\documentclass[10pt, a4paper]{amsart}
\usepackage{amsmath, amssymb,amsthm}
\usepackage[extra]{tipa}
\usepackage{lettrine}
\usepackage[Sonny]{fncychap}
\usepackage[english]{babel}
\usepackage{amsmath, amssymb,amsthm}
\usepackage[all]{xy}
\usepackage{tabularx}
\usepackage[applemac]{inputenc}
\usepackage{graphicx} 
\usepackage[dvipsnames]{xcolor}
\usepackage{hyperref}
\usepackage[applemac]{inputenc}
\usepackage{enumitem}
\usepackage{geometry}
\usepackage{prerex}
\usetikzlibrary{fit}
\usepackage{pdflscape}

\makeatletter
\def\oversortoftilde#1{\mathop{\vbox{\m@th\ialign{##\crcr\noalign{\kern3\p@}%
      \sortoftildefill\crcr\noalign{\kern3\p@\nointerlineskip}%
      $\hfil\displaystyle{#1}\hfil$\crcr}}}\limits}

\def\sortoftildefill{$\m@th \setbox\z@\hbox{$\braceld$}%
  \braceld\leaders\vrule \@height\ht\z@ \@depth\z@\hfill\braceru$}

\makeatother

\usepackage{mathrsfs}
\DeclareMathOperator{\B}{\overline{B}}

\DeclareMathOperator{\Norm}{Norm}
\DeclareMathOperator{\Ker}{Ker}
\DeclareMathOperator{\Pic}{Pic}
\DeclareMathOperator{\Gal}{Gal}
\DeclareMathOperator{\Rep}{Rep}
\DeclareMathOperator{\Res}{Res}
\DeclareMathOperator{\Hom}{Hom}
\DeclareMathOperator{\Ann}{Ann}
\DeclareMathOperator{\Aut}{Aut}
\DeclareMathOperator{\Card}{Card}
\DeclareMathOperator{\Disc}{Disc}
\DeclareMathOperator{\Tr}{Tr}
\DeclareMathOperator{\cusps}{cusps}
\DeclareMathOperator{\Frob}{Frob}
\DeclareMathOperator{\SL}{SL}
\DeclareMathOperator{\GL}{GL}
\DeclareMathOperator{\PSL}{PSL}
\DeclareMathOperator{\rk}{rk}
\DeclareMathOperator{\NP}{NP}
\newcommand{\cf}{\textit{cf. }}
\newcommand{\ie}{\textit{i.e. }}

\theoremstyle{definition}

\newtheorem{rem}{Remark}
\newtheorem{rems}{Remarks}
\theoremstyle{plain}
\newtheorem{thm}{Theorem}[section]

\newtheorem{conj}[thm]{Conjecture}
\newtheorem{lem}[thm]{Lemma}
\newtheorem{corr}[thm]{Corollary}

\newtheorem{prop}[thm]{Proposition}

\newcommand{\legendre}[2]{\genfrac{(}{)}{}{}{#1}{#2}}

\pagestyle{plain}

\title{Higher Eisenstein elements, higher Eichler formulas and rank of Hecke algebras}
\author{Emmanuel Lecouturier}
\date{04/01/18}

\begin{document}
\maketitle

\begin{abstract} 
Let $N$ and $p$ be primes such that $p$ divides the numerator of $\frac{N-1}{12}$. In this paper, we study the rank $g_p$ of the completion of the Hecke algebra acting on cuspidal modular forms of weight $2$ and level $\Gamma_0(N)$ at the $p$-maximal Eisenstein ideal. We give in particular an explicit criterion to know if $g_p \geq 3$, thus answering partially a question of Mazur.

In order to study $g_p$, we develop the theory of \textit{higher Eisenstein elements}, and compute the first few such elements in four different Hecke modules. This has applications such as generalizations of the Eichler mass formula in characteristic $p$.

\end{abstract}

\section{Introduction and results}\label{Section_introduction}
Let $p\geq 2$ and $N$ be two prime numbers such that $p$ divides the numerator of $\frac{N-1}{12}$, whose $p$-adic valuation is denoted by $t\geq 1$. This is the situation of an Eisenstein prime extensively studied in \cite{Mazur_Eisenstein}. 

Let $\nu = \gcd(N-1,12)$. We fix in all the article a surjective group homomorphism $\log : (\mathbf{Z}/N\mathbf{Z})^{\times} \rightarrow \mathbf{Z}/p^t\mathbf{Z}$. Let $\tilde{\mathbb{T}}$ (resp. $\mathbb{T}$) be the $\mathbf{Z}_p$-Hecke algebra acting on the space of modular forms (resp. cuspidal modular forms) of weight $2$ and level $\Gamma_0(N)$. Let $\tilde{I}$ (resp. $I$) be the ideal of $\tilde{\mathbb{T}}$ (resp. $\mathbb{T}$) generated by the Hecke operators $T_n - \sum_{d} d$, where the sum is over the divisors of $n$ prime to $N$.

Let $\tilde{\mathfrak{P}} = \tilde{I}+(p)$ and $\mathfrak{P} = I+(p)$; these are maximal ideals.
The kernel of the natural map $\tilde{\mathbb{T}} \rightarrow \mathbb{T}$
is $\mathbf{Z}_p\cdot T_0$ for some $T_0 \in \tilde{\mathbb{T}}$. A particular choice of $T_0$ will be made later using modular forms.  Let $\tilde{\mathbf{T}}$ (resp. $\mathbf{T}$) be the $\tilde{\mathfrak{P}}$-adic (resp. $\mathfrak{P}$-adic) completion of $\tilde{\mathbb{T}}$ (resp. $\mathbb{T}$). Let $g_p \geq 1$ be the rank of $\mathbf{T}$ as a $\mathbf{Z}_p$-module. Barry Mazur asked what can be said about $g_p$, and more generally about the Newton polygon of $\mathbf{T}$ \cite[p. 140]{Mazur_Eisenstein}. This is one of the main motivation of this paper, and we provide a partial answer to Mazur's question. 

Lo\"ic Merel was the first to give explicit information about $g_p$. For simplicity, in the rest of the introduction, we assume that $p \geq 5$.

\begin{thm}\cite[Th\'eor\`eme $2$]{Merel_accouplement}\label{thm_Introduction_g_p>1}
We have $g_p>1$ if and only if 
$$\sum_{k=1}^{\frac{N-1}{2}} k \cdot \log(k) \equiv 0 \text{ (modulo }p\text{).}$$
\end{thm}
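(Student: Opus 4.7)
The plan is to recast $g_p > 1$ as the existence of a nontrivial first-order deformation of the Eisenstein eigenvalue system, construct the natural candidate deformation explicitly, and then read off the obstruction to its descent from the full Hecke algebra $\tilde{\mathbf{T}}$ to the cuspidal one $\mathbf{T}$.

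First I would reformulate $g_p > 1$ algebraically. By Mazur's results, $\mathbf{T}$ is free of rank $g_p$ over $\mathbf{Z}_p$ with $\mathbf{T}/I \simeq \mathbf{Z}_p$, so $g_p > 1$ if and only if $I \ne 0$, equivalently (Nakayama) $I/(I^2 + pI) \ne 0$. In yet other words, $g_p > 1$ iff there exists a ring homomorphism $\phi : \mathbf{T} \to \mathbf{F}_p[\epsilon]/(\epsilon^2)$ lifting the Eisenstein augmentation $T_n \mapsto \sigma'(n) := \sum_{d \mid n,\ (d,N)=1} d \pmod p$ and distinct from it.

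Next I would produce the natural candidate, by taking a logarithmic derivative of the Eisenstein eigenvalue: for a prime $\ell \ne N$ set $\phi(T_\ell) = (\ell+1) + \epsilon \cdot \log(\ell)$, and extend multiplicatively via the Euler product to
\[
\phi(T_n) \;=\; \sigma'(n) \;+\; \epsilon \cdot \!\!\sum_{d \mid n,\ (d,N)=1}\! \log(d), \qquad \phi(T_N) \;=\; 1.
\]
Additivity of $\log$ on products together with the Hecke recursion $T_{\ell^{k+1}} = T_\ell T_{\ell^k} - \ell \cdot T_{\ell^{k-1}}$ for $\ell \ne N$ shows that this defines a ring homomorphism on $\tilde{\mathbf{T}}$; the only question is whether it descends to $\mathbf{T}$, which by the description of the kernel of $\tilde{\mathbb{T}} \to \mathbb{T}$ amounts to $\phi(T_0) \equiv 0 \pmod p$.

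The hard part is the explicit evaluation of $\phi(T_0)$, which is where Merel's accouplement enters. The strategy is to realize $\tilde{\mathbf{T}}$ faithfully on a concrete Hecke module (Merel uses modular symbols; the free abelian group on cusps or on supersingular points are related alternatives), to identify a higher Eisenstein element in that module, and to pair it against $\phi$. One expects the pairing with $T_0$ to reduce, up to a nonzero unit depending on $\nu$ and the normalization of $\log$, to the sum $\sum_{k=1}^{(N-1)/2} k \log(k) \pmod p$. Vanishing of this sum is then exactly the condition for $\phi$ to descend from $\tilde{\mathbf{T}}$ to $\mathbf{T}$, hence for $g_p > 1$. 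The main obstacle is precisely this last identification: the existence of the candidate deformation and the reduction of the obstruction to the single class $\phi(T_0)$ are essentially formal, but matching $\phi(T_0)$ with the explicit $k \log(k)$ sum requires a careful calculation in the chosen Hecke module, together with a controllable description of $T_0$ there.
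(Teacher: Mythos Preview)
Your strategy is essentially the one the paper develops via the modular-forms module $\mathcal{M}$ (Section~\ref{even_modSymbidea} and Proposition~\ref{even_modSymbhigher_modular_form}): a first-order deformation of the Eisenstein system corresponds to the element $f_1 \in \mathcal{M}/p\mathcal{M}$, and the obstruction to descent from $\tilde{\mathbf{T}}$ to $\mathbf{T}$ is the constant term $a_0(f_1)$, since $T_0$ picks out the constant coefficient (equation~\eqref{comparison_relation_a_1_a_0_T_0}). So the shape of the argument is right.

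However, your explicit candidate is wrong, and not merely up to normalization. The correct first-order term is $\phi(T_\ell) = (\ell+1) + \epsilon \cdot \tfrac{\ell-1}{2}\log(\ell)$, not $(\ell+1) + \epsilon\log(\ell)$. The images of $T_{\ell_1} - \ell_1 - 1$ and $T_{\ell_2} - \ell_2 - 1$ in the one-dimensional cotangent space $\tilde{I}/(\tilde{I}^2 + p)$ stand in the ratio $\tfrac{(\ell_1-1)\log\ell_1}{(\ell_2-1)\log\ell_2}$ by Mazur's winding isomorphism (recalled in Section~\ref{Formalism_special_case_modular_forms}), so any linear functional must respect that ratio; your $c_\ell = \log(\ell)$ does not, and your $\phi$ is not a ring homomorphism on $\tilde{\mathbf{T}}$. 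Relatedly, your closed formula $\phi(T_n) = \sigma'(n) + \epsilon\sum_{d\mid n}\log(d)$ is not even the recursive extension of your own $\phi(T_\ell)$: already at $n=\ell^2$ one gets $3\log(\ell)$ on one side and $2(\ell+1)\log(\ell)$ on the other.

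Once corrected, what remains is exactly the computation you flag as the hard part: showing $a_0(f_1)$ is a unit times $\sum_{k=1}^{(N-1)/2} k\log(k)$ (Theorem~\ref{thm_Introduction_constant_coeff}(i)). The paper does not do this by direct $q$-expansion manipulation. It passes to other Hecke modules where the corresponding pairing can be computed concretely: in the supersingular module $e_1\bullet e_0$ becomes the logarithm of the discriminant of the Hasse polynomial (Theorems~\ref{Supersingular_discriminant_thm} and~\ref{Supersingular_w_1_S_5}); in modular symbols $m_0^+\bullet m_1^-$ is Merel's original computation (Theorem~\ref{Comparison_Merel_sqrt_u}). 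These are then identified with $a_0(f_1)$ via the comparison of Proposition~\ref{Comparison_comparison_tensor} and Corollary~\ref{Comparison_corr_comparison}. Your sketch stops exactly where this substantive work begins.
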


We prove the following deceptively simple generalization.
\begin{thm}\label{thm_Introduction_g_p>2}
We have $g_p>2$ if and only if 
$$\sum_{k=1}^{\frac{N-1}{2}} k \cdot \log(k) \equiv \sum_{k=1}^{\frac{N-1}{2}} k \cdot \log(k)^2 \equiv 0 \text{ (modulo }p\text{).}$$
\end{thm}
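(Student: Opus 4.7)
The overall strategy is to develop, in a suitable Hecke module $M$, a notion of \emph{higher Eisenstein elements}: chains $e_0, e_1, e_2, \ldots$ of elements of $M/pM$ with $e_0 \neq 0$ a genuine mod-$p$ Eisenstein vector, and with each $e_i$ satisfying
\[
(T_\ell - \ell - 1)\, e_i \;\equiv\; c_{i,\ell}^{(0)} e_{i-1} + c_{i,\ell}^{(1)} e_{i-2} + \cdots \pmod{p}
\]
for every prime $\ell \neq N$, the scalars $c_{i,\ell}^{(\bullet)}\in \mathbf{F}_p$ being determined inductively. The plan is to prove that the length of a maximal such chain in a faithful $\mathbf{T}$-module equals $g_p$, and then to compute, step by step, the obstructions to producing the next element.

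First, I would fix an appropriate $M$ on which $\mathbf{T}$ acts faithfully --- for example Merel's modular symbol module used in \cite{Merel_accouplement}, or $\mathbf{T}$ itself, or one of several other natural Hecke modules (which is presumably why the abstract mentions four of them). Merel's theorem is then rephrased as the statement that a first higher Eisenstein element $e_1$ exists if and only if $\sum_{k=1}^{(N-1)/2} k\log(k) \equiv 0 \pmod{p}$, and this is equivalent to $g_p > 1$. The reformulation rests on Mazur's theorem that $\mathbf{T}$ is Gorenstein and monogenic over $\mathbf{Z}_p$, which forces the tower length in a faithful module to coincide with $g_p$.

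Assuming $\sum k\log(k) \equiv 0 \pmod p$, one next tries to construct $e_2$. The required compatibility $(T_\ell - \ell - 1)e_2 \equiv \alpha_\ell e_1 \pmod p$ (with $\alpha_\ell$ determined by $e_1$ through the preceding step) places $e_2$ in the preimage of $\alpha_\ell e_1$ under $T_\ell - \ell - 1$, and existence is controlled by a single obstruction class. The main technical calculation is to identify this class explicitly via an Eisenstein pairing on Manin symbols. By lifting Merel's computation one step further up the filtration --- essentially working modulo $p^2$ rather than $p$ and absorbing the $\log$-factor produced at the first step into the calculation at the second --- the obstruction should evaluate to the sum $\sum_{k=1}^{(N-1)/2} k\log(k)^2 \pmod p$, where the square of $\log$ appears because the Eisenstein pairing is applied one further time after having already consumed one factor of $\log$ in producing $e_1$.

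The main obstacle is this second explicit Manin-symbol calculation, which is the combinatorial heart of the theorem. One must track how $T_\ell$ acts on modular symbols modulo $p^2$, reorganize the resulting sums according to the chosen $\log: (\mathbf{Z}/N\mathbf{Z})^\times \to \mathbf{Z}/p^t\mathbf{Z}$, and verify that the combinatorial identities collapse exactly to the stated higher power sum. Once both the interpretation $g_p > 2 \Leftrightarrow e_2$ exists and the explicit obstruction $\sum k\log(k)^2$ are established, the theorem follows: $g_p > 2$ is equivalent to the chain of higher Eisenstein elements extending to length at least three, which happens if and only if both congruences hold.
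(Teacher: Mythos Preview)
Your overall framework is correct and matches the paper's: the theory of higher Eisenstein elements, the identification of the maximal chain length with $g_p$ via Mazur's Gorenstein and principality results, and the idea that the obstructions are controlled by explicit scalar quantities. The indexing is slightly off (in the paper's conventions $e_0,\dots,e_{g_p}$ always exist; the issue is whether $e_i$ lies in the ``cuspidal'' submodule $M^0$, equivalently whether a suitable pairing vanishes), but this is cosmetic.

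The genuine divergence is in the technical heart. You propose to extract the second obstruction by ``lifting Merel's computation one step further up the filtration, essentially working modulo $p^2$.'' The paper does not do this, and in fact remarks that the step from $\sum k\log(k)$ to $\sum k\log(k)^2$ does \emph{not} seem to follow from direct manipulations of Merel's element or of $q$-expansions. Instead, the paper exploits the intersection pairing between the two dual modules $M_+ = H_1(X_0(N),\cusps,\mathbf{Z}_p)_+$ and $M^- = H_1(Y_0(N),\mathbf{Z}_p)^-$, together with the general criterion that $g_p \geq 3$ if and only if $m_1^+ \bullet m_0^- \equiv m_1^+ \bullet m_1^- \equiv 0 \pmod p$. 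The first pairing recovers Merel's quantity; the second is the new one. To compute $m_1^+ \bullet m_1^-$, the paper needs an explicit Manin-symbol formula for the \emph{first} higher Eisenstein element $m_1^+$ in even modular symbols (Theorem~\ref{thm_Introduction_w_1^+>3}), which is obtained by passing to level $\Gamma_1(N)$ and using the Eisenstein elements of Debargha--Merel, then descending. One then pairs this new object against Mazur's known description of $m_1^-$ (which pairs with $\xi_{\Gamma_0(N)}([x:1])$ via $\log(x)$), and the resulting double sum is simplified through a nontrivial elementary identity (Lemma~\ref{Bernardi_lemma>3}) to yield $\frac{1}{6}\sum k\log(k)^2$.

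So the missing idea in your proposal is the use of \emph{two} Hecke modules in duality, and the realization that one must first produce an explicit formula for $m_1^+$ --- a result of independent interest --- rather than trying to push a single module's computation to higher $p$-adic precision.
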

The obvious generalization does not hold. More precisely, there seems to be no link between the vanishing of $\sum_{k=1}^{\frac{N-1}{2}} k \cdot \log(k)^3$ and the fact that $g_p>3$. For instance, if $p=5$ and $N=3671$, we have $g_p=5$ but $\sum_{k=1}^{\frac{N-1}{2}} k \cdot \log(k)^3 \not\equiv 0 \text{ (modulo }p\text{)}$, and if $p=7$ and $N=4229$, we have $g_p=3$ and $\sum_{k=1}^{\frac{N-1}{2}} k \cdot \log(k)^3 \equiv 0 \text{ (modulo }p\text{)}$.

Frank Calegari and Matthew Emerton have identified $\tilde{\mathbf{T}}$ with a universal deformation ring for the residual representation $\overline{\rho} = \begin{pmatrix}\overline{\chi}_p& 0 \\ 0 & 1\end{pmatrix}$, where $\overline{\chi}_p$ is the reduction modulo $p$ of the $p$th cyclotomic character.  They deduce a characterization of $g_p$ in terms of the existence of certain Galois deformations of $\rho$. Using class field theory, they were able to prove the following result.
\begin{thm}\label{thm_Introduction_g_p>2_CE_class_gp}
If $g_p \geq 2$ then the $p$-Sylow subgroup of the class group of $\mathbf{Q}(N^{\frac{1}{p}})$ is not cyclic.
\end{thm}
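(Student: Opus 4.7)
\begin{sketch}
The plan is to exploit the Calegari--Emerton identification of $\tilde{\mathbf{T}}$ with the universal deformation ring $R$ of $\overline{\rho} = \overline{\chi}_p \oplus 1$, parametrising deformations unramified outside $Np$, ordinary at $p$, and with fixed determinant $\chi_p$. Since $\tilde{\mathbb{T}} \twoheadrightarrow \mathbb{T}$ has a free rank-one $\mathbf{Z}_p$-kernel, the assumption $g_p \geq 2$ forces $R$ to have $\mathbf{Z}_p$-rank at least $3$, so in particular the reduced mod-$p$ tangent space $\mathfrak{m}_R/(\mathfrak{m}_R^2, p)$ is nonzero. By the universal property, this tangent space is identified with a Selmer group $H^1_\Sigma(G_{\mathbf{Q},Np}, \operatorname{ad}^0 \overline{\rho})$ for the local conditions $\Sigma$ coming from the deformation problem.

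Next, one decomposes $\operatorname{ad}^0 \overline{\rho} = \mathbf{F}_p \oplus \mathbf{F}_p(\overline{\chi}_p) \oplus \mathbf{F}_p(\overline{\chi}_p^{-1})$. The prescribed local conditions (ordinarity at $p$, Steinberg-type behaviour at $N$, unramified elsewhere) should kill the scalar piece and the $\mathbf{F}_p(\overline{\chi}_p^{-1})$ piece, so the nonzero first-order deformation produced above yields a nonzero class $c \in H^1_\Sigma(G_{\mathbf{Q},Np}, \mathbf{F}_p(\overline{\chi}_p))$. Concretely, $c$ cuts out a cyclic degree-$p$ extension of $\mathbf{Q}(\zeta_p)$ with prescribed ramification only at the primes above $N$.

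The last step is to convert $c$ into a class in the $p$-part of the class group of $\mathbf{Q}(N^{1/p})$. After inflation--restriction to $G_{\mathbf{Q}(\zeta_p)}$, Kummer theory realises $H^1_\Sigma(G_{\mathbf{Q},Np}, \mathbf{F}_p(\overline{\chi}_p))$ inside a specific $\Delta$-isotypic component of $\mathbf{Q}(\zeta_p)^\times / \mathbf{Q}(\zeta_p)^{\times p}$, where $\Delta = \Gal(\mathbf{Q}(\zeta_p)/\mathbf{Q})$; the Selmer conditions then cut this Kummer group down to ideal-class data. One tautological class is the Kummer class of $N$ itself, corresponding to the extension $\mathbf{Q}(\zeta_p, N^{1/p})/\mathbf{Q}(\zeta_p)$ and accounting for the known genus-theoretic $p$-class. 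The class $c$ produced by $g_p \geq 2$ is linearly independent of this tautological one, so it contributes a second independent $p$-torsion class in the relevant isotypic piece of the class group of $\mathbf{Q}(\zeta_p, N^{1/p})$. Since $\Gal(\mathbf{Q}(\zeta_p, N^{1/p})/\mathbf{Q}(N^{1/p})) = \Delta$ has order $p-1$ prime to $p$, taking $\Delta$-invariants is exact on the $p$-part, and both classes descend to independent classes of order $p$ in the class group of $\mathbf{Q}(N^{1/p})$, proving that its $p$-Sylow is not cyclic.

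The hard part, in my view, is aligning precisely the local conditions of the deformation problem with those on the class-group side: one must verify that ordinarity at $p$ translates exactly to the unramifiedness needed for the Kummer class to define an element of the class group of $\mathbf{Q}(N^{1/p})$ rather than merely of an $S$-class group, and that the descent from $\mathbf{Q}(\zeta_p, N^{1/p})$ down to $\mathbf{Q}(N^{1/p})$ really preserves the $\mathbf{F}_p$-linear independence of the tautological class and the one built from $c$, rather than identifying them under the $\Delta$-averaging.
\end{sketch}
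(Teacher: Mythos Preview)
The paper does not actually prove this theorem; it is quoted as a result of Calegari--Emerton, and Section~\ref{Section_q_expansion_higher_Eis_series} later uses their deformation-theoretic setup without reproving the class-group statement. So what you are sketching is their original argument, and your outline has the right ingredients (the identification $R \simeq \tilde{\mathbf{T}}$, class field theory, Kummer descent). But there is a genuine gap in how you invoke the hypothesis $g_p \geq 2$.

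The problem is your first step. By Mazur's theorem the Eisenstein ideal is locally principal, so $\tilde{\mathbf{T}}$ is monogenic over $\mathbf{Z}_p$ and its mod-$p$ tangent space $\mathfrak{m}_R/(\mathfrak{m}_R^2,p)$ is \emph{always} one-dimensional, for every $g_p \geq 1$. (Having $\mathbf{Z}_p$-rank $\geq 3$ does not enlarge the tangent space: think of $\mathbf{Z}_p[x]/(x^3)$.) So your construction of the Selmer class $c$ never uses $g_p \geq 2$. Worse, the projection of any first-order deformation to the $\mathbf{F}_p(\overline{\chi}_p)$-piece is the upper-right cocycle $b$, and the local condition at $N$ forces $b$ to be (a nonzero multiple of) the Kummer class of $N$ itself --- precisely the tautological class you want $c$ to be independent from. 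Indeed the Selmer group $H^1_\Sigma(G_{\mathbf{Q},Np},\mathbf{F}_p(\overline{\chi}_p))$ with your local conditions is one-dimensional, spanned by that class. As written, your argument would ``show'' that $g_p \geq 1$ already forces non-cyclicity, which is false.

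What Calegari--Emerton actually use is a \emph{second-order} deformation: the condition $g_p \geq 2$ is exactly the existence of a lift $\rho : G_{\mathbf{Q}} \to \GL_2\big(\mathbf{F}_p[x]/x^3\big)$ with the prescribed local behaviour. In the notation of Section~\ref{Section_q_expansion_higher_Eis_series}, the first-order diagonal entry $a$ cuts out the genus extension $K_{(0)}\cdot K/K$, while the second-order diagonal data $a'' := a' - a^2/2$ restricts to a homomorphism on $\Gal(\overline{\mathbf{Q}}/K)$ cutting out a second, independent, everywhere-unramified $\mathbf{Z}/p\mathbf{Z}$-extension $H/K$ (this is Lemma~\ref{deformations_admitted_lemma_commutator} and the surrounding discussion). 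That is what yields $\dim_{\mathbf{F}_p}\mathcal{C}_K \geq 2$. The delicate local verification you anticipated is indeed the crux, but it must be carried out for the order-$x^2$ terms, not for the tangent space.
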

The converse of Theorem \ref{thm_Introduction_g_p>2_CE_class_gp} happens to be false in general if $p > 5$.
Recently, Preston Wake and Carl Wang--Erickson \cite{Wake} have built on the work of Calegari and Emerton which tackled the determination of $g_p$ through the theory of deformations of Galois representations. It would be interesting to compare their results to ours. In particular, they give another proof of Theorem \ref{thm_Introduction_g_p>1} and they proposed our Theorem \ref{thm_Introduction_g_p>2} as a conjecture.

Our work is of a different nature. If we compare to the standard conjectures on special values of L-functions, we work on the ``analytic side'' of the problem, while Calegari--Emerton and Preston--Wake study the ``algebraic side''.

Let us say a few words about the proof of Merel's theorem. The essential point is the computation of the \textit{Eisenstein element} of $H_1(X_0(N), \cusps, \mathbf{Q})_+$ (the fixed part by complex conjugation of the singular homology relative to the cusps of the modular curve $X_0(N)$ of level $\Gamma_0(N)$). It is an element annihilated by $\tilde{I}$. 

The main idea of this paper is to determine, in well-chosen Hecke modules, the so called \textit{higher Eisenstein elements}, which have the property to be annihilated by a power of $\tilde{I}$. Results such as Theorem \ref{thm_Introduction_g_p>1} and Theorem \ref{thm_Introduction_g_p>2} are by-products of our study of higher Eisenstein elements. Unfortunately we could only determine a few of them so that we do not have a general formula for $g_p$.

We first describe the higher Eisenstein elements in the space of modular form. If $f$ is a modular form, let $a_0(f)$ be its constant coefficient at the cusp $\infty$. For simplicity, we assume that $p \geq 5$. There are modular forms $f_0$, $f_1$, ..., $f_{g_p}$ in $M_2(\Gamma_0(N), \mathbf{Z}/p\mathbf{Z})$, such that the following property hold. For any prime number $\ell$ not dividing $N$ and any integer $i$ such that $0 \leq i \leq g_p$, we have:
$$ (T_{\ell}-\ell-1)(f_{i}) = \frac{\ell-1}{2}\cdot \log(\ell) \cdot f_{i-1} \text{ (modulo }\mathbf{Z}\cdot f_0 + ... + \mathbf{Z}\cdot f_{i-2}\text{).}$$
By convention, we let $f_{-1}=0$. This determines $f_0$ up to an element of $(\mathbf{Z}/p\mathbf{Z})^{\times}$. We normalize $f_0$ so that its $q$-expansion at the cusp $\infty$ is
$$\frac{N-1}{24}+\sum_{n\geq 1} \left(\sum_{d \mid n \atop \gcd(d,N)=1} d\right)\cdot q^n \text{ (modulo }p\text{).}$$
This is, of course, the unique (normalized) Eisenstein series of weight $2$ and level $\Gamma_0(N)$. Note that the constant coefficient of $f_0$ is $0$ modulo $p$. The image of $f_i$ in $M_2(\Gamma_0(N), \mathbf{Z}/p\mathbf{Z})/\left( \mathbf{Z}\cdot f_0 + ... + \mathbf{Z}\cdot f_{i-2}\right)$ is uniquely determined.

We can show that the constant coefficients of $f_1$, ..., $f_{g_p-1}$ are $0$ modulo $p$, and that the constant coefficient of $f_{g_p}$ is non-zero modulo $p$. In particular, the following assertions are equivalent.
\begin{enumerate}
\item We have $a_0(f_1) = 0$.
\item We have $g_p \geq 2$.
\item We have $\sum_{k=1}^{\frac{N-1}{2}} k \cdot \log(k) \equiv 0 \text{ (modulo }p\text{).}$
\end{enumerate}

If $g_p \geq 2$, then $f_2$ exists and the following assertions are equivalent.
\begin{enumerate}
\item We have $a_0(f_2) = 0$.
\item We have $g_p \geq 3$. 
\item We have $\sum_{k=1}^{\frac{N-1}{2}} k \cdot \log(k) \equiv \sum_{k=1}^{\frac{N-1}{2}} k \cdot \log(k)^2 \equiv 0 \text{ (modulo }p\text{).}$
\end{enumerate}

We prove the following finer result.
\begin{thm}\label{thm_Introduction_constant_coeff}
\begin{enumerate}
\item We have $$a_0(f_1) \equiv \frac{1}{6}\cdot \sum_{k=1}^{\frac{N-1}{2}} k \cdot \log(k) \text{ (modulo }p\text{).}$$
\item Assume that $g_p \geq 2$. We have $$a_0(f_2) = \frac{1}{12}\cdot \sum_{k=1}^{\frac{N-1}{2}} k \cdot \log(k)^2 \text{ (modulo }p\text{).}$$
\end{enumerate}
\end{thm}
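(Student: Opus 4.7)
The plan is to realize $f_1$ and $f_2$ as coefficients in the $\epsilon$-expansion of a family of weight-$2$ Eisenstein series deforming $f_0$, and then to read off the constant terms via generalized Bernoulli numbers. Fix a character $\chi : (\mathbf{Z}/N\mathbf{Z})^\times \to \overline{\mathbf{Z}}_p^\times$ of order $p^t$ corresponding to $\log$, so that formally
$$\chi(a) = 1 + \epsilon \log(a) + \tfrac{\epsilon^2}{2}\log(a)^2 + O(\epsilon^3), \quad \epsilon := \zeta_{p^t} - 1.$$
I would form an appropriate combination of the two Eisenstein series $E_{2, \chi, 1}$ and $E_{2, 1, \chi}$ attached to $\chi$ on $\Gamma_1(N)$, tentatively $E_\chi := E_{2, \chi, 1} - E_{2, 1, \chi}$, corrected by a level-$1$ oldform so that the combination deforms the new level-$N$ Eisenstein series $f_0$ as $\epsilon \to 0$. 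Then $f_1$ and $f_2$ are the suitably-normalized $\epsilon$- and $\epsilon^2$-coefficients of this family.

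The Hecke recursion follows from the identity $(T_\ell - \ell - 1) E_\chi = (\chi(\ell) - 1)(E_{2, \chi, 1} - \ell E_{2, 1, \chi})$. Expanding both sides in $\epsilon$ and tracking the normalization reproduces the factor $\frac{\ell - 1}{2}\log(\ell)$ in the relation $(T_\ell - \ell - 1) f_i \equiv \frac{\ell - 1}{2}\log(\ell) f_{i-1}$ (modulo $\mathbf{Z} f_0 + \cdots + \mathbf{Z} f_{i-2}$); the factor $\tfrac{1}{2}$ emerges precisely from symmetrizing $(1-\ell)$ and $\ell(\chi(\ell)-1)$.

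For the constant coefficients, I would use $a_0(E_{2, \chi, 1}) = 0$ (since $\chi$ is nontrivial) and $a_0(E_{2, 1, \chi}) = -B_{2, \chi}/2$, with $B_{2, \chi} = \sum_{a=1}^{N-1}\chi(a)(a^2/N - a + N/6)$ coming from $B_2(x) = x^2 - x + \tfrac{1}{6}$. Expanding $\chi(a)$ in powers of $\epsilon$, the $i$-th $\epsilon$-coefficient of $B_{2, \chi}$ is $\tfrac{1}{i!}\sum_{a=1}^{N-1}\log(a)^i(a^2/N - a + N/6)$. Reducing modulo $p$ (so that $N \equiv 1 \pmod p$) and invoking the two key symmetries $\log(N - a) \equiv \log(a) \pmod{p^t}$ and $\sum_{a=1}^{(N-1)/2}\log(a) \equiv 0 \pmod p$, the sums collapse into multiples of $\sum_{k=1}^{(N-1)/2} k \log(k)^i$. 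Combining this with the normalization constants from the first step produces the claimed coefficients $\tfrac{1}{6}$ and $\tfrac{1}{12}$.

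The main obstacle is the level-$N$ regularization: both $E_{2, \chi, 1}$ and $E_{2, 1, \chi}$ degenerate as $\chi \to 1$ to the level-$1$ Eisenstein series rather than to the new form $f_0$, so the deformation requires an oldform correction that must be propagated coherently through the $\epsilon$-expansion, and one should check that the resulting forms descend from $\Gamma_1(N)$ to $\Gamma_0(N)$ modulo $p$. Once this is in place, pinning down the exact constants $\tfrac{1}{6}$ and $\tfrac{1}{12}$ amounts to tracking the interplay of the $\tfrac{1}{6}$ in $B_2(x)$, the Taylor factors $1/i!$, and the sign/symmetrization factor $\tfrac{1}{2}$ inherited from the Hecke normalization. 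Part (ii) is strictly harder: it uses the hypothesis $g_p \geq 2$ to guarantee the existence of $f_2$, and the second-order computation crucially relies on $\sum_{a=1}^{(N-1)/2}\log(a) \equiv 0 \pmod p$ to eliminate the cross terms $\sum \log(a)^2$ and $\sum a \log(a)^2$ that arise from the $\epsilon^2$-expansion of $B_{2,\chi}$.
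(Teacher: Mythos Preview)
Your approach to part (i) is essentially the one the paper itself sketches in Section~\ref{even_modSymbidea}: one forms the combination $E=\frac{1}{4}(E_{1,\epsilon}-\tau(\epsilon)E_{\epsilon^{-1},1})$ and observes that its $q$-expansion equals $\tilde f_0+\pi f_1'+O(\pi^2)$, so the first-order coefficient recovers $f_1$ modulo $f_0$, and the constant term can then be read off from the generalized Bernoulli number $B_{2,\chi}$. (The paper's formal proof of (i) in Chapter~\ref{Section_comparison} proceeds instead via the identification $a_0(f_1)=\tfrac12\,m_0^+\bullet m_1^-$ together with Merel's computation of this pairing, but your direct route is viable for this part.)

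For part (ii), however, there is a genuine gap. The paper states immediately after Theorem~\ref{thm_Introduction_constant_coeff} that ``point (ii) does not seem to follow from manipulation on the $q$-expansion of modular forms,'' and the obstruction is concrete: the $\epsilon^2$-coefficient $E_2$ of your family is \emph{not} a $\Gamma_0(N)$-form, even modulo $p$. Since $E_{1,\chi}$ and $E_{\chi^{-1},1}$ have inverse nebentypus characters, a short computation of $(\langle d\rangle-1)E_2$ (expanding $\chi(d)$ and $\chi^{-1}(d)$ to second order in $\epsilon$) leaves a nonvanishing term of the shape $\log(d)\cdot G+\log(d)^2\cdot f_0$, where $G$ is a fixed $q$-series built from the first-order pieces of the two Eisenstein series. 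This cannot be zero for all $d$. Equivalently, when you try to verify the Hecke recursion $(T_\ell-\ell-1)E_2\equiv\frac{\ell-1}{2}\log(\ell)\,E_1\pmod{f_0}$, the right-hand side of $(T_\ell-\ell-1)(4E)=(\chi(\ell)-1)[\ell E_{1,\chi}+\tau(\chi)\chi^{-1}(\ell)E_{\chi^{-1},1}]$ contributes at order $\epsilon^2$ an extra piece proportional to $(\ell+1)\log(\ell)\cdot\bigl[(E_{1,\chi})_1-(E_{\chi^{-1},1})_1\bigr]$, which is not a multiple of $f_0$. So $E_2$ is not the higher Eisenstein element $f_2$, and its constant term has no direct relation to $a_0(f_2)$.

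The paper's proof of (ii) takes a completely different route: Proposition~\ref{Comparison_comparison_tensor} identifies $a_0(f_2)=\tfrac12\,m_1^+\bullet m_1^-$, and this pairing is then computed (Theorem~\ref{even_modSymb_computation_pairing_>3}) by combining Mazur's description of $m_1^-$ with the paper's explicit Manin-symbol formula for $m_1^+$ (Theorem~\ref{thm_Introduction_w_1^+>3_p^t}). The latter formula is itself obtained by pushing the Eisenstein family to the modular-symbol side and using the Debargha--Merel Eisenstein elements of level $\Gamma(N)$; this is where the genuine content of the $\epsilon$-deformation lives, but it is used to build $m_1^+$, not to produce $f_2$ directly.
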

Contrary to point (i), point (ii) does not seem to follow from manipulation on the $q$-expansion of modular forms.

There is a notion of higher Eisenstein elements modulo $p^r$ for any integer $r$ such that $1 \leq r \leq t$. Theorem \ref{thm_Introduction_constant_coeff} generalizes modulo $p^r$, but we shall work modulo $p$ in this introduction because this is most relevant with respect to the study of $g_p$.

The quantity $\sum_{k=1}^{\frac{N-1}{2}} k \cdot \log(k)$ can thus be interpreted as the constant coefficient of an higher Eisenstein series, and also as a derivative of an L-function. Similarly, we can think of $\sum_{k=1}^{\frac{N-1}{2}} k \cdot \log(k)^2$ as the second derivative of an $L$-function. This point of view has been made precise in \cite{Lecouturier_class_group}, where these quantities are related to derived Stickelberger elements and to the class group of the cyclotomic field $\mathbf{Q}(\zeta_p, \zeta_N)$, proving a kind of class number number formula. Here, $\zeta_p$ and $\zeta_N$ are respectively $p$th and $N$th primitive roots of unity.  These results on class groups will in fact be essential for us to construct the second higher Eisenstein element in the space of odd modular symbols.

We construct higher Eisenstein elements in three other Hecke modules. Before we give our results, we briefly define what we mean by higher Eisenstein elements in general. Let $M$ be a $\tilde{\mathbb{T}}$-module such that $M \otimes_{\tilde{\mathbb{T}}} \tilde{\mathbf{T}}$ is free of rank one over $\tilde{\mathbf{T}}$. Let $M^0 \subset M$ be the submodule annihilated by $T_0 \in \tilde{\mathbb{T}}$. There is a sequence of elements $e_0$, $e_1$, ..., $e_{g_p}$ in $M/p\cdot M$, called the \textit{higher Eisenstein elements}, satisfying the following properties.
\begin{enumerate}
\item We have $e_0 \neq 0$.
\item For all prime number $\ell$ not dividing $N$ and all integer $i$ such that $0 \leq i \leq g_p$, we have:
$$(T_{\ell}-\ell-1)(e_i) = \frac{\ell-1}{2}\cdot \log(\ell)\cdot e_{i-1}  \text{ (modulo }\mathbf{Z}\cdot e_0 + ... + \mathbf{Z}\cdot e_{i-2}\text{)}$$
(with the convention $e_{-1}=0$).
In the case $p=\ell=2$ (which is excluded in this introduction but will be considered in the paper), the term $\frac{\ell-1}{2} \cdot \log(\ell)$ is replaced by $\log(x)$ where $2=x^2$ modulo $N$.
\end{enumerate}

The following additional properties hold.

a) The element $e_0$ is unique, up to a scalar in $(\mathbf{Z}/p\mathbf{Z})^{\times}$.  There is an element $\tilde{e}_0 \in M$ congruent to $e_0$ modulo $p$, and which is annihilated by $\tilde{I}$. Furthermore, $\tilde{e}_0$ is unique up to multiplication by an element of $1+p\mathbf{Z}_p$.

b) We have $e_0$, ..., $e_{g_p-1}$ $\in M^0/p\cdot M^0$ and $e_{g_p} \not\in M^0/p\cdot M^0$. 

c) If we fix $e_0$ in $(M^0/p\cdot M^0)[I]$, then the image of $e_i$ in $(M^0/p\cdot M^0)/\left( \mathbf{Z}\cdot e_0 + ... + \mathbf{Z}\cdot e_{i-2}\right)$ is uniquely determined.

Again, there is an analogous theory modulo $p^r$, for any integer $r$ such that $1 \leq r \leq t$, which we avoid in this introduction. Many of our definitions and theorems are valid modulo appropriate powers of $p$. The reader should consider all statements modulo $p$ as simplifications.

Most of our paper is devoted to the study of higher Eisenstein elements in three different modules, using completely different methods.

\subsection{The supersingular module}\label{introduction_subsection_supersingular} Consider the free $\mathbf{Z}_p$-module $M:=\mathbf{Z}_p[S]$ on the set $S$ of isomorphism classes of supersingular elliptic curves over $\overline{\mathbf{F}}_N$. If $E \in S$, let $j(E) \in \mathbf{F}_{N^2}$ be the $j$-invariant of $E$ and $[E] \in \mathbf{Z}_p[S]$ be the element corresponding to $E$. The element $\tilde{e}_0$ (unique up to $\mathbf{Z}_p^{\times}$) is well-known:
$$\tilde{e}_0 = \sum_{E\in S} \frac{1}{w_E} \cdot [E] \in M$$
where $w_E \in \{1,2,3\}$ is half the number of automorphism of $E$. 

We determine completely the element $e_1$ (which is unique modulo $(\mathbf{Z}/p\mathbf{Z})\cdot e_0$). Let 
$$H(X) = \sum_{i=0}^{\frac{N-1}{2}} {{\frac{N-1}{2}}\choose{i}}^2\cdot X^i \in \mathbf{F}_N[X]$$ 
be the classical Hasse polynomial and
$$P(X) = \Res_T(H'(T), 256\cdot (1-T+T^2)^3-T^2\cdot (1-T)^2\cdot X) \in \mathbf{F}_N[X]$$
where $\Res_X$ means the resultant relatively to the variable $X$. Since $p>2$, we can extend $\log$ to a morphism $\mathbf{F}_{N^2}^{\times} \rightarrow \mathbf{Z}/p\mathbf{Z}$, still denoted by $\log$.

\begin{thm}\label{thm_Introduction_w_1_S}
We have, modulo $\left(\mathbf{Z}/p\mathbf{Z}\right)\cdot e_0$:
$$e_1 = \frac{1}{12}\cdot \sum_{E \in S} \log(P(j(E))) \cdot [E] \text{ .}$$
\end{thm}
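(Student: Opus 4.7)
The plan is to verify that
$$e_1^{\star} := \frac{1}{12}\sum_{E\in S}\log(P(j(E)))\cdot [E] \in M/pM$$
satisfies, for every prime $\ell\nmid N$, the relation $(T_\ell-\ell-1)(e_1^\star)=\frac{\ell-1}{2}\log(\ell)\cdot e_0$. By point (c) of the general framework on higher Eisenstein elements recalled in the introduction, this relation together with $e_1^\star\in M^0/pM^0$ characterizes $e_1$ modulo $\mathbf{F}_p\cdot e_0$, so no separate uniqueness argument is required.

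I would first unfold the resultant. The classical $j$-line parametrization $j(\lambda)=256(\lambda^2-\lambda+1)^3/\lambda^2(\lambda-1)^2$ of the cover $X(2)\to X(1)$ is exactly the polynomial appearing as the second argument of the resultant defining $P$; expanding gives
$$P(X)=256^{(N-3)/2}\prod_{\lambda:\,j(\lambda)=X}H'(\lambda).$$
Since the supersingular Legendre parameters are the simple roots of the monic polynomial $H$ (its leading coefficient $\binom{(N-1)/2}{(N-1)/2}^2=1$), at each such $\lambda$ one has $H'(\lambda)=\prod_{\mu\ne\lambda,\,H(\mu)=0}(\lambda-\mu)$. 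Taking $\log$, the $E$-independent constant $\frac{N-3}{2}\log(256)$ contributes a scalar multiple of $\sum_{E\in S}[E]$, which itself lies in $\mathbf{F}_p\cdot e_0$ via the direct count $T_\ell\sum_E[E]=(\ell+1)\sum_E[E]$ (using dual isogenies, the number of $(E,C)$ with $E/C\cong E'$ equals the number of order-$\ell$ subgroups of $E'$). Discarding this contribution,
$$e_1^\star\equiv\frac{1}{12}\sum_{E\in S}\Bigl(\sum_{\lambda\to j(E)}\sum_{\mu\ne\lambda,\,H(\mu)=0}\log(\lambda-\mu)\Bigr)[E]\pmod{\mathbf{F}_p\cdot e_0}.$$

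Next, applying the Brandt-matrix description $T_\ell[E]=\sum_{C\subset E[\ell]}[E/C]$ and extracting the $[E']$-coefficient, the Hecke identity to establish becomes: for each supersingular $E'$,
$$\sum_{E\xrightarrow{\ell} E'}\log(P(j(E)))\,-\,(\ell+1)\log(P(j(E')))\equiv\frac{6(\ell-1)\log(\ell)}{w_{E'}}\pmod p,$$
where the sum is over cyclic $\ell$-isogenies $E\to E'$ counted with Brandt multiplicity.

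This last identity is the main obstacle. The most direct strategy is to lift each supersingular Legendre pair $(\lambda,\mu)$ to a pair of CM-points on $X(2)$ via Deuring's correspondence, so that $\lambda-\mu$ becomes the mod-$\mathfrak{N}$ reduction of a difference of singular moduli. Gross-Zagier-type formulas then compute the $\mathfrak{N}$-adic valuation of such a difference in terms of counts of optimal embeddings of orders into the quaternion algebra $B_{N,\infty}$, and transport under an $\ell$-isogeny produces the factor $\log(\ell)$ on the right-hand side. The normalization $\frac{1}{12}$ is forced by the degree $6$ of the cover $X(2)\to X(1)$ together with the automorphism weights $w_{E'}\in\{1,2,3\}$ appearing in $e_0=\sum_E[E]/w_E$. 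An alternative route that avoids Gross-Zagier is to transfer the identity from the modular-form side via the Jacquet-Langlands correspondence and invoke Theorem~\ref{thm_Introduction_constant_coeff}(i); the matching coefficient $\frac{\ell-1}{2}\log(\ell)$ on both sides fixes the constant $\frac{1}{12}$ unambiguously.
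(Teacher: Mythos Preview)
Your reduction is essentially correct: unfolding the resultant gives $\log P(j(E))=\text{const}+\sum_{\lambda\to j(E)}\log H'(\lambda)$, and the problem becomes verifying that $e_1':=\sum_{\lambda\in L}\log H'(\lambda)\cdot[\lambda]$ in the Legendre module satisfies $(T_\ell-\ell-1)(e_1')=(\ell-1)\log(\ell)\cdot e_0'$, i.e.\ the multiplicative identity
\[
\prod_{\lambda'\sim_\ell\lambda} H'(\lambda')\;\equiv\;\ell^{\ell-1}\cdot H'(\lambda)^{\ell+1}\pmod N
\]
for each supersingular $\lambda$ and each prime $\ell\nmid 2N$. (Excluding $\ell=2$ is harmless by the Deligne--Serre / strong multiplicity one argument of Proposition~\ref{Formalism_property_Hecke_outside}.) This is exactly where the content lies, and it is not proved in your proposal.

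Neither of your suggested strategies closes the gap. The Gross--Zagier route computes archimedean or $\mathfrak{l}$-adic valuations of differences of \emph{CM} singular moduli; it gives no direct control over $\log H'(\lambda)\bmod p$ for supersingular $\lambda$ in characteristic $N$, and there is no evident mechanism by which an $\ell$-isogeny produces the factor $\ell^{\ell-1}$ on the nose. The Jacquet--Langlands alternative invoking Theorem~\ref{thm_Introduction_constant_coeff}(i) is circular: that theorem concerns $a_0(f_1)$ and is itself proved via the comparison machinery of Chapter~\ref{Section_comparison}, which presupposes the formula for $e_1$ (or $m_1^+$).

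The paper's proof of the displayed identity (Theorem~\ref{Supersingular_main_thm}) is of a completely different nature. The key observation is that for $\lambda\in L$ one has $E_{N+1}(E_\lambda,\omega_\lambda)=\text{const}\cdot(1-\lambda)H'(\lambda)$, by a computation of Katz expressing the mod-$N$ Eisenstein series $E_{N+1}$ on a Legendre curve via the Hasse polynomial (Lemma~\ref{Supersingular_Katz}, ultimately coming from the Picard--Fuchs equation). Robert's theorem (Lemma~\ref{Supersingular_Robert}) then says that, \emph{for supersingular curves}, $E_{N+1}$ transforms under an $\ell$-isogeny by the simple rule $E_{N+1}(E_{\lambda'},\omega_{\lambda'})=\ell\cdot c_\varphi^{N+1}\cdot E_{N+1}(E_\lambda,\omega_\lambda)$. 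Taking the product over the $\ell+1$ isogenies and using the elementary identities $\prod_\varphi c_\varphi^2=\ell^2$ (Lemma~\ref{Supersingular_c_phi}) and $\prod_{\lambda'\sim_\ell\lambda}(1-\lambda')=(1-\lambda)^{\ell+1}$ yields exactly $\prod H'(\lambda')=\ell^{\ell-1}H'(\lambda)^{\ell+1}$. This modular-forms interpretation of $H'(\lambda)$ is the missing idea; without it the Hecke relation remains unproven.
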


There is a bilinear pairing
$$\bullet : M \times M \rightarrow \mathbf{Z}_p$$
such that $[E] \bullet [E']=0$ if $E\neq E'$ and $[E] \bullet [E] = w_E$. This induces a perfect pairing:
$$\bullet : M/p\cdot M \times M/p\cdot M \rightarrow \mathbf{Z}/p\mathbf{Z} \text{ .}$$

It is not hard to show that $e_i \bullet e_j$ only depends on $i+j$ and that

$$e_i \bullet e_j \equiv 0 \text{ (modulo }p\text{)} \iff g_p\geq i+j+1 \text{ .}$$
Thus, the determination of $g_p$ is equivalent to the determination of the $e_i \bullet e_j$ modulo $p$. We now state two results about this pairing.
\begin{thm}\label{thm_Introduction_pairing_supersingular}
\begin{enumerate}
\item We have $$e_1 \bullet e_0 = \frac{1}{12}\cdot  \sum_{\lambda \in L} \log(H'(\lambda)) \text{ .}$$
\item We have $$e_2 \bullet e_0 = e_1 \bullet e_1 =\sum_{\lambda \in L} \frac{1}{24}\cdot \log(H'(\lambda))^2 - \frac{1}{18}\cdot \log(\lambda)^2 $$
where $L \subset \mathbf{F}_{N^2}^{\times}$ is the set of roots of $H$ (these are simple roots).
\end{enumerate}
\end{thm}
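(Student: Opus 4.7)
The plan is to substitute the explicit formulas $e_0 = \sum_E [E]/w_E$ and $e_1 = \frac{1}{12}\sum_E \log(P(j(E)))[E]$ (from Theorem \ref{thm_Introduction_w_1_S}) into the pairing, and then translate sums over supersingular $E$ into sums over Legendre parameters $\lambda \in L$ via the resultant definition of $P$. The general fact (stated in the excerpt) that $e_i \bullet e_j$ depends only on $i+j$ yields $e_2 \bullet e_0 = e_1 \bullet e_1$, so it suffices to compute $e_1 \bullet e_0$ and $e_1 \bullet e_1$.

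For part (i), the pairing rule $[E] \bullet [E'] = \delta_{E,E'} w_E$ gives $e_1 \bullet e_0 = \frac{1}{12}\sum_E \log P(j(E))$; this is well-defined modulo $p$ because $e_0 \bullet e_0 = \sum_E 1/w_E = (N-1)/12 \equiv 0 \pmod p$ by the Eichler mass formula. I would then expand $P(j) = 256^{(N-3)/2}\prod_T H'(T)^{m_T}$, where $T$ runs over distinct roots of $g_j(T) := 256(1-T+T^2)^3 - T^2(1-T)^2 j$ with multiplicities $m_T$. Summing over $E$ and grouping by Legendre parameter gives $\sum_E \log P(j(E)) = \#S \cdot \frac{N-3}{2}\log 256 + \sum_{\lambda} m_\lambda \log H'(\lambda)$. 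In the generic case (neither $j=0$ nor $j=1728$ supersingular) one has $\#S = (N-1)/12 \equiv 0 \pmod p$ and $m_\lambda = 1$ for all $\lambda$, immediately yielding the claim.

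The central input for part (ii) is the transformation behavior of $H$ under the $S_3$-action on the Legendre parameter. From the binomial formula one reads off $H(1-\lambda) = (-1)^{(N-1)/2}H(\lambda)$ and $H(1/\lambda) = \lambda^{-(N-1)/2}H(\lambda)$; differentiating and evaluating at roots of $H$ then gives, modulo $p$ (using $\log(-1)=0$ for $p \geq 5$ and $(N-1)/2 \equiv 0 \pmod p$), the congruences $\log H'(1-\lambda) \equiv \log H'(\lambda)$ and $\log H'(1/\lambda) \equiv \log H'(\lambda) + 2\log\lambda$. Per generic $S_3$-orbit of size $6$, setting $A = \log H'(\lambda_0)$, $u = \log \lambda_0$, $v = \log(1-\lambda_0)$ for a chosen representative $\lambda_0$, the six $\log H'$-values collapse to $\{A, A, A+2u, A+2u, A+2v, A+2v\}$ and a direct algebraic check gives the per-orbit identity
\[
(6A + 4u + 4v)^2 = 6\,[\,6A^2 + 8A(u+v) + 8(u^2+v^2)\,] - 8\,[\,4u^2 + 4v^2 - 4uv\,],
\]
whose two bracketed quantities are exactly the per-orbit sums of $(\log H'(\lambda))^2$ and $(\log\lambda)^2$. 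Dividing by $144$ and summing over orbits yields the stated formula, modulo a correction $\frac{\#S\cdot c^2}{144} + \frac{c}{6}(e_1 \bullet e_0)$ with $c := \frac{N-3}{2}\log 256$; both parts vanish modulo $p$ via $\#S \equiv 0$ (generic case) and $e_1 \bullet e_0 \equiv 0$ under the standing hypothesis $g_p \geq 2$.

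The main obstacle is the case analysis when $j=0$ or $j=1728$ is supersingular, where the relevant $S_3$-orbit has size $2$ or $3$ and $w_E > 1$, so the per-orbit identity must be rederived separately. The $j=0$ case is simplified considerably by $\log \zeta_6 \equiv 0 \pmod p$ (since $6$ is a unit and $\zeta_6^6 = 1$), which kills many terms. The $j=1728$ case involves only $\log 2$ (as $\log(-1) = 0$), and the factor $\log 256 = 8\log 2$ inside $c$ must be tracked carefully and shown to cancel against the exceptional contributions; this uses the refined mass formula $\sum_E 1/w_E = (N-1)/12$ together with the observation that $\#S$ differs from $(N-1)/12$ only by rational corrections coming from these very exceptional $E$.
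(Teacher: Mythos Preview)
Your approach is correct in spirit and matches the paper's for part (ii), where both rely on the $S_3$-transformation identities for $H'$ (namely $\log H'(1-\lambda)\equiv\log H'(\lambda)$ and $\log H'(1/\lambda)\equiv\log H'(\lambda)+2\log\lambda$ for $\lambda\in L$) and the resulting per-orbit algebra. The equality $e_2\bullet e_0=e_1\bullet e_1$ is, as you say, the general pairing property (Corollary~\ref{Formalism_criterion_pairing}).

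Where you diverge from the paper is in routing the computation through the explicit resultant expansion of $P$. The paper instead works with the pushforward $\pi(e_1')$ from the Legendre module $M'=\mathbf{Z}_p[L]$ (see the proof of Theorem~\ref{Supersingular_w_1_S_5}), whose coefficient at $[E]$ is \emph{exactly} $\sum_{\lambda\in F_E}\log H'(\lambda)$, with no extraneous constant. This makes part (i) a one-liner,
\[
12\,e_1\bullet e_0=\sum_{E}\sum_{\lambda\in F_E}\log H'(\lambda)=\sum_{\lambda\in L}\log H'(\lambda),
\]
with no case analysis whatsoever.

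For part (ii), your worry about the exceptional orbits is a genuine gap in your write-up, but it dissolves once you observe that the quantity $\tau(\lambda):=6\log H'(\lambda)+4\log\lambda+4\log(1-\lambda)$ is $S_3$-invariant, hence constant on each fiber $F_E$, and satisfies $\tau_E=w_E\cdot\sum_{\lambda\in F_E}\log H'(\lambda)$ for \emph{every} $E$, including $j=0,1728$. This is exactly the content of the paper's formula $P(j(E))^{w_E}=H'(\lambda)^6\lambda^4(1-\lambda)^4$ (equation~\eqref{Supersingular_P(j(E))_H'(Lambda)_F_E_eq}). With this in hand,
\[
144\,e_1\bullet e_1=\sum_{E}w_E\Bigl(\sum_{\lambda\in F_E}\log H'(\lambda)\Bigr)^2=\sum_{E}\frac{1}{w_E}\,\tau_E^2=\frac{1}{6}\sum_{\lambda\in L}\tau(\lambda)^2,
\]
and your per-orbit identity finishes uniformly. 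The correction terms involving $c$ and $\#S$ are thus an artifact of the resultant detour; they never need to appear, and the case analysis you flag as an obstacle is unnecessary.
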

This gives us criteria to determine wether $g_p \geq 2$ and $g_p \geq 3$ respectively. 

One can in fact compute directly in an elementary way the discriminant of $H$ (modulo $N$), which gives us in particular the following formula:
$$e_1 \bullet e_0 = \frac{1}{3} \cdot \sum_{k=1}^{\frac{N-1}{2}} k \cdot \log(k) \text{ .}$$
This gives an other proof of Theorem \ref{thm_Introduction_g_p>1}.

The quantity $e_i \bullet e_0$ is the degree of $e_i$. For $i=0$, Eichler mass formula (in characteristic $0$) tells us that
$$e_0 \bullet e_0 \equiv \frac{N-1}{12} \equiv 0 \text{ (modulo }p\text{).}$$ 
We thus feel justified to call our formulas for $e_0 \bullet e_i$ ``higher Eichler's formulas''. We can only state them for $i \in \{1,2\}$ (\cf Theorem \ref{thm_introduction_identity}).

\subsection{Odd modular symbols}\label{section_intro_odd_modSymb}

Consider $M^- = H_1(Y_0(N), \mathbf{Z}_p)^-$, the largest torsion-free quotient of $H_1(Y_0(N), \mathbf{Z}_p)$ on which the complex conjugation acts by multiplication by $-1$, where $Y_0(N)$ is the open modular curve. Let $M_+ = H_1(X_0(N), \cusps, \mathbf{Z}_p)_+$, the fixed subspace by the complex conjugation of the homology (relative to the cusps) of the classical modular curve $X_0(N)$. There is a perfect $\tilde{\mathbb{T}}$-equivariant bilinear pairing, called the \textit{intersection pairing}, $M_+ \times M^- \rightarrow \mathbf{Z}_p$. We denote this pairing by $\bullet$. 

Let 
$$\xi_{\Gamma_0(N)} : \mathbf{Z}_p[\Gamma_0(N) \backslash \SL_2(\mathbf{Z})] \rightarrow H_1(X_0(N), \cusps, \mathbf{Z}_p)$$
be the usual Manin surjection, given by
$$\xi_{\Gamma_0(N)}\left( \Gamma_0(N) \cdot g \right) = \{g(0), g(\infty)\} $$
where, if $\alpha,\beta \in \mathbf{P}^1(\mathbf{Q})$, we denote by $\{\alpha, \beta\}$ the cohomology class of the image of the geodesic path between $\alpha$ and $\beta$ in the modular curve $X_0(N)$ (via the complex upper-half plane parametrization). There is a natural identification
$\Gamma_0(N) \backslash \SL_2(\mathbf{Z}) \xrightarrow{\sim} \mathbf{P}^1(\mathbf{Z}/N\mathbf{Z})$
given by
$$\Gamma_0(N)\cdot  \begin{pmatrix}
a & b\\
c& d
\end{pmatrix} \mapsto [c:d] \text{ .}$$

We denote by $m_i^-$, $i=1, ..., g_p$ the higher Eisenstein elements in $M^-/p\cdot M^-$. The element $\tilde{m}_0^-$ is easy to describe as a generator of the kernel of the map $H_1(Y_0(N), \mathbf{Z}_p)^- \rightarrow H_1(X_0(N), \mathbf{Z}_p)^-$. Since $X_0(N)$ has two cusps, this kernel is isomorphic to $\mathbf{Z}_p$. We normalize $\tilde{m}_0^-$ so that $$\{0, \infty\} \bullet \tilde{m}_0^- = -1 \text{ .}$$

The element $m_1^-$ was essentially determined by Mazur \cite[Proposition $18.8$]{Mazur_Eisenstein}. For any $x \in (\mathbf{Z}/N\mathbf{Z})^{\times}$, we have in $\mathbf{Z}/p\mathbf{Z}$:
$$ \left( (1+c)\cdot \xi_{\Gamma_0(N)}([x:1])\right) \bullet m_1^- = \log(x) \text{ , }$$
where $c$ is the complex conjugation. 

The analogous statement modulo $p^t$ shows that 
\begin{equation}
I\cdot H_1(X_0(N), \mathbf{Z}_p)_+ = \left\{(1+c)\cdot \sum_{x \in (\mathbf{Z}/N\mathbf{Z})^{\times}} \lambda_x \cdot \xi_{\Gamma_0(N)}([x:1]) \text{ such that } \sum_{x \in (\mathbf{Z}/N\mathbf{Z})^{\times}} \lambda_x \cdot \log(x) \equiv 0 \text{ (modulo }p^t\text{)}\right\} \text{ .}
\end{equation}
This shows that $I\cdot H_1(X_0(N), \mathbf{Z}_p)_+$ is generated by the elements $$(1+c)\cdot \xi_{\Gamma_0(N)}\left([x\cdot y:1]-[x:1]-[y:1]\right)$$ for $x,y \in (\mathbf{Z}/N\mathbf{Z})^{\times}$.
Our main result about the higher Eisenstein elements in $M^-$ is the determination of $m_2^-$, which exists if and only if $g_p\geq 2$. We describe $m_2^-$ with coefficients not in $\mathbf{Z}/p\mathbf{Z}$, but in a certain $K$-group, which is cyclic of order $p$ (see below). If $A$ is a ring, let $K_2(A)$ be the second $K$-group of $A$ defined by Quillen. Let $\Lambda = \mathbf{Z}_p[\mathbf{Z}/p\mathbf{Z}]$ and $J$ be the augmentation ideal of $\Lambda$. Let $K$ be the unique subfield of $\mathbf{Q}(\zeta_N)$ of degree $p$ over $\mathbf{Q}$ where $\zeta_N$ is a primitive $N$th root of unity. The group $\Gal(K/\mathbf{Q})$ is canonically isomorphic to $(\mathbf{Z}/N\mathbf{Z})^{\times} \otimes \mathbf{Z}/p\mathbf{Z}$ via the $N$th cyclotomic character. Since we have fixed a choice of $\log$, we can identify canonically $\Gal(K/\mathbf{Q})$ with $\mathbf{Z}/p\mathbf{Z}$. We let $\mathcal{O}_K$ be the ring of integer of $K$ and $\mathcal{K} = K_2(\mathcal{O}_K[\frac{1}{Np}])/p\cdot K_2(\mathcal{O}_K[\frac{1}{Np}])$. Note that $\mathcal{K}$ is equipped with a canonical action of $\Lambda$. We prove the following result about the structure of $\mathcal{K}$.
\begin{thm}\label{intro_odd_modSymb_relation_K_C}
\begin{enumerate}
\item The group $\mathcal{K}/J\cdot \mathcal{K}$ is cyclic of order $p$.
\item The group $J\cdot \mathcal{K}/J^2\cdot \mathcal{K}$ is cyclic of order dividing $p$, with equality if and only if $\sum_{k=1}^{\frac{N-1}{2}} k \cdot \log(k) \equiv 0 \text{ (modulo }p\text{).}$
\end{enumerate}
\end{thm}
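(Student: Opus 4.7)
The approach is descent for the $\mathbf{F}_p[G]$-module $\mathcal{K}$, where $G = \Gal(K/\mathbf{Q}) \cong \mathbf{Z}/p\mathbf{Z}$. Writing $y = \sigma - 1$ for a generator $\sigma$ of $G$, we have $\mathbf{F}_p[G] \cong \mathbf{F}_p[y]/(y^p)$ with $J = (y)$. The filtration $\mathcal{K} \supseteq J\mathcal{K} \supseteq J^2\mathcal{K} \supseteq \dots$ thus has at most $p$ graded pieces, linked by surjections $y : J^i\mathcal{K}/J^{i+1}\mathcal{K} \twoheadrightarrow J^{i+1}\mathcal{K}/J^{i+2}\mathcal{K}$, so both parts reduce to understanding $\mathcal{K}/J\mathcal{K}$ and the first descent obstruction.

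For part (i), the key tool is the transfer (norm) map $\mathrm{N} : K_2(\mathcal{O}_K[\tfrac{1}{Np}]) \to K_2(\mathbf{Z}[\tfrac{1}{Np}])$, which factors through $\mathcal{K}_G = \mathcal{K}/J\mathcal{K}$. The target modulo $p$ is cyclic of order $p$: Quillen's localisation sequence applied to $\mathbf{Z}[\tfrac{1}{Np}]$ together with Tate's computation of $K_2(\mathbf{Z}) = \mathbf{Z}/2\mathbf{Z}$ shows that the $p$-primary part is controlled by the tame symbol at $N$, and the hypothesis $p \mid N-1$ yields exactly one factor of $\mathbf{Z}/p$. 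To upgrade this to an isomorphism $\mathcal{K}_G \xrightarrow{\sim} K_2(\mathbf{Z}[\tfrac{1}{Np}])/p$, I would invoke a Hochschild--Serre-type descent spectral sequence
$$H_i(G, K_j(\mathcal{O}_K[\tfrac{1}{Np}])) \Longrightarrow K_{i+j}(\mathbf{Z}[\tfrac{1}{Np}]),$$
and control the $H_1(G, K_1)$ correction via Hilbert 90 together with the explicit fact that $K/\mathbf{Q}$ is totally ramified at $N$ and unramified elsewhere.

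For part (ii), the $y$-multiplication map gives a surjection $\mathcal{K}/J\mathcal{K} \twoheadrightarrow J\mathcal{K}/J^2\mathcal{K}$, so the latter is cyclic of order dividing $p$. It vanishes precisely when the generator of $\mathcal{K}_G$ produced in part (i) lifts to an element of $\mathcal{K}$ killed by $J$ modulo $J^2\mathcal{K}$, equivalently when the corresponding class in $H^1(G,\mathcal{K})$ is trivial. To compute this obstruction I would lift the tame generator to a Steinberg symbol built from cyclotomic units of $\mathbf{Q}(\zeta_N)$, and invoke the main theorem of \cite{Lecouturier_class_group}, which ties $\sum_{k=1}^{(N-1)/2} k\cdot \log(k) \pmod{p}$ to the $p$-part of the class group of $\mathbf{Q}(\zeta_p, \zeta_N)$ and to derived Stickelberger data. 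This matches the vanishing of the obstruction with the congruence in the statement.

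The main obstacle is the obstruction computation in part (ii): one must push a cocycle representative of the descent class through the dictionary ``tame symbol $\leftrightarrow$ cyclotomic unit $\leftrightarrow$ class group'', and match normalisations (the choice of $\log$, the factor $\tfrac{1}{2}$ from complex conjugation, the tame generator at $N$) with the logarithmic sum. This is exactly where the reciprocity of \cite{Lecouturier_class_group} is indispensable, for it supplies the combinatorial link between $K_2$-valued descent data and derivatives of $L$-functions modulo $p$.
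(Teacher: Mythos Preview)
Your approach to part (i) is close to the paper's, though the paper works in \'etale cohomology rather than $K$-theory directly: it identifies $\mathcal{K}$ with $H^2(\Gamma, \mu_p^{\otimes 2})$ via the Chern character, then uses that corestriction induces an isomorphism on coinvariants (since $\mathrm{cd}_p(\Gamma) \le 2$) to obtain $\mathcal{K}/J\mathcal{K} \cong K_2(\mathbf{Z}[\tfrac{1}{Np}])/p$. This is essentially your norm/descent argument, packaged so that the correction term you call ``$H_1(G, K_1)$'' is absorbed automatically.

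Part (ii) is where you and the paper diverge substantially, and where your sketch has a gap. Your $H^1(G,\mathcal{K})$ framing is not quite right: the vanishing of $J\mathcal{K}/J^2\mathcal{K}$ is equivalent (by Nakayama, $J$ being nilpotent) to $J\mathcal{K}=0$, i.e.\ to $G$ acting trivially on $\mathcal{K}$, which is not an $H^1$-obstruction in any obvious sense. The paper instead establishes, via Kummer theory over $L_1 = K(\zeta_p)$ and taking $\Gal(L_1/K)$-coinvariants, a short exact sequence
\[
0 \longrightarrow \bigl(\Pic(\mathcal{O}_{L_1}[\tfrac{1}{Np}])\otimes \mathbf{Z}/p\bigr)_{(-1)} \longrightarrow \mathcal{K} \longrightarrow \mathbf{Z}/p \longrightarrow 0,
\]
and shows that the left-hand term is exactly $J\mathcal{K}$. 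The quotient $J\mathcal{K}/J^2\mathcal{K}$ is then computed by genus theory for $L_1/\mathbf{Q}(\zeta_p)$: its order is $p$ if and only if a certain norm-residue symbol $\bigl(\frac{u_{\chi_p^{-1}},\, L_1/\mathbf{Q}(\zeta_p)}{\mathfrak{n}}\bigr)$ vanishes, where $u_{\chi_p^{-1}}$ is essentially the $\chi_p^{-1}$-projection of a Gauss sum. The link to $\sum k\log k$ is then an explicit congruence for this Gauss sum modulo $\mathfrak{n}$, evaluated via the Morita $N$-adic Gamma function. None of this machinery---the Picard exact sequence, genus theory, the Gauss sum, the $\Gamma_N$ identity---appears in your sketch.

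The paper does remark that for $r=1$ the result can be deduced from \cite[Theorem 1.7]{Lecouturier_class_group}, so your black-box invocation is sanctioned in principle; but you have not explained how to pass from $\mathcal{K}$ to the class group of $\mathbf{Q}(\zeta_p,\zeta_N)$, and the paper's own proof is the self-contained argument above rather than a citation.
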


If $x$ and $y$ are in $\mathbf{Z}[\zeta_N, \frac{1}{Np}]$, we denote by $\{x,y\} \in K_2(\mathbf{Z}[\zeta_N, \frac{1}{Np}])$ the Steinberg symbol associated to $x$ and $y$. We also let $(x,y)$ be the image of $\{x,y\}$ via the norm map $K_2(\mathbf{Z}[\zeta_N, \frac{1}{Np}]) \rightarrow \mathcal{K}$.
\begin{prop}
There is a unique group isomorphism $\iota : \mathcal{K}/J\cdot \mathcal{K} \simeq \mathbf{Z}/p\mathbf{Z}$ such that for all $u$, $v$ $\in (\mathbf{Z}/N\mathbf{Z})^{\times}$, we have
$$\iota(1-\zeta_N^u, 1-\zeta_N^v) \equiv \log\left(\frac{u}{v}\right) \text{ (modulo }p\text{).}$$
\end{prop}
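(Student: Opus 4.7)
The plan is to realize $\iota$ as the map induced by the tame symbol at the unique prime of $K$ above $N$, and then to verify the stated formula on cyclotomic Steinberg symbols by a direct computation. Uniqueness will be a formality: since $\mathcal{K}/J\cdot\mathcal{K}$ is cyclic of order $p$ by Theorem \ref{intro_odd_modSymb_relation_K_C}(i), any two candidate isomorphisms differ by a unit of $\mathbf{Z}/p\mathbf{Z}$, and it suffices to exhibit one symbol $(1-\zeta_N^u, 1-\zeta_N^v)$ on which the prescribed value is nonzero in order to pin that unit down to $1$.

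For existence, since $N$ is totally ramified in $K/\mathbf{Q}$, let $\mathfrak{n}$ be the unique prime of $\mathcal{O}_K$ above $N$, with residue field $\mathbf{F}_N$. The tame symbol
$$\tau_{\mathfrak{n}} : K_2(\mathcal{O}_K[\tfrac{1}{Np}]) \to \mathbf{F}_N^{\times},$$
composed with the fixed $\log$ and reduction modulo $p$, yields a homomorphism $\phi : \mathcal{K} \to \mathbf{Z}/p\mathbf{Z}$. Each $\sigma \in \Gal(K/\mathbf{Q})$ fixes $\mathfrak{n}$ and acts trivially on the residue field $\mathbf{F}_N$, so $\phi$ is $\Gal(K/\mathbf{Q})$-invariant and descends to a homomorphism $\iota : \mathcal{K}/J\cdot\mathcal{K} \to \mathbf{Z}/p\mathbf{Z}$.

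To evaluate $\iota$ on cyclotomic symbols I appeal to the compatibility of the tame symbol with the norm. Let $\mathfrak{m}$ be the unique prime of $\mathbf{Z}[\zeta_N]$ above $N$; the residue extension $\kappa(\mathfrak{m})/\kappa(\mathfrak{n})$ is trivial by total ramification, which forces
$$\tau_{\mathfrak{n}}\bigl(N_{\mathbf{Q}(\zeta_N)/K}\{1-\zeta_N^u, 1-\zeta_N^v\}\bigr) = \tau_{\mathfrak{m}}\bigl(\{1-\zeta_N^u, 1-\zeta_N^v\}\bigr).$$
Setting $\pi = 1-\zeta_N$, one computes $v_{\mathfrak{m}}(1-\zeta_N^u)=1$ and $(1-\zeta_N^u)/\pi \equiv u \pmod{\mathfrak{m}}$; the tame symbol formula then returns $-u/v \in \mathbf{F}_N^{\times}$, and since $p>2$ one has $\log(-1)=0$, leaving $\iota(1-\zeta_N^u, 1-\zeta_N^v) = \log(u/v)$. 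Taking $u$ to be a primitive root modulo $N$ and $v=1$ gives a nonzero value, so $\iota$ is surjective and therefore an isomorphism between two groups of order $p$. The main subtlety worth checking carefully is the transfer-compatibility of the tame symbol in the ramified extension $\mathbf{Q}(\zeta_N)/K$; this is standard but deserves attention, because the clean output relies precisely on the residue extension at $N$ being trivial.
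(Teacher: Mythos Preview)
Your proposal is correct and follows essentially the same approach as the paper: both construct $\iota$ via the tame symbol at the unique prime above $N$ and compute it on $\{1-\zeta_N^u,1-\zeta_N^v\}$ directly. The only organizational difference is that the paper defines the tame-symbol map on $K_2(\mathbf{Z}[\zeta_N,\tfrac{1}{Np}])$ and descends to $\mathcal{K}/J\cdot\mathcal{K}$ via Galois equivariance and the corestriction isomorphism, whereas you define it directly on $K_2(\mathcal{O}_K[\tfrac{1}{Np}])$ and invoke norm-compatibility of the tame symbol to evaluate; these are two packagings of the same computation.
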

Thus, we have a $K$-theoretic version of $m_1^-$, given by 
\begin{align*}
\iota^{-1} \left( \left((1+c)\cdot \sum_{x \in (\mathbf{Z}/N\mathbf{Z})^{\times}} \lambda_x \cdot \xi_{\Gamma_0(N)}([x:1]) \right)\bullet m_1^- \right)&= \sum_{x \in (\mathbf{Z}/N\mathbf{Z})^{\times}} \lambda_x \cdot \left(1-\zeta_N^x, 1-\zeta_N\right) \\&= \left(\prod_{x \in (\mathbf{Z}/N\mathbf{Z})^{\times}} (1-\zeta_N^x)^{\lambda_x}, 1-\zeta_N\right) \text{ .}
\end{align*}
Let $\Delta = [\overline{1}]-[\overline{0}] \in \Lambda$; it is a generator of $J$. The multiplication by $\Delta$ induces a surjective morphism $\mathcal{K}/J\cdot \mathcal{K} \rightarrow J\cdot \mathcal{K}/J^2\cdot \mathcal{K}$ since $\Delta$ is a generator of $J$. By Theorem \ref{intro_odd_modSymb_relation_K_C}, it is an isomorphism if and only if $g_p \geq 2$. In this case, let $\delta : J\cdot \mathcal{K}/J^2\cdot \mathcal{K} \xrightarrow{\sim} \mathcal{K}/J\cdot \mathcal{K}$ be the inverse isomorphism, and $\delta' = \delta \circ \iota^{-1} : \mathbf{Z}/p\mathbf{Z} \xrightarrow{\sim} J\cdot \mathcal{K}/J^2\cdot \mathcal{K}$.

We found our formula for $m_2^-$ under the influence of the work of Alexander Goncharov and Romyar Sharifi (\cf \cite{Goncharov} et \cite{Sharifi_survey}). Our formula is conditional on a conjecture inspired by conjectures of Sharifi. Let $$\xi_{\Gamma_1(N)} : \mathbf{Z}_p[\Gamma_1(N) \backslash \PSL_2(\mathbf{Z})] \rightarrow H_1(X_1(N), \cusps, \mathbf{Z}_p)$$ be the Manin surjective map, given by $$\xi_{\Gamma_1(N)}\left(\Gamma_1(N)\cdot g \right) = \{g(0), g(\infty)\} \text{ .}$$
Let $$M_{\Gamma_1(N)}^0 = \left\{\Gamma_1(N)\cdot \begin{pmatrix}a & b \\ c& d \end{pmatrix} \in \Gamma_1(N) \backslash \PSL_2(\mathbf{Z}), \text{ } \gcd(c\cdot d, N)=1\right\} $$
and let $C_{\Gamma_1(N)}^0$ be the set of cusps of $X_1(N)$ above the cusp $\Gamma_0(N) \cdot 0$ of $X_0(N)$. The Manin surjective map induces a surjection
$$\xi_{\Gamma_1(N)}^0 : M_{\Gamma_1(N)}^0 \rightarrow H_1(X_1(N), C_{\Gamma_1(N)}^0, \mathbf{Z}_p) \text{ .}$$

Let $\varpi' : M_{\Gamma_1(N)}^0 \rightarrow K_2\left(\mathbf{Z}[\zeta_N, \frac{1}{Np}]\right) \otimes_{\mathbf{Z}} \mathbf{Z}_p$ be the $\mathbf{Z}_p$-linear map defined by
$$\varpi' \left(\Gamma_1(N)\cdot \begin{pmatrix}a & b \\ c& d \end{pmatrix} \right) = \left\{1-\zeta_N^c, 1-\zeta_N^d\right\}\otimes 1 $$
(it does not depend on the choice of $\begin{pmatrix}a & b \\ c& d \end{pmatrix}$).

One easily shows that $\varpi'$ factors through $\xi_{\Gamma_1(N)}^0$, thus inducing a group homomorphism
$$\varpi : H_1(X_1(N), C_{\Gamma_1(N)}^0, \mathbf{Z}_p) \rightarrow  K_2\left(\mathbf{Z}[\zeta_N, \frac{1}{Np}]\right)  \otimes_{\mathbf{Z}} \mathbf{Z}_p \text{ .}$$
The natural analogue of Sharifi's conjecture (proved by Fukaya Takako and Kazuya Kato in \cite[Theorem 5.2.3]{Fukaya_Kato}) would be the following conjecture.
\begin{conj}\label{Intro_fr_conjecture_sharifi}
The map $\varpi$ is annihilated by the Hecke operators $T_n - \sum_{d} \frac{n}{d}\cdot \langle d \rangle$ for any integer $n \geq 1$ (where the sum is over the divisors of $n$ prime to $N$ and $\langle d \rangle$ is the $d$th diamond operator).
\end{conj}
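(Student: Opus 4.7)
The plan is to imitate the direct, Manin-symbol-level verification strategy pioneered by Sharifi (and also used in several subsequent works) rather than attempt to adapt the deeper motivic/zeta-element approach of Fukaya--Kato, which would require substantially more machinery than is developed in the present paper.

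By the multiplicativity of the Hecke algebra it suffices to treat $n=\ell$ a prime. For $\ell\mid N$ the operator $T_\ell$ (which here equals $U_\ell$) admits an explicit coset description and the vanishing of $(T_\ell-\langle\ell\rangle)\circ\varpi'$ reduces to a direct check on Manin symbols. For $\ell\nmid N$, one must show that $(T_\ell-\ell-\langle\ell\rangle)\circ\varpi'=0$. I would apply the classical coset decomposition of $T_\ell$ on $\Gamma_1(N)\backslash\mathrm{SL}_2(\mathbf{Z})$ to a Manin symbol $\gamma=\begin{pmatrix}a&b\\ c&d\end{pmatrix}$ with $\gcd(cd,N)=1$, obtaining $\ell+1$ cosets whose bottom rows are readily computed in terms of $(c,d)$ and of the representatives $\begin{pmatrix}1&j\\0&\ell\end{pmatrix}$ (for $0\le j\le\ell-1$) together with $\sigma_\ell\begin{pmatrix}\ell&0\\0&1\end{pmatrix}$, where $\sigma_\ell\in\mathrm{SL}_2(\mathbf{Z})$ is congruent to $\mathrm{diag}(\ell^{-1},\ell)$ modulo $N$.

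Next, I would apply $\varpi'$ termwise to reduce the identity to a relation among a sum of Steinberg symbols $\{1-\zeta_N^{c_j},1-\zeta_N^{d_j}\}$ in $K_2(\mathbf{Z}[\zeta_N,\frac{1}{Np}])\otimes\mathbf{Z}_p$. The heart of the argument would be to combine the cyclotomic distribution relation
$$\prod_{j=0}^{\ell-1}(1-\zeta_N\zeta_\ell^j)=1-\zeta_N^\ell,$$
with the Steinberg relation $\{x,1-x\}=0$ and the bilinearity of the symbol, in order to recombine the $\ell$ symbols produced by the upper-triangular cosets into $\ell\cdot\{1-\zeta_N^c,1-\zeta_N^d\}$; the remaining coset representative yields precisely the diamond-twisted contribution $\langle\ell\rangle\cdot\{1-\zeta_N^c,1-\zeta_N^d\}$.

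The main obstacle is that the intermediate Steinberg symbols naturally live in $K_2(\mathbf{Z}[\zeta_{N\ell},\frac{1}{N\ell p}])$ rather than in the target group, so one must invoke the transfer/norm map from $\zeta_{N\ell}$ down to $\zeta_N$ and verify a projection-formula-type identity compatible with the Hecke action. A further subtlety is to ensure that the $T_\ell$-translates stay inside the subset $M_{\Gamma_1(N)}^0$ where $\varpi'$ is defined (handling the boundary cases coming from cusps and from overlapping coset representatives requires care, since the relations defining $H_1(X_1(N),C_{\Gamma_1(N)}^0,\mathbf{Z}_p)$ must match the Steinberg relations on the $K_2$ side). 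If the purely combinatorial computation proves too fragile, a fall-back would be to import Hecke equivariance from the étale realization via the norm-compatible system of cyclotomic units, following the template of \cite{Fukaya_Kato}.
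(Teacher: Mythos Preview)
The statement you are attempting to prove is labelled a \emph{Conjecture} in the paper, and the paper does not claim a proof. What the paper does establish is the special cases $\ell=2$ and $\ell=3$ (following Busuioc and Sharifi), and it does so not via the distribution relation you invoke but via ad hoc algebraic identities of the form
\[
\frac{(1-\zeta_N^{u+v})(1-\zeta_N^u)}{1-\zeta_N^{2u}}+\frac{\zeta_N^u(1-\zeta_N^{2v})(1-\zeta_N^{u})}{(1-\zeta_N^{2u})(1-\zeta_N^{v})}=1,
\]
which feed directly into the Steinberg relation. The paper explicitly remarks that it was unable to handle $T_5-5-\langle 5\rangle$ by this or any other method.

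Your proposal contains a genuine gap at precisely the step you flag as ``the heart of the argument''. Using Merel's formula, $T_\ell$ applied to the Manin symbol with bottom row $(c,d)$ produces symbols whose bottom rows are of the form $(a c+ c' d,\, b c + d' d)$ for various $\begin{pmatrix}a&b\\c'&d'\end{pmatrix}$ of determinant $\ell$; the associated Steinberg symbols are $\{1-\zeta_N^{ac+c'd},1-\zeta_N^{bc+d'd}\}$, which live entirely in $K_2(\mathbf{Z}[\zeta_N,\frac{1}{Np}])$ and do \emph{not} naturally involve $\ell$th roots of unity. The distribution relation $\prod_{j}(1-\zeta_N\zeta_\ell^j)=1-\zeta_N^\ell$ simply does not enter, and there is no known way to collapse this sum of symbols to $\ell\{1-\zeta_N^c,1-\zeta_N^d\}+\{1-\zeta_N^{\ell c},1-\zeta_N^{\ell d}\}$ using only bilinearity and Steinberg. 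For $\ell=2,3$ one is saved by the low-degree identities above; for $\ell\ge 5$ no such identity is known, and the paper records this as an open problem.

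Your fallback to Fukaya--Kato is also blocked: their theorem is proved for modular curves of level divisible by $p$, whereas here $p\nmid N$. The paper discusses exactly this point, noting that the natural analogue via Goncharov's map $\Gamma$ on Siegel units would yield the conjecture if one could prove Hecke-equivariance of $\Gamma$, but that this equivariance remains unproved in the present setting.
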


Conjecture \ref{Intro_fr_conjecture_sharifi} seems to have been considered by Sharifi himself. We refer to sections \ref{odd_modSymb_Sharifi_conj} and \ref{odd_modSymb_Section_Sharifi} for a more detailed discussion about Conjecture \ref{Intro_fr_conjecture_sharifi}.

\begin{thm}\label{Intro_formula_m_2^-}
Assume that Conjecture \ref{Intro_fr_conjecture_sharifi} holds.  Assume $g_p \geq 2$, \ie
$$\sum_{k=1}^{\frac{N-1}{2}} k \cdot \log(k) \equiv 0 \text{ (modulo } p\text{).}$$

Let $x$, $y$ $\in (\mathbf{Z}/N\mathbf{Z})^{\times}$. Then, we have the following equality in $J\cdot \mathcal{K}/J^2\cdot \mathcal{K}$:
\begin{align*}
\delta'\left(\left( 2\cdot (1+c)\cdot \xi_{\Gamma_0(N)}([x\cdot y:1]-[x:1] - [y:1]) \right)\bullet m_2^- \right)&= (1-\zeta_N^x, 1-\zeta_N) \\& -(1-\zeta_N^x, 1-\zeta_N^y) - (1-\zeta_N^{y}, 1-\zeta_N)\text{ .}
\end{align*}
\end{thm}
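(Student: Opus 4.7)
The plan is to realize the pairing $(\,\cdot\,)\bullet m_2^-$, restricted to $I \cdot M_+$, as the pullback through $\varpi$ of the $K$-theoretic formula on the right-hand side. I would work at $\Gamma_1(N)$-level, where $\varpi$ is defined, and use Conjecture \ref{Intro_fr_conjecture_sharifi} to transport Hecke equivariance from the $K$-theoretic side back to the modular-symbol side.

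First I would lift each Manin symbol $[u:1]$ on $X_0(N)$ to a $\Gamma_1(N)$-class represented by a matrix in $\SL_2(\mathbf{Z})$ whose bottom row reduces to $(u,1)$ modulo $N$. Applying $\varpi'$ and summing the three contributions in $2(1+c)\xi_{\Gamma_0(N)}([xy:1]-[x:1]-[y:1])$, the complex-conjugation symmetrization combined with the explicit formula $\varpi'\bigl(\Gamma_1(N)\cdot\begin{pmatrix}a & b \\ c & d\end{pmatrix}\bigr)=\{1-\zeta_N^c,1-\zeta_N^d\}$ should produce exactly $(1-\zeta_N^x, 1-\zeta_N)-(1-\zeta_N^x, 1-\zeta_N^y)-(1-\zeta_N^y, 1-\zeta_N)$, modulo ambiguities of the lift that must be shown to land in $J^2\mathcal{K}$. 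One checks directly that this image lies in $J\mathcal{K}$: applying $\iota$ gives $\log(x)-\log(x/y)-\log(y)=0$, which is consistent with the fact that $2(1+c)\xi_{\Gamma_0(N)}([xy:1]-[x:1]-[y:1]) \in I\cdot M_+$.

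Next I would use Conjecture \ref{Intro_fr_conjecture_sharifi} to identify the two sides. The conjecture asserts $(T_\ell - \ell\langle 1\rangle - \langle \ell\rangle)\varpi = 0$. Modulo $p$, via the canonical identification $\Gal(K/\mathbf{Q}) \cong \mathbf{Z}/p\mathbf{Z}$, one has $\langle \ell\rangle - 1 \equiv \log(\ell)\cdot \Delta$ modulo $J^2$, so $T_\ell - \ell - 1$ acts on $\varpi$-images as multiplication by $\log(\ell)\cdot \Delta$ modulo $J^2\mathcal{K}$. On the modular-symbol side, the defining recursion
\[(T_\ell - \ell - 1)m_2^- \equiv \tfrac{\ell-1}{2}\log(\ell)\cdot m_1^- \pmod{\mathbf{Z}\cdot m_0^-}\]
combined with Mazur's formula $(1+c)\xi_{\Gamma_0(N)}([u:1])\bullet m_1^- = \log(u)$ forces $\alpha\bullet m_2^-$ to satisfy the same Eisenstein recursion as $\delta'^{-1}$ of the $K$-theoretic lift. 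Matching initial conditions by reducing modulo $J$ recovers Mazur's formula for $m_1^-$; the factor $2$ in the statement comes from reconciling $\tfrac{\ell-1}{2}\log(\ell)$ with $\log(\ell)$ in the two recursions.

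The main obstacle is this third step: turning the abstract Hecke equivariance of $\varpi$ into a concrete equality of maps on $J\mathcal{K}/J^2\mathcal{K}$, while carefully tracking all normalizations (the isomorphisms $\iota$ and $\delta'$, the choice of $\tilde m_0^-$ via $\{0,\infty\}\bullet\tilde m_0^-=-1$, and the splitting used to lift modular symbols from $\Gamma_0(N)$- to $\Gamma_1(N)$-level) so as to obtain the precise constant $2$. A subsidiary difficulty in Step 1 is showing the $K$-theoretic expression is independent of the choice of $\SL_2(\mathbf{Z})$-lifts of Manin symbols, equivalently that the lifting ambiguities land in $J^2\cdot\mathcal{K}$.
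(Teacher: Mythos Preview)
Your overall strategy matches the paper's: lift to $\Gamma_1$-level, apply $\varpi$, and use the Sharifi-type conjecture to match the Hecke recursion for $m_2^-$. You have also correctly identified the two genuine difficulties. However, the paper's resolution of each is more specific than what you sketch, and your Step~1 as written does not work.

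For Step~1, lifting each $[u:1]$ to the class with bottom row $(u,1)$ gives $\varpi'$-values $\{1-\zeta_N^u,1-\zeta_N\}$, so the combination $[xy:1]-[x:1]-[y:1]$ lands on $(1-\zeta_N^{xy},1-\zeta_N)-(1-\zeta_N^x,1-\zeta_N)-(1-\zeta_N^y,1-\zeta_N)$, which is not the stated right-hand side. More seriously, this naive lift has nonzero boundary at the cusps $C_{\Gamma_1(N)}^0$, so it does not even lie in $H_1(X_1(N),\mathbf{Z}_p)$; different choices of lift differ by diamond operators and hence by elements of $J\cdot\mathcal{K}$, not $J^2\cdot\mathcal{K}$. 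The paper instead uses the specific lift $\xi_{\Gamma_1^{(p^r)}(N)}([x,y^{-1}]-[x,1]+[y^{-1},1])$, whose boundary is zero by direct computation; this lift lies in $(H^{(p^r)})_+$ and projects to the desired element on $X_0(N)$. Computing $\varpi^{(p^r)}$ on it gives the right-hand side directly, with no ambiguity.

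For your main obstacle (Step~3), the paper does not match ``initial conditions'' but instead proves the identity $\delta_r(2\cdot\pi(h)\bullet m_2^-)=-\varphi_r'(h)$ on generators. The key algebraic input is the decomposition $T_\ell-\ell\langle\ell\rangle-1=(T_\ell-\ell-\langle\ell\rangle)-(\ell-1)(\langle\ell\rangle-1)\in\tilde I_\infty-(\ell-1)(\langle\ell\rangle-1)$, so that (using the conjecture) $\varphi_r'((T_\ell-\ell\langle\ell\rangle-1)(u))=-(\ell-1)([\ell]-1)\varphi_r(u)$; comparing with $(T_\ell-\ell-1)m_2^-\equiv\frac{\ell-1}{2}\log(\ell)m_1^-$ and the explicit formula $\iota_r\circ\varphi_r=(\cdot)\bullet m_1^-$ yields the factor $2$ exactly as you suspected. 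The crucial point you are missing is that the elements $(T_\ell-\ell\langle\ell\rangle-1)(u)$ \emph{generate} $(H^{(p^r)})_+$: this is Proposition~\ref{odd_modSymb_image_I_0} (equivalently $\tilde I_0\cdot(\tilde H^{(p^t)})_+=(H^{(p^t)})_+$), which in turn depends on the explicit description of $\tilde{\mathbb T}^{(p^r)}/\tilde I_\infty$ in Theorem~\ref{odd_modSymb_Eisenstein_ideal_X1} and the refined Hida theory of Section~\ref{odd_modSymb_section_hida}. This generation statement, together with the isomorphism of Corollary~\ref{odd_modSymb_construction}, is what turns the Hecke-compatible identity into an actual equality of maps.
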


\subsection{Even modular symbols}\label{section_intro_even_modSymb}
We denote by $m_i^+$, $i=1, ..., g_p$ the higher Eisenstein elements in $M_+/p\cdot M_+$. 

Merel determined the element $\tilde{m}_0^+$ (unique up to $\mathbf{Z}_p^{\times}$) in terms of Manin symbols. We recall his result below, using a slightly different formula. 

We will need to use the Bernoulli polynomial functions. Recall that $\B_1 : \mathbf{R} \rightarrow \mathbf{R}$
is the function defined by $\B_1(x) = x-\lfloor x \rfloor-\frac{1}{2}$
if $x\not\in \mathbf{Z}$ and $\B_1(x) = 0$ if $x \in \mathbf{Z}$.

Consider the boundary map $\partial : H_1(X_0(N), \cusps, \mathbf{Z}_p) \rightarrow \mathbf{Z}_p[\cusps]^0$ given by $\partial(\{\alpha, \beta\}) = (\Gamma_0(N) \cdot \beta) - (\Gamma_0(N) \cdot \alpha)$.

\begin{thm}[Merel]\label{thm_Introduction_w_0^+}
Let $F_{0,p} : \mathbf{P}^1(\mathbf{Z}/N\mathbf{Z}) \rightarrow \mathbf{Z}_p$ be such that if $x = [c:d] \in  \mathbf{P}^1(\mathbf{Z}/N\mathbf{Z})$, we have:
$$
6\cdot F_{0,p}(x) =  \sum_{(s_1,s_2) \in (\mathbf{Z}/2N\mathbf{Z})^2 \atop (d-c)s_1+(d+c)s_2 \equiv 0 \text{ (modulo } N\text{)}} (-1)^{s_1+s_2} \B_1\left(\frac{s_1}{2N}\right)\B_1\left(\frac{s_2}{2N}\right) \text{ .}
$$
This is independent of the choice of $c$ and $d$ such that $x = [c:d]$. 
We have 
$$\tilde{m}_0^+ =  \sum_{x \in \mathbf{P}^1(\mathbf{Z}/N\mathbf{Z})} F_{0,p}(x) \cdot\xi_{\Gamma_0(N)}(x) \in M_+ \text{.}$$
 Furthermore, one has $\partial \tilde{m}_0^+=\frac{N-1}{12}\cdot( (\Gamma_0(N)\cdot 0) - (\Gamma_0(N) \cdot \infty))$, \ie $\tilde{m}_0^+ \bullet \tilde{m}_0^- = \frac{N-1}{12}$.
\end{thm}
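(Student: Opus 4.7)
The strategy is to verify that the element on the right-hand side satisfies the three defining properties of $\tilde{m}_0^+$: membership in $M_+$, annihilation by the Eisenstein ideal $\tilde{I}$, and the stated boundary. Since $M_+[\tilde{I}]$ is free of rank one over $\mathbf{Z}_p$ and $\partial$ is injective on this line, these three conditions pin down $\tilde{m}_0^+$ uniquely.

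First I would check well-definedness of $F_{0,p}$ on $\mathbf{P}^1(\mathbf{Z}/N\mathbf{Z})$: independence of the representative $(c,d)$ reduces, via the periodicity and antisymmetry of $\B_1$, to a reindexing of the double sum. Next, to verify that the formal combination descends to $H_1(X_0(N),\cusps,\mathbf{Z}_p)$, I would check the Manin relations $F_{0,p}(x)+F_{0,p}(Sx)=0$ and $F_{0,p}(x)+F_{0,p}(Ux)+F_{0,p}(U^2x)=0$ for the standard generators $S, U\in\SL_2(\mathbf{Z})$; each follows from a substitution in the $(s_1,s_2)$-sum that permutes the defining linear condition. Invariance under complex conjugation, which acts by $[c:d]\mapsto[-c:d]$, comes from swapping $s_1$ and $s_2$.

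The main step is then to prove $(T_\ell-\ell-1)\tilde{m}_0^+=0$ for every prime $\ell\nmid N$. Writing $T_\ell$ on Manin symbols via the Heilbronn matrices, the identity becomes a distribution relation for $F_{0,p}$, which ultimately rests on the classical identity
$$\sum_{k=0}^{\ell-1}\B_1\!\left(\frac{x+k}{\ell}\right)=\B_1(x)$$
applied inside each Bernoulli factor, together with a matching of the parity signs $(-1)^{s_1+s_2}$ and a reindexing of the defining congruence modulo $N$. This combinatorial bookkeeping of the double Bernoulli sum under the Hecke correspondence is the main obstacle.

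Finally, for the boundary, the cusps of $X_0(N)$ correspond to $[c:d]\in\mathbf{P}^1(\mathbf{Z}/N\mathbf{Z})$ with $d\equiv 0\pmod N$ (the cusp $\Gamma_0(N)\cdot 0$) or $c\equiv 0\pmod N$ (the cusp $\Gamma_0(N)\cdot\infty$). On each fiber the linear condition becomes $s_1+s_2\equiv 0$ or $s_1-s_2\equiv 0\pmod N$, and the double sum collapses to a single sum of products $\B_1(s/2N)\B_1(\pm s/2N)$, which evaluates explicitly to $\pm\tfrac{N-1}{12}$ via the standard closed form for $\sum_{k=1}^{N-1}\B_1(k/N)^2$. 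The pairing identity $\tilde{m}_0^+\bullet\tilde{m}_0^-=\tfrac{N-1}{12}$ is then read off the boundary coefficient using the normalization $\{0,\infty\}\bullet\tilde{m}_0^-=-1$ and the characterization of $\tilde{m}_0^-$ as a generator of $\Ker\!\left(H_1(Y_0(N),\mathbf{Z}_p)^-\to H_1(X_0(N),\mathbf{Z}_p)^-\right)$.
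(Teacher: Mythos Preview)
The paper does not give its own proof of this statement: it is attributed to Merel, with the remark immediately following the theorem that $\tilde{m}_0^+=\tfrac{1}{12}\mathcal{E}$ where $\mathcal{E}$ is the element computed in \cite[Corollaire~4]{Merel_accouplement}. In Merel's construction, $\mathcal{E}$ is defined by intersection duality with the period map $\gamma\mapsto\int_\gamma E(z)\,dz$ of the weight-$2$ Eisenstein series $E$ of level $\Gamma_0(N)$; the annihilation by $\tilde{I}$ is then automatic, and the Manin-symbol expression is obtained by evaluating these periods. The same period-theoretic viewpoint underlies the paper's own treatment of the higher elements (see the Eisenstein elements $\mathcal{E}_P$ of Debargha--Merel in Theorem~\ref{even_modSymb_DM_thm} and Lemma~\ref{even_modSymb_Hecke_relation_bis}, where the Hecke relations are \emph{deduced} from those of period-defined elements, never verified directly on the Bernoulli sums).

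Your proposal takes the opposite route: start from the explicit formula and check everything by hand. The well-definedness, Manin relations, complex-conjugation invariance, and boundary computation are indeed routine as you outline. The genuine gap is the Hecke step. Writing $T_\ell$ via Heilbronn matrices sends $[c:d]$ to a sum of $[c':d']$ in which the pair $(d'-c',\,d'+c')$ is \emph{not} obtained from $(d-c,\,d+c)$ by a simple scaling, so the linear constraint $(d-c)s_1+(d+c)s_2\equiv 0\pmod N$ does not transform into something that the one-variable distribution identity $\sum_{k=0}^{\ell-1}\B_1\!\bigl(\tfrac{x+k}{\ell}\bigr)=\B_1(x)$ handles directly; one needs a two-variable Hecke-equivariance statement for the function $(s_1,s_2)\mapsto \B_1(s_1/2N)\B_1(s_2/2N)$ under the $\Gamma(2)$-action implicit in the change of variables $(s_1,s_2)\leftrightarrow$ ``$(d-c,\,d+c)$-coordinates''. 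You flag this as ``the main obstacle'' but do not indicate how to carry it out. Merel's period definition, and the paper's use of it, bypasses this combinatorics entirely.
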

We warn the reader that $\tilde{m}_0^+$ is $\frac{1}{12} \cdot \mathcal{E}$ where $\mathcal{E}$ is computed in \cite[Corollaire $4$]{Merel_accouplement}.

Our main result about $M_+$ is a formula for $m_1^+$.

\begin{thm}\label{thm_Introduction_w_1^+>3}
Let $F_{1,p} : \mathbf{P}^1(\mathbf{Z}/N\mathbf{Z}) \rightarrow \mathbf{Z}/p\mathbf{Z}$ be such that
\begin{align*}
12 \cdot F_{1,p}([c:d]) &=  \sum_{(s_1,s_2) \in (\mathbf{Z}/2N\mathbf{Z})^2 \atop (d-c)s_1+(d+c)s_2 \equiv 0 \text{ (modulo } N\text{)}} (-1)^{s_1+s_2} \B_1\left(\frac{s_1}{2N}\right)\B_1\left(\frac{s_2}{2N}\right) \cdot \log\left(\frac{s_2}{d-c}\right)  \\&
- \sum_{(s_1,s_2) \in (\mathbf{Z}/2N\mathbf{Z})^2 \atop (d-c)s_1+(d+c)s_2 \not\equiv 0 \text{ (modulo } N\text{)}} (-1)^{s_1+s_2} \B_1\left(\frac{s_1}{2N}\right)\B_1\left(\frac{s_2}{2N}\right) \cdot \log((d-c)s_1+(d+c)s_2)) 
\end{align*}
if $[c:d]\neq [1:1]$ and $F_{1,p}([1:1]) = 0$. 

We have the following equality in $M_+/pM_+$:
$$m_1^+ = \sum_{x \in \mathbf{P}^1(\mathbf{Z}/N\mathbf{Z})} F_{1,p}(x) \cdot \xi_{\Gamma_0(N)}(x) \text{ .}$$
Furthermore, one has $\partial m_1^+ =  \left( \frac{1}{3}\sum_{k=1}^{\frac{N-1}{2}} k \cdot \log(k) \right) \cdot ( (\Gamma_0(N)\cdot 0) - (\Gamma_0(N) \cdot \infty))$, \ie $$m_1^+ \bullet m_0^- = \frac{1}{3}\sum_{k=1}^{\frac{N-1}{2}} k \cdot \log(k) \text{ .}$$
\end{thm}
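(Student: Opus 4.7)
The plan is to verify directly that the explicit element
$$X := \sum_{x \in \mathbf{P}^1(\mathbf{Z}/N\mathbf{Z})} F_{1,p}(x)\cdot \xi_{\Gamma_0(N)}(x) \in M_+/p M_+$$
agrees with $m_1^+$ by checking it fulfils the axioms characterizing higher Eisenstein elements up to scalar and then reading off the boundary. Namely, I would show that (i) $F_{1,p}([c:d])$ is independent of the chosen lift $(c,d)$ and $X$ lies in the plus-part $M_+/p M_+$; (ii) the recursion $(T_\ell - \ell - 1)(X) \equiv \tfrac{\ell-1}{2}\cdot \log(\ell)\cdot \tilde{m}_0^+ \pmod{p}$ holds for every prime $\ell\nmid N$; (iii) the formula for $\partial X$ holds. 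Once (i) and (ii) are verified, the uniqueness statement recorded after property (c) in the introduction pins $X$ down as $m_1^+$ modulo a $(\mathbf{Z}/p\mathbf{Z})$-multiple of $\tilde{m}_0^+$; since $\partial\tilde{m}_0^+ \equiv 0\pmod p$ by the last sentence of Theorem \ref{thm_Introduction_w_0^+}, computing $\partial X$ actually nails down $X$ on the nose, and the pairing identity $m_1^+\bullet m_0^- = \frac{1}{3}\sum k\log(k)$ then follows from the normalization $\{0,\infty\}\bullet \tilde{m}_0^-=-1$ applied to $\partial m_1^+ \in \mathbf{Z}_p[\cusps]^0$.

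Item (i) is a bookkeeping check: substituting $(c,d)\mapsto(-c,-d)$ together with $(s_1,s_2)\mapsto(-s_1,-s_2)$ preserves the constraint $(d-c)s_1+(d+c)s_2\equiv 0\pmod N$, the sign $(-1)^{s_1+s_2}$, and both logarithmic factors, using $\log(-1)\equiv 0\pmod p$ (valid since $p$ is odd); the plus-invariance reduces to the $s_1\leftrightarrow s_2$ symmetry of the underlying Bernoulli sum once one notes that modulo the Manin relations the two branches of the log-factor become identified. Item (iii) is a direct computation: since $X_0(N)$ has only the two cusps $\Gamma_0(N)\cdot 0$ and $\Gamma_0(N)\cdot \infty$, $\partial X$ is supported on $(\Gamma_0(N)\cdot 0)-(\Gamma_0(N)\cdot \infty)$; writing the coefficient as a sum of $\pm F_{1,p}([c:1]) \mp F_{1,p}([1:c])$ and collapsing the Bernoulli variables one at a time through the standard distribution identity $\sum_{s\in \mathbf{Z}/2N\mathbf{Z}}(-1)^{s}\B_1(s/2N)\cdot \phi(s)$ reduces the double sum to a one-dimensional sum that evaluates to $\frac{1}{3}\sum_{k=1}^{(N-1)/2} k\cdot \log(k)$ modulo $p$, mirroring Merel's proof for $\partial\tilde{m}_0^+$ but carrying the extra $\log$ weight.

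The main obstacle is (ii), the Hecke eigen-equation modulo $p$. The strategy is to exhibit $F_{1,p}$ as a first-order logarithmic deformation of Merel's $F_{0,p}$, designed so that the distribution identities obeyed by $\B_1$ under the Heilbronn correspondence generate the required error term. Concretely, one writes $T_\ell\cdot \xi_{\Gamma_0(N)}(x) = \sum_{M\in H_\ell}\xi_{\Gamma_0(N)}(M\cdot x)$ for a set $H_\ell$ of Heilbronn matrices of determinant $\ell$, substitutes the defining double Bernoulli sum, and invokes both the classical distribution identity $\sum_{j=0}^{\ell-1}\B_1((x+j)/\ell)=\B_1(x)$ and its logarithmically twisted analogue, which contributes a term proportional to $\frac{\ell-1}{2}\log(\ell)\cdot \B_1(x)$ after the inner sums collapse. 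Summing over the Manin-symbol index then reassembles this extra term into $\tfrac{\ell-1}{2}\log(\ell)\cdot \tilde{m}_0^+$, while the remaining contributions add up to $(\ell+1)\cdot X$ modulo a quantity lying in $\mathbf{Z}\cdot \tilde{m}_0^+$. The delicate step is tracking which pairs $(s_1,s_2)$ migrate between the vanishing and non-vanishing branches of the definition of $F_{1,p}$ under the Heilbronn substitution: those transitions generate auxiliary $\log(\ell)$-terms that must be reorganised via the two-term Manin relations before the cancellations giving the clean error $\tfrac{\ell-1}{2}\log(\ell)\cdot \tilde{m}_0^+$ become visible.
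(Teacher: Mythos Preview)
Your plan diverges from the paper's in the crucial step (ii), and the direct Heilbronn verification you sketch is precisely the difficulty the paper's argument is designed to sidestep. The paper does \emph{not} check $(T_\ell-\ell-1)X=\tfrac{\ell-1}{2}\log(\ell)\,\tilde m_0^+$ by manipulating Bernoulli distribution identities at level $\Gamma_0(N)$. Instead it lifts the problem to level $\Gamma_1(N)$: using the Debargha--Merel Eisenstein elements $\mathcal E_{(x,0)},\mathcal E_{(0,x)}$ (Theorem~\ref{even_modSymb_DM_thm}) one forms two $\mathbf Z[\tfrac1{2N}][(\mathbf Z/N\mathbf Z)^\times/\pm1]$-valued cycles $\mathcal E_\infty',\mathcal E_0'$ and sets $\mathcal U=\tfrac12(\mathcal E_0'+\mathcal E_\infty')$. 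The Hecke relation comes for free from the Eisenstein-series eigenvalue structure at $\Gamma_1(N)$: $(T_\ell-\ell-[\ell])\mathcal E_\infty'=0$ and $(T_\ell-\ell[\ell]-1)\mathcal E_0'=0$ give $(T_\ell-\ell-1)\mathcal U=([\ell]-1)\cdot\tfrac12(\mathcal E_\infty'+\ell\mathcal E_0')$. One then applies the augmentation map $\beta:J/J^2\to\mathbf Z/p^t\mathbf Z$, $[x]-1\mapsto\log x$, and descends to $\Gamma_0(N)$ via the injectivity of $\pi^*$ on $M_+/p^tM_+$ (Proposition~\ref{even_modSymb_injectivity_trace}(ii), valid for $p\ge3$). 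The explicit $F_{1,p}$ is then read off from $\beta_*(\mathcal U)$, and a final simplification replaces the $(b-a)s_1+(b+a)s_2$ in the first branch by $\tfrac{2}{d-c}s_2$, absorbing a $\log 2$ into the ambiguity modulo $m_0^+$.

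Your step (ii) as written is a gap: the ``logarithmically twisted analogue'' of the $\B_1$ distribution identity you invoke is not stated, and the migration of pairs $(s_1,s_2)$ between the two branches under the Heilbronn correspondence is exactly the obstruction that makes a direct approach unwieldy --- you acknowledge this as ``the delicate step'' without resolving it. The paper's passage through $\Gamma_1(N)$ packages this bookkeeping into the single equation $(\ref{even_modSymb_crucial_Hecke})$, where the branch-switching is absorbed by the diamond-operator action rather than fought head-on. If you want to pursue the direct route, you would need to prove a precise identity expressing $\sum_{M\in H_\ell}F_{1,p}(Mx)-(\ell+1)F_{1,p}(x)$ in closed form, and no such identity is known independently of the $\Gamma_1(N)$ computation. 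Your outline of (i) and (iii) is fine; note that the paper only claims the formula for $m_1^+$ modulo $\mathbf Z\cdot m_0^+$ (see Theorem~\ref{thm_Introduction_w_1^+>3_p^t}), and the boundary value $m_1^+\bullet m_0^-$ is obtained separately via $m_0^+\bullet m_1^-$ and Merel's computation (Theorem~\ref{Comparison_Merel_sqrt_u}).
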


The last assertion, combined with the first equality of point (i) of Theorem \ref{thm_Introduction_comparison_pairing_modSymb} below gives another proof of Theorem \ref{thm_Introduction_g_p>1}.

\subsection{Comparison}
We get additional results by comparing our results in the three above settings. First, combining our results in the spaces of even and odd modular symbols, a computation gives us the following formulas.

\begin{thm}\label{thm_Introduction_comparison_pairing_modSymb}
\begin{enumerate}
\item We have $m_0^+ \bullet m_1^- = m_1^+\bullet m_0^- = \frac{1}{3}\cdot \sum_{k=1}^{\frac{N-1}{2}} k \cdot \log(k)$.
\item We have $m_1^+ \bullet m_1^- = m_2^+ \bullet m_0^- = m_0^+ \bullet m_2^- = \frac{1}{6} \cdot \sum_{k=1}^{\frac{N-1}{2}} k \cdot \log(k)^2$.
\end{enumerate}
\end{thm}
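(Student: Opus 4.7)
The strategy has two parts: an algebraic reduction by Hecke-equivariance which shows that, within each statement, all three pairings coincide mod $p$; and then the evaluation of a single chosen pairing using the explicit formulas established earlier in the paper.

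For the reduction, the intersection pairing is $\tilde{\mathbb{T}}$-equivariant, so applying the defining relation
\[
(T_\ell-\ell-1)\,m_i^{\pm} \equiv \tfrac{\ell-1}{2}\log(\ell)\cdot m_{i-1}^{\pm} \pmod{\mathbf{Z}\cdot m_0^{\pm}+\cdots+\mathbf{Z}\cdot m_{i-2}^{\pm}}
\]
to both sides of $((T_\ell-\ell-1)m_i^+)\bullet m_j^- = m_i^+\bullet((T_\ell-\ell-1)m_j^-)$ yields, modulo terms of the form $m_k^+\bullet m_l^-$ with $k+l\leq i+j-2$,
\[
\tfrac{\ell-1}{2}\log(\ell)\cdot (m_{i-1}^+\bullet m_j^-) \equiv \tfrac{\ell-1}{2}\log(\ell)\cdot (m_i^+\bullet m_{j-1}^-) \pmod p.
\]
These error terms vanish mod $p$ by the orthogonality $m_k^+\bullet m_l^-\equiv 0\pmod p$ for $k+l<g_p$, provided $i+j\leq g_p+1$. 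Choosing $\ell\nmid N$ with $\log(\ell)\not\equiv 0$ then shows $m_i^+\bullet m_j^- \pmod p$ depends only on $i+j$ in this range. Taking $i+j=1$ (always valid) gives the inner equality of (i); taking $i+j=2$ under the hypothesis $g_p\geq 2$ (implicit in (ii), since $m_2^{\pm}$ must exist) gives the two inner equalities of (ii).

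For (i), the identity $m_1^+\bullet m_0^-=\frac{1}{3}\sum_{k=1}^{(N-1)/2}k\log(k)$ is the last assertion of Theorem~\ref{thm_Introduction_w_1^+>3}: the boundary formula for $\partial m_1^+$ translates, via the normalization $\{0,\infty\}\bullet\tilde m_0^-=-1$, into this value.

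For (ii), my plan is to compute $m_1^+\bullet m_1^-$ by combining the explicit formula $m_1^+=\sum_x F_{1,p}(x)\xi_{\Gamma_0(N)}(x)$ from Theorem~\ref{thm_Introduction_w_1^+>3} with Mazur's pairing formula for $m_1^-$. Because the intersection pairing is anti-symmetric under complex conjugation, one checks that $u\bullet m_1^- = (cu)\bullet m_1^-$ for any $u$, so Mazur's identity $(1+c)\xi_{\Gamma_0(N)}([x:1])\bullet m_1^-=\log(x)$ gives $\xi_{\Gamma_0(N)}([x:1])\bullet m_1^-\equiv\tfrac{1}{2}\log(x)\pmod p$ for $x\in(\mathbf{Z}/N\mathbf{Z})^{\times}$; the cuspidal Manin symbols $[0:1]$ and $[1:0]$ contribute further boundary-type corrections, controlled as in (i). Substituting the Bernoulli double sum defining $F_{1,p}$ and summing against $\log(x)$, the resulting triple sum is then collapsed via Fourier and orthogonality identities for Bernoulli functions on $\mathbf{Z}/2N\mathbf{Z}$ to produce $\frac{1}{6}\sum_{k=1}^{(N-1)/2}k\log(k)^2$. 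The main obstacle is precisely this final collapse: it is one degree more intricate than the identity underlying the last assertion of Theorem~\ref{thm_Introduction_w_1^+>3}, and handling the cuspidal boundary terms requires care.
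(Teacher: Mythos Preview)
Your proposal is correct and follows essentially the same route as the paper. The reduction showing that $m_i^+\bullet m_j^-$ depends only on $i+j$ is the content of Corollary~\ref{Formalism_criterion_pairing} (applied to the pair $(M_+,M^-)$), and your inductive re-derivation of it is sound. For part (ii) the paper, like you, computes $m_1^+\bullet m_1^-$ directly by inserting the Manin-symbol expression for $m_1^+$ from Theorem~\ref{thm_Introduction_w_1^+>3} into Mazur's pairing formula $(1+c)\,\xi_{\Gamma_0(N)}([x{:}1])\bullet m_1^-=\log(x)$; this is precisely Theorem~\ref{even_modSymb_computation_pairing_>3}.

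One refinement worth flagging: your phrase ``Fourier and orthogonality identities for Bernoulli functions on $\mathbf{Z}/2N\mathbf{Z}$'' undersells what is actually needed in the collapse. After converting the $\B_1$-sums to $D_2$-sums and interchanging the order of summation, the inner sum over $x$ is
\[
\sum_{x\ne \frac{s_1+s_2}{s_1-s_2}} \log(x)\,\log\bigl((1-x)s_1+(1+x)s_2\bigr),
\]
and the decisive input is the elementary but non-obvious identity of Lemma~\ref{Bernardi_lemma>3},
\[
\sum_{k\ne a}\log(k-a)\log(k)\equiv -\log(a)^2 \pmod{p}\quad (p\ge 5),
\]
which is an identity about discrete logarithms rather than about Bernoulli functions. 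The remaining simplification then uses the $D_2$-summation Lemmas~\ref{even_modSymb_computation_D2} and~\ref{even_modSymb_computation_D2_square}. Your handling of the cuspidal symbols $[0{:}1],[1{:}0]$ and of the factor $2$ coming from $(1+c)$ is exactly how the paper absorbs these contributions into the overall constant $24$.
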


This proves Theorem \ref{thm_Introduction_g_p>2}. Note that all equalities but the last of each line follows formally from algebraic properties of pairing of Hecke modules.

In particular, the boundary of $m_2^+$ is $\left(  \frac{1}{6} \cdot \sum_{k=1}^{\frac{N-1}{2}} k \cdot \log(k)^2 \right) \cdot ((\Gamma_0(N) \cdot 0) - (\Gamma_0(N)\cdot \infty))$. However, we do not know an expression for $m_2^+$ in terms of Manin symbols similar to Theorems \ref{thm_Introduction_w_0^+} and \ref{thm_Introduction_w_1^+>3}. It is unclear to us whether such an expression is to be expected as a quadratic expression of logarithms or as a formula in some algebraic number theoretic group considered in section \ref{section_intro_odd_modSymb}.

We also get the following result relating the three Hecke modules.

\begin{thm}\label{thm_Introduction_comparison_pairings}
Let $i$ and $j$ be integers such that $0 \leq i,j\leq g_p$ and $i+j \leq g_p$. We have:
$$m_i^+ \bullet m_j^- = e_i \bullet e_j \text{ .}$$
Furthermore, this quantity depends only on $i+j$. It is $0$ if $i+j < g_p$ and it is non-zero if $i+j = g_p$.
\end{thm}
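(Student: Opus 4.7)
The plan proceeds in two parts: first show that each pairing depends only on $i+j$ with the stated vanishing, then identify the two pairings via the Hecke structure.

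First I would exploit the Hecke-equivariance $(Tv)\bullet w = v\bullet(Tw)$ of both pairings applied to $T=T_\ell-\ell-1 \in \tilde I$. Combined with the defining recursion of the higher Eisenstein elements,
\[
(T_\ell-\ell-1)(m_i^+) \equiv \tfrac{\ell-1}{2}\log(\ell)\cdot m_{i-1}^+ \pmod{\mathbf{Z}\cdot m_0^++\cdots+\mathbf{Z}\cdot m_{i-2}^+},
\]
and the analogous identity for $m_j^-$, and choosing a prime $\ell\nmid N$ with $\log(\ell)\not\equiv 0\pmod p$, a double induction on $i+j$ yields that $m_i^+\bullet m_j^-\bmod p$ depends only on $i+j$ and vanishes for $i+j<g_p$. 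The identical argument, already invoked in Section \ref{introduction_subsection_supersingular}, gives the analogous property for $e_i\bullet e_j$. Thus it suffices to check a single equality at $i+j=g_p$, say $m_{g_p}^+\bullet m_0^-=e_{g_p}\bullet e_0$ modulo $p$.

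For this I would use that $M$, $M_+$, $M^-$ are all free of rank one over $\tilde{\mathbf{T}}$ after Eisenstein completion, with respective generators $\tilde e_0$, $\tilde m_0^+$, $\tilde m_0^-$. Fixing $\tilde{\mathbf{T}}$-linear isomorphisms sending each generator to $1\in\tilde{\mathbf{T}}$, the perfect Hecke-equivariant pairings $M\times M\to\mathbf{Z}_p$ and $M_+\times M^-\to\mathbf{Z}_p$ become pairings of trace-form type $(x,y)\mapsto\eta(xy)$ on $\tilde{\mathbf{T}}$. The common normalization
\[
\tilde e_0\bullet\tilde e_0=\tilde m_0^+\bullet\tilde m_0^-=\tfrac{N-1}{12},
\]
from the classical Eichler mass formula and Theorem \ref{thm_Introduction_w_0^+} respectively, identifies the two functionals $\eta^S,\eta^{MS}\in\Hom_{\mathbf{Z}_p}(\tilde{\mathbf{T}},\mathbf{Z}_p)$. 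The uniqueness characterization of higher Eisenstein elements (each $e_i$ is unique modulo $e_0,\ldots,e_{i-2}$, once $e_0$ is fixed) then transports $e_i$ to $m_i^\pm$ under these identifications modulo strictly lower-index terms. Since all pairings with total index $<g_p$ vanish, the mod-$p$ value at $i+j=g_p$ is unaffected by these ambiguities, and the desired equality follows.

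The delicate point is the identification of the two trace forms. Because $\frac{N-1}{12}\equiv 0\pmod p$, the coincidence of values at the unit does not by itself pin down each functional modulo $p$; one must compare $\eta^S$ and $\eta^{MS}$ modulo $p^t$ where $t=v_p\!\left(\frac{N-1}{12}\right)$, and use Gorenstein duality for the cuspidal Hecke algebra $\mathbf{T}$ (by Mazur) to promote this to a literal equality of the two $\mathbf{Z}_p$-linear functionals on $\tilde{\mathbf{T}}$. Once that is secured, the conclusion follows cleanly from the recursive nature of the higher Eisenstein elements.
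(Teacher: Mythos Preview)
Your first part—showing via Hecke-equivariance and the defining recursion that each of $m_i^+ \bullet m_j^-$ and $e_i \bullet e_j$ depends only on $i+j$, vanishes for $i+j < g_p$, and is nonzero at $i+j = g_p$—is correct and coincides with the paper's general Corollary~\ref{Formalism_criterion_pairing}.

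The second part rests on a false premise. The elements $\tilde e_0$, $\tilde m_0^+$, $\tilde m_0^-$ are \emph{not} $\tilde{\mathbf{T}}$-generators of the completed modules: each is annihilated by the Eisenstein ideal $\tilde I$, so for instance $\tilde{\mathbf{T}} \cdot \tilde e_0 = \mathbf{Z}_p \cdot \tilde e_0$, a $\mathbf{Z}_p$-line inside a module of $\mathbf{Z}_p$-rank $g_p+1$. (In the paper's language, $\tilde e_0$ generates $M_{\tilde{\mathfrak{P}}}[\tilde I] = T_0 \cdot M_{\tilde{\mathfrak{P}}}$, not $M_{\tilde{\mathfrak{P}}}$ itself.) There is therefore no $\tilde{\mathbf{T}}$-linear isomorphism sending $\tilde e_0$ to $1 \in \tilde{\mathbf{T}}$, and your normalization $\tilde e_0 \bullet \tilde e_0 = \tilde m_0^+ \bullet \tilde m_0^- = \tfrac{N-1}{12}$ is a single scalar constraint that cannot pin down a functional in the rank-$(g_p+1)$ module $\Hom_{\mathbf{Z}_p}(\tilde{\mathbf{T}},\mathbf{Z}_p)$. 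Your ``delicate point'' paragraph does not address this: the obstruction is not about working modulo $p$ versus $p^t$, but that the starting isomorphisms do not exist.

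The paper (Proposition~\ref{Comparison_comparison_tensor}, Corollary~\ref{Comparison_corr_comparison}) instead routes through the Hecke module $\mathcal{M}$ of modular forms. One picks any $x \in M$ and $x^- \in M^-$ with $\tilde e_0 \bullet x = 1$ and $\tilde m_0^+ \bullet x^- = 1$, and forms
\[
E_i = \tfrac{e_i \bullet e_0}{2} + \sum_{n \geq 1} (e_i \bullet T_n x)\, q^n, \qquad
E_i^+ = \tfrac{m_i^+ \bullet m_0^-}{2} + \sum_{n \geq 1} (m_i^+ \bullet T_n x^-)\, q^n.
\]
These are genuine elements of $\mathcal{M}/p\mathcal{M}$ (the constant term being forced by Proposition~\ref{Comparison_T_0}, which expresses $T_0$ as projection onto $\tilde e_0$, resp.\ $\tilde m_0^-$), they satisfy the higher-Eisenstein recursion, and they agree with $f_0$ at $i=0$. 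Hence both coincide with $f_i$ modulo $f_0,\dots,f_{i-1}$, and comparing constant terms gives $e_i \bullet e_0 = m_i^+ \bullet m_0^-$; the already-established dependence on $i+j$ then yields the full statement. The virtue of this route is that $\mathcal{M}$ is a common target in which higher Eisenstein elements are canonically characterized by their $q$-expansions, so one never has to identify the two abstract trace forms directly.
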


The combination of Theorem \ref{thm_Introduction_pairing_supersingular}, Theorem \ref{thm_Introduction_comparison_pairing_modSymb} and Theorem \ref{thm_Introduction_comparison_pairings} allows us to deduce the following identity. We were not able to find an elementary proof of it.

\begin{thm}\label{thm_introduction_identity}
Assume that $\sum_{k=1}^{\frac{N-1}{2}} k \cdot \log(k) \equiv 0 \text{ (modulo }p\text{)}$. We then have
$$\sum_{\lambda \in L} \frac{1}{4}\cdot \log(H'(\lambda))^2 - \frac{1}{3}\cdot \log(\lambda)^2 \equiv   \sum_{k=1}^{\frac{N-1}{2}} k \cdot \log(k)^2 \text{ (modulo }p\text{)}\text{ .}$$
\end{thm}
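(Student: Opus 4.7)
The plan is that this identity is not meant to be proved directly; rather, it is extracted by comparing two different computations of one and the same Hecke-module pairing. So the strategy is essentially bookkeeping: read off $e_1\bullet e_1$ from the supersingular side (Theorem \ref{thm_Introduction_pairing_supersingular}) and read off $m_1^+\bullet m_1^-$ from the modular-symbol side (Theorem \ref{thm_Introduction_comparison_pairing_modSymb}), then invoke Theorem \ref{thm_Introduction_comparison_pairings} to identify them.

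First I would observe that the standing hypothesis $\sum_{k=1}^{(N-1)/2} k\cdot \log(k)\equiv 0\pmod p$ is, by Merel's Theorem \ref{thm_Introduction_g_p>1}, equivalent to $g_p\geq 2$. This is exactly the numerical condition needed for the higher Eisenstein elements $e_1$, $m_1^{\pm}$ (and in fact $e_2$, $m_2^{\pm}$) to exist and for Theorem \ref{thm_Introduction_comparison_pairings} to apply in the range $i+j\leq 2$.

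Next I would invoke Theorem \ref{thm_Introduction_comparison_pairings} with $i=j=1$ to obtain
\[
e_1\bullet e_1 \;=\; m_1^+\bullet m_1^-
\]
in $\mathbf{Z}/p\mathbf{Z}$. The left-hand side is computed explicitly by Theorem \ref{thm_Introduction_pairing_supersingular}(ii):
\[
e_1\bullet e_1 \;=\; \sum_{\lambda\in L}\frac{1}{24}\cdot\log(H'(\lambda))^2 - \frac{1}{18}\cdot\log(\lambda)^2 .
\]
The right-hand side is computed by Theorem \ref{thm_Introduction_comparison_pairing_modSymb}(ii):
\[
m_1^+\bullet m_1^- \;=\; \frac{1}{6}\cdot \sum_{k=1}^{\frac{N-1}{2}} k\cdot \log(k)^2 .
\]
Setting these equal and multiplying through by $6$ yields exactly the claimed identity.

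There is no real obstacle specific to this theorem; all the serious work has already been packaged into the three inputs. The main ``hidden'' step is Theorem \ref{thm_Introduction_comparison_pairings}, which asserts that the intersection pairing on modular symbols and the Brandt/monodromy pairing on the supersingular module agree on higher Eisenstein elements as long as one stays in the range $i+j\leq g_p$; this is why the hypothesis $g_p\geq 2$ (equivalent to the displayed congruence) is needed, and it is also why one cannot expect to prove the identity elementarily: it encodes the coincidence between a geometric/arithmetic pairing on $\mathbf{F}_N$-supersingular points and the analytic pairing on modular symbols, and the proof goes through the Hecke-module comparison rather than through any manipulation of the sums $\sum_\lambda \log(H'(\lambda))^2$ and $\sum_k k\log(k)^2$ themselves.
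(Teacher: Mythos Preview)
Your proposal is correct and is exactly the paper's approach: the paper explicitly states that Theorem \ref{thm_introduction_identity} is obtained by combining Theorems \ref{thm_Introduction_pairing_supersingular}, \ref{thm_Introduction_comparison_pairing_modSymb} and \ref{thm_Introduction_comparison_pairings}, and in the body this is recorded as Corollary \ref{comparison_higher_Eichler_quadratic_formula} (the same identity up to a factor of $12$). Your arithmetic check $e_1\bullet e_1=m_1^+\bullet m_1^-$ followed by multiplying by $6$ is precisely what is intended.
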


This is most advanced instance (known to us) of what we call an \textit{higher Eichler mass formula}. We do not know how to connect the algebraic objects of section \ref{section_intro_odd_modSymb} with supersingular $j$ (or $\lambda$) invariants.

Obviously, there is a theory of higher Eisenstein elements for all levels and all weights. We finish this introduction by giving some reasons as to why we focus on the prime level and weight $2$ cases.
\begin{enumerate}
\item Mazur's original question about the rank $g_p$ was in this setting.
\item It is known that in this case, the Hecke algebra has nice properties. For example it is a Gorenstein ring at the Eisenstein maximal ideals, and the Eisenstein ideal $I$ is locally principal, \ie $I/I^2$ is cyclic.
\item We know, thanks to Mazur, some multiplicity one results for the homology of $X_0(N)$.
\item There is an explicit and simple description of $m_1^-$ coming from the Shimura covering $X_1(N) \rightarrow X_0(N)$. 
Mazur deduced from this a description of $I/I^2$ in terms of modular symbols.
\item In this case, the supersingular module presents itself. Our methods in turn has applications to supersingular elliptic curves.
\item The Galois cohomology group arising in the determination of $m_2^-$ is well-understood thanks to the results of \cite{Lecouturier_class_group}.
\end{enumerate}

It is not clear to us in which settings we can obtain similar results. Mazur's question makes sense in any weight and level (although there are possibly several maximal Eisenstein ideals). However, we do not know multiplicity one in general, and Eisenstein ideals are not locally principal in general neither. See the list of alternative settings already proposed by Mazur in \cite[p. 39]{Mazur_Eisenstein}.

\section{The formalism of higher Eisenstein elements}\label{Section_Formalism}

\subsection{Algebraic setting}\label{Formalism_algebraic_setting}
In this section, we develop the theory of higher Eisenstein elements in a tentative axiomatic setting. Let $\tilde{\mathbb{T}}$ be a $\mathbf{Z}_p$-algebra which is free of finite rank as a $\mathbf{Z}_p$-module. Let $\tilde{I}$ be an ideal of $\tilde{\mathbb{T}}$ and $\Ann_{\tilde{\mathbb{T}}}(\tilde{I})$ be the annihilator of $\tilde{I}$ in $\tilde{\mathbb{T}}$. Let $\mathbb{T} = \tilde{\mathbb{T}}/\Ann_{\tilde{\mathbb{T}}}(\tilde{I})$ and $I \subset \mathbb{T}$ be the image of $\tilde{I}$ in $\mathbb{T}$. Let $\tilde{\mathbf{T}}$ be the $\tilde{I}$-adic completion of $\tilde{\mathbb{T}}$, and $\mathbf{T}$ be the $I$-adic completion of $\mathbb{T}$. Let $t \geq 1$ be an integer. 

We assume the following hypotheses.
\begin{enumerate}
\item \label{hyp_T/I_tilde} We have $\tilde{\mathbb{T}}/\tilde{I} \simeq \mathbf{Z}_p$. In particular, the ideal $\tilde{\mathfrak{P}} := \tilde{I}+(p)$ of $\tilde{\mathbb{T}}$ is the unique maximal ideal containing $\tilde{I}$.
\item \label{hyp_Ann_free_one} The $\mathbf{Z}_p$-module $\Ann_{\tilde{\mathbb{T}}}(\tilde{I})$ is free of rank one.
\item \label{hyp_T_free} The $\mathbf{Z}_p$-module $\mathbb{T}$ is free.
\item \label{hyp_T/I} The group $\mathbb{T}/I$ is cyclic of order $p^t$.
\item \label{hyp_I/I^2}The $\mathbb{T}/I$-module $I/I^2$ is free of rank one. We fix a group isomorphism $e : I/I^2 \xrightarrow{\sim} \mathbf{Z}/p^t\mathbf{Z}$. If $\eta \in \tilde{I}$, we denote by $e(\eta)$ the image by $e$ of the class of $\eta$ in $I/I^2$.
\item \label{hyp_gorenstein} The ring $\mathbf{T}$ is Gorenstein, \ie the $\mathbf{T}$-module $\Hom_{\mathbf{Z}_p}(\mathbf{T}, \mathbf{Z}_p)$ is free of rank one.
\end{enumerate}

Since $\tilde{\mathbb{T}}$ is a Noetherian ring, if $M$ is a finitely generated $\tilde{\mathbb{T}}$-module then the $\tilde{I}$-adic completion of $M$ is canonically isomorphic to $M \otimes_{\tilde{\mathbb{T}}} \tilde{\mathbf{T}}$ \cite[Proposition 10.13]{Atiyah}. 

\begin{thm}[Construction of higher Eisenstein elements]\label{Formalism_existence_eisenstein}
Let $M$ be a $\tilde{\mathbb{T}}$-module which is free of finite rank over $\mathbf{Z}_p$. Assume that $M \otimes_{\tilde{\mathbb{T}}} \tilde{\mathbf{T}}$ is free of rank one over $\tilde{\mathbf{T}}$. Let $M^0 \subset M$ be the submodule annihilated by $\Ann_{\tilde{\mathbb{T}}}(\tilde{I})$. Let $r$ be an integer such that $1 \leq r \leq t$.

There exists a maximal positive integer $n(r,p)$ and a sequence of elements $e_0$, $e_1$, ..., $e_{n(r,p)}$ in $M/p^r\cdot M$, called the \textup{higher Eisenstein elements} of $M$, such that the following properties hold.
\begin{enumerate}
\item We have $e_0 \not\in p \cdot (M/p^r\cdot M)$
\item For all $\eta \in \tilde{I}$, we have 
$$\eta(e_i) \equiv e(\eta)\cdot e_{i-1} \text{ (modulo }\mathbf{Z}\cdot e_0+ ... + \mathbf{Z}\cdot e_{i-2}\text{)}$$
(with the convention $e_{-1}=0$).
\end{enumerate}

We have the following properties.

a) The element $e_0$ is unique up to $(\mathbf{Z}/p^r\mathbf{Z})^{\times}$. There exists $\tilde{e}_0 \in M[\tilde{I}]$ whose class in $M/p^r\cdot M$ is $e_0$. Furthermore, the element $\tilde{e}_0$ is unique up to $\mathbf{Z}_p^{\times}$, and $\tilde{e}_0 \not\in M^0 $.

b) We have $e_0$, ..., $e_{n(r,p)-1}$ $\in M^0/p^r \cdot M^0$ and $e_{n(r,p)} \not\in M^0/p^r\cdot M^0$ . 

c) If we fix $e_0$ in $(M^0/p^r \cdot M^0)[I]$, then for every integer $i$ such that $1 \leq i \leq n(r,p)$ the image of $e_i$ in $\left(M/p^r\cdot M\right)/\left(\mathbf{Z}\cdot e_0 + ... + \mathbf{Z}\cdot e_{i-2}\right)$ is uniquely determined.

d) The integer $n(r,p)$ is the largest integer $n \geq 1$ such that the group $\tilde{I}^n\cdot (\tilde{\mathbb{T}}/p^r\cdot \tilde{\mathbb{T}})/\tilde{I}^{n+1}\cdot (\tilde{\mathbb{T}}/p^r\cdot \tilde{\mathbb{T}})$ is cyclic of order $p^r$. Furthermore, we have $n(r,p) \leq \rk_{\mathbf{Z}_p}(\mathbf{T})$ with equality if $r=1$. In particular, the integer $n(r,p)$ only depends on $\tilde{\mathbb{T}}$ and $r$, and not on the choice of $M$.

\end{thm}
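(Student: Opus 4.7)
The plan is to reduce the entire construction to a computation inside $\tilde{\mathbf{T}}/p^r$ and to build the $e_i$'s using the $\tilde{I}$-adic filtration of this ring. Since $M$ is finitely generated over $\tilde{\mathbb{T}}$, one has $M/p^rM=(M\otimes_{\tilde{\mathbb{T}}}\tilde{\mathbf{T}})/p^r$; fixing a $\tilde{\mathbf{T}}$-generator $x$ of the free rank one module $M\otimes\tilde{\mathbf{T}}$ identifies $M/p^rM$ with $\tilde{\mathbf{T}}/p^r$ as a $\tilde{\mathbf{T}}$-module, and sends $M^0/p^rM^0$ to $\Ann_{\tilde{\mathbf{T}}/p^r}(T_0)$. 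All assertions of the theorem thus become statements about the ring $\tilde{\mathbf{T}}/p^r$ alone, which immediately gives the last sentence of (d).

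Next I would establish the structure of $\mathbf{T}$ and $\tilde{\mathbf{T}}$. The local ring $\mathbf{T}$ has residue field $\mathbf{F}_p$, so Nakayama's lemma applied to hypothesis (v) shows that $I\subset\mathbf{T}$ is principal; fix a generator $\eta$ with $e(\eta)=1$. Iterating $\mathbf{T}=\mathbf{Z}_p+I\mathbf{T}$ gives $\mathbf{T}=\mathbf{Z}_p[\eta]\simeq\mathbf{Z}_p[X]/(P(X))$ with $P$ monic of degree $g_p:=\rk_{\mathbf{Z}_p}\mathbf{T}$ and $P(0)\in p^t\mathbf{Z}_p^{\times}$. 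Using (i), (ii) and (iv) together, the kernel of the composite $\tilde{\mathbb{T}}/\tilde{I}=\mathbf{Z}_p\twoheadrightarrow\mathbb{T}/I=\mathbf{Z}/p^t\mathbf{Z}$ is $p^t\mathbf{Z}_p$, and this kernel equals $\Ann_{\tilde{\mathbb{T}}}(\tilde{I})/(\tilde{I}\cap\Ann_{\tilde{\mathbb{T}}}(\tilde{I}))$. Since $\Ann_{\tilde{\mathbb{T}}}(\tilde{I})=\mathbf{Z}_pT_0$ is free of rank one, this forces $\tilde{I}\cap\Ann_{\tilde{\mathbb{T}}}(\tilde{I})=0$ and the image of $T_0$ in $\mathbf{Z}_p$ is $p^tu$ for some unit $u$. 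Combined with the multiplication rule $T_0\cdot t'=(\text{image of }t'\text{ in }\mathbf{Z}_p)\cdot T_0$ (valid because $T_0$ annihilates $\tilde{I}$), this yields $T_0^2=p^tu\cdot T_0$, hence $T_0^2\equiv 0\pmod{p^r}$ and $T_0\in\tilde{I}$ in $\tilde{\mathbf{T}}/p^r$; moreover one checks $\tilde{\mathbf{T}}/p\simeq\mathbf{F}_p[X]/(X^{g_p+1})$ with $T_0$ corresponding to $X^{g_p}$.

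I would then construct the $\tau_i$'s by descending induction on the $\tilde{I}$-adic filtration of $\tilde{\mathbf{T}}/p^r$. Set $\tau_0=T_0$ mod $p^r$; since $\tilde{I}\cdot T_0=0$, $T_0$ lies in the socle of $\tilde{\mathbf{T}}/p^r$, and combining the Gorenstein hypothesis (vi) on $\mathbf{T}$ with the splitting $\tilde{\mathbf{T}}=\mathbf{T}\oplus\mathbf{Z}_pT_0$ places $T_0$ at $\tilde{I}$-adic depth exactly $n(r,p)$. For $1\leq i\leq n(r,p)$, inductively pick any $\tau_i$ in $\tilde{I}^{n(r,p)-i}\cdot(\tilde{\mathbf{T}}/p^r)$ satisfying $\eta\tau_i\equiv e(\eta)\tau_{i-1}$ modulo $\mathbf{Z}\tau_0+\dots+\mathbf{Z}\tau_{i-2}$. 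Both existence and uniqueness of $\tau_i$ modulo the stated subgroup are direct consequences of the cyclicity of the successive quotients $\tilde{I}^j/\tilde{I}^{j+1}$ for $j\leq n(r,p)$ that is built into the definition of $n(r,p)$; the verification of the recursion for an arbitrary $\eta'\in\tilde{I}$ (and not just for the generator $\eta$) follows from the inclusion $\tilde{I}^2\tau_i\subset\mathbf{Z}\tau_0+\dots+\mathbf{Z}\tau_{i-2}$. The bound $n(r,p)\leq g_p$, with equality at $r=1$, follows from the description of $\tilde{\mathbf{T}}/p$ above.

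Properties (a)--(d) are then inspected directly: (a) is hypothesis (ii) together with $T_0^2=p^tu\cdot T_0\neq 0$ in $\tilde{\mathbf{T}}$; (b) holds because for $i<n(r,p)$ one has $\tau_i\in\tilde{I}$ mod $p^r$ so $T_0\tau_i=0$ by the multiplication rule, while $\tau_{n(r,p)}\notin\tilde{I}$ mod $p^r$; (c) and (d) are built into the construction. The principal obstacle I foresee is the precise identification of the $\tilde{I}$-adic depth of $T_0$ inside $\tilde{\mathbf{T}}/p^r$ as exactly $n(r,p)$, which requires carefully coupling the Gorenstein duality on $\mathbf{T}$ with the additional $\mathbf{Z}_pT_0$ summand; once that is in place, the recursive construction of the $\tau_i$'s and the verifications of (a)--(d) become routine.
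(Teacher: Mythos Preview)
Your approach is viable but differs from the paper's in one essential respect, and it contains one genuine error that must be repaired.

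\textbf{The error.} The identification $M/p^rM=(M\otimes_{\tilde{\mathbb{T}}}\tilde{\mathbf{T}})/p^r$ is false: $\tilde{\mathbb{T}}$ is only semi\-local, so $M$ decomposes as a direct sum over all maximal ideals, and $M\otimes_{\tilde{\mathbb{T}}}\tilde{\mathbf{T}}$ is just the $\tilde{\mathfrak{P}}$-component. The fix is easy---since the $e_i$ are $\tilde{I}$-power torsion and $\tilde{I}$ is a unit at every other maximal ideal, the $e_i$ lie in $M_{\tilde{\mathfrak{P}}}/p^r$, so you may carry out your construction in $\tilde{\mathbf{T}}/p^r$ and then regard the result as sitting inside $M/p^rM$ via the direct-sum decomposition. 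The same remark applies to $M^0$ (at the other maximal ideals $T_0$ acts by zero, so those components lie in $M^0$ automatically). A related slip: $T_0$ is \emph{not} in the socle of $\tilde{\mathbf{T}}/p^r$---it is killed by $\tilde{I}$ but not by $p$---so say ``$\Ann(\tilde{I})$'' rather than ``socle''.

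\textbf{Comparison with the paper.} The paper does not identify $M_{\tilde{\mathfrak{P}}}$ with $\tilde{\mathbf{T}}$ directly; instead it dualizes, observing that $(M^*)_{\tilde{\mathfrak{P}}}\simeq\tilde{\mathbf{T}}$ and hence
\[
(M/p^rM)[\tilde{I}^n]\big/(M/p^rM)[\tilde{I}^{n-1}]\;\simeq\;\Hom\!\bigl(J^{n-1}/J^n,\ \mathbf{Z}/p^r\mathbf{Z}\bigr),
\qquad J=\tilde{I}\bmod p^r.
\]
The $e_i$ are then chosen in the ascending $\tilde{I}$-torsion filtration $(M/p^rM)[\tilde{I}^{i+1}]$, and the depth of $T_0$ never enters the construction; it is instead \emph{deduced} afterwards (Lemma~\ref{Formalism_odd_modSymb_critere_higher_eisenstein_lemma}). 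Your route---working in the descending $\tilde{I}$-adic filtration of $\tilde{\mathbf{T}}/p^r$---is more concrete but makes the depth of $T_0$ the starting point rather than a corollary. Each has its merits: the paper's duality argument packages existence and uniqueness in one stroke, while yours makes the ring-theoretic structure visible from the outset.

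\textbf{On the obstacle you flag.} The equality $\mathrm{depth}_{\tilde{I}}(T_0)=n(r,p)$ does not require the Gorenstein hypothesis (vi); it follows from $\tilde{I}=(\eta)$ and $\Ann_{\tilde{\mathbf{T}}/p^r}(\eta)=(\overline{T}_0)$ (which you can check directly in $\mathbf{Z}_p[X]/(XS(X))$). Indeed, multiplication by $\eta$ gives a surjection $\tilde{I}^m/\tilde{I}^{m+1}\twoheadrightarrow\tilde{I}^{m+1}/\tilde{I}^{m+2}$ with kernel $\bigl((\overline{T}_0)+\tilde{I}^{m+1}\bigr)\cap\tilde{I}^m/\tilde{I}^{m+1}$; this vanishes for $m<d:=\mathrm{depth}(T_0)$ and is nonzero for $m=d$, so $n(r,p)=d$. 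Once this is in hand, your inductive construction of the $\tau_i$ and the verification of (a)--(d) go through essentially as you describe.
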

\begin{proof}
By hypothesis (\ref{hyp_T/I_tilde}), there is a unique maximal ideal $\tilde{\mathfrak{P}}$ containing $\tilde{I}$ in $\tilde{\mathbb{T}}$. We denote by $M_{\tilde{\mathfrak{P}}}$ the $\tilde{\mathfrak{P}}$-adic  (or equivalently $\tilde{I}$-adic) completion of $M$. 

\begin{lem}\label{Formalism_flatness_Hecke}
\begin{enumerate}
\item The algebra $\tilde{\mathbf{T}}$ is flat as a $\tilde{\mathbb{T}}$-module.
\item The canonical map $M \rightarrow M_{\tilde{\mathfrak{P}}}$
is surjective.
\item Let $M$ be a $\tilde{\mathbb{T}}$-module which is free as a $\mathbf{Z}_p$-module. Then the natural morphism of $\tilde{\mathbf{T}}$-modules
$$ \mathrm{Hom}_{\mathbf{Z}_p}(M_{\tilde{\mathfrak{P}}}, \mathbf{Z}_p) \rightarrow \mathrm{Hom}_{\mathbf{Z}_p}(M, \mathbf{Z}_p)_{\tilde{\mathfrak{P}}}$$
is an isomorphism.
\end{enumerate}
\end{lem}
\begin{proof}
The ring $\tilde{\mathbb{T}}$ has finitely many maximal ideals, \ie is a semi-local ring since it is a finitely generated $\mathbf{Z}_p$-module. The ring $\tilde{\mathbb{T}}$ is $p$-adically complete and semi-local, so is the direct sum of its completions at its maximal ideals \cite[Chap. III, \S 2, no. 12]{Bourbaki_Comm_Alg}. Thus, any module over $\tilde{\mathbb{T}}$ is the direct sum of its completions at the maximal ideals of $\tilde{\mathbb{T}}$. Lemma \ref{Formalism_flatness_Hecke} follows immediately.
\end{proof}

Let $M^*:=\Hom_{\mathbf{Z}_p}(M, \mathbf{Z}_p)$, equipped with its natural structure of $\tilde{\mathbb{T}}$-module. The $\tilde{\mathbf{T}}$-module $(M^*)_{\tilde{\mathfrak{P}}}$ is free of rank one by Lemma \ref{Formalism_flatness_Hecke} (iii) and hypothesis (\ref{hyp_gorenstein}). Since $M$ is free over $\mathbf{Z}_p$, we have:
$$M/p^r\cdot M = \Hom\left(M^*/p^r\cdot M^*, \mathbf{Z}/p^r\mathbf{Z}\right) \text{ .}$$
Thus, we have: 
$$(M/p^r\cdot M)[\tilde{I}^n] = \Hom\left(M^*/(p^r+ \tilde{I}^n)\cdot M^*, \mathbf{Z}/p^r\mathbf{Z}\right)\text{ .}$$
We get a canonical group isomorphism
\begin{equation}\label{Formalism_restriction_J_n-1_J_n}
(M/p^r\cdot M)[\tilde{I}^n]/(M/p^r\cdot M)[\tilde{I}^{n-1}] \xrightarrow{\sim} \Hom\left(\tilde{I}^{n-1}\cdot (M^*/p^r\cdot M^*)/\tilde{I}^{n}\cdot (M^*/p^r\cdot M^*), \mathbf{Z}/p^r\mathbf{Z}\right)\text{ .}
\end{equation}
Since $(M^*)_{\tilde{\mathfrak{P}}}$ is free of rank one over $\tilde{\mathbf{T}}$ and that $\tilde{\mathfrak{P}}$ is the unique maximal ideal of $\tilde{\mathbb{T}}$ containing $\tilde{I}$, we have group isomorphisms
\begin{equation}\label{Formalism_restriction_J_n-1_J_n_2}
(M^*/p^r\cdot M^*)/\tilde{I}^n\cdot (M^*/p^r\cdot M^*)  \simeq \tilde{\mathbf{T}}/(p^r+\tilde{I}^n)\cdot \tilde{\mathbf{T}} \simeq \tilde{\mathbb{T}}/(p^r+\tilde{I}^n)\cdot \tilde{\mathbb{T}}\text{ .}
\end{equation}
Let $J$ be the image of $\tilde{I}$ in $\tilde{\mathbb{T}}/p^r\cdot \tilde{\mathbb{T}}$. 
By (\ref{Formalism_restriction_J_n-1_J_n}) and (\ref{Formalism_restriction_J_n-1_J_n_2}), we get a (non-canonical) group isomorphism
\begin{equation}\label{construction_e_i}
(M/p^r\cdot M)[\tilde{I}^n]/(M/p^r\cdot M)[\tilde{I}^{n-1}] \xrightarrow{\sim}  \Hom(J^{n-1}/J^n, \mathbf{Z}/p^r\mathbf{Z}) \text{ .}
\end{equation}
In particular, for $n=1$, we have group isomorphisms:
$$(M/p^r\cdot M)[\tilde{I}] \simeq \Hom\left(\tilde{\mathbb{T}}/(p^r+\tilde{I})\cdot \tilde{\mathbb{T}}, \mathbf{Z}/p^r\mathbf{Z}\right) \simeq \mathbf{Z}/p^r\mathbf{Z} \text{ ,}$$
where the last isomorphism follows from Hypothesis (\ref{hyp_T/I_tilde}).

This shows the existence of $e_0 \in (M/p^r\cdot M)[\tilde{I}]$ such that $e_0 \not\in p \cdot (M/p^r\cdot M)$. Furthermore $e_0$ is unique up to $(\mathbf{Z}/p^r\mathbf{Z})^{\times}$.
Similarly, we have isomorphisms of $\mathbf{Z}_p$-modules:
\begin{equation}\label{e_0_tilde}
M[\tilde{I}] \simeq  \Hom_{\mathbf{Z}_p}(\tilde{\mathbb{T}}/\tilde{I}, \mathbf{Z}_p) \simeq \mathbf{Z}_p \text{ .}
\end{equation}
This shows the existence of $\tilde{e}_0 \in M[\tilde{I}]$ which reduces to $e_0$ modulo $p^r$, and its unicity up to a scalar. 

\begin{lem}\label{Formalism_T_0_Eisenstein}
We have $M[\tilde{I}] = \Ann_{\tilde{\mathbb{T}}}(\tilde{I}) \cdot M$.
\end{lem}
\begin{proof}
The inclusion $\Ann_{\tilde{\mathbb{T}}}(\tilde{I}) \cdot M \subset M[\tilde{I}]$ is obvious. We first claim that $M/\Ann_{\tilde{\mathbb{T}}}(\tilde{I}) \cdot M$ is torsion-free. For any ideal $\mathfrak{Q}$ of $\tilde{\mathbb{T}}$ different from $\tilde{\mathfrak{P}}$, we have
\begin{equation}\label{Formalism_T_0_Eisenstein_eq0}
\Ann_{\tilde{\mathbb{T}}}(\tilde{I}) \cdot M_{\mathfrak{Q}} = 0 \text{ .}
\end{equation}
By the proof of Lemma \ref{Formalism_flatness_Hecke}, $M$ is the direct sum of its completions at the maximal ideals of $\tilde{\mathbb{T}}$. Thus, we have a canonical isomorphism of $\tilde{\mathbb{T}}$-modules:
\begin{equation}\label{Formalism_T_0_Eisenstein_eq1}
\Ann_{\tilde{\mathbb{T}}}(\tilde{I}) \cdot M \xrightarrow{\sim} \Ann_{\tilde{\mathbb{T}}}(\tilde{I}) \cdot M_{\tilde{\mathfrak{P}}} \text{ .}
\end{equation}
By (\ref{Formalism_T_0_Eisenstein_eq0}) and (\ref{Formalism_T_0_Eisenstein_eq1}), we have an isomorphism of $\mathbf{Z}_p$-modules
\begin{equation}\label{Formalism_T_0_Eisenstein_eq2}
M/\Ann_{\tilde{\mathbb{T}}}(\tilde{I})\cdot M \simeq \left( M_{\tilde{\mathfrak{P}}}/\Ann_{\tilde{\mathbb{T}}}(\tilde{I})\cdot M_{\tilde{\mathfrak{P}}}\right) \bigoplus_{\mathfrak{Q} \neq \tilde{\mathfrak{P}}} M_{\mathfrak{Q}}  \text{ ,}
\end{equation}
where the sum is over the maximal ideals $\mathfrak{Q}$ of $\tilde{\mathbb{T}}$ different from $\tilde{\mathfrak{P}}$. Since $M_{\tilde{\mathfrak{P}}}$ is free of rank one over $\tilde{\mathbf{T}}$, we have an isomorphism of $\tilde{\mathbf{T}}$-modules 
\begin{equation}\label{Formalism_T_0_Eisenstein_eq3}
M_{\tilde{\mathfrak{P}}}/\Ann_{\tilde{\mathbb{T}}}(\tilde{I})\cdot M_{\tilde{\mathfrak{P}}} \simeq  \tilde{\mathbf{T}}/\Ann_{\tilde{\mathbb{T}}}(\tilde{I})\cdot \tilde{\mathbf{T}} \text{ .}
\end{equation}
As in (\ref{Formalism_T_0_Eisenstein_eq2}), we have an isomorphism of $\mathbf{Z}_p$-modules 
\begin{equation}\label{Formalism_T_0_Eisenstein_eq4}
\tilde{\mathbb{T}}/\Ann_{\tilde{\mathbb{T}}}(\tilde{I})\cdot \tilde{\mathbb{T}}  \simeq \tilde{\mathbf{T}}/\Ann_{\tilde{\mathbb{T}}}(\tilde{I})\cdot \tilde{\mathbf{T}} \bigoplus_{\mathfrak{Q}\neq \tilde{\mathfrak{P}}} \tilde{\mathbb{T}}_{\mathfrak{Q}} \text{ .}
\end{equation}
By hypothesis (\ref{hyp_T_free}), the former $\mathbf{Z}_p$-module is torsion-free. By (\ref{Formalism_T_0_Eisenstein_eq2}),  (\ref{Formalism_T_0_Eisenstein_eq3}) and (\ref{Formalism_T_0_Eisenstein_eq4}), the $\mathbf{Z}_p$-module $M/\Ann_{\tilde{\mathbb{T}}}(\tilde{I})\cdot M $ is torsion-free. 

By (\ref{e_0_tilde}) the $\mathbf{Z}_p$-module $M[\tilde{I}]$ is free of rank one.  Since $M[\tilde{I}]$ is a direct summand of the $\mathbf{Z}_p$-module $M$, the $\mathbf{Z}_p$-module $M[\tilde{I}]/\Ann_{\tilde{\mathbb{T}}}(\tilde{I})\cdot M$ is a free $\mathbf{Z}_p$-module of rank zero or one. If the rank of $M[\tilde{I}]/\Ann_{\tilde{\mathbb{T}}}(\tilde{I})\cdot M$ is one, then $\Ann_{\tilde{\mathbb{T}}}(\tilde{I})\cdot M = 0$. By (\ref{Formalism_T_0_Eisenstein_eq1}), we get $\Ann_{\tilde{\mathbb{T}}}(\tilde{I}) \cdot \tilde{\mathbf{T}}=0$. As above, this implies $\Ann_{\tilde{\mathbb{T}}}(\tilde{I})=0$, so $\tilde{\mathbb{T}}=\mathbb{T}$. This contradicts hypothesis (\ref{hyp_T/I}). This concludes the proof of Lemma \ref{Formalism_T_0_Eisenstein}.
\end{proof}

\begin{lem}\label{Formalism_T_0J}
We have $M^0 = \tilde{I} \cdot M$.
\end{lem}
\begin{proof}
The inclusion $\tilde{I}\cdot M \subset M^0$ is obvious. Since $M_{\tilde{\mathfrak{P}}}$ is free of rank one over $\tilde{\mathbf{T}}$, we have isomorphisms of $\mathbf{Z}_p$-modules:
$$M/\tilde{I}\cdot M \simeq \tilde{\mathbf{T}}/\tilde{I} \simeq \mathbf{Z}_p \text{ .}$$
We thus have a surjective group homomorphism
$$\mathbf{Z}_p \simeq M/\tilde{I}\cdot M \rightarrow M/M^0\text{ .}$$
The $\mathbf{Z}_p$-module $M/M^0$ is torsion-free by definition of $M^0$ and the fact that $M$ is a free $\mathbf{Z}_p$-module. Thus, we have either $M^0 =  \tilde{I}\cdot M$ or $M^0 =M$. If $M^0=M$, then we have $\Ann_{\tilde{\mathbb{T}}}(\tilde{I}) \cdot M = 0$, which is impossible by Lemma \ref{Formalism_T_0_Eisenstein}.
\end{proof}

Denote by $\varphi : M \rightarrow M_{\tilde{\mathfrak{P}}}$ the canonical map.

\begin{lem}\label{Formalism_e_0_completion}
The element $\varphi(\tilde{e}_0)$ is a generator of $M_{\tilde{\mathfrak{P}}}[\tilde{I}]$. In particular, the element $e_0$ is sent to a generator of $(M_{\tilde{\mathfrak{P}}}/p^r\cdot M_{\tilde{\mathfrak{P}}})[\tilde{I}]$.
\end{lem}
\begin{proof}
This follows from Lemma \ref{Formalism_T_0_Eisenstein} since $$M[\tilde{I}] = \Ann_{\tilde{\mathbb{T}}}(\tilde{I})  \cdot M \simeq \Ann_{\tilde{\mathbb{T}}}(\tilde{I})  \cdot M_{\tilde{\mathfrak{P}}} = M_{\tilde{\mathfrak{P}}}[\tilde{I}] \text{ ,}$$
where the middle isomorphism follows from (\ref{Formalism_T_0_Eisenstein_eq1}).
\end{proof}

\begin{lem}\label{Formalism_J/J^2}
The canonical map $\tilde{I} \rightarrow I$ induces a group isomorphism $\tilde{I}/\tilde{I}^2 \xrightarrow{\sim} I/I^2$. 
\end{lem}
\begin{proof}
We have, by definition and by Hypothesis (\ref{hyp_I/I^2}), a surjective group homomorphism $\tilde{I}/\tilde{I}^2 \rightarrow I/I^2 \simeq \mathbf{Z}/p^t\mathbf{Z}$. It remains to show that the kernel of the map $\tilde{I}/\tilde{I}^2 \rightarrow I/I^2$ is zero. This kernel is the image of $\tilde{I} \cap\Ann_{\tilde{\mathbb{T}}}(\tilde{I})$ in  $\tilde{I}/\tilde{I}^2$. Thus, it suffices to prove that $\tilde{I} \cap\Ann_{\tilde{\mathbb{T}}}(\tilde{I})=0$. By hypothesis (\ref{hyp_Ann_free_one}), there exists $T_0 \in \tilde{\mathbb{T}}$ such that $\Ann_{\tilde{\mathbb{T}}}(\tilde{I}) = \mathbf{Z}_p\cdot T_0$. Let $x \in \tilde{I} \cap\Ann_{\tilde{\mathbb{T}}}(\tilde{I})$. For the sake of a contradiction, assume that $x \neq 0$. Then there exists $n \in \mathbf{Z}_p \backslash \{0\}$ such that $x = n \cdot T_0$. Since the image of $x$ in $\tilde{\mathbb{T}}/\tilde{I} \simeq \mathbf{Z}_p$ is zero, we have $T_0 \in \tilde{I}$, so $\Ann_{\tilde{\mathbb{T}}}(\tilde{I}) \subset \tilde{I}$. This contradicts hypothesis (\ref{hyp_T/I}).
\end{proof}

By Lemma \ref{Formalism_J/J^2}, the ideal $\tilde{I}\cdot \tilde{\mathbf{T}}$ is principal. In particular, for every $n \geq 1$ the group $\tilde{I}^{n-1} \cdot (\tilde{\mathbf{T}}/p^r\cdot \tilde{\mathbf{T}})/\tilde{I}^{n} \cdot (\tilde{\mathbf{T}}/p^r\cdot \tilde{\mathbf{T}})$ is cyclic of order $ \leq p^r$. Thus, $J^{n-1}/J^n$ is cyclic of order $\leq p^r$. We let $n(r,p)$ be the largest integer $n\geq 1$ such that the group $J^{n}/J^{n+1}$ has order $p^r$. By (\ref{construction_e_i}), there exists $e_0$, $e_1$, ..., $e_{n(r,p)}$ satisfying properties (ii), and $n(r,p)$ is the largest such integer.

To prove property (a), it suffices to prove that $\tilde{e}_0 \not\in M^0$. Assume for the sake of a contradiction that $\tilde{e}_0 \in M^0$. By Lemma \ref{Formalism_T_0J}, we have $\tilde{e}_0 \in \tilde{I} \cdot M$. Using Lemma \ref{Formalism_T_0_Eisenstein}, we get $\Ann_{\tilde{\mathbb{T}}}(\tilde{I})  \cdot M \subset \tilde{I}\cdot M$. Since $M_{\tilde{\mathfrak{P}}}$ is free of rank one over $\tilde{\mathbf{T}}$, we have $\Ann_{\tilde{\mathbb{T}}}(\tilde{I})\cdot \tilde{\mathbf{T}} \subset \tilde{I}\cdot \tilde{\mathbf{T}}$. As in (\ref{Formalism_T_0_Eisenstein_eq0}), we get $\Ann_{\tilde{\mathbb{T}}}(\tilde{I}) \cdot \tilde{\mathbb{T}} \subset \tilde{I}\cdot \tilde{\mathbb{T}}$, which contradicts Hypothesis (\ref{hyp_T/I}). Property (c) follows from (\ref{construction_e_i}).

Lemma \ref{Formalism_T_0J} shows that $e_0$, ..., $e_{n(r,p)-1}$ $\in M^0/p^r\cdot M^0$. Assume $e_{n(r,p)} \not\in  M^0/p^r\cdot M^0$. By Lemma \ref{Formalism_T_0J}, we have $e_{n(r,p)} \in \tilde{I} \cdot (M/p^r\cdot M)$. Let $f_i$ be the image of $e_i$ in $M_{\tilde{\mathfrak{P}}}/p^r\cdot M_{\tilde{\mathfrak{P}}}$. There exists $f_{n(r,p)+1} \in M_{\tilde{\mathfrak{P}}}/p^r\cdot M_{\tilde{\mathfrak{P}}}$ satisfying Property (ii). By Lemma \ref{Formalism_e_0_completion}, the group $(\tilde{\mathbf{T}}/p^r\cdot \tilde{\mathbf{T}})[\tilde{I}^{n(r,p)+2}]/(\tilde{\mathbf{T}}/p^r\cdot \tilde{\mathbf{T}})[\tilde{I}^{n(r,p)+1}]$ is cyclic of order $p^r$. Thus, $J^{n(r,p)+1}/J^{n(r,p)+2}$ is cyclic of order $p^r$, which contradicts the definition of $n(r,p)$.

We finally show Property (d). The map $r \mapsto n(r,p)$ is obviously a decreasing function of $r$. Thus, it suffices to prove $n(1,p) = \rk_{\mathbf{Z}_p}(\mathbf{T})$. Let $\eta$ be a generator of $\tilde{I}\cdot \tilde{\mathbf{T}}$ and $R(X) \in \mathbf{Z}_p[X]$ be the characteristic polynomial of $\eta$ acting on the free $\mathbf{Z}_p$-module $\tilde{\mathbf{T}}$. We then have a ring isomorphism
\begin{equation}\label{Formalism_newton_R(X)_T}
\tilde{\mathbf{T}} \simeq \mathbf{Z}_p[X]/(R(X)) \text{ .}
\end{equation}
We have $\rk_{\mathbf{Z}_p}(\tilde{\mathbf{T}}) = \rk_{\mathbf{Z}_p}(\mathbf{T})+1$ by hypothesis (\ref{hyp_Ann_free_one}), so $\text{deg}(R) =  \rk_{\mathbf{Z}_p}(\mathbf{T})+1$. By Hypothesis \ref{hyp_T/I_tilde}, we have $R(0)=0$. Since $\tilde{\mathbf{T}}$ is local, we have $R(X) \equiv X^{ \rk_{\mathbf{Z}_p}(\mathbf{T})+1} \text{ (modulo }p \text{)}$. Thus, we have a ring isomorphism:
$$\tilde{\mathbf{T}}/p\cdot \tilde{\mathbf{T}} \simeq (\mathbf{Z}/p\mathbf{Z})[X]/(X^{ \rk_{\mathbf{Z}_p}(\mathbf{T})+1})$$
such that $\eta$ (modulo $p$) is sent to $X$. Thus, $J^{n}/J^{n+1}$ is isomorphic to $(X^{n})/(X^{n+1})$, which is non-zero if and only if $n \leq \rk_{\mathbf{Z}_p}(\mathbf{T})$, and also if and only if $n \leq n(1,p)$ by definition. Thus we have $n(1,p) = \rk_{\mathbf{Z}_p}(\mathbf{T})$.
This concludes the proof of Theorem \ref{Formalism_existence_eisenstein}.
\end{proof}
Another characterization of $n(r,p)$ that will be used later. 
\begin{prop}\label{Formalism_odd_modSymb_critere_higher_eisenstein}
The integer $n(r,p)$ is the largest integer $k \geq 1$ such that $p^t$ is in $I^k+p^r\cdot I$.
\end{prop}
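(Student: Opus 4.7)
The plan is to reduce both sides of the equality to the same explicit congruences on the coefficients of the characteristic polynomial introduced in the proof of Theorem \ref{Formalism_existence_eisenstein}. Concretely, I re-use the isomorphism $\tilde{\mathbf{T}}\simeq\mathbf{Z}_p[X]/R(X)$ from that proof, write $R(X)=X\cdot S(X)$ with $S(X)=X^{g_p}+a_{g_p}X^{g_p-1}+\cdots+a_1$ where $g_p=\rk_{\mathbf{Z}_p}(\mathbf{T})$ and $a_i\in p\mathbf{Z}_p$, and exploit the consequent description $\mathbf{T}\simeq\mathbf{Z}_p[X]/S(X)$.

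First, since $\mathbb{T}$ is a finitely generated $\mathbf{Z}_p$-module and hence a $p$-adically complete semilocal ring, it splits as $\mathbb{T}\simeq\mathbf{T}\times\prod_{\mathfrak{Q}\neq\mathfrak{P}}\mathbb{T}_\mathfrak{Q}$; since $I$ is contained only in $\mathfrak{P}$, the condition $p^t\in I^k+p^r I$ is equivalent to its analogue inside $\mathbf{T}$. In $\mathbf{T}$ the ideal $I\mathbf{T}$ is principal, generated by $X$; the relation $\mathbf{T}/(X)\simeq\mathbf{Z}/p^t\mathbf{Z}$ forces $S(0)=a_1=u\cdot p^t$ with $u\in\mathbf{Z}_p^{\times}$, so $\gcd(X,S)=1$ in the UFD $\mathbf{Z}_p[X]$, and therefore $X$ is a non-zero-divisor in $\mathbf{T}$. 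Rearranging $S(X)=0$ yields $p^t=-u^{-1}X\beta$ with $\beta:=X^{g_p-1}+a_{g_p}X^{g_p-2}+\cdots+a_2$; since $X$ is regular, $p^t\in I^k\mathbf{T}+p^rI\mathbf{T}=X\cdot(X^{k-1},p^r)$ is equivalent to $\beta\in(X^{k-1},p^r)$ in $\mathbf{T}$. Working in $(\mathbf{Z}/p^r)[X]/S(X)$ with $\mathbf{Z}/p^r$-basis $1,X,\ldots,X^{g_p-1}$ and using $a_1\equiv 0\pmod{p^r}$ (because $r\leq t$), one checks by induction on $j=0,1,\ldots,k-2$ that comparing the coefficient of $X^j$ on both sides of a prospective identity $\beta\equiv X^{k-1}\gamma+S(X)\epsilon\pmod{p^r}$ successively forces $a_{j+2}\equiv 0\pmod{p^r}$. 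For $k>g_p$ the membership fails automatically, since the residue of $\beta$ still has a nonzero coefficient on $X^{g_p-1}$ while $(X^{g_p},p^r)$ captures only those combinations of basis vectors produced by the relation $X^{g_p}=-\sum a_iX^{i-1}$, whose mod-$p^r$ reduction cannot realise $X^{g_p-1}$.

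On the other side, Theorem \ref{Formalism_existence_eisenstein}(d) together with the principality of $J$ yields that $n(r,p)$ is the largest $n\geq 1$ with $p^{r-1}X^n\notin(X^{n+1},p^r)$ in $\tilde{\mathbf{T}}$. A parallel computation using $R=XS$ to cancel one power of $X$ reduces this to the statement that $p^{r-1}X^{n-1}\in(X^n,p^r)$ in $\mathbf{T}$ admits no solution. Writing $p^{r-1}X^{n-1}\equiv S(X)\epsilon\pmod{X^n,p^r}$ and expanding exactly as before gives the same system of upper-triangular congruences in $(\mathbf{Z}/p^r)$ whose unique obstruction is $a_2,\ldots,a_n\in p^r\mathbf{Z}_p$. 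Consequently both $n(r,p)$ and the largest $k$ with $p^t\in I^k+p^r I$ equal the largest $m\geq 1$ such that $a_2,\ldots,a_m\in p^r\mathbf{Z}_p$, whence the Proposition.

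The principal obstacle is the coefficient bookkeeping in the two parallel polynomial identities: one must verify carefully that the systems of linear congruences in $\mathbf{Z}/p^r$, after eliminating the unknowns $\gamma,\delta,\epsilon$, produce literally the same cutoff index on both sides. This relies crucially on $a_1\equiv 0\pmod{p^r}$, which trivialises the first column of each system and lets both analyses begin with a common ``rank-one shift'' in the indices; after that, the remaining columns are governed by $a_2,a_3,\ldots$ in the same order and with the same valuation thresholds on either side.
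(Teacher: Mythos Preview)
Your overall strategy is correct and takes a genuinely different route from the paper. The paper applies Theorem~\ref{Formalism_existence_eisenstein} to $M=\tilde{\mathbb{T}}$ itself: taking $T_0$ a generator of $\Ann_{\tilde{\mathbb{T}}}(\tilde I)$ with $T_0-p^t\in\tilde I$, it shows that $e_0$ is the image of $T_0-p^t$ in $\tilde{\mathbb{T}}/p^r\tilde{\mathbb{T}}$, proves as an inner lemma that $n(r,p)$ is the largest $k$ with $e_0\in\tilde I^k\cdot(\tilde{\mathbb{T}}/p^r\tilde{\mathbb{T}})$, and then reads off the statement by projecting to $\mathbb{T}$. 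You instead translate both sides into the common coefficient condition $a_2,\ldots,a_m\equiv0\pmod{p^r}$ via explicit polynomial bookkeeping in $\mathbf Z_p[X]/(S)$; this is precisely the Newton-polygon description of $n(r,p)$ given in Section~\ref{Formalism_Newton_polygon}. Your argument for the first half (that $p^t\in I^k\mathbf T+p^rI\mathbf T$ iff $\bar\beta\in(X^{k-1})$ iff $a_2,\ldots,a_k\equiv0\pmod{p^r}$, with the $k>g_p$ case handled separately) is fine, as is the reduction from $\tilde{\mathbf T}$ to $\mathbf T$ by factoring out one $X$ at the level of $\mathbf Z_p[X]$.

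The second half, however, has a gap you flag but do not close. The two linear systems are not ``the same'': they share the lower-triangular coefficient array in the $\bar a_i$, but the right-hand sides differ---$(\bar a_2,\ldots,\bar a_k)$ in the first versus $(0,\ldots,0,p^{r-1})$ in the second---and the solvability conclusions run in \emph{opposite} directions. Your induction on the first system shows that solvability of $\beta\equiv X^{k-1}\gamma+S\epsilon$ forces $\bar a_2=\cdots=\bar a_k=0$. For the second system you need that if some $a_m$ with $2\le m\le n$ satisfies $v_p(a_m)=v<r$, then $p^{r-1}X^{n-1}\in(X^n,p^r)$ in $\mathbf T$, and this is not argued. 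It is true: take $m$ minimal, write $\bar a_m=up^v$ with $u$ a unit, and set $\delta_l=0$ for $l\ne n-m$, $\delta_{n-m}=u^{-1}p^{r-1-v}$; then all equations for $j<n-1$ read $0=0$ and the last gives $\bar a_m\,\delta_{n-m}=p^{r-1}$. Alternatively you can simply invoke Section~\ref{Formalism_Newton_polygon}, which already records $n(r,p)=\max\{j:z_j\ge r\}$ with $z_j=\min_{i\le j}v_p(a_i)$, i.e.\ $n(r,p)$ is the largest $m$ with $a_2,\ldots,a_m\equiv0\pmod{p^r}$.
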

\begin{proof}
We apply Theorem \ref{Formalism_existence_eisenstein} with $M=\tilde{\mathbb{T}}$, which obviously satisfies the hypotheses of the theorem. Let $T_0$ be a generator of the $\mathbf{Z}_p$-module $\Ann_{\tilde{\mathbb{T}}}(\tilde{I})$. By hypothesis (\ref{hyp_T/I}), we can assume that $T_0 - p^t \in \tilde{I}$. We can choose $\tilde{e}_0 = T_0$ by Lemma \ref{Formalism_T_0_Eisenstein}. Since $r \leq t$, the image of $T_0-p^t$ in $M/p^r\cdot M$ is $e_0$. By Lemma \ref{Formalism_T_0J}, we have $M^0 = \tilde{I}$. 

\begin{lem}\label{Formalism_odd_modSymb_critere_higher_eisenstein_lemma}
The integer $n(r,p)$ is the largest integer $k\geq 1$ such that $e_0 \in \tilde{I}^k \cdot (M/p^r\cdot M)$.
\end{lem}
\begin{proof}
By Theorem \ref{Formalism_existence_eisenstein}, if $\eta \in \tilde{I} \backslash \tilde{I}^2$, there exists $c \in (\mathbf{Z}/p^r\mathbf{Z})^{\times}$ such that $\eta^{n(r,p)}\cdot e_{n(r,p)} = c\cdot e_0$. Thus, we have $e_0 \in \tilde{I}^{n(r,p)}\cdot (M/p^r\cdot M)$. Assume for the sake of a contradiction that $e_0 \in \tilde{I}^{n(r,p)+1}\cdot (M/p^r\cdot M)$, then we have $\eta^{n(r,p)} \cdot e_{n(r,p)} \in \tilde{I}^{n(r,p)+1}\cdot (M/p^r\cdot M)$. The group $$\tilde{I}^{n(r,p)} \cdot \left(\tilde{\mathbb{T}}/p^r\cdot \tilde{\mathbb{T}}\right)/\tilde{I}^{n(r,p)+1} \cdot \left(\tilde{\mathbb{T}}/p^r\cdot \tilde{\mathbb{T}}\right)$$ is cyclic of order $p^r$, so we have $e_{n(r,p)} \in \tilde{I}\cdot (M/p^r\cdot M)$. By Lemma \ref{Formalism_T_0J}, we have $e_{n(r,p)} \in M^0/p^r\cdot M^0$. This contradicts Theorem \ref{Formalism_existence_eisenstein} b).
\end{proof}

By lemma \ref{Formalism_odd_modSymb_critere_higher_eisenstein_lemma}, the integer $n(r,p)$ is the largest integer $k\geq 1$ such that $T_0-p^t \in \tilde{I}^k + p^r\cdot \tilde{\mathbb{T}}$. Since $T_0-p^t \in \tilde{I}$ and $\tilde{I}\cap (p^r) = p^r\cdot \tilde{I}$, it is also the largest integer $k\geq 1$ such that $T_0-p^t \in \tilde{I}^k + p^r\cdot \tilde{I}$. This concludes the proof of Proposition \ref{Formalism_odd_modSymb_critere_higher_eisenstein} since $\Ker\left(\tilde{\mathbb{T}} \rightarrow \mathbb{T}\right) = \mathbf{Z}_p\cdot T_0$ and that the unique $\alpha \in \mathbf{Z}_p$ such that $\alpha\cdot T_0 - p^t \in \tilde{I}$ is $\alpha=1$. 
\end{proof}

\subsection{The Newton polygon of $\tilde{\mathbf{T}}$}\label{Formalism_Newton_polygon}
As in (\ref{Formalism_newton_R(X)_T}), there is a (non-canonical) ring isomorphism
$$\tilde{\mathbf{T}} \simeq \mathbf{Z}_p[X]/(R(X))$$
for some monic polynomial $R = \sum_{i=0}^{\rk_{\mathbf{Z}_p}(\tilde{\mathbf{T}})+1} a_i \cdot X^i \in \mathbf{Z}_p[X]$. Recall that the \textit{Newton polygon} of $R$ is the lower convex hull of the points $\{(i, v_p(a_i)), \text{ }i \in \{0, ..., \rk_{\mathbf{Z}_p}(\tilde{\mathbf{T}})+1\}\}$ (where $v_p$ is the usual $p$-adic valuation and we omit the points with $a_i=0$). The Newton polygon of $R$ only depends on $\tilde{\mathbf{T}}$. The Newton polygon of $\tilde{\mathbf{T}}$ is by definition the Newton polygon of $R$, and we denote it by $\NP(\tilde{\mathbf{T}})$. 

We now recall a finer invariant of $\tilde{\mathbf{T}}$ than $\NP(\tilde{\mathbf{T}})$, introduced in by Wake--Wang-Erickson in \cite[Section 8]{Wake}. By convention, we let $v_p(0) = \infty$. Define a sequence $z_0$, ..., $z_{\rk_{\mathbf{Z}_p}(\tilde{\mathbf{T}})+1}$ inductively by $z_0 = v_p(a_0)$ and $z_i = \min(z_{i-1}, v_p(a_i))$ (by convention, $\min(z, \infty)=z$ for any $z \in \mathbf{Z} \cup \{\infty\}$). One easily sees that $\NP(\tilde{\mathbf{T}})$ is the lower convex hull of the points $\{(i, z_i), \text{ } i\in \{0, ...,  \rk_{\mathbf{Z}_p}(\tilde{\mathbf{T}})+1\}\}$ (we omit the points with $z_i = \infty$). Let $ i \in  \{0, ..., \rk_{\mathbf{Z}_p}(\tilde{\mathbf{T}})+1\}$. Then $z_i$ is the supremum of the set of integers $r\geq 1$ such that there exists a surjective ring homomorphism
$$f: \mathbf{Z}_p[X]/(R(X)) \twoheadrightarrow (\mathbf{Z}/p^r\mathbf{Z})[x]/(x^{i+1})$$
such that $f(X) = x$ (this supremum can be $\infty$)  \cite[Lemma 8.1.2]{Wake}. We choose $R$ so that $X$ corresponds to a generator of $\tilde{I}\cdot \tilde{\mathbf{T}}$ via (\ref{Formalism_newton_R(X)_T}). By Hypothesis (\ref{hyp_T/I_tilde}) we have $a_0 = 0$. By Hypothesis (\ref{hyp_T/I}) and Lemma \ref{Formalism_J/J^2}, we have $v_p(a_1) = t$. Thus, we have $z_0 = \infty$ and $z_i \leq t$ for all $i \geq 1$.

By Theorem \ref{Formalism_existence_eisenstein} d), one easily sees that for all integer $r$ such that $1 \leq r \leq t$, the integer $n(r,p)$ is the largest integer $i \geq 1$ such that there exists a surjective ring homomorphism
$$f': \mathbf{Z}_p[X]/(R(X)) \twoheadrightarrow (\mathbf{Z}/p^r\mathbf{Z})[x]/(x^{i+1}) $$
such that $f'(X) = x$. Thus, for all $r \in \{1, ..., t\}$ and $i \in\{1, ..., \rk_{\mathbf{Z}_p}(\tilde{\mathbf{T}})+1\}$, we have:
$$n(r,p) = \max\{j \in \{1, ..., \rk_{\mathbf{Z}_p}(\tilde{\mathbf{T}})+1\}, z_j \geq r \} $$
and
$$z_i = \max\{r' \in \{1, ..., t\}, n(r',p) \geq i \} \text{ .}$$
Hence, knowing the sequence $(n(r,p))_{1 \leq r \leq t}$ amounts to knowing the sequence $(z_i)_{0 \leq i \leq \rk_{\mathbf{Z}_p}(\tilde{\mathbf{T}})+1}$. Thus, the sequence $(n(r,p))_{1 \leq r \leq t}$ is a finer invariant of $\tilde{\mathbf{T}}$ than $\NP(\tilde{\mathbf{T}})$ (\cf \cite[8.1.3]{Wake} for an example where the Newton polygon does not determine the $(z_i)_{0 \leq i \leq \rk_{\mathbf{Z}_p}(\tilde{\mathbf{T}})+1}$).

\subsection{Pairing between higher Eisenstein elements}
The proof of the following is easy and left to the reader.
\begin{prop}\label{Formalism_pairing}
Let $M$ and $M'$ be two $\tilde{\mathbb{T}}$-modules satisfying the assumptions of Theorem \ref{Formalism_existence_eisenstein}. Let $1 \leq r \leq t$ be an integer. Let $e_0$, ..., $e_{n(r,p)}$ (resp. $e_0'$, ..., $e_{n(r,p)}'$) be the elements of $M/p^r\cdot M$ (resp. $M'/p^r\cdot M'$) constructed in Theorem \ref{Formalism_existence_eisenstein}. Let 
$$ \bullet : M \times M' \rightarrow \mathbf{Z}/p^r\mathbf{Z}$$
be a perfect $\tilde{\mathbb{T}}$-equivariant bilinear pairing. Let $i \in \{0, ..., n(r,p)\}$. Then an element $m$ (resp. $m'$) of $\tilde{I}^i\cdot (M/p^r\cdot M)$ (resp. $\tilde{I}^i\cdot (M'/p^r\cdot M')$) is in $\tilde{I}^{i+1} \cdot (M/p^r\cdot M)$ (resp. $\tilde{I}^{i+1}\cdot (M'/p^r\cdot M')$) if and only if $m \bullet e_i' \equiv 0 \text{ (modulo }p^r\text{)}$ (resp. $e_i \bullet m' \equiv 0 \text{ (modulo }p^r\text{)}$). 
\end{prop}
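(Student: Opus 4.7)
The plan is to exploit the $\tilde{\mathbb{T}}$-equivariance of $\bullet$ to pass from the filtration by powers of $\tilde{I}$ on $M/p^r M$ to the annihilator filtration on $M'/p^r M'$, and then appeal to the construction of the $e_j'$ in Theorem \ref{Formalism_existence_eisenstein} to identify a generator of the relevant graded piece.

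First I would establish the key duality: for any ideal $J \subseteq \tilde{\mathbb{T}}$, the orthogonal of $J\cdot(M/p^r M)$ under $\bullet$ equals $(M'/p^r M')[J]$. The inclusion $(M'/p^r M')[J] \subseteq (J\cdot(M/p^r M))^\perp$ is immediate from equivariance; the reverse inclusion uses perfectness of $\bullet$, since if $m'$ pairs to $0$ with every $\eta m_0$ for $\eta \in J$ and $m_0 \in M/p^r M$, then $m_0 \bullet (\eta m') = 0$ for all such $m_0, \eta$, which forces $\eta m' = 0$ for every $\eta \in J$. Applying this to $J = \tilde{I}^i$ and $J = \tilde{I}^{i+1}$ and passing to quotients yields a nondegenerate $\mathbf{Z}/p^r\mathbf{Z}$-bilinear pairing
\begin{equation*}
\tilde{I}^i(M/p^r M) / \tilde{I}^{i+1}(M/p^r M) \;\times\; (M'/p^r M')[\tilde{I}^{i+1}]/(M'/p^r M')[\tilde{I}^i] \;\longrightarrow\; \mathbf{Z}/p^r\mathbf{Z}.
\end{equation*}

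Next I would show that the class of $e_i'$ generates the right-hand factor as a $\mathbf{Z}/p^r\mathbf{Z}$-module. Property (ii) of Theorem \ref{Formalism_existence_eisenstein} gives $\tilde{I}^{i+1}\cdot e_i' = 0$, so $e_i' \in (M'/p^r M')[\tilde{I}^{i+1}]$. Fix $\eta \in \tilde{I}$ with $e(\eta) \in (\mathbf{Z}/p^t\mathbf{Z})^\times$, and iterate the relation $\eta(e_j') \equiv e(\eta)\cdot e_{j-1}' \pmod{\mathbf{Z} e_0' + \cdots + \mathbf{Z} e_{j-2}'}$, using $\eta e_0'=0$ at the bottom of the induction; this yields $\eta^i e_i' = e(\eta)^i e_0'$. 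Since $e_0'$ has exact order $p^r$ in $M'/p^r M'$ by property (i), so does $\eta^i e_i'$, and consequently the class of $e_i'$ in the quotient $(M'/p^r M')[\tilde{I}^{i+1}]/(M'/p^r M')[\tilde{I}^i]$ has order exactly $p^r$. On the other hand the isomorphism (\ref{construction_e_i}) identifies this quotient with $\Hom(J^i/J^{i+1},\mathbf{Z}/p^r\mathbf{Z})$, which for $i \leq n(r,p)$ is cyclic of order exactly $p^r$ by definition of $n(r,p)$. Therefore $e_i'$ is a generator.

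Combining the two steps proves the proposition: for $m \in \tilde{I}^i(M/p^r M)$, the class of $m$ vanishes in $\tilde{I}^i(M/p^r M)/\tilde{I}^{i+1}(M/p^r M)$ (i.e.\ $m \in \tilde{I}^{i+1}(M/p^r M)$) iff it annihilates the entire right-hand factor of the induced pairing, and since $e_i'$ generates that factor, this is equivalent to $m \bullet e_i' \equiv 0 \pmod{p^r}$. The symmetric statement for $m'$ follows by interchanging the roles of $M$ and $M'$. The main obstacle in this outline is the second step, specifically showing that the class of $e_i'$ has order \emph{exactly} $p^r$ in the cyclic quotient rather than some proper divisor; this is where one must combine the inductive consequence $\eta^i e_i' = e(\eta)^i e_0'$ with the fact, built into the proof of Theorem \ref{Formalism_existence_eisenstein}, that $J^i/J^{i+1}$ is cyclic of order exactly $p^r$ for $i\leq n(r,p)$.
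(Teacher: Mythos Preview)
Your proof is correct and follows the natural approach. The paper itself leaves this proposition to the reader, so there is no proof to compare against; your argument via the orthogonality $(J\cdot(M/p^rM))^\perp = (M'/p^rM')[J]$, the resulting nondegenerate pairing on graded pieces, and the verification that $e_i'$ generates $(M'/p^rM')[\tilde I^{i+1}]/(M'/p^rM')[\tilde I^i]$ (using $\eta^i e_i' = e(\eta)^i e_0'$ together with the isomorphism~(\ref{construction_e_i})) is exactly the expected route.
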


\begin{corr}\label{Formalism_criterion_pairing}
Keep the notation of Proposition \ref{Formalism_pairing} and consider the elements $e_0$, ..., $e_{n(r,p)}$, $e_0'$, ..., $e_{n(r,p)}'$ modulo $p^r$. The product
$$e_i \bullet e_j'$$
only depends on $i+j$, is zero modulo $p^r$ if $i+j < n(r,p)$ and non-zero modulo $p^r$ if $i+j=n(r,p)$. (Note that if $i+j>n(r,p)$, the product is not well-defined since $e_i$ is only defined modulo $e_0$, ..., $e_{i-1}$ and the same for $e_j'$). 

In particular, we have $n(r,p) \geq 2$ if and only if $e_1 \bullet e_0' \equiv 0 \text{ (modulo }p \text{)}$, and $\rk_{\mathbf{Z}_p}(\mathbb{T}) \geq 3$ if and only if $e_1 \bullet e_0' \equiv e_1 \bullet e_1' \equiv 0 \text{ (modulo }p \text{)}$.
\end{corr}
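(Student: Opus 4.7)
The plan is to reduce the corollary to a filtration statement on each higher Eisenstein element and then apply Proposition \ref{Formalism_pairing}. I would first establish that for each $i \in \{0, \ldots, n(r,p)\}$ the element $e_i$ lies in $\tilde{I}^{n(r,p)-i}\cdot(M/p^r M)$ but not in $\tilde{I}^{n(r,p)-i+1}\cdot(M/p^r M)$, together with the symmetric statement for $e'_j$. Granted this, Proposition \ref{Formalism_pairing} yields the dichotomy directly: for $i+j<n(r,p)$ one has $e_i\in \tilde{I}^{j+1}\cdot(M/p^r M)$, forcing $e_i\bullet e'_j\equiv 0\pmod{p^r}$; for $i+j=n(r,p)$ one has $e_i\in \tilde{I}^j\cdot(M/p^r M)\setminus \tilde{I}^{j+1}\cdot(M/p^r M)$, forcing $e_i\bullet e'_j\not\equiv 0\pmod{p^r}$. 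The two concluding equivalences of the corollary then follow on specializing $r=1$ and using $n(1,p)=\rk_{\mathbf{Z}_p}(\mathbf{T})$ from Theorem \ref{Formalism_existence_eisenstein} (d).

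To prove the filtration statement I work inside $M_{\tilde{\mathfrak{P}}}/p^r M_{\tilde{\mathfrak{P}}}$, which is free of rank one over $\tilde{\mathbf{T}}/p^r\tilde{\mathbf{T}}$. Fix a lift $\eta_0\in\tilde{I}$ of a generator of $I/I^2$, so that $\eta_0$ generates $\tilde{I}\cdot\tilde{\mathbf{T}}/p^r\tilde{\mathbf{T}}$. A direct iteration of property (ii) of Theorem \ref{Formalism_existence_eisenstein}, using $\eta_0 e_0=0$ in $M/p^r M$ together with the convention $e_{-1}=0$ to absorb all correction terms at each stage, yields the clean identity $\eta_0^{\,i} e_i = e_0$ in $M/p^r M$. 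The non-containment $e_i \notin \tilde{I}^{n(r,p)-i+1}\cdot(M/p^r M)$ is then immediate from Lemma \ref{Formalism_odd_modSymb_critere_higher_eisenstein_lemma}: if $e_i$ were in $\tilde{I}^{n(r,p)-i+1}$, then $e_0=\eta_0^i e_i$ would lie in $\tilde{I}^{n(r,p)+1}\cdot(M/p^r M)$, contradicting the lemma. For the containment $e_i\in \tilde{I}^{n(r,p)-i}\cdot(M/p^r M)$, I write $e_0=\eta_0^{n(r,p)} m$ for some $m\in M/p^r M$ (possible by the same lemma); then $\eta_0^i(e_i-\eta_0^{n(r,p)-i} m)=0$, so $e_i\in \eta_0^{n(r,p)-i} m+(M/p^r M)[\eta_0^i]$. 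The main obstacle is the containment $(M/p^r M)[\eta_0^i] \subset \tilde{I}^{n(r,p)-i}\cdot(M/p^r M)$: this is immediate for $r=1$ from the presentation $\tilde{\mathbf{T}}/p\simeq \mathbf{F}_p[X]/(X^{n(1,p)+1})$, and for general $r$ follows from the analogous analysis of $\tilde{\mathbf{T}}/p^r\tilde{\mathbf{T}}$ via the invariants $z_i$ recalled in Section \ref{Formalism_Newton_polygon}.

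Finally, that $e_i\bullet e'_j$ depends only on $i+j$: if $i+j<n(r,p)$ everything vanishes and there is nothing to prove. If $i+j=n(r,p)$ with $j\geq 1$, combine the Hecke-equivariance of $\bullet$ with the relations $\eta_0\cdot e_{i+1}=e_i+\sum_{k\leq i-1}c_k e_k$ and $\eta_0\cdot e'_j=e'_{j-1}+\sum_{k'\leq j-2}c'_{k'}e'_{k'}$ to obtain
\begin{equation*}
e_i\bullet e'_j \;=\; e_{i+1}\bullet e'_{j-1} \;+\; \sum_{k'\leq j-2}c'_{k'}(e_{i+1}\bullet e'_{k'}) \;-\; \sum_{k\leq i-1}c_k(e_k\bullet e'_j).
\end{equation*}
Each correction product has total index strictly less than $n(r,p)$ and thus vanishes by the dichotomy already established, yielding $e_i\bullet e'_j=e_{i+1}\bullet e'_{j-1}$. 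Iterating reduces all such pairings to the single value $e_{n(r,p)}\bullet e'_0$, finishing the proof.
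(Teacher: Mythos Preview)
Your argument is correct; since the paper gives no proof for this corollary (nor for Proposition~\ref{Formalism_pairing}, both being ``left to the reader''), there is no paper's approach to compare against. Two minor points. First, Lemma~\ref{Formalism_odd_modSymb_critere_higher_eisenstein_lemma} is stated only for $M=\tilde{\mathbb{T}}$, though its proof works verbatim for any $M$ as in Theorem~\ref{Formalism_existence_eisenstein}. Second, your containment $(M/p^rM)[\eta_0^i]\subset \tilde{I}^{\,n(r,p)-i}(M/p^rM)$ for general $r$ is true but deserves one more line: the point is that $z_{n(r,p)}\geq r$ forces $\bar{R}(X)=X^{n+1}Q(X)$ in $(\mathbf{Z}/p^r)[X]$ with $n=n(r,p)$, whence the $X^i$-torsion equals $(X^{n+1-i}Q(X))\subset(X^{n-i})$ because $X$ is a non-zero-divisor in $(\mathbf{Z}/p^r)[X]$.

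You can also avoid the filtration claim altogether by folding your shifting identity into an induction on $s=i+j$. Assuming vanishing at all levels $<s$, the correction terms drop and every level-$s$ pairing equals $e_s\bullet e'_0$. For $s<n(r,p)$ this vanishes directly: from $e'_0=\eta_0^s e'_s$ and $e_0=\eta_0^{s+1}e_{s+1}$ one gets $e_s\bullet e'_0=e_0\bullet e'_s=e_{s+1}\bullet\eta_0 e'_0=0$. For $s=n(r,p)$ the non-vanishing of $e_{n(r,p)}\bullet e'_0$ is Proposition~\ref{Formalism_pairing} at index $0$ together with $e_{n(r,p)}\notin\tilde{I}(M/p^rM)=M^0/p^rM^0$ from Theorem~\ref{Formalism_existence_eisenstein}(b) and Lemma~\ref{Formalism_T_0J}. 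This route never needs to locate $e_i$ inside $\tilde{I}^{n(r,p)-i}$.
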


\subsection{The special case of weight $2$ and prime level}\label{Formalism_special_case_modular_forms}

In this section, as well as in the rest of the article, let $\tilde{\mathbb{T}}$ be the the Hecke algebra with $\mathbf{Z}_p$ coefficients acting faithfully on the space of  modular forms of weight $2$ and level $\Gamma_0(N)$. The ideal $\tilde{I}$ is the \textit{Eisenstein ideal}, generated by the Hecke operators $T_{\ell}-\ell-1$ if $\ell \neq N$ and by $U_N-1$. We easily see that $\mathbb{T}$ is the Hecke algebra acting faithfully on the space of cuspidal modular forms of weight $2$ and level $\Gamma_0(N)$. 

We now check the hypotheses of section \ref{Section_Formalism}. 
Hytothesis (\ref{hyp_T/I_tilde}) is obvious. Hypothesis (\ref{hyp_Ann_free_one}) follows from the fact that there is a unique Eisenstein series of weight $2$ and level $\Gamma_0(N)$ since $N$ is prime. Hypothesis (\ref{hyp_T_free}) is obvious. Hypothesis (\ref{hyp_T/I}) follows from \cite[Proposition II.9.7]{Mazur_Eisenstein}. Hypothesis (\ref{hyp_I/I^2}) follows from \cite[Propositions II.18.7 and II.18.10]{Mazur_Eisenstein}. Hypothesis (\ref{hyp_gorenstein}) follows from \cite[Corollary II.16.3]{Mazur_Eisenstein}. Thus, we can apply Theorem \ref{Formalism_existence_eisenstein}.  As in \cite[Proposition $1.8$ (ii)]{Emerton_supersingular}, we let $T_0 \in \tilde{\mathbb{T}}$ be such that $\Ann_{\tilde{\mathbb{T}}}(\tilde{I}) = \mathbf{Z}_p\cdot T_0$ and $T_0-\frac{N-1}{\nu}\in \tilde{I}$ (where $\nu = \gcd(N-1, 12)$).

By \cite[Proposition 18.9]{Mazur_Eisenstein}, there is a group isomorphism $e : I/I^2  \xrightarrow{\sim} \mathbf{Z}/p^t\mathbf{Z}$ such that for all prime $\ell$ not dividing $N$, we have:
$$e(T_{\ell}-\ell-1) = \frac{\ell-1}{2}\cdot \log(\ell) $$
where, if $p=\ell=2$, this equality means
$$e(T_2-3) = \log(x)$$
where $x^2 \equiv 2$ (modulo $N$).

\begin{rem}
In order to study $g_p$, only higher Eisenstein elements \textit{modulo} $p$ (and not modulo $p^r$ for $r>1$) are important.
\end{rem}

In practice, when we construct the $e_i$'s, we need only to check condition (ii) of Theorem \ref{Formalism_existence_eisenstein} for Hecke operators $T_{\ell}-\ell-1$ for primes $\ell$ outside a finite set.

\begin{prop}\label{Formalism_property_Hecke_outside}
We keep the notation of Theorem \ref{Formalism_existence_eisenstein}. Let $S$ be a finite set of rational primes containing $N$. Let $i$ be an integer such that $1 \leq i \leq n(r,p)$ and $m \in M/p^r\cdot M$ be such that for all $\ell \not\in S$ prime, we have:
$$(T_{\ell}-\ell-1)(m) = \frac{\ell-1}{2}\cdot \log(\ell)\cdot e_{i-1} \text{ .}$$
Then we have 
$$m \equiv e_i \text{ (modulo the subgroup generated by }e_0\text{, ..., }e_{i-1}\text{).}$$
\end{prop}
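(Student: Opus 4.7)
The plan is to set $w := m - e_i \in M/p^r\cdot M$ and show that $w$ lies in the subgroup $F_{i-1} := \mathbf{Z}\cdot e_0 + \cdots + \mathbf{Z}\cdot e_{i-1}$. First I would apply Theorem \ref{Formalism_existence_eisenstein}(ii) to the element $\eta = T_\ell - \ell - 1$ of $\tilde{I}$ (whose image under $e$ is $\frac{\ell-1}{2}\log(\ell)$). Subtracting the resulting congruence from the hypothesis on $m$ gives
\[ (T_\ell - \ell - 1)(w) \in F_{i-2} \qquad \text{for every prime } \ell \notin S. \]

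The crux of the proof is to upgrade this pointwise vanishing (only for $\ell \notin S$) to the global statement $\tilde{I}\cdot w \subset F_{i-2}$. Since each $F_j$ is $\tilde{\mathbb{T}}$-stable (via its identification below with $(M/p^r M)[\tilde{I}^{j+1}]$), it suffices to check that the ideal $I_S$ of $\tilde{\mathbb{T}}$ generated by $\{T_\ell - \ell - 1 : \ell \notin S\}$ coincides with $\tilde{I}$ after completion at every maximal ideal of $\tilde{\mathbb{T}}$. Since $\tilde{\mathbb{T}}$ is semi-local and $p$-adically complete, it decomposes as a product of its completions at the maximal ideals. At the Eisenstein prime $\tilde{\mathfrak{P}}$, Lemma \ref{Formalism_J/J^2} together with Hypothesis (\ref{hyp_I/I^2}) shows that $\tilde{I}\cdot \tilde{\mathbf{T}}$ is principal and is generated by any $\eta \in \tilde{I}$ with $e(\eta) \in (\mathbf{Z}/p\mathbf{Z})^\times$; Dirichlet's theorem supplies infinitely many primes $\ell_0 \notin S$ with $\frac{\ell_0-1}{2}\log(\ell_0) \not\equiv 0 \pmod{p}$, and any such $T_{\ell_0}-\ell_0-1$ is then a generator of $\tilde{I}\cdot\tilde{\mathbf{T}}$ lying in $I_S$. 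At each non-Eisenstein maximal ideal $\mathfrak{Q}$, Chebotarev density applied to the residual Galois representation attached to $\mathfrak{Q}$ furnishes some prime $\ell \notin S$ with $T_\ell - \ell - 1 \notin \mathfrak{Q}$, hence a unit in $\tilde{\mathbb{T}}_\mathfrak{Q}$; thus $I_S\cdot \tilde{\mathbb{T}}_\mathfrak{Q} = \tilde{\mathbb{T}}_\mathfrak{Q} = \tilde{I}\cdot\tilde{\mathbb{T}}_\mathfrak{Q}$. The two cases together give $I_S = \tilde{I}$ in $\tilde{\mathbb{T}}$.

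Once $\tilde{I}\cdot w \subset F_{i-2}$ is known, I conclude via the identification $F_j = (M/p^r M)[\tilde{I}^{j+1}]$ for $0 \leq j \leq n(r,p)-1$, which follows directly from the construction of the $e_j$'s through the isomorphism \eqref{construction_e_i} in the proof of Theorem \ref{Formalism_existence_eisenstein}. Iterating the inclusion gives
\[ \tilde{I}^i \cdot w \;=\; \tilde{I}^{i-1}\cdot (\tilde{I}\cdot w) \;\subset\; \tilde{I}^{i-1}\cdot F_{i-2} \;=\; \tilde{I}^{i-1}\cdot (M/p^r M)[\tilde{I}^{i-1}] \;=\; 0, \]
so $w \in (M/p^r M)[\tilde{I}^i] = F_{i-1}$, which is the desired conclusion. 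The main obstacle is the density step above: the assertion that any cofinite subfamily of the operators $T_\ell - \ell - 1$ still generates the full Eisenstein ideal is not formal from the axioms of Section \ref{Formalism_algebraic_setting}, and relies on the specific modular-forms setting of Section \ref{Formalism_special_case_modular_forms}, in particular on Dirichlet's theorem and Chebotarev density applied to the residual Galois representations associated to the maximal ideals of $\tilde{\mathbb{T}}$.
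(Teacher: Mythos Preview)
Your proof is correct and takes a genuinely different route from the paper's. The paper reduces to a lemma (proved via the Deligne--Serre lifting lemma and strong multiplicity one) asserting that the subspace of $M/p^r M$ annihilated by all $T_\ell-\ell-1$ with $\ell\notin S$ is exactly $(\mathbf{Z}/p^r\mathbf{Z})\cdot e_0$; this characterizes the $I_S$-torsion in $M/p^r M$ rather than $I_S$ itself. You instead prove the ideal-theoretic equality $I_S=\tilde{I}$ directly, by localizing at each maximal ideal of $\tilde{\mathbb{T}}$ and invoking Dirichlet at the Eisenstein prime and Chebotarev (together with Brauer--Nesbitt for the residual representation) at the non-Eisenstein ones. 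Your approach is arguably more elementary, and your reduction via the filtration $F_j=(M/p^rM)[\tilde{I}^{\,j+1}]$ is in fact cleaner than the paper's: the paper's one-line claim that $m-e_i$ is literally annihilated by each $T_\ell-\ell-1$ is exact only for $i=1$ (for $i\geq 2$ one only has $(T_\ell-\ell-1)(m-e_i)\in F_{i-2}$), so a filtration or quotient argument of your type is in any case needed to complete the reduction.
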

\begin{proof}
The element $m-e_i\in M/p^r$ is annihilated by $T_{\ell}-\ell-1$ for all prime $\ell \not\in S$. It thus suffices to prove the following result.
\begin{lem}\label{Formalism_property_Hecke_outside_lem}
Let $W \subset M/p^r\cdot M$ be set of elements annihilated by $T_{\ell}-\ell-1$ for $\ell$ prime not in $S$. Then $W = (\mathbf{Z}/p^r\mathbf{Z}) \cdot e_0$.
\end{lem}
\begin{proof}
We prove it by induction on $r$. Let $T$ be the $\mathbf{Z}_p$-subalgebra of $\tilde{\mathbb{T}}$ generated by the operators $T_{\ell}$ for $\ell \not\in S$. 

Assume first that $r=1$.
Let $w \in W$. By the Deligne--Serre lifting lemma \cite[Lemme 6.11]{Deligne_Serre}, there is a finite extension $\mathcal{O} \subset \overline{\mathbf{Q}}_p$ of $\mathbf{Z}_p$ ($\mathcal{O}$ is a discrete valuation ring) and an element $\tilde{w} \in M \otimes_{\mathbf{Z}} \mathcal{O}$ which is proper for the action of $T$ and such that the eigenvalue of $\tilde{w}$ for $T_{\ell}$ modulo $\pi$ (an uniformizer of $\mathcal{O}$) is the eigenvalue of $w$ for $T_{\ell}$ if $\ell \not\in S$.

The strong multiplicity one theorem shows that $T \otimes_{\mathbf{Z}_p} \overline{\mathbf{Q}}_p = \mathbb{T}\otimes_{\mathbf{Z}_p} \overline{\mathbf{Q}}_p$.  Thus, there is a ring homomorphism
$$\varphi : \mathbb{T} \rightarrow \overline{\mathbf{Q}}_p$$
such that for all $t \in T$, $\varphi(t)$ is the eigenvalue of $t$ on $\tilde{w}$. There is an associated semi-simple Galois representation $\rho_{\varphi} : \Gal(\overline{\mathbf{Q}}/\mathbf{Q}) \rightarrow \text{GL}_2(\overline{\mathbf{Q}}_p)$. The associated semi-simple residual representation must be the direct sum of the trivial character and the Teichm\"uller character. This shows that, in fact, for all prime $\ell$ not dividing $N$, $T_{\ell}-\ell-1$ annihilates $w$. We conclude that $w$ and $e_0$ are proportional modulo $p$, which concludes the case $r=1$.

Now, let $r \geq 1$ be any integer. By the case $r=1$, there exists $\lambda \in \mathbf{Z}$ such that
$$w - \lambda \cdot e_0 \in p\cdot M/p^r \cdot M \text{ .}$$
The the element $$\frac{x-\lambda\cdot e_0}{p} \in M/p^{r-1}\cdot M$$
is annihilated by $T_{\ell}-\ell-1$ for all $\ell \not\in S$. By induction this concludes the proof of Lemma \ref{Formalism_property_Hecke_outside_lem}.

\end{proof}
\end{proof}

\section{The supersingular module}\label{Section_Supersingular}
We keep the notation of Chapters \ref{Section_introduction} and \ref{Section_Formalism}. In particular, $p \geq 2$ is a prime such that $p^t$ divides exactly the numerator of $\frac{N-1}{12}$. Let $r$ be an integer such that $1 \leq r \leq t$.

\subsection{Preliminary results and notation}\label{Section_Supersingular_intro_SS}

Let $S$ be the set of isomorphism classes of supersingular elliptic curves over $\overline{\mathbf{F}}_N$, and $M := \mathbf{Z}_p[S]$ be the free $\mathbf{Z}_p$-module with basis the elements of $S$. If $E \in S$, we denote by $[E]$ the corresponding element in $M$ and we let $w_E = \frac{\Card(\Aut(E))}{2} \in \{1,2,3\}$. The ring $\tilde{\mathbb{T}}$ acts on $M$ in the following way. If $n\geq 1$, we have $$T_n([E]) = \sum_{C} [E/C]$$
where $C$ goes through the cyclic subgroup schemes of order $n$ in $E$. If $n$ and $N$ are coprimes, these subgroup schemes correspond to the subgroups of $E(\overline{\mathbf{F}}_N)$ isomorphic to $\mathbf{Z}/n\mathbf{Z}$. We also have 
$$U_N([E]) = [E^{(N)}]$$
where $U_N$ is the $N$th Hecke operator and $E^{(N)}$ is the Frobenius transform of $E$.

\begin{thm}\cite[Theorem 0.5]{Emerton_supersingular}\label{Supersingular_Emerton_supersingular}
The $\tilde{\mathbf{T}}$-module $M \otimes_{\tilde{\mathbb{T}}} \tilde{\mathbf{T}}$ is free of rank one.
\end{thm}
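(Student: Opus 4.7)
The plan is to reduce the statement to a rank computation plus a multiplicity-one result, then conclude via Nakayama using the Gorenstein property (Hypothesis (\ref{hyp_gorenstein})) already verified in \S\ref{Formalism_special_case_modular_forms}. Concretely, I would split the argument into three steps: a generic rank computation, a residual multiplicity-one statement, and a Nakayama-type lifting argument.

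\textbf{Step 1: Generic structure.} First I would show that $M\otimes_{\mathbf{Z}_p}\mathbf{Q}_p$ is free of rank one over $\tilde{\mathbb{T}}\otimes_{\mathbf{Z}_p}\mathbf{Q}_p$. By Eichler's trace formula (or equivalently by Jacquet--Langlands applied to the quaternion algebra $B_{N,\infty}$ ramified precisely at $N$ and $\infty$), the $\mathbf{Q}_p$-vector space $M\otimes \mathbf{Q}_p$ is $\tilde{\mathbb{T}}$-equivariantly isomorphic to the full space of weight $2$ modular forms of level $\Gamma_0(N)$ (the cuspidal part corresponds to the orthogonal complement of the Eisenstein line under the pairing $\bullet$, whose existence is noted in the introduction, and the Eisenstein line matches the one-dimensional $\tilde{I}$-eigenspace in $M$ spanned by $\sum_{E\in S}\frac{1}{w_E}[E]$). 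Since $\tilde{\mathbb{T}}\otimes\mathbf{Q}_p$ is semisimple and acts faithfully on a $\mathbf{Q}_p$-space of the same dimension as itself, $M\otimes\mathbf{Q}_p$ is free of rank one as a $\tilde{\mathbb{T}}\otimes\mathbf{Q}_p$-module. Localizing at $\tilde{\mathfrak{P}}$ gives that $M_{\tilde{\mathfrak{P}}}\otimes_{\mathbf{Z}_p}\mathbf{Q}_p$ is free of rank one over $\tilde{\mathbf{T}}\otimes_{\mathbf{Z}_p}\mathbf{Q}_p$.

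\textbf{Step 2: Residual multiplicity one.} Next I would establish that $M[\tilde{\mathfrak{P}}]$, equivalently $(M/pM)[\tilde{I}]$, is one-dimensional over $\mathbf{F}_p$. This is the crucial input and is exactly Mazur's multiplicity-one theorem for the supersingular module at an Eisenstein prime (it is the analogue for the character group of $J_0(N)_{/\mathbf{F}_N}$ of the multiplicity-one results already quoted from \cite{Mazur_Eisenstein}). Concretely, any class in $(M/pM)[\tilde{I}]$ gives, via the pairing $\bullet$, a map $M/pM\to\mathbf{F}_p$ that factors through $M/(\tilde{I}+p)M\cong \tilde{\mathbb{T}}/\tilde{\mathfrak{P}}\cong \mathbf{F}_p$, so the dimension is at most one; it is exactly one because the explicit element $\sum_E w_E^{-1}[E]$ is non-zero mod $p$ and is annihilated by $\tilde{I}$.

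\textbf{Step 3: Lifting via Gorenstein + Nakayama.} Finally, $M_{\tilde{\mathfrak{P}}}^*:=\Hom_{\mathbf{Z}_p}(M_{\tilde{\mathfrak{P}}},\mathbf{Z}_p)$ carries a $\tilde{\mathbf{T}}$-module structure, and Step 2 together with duality shows $M_{\tilde{\mathfrak{P}}}^*/\tilde{\mathfrak{P}}\cdot M_{\tilde{\mathfrak{P}}}^*$ is one-dimensional. By Nakayama's lemma $M_{\tilde{\mathfrak{P}}}^*$ is cyclic, so there is a surjection $\tilde{\mathbf{T}}\twoheadrightarrow M_{\tilde{\mathfrak{P}}}^*$. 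Since both sides have the same rank over $\mathbf{Z}_p$ (by Step 1) and $\tilde{\mathbf{T}}$ is $\mathbf{Z}_p$-torsion-free, this surjection is an isomorphism. Invoking Hypothesis (\ref{hyp_gorenstein}), which asserts that $\Hom_{\mathbf{Z}_p}(\tilde{\mathbf{T}},\mathbf{Z}_p)\cong \tilde{\mathbf{T}}$ as $\tilde{\mathbf{T}}$-modules, and dualizing once more yields $M_{\tilde{\mathfrak{P}}}\cong \tilde{\mathbf{T}}$.

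The main obstacle is Step 2: the Gorenstein property of $\tilde{\mathbf{T}}$ and the rank computation are both standard, but residual multiplicity one for the supersingular module at the Eisenstein prime is the non-formal input, which ultimately rests on Mazur's analysis of the N\'eron model of $J_0(N)$ at $N$ and the identification of the character group of the toric part with $M$ modulo its Eisenstein line.
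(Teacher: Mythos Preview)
The paper does not prove this statement; it is quoted directly from Emerton \cite{Emerton_supersingular}. Your three-step outline (generic rank one via Jacquet--Langlands, residual multiplicity one, then Nakayama) is the right shape and is essentially Emerton's strategy, but the argument you offer for Step~2 is circular. You claim that the self-pairing $\bullet$ sends $(M/pM)[\tilde I]$ into $\Hom(M/\tilde{\mathfrak P}M,\mathbf F_p)$ and then assert $M/\tilde{\mathfrak P}M\cong\tilde{\mathbb T}/\tilde{\mathfrak P}\cong\mathbf F_p$. That last isomorphism \emph{is} the statement that $M_{\tilde{\mathfrak P}}$ is cyclic over $\tilde{\mathbf T}$, which is precisely what you are trying to establish. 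The self-pairing, being perfect, only yields $\dim(M/pM)[\tilde I]=\dim M/\tilde{\mathfrak P}M$; it does not tell you either side equals $1$. You correctly flag in your closing paragraph that this step is the non-formal input resting on Mazur's analysis of the N\'eron model of $J_0(N)$ at $N$, but your ``Concretely,\dots'' clause does not supply that input---it merely restates the duality.

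Two smaller remarks. First, since all $w_E$ are $p$-adic units under the standing hypotheses on $N$ and $p$, the pairing $\bullet$ is already perfect over $\mathbf Z_p$ and $M$ is self-dual as a $\tilde{\mathbb T}$-module; so once you know $M^*_{\tilde{\mathfrak P}}\cong\tilde{\mathbf T}$ you get $M_{\tilde{\mathfrak P}}\cong\tilde{\mathbf T}$ immediately, without invoking Gorensteinness separately. Second, Hypothesis~(\ref{hyp_gorenstein}) in the paper asserts that $\mathbf T$ (the cuspidal completion) is Gorenstein, not $\tilde{\mathbf T}$; the latter does hold (for instance because $\tilde I\cdot\tilde{\mathbf T}$ is principal, so $\tilde{\mathbf T}$ is monogenic over $\mathbf Z_p$ and hence a complete intersection), but you should be precise about which ring you mean when you invoke it.
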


Thus, we can apply Theorem \ref{Formalism_existence_eisenstein} to $M$. One easily sees that $M^0$ is the augmentation subgroup of $M$. There is a $\tilde{\mathbb{T}}$-equivariant bilinear pairing
$$\bullet : M \times M \rightarrow \mathbf{Z}_p$$
given by $[E] \bullet [E'] = 0$ if $[E] \neq [E']$ and $[E] \bullet [E] = w_E$.

We let
$$\tilde{e}_0 = \sum_{E \in S} \frac{1}{w_E}\cdot [E] \in \mathbf{Q}_p[S] \text{ .}$$
The following result is well-known.
\begin{prop}
\begin{enumerate}
\item We have $\tilde{e}_0 \in M[\tilde{I}]$.
\item We have:
$$\tilde{e}_0 \bullet \tilde{e}_0 = \frac{N-1}{12} \text{ .}$$
\end{enumerate}
\end{prop}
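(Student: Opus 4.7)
The plan is to deduce both parts from two classical ingredients: Eichler's mass formula in characteristic $N$, and the self-adjointness of the Hecke operators $T_\ell$ ($\ell\neq N$) with respect to the pairing $\bullet$. Part (ii) will be a one-line computation. Part (i) will follow by observing that $\tilde{e}_0$ is, under $\bullet$, the element representing the degree (augmentation) functional on $M$; combining this with self-adjointness turns the eigenvalue equations $T_\ell\tilde{e}_0 = (\ell+1)\tilde{e}_0$ and $U_N\tilde{e}_0 = \tilde{e}_0$ into trivial degree computations.

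For (ii), simply use the definition of $\bullet$. Cross terms vanish and $[E]\bullet[E]=w_E$, so
$$\tilde{e}_0\bullet \tilde{e}_0 \;=\; \sum_{E\in S}\frac{w_E}{w_E^2} \;=\; \sum_{E\in S}\frac{1}{w_E},$$
and Eichler's mass formula $\sum_{E\in S}\tfrac{1}{w_E}=\tfrac{N-1}{12}$ gives the result.

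For (i), first establish that $T_\ell$ is self-adjoint for $\bullet$ when $\ell\neq N$. Let $B_{E,E'}(\ell)$ be the number of cyclic subgroup schemes $C\subset E$ of order $\ell$ with $E/C\cong E'$. Counting degree-$\ell$ isogenies $E\to E'$ as pairs $(C,\iota)$ where $\iota\colon E/C\xrightarrow{\sim}E'$, and using that the dual isogeny gives a bijection between degree-$\ell$ isogenies $E\to E'$ and degree-$\ell$ isogenies $E'\to E$, yields the identity
$$B_{E,E'}(\ell)\cdot w_{E'} \;=\; B_{E',E}(\ell)\cdot w_E,$$
which unpacks to $T_\ell([E])\bullet[E']=[E]\bullet T_\ell([E'])$. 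Now observe that $[E]\bullet \tilde{e}_0 = w_E/w_E = 1$ for every $E\in S$, so under $\bullet$ the element $\tilde{e}_0$ represents the degree map $\deg\colon M\to\mathbf{Z}_p$, $[E]\mapsto 1$. Since the correspondence $T_\ell$ has degree $\ell+1$,
$$x\bullet T_\ell(\tilde{e}_0) \;=\; T_\ell(x)\bullet \tilde{e}_0 \;=\; \deg(T_\ell(x)) \;=\; (\ell+1)\deg(x) \;=\; (\ell+1)\bigl(x\bullet \tilde{e}_0\bigr)$$
for all $x\in M$. After extending scalars to $\mathbf{Q}_p$ (where $\tilde{e}_0$ naturally lives) the pairing is non-degenerate, so $T_\ell(\tilde{e}_0)=(\ell+1)\tilde{e}_0$, i.e.\ $(T_\ell-\ell-1)\tilde{e}_0=0$. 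For the remaining generator $U_N-1$ of $\tilde{I}$: the Frobenius twist $E\mapsto E^{(N)}$ is a permutation of $S$, and the isomorphism $\Aut(E)\cong\Aut(E^{(N)})$ induced by Frobenius gives $w_{E^{(N)}}=w_E$. Hence $U_N$ permutes the terms of $\tilde{e}_0$ without changing the coefficients, so $U_N(\tilde{e}_0)=\tilde{e}_0$. This exhausts the generators of $\tilde{I}$, proving (i).

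The only non-formal step is the self-adjointness identity, but this is completely standard (it is essentially the symmetry of the $w$-normalized Brandt matrices). Everything else is bookkeeping, and one could alternatively short-circuit (i) by appealing to Theorem \ref{Supersingular_Emerton_supersingular} together with Theorem \ref{Formalism_existence_eisenstein}(a), which already guarantees the existence of a unique-up-to-$\mathbf{Z}_p^\times$ element $\tilde{e}_0\in M[\tilde{I}]$; the argument above merely identifies it explicitly as $\sum_E w_E^{-1}[E]$.
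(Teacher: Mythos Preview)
Your proof of (ii) is identical to the paper's: both compute $\tilde{e}_0\bullet\tilde{e}_0=\sum_E 1/w_E$ and invoke Eichler's mass formula. For (i), the paper simply declares the annihilation by $\tilde{I}$ to be ``an easy and well-known computation'' without writing it out; your argument via self-adjointness of $T_\ell$ and the identification of $\tilde{e}_0$ with the degree functional is exactly the standard way to unpack this, so you have supplied the details the paper omits.

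There is, however, one point the paper treats that you skip. The assertion $\tilde{e}_0\in M[\tilde{I}]$ includes the claim $\tilde{e}_0\in M=\mathbf{Z}_p[S]$, not merely $\tilde{e}_0\in\mathbf{Q}_p[S]$ (you yourself note that $\tilde{e}_0$ ``naturally lives'' in $\mathbf{Q}_p[S]$). For $p\geq 5$ this is automatic since every $w_E\in\{1,2,3\}$ is a $p$-adic unit, but for $p\in\{2,3\}$ one must rule out the bad $w_E$'s. The paper does this: an $E$ with $w_E=2$ exists iff $j=1728$ is supersingular iff $N\equiv 3\pmod 4$, and an $E$ with $w_E=3$ exists iff $j=0$ is supersingular iff $N\equiv 2\pmod 3$; both possibilities are excluded by the standing hypothesis that $p$ divides the numerator of $\frac{N-1}{12}$. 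You should add this short check to close the argument.
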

\begin{proof}
We first check that $\tilde{e}_0 \in M$. This is obvious if $p \geq 5$, so we need to check it when $p \in \{2,3\}$. If $p=2$ (resp. if $p=3$), there is some $E$ in $S$ with $w_E=2$  (resp. $w_E=3$) if and only if $4$ divides $N-3$ (resp. $3$ divides $N-2$). Since $p$ divides the numerator of $\frac{N-1}{12}$, we have $\tilde{e}_0 \in M$. 

The fact that $\tilde{e}_0$ is annihilated by $\tilde{I}$ is an easy and well-known computation. The last assertion is equivalent to \textit{Eichler mass formula}:
\begin{equation}\label{Supersingular_Eichler_mass_formula_eq}
\sum_{E \in S} \frac{1}{w_E} = \frac{N-1}{12} \text{ .}
\end{equation}
\end{proof}
We let $e_0$ be the image of $\tilde{e}_0$ in $M/p^r\cdot M$. We denote by $e_0$, $e_1$, ..., $e_{n(r,p)}$ the higher Eisenstein elements in $M/p^r\cdot M$. In this chapter, we give an explicit formula for $e_1$. 

One idea would be to consider the element whose coefficient in $[E]$ is $\log(S'(j(E)))$, where $S(X) \in \mathbf{F}_N[X]$ is the monic polynomial whose roots are the $j$-invariants of elements in $S$. It turns out that this element is not $e_1$. For reasons we do not fully understand, this idea leads us to $e_1$ after adding an auxiliary $\Gamma(2)$-structure. In other words, we replace the $j$-invariants by the Legendre $\lambda$-invariants. The analogue of the polynomial $S$ in this context is the well-known Hasse polynomial. In the next section, we mainly study the relation between the Hasse polynomial and the Hecke operators.

\subsection{The Hasse polynomial}\label{Supersingular_section_Hasse}
Recall the definition of the \textit{Hasse polynomial}:
$$H(X) = \sum_{i=0}^{\frac{N-1}{2}} {{\frac{N-1}{2}}\choose{i}}^2\cdot X^i \in \mathbf{F}_N[X] \text{ .}$$
Let
$$P(Y) = \Res_X(H'(X), 256\cdot(1-X+X^2)^3-X^2\cdot (1-X)^2\cdot Y) \in \mathbf{F}_N[Y] \text{ .}$$

\subsubsection{The discriminant of the Hasse polynomial}
\label{Supersingular_discriminant_hasse}

The computation of $e_1 \bullet e_0$ is related to the discriminant $\Disc(H)$ of the Hasse polynomial.

\begin{thm}\label{Supersingular_discriminant_thm}
Let $m = \frac{N-1}{2}$. We have, in $\mathbf{F}_N$:
$$\Disc(H) = \frac{(-1)^{\frac{m(m-1)}{2}}}{m!} \cdot \prod_{k=1}^{m} k^{4k} \text{ .}$$
In particular, we have $\Disc(H) \neq 0$, so the roots of $H$ are simples.
\end{thm}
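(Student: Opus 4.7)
My plan is to identify the Hasse polynomial $H$ modulo $N$ with a classical orthogonal polynomial whose discriminant is known, and then simplify via Fermat's little theorem.

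The first step is to observe that $H(X)={}_2F_1(-m,-m;1;X)$ as a polynomial over $\mathbf{Z}$. In characteristic $N=2m+1$, each factor of the Pochhammer symbol satisfies $-m+j\equiv m+1+j \pmod N$, so $H(X)\equiv {}_2F_1(-m,m+1;1;X)=\tilde{P}_m(X)\pmod N$, where $\tilde{P}_m(x)=P_m(1-2x)$ is the shifted Legendre polynomial. Using $\binom{2m}{m}=\binom{N-1}{m}\equiv(-1)^m\pmod N$ and the fact that $\tilde{P}_m$ has leading coefficient $(-1)^m\binom{2m}{m}$, one checks that both polynomials are monic modulo $N$, hence their discriminants coincide in $\mathbf{F}_N$.

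The second step invokes the classical Stieltjes formula
$\Disc(P_m)=2^{-m(m-1)}\prod_{k=1}^m k^{3k-2m}(m+k)^{m-k}$
for the Legendre discriminant, together with the elementary fact that the substitution $x\mapsto 1-2x$ multiplies the discriminant of a degree-$n$ polynomial by $2^{n(n-1)}$. Combining these yields $\Disc(\tilde{P}_m)=\prod_{k=1}^m k^{3k-2m}(m+k)^{m-k}$ as an identity of integers.

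The third step reduces this expression modulo $N$. Writing $m+k\equiv-(m+1-k)\pmod N$ and reindexing by $j=m+1-k$ gives $\prod_{k=1}^m(m+k)^{m-k}\equiv(-1)^{m(m-1)/2}\prod_{j=1}^m j^{j-1}\pmod N$. Multiplying by $\prod_{k=1}^m k^{3k-2m}$ gives a total exponent of $4k-2m-1$ on each $k$, i.e.\ $\prod_{k=1}^m k^{4k}/(m!)^{2m+1}=\prod_{k=1}^m k^{4k}/(m!)^N$. Fermat's little theorem gives $(m!)^N\equiv m!\pmod N$, so $\Disc(H)\equiv\frac{(-1)^{m(m-1)/2}}{m!}\prod_{k=1}^m k^{4k}\pmod N$, which is nonzero because every factor in numerator and denominator is coprime to $N$; in particular the roots of $H$ are simple.

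The hard part will be supplying the Stieltjes discriminant formula for Legendre polynomials. Although classical, a fully self-contained proof requires some work (one typically uses the Rodrigues formula and Selberg-type integrals). The author's remark that the computation can be carried out ``in an elementary way'' suggests a more direct route using the Rodrigues expression $\tilde{P}_m(x)=\frac{1}{m!}\frac{d^m}{dx^m}[x(1-x)]^m$ (valid over $\mathbf{Z}$ and, after reducing, over $\mathbf{F}_N$) together with a careful bookkeeping of $\prod H'(\lambda_i)$ via iterated Leibniz differentiation of $[x(1-x)]^m$. If the Stieltjes step proved awkward to quote or rederive, I would pursue that elementary alternative.
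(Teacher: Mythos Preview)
Your argument is correct. The identification $H\equiv {}_2F_1(-m,m+1;1;X)=P_m(1-2X)\pmod N$, the scaling of the discriminant under $x\mapsto 1-2x$, and the reduction modulo $N$ of the Stieltjes formula all check out as written; the only things worth making explicit are that $\binom{2m}{m}$ and the factors $k,\ m+k$ appearing in the Stieltjes product are all units modulo $N$ (so the rational identity reduces cleanly), which you implicitly use.

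The paper, however, does not go through Legendre polynomials or quote the Stieltjes formula. It works directly with $H$ over $\mathbf{F}_N$ via the Picard--Fuchs equation
\[
4X(1-X)\,H^{(n+2)}+(n+1)\cdot 4(1-2X)\,H^{(n+1)}-(2n+1)^2 H^{(n)}=0,
\]
takes resultants with $H^{(n+1)}$ to obtain a recursion for $r_n=\Res(H^{(n)},H^{(n+1)})$ in terms of $r_{n+1}$ and the values $H^{(n+1)}(0)$, $H^{(n+1)}(1)$ (the latter themselves computed from the same ODE), and unwinds from the trivial case $r_{m-1}=m!$. This is entirely self-contained and is precisely the ``elementary'' route you allude to at the end; it is essentially how one proves the Stieltjes formula for Legendre polynomials in the first place, just carried out in characteristic $N$ for $H$ rather than in characteristic $0$ for $P_m$. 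Your approach has the virtue of explaining conceptually \emph{why} $H$ has a closed-form discriminant (it is a classical orthogonal polynomial in disguise), at the cost of importing a nontrivial classical result; the paper's approach is longer but needs no outside input.
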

\begin{proof}
The following \textit{Picard--Fuchs} differential equation is well-known (\cf the proof of \cite[Theorem V.4.1]{Silverman}).
\begin{lem}
Let $A=4X(1-X)$ and $B=4(1-2X)$. Then for all $n \geq 0$, we have the following differential equation:
\begin{equation}\label{diff_equation}
A\cdot H^{(n+2)}+(n+1)\cdot B\cdot H^{(n+1)}-(2n+1)^2\cdot H^{(n)}=0
\end{equation}
where $H^{(n)}$ is the $n$th derivative of $H$.
\end{lem}
For all $n \geq 0$, let $r_n = \Res(H^{(n)}, H^{(n+1)})$, where $\Res$ is the resultant. Note that $r_0 = (-1)^{\frac{m(m-1)}{2}}\text{Disc}(H)$.  We can rewrite the differential equation (\ref{diff_equation}) as
$$A\cdot H^{(n+2)}+(n+1)\cdot B\cdot H^{(n+1)}=(2n+1)^2\cdot H^{(n)}$$
and then take the resultant of both sides with $H^{(n+1)}$. 
Thus, for all $0 \leq n \leq m-2$, we have: 
$$r_n = (-4)^{m-n-1}\cdot (2n+1)^{2n+2} \cdot H^{(n+1)}(0)\cdot H^{(n+1)}(1) \cdot r_{n+1} \text{ .}$$
Note that $r_{m-1} = \Res(H^{(m-1)}, m!) = m!$. We have to compute $a_n:=H^{(n)}(0)$ and $b_n:=H^{(n)}(1)$. We immediately see that $a_n = n!\cdot {m \choose n}^2$. We can again use the differential equation (\ref{diff_equation}) to compute $b_n$. For all $0 \leq n \leq m-1$, we get: 
$$b_{n} = -\frac{4(n+1)}{(2n+1)^2}\cdot b_{n+1} \text{ .}$$
Furthermore, we have $b_{m} = m!$.
We thus have, for all $0 \leq n \leq m$:
$$b_n = m!\cdot \prod_{i=n}^{m-1}\frac{-4\cdot(i+1)}{(2i+1)^2} \text{ .}$$
Thus, we have:
$$\prod_{i=1}^{m-1}H^{(i)}(0)\cdot H^{(i)}(1) = m!^{m-1}\cdot \prod_{i=1}^{m-1}i!\cdot {m \choose i}^2\cdot \left(\frac{-4\cdot (i+1)}{(2i+1)^2}\right)^i \text{ .}$$
This gives us, after simplification:
$$\Res(H,H') = r_0 = m!^m \cdot \prod_{i=1}^{m-1} (2i+1)^2\cdot (i+1)^i \cdot i! \cdot {m \choose i}^2 \text{ .}$$
A direct computation finally shows that:
$$\Res(H,H') = m!^{-1}\cdot \prod_{k=1}^m k^{4k} \text{ .}$$
This concludes the proof of Theorem \ref{Supersingular_discriminant_thm}.
\end{proof}

\subsubsection{Relation between the Hasse polynomial and the modular polynomials}

We denote by $L$ the set of roots of $H$ in $\overline{\mathbf{F}}_N$. By Theorem \ref{Supersingular_discriminant_thm}, the roots of $H$ are simples, so $\Card(L) = \frac{N-1}{2}$. 

Let $L'$ be the set of isomorphism classes of triples $(E, P_1, P_2)$ where $E$ is a supersingular elliptic curve over $\overline{\mathbf{F}}_N$ and $(P_1, P_2)$ is a basis of the $2$-torsion $E[2](\overline{\mathbf{F}}_N)$. Each such triple is isomorphic to a unique triple of the form $(E_{\lambda} : y^2=x(x-1)(x-\lambda), (0,0),(1,0))$ where $\lambda \in \overline{\mathbf{F}}_N \backslash \{0,1\}$. We call $\lambda$ the \textit{lambda-invariant} of $(E, P_1, P_2)$. The relation between the $j$-invariant and the lambda-invariant is:
\begin{equation}\label{Supersingular_relation_j_lambda_eq}
j = \frac{256\cdot (1-\lambda+\lambda^2)^3}{\lambda^2\cdot(1-\lambda)^2} \text{ .}
\end{equation} 

We identify $L'$ with the set of supersingular lambda-invariants. It is well-known that $\lambda \in L'$ if and only if $H(\lambda)=0$ \cite[Theorem V.4.1(b)]{Silverman}. Thus, one can (and do) identify $L'$ and $L$. Since the supersingular Legendre elliptic curves in characteristic $N$ are defined over $\mathbf{F}_{N^2}$, we have $L \subset \mathbf{F}_{N^2}$. 

If $\lambda$ and $\lambda'$ are in $\overline{\mathbf{F}}_N \backslash \{0,1\}$ (not necessarily in $L$) and $\ell \geq 3$ is a prime number different from $N$, we say that $\lambda$ and $\lambda'$ are $\ell$\textit{-isogenous}, and we write $\lambda \sim_{\ell} \lambda'$, if there is a rational isogeny $\phi : E_{\lambda} \rightarrow E_{\lambda'}$ of degree $\ell$ preserving the $\Gamma(2)$-structure, \ie $\phi(0,0) = (0,0)$ and $\phi(1,0)=(1,0)$.

It is well-known (\cf for instance \cite[Proposition 2.2]{Lecouturier_Betina_2}) that there exists a unique polynomial $\varphi_{\ell} \in \mathbf{Z}[X,Y]$ which is monic in $X$ and $Y$, and such that we have $\lambda \sim_{\ell} \lambda'$ if and only if $\varphi_\ell(\lambda, \lambda')=0$.
The polynomial $\varphi_{\ell}(X,Y)$ is the ``Legendre version'' of the classical $\ell$th modular polynomial. 

The following result is the key to compute $e_1$.

\begin{thm}\label{Supersingular_main_thm}
Let $\ell$ be a prime number not dividing $2\cdot N$ and let $\lambda \in L$. We have, in $\mathbf{F}_{N^2}$:
\begin{equation}\label{Supersingular_main_thm_eq}
\prod_{\lambda' \sim_{\ell} \lambda} H'(\lambda') = \ell^{\ell-1}\cdot H'(\lambda)^{\ell+1} \text{ .} 
\end{equation}
\end{thm}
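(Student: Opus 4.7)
The plan is to interpret the left-hand side as a resultant and then exploit the geometry of the $\ell$-isogeny correspondence $X_0(\ell, 2) \to X(2)$ in characteristic $N$. Since $\varphi_\ell(X, \lambda) \in \overline{\mathbf{F}}_N[X]$ is monic of degree $\ell + 1$ in $X$ with roots $\{\lambda' : \lambda' \sim_\ell \lambda\}$, the resultant formula gives
\[
\prod_{\lambda' \sim_\ell \lambda} H'(\lambda') \;=\; \Res_X\bigl(\varphi_\ell(X, \lambda),\, H'(X)\bigr).
\]
A preliminary step is to show that for $\lambda \in L$ the $\ell+1$ roots are simple and all lie in $L$. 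Supersingularity is preserved because an $\ell$-isogeny with $\ell \neq N$ does not change the height of the formal group; simplicity reduces to the fact that the $\ell$-isogeny graph on supersingular Legendre invariants, for $\ell \neq N$ odd, carries no double edges.

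The main idea is then to exploit the transformation of the Hasse invariant under $\ell$-isogenies preserving the $\Gamma(2)$-structure. The polynomial $H$ is a weight-$(N-1)$ modular form on $X(2)$ in characteristic $N$, and via the Kodaira--Spencer isomorphism $\omega^{\otimes 2} \cong \Omega^1_{X(2)}$ the differential $dH = H'(\lambda)\,d\lambda$ corresponds to a section of $\omega^{\otimes(N+1)}$. Pulling back $dH$ via the two projections $\pi_1, \pi_2 \colon X_0(\ell, 2) \to X(2)$ produces two sections; their ratio at an isogenous pair $(\lambda, \lambda')$ is governed by $\hat\phi \circ \phi = [\ell]$ for the $\ell$-isogeny $\phi$ and its dual. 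From this I would extract, for each pair $(\lambda', \lambda)$ with $\varphi_\ell(\lambda', \lambda) = 0$, a pointwise identity of the form
\[
H'(\lambda') \cdot \varphi_{\ell, X}(\lambda', \lambda) \;=\; \pm\,\ell \cdot H'(\lambda) \cdot \varphi_{\ell, Y}(\lambda', \lambda).
\]

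Taking the product over the $\ell + 1$ values of $\lambda'$ and using the symmetry $\varphi_\ell(X, Y) = \varphi_\ell(Y, X)$ together with the involution on $X_0(\ell, 2)$ that swaps $(\lambda, \lambda')$ with $(\lambda', \lambda)$, the partial-derivative factors collapse: the discriminant $\Disc_X\,\varphi_\ell(X, \lambda)$ appears on both sides after pairing up inverse isogenies. The expected power $\ell^{\ell-1}$ then arises as $\ell^{(\ell+1)-2}$: each of the $\ell + 1$ sheets contributes one factor of $\ell$, while two factors cancel from the unique diagonal pair of dual isogenies.

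The main obstacle is the precise bookkeeping of the power of $\ell$ and the units appearing in the Kodaira--Spencer normalization in characteristic $N$. A safer alternative is to lift the identity to characteristic zero: both $\varphi_\ell$ and $H$ (via $E_{N-1}$) are defined over $\mathbf{Z}$, so one can establish the analogous derivative identity on the universal Legendre family over $\mathbf{Z}[1/(2\ell N)]$ by the $q$-expansion principle and reduce modulo $N$ at the end, restricting to supersingular $\lambda$. A direct verification for $\ell = 3$ using the explicit form of $\varphi_3$ would provide a sanity check on the exponent.
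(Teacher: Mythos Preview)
Your instinct to interpret $H'(\lambda)\,d\lambda$ as a section of $\omega^{\otimes(N+1)}$ is correct and matches the paper, which does this via a lemma of Katz expressing $E_{N+1}(E_\lambda,\omega_\lambda)$ as a linear combination of $H(\lambda)$ and $(\lambda-1)H'(\lambda)$. But your pointwise identity is not what actually holds, and the missing ingredient is substantive. Writing $\varphi^*\omega_{\lambda'}=c_\varphi\,\omega_\lambda$ for the $\ell$-isogeny $\varphi$, the relation one obtains at a supersingular $\lambda$ is
\[
(\lambda'-1)\,H'(\lambda') \;=\; \ell\cdot c_\varphi^{-(N+1)}\cdot(\lambda-1)\,H'(\lambda),
\]
which involves the $(N+1)$-st power of $c_\varphi$, not a single $\ell$ times a ratio of first partials. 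This comes from Robert's theorem: for \emph{supersingular} $E_\lambda$ one has $E_{N+1}(E_{\lambda'},\omega_{\lambda'})=\ell\cdot E_{N+1}(E_\lambda,\varphi^*\omega_{\lambda'})$. That is precisely where supersingularity enters, and the relation $\hat\phi\circ\phi=[\ell]$ alone does not yield it. After taking the product over $\lambda'$, the $(\lambda-1)$-factors cancel via $\prod(1-\lambda')=(1-\lambda)^{\ell+1}$, but one still needs the separate lemma $\prod_\varphi c_\varphi^2=\ell^2$, which the paper proves independently using the functional equations of $\varphi_\ell$ and a $q$-expansion computation over $\mathbf{C}$. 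Then $\prod c_\varphi^{-(N+1)}=(\ell^2)^{-(N+1)/2}=\ell^{-2}$ in $\mathbf{F}_N$ and $\ell^{\ell+1}\cdot\ell^{-2}=\ell^{\ell-1}$. Your heuristic for the exponent (``two factors cancel from the unique diagonal pair of dual isogenies'') does not correspond to anything in the actual mechanism.

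Your ``safer alternative'' of lifting to characteristic zero is a dead end. The paper remarks explicitly that the identity \emph{fails} for $\lambda\in\overline{\mathbf{F}}_N\setminus(\{0,1\}\cup L)$, so there is no polynomial identity in $\lambda$ over $\mathbf{Z}[1/(2\ell N)]$ to prove by the $q$-expansion principle and then reduce; supersingularity is the essential hypothesis and has no characteristic-zero analogue for the generic Legendre family. (Minor aside: you do not need the roots of $\varphi_\ell(X,\lambda)$ to be simple, and the claimed absence of double edges is false in general; the product is simply a resultant with multiplicities.)
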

\begin{rem}
Equation (\ref{Supersingular_main_thm_eq}) makes sense even if $\lambda \in \overline{\mathbf{F}}_N \backslash \left(\{0,1\} \cup L\right)$, but it does not hold in general. The supersingularity of $\lambda$ is used crucially in Lemma \ref{Supersingular_Robert} below.
\end{rem}
\begin{proof}
The idea is to reinterpret $H(\lambda)$ and $H'(\lambda)$ as modular forms of level $1$, and more precisely as Eisenstein series. If $k \geq 2$ is an integer, let
$$E_k = 1+\frac{(-1)^{k-1}\cdot 4k}{B_k}\cdot \sum_{n\geq 1} \sigma_{k-1}(n)q^n$$ be the unique Eisenstein series of weight $k$ and level $\SL_2(\mathbf{Z})$ with constant coefficient $1$.

Recall that a modular form of level $\SL_2(\mathbf{Z})$ with coefficients in an  $\mathbf{F}_N$-algebra $A$ can be considered, following Deligne and Katz \cite[p. 77-78]{Katz_properties}, as a rule on pairs $(E, \omega)$ were $E$ is an elliptic curve over $A$ and $\omega$ is a differential form generating $H^0(E, \Omega^1)$ over $A$. 

Recall that if $\lambda \in \overline{\mathbf{F}}_N \backslash \{0,1\}$, we have denoted by $E_{\lambda}$ the Legendre elliptic curve $y^2=x(x-1)(x-\lambda)$. We let $\omega_{\lambda} = \frac{dx}{y}$. The following fact, essentially due du Katz, is crucial to interpret $H'(\lambda)$.
\begin{lem}[Katz]\label{Supersingular_Katz}
Let $m = \frac{N-1}{2}$.
For all $\lambda \in \overline{\mathbf{F}}_N \backslash \{0,1\}$, we have the following equality in $\overline{\mathbf{F}}_N$:
$$E_{N+1}(E_{\lambda}, \omega_{\lambda}) = \frac{3}{m+1} \cdot \left(m\cdot H(\lambda) - (\lambda-1) \cdot H'(\lambda)\right) + (\lambda+1)\cdot H(\lambda) \text{ .}$$
\end{lem}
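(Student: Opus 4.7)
The plan is to prove this lemma, attributed to Katz, by establishing a characteristic-$N$ identity of modular forms and then evaluating it on the Legendre family, in Katz's framework where mod-$N$ modular forms are algebraic rules on pairs $(E,\omega)$.

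First, I would recall the classical identification $E_{N-1}(E_\lambda,\omega_\lambda)\equiv H(\lambda)\pmod{N}$: the weight-$(N-1)$ Eisenstein series reduces modulo $N$ to the Hasse invariant, whose expression on the Legendre curve $y^2=x(x-1)(x-\lambda)$ with differential $dx/y$ is exactly the Hasse polynomial (\cite[Theorem V.4.1]{Silverman}). This fixes the meaning of $H(\lambda)$ and $H'(\lambda)$ in the statement.

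The heart of the argument is the characteristic-$N$ identity
$$E_{N+1}\equiv E_2\cdot E_{N-1}+12\,\theta(E_{N-1})\pmod{N},$$
which I would derive from the Ramanujan--Serre identity $\partial_k E_k=-\tfrac{k}{12}E_{k+2}$ using the definition $\partial_k=\theta-\tfrac{k}{12}E_2$ and $N-1\equiv-1\pmod{N}$. I would next evaluate both sides on $(E_\lambda,\omega_\lambda)$: via the Kodaira--Spencer isomorphism on the moduli $Y(2)\otimes\mathbf{F}_N$, the $\theta$-operator translates into a multiple of $\tfrac{d}{d\lambda}$ whose precise coefficient is pinned down by comparison with the Tate curve uniformization; the quasi-modular $E_2$ on $(E_\lambda,\omega_\lambda)$ is computed as an explicit polynomial in $\lambda$ via the Weierstrass $\wp$-expansion of the Legendre equation. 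Combining, $E_{N+1}(E_\lambda,\omega_\lambda)$ comes out as a linear combination of $H(\lambda)$ and $H'(\lambda)$ with polynomial coefficients in $\lambda$, which one then matches with the RHS of the lemma, fixing the constants $\tfrac{3m}{m+1}$, $-\tfrac{3(\lambda-1)}{m+1}$, and $\lambda+1$.

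The main obstacle is the evaluation of $E_2(E_\lambda,\omega_\lambda)$: since $E_2$ is only quasi-modular, its algebraic value depends on the choice of normalization of $\omega_\lambda=dx/y$, so a careful Weierstrass computation is required --- this is the technical heart of the proof. A secondary difficulty is verifying that the Ramanujan identity $\partial_k E_k=-\tfrac{k}{12}E_{k+2}$, which in characteristic zero can acquire a cusp-form correction for $k\geq 10$, remains valid modulo $N$ at weight $k=N-1$; this should follow from the $q$-expansion principle combined with the fact that the relevant cusp-form correction is $N$-integrally divisible by $N$, which one checks on sufficiently many $q$-coefficients.
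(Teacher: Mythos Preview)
Your approach via the Ramanujan--Serre identity $E_{N+1}\equiv E_2\cdot E_{N-1}+12\,\theta E_{N-1}\pmod N$ is genuinely different from the paper's. The paper instead invokes a direct (unpublished) formula of Katz: for a Weierstrass curve $y^2=4x^3-g_2x-g_3$ with $\omega=dx/y$, the value $\tfrac{1}{12}E_{N+1}(E,\omega)$ is the coefficient of $x^{N-2}$ in $(4x^3-g_2x-g_3)^{(N-1)/2}$. One then brings the Legendre curve into Weierstrass form by the translation $x\mapsto x+\tfrac{\lambda+1}{3}$, observes that in characteristic $N$ the $(N-2)$-nd derivative of any polynomial has at most two Taylor coefficients (since the $N$-th derivative vanishes identically), and reads off the claimed expression by a short explicit computation. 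This completely bypasses the need to interpret $E_2$ and $\theta$ geometrically on the Legendre family, which is the main technical burden in your route. Your secondary difficulty (the cusp-form correction to $\partial_{N-1}E_{N-1}=-\tfrac{N-1}{12}E_{N+1}$) is, as you suspect, harmless: the correction has $q$-expansion $\equiv 0\pmod N$ since $E_{N-1}\equiv 1$ and $E_{N+1}\equiv E_2$ as $q$-series mod $N$. Your primary difficulty --- the exact evaluation of $E_2(E_\lambda,\omega_\lambda)$ and of the Kodaira--Spencer constant --- is real, and carrying it out carefully would require noticeably more machinery than the paper's two-line polynomial manipulation. The payoff of your approach would be a conceptual explanation of why the answer is a first-order differential expression in $H$; the paper's approach buys brevity and self-containment.
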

\begin{proof}
Recall the following (unpublished) theorem of Katz \cite[Theorem $3.1$]{Katz}. Let $(E,\omega)$ be a pair of the form $\left(y^2=4x^3-g_2x-g_3, \frac{dx}{y}\right)$. Then, $\frac{1}{12} \cdot E_{N+1}(E, \omega)$ is the coefficient of $x^{N-2}$ in the polynomial $\left(4x^3-g_2x-g_3\right)^{\frac{N-1}{2}}$.
  
Let $\lambda \in \overline{\mathbf{F}}_N \backslash \{0,1\}$, and $f(x) = 4x(x-1)(x-\lambda)$. The pair $\left(y^2=f(x), \frac{dx}{y}\right)$ is isomorphic to the pair $\left(y^2=g(x), \frac{dx}{y}\right)$ with $g(x) = f(x+c)=4x^3-g_2x-g_2$ (for some $g_2$ and $g_3$), where $c = \frac{\lambda+1}{3}$. 

We have $$E_{N+1}(E_{\lambda}, \omega_{\lambda}) = E_{N+1}\left(y^2=f(x), 2\cdot \frac{dx}{y}\right) = 2^{-(N+1)}\cdot E_{N+1}\left(y^2=g(x), \frac{dx}{y}\right) \text{, }$$
so $\frac{2^{N+1}}{12}\cdot E_{N+1}\left(E_{\lambda}, \omega_{\lambda}\right)$ is the coefficient of $x^{N-2}$ in $g(x)^{\frac{N-1}{2}}=f(x+c)^{\frac{N-1}{2}}$. We have$$\left(f^{\frac{N-1}{2}}\right)^{(N-2)}(c) = \left(f^{\frac{N-1}{2}}\right)^{(N-2)}(0)+c\cdot \left(f^{\frac{N-1}{2}}\right)^{(N-1)}(0)$$
where $\left(f^{\frac{N-1}{2}}\right)^{(k)}$ is the $k$th derivative of $f^{\frac{N-1}{2}}$, since $(f^{\frac{N-1}{2}})^{(k)}=0$ if $k\geq N$. A direct computation finally shows that
$$\frac{2^{N+1}}{12}\cdot E_{N+1}(E_{\lambda}, \omega_{\lambda}) = \frac{2^{N-1}}{m+1}\cdot \left(m\cdot H(\lambda)-(\lambda-1)\cdot H'(\lambda)\right) + \frac{\lambda+1}{3} \cdot 2^{N-1} \cdot H(\lambda) \text{ .}$$ 
\end{proof}

Let $\ell$ be a prime number not dividing $2\cdot N$. If $\varphi: E_{\lambda} \rightarrow E_{\lambda'}$ is an isogeny of degree $\ell$ preserving the $\Gamma(2)$-structure then
$$\varphi^*(\omega_{\lambda'}) = c_{\varphi}\cdot \omega_{\lambda}$$
for  some
$c_{\varphi} \in \overline{\mathbf{F}}_N^{\times}$. 
Note that there are only two $\ell$-isogenies $E_{\lambda} \rightarrow E_{\lambda'}$. These are $\varphi$ and $-\varphi$. Consequently, $c_{\varphi}^2$ only depends on $\lambda$ and $\lambda'$ (satisfying $\varphi_{\ell}(\lambda, \lambda')=0$). In fact one can show that 
\begin{equation}\label{Supersingular_c_phi_lambda_lambda'_eq_0}
c_{\varphi}^2=:c(\lambda, \lambda')
\end{equation}
is a rational function of $\lambda$ and $\lambda'$. More precisely, we have \cite[Lemma 8]{Madsen}:
\begin{equation}\label{Supersingular_c_phi_lambda_lambda'_eq}
c_{\varphi}^2 = -\ell \cdot \frac{\lambda(1-\lambda)}{\lambda'(1-\lambda')}\cdot \frac{\partial_Y \varphi_{\ell}(\lambda', \lambda) }{\partial_X \varphi_{\ell}(\lambda', \lambda) } \text{ .}
\end{equation}
Note that this equality is true in $\mathbf{Q}(\lambda, \lambda')$ and is deduced in $\mathbf{F}_N(\lambda, \lambda')$ by reducing modulo $N$ our objects (elliptic curves, isogenies...).
\begin{lem}\label{Supersingular_c_phi}
We have, over any field of characteristic $\neq 2$, without assuming that $E_{\lambda}$ is supersingular:
$$\prod_{\varphi} c_{\varphi}^2 = \ell^2$$
where the product is over a choice of non isomorphic degree $\ell+1$ isogenies with origin $E_{\lambda}$ and preserving the $\Gamma(2)$-structure. Equivalently, we have the following equality in $\mathbf{Q}(\lambda)$:
$$\frac{\Res_X(\partial_Y \varphi_{\ell}(X, \lambda), \varphi_{\ell}(X,\lambda))}{\Res_X(\partial_X \varphi_{\ell}(X, \lambda), \varphi_{\ell}(X,\lambda))} = \ell^{-(\ell-1)}\text{ .}$$
\end{lem}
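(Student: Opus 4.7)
The plan is to establish the main identity $\prod_i c_{\varphi_i}^2 = \ell^2$; the equivalent resultant identity then follows by substituting Madsen's formula (\ref{Supersingular_c_phi_lambda_lambda'_eq}) into the product and rearranging, using the consequence $\prod_i \lambda_i(1-\lambda_i) = \lambda^{\ell+1}(1-\lambda)^{\ell+1}$ of the cuspidal behaviour of the Hecke correspondence on $X(2)$ (the cusps $\lambda = 0, 1$ are totally ramified under the degree-$(\ell+1)$ projection of $X(2)(\ell) \to X(2)$, so $\varphi_\ell(0, Y) = Y^{\ell+1}$ and $\varphi_\ell(1, Y) = (1-Y)^{\ell+1}$). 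Label the $\ell+1$ isogenies out of $E_\lambda$ preserving the $\Gamma(2)$-structure as $\varphi_i : E_\lambda \to E_{\lambda_i}$ for $i = 0, \ldots, \ell$.

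The starting point is a per-pair duality identity. For each $\varphi_i$, the dual isogeny $\hat\varphi_i : E_{\lambda_i} \to E_\lambda$ also preserves the $\Gamma(2)$-structure, since $\hat\varphi_i \circ \varphi_i = [\ell]$ acts as the identity on $E_\lambda[2]$ (because $\ell$ is odd). Applying the chain rule to $[\ell]^* \omega_\lambda = \ell\, \omega_\lambda$ yields $c_{\varphi_i} \cdot c_{\hat\varphi_i} = \ell$, and in particular $c_{\varphi_i}^2 \cdot c_{\hat\varphi_i}^2 = \ell^2$ for each individual $i$.

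To promote this per-pair identity to the global product, I would view $F(\lambda) := \prod_i c_{\varphi_i}^2$ in the complex-analytic setting. Writing $E_\tau = \mathbf{C}/(\mathbf{Z} + \tau\mathbf{Z})$, the $\ell+1$ isogenies correspond to $\tau \mapsto (\tau+k)/\ell$ for $0 \leq k \leq \ell-1$ (each inducing $\varphi_k^* dz = dz$) and the Atkin--Lehner isogeny $\tau \mapsto \ell\tau$ (inducing $\varphi_{\mathrm{AL}}^* dz = \ell\, dz$). Writing $\omega_\lambda = \Omega(\tau)\, dz$ for the Legendre period $\Omega$, each $c_{\varphi_k}$ is the ratio $\Omega((\tau+k)/\ell)/\Omega(\tau)$ and $c_{\varphi_{\mathrm{AL}}} = \ell\, \Omega(\ell\tau)/\Omega(\tau)$, so
\[ F(\lambda) \;=\; \ell^2 \cdot \frac{\Omega(\ell\tau)^2 \cdot \prod_{k=0}^{\ell-1} \Omega((\tau+k)/\ell)^2}{\Omega(\tau)^{2(\ell+1)}}. \]
The claim thus reduces to the level-raising identity $\Omega(\ell\tau)^2 \prod_{k=0}^{\ell-1} \Omega((\tau+k)/\ell)^2 = \Omega(\tau)^{2(\ell+1)}$.

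The main obstacle will be the precise verification of this level-raising identity for the Legendre period, including the resolution of the sign (which disappears after squaring). I would verify it either via the standard $\vartheta$-function product identities (noting that $\Omega(\tau)^2$ is, up to normalisation, $\vartheta_3(0|\tau)^4$, for which such level-raising relations are classical), or algebraically by viewing $F(\lambda)$ as a rational function on $X(2)$ with divisor supported on the cusps: Madsen's formula makes the divisor explicit, and a local computation at $\lambda = \infty$ using the $q$-expansions of the $\lambda_i(\lambda)$, combined with implicit differentiation of $\varphi_\ell(\lambda_i, \lambda) = 0$, pins down the constant as $\ell^2$. Since the identity is algebraic in $\lambda$, once established in characteristic zero it reduces modulo $N$ to give the statement in $\mathbf{F}_N(\lambda)$.
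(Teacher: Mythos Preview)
Your proposal is broadly correct and lands in the same place as the paper, but the route to showing that $F_\ell(\lambda) := \prod_\varphi c_\varphi^2$ is constant is different and somewhat heavier than the paper's.

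The paper's argument is: since each $c_\varphi^2$ is a regular nonvanishing function on the open $\Gamma(2)$-curve, $F_\ell(\lambda)$ has divisor supported on $\{0,1,\infty\}$, so $F_\ell(\lambda) = C_\ell\,\lambda^{n_\ell}(\lambda-1)^{m_\ell}$. The symmetries $\varphi_\ell(1-X,1-Y)=\varphi_\ell(X,Y)$ and $\varphi_\ell(X^{-1},Y^{-1})=(XY)^{-(\ell+1)}\varphi_\ell(X,Y)$, combined with the product identities $\prod\lambda'=\lambda^{\ell+1}$ and $\prod(1-\lambda')=(1-\lambda)^{\ell+1}$, force $F_\ell(1-\lambda)=F_\ell(\lambda)$ and $F_\ell(\lambda^{-1})=F_\ell(\lambda)$, hence $n_\ell=m_\ell=0$. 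Only then does the paper compute the constant $C_\ell$, and it does so by the same kind of $q$-expansion/$\theta_3$ limit you propose: each $c(\lambda,\lambda_i)\to 1$ for $i<\ell$ and $c(\lambda,\lambda_\ell)\to\ell^2$ as $z\to i\infty$.

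Your approach instead asks for a full level-raising identity $\Omega(\ell\tau)^2\prod_k\Omega((\tau+k)/\ell)^2=\Omega(\tau)^{2(\ell+1)}$ for the Legendre period, which is equivalent to constancy plus the value but is strictly more to verify than the paper needs; your fallback of computing the divisor of $F_\ell$ at each cusp via local $q$-expansions would also work but is more laborious than the two-line symmetry argument. The per-pair duality $c_{\varphi_i}c_{\hat\varphi_i}=\ell$ is a correct observation but, as you note, does not feed into the global product since the duals $\hat\varphi_i$ are not among the outgoing isogenies from $E_\lambda$; you can safely drop it.
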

\begin{proof}
\begin{lem}\label{Supersingular_identity_product_lambda}
Let $F$ be a separably closed field of characteristic different from $2$. For all $\lambda \in F \backslash \{0,1\}$, we have in $F$:
\begin{equation}\label{identity_product_lambda_1}
\prod_{\lambda' \sim_{\ell} \lambda} \lambda' = \lambda^{\ell+1}
\end{equation}
and
\begin{equation}\label{identity_product_lambda_2}
\prod_{\lambda' \sim_{\ell} \lambda} (1-\lambda') = (1-\lambda)^{\ell+1} \text{ .}
\end{equation}
\end{lem}
\begin{proof}
This is well-known, \cf for instance \cite[p. 122]{Pi_AGM}.
\end{proof}

The equivalence between the two equalities comes from the relation $$\prod_{\varphi} c_{\varphi}^2 = \ell^{\ell+1}\cdot \frac{\Res_X(\partial_Y \varphi_{\ell}(X, \lambda), \varphi_{\ell}(X,\lambda))}{\Res_X(\partial_X \varphi_{\ell}(X, \lambda), \varphi_{\ell}(X,\lambda))} $$ which is a direct consequence of (\ref{Supersingular_c_phi_lambda_lambda'_eq}) and Lemma \ref{Supersingular_identity_product_lambda}. Let $F_{\ell}(\lambda) = \prod_{\varphi} c_{\varphi}^2$. This is a rational fraction in $\lambda$, which has no poles or zeros outside $0$ and $1$. Thus, we have $F_{\ell}(\lambda) = C_{\ell}\cdot \lambda^{n_{\ell}}\cdot(\lambda-1)^{m_{\ell}}$ where $n_{\ell}$, $m_{\ell}$ $\in \mathbf{Z}$ and $C_{\ell} \in \overline{\mathbf{F}}_N^{\times}$.

The relations $$\varphi_{\ell}(X,Y) = X^{\ell+1}Y^{\ell+1}\cdot \varphi_{\ell}(X^{-1},Y^{-1})$$ and $$\prod_{\lambda' \text{ s.t. } \varphi_{\ell}(\lambda', \lambda)=0} \lambda' = \lambda^{\ell+1}$$ show that $F_{\ell}(\lambda) = F_{\ell}(\lambda^{-1})$.
This last relation shows that $n_{\ell} = -(n_{\ell}+m_{\ell})$.
One can easily show that $\varphi_{\ell}(1-X, 1-Y) = \varphi_{\ell}(X,Y)$. This is deduced for example from the fact that $\lambda\left(\frac{-1}{z}\right) = 1-\lambda(z)$ if $z$ is in the complex upper-half plane. The relation $\varphi_{\ell}(1-X, 1-Y) = \varphi_{\ell}(X,Y)$ shows that $F_{\ell}(1-\lambda) = F_{\ell}(\lambda)$, so $n_{\ell}=m_{\ell}$. Since $n_{\ell}=-(n_{\ell}+m_{\ell})$, we have $n_{\ell} = m_{\ell} = 0$, so $F_{\ell}$ is a constant. 

In order to show that $C_{\ell} = \ell^2$, it suffices to prove it over $\mathbf{C}$ using the $q$-expansions, where $q = e^{i\pi z}$. More precisely, let $\lambda = \lambda(q)$ be fixed, where $q = e^{i \pi z}$. The $\ell+1$ roots of $\varphi_{\ell}(\lambda, X)$ are the $\lambda_i = \lambda\left(e^{\frac{i\pi\cdot(z+2i)}{\ell}}\right)$ where $i \in \{0,1,..., \ell-1\}$ and $\lambda_{\ell} = \lambda(q^{\ell})$. We thus have
$$C_{\ell} = c(\lambda, \lambda_{\ell})^2\cdot \prod_{i=0}^{\ell-1}c(\lambda, \lambda_i)^2 $$
where $c$ was defined in (\ref{Supersingular_c_phi_lambda_lambda'_eq_0}).

This last factor is a constant function of $z$. Note that $c(\lambda, \lambda_{\ell}) = \frac{\ell}{c(\lambda_{\ell}, \lambda)}$. By \cite[(4.6.1) p. 137]{Pi_AGM}, for all $i \in \{0,1,..., \ell-1\}$ we have:
$$c(\lambda, \lambda_i) = \frac{\theta_3\left(e^{\frac{i\pi\cdot(z+2i)}{\ell}} \right)^2}{\theta_3\left(e^{i\pi z} \right)^2} \text{ , }$$
where $\theta_3(q) = \sum_{n\in \mathbf{Z}} q^{n^2}$.
Thus, $c(\lambda, \lambda_{i})$ goes to $1$ when $z$ goes to $i\infty$. Similarly, $c(\lambda_{\ell}, \lambda)$ goes to $1$ when $z$ goes to $i\infty$. This shows $C_{\ell}=\ell^2$.
\end{proof}

In order to conclude the proof of Theorem \ref{Supersingular_main_thm}, we need one last result, which is only true in characteristic $N$ and which uses the supersingularity is used in an essential way.
\begin{lem}\cite[Th\'eor\`eme B]{Robert}\label{Supersingular_Robert}
Let $\lambda \in L$ and $\varphi: E_{\lambda} \rightarrow E_{\lambda'}$ be an isogeny of degree $\ell$. We have:
$$E_{N+1}(E_{\lambda'}, \omega_{\lambda'}) = \ell\cdot E_{N+1}(E_{\lambda}, \varphi^*(\omega_{\lambda'})) \text{ .}$$
\end{lem}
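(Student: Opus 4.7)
This is \cite[Théorème B]{Robert}. Let me sketch the approach. Write $\varphi^*\omega_{\lambda'} = \alpha \cdot \omega_\lambda$ with $\alpha \in \overline{\mathbf{F}}_N^\times$. Using the weight-$(N+1)$ transformation $E_{N+1}(E, c\omega) = c^{-(N+1)} E_{N+1}(E, \omega)$, the claim is equivalent to
$$\alpha^{N+1} \cdot E_{N+1}(E_{\lambda'}, \omega_{\lambda'}) = \ell \cdot E_{N+1}(E_\lambda, \omega_\lambda). \qquad (\star)$$
In characteristic zero, $E_{N+1}$ is a true modular form of weight $N+1$ and a Hecke eigenform with $T_\ell$-eigenvalue $1+\ell^N$; reducing modulo $N$, this eigenvalue becomes $1+\ell$. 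This alone yields only an identity summed over the $\ell+1$ degree-$\ell$ isogenies out of $E_\lambda$, not the pointwise identity $(\star)$.

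The strategy is to exploit supersingularity via the characteristic-$N$ structure of $E_{N+1}$. Serre's mod-$p$ theory of modular forms gives a congruence, modulo $N$, between $E_{N+1}$ and a Serre-derivative-type expression in the Hasse invariant $A = E_{N-1}$. Since $A$ is a genuine weight-$(N-1)$ modular form, it transforms cleanly as $A(E_{\lambda'}, \omega_{\lambda'}) = \alpha^{-(N-1)} A(E_\lambda, \omega_\lambda)$, and both sides vanish at the supersingular $\lambda$ (and at $\lambda'$, which is automatically supersingular since $\ell \neq N$). One then computes the first-order behavior of $A$ along the direction given by $\varphi$: the failure of the Serre derivative to preserve modularity produces a correction term tracked by the Gauss--Manin connection, and at a supersingular point the vanishing of $A$ kills the otherwise-dominant term, so that the correction contributes exactly the factor $\ell$ appearing in $(\star)$.

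\textbf{Main obstacle.} The hard part is to make the correction-term computation precise and to extract the factor $\ell$ from the degree of $\varphi$. Robert's approach proceeds via a direct calculation on the formal group of $E_\lambda$ at the supersingular point, using that $\hat\varphi \circ \varphi = [\ell]$ on $E_\lambda$ and that the Frobenius--Verschiebung relation kills the appropriate terms modulo $N$ precisely at supersingular points (where the formal group has height $2$). Once this local computation is in place, $(\star)$ follows by matching the leading-order expansions on both sides; one may alternatively attempt to lift $E_\lambda$, $E_{\lambda'}$ and $\varphi$ to a ramified extension of $\mathbf{Z}_N$ (the isogeny lifts because $\ell\neq N$), work with the characteristic-zero modular form $E_{N+1}$, and then specialize, but the absence of a canonical Serre--Tate lift in the supersingular case makes the formal-group method more transparent.
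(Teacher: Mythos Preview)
The paper does not prove this lemma: it is stated with the citation \cite[Th\'eor\`eme B]{Robert} and then used as a black box in the proof of Theorem~\ref{Supersingular_main_thm}. So there is no argument in the paper to compare your sketch against.

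Your proposal is explicitly a plan rather than a proof, and you identify the main obstacle yourself: reducing to $(\star)$ is routine, but extracting the factor $\ell$ from the degree of $\varphi$ at a supersingular point requires a genuine local computation (formal-group or Gauss--Manin type) that you do not carry out. The ingredients you list---Serre's mod-$p$ theory, the vanishing of the Hasse invariant $A=E_{N-1}$ at supersingular points, and the height-$2$ structure of the formal group---are the right ones, and your reformulation $(\star)$ is correct. But as written this is an outline of where the content lies, not a proof; you would still need to perform the actual calculation that pins down the constant $\ell$ (as opposed to some other function of $\ell$), which is exactly what Robert's paper does.
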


Using Lemma \ref{Supersingular_Katz} and (\ref{identity_product_lambda_2}), we get
$$\prod_{\lambda' \sim_{\ell} \lambda} H'(\lambda') = \ell^{\ell+1}\cdot \prod_{\varphi} c_{\varphi}^{-(N+1)} \cdot H(\lambda)^{\ell+1} \text{ .}$$
Lemma \ref{Supersingular_c_phi} shows that $\prod_{\varphi} c_{\varphi}^{N+1} = (\ell^2)^{\frac{N+1}{2}} = \ell^2$, which concludes the proof of Theorem \ref{Supersingular_main_thm}.
\end{proof}

We let
\begin{equation}\label{Supersingular_modular_polynomial_varphi_2}
\varphi_2(X,Y) := Y^2\cdot (1-X)^2 + 16\cdot X\cdot Y - 16\cdot X \in \mathbf{Z}[X,Y]\text{ .}
\end{equation}
The following result motivates the definition of $\varphi_2$.

\begin{prop}\label{Supersingular_motivation_varphi_2}
Let $z \in \mathbf{C}$ and $\lambda = \lambda(q)$ (where $q = e^{i \pi z}$). We have in $\mathbf{C}[Y]$:
$$\left(Y-\lambda(e^{\frac{i\pi z}{2}})\right) \cdot \left(Y-\lambda(e^{\frac{i\pi (z+2)}{2}}) \right) = \frac{1}{(1-\lambda)^2}\cdot \varphi_2(\lambda, Y) \text{ .}$$ 
\end{prop}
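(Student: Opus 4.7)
The plan is to identify the two roots on the left-hand side explicitly and then reduce the statement to a form of Landen's classical transformation formula for $\lambda$. Set $\mu := \lambda(e^{i\pi z/2})$. Since $e^{i\pi (z+2)/2} = -e^{i\pi z/2}$, and the modular $\lambda$-function satisfies the standard transformation $\lambda(-q) = \lambda(q)/(\lambda(q)-1)$ (coming from $\lambda(\tau+1) = \lambda(\tau)/(\lambda(\tau)-1)$), the second root is $\mu/(\mu-1)$. The elementary identity
$$\mu + \frac{\mu}{\mu-1} \;=\; \mu \cdot \frac{\mu}{\mu-1} \;=\; \frac{\mu^2}{\mu-1}$$
then yields
$$\left(Y-\mu\right)\left(Y - \frac{\mu}{\mu-1}\right) \;=\; Y^2 + \frac{\mu^2}{1-\mu}\,(Y-1).$$
On the other hand, direct expansion of the right-hand side of the proposition gives
$$\frac{\varphi_2(\lambda, Y)}{(1-\lambda)^2} \;=\; Y^2 + \frac{16\lambda}{(1-\lambda)^2}\,(Y-1),$$
so the proposition reduces to the Landen-type identity
$$\frac{\mu^2}{1-\mu} \;=\; \frac{16\lambda}{(1-\lambda)^2}. \qquad (\ast)$$

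To prove $(\ast)$, write $\lambda = \theta_2(q)^4/\theta_3(q)^4$ with $q = e^{i\pi z}$, and $\mu = \theta_2(q^{1/2})^4/\theta_3(q^{1/2})^4$ with $q^{1/2} = e^{i\pi z/2}$. The standard theta duplication formulas
$$\theta_3(q)^2 = \tfrac{1}{2}\bigl(\theta_3(q^{1/2})^2 + \theta_4(q^{1/2})^2\bigr), \qquad \theta_2(q)^2 = \tfrac{1}{2}\bigl(\theta_3(q^{1/2})^2 - \theta_4(q^{1/2})^2\bigr),$$
combined with Jacobi's identity $\theta_4^4 = \theta_3^4 - \theta_2^4$, give
$$\sqrt{\lambda} \;=\; \frac{1-s}{1+s}, \qquad s := \frac{\theta_4(q^{1/2})^2}{\theta_3(q^{1/2})^2} \;=\; \sqrt{1-\mu},$$
the branches being pinned down by the explicit theta-function expressions. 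An elementary computation then gives $1 - \lambda = 4s/(1+s)^2$, and therefore
$$\frac{\lambda}{(1-\lambda)^2} \;=\; \frac{(1-s)^2(1+s)^2}{16 s^2} \;=\; \frac{(1-s^2)^2}{16 s^2} \;=\; \frac{\mu^2}{16(1-\mu)},$$
which is exactly $(\ast)$.

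The main obstacle in this plan is keeping track of the branch of the square root in Landen's formula; once this is fixed by the theta-function expressions it is unambiguous, and the rest is routine algebra. Both sides of the asserted identity are holomorphic in $q$ on the punctured unit disk, so checking it for, say, $q \in (0,1)$ (where every quantity is manifestly a positive real number) is sufficient to conclude in general by analytic continuation.
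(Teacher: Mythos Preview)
Your proof is correct and follows essentially the same strategy as the paper's: both reduce the proposition to the identity $\lambda_1+\lambda_2=\lambda_1\lambda_2=\dfrac{-16\lambda}{(1-\lambda)^2}$ (equivalently your $(\ast)$), together with the transformation $\lambda(-q)=\lambda(q)/(\lambda(q)-1)$ giving $(1-\lambda_1)(1-\lambda_2)=1$.

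The difference lies only in how the key identity $(\ast)$ is proved. The paper works with the infinite-product expansion $\lambda(q)=16q\prod_{n\geq 1}\bigl((1+q^{2n})/(1+q^{2n-1})\bigr)^8$ and the Euler product identities $Q_1Q_2Q_3=1$ and $Q_2^8=Q_3^8+16qQ_1^8$ to compute $\lambda_1\lambda_2$ directly. You instead invoke the theta duplication formulas and Jacobi's identity to obtain Landen's transformation $\sqrt{\lambda}=(1-s)/(1+s)$ with $s^2=1-\mu$, from which $(\ast)$ follows by algebra. The two routes are classically equivalent (the $Q_i$ are essentially theta quotients and the Euler identity $Q_2^8=Q_3^8+16qQ_1^8$ is Jacobi's $\theta_3^4=\theta_4^4+\theta_2^4$), but your packaging via Landen is cleaner and more recognisable, while the paper's $q$-product manipulation is more self-contained. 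Your remark on fixing branches via positivity for $q\in(0,1)$ and analytic continuation is a nice touch that the paper leaves implicit.
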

\begin{proof}
We have: $$(Y-\lambda(e^{\frac{i \pi z}{2}}))(Y-\lambda(e^{\frac{i \pi (z+2)}{2}})) = Y^2 - (\lambda(q^{\frac{1}{2}})+\lambda(-q^{\frac{1}{2}}))\cdot Y+\lambda(q^{\frac{1}{2}})\lambda(-q^{\frac{1}{2}})$$
where $q^{\frac{1}{2}} := e^{\frac{i \pi z}{2}}$. For clarity, let $\lambda_1 = \lambda(q^{\frac{1}{2}})$ et $\lambda_2=\lambda(-q^{\frac{1}{2}})$. We have the following $q$-expansion identity:
$$\lambda(q) = 16q\prod_{n=1}^{\infty} \left( \frac{1+q^{2n}}{1+q^{2n-1}} \right)^8 \text{ .}$$ 
We follow \cite[p.$63$]{Pi_AGM} by letting $Q_0=\prod_{n=1}^{\infty} (1-q^{2n})$, $Q_1=\prod_{n=1}^{\infty} (1+q^{2n})$, $Q_2=\prod_{n=1}^{\infty} (1+q^{2n-1})$ and $Q_3=\prod_{n=1}^{\infty} (1-q^{2n-1})$. 
We have:
\begin{equation}\label{Supersingular_U_2_Q_i_eq3}
\lambda(q) = 16q\cdot \left(\frac{Q_1}{Q_2}\right)^8
\end{equation}
and
\begin{equation}\label{Supersingular_U_2_Q_i_eq4}
\lambda(q^{\frac{1}{2}})\cdot \lambda(-q^{\frac{1}{2}}) = -16^2q\prod_{n=1}^{\infty}\frac{(1+q^n)^{16}}{(1-q^{2n-1})^8} = -16^2q\cdot \left(\frac{(Q_1\cdot Q_2)^2}{Q_3}\right)^8 \text{ .}
\end{equation}
We have  \cite[p.$64$,$65$]{Pi_AGM}:
\begin{equation}\label{Supersingular_U_2_Q_i_eq1}
Q_1\cdot Q_2\cdot Q_3=1
\end{equation}
and 
\begin{equation}\label{Supersingular_U_2_Q_i_eq2}
Q_2^8=Q_3^8+16q\cdot Q_1^8 \text{ .}
\end{equation}

By (\ref{Supersingular_U_2_Q_i_eq4}) and (\ref{Supersingular_U_2_Q_i_eq1}), we have:
\begin{equation}\label{Supersingular_U_2_Q_i_eq6}
\lambda(q^{\frac{1}{2}})\cdot \lambda(-q^{\frac{1}{2}}) = -16^2q(Q_1\cdot Q_2)^{3\cdot 8} = -16^2q\cdot \left(\frac{Q_1}{Q_2}\right)^{3\cdot 8}\cdot Q_2^{6\cdot 8} \text{ .}
\end{equation}
By (\ref{Supersingular_U_2_Q_i_eq1}) and (\ref{Supersingular_U_2_Q_i_eq2}), we have:
$$Q_2^{3\cdot 8}\cdot \left(\frac{Q_1}{Q_2}\right)^8\cdot\left(1-16q\cdot \left(\frac{Q_1}{Q_2}\right)^8\right)=1$$
which gives 
\begin{equation}\label{Supersingular_U_2_Q_i_eq5}
Q_2^{6\cdot 8} = \frac{16^2q^2}{\lambda^2\cdot(1-\lambda)^2} \text{ .}
\end{equation}
By (\ref{Supersingular_U_2_Q_i_eq6}) and (\ref{Supersingular_U_2_Q_i_eq5}), we have:
$$\lambda(q^{\frac{1}{2}})\cdot \lambda(-q^{\frac{1}{2}}) = \frac{-16\cdot \lambda(q)}{(1-\lambda(q))^2}$$ that is $\lambda_1\lambda_2 = \frac{-16\lambda}{(1-\lambda)^2}$.
By \cite[p.$115$, ($4.3.6$)]{Pi_AGM}, we have $\lambda(-q)=\frac{\lambda(q)}{1-\lambda(q)}$, which gives $$(1-\lambda_1)\cdot (1-\lambda_2)=1 \text{ , }$$ that is $$\lambda_1+\lambda_2=\lambda_1\cdot \lambda_2 = \frac{-16\cdot \lambda}{(1-\lambda)^2} \text{ .}$$ This concludes the proof of Proposition \ref{Supersingular_motivation_varphi_2}.
\end{proof}

The following result is the analogue of Theorem \ref{Supersingular_main_thm} for $\ell=2$.
\begin{thm}\label{Supersingular_U_2_multiplicative}
Let $\lambda \in L$ and let $\lambda_1$ and $\lambda_2$ be the roots of the polynomial $\varphi_2(X, \lambda)$. We have:
$$\lambda^{N-1}\cdot H'(\lambda_1)\cdot H'(\lambda_2) = \frac{\lambda^2\cdot (\lambda-1)}{4} \cdot H'(\lambda)^2 \text{ .}$$
\end{thm}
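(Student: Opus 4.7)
I would mimic the proof of Theorem \ref{Supersingular_main_thm} with the necessary $\ell=2$ modifications. The three ingredients remain Katz's formula (Lemma \ref{Supersingular_Katz}) giving $E_{N+1}$ in terms of $H$ and $H'$ at a Legendre $\lambda$-invariant, Robert's theorem (Lemma \ref{Supersingular_Robert}) relating $E_{N+1}$ at $2$-isogenous supersingular curves, and an explicit computation of the product $c_{\varphi_1}^{2} c_{\varphi_2}^{2}$ of the pullback scalars; the main adjustment for $\ell=2$ is that we have only two isogeny targets instead of three, and the structure of the Vieta relations for $\varphi_2(\lambda,Y)$ is very special.

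\textbf{Steps 1--3.} Since $E_\lambda$ is supersingular and an isogeny of supersingular curves is supersingular, $\lambda_1,\lambda_2 \in L$, so $H(\lambda_1)=H(\lambda_2)=0$. Lemma \ref{Supersingular_Katz} then collapses to
$$E_{N+1}(E_\mu,\omega_\mu) = -\tfrac{3(\mu-1)}{m+1}\cdot H'(\mu)\qquad (\mu\in\{\lambda,\lambda_1,\lambda_2\}).$$
For each of the two $2$-isogenies $\varphi_i:E_\lambda\to E_{\lambda_i}$, write $\varphi_i^\ast(\omega_{\lambda_i})=c_{\varphi_i}\omega_\lambda$. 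Lemma \ref{Supersingular_Robert} (applied with $\ell=2$) combined with the weight-$(N+1)$ homogeneity of $E_{N+1}$ gives
$$E_{N+1}(E_{\lambda_i},\omega_{\lambda_i}) = 2\,c_{\varphi_i}^{-(N+1)}\cdot E_{N+1}(E_\lambda,\omega_\lambda).$$
Multiplying these two identities and substituting the Katz expressions yields, after cancellation of the factor $9/(m+1)^2$,
$$(1-\lambda_1)(1-\lambda_2)\cdot H'(\lambda_1) H'(\lambda_2) \;=\; 4\,(c_{\varphi_1}c_{\varphi_2})^{-(N+1)}\cdot (1-\lambda)^2\cdot H'(\lambda)^2.$$

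\textbf{Step 4 (main obstacle).} From $\varphi_2(\lambda,Y)=(1-\lambda)^2Y^2+16\lambda Y-16\lambda$, Vieta gives $\lambda_1+\lambda_2=\lambda_1\lambda_2=-\tfrac{16\lambda}{(1-\lambda)^2}$, whence the clean identity $(1-\lambda_1)(1-\lambda_2)=1$ (already observed in the proof of Proposition \ref{Supersingular_motivation_varphi_2}). The remaining and delicate point is to compute $(c_{\varphi_1}c_{\varphi_2})^{N+1}$. I would imitate the end of the proof of Lemma \ref{Supersingular_c_phi}: work analytically with $\lambda=\lambda(q)$, $\lambda_1=\lambda(q^{1/2})$, $\lambda_2=\lambda(-q^{1/2})$, use the theta-quotient expression $c_{\varphi_i}=\theta_3(\pm q^{1/2})^2/\theta_3(q)^2$ from \cite{Pi_AGM}, (4.6.1), and exploit the Jacobi product identities (\ref{Supersingular_U_2_Q_i_eq1})--(\ref{Supersingular_U_2_Q_i_eq5}) already recalled in the proof of Proposition \ref{Supersingular_motivation_varphi_2} to obtain an explicit rational expression of $(c_{\varphi_1}c_{\varphi_2})^2$ in $\lambda$. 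Matching with the target identity predicts the answer
$$(c_{\varphi_1}c_{\varphi_2})^{N+1} \;=\; 16\,(\lambda-1)\,\lambda^{N-3}\qquad \text{in } \mathbf{F}_{N^2},$$
which, once established, combined with the displayed identity above and $(1-\lambda_1)(1-\lambda_2)=1$, gives
$$H'(\lambda_1)H'(\lambda_2) \;=\; \tfrac{\lambda^{3-N}(\lambda-1)}{4}\cdot H'(\lambda)^2,$$
i.e.\ the claim after multiplying both sides by $\lambda^{N-1}$.

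\textbf{Where the difficulty lies.} The algebraic steps (Katz, Robert, Vieta) are entirely parallel to Theorem \ref{Supersingular_main_thm}; the genuine work is the $\ell=2$ analogue of Lemma \ref{Supersingular_c_phi}. Unlike in the odd prime case, $\prod c_{\varphi_i}^2$ need not be a constant in $\lambda$, because only two of the three order-$2$ subgroups of $E[2]$ appear (the third one collapses the $\Gamma(2)$-structure), breaking the symmetry used there. One cannot simply invoke $F_\ell(\lambda^{-1})=F_\ell(\lambda)=F_\ell(1-\lambda)$ to force constancy. I would therefore either carry out the theta-function computation directly, or use an explicit analogue of Madsen's formula (\ref{Supersingular_c_phi_lambda_lambda'_eq}) specialised to the non-symmetric polynomial $\varphi_2$, computing $\partial_X\varphi_2$ and $\partial_Y\varphi_2$ at the two roots and taking resultants. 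This final identity is the only step that is genuinely $\ell=2$ specific.
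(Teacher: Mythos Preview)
Your approach is genuinely different from the paper's, and the step you flag as ``the genuine work'' is left as an unverified claim. The paper does \emph{not} pass through Katz and Robert for $\ell=2$. Instead it first proves, for \emph{arbitrary} $\lambda\in\overline{\mathbf F}_N\setminus\{0,1\}$, the resultant identity
\[
\Res_X\bigl(H(X),\varphi_2(X,Y)\bigr)=H(Y)^2,
\]
equivalently $\lambda^{N-1}H(\lambda_1)H(\lambda_2)=H(\lambda)^2$; this is a short direct argument using that the set $L$ is closed under the $\varphi_2$-correspondence together with $\prod_{\lambda'\in L}\lambda'=1$. It then extends the derivation $d/d\lambda$ to the quadratic extension $\overline{\mathbf F}_N(\lambda)(\lambda_1)$, checks by a one-line formal computation that $\tfrac{\partial\lambda_1}{\partial\lambda}\cdot\tfrac{\partial\lambda_2}{\partial\lambda}=\tfrac{4}{\lambda(\lambda-1)}$, differentiates the polynomial identity twice, and finally specializes to $\lambda\in L$ so that all cross-terms involving $H(\lambda)$, $H(\lambda_1)$, $H(\lambda_2)$ vanish. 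No modular-form input enters this proof; it is purely algebraic.

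Your route might be salvageable, but as written it has a real gap. You only reverse-engineer the target value $(c_{\varphi_1}c_{\varphi_2})^{N+1}=16(\lambda-1)\lambda^{N-3}$; observe that this expression depends on $N$, so it cannot be obtained simply by reducing a characteristic-zero identity $(c_{\varphi_1}c_{\varphi_2})^2=G(\lambda)$ with $G\in\mathbf Q(\lambda)$ and raising to the $(N+1)/2$-th power. Madsen's formula (\ref{Supersingular_c_phi_lambda_lambda'_eq}) is stated for the \emph{symmetric} polynomials $\varphi_\ell$ with $\ell$ odd, and the polynomial $\varphi_2$ here is asymmetric, so you would have to redo that analysis from scratch; you would also need to identify precisely which $2$-isogenies $E_\lambda\to E_{\lambda_i}$ are in play (their kernels are points of the marked $\Gamma(2)$-basis, not transverse to it) and check that the theta-quotient formula you cite still computes their pullback scalars. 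The paper's differentiation trick sidesteps all of this.
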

\begin{proof}
\begin{lem}
Keep the notation of Theorem \ref{Supersingular_U_2_multiplicative}, except that $\lambda \in \overline{\mathbf{F}}_N\backslash \{0,1\}$ is now arbitrary. We have: 
 \begin{equation}\label{U_2_H}
\lambda^{N-1}\cdot H(\lambda_1)\cdot H(\lambda_2) = H(\lambda)^2
\end{equation}
\end{lem}
\begin{proof}
It suffices to prove the following identity, in $\mathbf{F}_N[X]$:
\begin{equation}\label{Supersingular_H_F_H^2_eq}
\Res_X(H(X), \varphi_2(X,Y))=H(Y)^2 \text{ .}
\end{equation}
We have $$\Res_X(H(X), \varphi_2(X,Y))=\prod_{\lambda' \in L} \varphi_2(\lambda', Y) =\prod_{\lambda' \in L} \lambda'^2\cdot (\lambda_1'-Y)\cdot(\lambda_2'-Y) \text{ , }$$ where $\lambda_1'$ et $\lambda_2'$ are the roots of $\varphi_2(\lambda', Y) = 0$ (if $\varphi_2(\lambda', Y)$ has double roots then we let $\lambda_1' = \lambda_2'$). Since $\prod_{\lambda' \in L} \lambda' = 1$, we get 
\begin{equation}\label{Supersingular_H_F_H^2_eq_1}
\Res_X(H(X), \varphi_2(X,Y)) = \prod_{\lambda' \in L} (\lambda_1'-Y)\cdot(\lambda_2'-Y) \text{ .}
\end{equation}
If $\lambda' \in L$ then $\lambda_1'$ and $\lambda_2'$ are also in $L$, since by Proposition \ref{Supersingular_motivation_varphi_2} the elliptic curves $E_{\lambda_1'}$ and $E_{\lambda_2'}$ are $2$-isogenous to $E_{\lambda}$. If $\lambda' \in \overline{\mathbf{F}}_N \backslash \{0,1,-1\}$ (resp. $\lambda' = -1$) then the polynomial $\varphi_2(\lambda', Y)$ has two distinct roots (resp. a double root $Y=2$). Conversely, if $\lambda_1 \in \overline{\mathbf{F}}_N \backslash \{0, 1, 2\}$ (resp. $\lambda_1=2$) then the polynomial $\varphi_2(X, \lambda_1)$ has two distinct roots (resp. a double root $X=-1$). Thus, we have $$\prod_{\lambda' \in L} (\lambda_1'-Y)\cdot(\lambda_2'-Y) = \prod_{\lambda' \in L} (\lambda'-Y)^2 = H(Y)^2 \text{ .}$$ 
\end{proof}

We are going to differentiate (\ref{U_2_H}) two times. Let $K = \overline{\mathbf{F}}_N(\lambda)$ be the function field of $\mathbf{P}^1(\lambda)$. We have a derivation $\frac{d}{d\lambda}$ which sends $\lambda$ to $1$. Let $F = K(\lambda_1)$ where $\lambda_1$ has minimal polynomial $X^2+\frac{-2\lambda^2+16\lambda-16}{\lambda^2}X+1$ over $K$.
\begin{lem}
The $\overline{\mathbf{F}}_N$-derivation $\frac{d}{d\lambda}$ of $K$ extends in an unique way to a $\overline{\mathbf{F}}_N$-derivation $\frac{\partial}{\partial \lambda}$ of $F$. More precisely, we have in $F$:
$$\frac{\partial \lambda_1}{\partial \lambda} = -\frac{\frac{\partial }{\partial Y} \varphi_2(\lambda_1, \lambda)}{\frac{\partial }{\partial X} \varphi_2(\lambda_1, \lambda)} \text{ .}$$
\end{lem}
\begin{proof}
This follows from the fact that $\varphi_2$ is irreducible of degree $2$ as a polynomial in $X$ over $\mathbf{F}_N(Y)$, and that $N$ is prime to $2$.
\end{proof}
Let $\lambda_2$ be the other root of $X^2+\frac{-2\lambda^2+16\lambda-16}{\lambda^2}X+1$ in $F$. 
\begin{lem}
We have:
$$ \frac{\partial \lambda_1}{\partial \lambda} \cdot \frac{\partial \lambda_2}{\partial \lambda} = \frac{4}{\lambda\cdot (\lambda-1)}$$
\end{lem}
\begin{proof}
It is just a formal computation.
\end{proof}
By differentiating (\ref{U_2_H}) two times ($\lambda$ is considered as a formal variable), we get:
\begin{equation}\label{H_messy_lambda_1_lambda_2}
\frac{4\cdot \lambda^{N-1}}{\lambda\cdot (\lambda-1)}\cdot H'(\lambda_1)\cdot H'(\lambda_2) = H'(\lambda)^2+ G(\lambda, \lambda_1, \lambda_2)
\end{equation}
where $G(\lambda, \lambda_1, \lambda_2)$ is a sum of two terms of the form $H(\lambda)$, $H(\lambda_1)$ or $H(\lambda_2)$ times a polynomial in $\lambda$, $\lambda_1$, $\lambda_2$, $ \frac{\partial \lambda_1}{\partial \lambda}$ et $\frac{\partial \lambda_2}{\partial \lambda} $. If $\lambda \in L$ then we have $H(\lambda)=H(\lambda_1)=H(\lambda_2)=0$. By (\ref{H_messy_lambda_1_lambda_2}), we get
$$
\frac{4\cdot \lambda^{N-1}}{\lambda\cdot (\lambda-1)}\cdot H'(\lambda_1)\cdot H'(\lambda_2) = H'(\lambda)^2 \text{ .}$$
This concludes the proof of Theorem \ref{Supersingular_U_2_multiplicative}.
\end{proof}

\subsection{The supersingular module of Legendre elliptic curves}\label{Section_Supersingular_Gamma(2)_structure}
Keep the notation of sections \ref{Section_Supersingular_intro_SS} and \ref{Supersingular_section_Hasse}. We denote by $v$ the $p$-adic valuation of $N^2-1$ (thus, $v=t$ if $p\geq 5$, $v=t+1$ if $p=3$ and $v=t+3$ if $p=2$).

In view of the properties satisfied by the Hasse polynomial, the higher Eisenstein element $e_1$ is more easily determined after adding an auxiliary $\Gamma(2)$-structure. 

Let $\tilde{\mathbb{T}}'$ be the full $\mathbf{Z}_p$-Hecke algebra acting faithfully the space of modular forms of weight $2$ and level $\Gamma_0(N) \cap \Gamma(2)$. If $n \geq 1$ is an integer, we denote by $T_n'$ the $n$th Hecke operator in $\tilde{\mathbb{T}}'$. The ring $\tilde{\mathbb{T}}$ acts on the $\mathbf{Z}_p$-module $M' := \mathbf{Z}_p[L]$. If $\ell$ is a prime not dividing $2\cdot N$ and $\lambda \in L$, we have in $M'$:
$$T_{\ell}'([\lambda]) = \sum_{\lambda' \in L \atop \varphi_{\ell}(\lambda, \lambda')=0} [\lambda']\text{ .}$$
Let 
$$\tilde{e}_0' = \sum_{\lambda \in L} [\lambda] \in M' \text{ .}$$ 
\begin{thm}\label{Supersingular_w_1_L}
Let $\Lambda : \mathbf{F}_{N^2}^{\times} \rightarrow \mathbf{Z}/p^v\mathbf{Z}$ be a surjective group homomorphism. 
Let $e_0'$ be the image of $\tilde{e}_0'$ in $M/p^v\cdot M$ and let
$$e_1' = \sum_{\lambda \in L} \Lambda(H'(\lambda))\cdot [\lambda] \in M'/p^v\cdot M' \text{ .}$$
For all prime $\ell$ not dividing $2\cdot N$, we have in $M'/p^v\cdot M'$:
$$(T_{\ell}'-\ell-1)(e_0')=0$$ 
and
$$(T_{\ell}'-\ell-1)(e_1') = (\ell-1)\cdot \Lambda(\ell) \cdot e_0' \text{ .}$$
\end{thm}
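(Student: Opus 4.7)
The first identity $(T_\ell'-\ell-1)(e_0')=0$ expresses that $e_0'$ is the uniform eigenvector for the adjacency operator of the $(\ell+1)$-regular $\ell$-isogeny graph on $L$. Concretely, $T_\ell'e_0'=\sum_{\lambda\in L}\sum_{\lambda'\sim_{\ell}\lambda}[\lambda']$, and swapping the order of summation one invokes the symmetry $\varphi_\ell(X,Y)=\varphi_\ell(Y,X)$---equivalently, that the dual of an $\ell$-isogeny is an $\ell$-isogeny preserving the $\Gamma(2)$-structure, which holds because $\ell$ is odd so $\hat\phi\circ\phi=[\ell]$ acts trivially on $E[2]$---to conclude that each $[\lambda']$ appears exactly $\ell+1$ times. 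Thus $T_\ell'e_0'=(\ell+1)e_0'$.

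For the second identity, I compare coefficients of $[\mu]$ on both sides for each $\mu\in L$. The coefficient of $[\mu]$ in $T_\ell' e_1'$ equals
$$\sum_{\lambda\sim_{\ell}\mu}\Lambda(H'(\lambda))\;=\;\Lambda\!\left(\prod_{\lambda\sim_{\ell}\mu}H'(\lambda)\right),$$
where the multiplicities of $\lambda$ are those of the roots of $\varphi_\ell(\mu,Y)$ (invoking the symmetry of $\varphi_\ell$ once more to swap the roles of $\mu$ and $\lambda$). By Theorem \ref{Supersingular_main_thm} applied to $\mu\in L$, this product equals $\ell^{\ell-1}\cdot H'(\mu)^{\ell+1}$, so the coefficient is
$$(\ell-1)\Lambda(\ell)+(\ell+1)\Lambda(H'(\mu)).$$
Subtracting the coefficient $(\ell+1)\Lambda(H'(\mu))$ of $[\mu]$ in $(\ell+1)e_1'$ leaves exactly $(\ell-1)\Lambda(\ell)$, which is the coefficient of $[\mu]$ in $(\ell-1)\Lambda(\ell)\,e_0'$. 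Summing over $\mu\in L$ gives the desired identity.

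All the real content of this theorem is already contained in Theorem \ref{Supersingular_main_thm}; once that resultant-type product formula is granted, the present statement reduces to a coefficient-by-coefficient verification, with the symmetry of $\varphi_\ell$ as the only additional ingredient. Consequently I do not expect any serious obstacle at this stage---the main difficulty was already overcome in establishing Theorem \ref{Supersingular_main_thm} via the Picard--Fuchs differential equation, Katz's interpretation of the Hasse invariant as an Eisenstein series of weight $N+1$, and Robert's lemma (Lemma \ref{Supersingular_Robert}) comparing values of $E_{N+1}$ at supersingular points under pullback by an isogeny.
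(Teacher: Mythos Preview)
Your proof is correct and follows exactly the approach of the paper, which is extremely terse: the paper's entire proof reads ``The first equality is obvious. The second equality is a direct consequence of Theorem~\ref{Supersingular_main_thm}.'' You have simply unpacked what this direct consequence amounts to, namely the coefficient-by-coefficient comparison using $\Lambda\bigl(\prod_{\lambda\sim_\ell\mu}H'(\lambda)\bigr)=(\ell-1)\Lambda(\ell)+(\ell+1)\Lambda(H'(\mu))$, together with the symmetry of $\varphi_\ell$ to identify the coefficient of $[\mu]$ in $T_\ell'e_1'$ with a sum over $\lambda\sim_\ell\mu$.
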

\begin{proof}
The first equality is obvious. The second equality is a direct consequence of Theorem \ref{Supersingular_main_thm}.
\end{proof}

We let $\pi : M' \rightarrow M$ be the forgetful map. It is a $\mathbf{Z}_p$-equivariant group homomorphism defined by $\pi([\lambda]) = [E_{\lambda}] \in S$ (recall that $E_{\lambda}$ is the Legendre elliptic curve $y^2 = x(x-1)(x-\lambda)$). For all prime $\ell$ not dividing $2\cdot N$ and all $x \in M'$, we have in $M$:
\begin{equation}\label{Supersingular_equivariance_T_ell_prime_T_ell}
\pi\left(T_{\ell}'(x)\right) = T_{\ell}\left(\pi(x)\right) \text{ .}
\end{equation}
We have, in $M$:
\begin{equation}\label{Supersingular_e_0_tilde_prime_e_0_tilde_eq}
\pi(\tilde{e}_0') = 6 \cdot \tilde{e}_0 \text{ .}
\end{equation}
Theorem \ref{Supersingular_w_1_L}, (\ref{Supersingular_equivariance_T_ell_prime_T_ell}) and (\ref{Supersingular_e_0_tilde_prime_e_0_tilde_eq}) allow us to compute $e_1$. However, some complications arise when $p \in \{2,3\}$, so we treat the cases $p \geq 5$, $p=3$ and $p=2$ separately.

\subsection{The case $p\geq 5$}
In this section, we assume $p\geq 5$. Keep the notation of sections \ref{Section_Supersingular_intro_SS}, \ref{Supersingular_section_Hasse} and \ref{Section_Supersingular_Gamma(2)_structure}.

\begin{thm}\label{Supersingular_w_1_S_5}
Assume that $p \geq 5$. We extend $\log$ to a surjective group morphism $\log : \mathbf{F}_{N^2}^{\times} \rightarrow \mathbf{Z}/p^r\mathbf{Z}$.
\begin{enumerate}
\item We have, in $M/p^r\cdot M$ modulo the subgroup generated by $e_0$:
$$12\cdot e_1 = \sum_{E \in S} \log(P(j(E))) \cdot [E] $$
where $j(E)$ is the $j$-invariant of $E$ (the fact that $P(j(E)) \neq 0$ is included in the statement). 
\item We have, in $\mathbf{Z}/p^r\mathbf{Z}$:
\begin{align*}
e_1\bullet e_0 &= \frac{1}{12}\cdot \sum_{\lambda \in L} \log(H'(\lambda)) \\& = \frac{1}{3}\cdot \sum_{k=1}^{\frac{N-1}{2}} k \cdot \log(k) \text{ .}
\end{align*}
\item Assume $e_1 \bullet e_0 = 0$. We have, in $\mathbf{Z}/p^r\mathbf{Z}$:
$$72 \cdot e_1 \bullet e_1 = 3 \cdot \left( \sum_{\lambda \in L} \log(H'(\lambda))^2 \right) -4  \cdot \left( \sum_{\lambda \in L} \log(\lambda)^2 \right) \text{ .}$$
\end{enumerate}
\end{thm}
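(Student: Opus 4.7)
The plan is to reduce everything to the auxiliary module $M'$ of Section \ref{Section_Supersingular_Gamma(2)_structure}, where Theorem \ref{Supersingular_w_1_L} already produced an explicit $e_1'$, and then push down to $M$ via $\pi$. For part (i), applying Theorem \ref{Supersingular_w_1_L} and the Hecke-equivariance (\ref{Supersingular_equivariance_T_ell_prime_T_ell}) gives
\[
(T_\ell - \ell - 1)(\pi(e_1')) \;=\; \pi\bigl((T_\ell' - \ell - 1)(e_1')\bigr) \;=\; (\ell-1)\log(\ell)\,\pi(e_0') \;=\; 6(\ell-1)\log(\ell)\,e_0
\]
for every prime $\ell \nmid 2N$, using (\ref{Supersingular_e_0_tilde_prime_e_0_tilde_eq}). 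This matches $12 \cdot \tfrac{\ell-1}{2}\log(\ell)\, e_0$, so Proposition \ref{Formalism_property_Hecke_outside} (applied with $S = \{2,N\}$) forces $\tfrac{1}{12}\pi(e_1') \equiv e_1$ modulo $\mathbf{Z}_p\cdot e_0$. To rewrite $\pi(e_1') = \sum_\lambda \log(H'(\lambda))[E_\lambda]$ in terms of $P$, I will use the resultant definition together with the computation that the polynomial $Q_{j_0}(X) = 256(1-X+X^2)^3 - X^2(1-X)^2\,j_0$ has leading coefficient $256$ and (with multiplicities dictated by the ramification of $\lambda \mapsto j$) its roots are exactly the $\lambda$'s with $E_\lambda$ of $j$-invariant $j_0$; this gives $P(j(E)) = 256^{(N-3)/2}\prod_{\lambda \mapsto E} H'(\lambda)^{w_E}$, and the constant part $\tfrac{N-3}{2}\log(256)$ is absorbed into the $e_0$-coset.

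For part (ii), pair with $e_0 = \tilde e_0 = \sum_E w_E^{-1}[E]$: since $[E_\lambda] \bullet \tilde e_0 = 1$, one has
\[
12\,(e_1 \bullet e_0) \;=\; \pi(e_1') \bullet \tilde e_0 \;=\; \sum_{\lambda \in L} \log(H'(\lambda)).
\]
For the second equality, I combine Theorem \ref{Supersingular_discriminant_thm} with Wilson's theorem: $\mathrm{Disc}(H) = \pm\prod_\lambda H'(\lambda)$ since $H$ is monic, so $\sum_\lambda \log(H'(\lambda)) = 4\sum_{k=1}^m k\log(k) - \log(m!)$. Wilson's $(m!)^2 \equiv (-1)^{m+1}\pmod N$ together with $\log(-1)=0$ (valid for $p$ odd) yields $2\log(m!) \equiv 0$, hence $\log(m!) = 0$ in $\mathbf{Z}/p^r\mathbf{Z}$, giving exactly $\tfrac{1}{3}\sum k\log(k)$.

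For part (iii), I use that under the hypothesis $e_1 \bullet e_0 = 0$, together with $e_0 \bullet e_0 \equiv (N-1)/12 \equiv 0$, the value $e_1 \bullet e_1$ is well defined on the coset $e_1 + \mathbf{Z}_p\cdot e_0$, and equals $\tfrac{1}{144}\pi(e_1') \bullet \pi(e_1')$. The key identification is that the adjoint $\pi^*\!\pi$ on $M'$ (with respect to the natural pairing $\bullet'$ on $M'$, for which every Legendre class has weight one) equals the $S_3$-orbit-sum operator $T_{S_3}([\lambda]) = \sum_{\sigma \in S_3}[\sigma\lambda]$; this follows from $|\pi^{-1}(E)| = 6/w_E$ and $[E]\bullet[E] = w_E$. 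Hence
\[
\pi(e_1') \bullet \pi(e_1') \;=\; e_1' \bullet' T_{S_3}(e_1') \;=\; \sum_{\sigma \in S_3}\sum_{\mu \in L}\log(H'(\mu))\log(H'(\sigma\mu)).
\]
I will then derive the functional equation $\log(H'(\sigma\mu)) \equiv \log(H'(\mu)) + a_\sigma\log(\mu) + b_\sigma\log(\mu-1) \pmod{p^r}$ for each $\sigma \in S_3$ by differentiating the two polynomial identities $H(1-X) = H(X)$ and $H(1/X) = X^{-m}H(X)$ (the latter coming from the palindromic shape of $H$), using that $H(\lambda) = 0$ and that $m \equiv 0 \pmod{p^t}$ since $p$ is odd. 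A direct tabulation gives $\sum_\sigma a_\sigma = \sum_\sigma b_\sigma = 4$.

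Combining these: $\pi(e_1') \bullet \pi(e_1') = 6\sum_\mu \log(H'(\mu))^2 + 4\bigl[\sum_\mu \log(H'(\mu))\log(\mu) + \sum_\mu \log(H'(\mu))\log(\mu-1)\bigr]$. The symmetry $\mu \mapsto 1-\mu$ (which is an involution of $L$ under which $\log(H'(\mu))$ is invariant modulo signs, and which sends $\log(\mu)$ to $\log(\mu-1)$) collapses the bracket to $2\sum_\mu \log(H'(\mu))\log(\mu)$. Finally, substituting $\mu \mapsto 1/\mu$ in this last sum and using $\log(H'(1/\mu)) \equiv 2\log(\mu) + \log(H'(\mu))$ produces the identity $\sum_\mu \log(H'(\mu))\log(\mu) = -\sum_\mu \log(\mu)^2$; plugging back and dividing by $144/72 = 2$ recovers the stated formula. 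The most delicate step will be the bookkeeping of functional equations with their signs and exponents modulo $p^r$, since one must consistently use $\log(-1) = 0$ and $m \equiv 0 \pmod{p^t}$ to kill all extraneous terms.
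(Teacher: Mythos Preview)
Your strategy matches the paper's throughout: reduce to $M'$ via Theorem~\ref{Supersingular_w_1_L}, push down by $\pi$, identify $\pi(e_1')\equiv 12e_1\pmod{\mathbf{Z}\cdot e_0}$ via Proposition~\ref{Formalism_property_Hecke_outside}, and then simplify using the functional equations for $H$ differentiated on $L$. For (iii) your adjoint identity $\pi^*\pi=T_{S_3}$ is a clean repackaging of the paper's direct route (which squares the formula from (i) and invokes the identity $P(j(E))^{w_E}=H'(\lambda)^6\lambda^4(1-\lambda)^4$); both unwind to exactly the same cross-term computations, e.g.\ $\sum_{\lambda}\log H'(\lambda)\log\lambda=-\sum_{\lambda}\log(\lambda)^2$ via $\lambda\mapsto 1/\lambda$, and your arguments for (ii) and (iii) are correct.

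One step in your (i) does not go through as written, and the paper makes the same slip. Your resultant formula $P(j(E))=256^{(N-3)/2}\prod_{\lambda\to E}H'(\lambda)^{w_E}$ is correct, but the claim that ``the constant part $\tfrac{N-3}{2}\log 256$ is absorbed into the $e_0$-coset'' presupposes that $\sum_{E\in S}[E]$ is a scalar multiple of $e_0=\sum_E w_E^{-1}[E]$; this fails whenever some $w_E>1$, i.e.\ whenever $j=0$ or $j=1728$ is supersingular (equivalently $N\not\equiv 1\pmod{12}$, which is certainly allowed under $p\geq 5$, $p\mid N-1$). In that regime the discrepancy $\log P(j(E))-\sum_{\lambda\in F_E}\log H'(\lambda)=C+(w_E-1)\sum_{\lambda\in F_E}\log H'(\lambda)$ is not constant in $E$, so $\sum_E\log P(j(E))[E]-\pi(e_1')\notin\mathbf{Z}\cdot e_0$. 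The paper's proof asserts the bare equality $\log P(j(E))=\sum_{\lambda\in F_E}\log H'(\lambda)$, omitting both the constant and the $w_E$ factor, so it elides the same point. Your proofs of (ii) and (iii), which pair $\pi(e_1')$ directly rather than going through (i), are immune to this and are in fact more robust than the paper's own derivation of (iii).
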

\begin{proof}
We prove (i). We choose $\Lambda : \mathbf{F}_{N^2}^{\times} \rightarrow \mathbf{Z}/p^v\mathbf{Z}$ so that for all $x \in \mathbf{F}_{N^2}^{\times}$, we have $\Lambda(x) \equiv \log(x) \text{ (modulo }p^r\text{)}$.
We abuse notation and still denote by $\pi : M'/p^r\cdot M' \rightarrow M/p^r\cdot M$ the forgetful map. Let $e_1''$ be the image of $e_1'$ in $M'/p^r\cdot M'$. Let $\ell$ be a prime not dividing $2\cdot N$.
By Theorem \ref{Supersingular_w_1_L}, (\ref{Supersingular_equivariance_T_ell_prime_T_ell}) and (\ref{Supersingular_e_0_tilde_prime_e_0_tilde_eq}) we have in $M/p^r\cdot M$:
$$(T_{\ell}-\ell-1)(\pi(e_1'')) = 6\cdot (\ell-1)\cdot \log(\ell)\cdot e_0 \text{ .}$$
By Proposition \ref{Formalism_property_Hecke_outside}, we have 
$$\pi(e_1'') = 12\cdot e_1 \text{ (modulo the subgroup generated by }e_0 \text{).}$$
Let $E \in S$. The coefficient of $\pi(e_1'')$ in $[E]$ is by definition $$\sum_{\lambda \in L \atop j(E_{\lambda}) = j(E)} \log(H'(\lambda)) \text{ .}$$
By (\ref{Supersingular_relation_j_lambda_eq}), we have
$$\sum_{\lambda \in L \atop j(E_{\lambda}) = j(E)} \log(H'(\lambda)) = \log(P(j(E))) \text{ .}$$
This concludes the proof of Theorem \ref{Supersingular_w_1_S_5}.

We prove (ii). We have:
\begin{align*}
12\cdot e_1 \bullet e_0 &= \sum_{E \in S} \log(P(j(E)) \\&
=\sum_{\lambda \in L} \log(H'(\lambda)) \\& = \log(\Disc(H)) \\& = 4\cdot \sum_{k=1}^{\frac{N-1}{2}} k \cdot \log(k) \text{ .}
\end{align*}
The first equality follows from (i). The last equality follows from Theorem \ref{Supersingular_discriminant_thm}, using the fact that $\log\left(\left(\frac{N-1}{2}\right)!\right) = 0$ (since $p>2$ and $\left(\frac{N-1}{2}\right)!^4 = 1$ in $\mathbf{F}_N$).

We prove (iii). We identify an element of $L$ with its $\lambda$-invariant. For $E\in S$, let $F_E \subset L$ be the fiber above $E$, \ie the set of $\lambda \in L$ such that $\pi([\lambda]) = [E]$. There exists $\lambda \in L$ such that
\begin{equation}\label{Supersingular_F_E_fiber}
F_E= \left\{\lambda, \frac{1}{\lambda}, 1-\lambda, \frac{\lambda-1}{\lambda}, \frac{\lambda}{\lambda-1}, \frac{1}{1-\lambda}\right\}
\end{equation}
(we do $\textbf{not}$ count multiplicity). Let $c_E \in \{1,2,3,6\}$ be the cardinality of $F_E$. We have $w_E = \frac{6}{c_E}$. We have, by definition of $P$, for any $E \in S$ and $\lambda \in F_E$:
\begin{equation}\label{Supersingular_computation_e_1_e_1_>5}
P(j(E))^{w_E} =  H'(\lambda) \cdot H'\left(\frac{1}{\lambda}\right) \cdot H'(1-\lambda) \cdot H'\left( \frac{\lambda-1}{\lambda}\right) \cdot H'\left(\frac{\lambda}{\lambda-1}\right) \cdot H'\left(\frac{1}{1-\lambda}\right) \text{ .}
\end{equation}
We have $H(X) = (-1)^m\cdot H(1-X)$ and that $H(\frac{1}{X}) = X^{-m} \cdot H(X)$ where $m = \frac{N-1}{2}$. Indeed, $H$ is monic and the roots of $H$ are permuted by the transformation $\lambda \mapsto 1-\lambda$ and $\lambda \mapsto \frac{1}{\lambda}$. By differentiating these two relations  with respect to $X$ and using (\ref{Supersingular_computation_e_1_e_1_>5}), we get:
\begin{equation}\label{Supersingular_P(j(E))_H'(Lambda)_F_E_eq}
P(j(E))^{w_E} =  H'(\lambda)^6 \cdot \lambda^4 \cdot (1-\lambda)^4  \text{ .}
\end{equation}
Thus, we have, in $\mathbf{Z}/p^r\mathbf{Z}$:
\begin{align*}
12^2\cdot e_1 \bullet e_1 &= \sum_{E \in S} w_E \cdot \log(P(j(E)))^2 \\&=\frac{1}{6} \cdot \left(\sum_{\lambda \in L}  6 \cdot \log(H'(\lambda)) + 4 \cdot \log(\lambda) + 4 \cdot \log(1-\lambda) \right)^2 \\&
= 6 \cdot \left( \sum_{\lambda \in L} \log(H'(\lambda))^2 \right) -8  \cdot \left( \sum_{\lambda \in L} \log(\lambda)^2 \right) 
\end{align*}
This concludes the proof of Theorem \ref{Supersingular_w_1_S_5}
\end{proof}
\begin{rems}
\begin{enumerate}
\item Theorem \ref{Supersingular_w_1_S_5} (ii) will be proved independently using modular symbols in Section \ref{comparison_section_computation_i+j=1}. 
\item Theorem \ref{Supersingular_w_1_S_5} (ii) is the first instance of what we call a \textit{higher Eichler mass formula}, by analogy with the classical Eichler mass formula (\ref{Supersingular_Eichler_mass_formula_eq}).
\item In Corollary \ref{comparison_higher_Eichler_quadratic_formula}, we will show (using modular symbols) that if $e_1 \bullet e_0 = 0$, then $e_1 \bullet e_1 = 12\cdot \sum_{k=1}^{\frac{N-1}{2}} k\cdot \log(k)^2$. This is the second instance of a higher Eichler mass formula. We have note been able to prove this identity directly. See Conjecture \ref{Supersingular_conjecture_supersingular_5} (iii) for a generalization when $e_1 \bullet e_0 \neq 0$. 
\end{enumerate}
\end{rems}

\subsection{The case $p=3$}
In this section, we assume $p=3$. Keep the notation of sections \ref{Section_Supersingular_intro_SS}, \ref{Supersingular_section_Hasse} and \ref{Section_Supersingular_Gamma(2)_structure}.

\begin{thm}\label{Supersingular_w_1_S_3}
Extend and lift $\log$ to a surjective group homomorphism $\log: \mathbf{F}_{N^2}^{\times} \rightarrow \mathbf{Z}/3^{r+1}\mathbf{Z}$. We have, in $\left(M/3^{r+1}\cdot M\right)/\left(\mathbf{Z}\cdot (3\cdot e_0) \right)$:
$$
12 \cdot e_1 \equiv 2\cdot \log(2) \cdot \tilde{e}_0 +  \sum_{E \in S} \log(P(j(E))) \cdot [E] \text{ .}
$$
\end{thm}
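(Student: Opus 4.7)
The approach is to adapt the strategy of Theorem \ref{Supersingular_w_1_S_5} (the $p\geq 5$ case) to $p=3$, working modulo $3^{r+1}$ instead of $3^{r}$ in order to see the correction term $2\log(2)\tilde{e}_0$: this correction accounts for the failure of $6=\pi(\tilde{e}_0')/\tilde{e}_0$ to be a unit modulo $3$.

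The first step is to reduce Theorem \ref{Supersingular_w_1_L} modulo $3^{r+1}$ (legitimate since $r+1\leq v=t+1$), obtaining $e_1'\in M'/3^{r+1}M'$ with $(T_\ell'-\ell-1)e_1'=(\ell-1)\log(\ell)e_0'$ for every prime $\ell\nmid 2N$. Let $\hat{e}_0\in M/3^{r+1}M$ be the reduction of $\tilde{e}_0$. Since $\pi$ is $T_\ell$-equivariant for $\ell$ odd and $\pi(\tilde{e}_0')=6\tilde{e}_0$, pushing forward gives
\[(T_\ell-\ell-1)\pi(e_1')\equiv 6(\ell-1)\log(\ell)\hat{e}_0\pmod{3^{r+1}}\qquad(\ell\nmid 2N).\]
An explicit coefficient-wise calculation then uses identity (3.5), $\log(\pm 1)=0$, and the crucial vanishing $(N-1)\log(\cdot)\equiv 0\pmod{3^{r+1}}$ (coming from $v_3(N-1)=t+1\geq r+1$) to establish
\[ \pi(e_1')\equiv\sum_{E\in S}\log(P(j(E)))[E]\pmod{3^{r+1}}. \]
Only $w_E\in\{1,2\}$ occurs, as $3\mid N-1$ forces $N\equiv 1\pmod 3$, excluding $w_E=3$.

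Next, I would choose any lift $\hat{e}_1\in M/3^{r+1}M$ of the higher Eisenstein element $e_1\in M/3^{r}M$. The factor $12=4\cdot 3$ absorbs the ambiguity of the lift modulo $\mathbf{Z}\cdot 3\hat{e}_0$, and the defining relation for $e_1$ upgrades to $(T_\ell-\ell-1)(12\hat{e}_1)\equiv 6(\ell-1)\log(\ell)\hat{e}_0\pmod{3^{r+1}}$ for every $\ell\nmid 2N$. Consequently $\pi(e_1')-12\hat{e}_1$ is annihilated by these Hecke operators modulo $3^{r+1}$; by the extension of Lemma \ref{Formalism_property_Hecke_outside_lem} to arbitrary exponents (its inductive proof uses $r\leq t$ only through the fact that $\tilde{\mathbf{T}}/\tilde{I}\simeq\mathbf{Z}_p$), there is a unique $c\in\mathbf{Z}/3^{r+1}\mathbf{Z}$ with
\[ \pi(e_1')-12\hat{e}_1\equiv c\hat{e}_0\pmod{3^{r+1}}.\]
The theorem becomes equivalent to $c\equiv\log(2)\pmod{3}$, since $-c\hat{e}_0\equiv 2\log(2)\hat{e}_0\pmod{3\hat{e}_0}$ when $c\equiv-2\log(2)\equiv\log(2)\pmod 3$.

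The main obstacle is pinning down $c$ modulo $3$. The key tool is Theorem \ref{Supersingular_U_2_multiplicative}, which after taking $\log$s and discarding $(N-1)\log(\cdot)\equiv 0$ gives
\[ \log H'(\lambda_1)+\log H'(\lambda_2)\equiv 2\log\mu+\log(\mu-1)-2\log(2)+2\log H'(\mu)\pmod{3^{r+1}} \]
when $\lambda_1,\lambda_2$ are the two $\Gamma(2)$-preserving $2$-isogenous targets of $\mu\in L$. Combined with the companion identity for the third non-$\Gamma(2)$-preserving $2$-isogeny target (the Landen transform $\lambda(q^2)$), one computes $(T_2-3)\pi(e_1')$ explicitly and compares with the value $(T_2-3)(12\hat{e}_1)\equiv 6\log(2)\hat{e}_0$ forced by the Mazur normalization $e(T_2-3)=\tfrac{1}{2}\log(2)$. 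Because $\hat{e}_0$ is Eisenstein this equality is consistent but does not by itself detect $c$; the residue $c\bmod 3$ is then extracted by pairing with $\hat{e}_0$, using $\pi(e_1')\bullet\hat{e}_0=4\sum_k k\log(k)\pmod{3^{r+1}}$ (from Theorem \ref{Supersingular_discriminant_thm} and Wilson's theorem) together with the a-priori formula $e_1\bullet e_0=\tfrac{1}{3}\sum_k k\log(k)$ obtained independently of the case $p=3$ via the modular-symbol comparison of Theorem \ref{thm_Introduction_comparison_pairings}. Controlling the third $2$-isogeny $\lambda(q^2)$ and coordinating with the modular-symbol side are the technically delicate points of the proof.
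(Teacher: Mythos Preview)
Your overall architecture matches the paper's: push $e_1'$ down via $\pi$, identify $\pi(e_1')=\sum_E\log(P(j(E)))[E]$, invoke the Eisenstein property to get $\pi(e_1')-12\hat e_1\equiv c\hat e_0$ in $M/3^{r+1}M$, and then determine $c\bmod 3$ by pairing with $\hat e_0$ using the independently known value of $e_1\bullet e_0$ coming from modular symbols. The $T_2$ detour via Theorem~\ref{Supersingular_U_2_multiplicative} is absent from the paper and, as you yourself note, does not detect $c$; you can simply drop it.

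The genuine gap is in your final step. The formula $e_1\bullet e_0=\tfrac{1}{3}\sum_{k}k\log(k)$ is the $p\geq 5$ statement and is meaningless for $p=3$. If you interpret it as $12(e_1\bullet e_0)=4\sum_k k\log(k)$ in $\mathbf{Z}/3^{r+1}\mathbf{Z}$, then since $\pi(e_1')\bullet\hat e_0=\log(\Disc(H))=4\sum_k k\log(k)$ as well, you would conclude $c\cdot\frac{N-1}{12}=0$, i.e.\ $c\equiv 0\pmod 3$, which is wrong. What the paper actually uses is the $p=3$ case of Theorem~\ref{Comparison_Merel_sqrt_u}, namely
\[
e_1\bullet e_0=m_0^+\bullet m_1^-=-\tfrac{1}{4}\sum_{k=1}^{N-1}k^2\log(k)\pmod{3^t},
\]
so that $12(e_1\bullet e_0)=-3\sum_k k^2\log(k)$ in $\mathbf{Z}/3^{t+1}\mathbf{Z}$. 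The crucial ingredient you are missing is then Lemma~\ref{even_modSymb_computation_square}, which gives
\[
4\sum_{k=1}^{(N-1)/2}k\log(k)=-3\sum_{k=1}^{N-1}k^2\log(k)-\tfrac{N-1}{6}\log(2)
\]
in $\mathbf{Z}/3^{t+1}\mathbf{Z}$ (the $\mathcal F_1$ term vanishes for $p=3$). The discrepancy $-\frac{N-1}{6}\log(2)=-2\cdot\frac{N-1}{12}\log(2)$ between $\log(\Disc(H))$ and $12(e_1\bullet e_0)$ is exactly what forces $c\equiv -2\log(2)\equiv\log(2)\pmod 3$. Without this lemma and the correct $p=3$ formula for $e_1\bullet e_0$, the $\log(2)$ term cannot emerge from your argument.
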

\begin{proof}
It obviously suffices to prove Theorem \ref{Supersingular_w_1_S_3} when $r=t$, which we assume until the end of the proof. Note that $v=t+1$, so we can let $\Lambda=\log$.
We abuse notation and still denote by $\pi : M'/3^{t+1}\cdot M' \rightarrow M/3^{t+1}\cdot M$ the forgetful map. Let $\ell$ be a prime not dividing $2\cdot N$.
By Theorem \ref{Supersingular_w_1_L}, (\ref{Supersingular_equivariance_T_ell_prime_T_ell}) and (\ref{Supersingular_e_0_tilde_prime_e_0_tilde_eq}) we have in $M/3^{t+1}\cdot M$:
$$(T_{\ell}-\ell-1)(\pi(e_1')) = 6\cdot (\ell-1)\cdot \log(\ell)\cdot e_0 \text{ .}$$
By Proposition \ref{Formalism_property_Hecke_outside}, we have 
$$\pi(e_1') = 12\cdot e_1 \text{ (modulo the subgroup generated by the image of }\tilde{e}_0 \text{ in }M/3^{t+1}\cdot M \text{).}$$
Let $E \in S$. The coefficient of $\pi(e_1')$ in $[E]$ is by definition $$\sum_{\lambda \in L \atop j(E_{\lambda}) = j(E)} \log(H'(\lambda)) \text{ .}$$
By (\ref{Supersingular_relation_j_lambda_eq}), we have
$$\sum_{\lambda \in L \atop j(E_{\lambda}) = j(E)} \log(H'(\lambda)) = \log(P(j(E))) \text{ .}$$
Thus, there exists $C_3 \in \mathbf{Z}/3^{t+1}\mathbf{Z}$, uniquely defined modulo $3$, such that we have in $M/3^{t+1}\cdot M$:
\begin{equation}\label{Supersingular_e_1_p=3_eq1}
12\cdot e_1 = C_3 \cdot \tilde{e}_0 +  \sum_{E \in S} \log(P(j(E))) \cdot [E] \text{ .}
\end{equation}

By pairing  (\ref{Supersingular_e_1_p=3_eq1}) with $\tilde{e}_0$, we get in $\mathbf{Z}/3^{t+1}\mathbf{Z}$:
$$12\cdot e_1 \bullet e_0 =  C_3 \cdot \tilde{e}_0 \bullet \tilde{e}_0 + \log(\text{Disc}(H)) \text{ .}$$

By Corollary \ref{Comparison_corr_comparison}, Theorem \ref{Comparison_Merel_sqrt_u}, Eichler mass formula (\ref{Supersingular_Eichler_mass_formula_eq}), Theorem \ref{Supersingular_discriminant_thm} and Lemma \ref{even_modSymb_computation_square} (which are independent of the results of this section), we have in $\mathbf{Z}/3^{t+1}\mathbf{Z}$:
$$- 3 \cdot \sum_{k=1}^{N-1} k^2\cdot\log(k) = C_3\cdot \frac{N-1}{12}-3 \cdot \sum_{k=1}^{N-1} k^2\cdot\log(k) -\frac{N-1}{6}\cdot \log(2) \text{ .}$$
This concludes the proof of Theorem \ref{Supersingular_w_1_S_3}.
\end{proof}

\begin{rem}
One could use Theorem \ref{Supersingular_w_1_S_3} to compute $e_1 \bullet e_1$ if $t\geq 2$ and $r \leq t-1$. 
\end{rem}

\subsection{The case $p=2$}
In this section, we assume $p=2$. Keep the notation of sections \ref{Section_Supersingular_intro_SS}, \ref{Supersingular_section_Hasse} and \ref{Section_Supersingular_Gamma(2)_structure}.

\begin{thm}\label{Supersingular_w_1_S_2}
Let $\tilde{\Lambda} : \mathbf{F}_{N^2}^{\times} \rightarrow \mathbf{Z}/2^{t+3}\mathbf{Z}$ be a surjective group homomorphism such that for all $x \in \mathbf{F}_N^{\times}$, we have $\tilde{\Lambda}(x) \equiv 2\cdot \log(x) \text{ (modulo }2^{r+1}\text{)}$. Let $\epsilon_2 \in \{1,-1\}$ be defined by $$\tilde{\Lambda}\left(\left(\frac{N-1}{2}\right)!\right) \equiv 2^{t+1}\cdot \epsilon_2 \text{ (modulo }2^{t+3}\text{).}$$ 
Let $\Lambda$ be the reduction of $\tilde{\Lambda}$ modulo $2^{r+3}$. We have, in $M/2^{r+3}\cdot M$ modulo the subgroup generated by $8\cdot e_0$:
$$24 \cdot e_1 =2 \cdot C_2 \cdot \tilde{e}_0 + \sum_{E \in S} \Lambda(P(j(E))) \cdot [E] \text{ .}$$
where 
$$C_2 \equiv \frac{2^{t+2}}{N-1} \cdot \epsilon_2 \text{ (modulo }4\text{).}$$
\end{thm}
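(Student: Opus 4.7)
The plan is to mirror the strategy of Theorem~\ref{Supersingular_w_1_S_3}, using the forgetful map $\pi: M' \to M$ and carefully tracking $2$-adic valuations. First, I would apply Theorem~\ref{Supersingular_w_1_L} to $\tilde{\Lambda}$ to obtain $\tilde{e}_1' = \sum_{\lambda \in L} \tilde{\Lambda}(H'(\lambda))[\lambda] \in M'/2^{t+3}M'$ satisfying $(T_\ell' - \ell - 1)(\tilde{e}_1') = (\ell-1)\tilde{\Lambda}(\ell) e_0'$ for every odd prime $\ell \nmid N$; reducing modulo $2^{r+3}$ yields an element $e_1'$ built from $\Lambda$ with the same Hecke relation. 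Since $2 \mid (N-1)/12$ forces $N \equiv 1 \pmod{24}$, neither $j=0$ nor $j=1728$ is supersingular, so $w_E = 1$ for every $E \in S$ and consequently $\tilde{e}_0 = \sum_{E \in S} [E]$.

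Pushing forward via $\pi$: using $\pi(\tilde{e}_0') = 6\tilde{e}_0$ and (\ref{Supersingular_equivariance_T_ell_prime_T_ell}), the Hecke relation transfers to $(T_\ell - \ell - 1)\pi(e_1') = 6(\ell-1)\Lambda(\ell) e_0$ in $M/2^{r+3}M$. For odd $\ell$ the quantity $6(\ell-1)$ is divisible by $12$, and combining with $\Lambda(\ell) \equiv 2\log(\ell) \pmod{2^{r+1}}$ sharpens this to $(T_\ell - \ell - 1)\pi(e_1') \equiv 12(\ell-1)\log(\ell) e_0 \equiv (T_\ell - \ell - 1)(24 e_1) \pmod{2^{r+3}}$. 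Lemma~\ref{Formalism_property_Hecke_outside_lem} with $S = \{2, N\}$ then forces $\pi(e_1') - 24 e_1 \in (\mathbf{Z}/2^{r+3}) \cdot \tilde{e}_0$. To massage $\pi(e_1')$ into the form stated in the theorem, I would use that since $w_E = 1$, the resultant description of $P$ yields $P(j(E)) = 256^{m-1}\prod_{\lambda \in F_E} H'(\lambda)$; applying $\Lambda$ and summing over $E$ gives $\pi(e_1') = \sum_E \Lambda(P(j(E)))[E] - 8(m-1)\Lambda(2)\,\tilde{e}_0$. Combining with the previous display, I obtain an identity $24 e_1 = 2 C_2 \tilde{e}_0 + \sum_E \Lambda(P(j(E)))[E]$ in $M/2^{r+3}M$ for some $C_2$.

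The main obstacle --- and the content of the final claim --- is to pin down $C_2 \pmod 4$. I would first verify that $\epsilon_2 \in \{\pm 1\}$ is well-defined: from $((N-1)/2)!^2 \equiv -1 \pmod N$ (valid because $N \equiv 1 \pmod 4$), the restriction $\tilde{\Lambda}|_{\mathbf{F}_N^\times}$ factors as $2\phi$ for a surjection $\phi: \mathbf{F}_N^\times \to \mathbf{Z}/2^{t+2}\mathbf{Z}$ lifting $\log$, so $\phi(((N-1)/2)!)$ is an element of additive order $4$, forcing $\tilde{\Lambda}(((N-1)/2)!) \equiv \pm 2^{t+1} \pmod{2^{t+3}}$. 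Then I would pair the identity with $\tilde{e}_0$ in the perfect pairing. Using $\tilde{e}_0 \bullet [E] = 1$, Eichler's formula $\tilde{e}_0 \bullet \tilde{e}_0 = (N-1)/12$, the identification $\sum_\lambda \Lambda(H'(\lambda)) = \Lambda(\Disc(H))$, the explicit form of $\Disc(H)$ from Theorem~\ref{Supersingular_discriminant_thm}, and the congruence $\sum_k 4 k \Lambda(k) \equiv 8 \sum_k k\log(k) \pmod{2^{r+3}}$, the factor $2^{t+1}\epsilon_2$ enters $\Lambda(\Disc(H))$ through $\Lambda(m!)$. Independently, $\tilde{e}_0 \bullet e_1 = e_0 \bullet e_1$ is computed via the modular-symbol comparison --- through Corollary~\ref{Comparison_corr_comparison}, Theorem~\ref{Comparison_Merel_sqrt_u}, and Lemma~\ref{even_modSymb_computation_square} --- exactly as in the proof of Theorem~\ref{Supersingular_w_1_S_3}. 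Equating the two expressions for $24\,\tilde{e}_0 \bullet e_1$, the terms $8\sum_k k \log(k)$ cancel and the contributions coming from $\Lambda(2)$ vanish modulo $2^{r+3}$ (their $2$-adic valuation exceeds $t+3$), leaving $(2 C_2 + 8 c)\cdot (N-1)/12 \equiv 2^{t+1}\epsilon_2 \pmod{2^{r+3}}$. Dividing through by the $2$-primary part $2^t$ of $(N-1)/12$, inverting the odd part of $N-1$ (whose inverse is $\frac{2^{t+2}}{N-1} \in \mathbf{Z}_2^\times$), and reducing modulo $4$ then yields $C_2 \equiv \frac{2^{t+2}}{N-1}\epsilon_2 \pmod 4$. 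The technical bulk of the proof is the careful $2$-adic bookkeeping throughout, further complicated by $\Lambda$ having image in $\mathbf{Z}/2^{t+3}$ (not $\mathbf{Z}/2^t$) and by the ``$\log(x)$ where $x^2 = 2$'' convention entering the modular-symbol side of the comparison.
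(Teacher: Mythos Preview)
Your approach is correct and matches the paper's proof closely: reduce to $r=t$, push forward $e_1'$ via $\pi$ using Theorem~\ref{Supersingular_w_1_L} and $\pi(\tilde e_0')=6\tilde e_0$, apply Proposition~\ref{Formalism_property_Hecke_outside} to produce the identity $24e_1\equiv C_2'\,\tilde e_0+\sum_E\tilde\Lambda(P(j(E)))[E]$ modulo $\tilde e_0$, then determine $C_2'\pmod 8$ by pairing with $\tilde e_0$ and invoking Eichler's mass formula, Theorem~\ref{Supersingular_discriminant_thm}, Corollary~\ref{Comparison_corr_comparison}, and Theorem~\ref{Comparison_Merel_sqrt_u}. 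One minor correction: the hypothesis $p=2$ only forces $N\equiv 1\pmod 8$, not $\pmod{24}$; when $N\equiv 2\pmod 3$ the curve with $j=0$ is supersingular with $w_E=3$, so $\tilde e_0\neq\sum_E[E]$ in general. This slip is harmless for the argument since $3\in\mathbf Z_2^\times$, and the paper's own proof passes over this point without comment.
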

\begin{proof}
It obviously suffices to prove Theorem \ref{Supersingular_w_1_S_2} when $r=t$ (so $v=r+3$), which we assume until the end of the proof. 
We abuse notation and still denote by $\pi : M'/2^{t+3}\cdot M' \rightarrow M/2^{t+3}\cdot M$ the forgetful map. Let $\ell$ be a prime not dividing $2\cdot N$.
By Theorem \ref{Supersingular_w_1_L}, (\ref{Supersingular_equivariance_T_ell_prime_T_ell}) and (\ref{Supersingular_e_0_tilde_prime_e_0_tilde_eq}) we have in $M/2^{t+3}\cdot M$:
$$(T_{\ell}-\ell-1)(\pi(e_1')) = 12\cdot (\ell-1)\cdot \log(\ell)\cdot e_0\text{ .}$$
By Proposition \ref{Formalism_property_Hecke_outside}, there exists $C_2' \in \mathbf{Z}/2^{t+3}\mathbf{Z}$ (uniquely defined modulo $8$) such that we have, in $M/2^{t+3}\cdot M$ modulo the subgroup generated by the image of $\tilde{e}_0$ in $M/2^{t+3}\cdot M$:
\begin{equation}\label{Supersingular_C_2_e_1}
24\cdot e_1 = C_2' \cdot \tilde{e}_0 + \sum_{E \in S}\tilde{\Lambda}(P(j(E))) \cdot [E] \text{ .}
\end{equation}
By pairing (\ref{Supersingular_C_2_e_1}) with $\tilde{e}_0$, we get in $\mathbf{Z}/2^{r+3}\mathbf{Z}$:
$$
24\cdot e_1 \bullet e_0 = C_2' \cdot \tilde{e}_0 \bullet \tilde{e}_0+\tilde{\Lambda}(\text{Disc}(H)) \text{ (modulo }2^{t+3}\text{).}
$$

By Corollary \ref{Comparison_corr_comparison}, Theorem \ref{Comparison_Merel_sqrt_u}, Eichler mass formula (\ref{Supersingular_Eichler_mass_formula_eq}) and Theorem \ref{Supersingular_discriminant_thm} (which are independent of the results of this section), we have in $\mathbf{Z}/2^{t+3}\mathbf{Z}$:

$$-2^{t+2} +8\cdot \left(\sum_{k=1}^{\frac{N-1}{2}} k \cdot \log(k) \right) \equiv C_2'\cdot \frac{N-1}{12}+8\cdot \left(\sum_{k=1}^{\frac{N-1}{2}} k \cdot \log(k) \right) - \tilde{\Lambda}\left(\left(\frac{N-1}{2}\right)!\right) \text{ .}$$
Thus, we have in $\mathbf{Z}/2^{t+3}\mathbf{Z}$:
$$2^{t}\cdot \left(4+ \frac{C_2'}{3}\cdot \frac{N-1}{2^{t+2}} - 2\cdot \epsilon_2\right) = 0 \text{ .}$$
Thus, we have 
$$C_2' \equiv 6\cdot \frac{2^{t+2}}{N-1}\cdot (\epsilon_2-2) \equiv 2 \cdot C_2 \text{ (modulo }8\text{).}$$
This concludes the proof of Theorem \ref{Supersingular_w_1_S_2}.
\end{proof}

The following result is an elementary consequence of Theorem \ref{Supersingular_w_1_S_2}, for which we have not found an elementary proof.
\begin{corr}
There exists $\lambda \in L$ such that $H'(\lambda)$ is not a square of $\mathbf{F}_{N^2}^{\times}$.
\end{corr}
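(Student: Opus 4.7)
I would argue by contradiction: suppose every $H'(\lambda)$, $\lambda \in L$, is a square in $\mathbf{F}_{N^2}^{\times}$. Via the surjection $\tilde{\Lambda} : \mathbf{F}_{N^2}^{\times} \to \mathbf{Z}/2^{t+3}\mathbf{Z}$ of Theorem \ref{Supersingular_w_1_S_2}, the squares correspond to the index-$2$ subgroup $2\mathbf{Z}/2^{t+3}\mathbf{Z}$, so this is equivalent to $\tilde{\Lambda}(H'(\lambda))$ being even for every $\lambda \in L$. My first task is to redo the product expansion of (\ref{Supersingular_computation_e_1_e_1_>5}) \emph{without} discarding the signs and exponents that were $p$-th powers for $p \geq 5$: using $H(X) = (-1)^m H(1-X)$ and $X^m H(1/X) = H(X)$, the six individual $H'$-values combine to yield, for any $\lambda \in F_E$,
\[ P(j(E))^{w_E} = (-1)^{m+1} \cdot H'(\lambda)^6 \cdot \lambda^{4-2m} \cdot (1-\lambda)^{4-2m}\text{.}\]

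Next I apply $\tilde{\Lambda}$ and reduce modulo $4$. The hypothesis that $2^t$ exactly divides the numerator of $\frac{N-1}{12}$ gives $v_2(N-1) = t+2$ and in particular $N \equiv 1 \pmod 8$, which excludes the value $w_E = 2$ (so $w_E \in \{1,3\}$ is a unit modulo $4$), forces $m = (N-1)/2$ to be even, and yields $4-2m \equiv 4 \pmod 8$. Combined with the defining relation $\tilde{\Lambda}|_{\mathbf{F}_N^{\times}} \equiv 2 \log \pmod{2^{r+1}}$, which (using $\log(-1) = 0$ because $-1$ lies in the unique index-$2^t$ subgroup of $(\mathbf{Z}/N)^{\times}$) forces $\tilde{\Lambda}(-1) \equiv 0 \pmod 4$, each of the three terms $(4-2m)\tilde{\Lambda}(\lambda)$, $(4-2m)\tilde{\Lambda}(1-\lambda)$ and $(m+1)\tilde{\Lambda}(-1)$ lies in $4\mathbf{Z}/2^{t+3}\mathbf{Z}$. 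What is left is $w_E \cdot \tilde{\Lambda}(P(j(E))) \equiv 2\tilde{\Lambda}(H'(\lambda)) \pmod 4$; since $w_E$ is invertible modulo $4$ and $\tilde{\Lambda}(H'(\lambda))$ is assumed even, this forces $\Lambda(P(j(E))) \equiv 0 \pmod 4$ for every $E \in S$. Hence $A := \sum_{E \in S}\Lambda(P(j(E)))\cdot [E] \in 4M$ in $M/2^{r+3}M$.

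Plugging this into Theorem \ref{Supersingular_w_1_S_2}, which reads $24 e_1 \equiv 2 C_2 \tilde{e}_0 + A \pmod{8\mathbf{Z}\cdot \tilde{e}_0}$ in $M/2^{r+3}M$, and using that $24 e_1 \in 8M \subset 4M$ and $A \in 4M$, I deduce the existence of $k \in \mathbf{Z}$ with $2(C_2 + 4k)\tilde{e}_0 \in 4M$. The main obstacle — and the step that closes the argument — is turning this into a parity contradiction: reading off the $[E]$-coefficient, which equals $2(C_2+4k)/w_E$, and using that $w_E$ is a $2$-adic unit, the condition $2(C_2+4k)/w_E \in 4\mathbf{Z}/2^{r+3}\mathbf{Z}$ amounts to requiring $C_2 + 4k$ to be even. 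But the formula $C_2 \equiv \frac{2^{t+2}}{N-1}\cdot \epsilon_2 \pmod 4$ from Theorem \ref{Supersingular_w_1_S_2}, combined with $v_2(N-1) = t+2$ (so $\frac{2^{t+2}}{N-1} \in \mathbf{Z}_2^{\times}$) and $\epsilon_2 \in \{\pm 1\}$, shows that $C_2$ is odd modulo $4$, and so $C_2 + 4k$ is odd. This is the desired contradiction, proving that some $\lambda \in L$ must have $H'(\lambda)$ not a square in $\mathbf{F}_{N^2}^{\times}$.
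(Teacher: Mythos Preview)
Your proof is correct and follows essentially the same route as the paper: both argue that if every $H'(\lambda)$ were a square then every $P(j(E))$ would be a fourth power, and then invoke Theorem~\ref{Supersingular_w_1_S_2} together with the oddness of $C_2$ for a contradiction. The paper compresses this into three lines by citing the identity~(\ref{Supersingular_P(j(E))_H'(Lambda)_F_E_eq}) directly and simply observing $C_2\in(\mathbf{Z}/4\mathbf{Z})^\times$. You are more careful in two respects: you re-derive the product identity keeping the sign $(-1)^{m+1}$ and the genuine exponents $4-2m$ (which were suppressed in the $p\geq 5$ derivation), and you verify explicitly that these extra factors lie in $4\mathbf{Z}/2^{t+3}\mathbf{Z}$ using $v_2(N-1)=t+2$; and you spell out the final parity contradiction coefficient-by-coefficient. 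This extra care is warranted for $p=2$, but the strategy is the same.
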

\begin{proof}
If $E\in S$, let $F_E$ be defined as in (\ref{Supersingular_F_E_fiber}).
By (\ref{Supersingular_P(j(E))_H'(Lambda)_F_E_eq}), for all $\lambda \in F_E$ we have: 
$$P(j(E))^{w_E} =  H'(\lambda)^6 \cdot \lambda^4 \cdot (1-\lambda)^4  \text{ .}$$
Thus exists $E \in S$ such that $P(j(E))$ is not a fourth power in $\mathbf{F}_{N^2}^{\times}$ if and only if there is $\lambda \in L$ such that $H'(\lambda)$ is not a square of $\mathbf{F}_{N^2}^{\times}$. Such a $E$ exists by Theorem \ref{Supersingular_w_1_S_2} since we have $C_2 \in (\mathbf{Z}/4\mathbf{Z})^{\times}$.
\end{proof}

The following conjecture was checked numerically for $N<2000$ (without assuming $N \equiv 1 \text{ (modulo }8\text{)}$ anymore). We do not know the significance of this empirical fact.
\begin{conj}\label{Supersingular_carre}
Assume $N \equiv 1 \text{ (modulo }4\text{)}$. For all $\lambda \in L$, $H'(\lambda)$ is not a square of $\mathbf{F}_{N^2}^{\times}$.
\end{conj}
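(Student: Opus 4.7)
The plan is to reduce the statement to a computation in $\mathbf{F}_N^\times$ via Galois descent. Under the hypothesis $N\equiv 1\pmod{4}$, any supersingular elliptic curve $E/\overline{\mathbf{F}}_N$ has $\#E(\mathbf{F}_N)=N+1\not\equiv 0\pmod 4$, so $E$ cannot have full $2$-torsion defined over $\mathbf{F}_N$. Hence no element of $L$ lies in $\mathbf{F}_N$, and $L$ is a disjoint union of Frobenius pairs $\{\lambda,\lambda^N\}$; note also that $m=(N-1)/2$ is even. Since $\mathbf{F}_{N^2}^\times$ is cyclic, an element $a\in\mathbf{F}_{N^2}^\times$ is a square if and only if its norm $N_{\mathbf{F}_{N^2}/\mathbf{F}_N}(a)=a\cdot a^N$ is a square in $\mathbf{F}_N^\times$. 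Thus the conjecture is equivalent to the assertion that for every Frobenius pair $\{\lambda,\lambda^N\}\subset L$, the norm
\[
\mathcal{N}(\lambda):=H'(\lambda)\cdot H'(\lambda^N)\in\mathbf{F}_N^\times
\]
is a quadratic non-residue modulo $N$.

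Next I would exploit Katz's identity (Lemma \ref{Supersingular_Katz}): for $\lambda\in L$, $H(\lambda)=0$ forces
\[
E_{N+1}(E_\lambda,\omega_\lambda)=-\tfrac{3(\lambda-1)}{m+1}H'(\lambda),
\]
so that $\mathcal{N}(\lambda)$ differs from $N_{\mathbf{F}_{N^2}/\mathbf{F}_N}\bigl(E_{N+1}(E_\lambda,\omega_\lambda)\bigr)\cdot N_{\mathbf{F}_{N^2}/\mathbf{F}_N}(\lambda-1)^{-1}$ only by a square. The advantage of this rewriting is that $E_{N+1}$ is a characteristic-$0$ Eisenstein series whose values at supersingular points can potentially be analysed via $q$-expansions or Serre's theory of $p$-adic modular forms. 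Combined with the identity $\bigl((N-1)/2\bigr)!^2\equiv-1\pmod N$, one hopes to determine the Legendre symbol of each $\mathcal{N}(\lambda)$ individually.

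Before attempting the general case, I would verify the \emph{product} statement as a sanity check. By Theorem \ref{Supersingular_discriminant_thm}, $\mathrm{Disc}(H)\equiv(-1)^{m(m-1)/2}/m!$ modulo squares, since $\prod_{k=1}^m k^{4k}$ is a fourth power. As $-1$ is a square in $\mathbf{F}_N^\times$ and $m!$ is a square root of $-1$, the class of $m!$ modulo squares is a fourth root of $-1$, which exists in $\mathbf{F}_N$ iff $N\equiv 1\pmod 8$. Hence $\mathrm{Disc}(H)$ is a non-square iff $N\equiv 5\pmod 8$, iff $m/2$ is odd. Assuming the conjecture, the product of the $m/2$ non-square orbit-norms is a non-square iff $m/2$ is odd, matching exactly. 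So the conjecture is globally consistent with the discriminant formula, though not a consequence of it.

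The main obstacle, and likely the reason the statement is only conjectured in the paper, is disaggregating this global consistency down to individual Frobenius orbits: the discriminant formula only pins down the product, so one needs a finer invariant to isolate each pair. A plausible route would be to construct a nontrivial quadratic character on the Hecke module $\mathbf{Z}_p[S]$---for instance from the Atkin--Lehner involution at $2$, using the identity in Theorem \ref{Supersingular_U_2_multiplicative} together with the $2$-isogeny correspondence encoded by $\varphi_2$---or to apply Kodaira--Spencer to the universal Legendre family in order to realise $H'(\lambda)$ as the reduction mod $N$ of an explicit algebraic quantity attached to CM points of the ring class field that parametrises supersingular $\lambda$. In either approach the arithmetic input that controls the square class at a single orbit appears to lie genuinely deeper than what the ``global'' identities provide, which is where I expect the bulk of the difficulty to sit.
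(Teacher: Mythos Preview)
The statement you are addressing is a \emph{conjecture} in the paper, not a theorem; the paper offers no proof, only the remark that it was verified numerically for $N<2000$. So there is no argument in the paper to compare your proposal against.

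That said, your reductions are sound. The observation that no $\lambda\in L$ lies in $\mathbf{F}_N$ when $N\equiv 1\pmod 4$ is correct and self-contained (the paper's Remark following the conjecture records the contrapositive). The norm criterion for squares in $\mathbf{F}_{N^2}^\times$ is valid since $N$ is odd. Your discriminant sanity check is also correct: modulo squares $\mathrm{Disc}(H)\equiv (m!)^{-1}$ because $\prod_{k=1}^m k^{4k}$ is a fourth power and $-1$ is a square, and since $(m!)^2\equiv -1\pmod N$ when $N\equiv 1\pmod 4$, the class of $m!$ is a non-square precisely when $N\equiv 5\pmod 8$, matching the parity of $m/2$ as you claim.

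However, as you yourself acknowledge, none of this constitutes a proof: the discriminant only pins down the product of the orbit-norms $\mathcal{N}(\lambda)$ modulo squares, not each one individually. Your proposed avenues (a quadratic character built from $U_2$ via Theorem~\ref{Supersingular_U_2_multiplicative}, or Kodaira--Spencer at CM points) are reasonable speculation but are not carried out, and it is not clear that either would separate the orbits. The honest summary is that you have correctly identified both a clean reduction and the essential obstruction, but the conjecture remains open --- in the paper and in your proposal.
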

\begin{rem}
Conjecture \ref{Supersingular_carre} does not hold if $N \equiv 3 \text{ (modulo }4\text{)}$, since in this case there exists $\lambda \in \mathbf{F}_N \cap L$, so $H'(\lambda) \in \mathbf{F}_N^{\times}$ is a square of $\mathbf{F}_{N^2}^{\times}$ \cite[Proposition 4.3]{Lecouturier_Betina_2}.
\end{rem}

\subsection{Conjectural identities satisfied by the supersingular lambda invariants}\label{Supersingular_module_various_conjectures}
In this section, we collect various (often conjectural) arithmetic properties satisfied by the elements of $L$. These identities are motivated by the theory of the Eisenstein ideal.

\begin{conj}\label{Supersingular_conjecture_supersingular_5}
Assume $p \geq 5$. Extend $\log$ to a surjective group homomorphism $\log : \mathbf{F}_{N^2}^{\times} \rightarrow \mathbf{Z}/p^r\mathbf{Z}$.

\begin{enumerate}
\item We have $\sum_{\lambda \in L} \log(\lambda)^2 = -32 \cdot \log(2) \cdot \left( \sum_{k=1}^{\frac{N-1}{2}}k\cdot \log(k) \right)$.
\item There exists $\lambda \in L$ such that $\log(\lambda)\neq 0$.
\item We have $\sum_{\lambda \in L} \log(H'(\lambda))^2 = 4\cdot \left(\sum_{k=1}^{\frac{N-1}{2}} k \cdot \log(k)^2\right) - 3 \cdot 16\cdot \log(2) \cdot \left(\sum_{k=1}^{\frac{N-1}{2}} k \cdot \log(k)\right)$.

\item Assume $\sum_{k=1}^{\frac{N-1}{2}} k \cdot \log(k) = 0$. For all $\lambda \in L$, we have
$$\sum_{\lambda' \in L \backslash\{\lambda\}} \log(\lambda'-\lambda) \cdot \log(H'(\lambda')) = \log(H'(\lambda))^2 \text{ .}$$

\item Assume $\sum_{k=1}^{\frac{N-1}{2}} k \cdot \log(k) = 0$.  For all $\lambda \in L$, we have
$$\sum_{\lambda' \in L \backslash\{\lambda\}} \log(\lambda'-\lambda) \cdot \log(\lambda') = \log(\lambda)^2 \text{ .}$$

\end{enumerate}
\end{conj}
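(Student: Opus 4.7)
Since this is a conjecture, the proposal is speculative; the natural strategy combines the $\Gamma(2)$-level supersingular machinery of this section with the modular-symbol identifications of Theorems \ref{thm_Introduction_comparison_pairing_modSymb} and \ref{thm_Introduction_comparison_pairings}. Parts (iv) and (v) assume $\sum_{k=1}^{(N-1)/2} k \log(k) \equiv 0$ (equivalently $e_1 \bullet e_0 = 0$), while (i)--(iii) are unconditional; hence the central obstacle is extending the pairing formula of Theorem \ref{Supersingular_w_1_S_5}(iii), which requires that hypothesis, to an unconditional statement.

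For (i), I would introduce the auxiliary element $a := \sum_{\lambda \in L} \log(\lambda) \cdot [\lambda] \in M'/p^r M'$ and argue that it is a higher Eisenstein element at the $\Gamma_0(N) \cap \Gamma(2)$-level. Lemma \ref{Supersingular_identity_product_lambda}, which asserts $\prod_{\lambda' \sim_\ell \lambda} \lambda' = \lambda^{\ell+1}$, plays for $a$ the role that Theorem \ref{Supersingular_main_thm} plays for $e_1'$, yielding $(T_\ell' - \ell - 1)(a) \equiv (\ell-1) \log(\ell) \cdot e_0'$ for $\ell$ odd; the case $\ell = 2$ is where the conjectural factor $-32 \log(2)$ should enter, produced by the anomalous factor $\lambda^2(\lambda-1)/4$ in Theorem \ref{Supersingular_U_2_multiplicative}. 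By the uniqueness statement of Theorem \ref{Formalism_existence_eisenstein}(c), $a - e_1'$ is then a scalar multiple of $e_0'$ whose value is pinned down by comparing $a \bullet \tilde{e}_0'$ with $e_1' \bullet \tilde{e}_0'$, the latter being computed from Theorem \ref{Supersingular_discriminant_thm}; pushing forward via $\pi$ and using $\pi(\tilde{e}_0') = 6 \tilde{e}_0$ converts this scalar equality into (i).

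For (iii), one combines Theorem \ref{Supersingular_w_1_S_5}(iii) with the value $e_1 \bullet e_1 = \frac{1}{6}\sum k \log(k)^2$ extracted from Theorems \ref{thm_Introduction_comparison_pairing_modSymb}(ii) and \ref{thm_Introduction_comparison_pairings} and then substitutes (i) to eliminate the $\sum \log(\lambda)^2$ term; removing the hypothesis $e_1 \bullet e_0 = 0$ is the conceptual difficulty and would probably require recomputing $e_1 \bullet e_1$ while tracking a correction proportional to $(e_1 \bullet e_0)^2$. For (ii), if every $\lambda \in L$ satisfied $\log(\lambda) \equiv 0 \pmod{p}$, then all $(N-1)/2$ elements of $L$ would lie in the index-$p$ subgroup $\ker(\log \bmod p) \subset \mathbf{F}_{N^2}^{\times}$; combining the $S_3$-symmetry of (\ref{Supersingular_F_E_fiber}) with the Galois action should give a contradiction for $p \geq 5$, and in any case (ii) follows from (i) in the generic range $\log(2) \cdot \sum k \log(k) \not\equiv 0$. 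Finally, (iv) and (v) interpret each left-hand sum as the $[\lambda]$-coefficient of an appropriate Hecke operator applied to $e_1'$ or $a$; under the hypothesis $\sum k \log(k) \equiv 0$ the Eisenstein ideal acts trivially modulo $e_0'$, so the off-diagonal terms cancel and the diagonal term is forced to equal the claimed square. The main obstacle throughout is to produce clean unconditional identities rather than ones holding only up to scalar multiples of $e_0$; without a strengthening of Theorem \ref{Supersingular_w_1_S_5}(iii), parts (i) and (iii) would otherwise have to be attacked by direct manipulation of $\log$-sums over $L$, which seems out of reach with present techniques.
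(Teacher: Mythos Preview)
This statement is a \emph{conjecture} in the paper, not a theorem; the paper does not prove it in full and offers only the partial results recorded in Remarks~\ref{Supersingular_relation_remark}: part (ii) is established when $\log(2)\not\equiv 0$ (Proposition~\ref{Supersingular_preuve_non_nuls}), part (iii) is established under the additional hypothesis $\sum_{k=1}^{(N-1)/2} k\log(k)\equiv 0$ (Corollary~\ref{comparison_higher_Eichler_quadratic_formula}), while (i), (iv), (v) and the unconditional forms of (ii), (iii) remain open in the paper. Your sketch for (iii) in the conditional regime is exactly the paper's argument; your observation that removing the hypothesis $e_1\bullet e_0=0$ is the central obstruction is also the paper's assessment.

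Your approach to (i), however, contains a concrete error. You claim that Lemma~\ref{Supersingular_identity_product_lambda} gives $(T_\ell'-\ell-1)(a)\equiv(\ell-1)\log(\ell)\cdot e_0'$ for $a=\sum_{\lambda\in L}\log(\lambda)\cdot[\lambda]$, but the identity $\prod_{\lambda'\sim_\ell\lambda}\lambda'=\lambda^{\ell+1}$ yields instead $(T_\ell'-\ell-1)(a)=0$: the coefficient of $[\mu]$ in $T_\ell'(a)$ is $\sum_{\lambda\sim_\ell\mu}\log(\lambda)=\log(\mu^{\ell+1})=(\ell+1)\log(\mu)$. In the paper's language, your element $a$ is precisely $e_0^0$ of Section~\ref{Supersingular_module_section_eis_ideals_gamma(2)}, the \emph{zeroth} Eisenstein element for the \emph{different} Eisenstein ideal $I_0$ (Proposition~\ref{Supersingular_Eisenstein_alpha}), not a first higher Eisenstein element for $I_2$. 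Hence the uniqueness argument forcing $a-e_1'\in(\mathbf{Z}/p^r\mathbf{Z})\cdot e_0'$ collapses. The paper's remark that Venkatesh's (unpublished) proof of (i) uses $U_2$ confirms that the content of (i) lies entirely in the behaviour at $\ell=2$, consistent with $a$ being Eisenstein for all odd $T_\ell'$; your instinct that the factor $-32\log(2)$ enters via $U_2$ and the anomalous factor in Theorem~\ref{Supersingular_U_2_multiplicative} is correct, but the surrounding framework needs to pair $e_0^0$ against something else rather than treat $a$ as a higher element. For (ii), your counting/$S_3$-symmetry sketch is not the paper's route: Proposition~\ref{Supersingular_preuve_non_nuls} argues instead that if every $\log(\lambda)\equiv 0$ then applying $U_2$ and the relation $\lambda_1\lambda_2=-16\lambda/(1-\lambda)^2$ forces $\log(-16)\equiv 0$, a contradiction when $\log(2)\not\equiv 0$. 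For (iv) and (v) the paper offers no proof and attributes their shape to refined $\mathscr{L}$-invariant theory rather than to a direct Hecke-operator computation.
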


\begin{rems}\label{Supersingular_relation_remark}
\begin{enumerate}
\item These conjectures have been numerically checked (using SAGE) for $N < 1000$.
\item Using (derivatives of) the relations $H(1-X) = (-1)^m\cdot H(X)$ and $H(\frac{1}{X}) = X^{-m}\cdot H(X)$ (where $m = \frac{N-1}{2}$), we easily prove that
$$\sum_{\lambda \in L} \log(H'(\lambda))\cdot \log(\lambda) = \sum_{\lambda \in L} \log(H'(\lambda))\cdot \log(1-\lambda) = -\sum_{\lambda \in L} \log(\lambda)^2 = -2 \cdot \sum_{\lambda \in L} \log(\lambda)\cdot \log(1-\lambda)$$
which allows us to reformulate Conjecture \ref{Supersingular_conjecture_supersingular_5} (i).
\item The term $\log(2)$ in Conjecture \ref{Supersingular_conjecture_supersingular_5} (i) comes from a criterion of Ribet concerning the existence of congruences between cuspidal newforms of weight $2$ and level $\Gamma_0(2N)$, and Eisenstein series \cite[Theorem $2.4$]{Yoo_optimal}.
\item It seems that Akshay Venkatesh found a proof of Conjecture \ref{Supersingular_conjecture_supersingular_5} (i) using the Hecke operator $U_2$. 
\item We will show in Proposition \ref{Supersingular_preuve_non_nuls} that (ii) holds if $\log(2) \not\equiv 0 \text{ (modulo }p\text{)}$.
\item If $\sum_{k=1}^{\frac{N-1}{2}} k \cdot \log(k)=0$, Conjecture \ref{Supersingular_conjecture_supersingular_5} (iii) is proved in Corollary \ref{comparison_higher_Eichler_quadratic_formula}, using modular symbols. 
\item Conjecture \ref{Supersingular_conjecture_supersingular_5} (iv) and (v) are suggested by the theory of the refined $\mathscr{L}$-invariant of de Shalit \cite{de_shalit_L}, Oesterl\'e, Mazur--Tate \cite{MT} and Mazur--Tate--Teitelbaum \cite{MTT}, and its generalization to level $\Gamma(2) \cap \Gamma_0(N)$ studied in \cite{Lecouturier_Betina}. 
\end{enumerate}
\end{rems}

We end this section with (conjectural) relations analogous to the ones of Conjecture \ref{Supersingular_conjecture_supersingular_5} for $p=3$ and $p=2$.
\begin{conj}\label{Supersingular_conjecture_supersingular_3}
Assume $p=3$. Extend and lift $\log$ to a surjective group homomorphism $\log : \mathbf{F}_{N^2}^{\times} \rightarrow \mathbf{Z}/p^{r+1}\mathbf{Z}$. 
\begin{enumerate}
\item There exists $\lambda \in L$ such that $\log(\lambda) \not\equiv 0 \text{ (modulo }3\text{)}$ (\ie $\lambda$ is not a cube of $\mathbf{F}_{N^2}^{\times}$).
\item We have $$\sum_{\lambda \in L} \log(\lambda)^2 = 4 \cdot \log(2) \cdot \left( \sum_{k=1}^{\frac{N-1}{2}}k\cdot \log(k) \right) \text{ (modulo }9\text{)} \text{ .}$$
Furthermore, both sides of the equality are congruent to $0$ modulo $3$. 

\item For al $\lambda \in L$, we have:
$$\sum_{\lambda' \in L \backslash\{\lambda\}} \log(\lambda'-\lambda) \cdot \log(H'(\lambda')) = \log(H'(\lambda))^2 \text{ (modulo }3\text{)}$$

\item For al $\lambda \in L$, we have:
$$\sum_{\lambda' \in L \backslash\{\lambda\}} \log(\lambda'-\lambda) \cdot \log(\lambda') = \log(\lambda)^2 \text{ (modulo }3\text{)}$$
\end{enumerate}
\end{conj}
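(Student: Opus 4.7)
The plan is to follow, part by part, the strategies indicated for the analogous Conjecture \ref{Supersingular_conjecture_supersingular_5} in residue characteristic $p\geq 5$, with extra care for the automorphism-of-order-3 issues that $p=3$ introduces in the supersingular module.

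The core computation is (ii), which I would attack exactly as Theorem \ref{Supersingular_w_1_S_5}(iii) was attacked. Apply Theorem \ref{Supersingular_w_1_S_3} to write
$$12\cdot e_1 \equiv 2\log(2)\cdot \tilde{e}_0 + \sum_{E\in S}\log(P(j(E)))\cdot [E] \pmod{\mathbf{Z}\cdot (3e_0)},$$
pair this element with itself using $[E]\bullet [E'] = w_E\cdot \delta_{E,E'}$ and $\tilde{e}_0\bullet\tilde{e}_0 = \frac{N-1}{12}$, and expand $P(j(E))^{w_E}$ via \eqref{Supersingular_P(j(E))_H'(Lambda)_F_E_eq}. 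This expresses $144\cdot (e_1\bullet e_1)$ modulo $9$ as a linear combination of $\sum_\lambda \log(H'(\lambda))^2$, $\sum_\lambda \log(\lambda)^2$, $\log(2)\cdot \sum_{k=1}^{(N-1)/2} k\log(k)$, and $(N-1)\log(2)^2/12$. Simultaneously Corollary \ref{comparison_higher_Eichler_quadratic_formula} (proved independently via modular symbols) expresses $e_1\bullet e_1$ in terms of $\sum k\log(k)^2$. Equating the two expressions modulo $9$ should yield (ii); the first assertion of (ii) (that both sides vanish modulo $3$) follows at once from the $p=3$ analog of Theorem \ref{thm_Introduction_g_p>1}.

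For (i), I would combine (ii) with a level-raising argument at $2$. If every $\lambda \in L$ were a cube in $\mathbf{F}_{N^2}^\times$, then $\sum_\lambda \log(\lambda)^2 \equiv 0 \pmod 9$, so by (ii) we would have $\log(2)\cdot \sum k\log(k) \equiv 0 \pmod 9$; a Ribet-style congruence between weight-$2$ newforms of level $\Gamma_0(2N)$ and the Eisenstein series, using the action of $U_2$ computed in Theorem \ref{Supersingular_U_2_multiplicative}, should rule this out. Parts (iii) and (iv) I would attack through the theory of the refined $\mathscr{L}$-invariant at the auxiliary level $\Gamma(2)\cap \Gamma_0(N)$ developed in \cite{Lecouturier_Betina}: the sums $\sum_{\lambda'\neq \lambda}\log(\lambda'-\lambda)\cdot \log(f(\lambda'))$ appearing in the statement are precisely the local components of such an $\mathscr{L}$-invariant evaluated at the supersingular points $\lambda$, and the identities reduce to congruences satisfied by the Mazur-Tate-Teitelbaum $p$-adic $L$-function modulo $3$.

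The main obstacle is twofold. First, proving (ii) modulo $9$ requires Corollary \ref{comparison_higher_Eichler_quadratic_formula}'s comparison to hold modulo $9$ (not merely modulo $3$), which demands carrying out the modular symbol computation at one higher power of $3$ and carefully tracking the error terms coming from the $2\log(2)\cdot \tilde{e}_0$ correction in Theorem \ref{Supersingular_w_1_S_3}. Second, the $\mathscr{L}$-invariant theory required for (iii) and (iv) is currently developed away from residue characteristic $3$; extending it to $p=3$ while tracking the contribution of the order-$3$ supersingular automorphisms, and ensuring that the resulting identities survive reduction modulo $3$ rather than being trivialized by the cyclotomic $\log$, is the principal technical hurdle.
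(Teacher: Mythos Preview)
This statement is a \emph{conjecture} in the paper, not a theorem; the paper offers no proof. The remarks immediately following it only note that (i) should hold even without the congruence $N\equiv 1\pmod 9$, and that (ii)--(iv), while formally parallel to Conjecture~\ref{Supersingular_conjecture_supersingular_5}, do not lift to $3^{r+1}$ in general. The paper proves only the partial result Proposition~\ref{Supersingular_preuve_non_nuls}, which gives (i) under the extra hypothesis $\log(2)\not\equiv 0\pmod 3$. So there is no proof to compare your proposal against.

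As a research sketch, your plan for (ii) has a structural gap: Corollary~\ref{comparison_higher_Eichler_quadratic_formula}, on which you rely to express $e_1\bullet e_1$ via $\sum k\log(k)^2$, is proved only for $p\geq 5$ (the entire Section~\ref{comparison_section_various_pairings_>5} assumes this) and only under $n(r,p)\geq 2$, whereas Conjecture~\ref{Supersingular_conjecture_supersingular_3}(ii) is stated unconditionally. Your claim that ``both sides vanish modulo $3$'' follows from the $p=3$ analogue of Theorem~\ref{thm_Introduction_g_p>1} is also not right: that theorem gives the criterion $g_p>1\iff \sum k\log(k)\equiv 0$, which does not force either side of (ii) to vanish mod $3$ in general. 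For (iii) and (iv), you correctly identify that the $\mathscr{L}$-invariant machinery of \cite{Lecouturier_Betina} would need to be extended to $p=3$; the paper makes exactly the same observation for the $p\geq 5$ analogues (Remark~\ref{Supersingular_relation_remark}(vii)) and leaves them open there too.
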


\begin{rems}
\begin{enumerate}
\item Conjecture \ref{Supersingular_conjecture_supersingular_3} (i) should be true even if $N \not\equiv 1 \text{ (modulo }9\text{)}$ (which is assumed since $p=3$).
\item Conjecture \ref{Supersingular_conjecture_supersingular_3} (ii), (iii) and (iv), although similar to \ref{Supersingular_conjecture_supersingular_5}, is not modulo $3^{r+1}$ in general.
\end{enumerate}
\end{rems}
The following result, in which $N$ is an arbitrary odd prime, is kind of opposite to Conjecture \ref{Supersingular_conjecture_supersingular_3} (i).
\begin{prop}\label{Supersingular_cube}
For all $\lambda \in L$, $\frac{\lambda(1-\lambda)}{2}$ is a cube of $\mathbf{F}_{N^2}^{\times}$.
\end{prop}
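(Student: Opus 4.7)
My plan rests on an algebraic identity between $j(E_{\lambda})$ and $\lambda(1-\lambda)/2$. Writing $256 = 4\cdot 4^{3}$ in the formula $(\ref{Supersingular_relation_j_lambda_eq})$ for $j$ and multiplying through by $\bigl(\lambda(1-\lambda)/2\bigr)^{2} = \lambda^{2}(1-\lambda)^{2}/4$, I obtain the key identity
$$
j(E_{\lambda})\cdot\left(\frac{\lambda(1-\lambda)}{2}\right)^{\!2} \;=\; \bigl(4(1-\lambda+\lambda^{2})\bigr)^{3},
$$
valid in $\mathbf{F}_{N^{2}}$ for every $\lambda\in L$. The right-hand side is manifestly a cube in $\mathbf{F}_{N^{2}}^{\times}$ whenever $1-\lambda+\lambda^{2}\neq 0$.

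I then reduce the proposition to a statement about $j$. The quotient $G:=\mathbf{F}_{N^{2}}^{\times}/(\mathbf{F}_{N^{2}}^{\times})^{3}$ is trivial if $N=3$ (whence the proposition holds trivially) and cyclic of order $3$ otherwise. On a group of order $3$ the squaring map is a bijection, so the identity above translates in $G$ into $[j(E_{\lambda})]=[\lambda(1-\lambda)/2]$, giving the equivalence \emph{$\lambda(1-\lambda)/2$ is a cube if and only if $j(E_{\lambda})$ is a cube} whenever $1-\lambda+\lambda^{2}\neq 0$. In the degenerate case $1-\lambda+\lambda^{2}=0$, the relation $\lambda^{2}=\lambda-1$ yields $\lambda(1-\lambda)=\lambda-\lambda^{2}=1$, so the assertion becomes $1/2\in(\mathbf{F}_{N^{2}}^{\times})^{3}$; this case arises only when $j(E_{\lambda})=0$ is supersingular, forcing $N\equiv 2\pmod{3}$, whence $3\mid N+1$ and
$$2^{(N^{2}-1)/3}=(2^{N-1})^{(N+1)/3}\equiv 1\pmod N,$$
so $2$ is indeed a cube in $\mathbf{F}_{N^{2}}^{\times}$.

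It remains to prove: every supersingular $j\neq 0$ in $\mathbf{F}_{N^{2}}^{\times}$ is a cube. I would attack this by exploiting the superspeciality of $E/\overline{\mathbf{F}}_{N}$, namely that Frobenius $F\in\mathrm{End}(E)$ satisfies $F^{2}=\pm[N]$ once $E$ is defined over $\mathbf{F}_{N^{2}}$, to produce a Weierstrass model $y^{2}=x^{3}+Ax+B$ of $E$ over $\mathbf{F}_{N^{2}}$ whose discriminant $\Delta=-16(4A^{3}+27B^{2})$ lies in $(\mathbf{F}_{N^{2}}^{\times})^{3}$. Since $j=(-48A)^{3}/\Delta$ is then a ratio of cubes, it is a cube. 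The principal obstacle is constructing this model; equivalently, controlling the cubic residue character of the discriminant across the twist classes of supersingular curves, for which I would expect to invoke the Deuring correspondence with CM lifts to reduce to an explicit statement in characteristic zero.
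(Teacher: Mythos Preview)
Your reduction is exactly the paper's approach: rewrite the $j$--$\lambda$ relation so that $j(E_{\lambda})\cdot\bigl(\lambda(1-\lambda)/2\bigr)^{2}$ is a cube, and use that squaring is a bijection on $\mathbf{F}_{N^{2}}^{\times}/(\mathbf{F}_{N^{2}}^{\times})^{3}$ to conclude that $\lambda(1-\lambda)/2$ is a cube if and only if $j(E_{\lambda})$ is. The paper then simply cites a known result (Morton, Theorem~1.2(b)) for the fact that every supersingular $j$-invariant is a cube in $\mathbf{F}_{N^{2}}$, whereas you try to prove this yourself and leave it incomplete. Your separate treatment of the degenerate case $j=0$ (i.e.\ $1-\lambda+\lambda^{2}=0$) is correct and more explicit than the paper, which folds it into the citation.

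For the remaining step, your idea of producing a model with cubic discriminant is on the right track, but the cleanest route is not via Deuring lifting. A supersingular curve over $\mathbf{F}_{N^{2}}$ has a twist with Frobenius equal to $[-N]$, hence with $E[3]\subset E(\mathbf{F}_{N^{2}})$; since $\zeta_{3}\in\mathbf{F}_{N^{2}}$, one can then put $E$ in Hesse form $X^{3}+Y^{3}+Z^{3}=3\mu XYZ$, for which the $j$-invariant is visibly a cube in $\mu$. Twisting does not change $j$, so this settles the claim. This is essentially the content of the reference the paper invokes.
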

\begin{proof}
Since $j = \frac{256\cdot (\lambda^2-\lambda+1)^3}{\lambda^2\cdot (1-\lambda)^2}$, it suffices to show that supersingular $j$-invariants are cubes of $\mathbf{F}_{N^2}^{\times}$. This is well-known \cite[Theorem $1.2$ (b)]{Morton}.
\end{proof}

We conclude this section by stating a result which goes in the opposite direction of Conjecture \ref{Supersingular_conjecture_supersingular_5} (ii).
\begin{prop}\cite[Proposition $3.1$]{legendre_fourth}\label{Supersingular_puissance_4} 
Every $\lambda \in L$ is a fourth power modulo $N$ (we do not assume anything on $N$). If $N^2 \equiv 1  \text{ (modulo } 8 \text{)}$, then every $\lambda \in L$ is a eighth power modulo $N$.
\end{prop}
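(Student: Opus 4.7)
The plan is to exploit the explicit $2$-isogeny correspondence on Legendre $\lambda$-invariants given by Proposition~\ref{Supersingular_motivation_varphi_2}. For $\lambda \in L$, the two $\Gamma(2)$-preserving $2$-isogenous $\lambda$-invariants $\lambda_1, \lambda_2$ are supersingular (being isogenous to the supersingular $E_\lambda$), hence lie in $L \subset \mathbf{F}_{N^2}$, and are by definition the roots (in $Y$) of the quadratic $\varphi_2(\lambda, Y) = (1-\lambda)^2 Y^2 + 16\lambda Y - 16\lambda$.

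First I would prove the weaker statement that every $\lambda \in L$ is a square in $\mathbf{F}_{N^2}^{\times}$. A short calculation gives the discriminant (in $Y$) of $\varphi_2(\lambda, Y)$ as $(16\lambda)^2 + 64\lambda(1-\lambda)^2 = 64\lambda(\lambda+1)^2$, so the $\mathbf{F}_{N^2}$-rationality of $\lambda_1, \lambda_2$ forces $\sqrt{\lambda} \in \mathbf{F}_{N^2}$ whenever $\lambda \ne -1$. The case $\lambda = -1$ is handled directly: $-1$ is always a square in $\mathbf{F}_{N^2}^{\times}$ for odd $N$, since $4 \mid N^2 - 1$.

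To upgrade to the fourth-power statement, I would apply the same argument one step further along the $2$-isogeny graph. Because $\lambda_1 \in L$, the analogous analysis applied to $\lambda_1$ yields $\sqrt{\lambda_1} \in \mathbf{F}_{N^2}$ and also produces its $2$-isogenous partners $\lambda_{1,1}, \lambda_{1,2} \in L \subset \mathbf{F}_{N^2}$. Combining the algebraic relations coming from $\varphi_2(\lambda, \lambda_1) = 0$ and $\varphi_2(\lambda_1, \lambda_{1,i}) = 0$ with the squareness of each of $\lambda, \lambda_1, \lambda_{1,i}$ produces, via Landen-type identities in characteristic $N$ (namely the classical relation $\sqrt{\lambda} = (1-\sqrt{1-\lambda_1})/(1+\sqrt{1-\lambda_1})$ coming from the Landen transformation, iterated once), an explicit element $\mu \in \mathbf{F}_{N^2}^{\times}$ with $\mu^4 = \lambda$.

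The main obstacle will be the combinatorial bookkeeping of the various sign and square-root choices at each iteration, ensuring that the final algebraic identity genuinely produces a fourth root of the original $\lambda$ rather than of some $S_3$-conjugate $1-\lambda$ or $1/\lambda$. The eighth-power statement under the congruence hypothesis is then obtained by one further iteration of the same type of argument, the congruence on $N$ being used precisely to guarantee that the extra auxiliary square root encountered at the third level of the Landen iteration remains inside $\mathbf{F}_{N^2}$ rather than escaping to a proper quadratic extension.
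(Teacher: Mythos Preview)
The paper does not supply its own proof of this proposition: it is simply quoted from the reference \cite{legendre_fourth}. So there is no ``paper's proof'' to compare against, and your proposal should be judged on its own merits.

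Your strategy is sound and does lead to a proof. The discriminant computation is correct: $\Disc_Y\varphi_2(\lambda,Y)=64\lambda(\lambda+1)^2$, and since $\lambda_1,\lambda_2\in L\subset\mathbf{F}_{N^2}$ this forces $\lambda$ (and by the $S_3$-symmetry of $L$, also $1-\lambda$) to be a square in $\mathbf{F}_{N^2}^\times$, with the case $\lambda=-1$ handled separately. For the fourth-power step, however, your write-up is vaguer than it needs to be. Solving the quadratic explicitly gives, for one choice of square root,
\[
\lambda_1=\frac{4\sqrt{\lambda}}{(1+\sqrt{\lambda})^2},\qquad 1-\lambda_1=\left(\frac{1-\sqrt{\lambda}}{1+\sqrt{\lambda}}\right)^2,
\]
which is precisely the Landen relation you invoke. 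From the first identity one reads off $\sqrt{\lambda}=\lambda_1\cdot\bigl(\tfrac{1+\sqrt{\lambda}}{2}\bigr)^2$; since $\lambda_1\in L$ is a square by the first step, $\sqrt{\lambda}$ is a square and $\lambda$ is a fourth power. Equivalently, from your own formula $\sqrt{\lambda}=(1-\sqrt{1-\lambda_1})^2/\lambda_1$, and both $\sqrt{1-\lambda_1}$ and $\sqrt{\lambda_1}$ lie in $\mathbf{F}_{N^2}$ by the first step applied to $\lambda_1$ and $1-\lambda_1$; hence $\mu=(1-\sqrt{1-\lambda_1})/\sqrt{\lambda_1}$ is an explicit fourth root. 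No second iteration is needed, and the sign ambiguities you worry about are all absorbed by the fact that $-1$ is a square in $\mathbf{F}_{N^2}^\times$.

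For the eighth-power statement, your instinct to iterate once more is correct, but note that the stated hypothesis $N^2\equiv 1\pmod 8$ is vacuous for odd $N$; indeed the argument goes through unconditionally. Using the second displayed identity above, $1-\lambda_1$ being a fourth power (by the result just proved, applied to $1-\lambda_1\in L$) makes $\tfrac{1-\sqrt{\lambda}}{1+\sqrt{\lambda}}$ a square; combined with $(1-\sqrt{\lambda})(1+\sqrt{\lambda})=1-\lambda$ being a square, both $1\pm\sqrt{\lambda}$ are squares. Then from $\lambda^{1/4}=\pm\nu^2(1+\sqrt{\lambda})/2$ with $\nu^4=\lambda_1$, and since $2$ is always a square in $\mathbf{F}_{N^2}^\times$, one concludes that $\lambda^{1/4}$ is a square. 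So the ``extra auxiliary square root'' you anticipate needing the congruence for does not in fact arise.
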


\subsection{Eisenstein ideals of level $\Gamma_0(N) \cap \Gamma(2)$}\label{Supersingular_module_section_eis_ideals_gamma(2)}
Assume $p\geq 3$. Keep the notation of sections \ref{Section_Supersingular_intro_SS}, \ref{Supersingular_section_Hasse} and \ref{Section_Supersingular_Gamma(2)_structure}. Extend $\log$ to a surjective group homomorphism $\log : \mathbf{F}_{N^2}^{\times} \rightarrow \mathbf{Z}/p^r\mathbf{Z}$.

In view of the role played by the Hasse polynomial, we reformulate the problem of Eisenstein elements in the context of the congruence subgroup $\Gamma_0(N) \cap \Gamma(2)$. The modular curve $X(\Gamma_0(N) \cap \Gamma(2))$ has $6$ cusps. Thus, the space of Eisenstein series of weight $2$ and level $\Gamma_0(N) \cap \Gamma(2)$ has dimension $5$. It admits a basis of eigenforms for the Hecke operators $T_{\ell}$ ($\ell$ prime $\neq 2,N$), $U_2$ and $U_N$. These Eisenstein series are characterized by the pair $(a_N, a_2)$ of their Fourier coefficients (at the cusp $\infty$) at $N$ and $2$, which belongs to $\{(1,1), (1, 2), (1, 0), (N, 1), (N, 0)\}$. Since $p^t$ divides $N-1$, these coefficients are in $\{(1, 1), (1, 2), (1, 0)\}$ modulo $p^t$. We define three Eisenstein ideals.

For $\alpha \in  \{0,1,2\}$, let $I_{\alpha}$ be the ideal of the Hecke algebra acting on $M_2(\Gamma(2) \cap \Gamma_0(N))$ generated by the $T_{\ell}-\ell-1$ ($\ell$ prime number different from $2$ and $N$), $U_2-\alpha$ and $U_N-1$. Recall that these Hecke operators generate a $\mathbf{Z}_p$-algebra $\tilde{\mathbb{T}}'$ acting on $M' = \mathbf{Z}_p[L]$. We have described the action of $T_{\ell}$ (given by the modular polynomial $\varphi_{\ell}$) and $U_N$ (given by $[\lambda] \mapsto [\lambda^N]$). We are going do describe $U_2$ (this might be well-known, be we could not find a reference).

\begin{prop}\label{Supersingular_formule_U_2}
For all $\lambda \in L$, we have in $M'$:
$$U_2([\lambda]) = [\lambda_1]+ [\lambda_2]$$
where $\lambda_1$ et $\lambda_2$ are the roots of the polynomial $\varphi_2(\lambda, Y)$.
\end{prop}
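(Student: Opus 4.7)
The plan is to identify the operator $U_2$ on the supersingular module $M'$ with the correspondence on the Legendre $\lambda$-line defined by $\varphi_2(\lambda, \lambda')=0$, using the complex-analytic description provided by Proposition \ref{Supersingular_motivation_varphi_2}.

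The first step is to analyze $U_2$ as a correspondence on the modular curve $X(\Gamma)$ where $\Gamma = \Gamma(2)\cap\Gamma_0(N)$. The Hecke operator $U_2$ is induced by the double coset $\Gamma\alpha_2\Gamma$ with $\alpha_2 = \begin{pmatrix} 1 & 0 \\ 0 & 2 \end{pmatrix}$. The map $\Gamma \to \mathbf{Z}/2\mathbf{Z}$ sending $\begin{pmatrix}a&b\\c&d\end{pmatrix}$ to $(b/2) \bmod 2$ (well-defined since $b$ is even on $\Gamma(2)$) is easily checked to be a homomorphism, is surjective (witnessed by $T^2$ with $T = \begin{pmatrix}1&1\\0&1\end{pmatrix}$), and has kernel $\Gamma \cap \alpha_2^{-1}\Gamma\alpha_2$, so this subgroup has index $2$ in $\Gamma$. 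Hence $\Gamma\alpha_2\Gamma = \Gamma\alpha_2 \sqcup \Gamma\alpha_2 T^2$, and the two representatives $\alpha_2$ and $\alpha_2 T^2 = \begin{pmatrix}1&2\\0&2\end{pmatrix}$ act on the upper half plane as $z\mapsto z/2$ and $z\mapsto(z+2)/2$ respectively. Under the Legendre parametrization, they send $\lambda(q)$ (with $q=e^{i\pi z}$) to $\lambda(e^{i\pi z/2})$ and $\lambda(e^{i\pi(z+2)/2})$, which by Proposition \ref{Supersingular_motivation_varphi_2} are exactly the two roots of $\varphi_2(\lambda, Y)$.

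The second step is to descend the correspondence to characteristic $N$. Since $\varphi_2\in\mathbf{Z}[X,Y]$ and $N$ is odd, the correspondence has good reduction at $N$; because $2$-isogenies preserve supersingularity, it restricts to a correspondence on the supersingular locus $L\subset\overline{\mathbf{F}}_N$. By the moduli-theoretic description of the Hecke action on the supersingular module (equivalently, its realization via Brandt matrices), $U_2$ acting on $M'=\mathbf{Z}_p[L]$ agrees with the map induced by this correspondence, giving $U_2([\lambda]) = [\lambda_1]+[\lambda_2]$ as claimed.

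The main technical point is the coset decomposition in the first step: one must be careful that the factor $\Gamma_0(N)$ in $\Gamma$ does not introduce extra cosets beyond those already present at level $\Gamma(2)$. This is ensured by $\gcd(2, N)=1$, which makes the $\Gamma_0(N)$-condition (divisibility of the lower-left entry by $N$) compatible with the conjugation by $\alpha_2$, so that the index computation reduces to the one on the $\Gamma(2)$-side.
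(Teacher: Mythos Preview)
Your proof is correct and follows essentially the same approach as the paper, which simply cites Proposition \ref{Supersingular_motivation_varphi_2} as the proof. You have made explicit what the paper leaves implicit: the coset decomposition $\Gamma\alpha_2\Gamma = \Gamma\alpha_2 \sqcup \Gamma\alpha_2 T^2$ (your homomorphism $\gamma\mapsto (b/2)\bmod 2$ on $\Gamma(2)\cap\Gamma_0(N)$ is a clean way to get the index-$2$ statement), the identification of the two coset actions with $z\mapsto z/2$ and $z\mapsto(z+2)/2$, and the descent of the correspondence from characteristic $0$ to characteristic $N$.
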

\begin{proof}
This follows from Proposition \ref{Supersingular_motivation_varphi_2}.
\end{proof}

Define the following elements in $M'/p^r\cdot M'$, if $p \geq 5$: 
\begin{itemize}
\item $e_0^0 = \sum_{\lambda \in L} \log(\lambda)\cdot [\lambda]$
\item $e_0^1 = \sum_{\lambda \in L} (\log(1-\lambda) - 2 \cdot \log(\lambda) - 4\cdot \log(2)) \cdot [\lambda]$
\item $e_0^2 = \sum_{\lambda \in L} [\lambda]$
\end{itemize}
If $p=3$, we define in the same way $e_0^0$ and $e_0^2$. To define $e_0^1$, note that by Proposition \ref{Supersingular_cube}, for all $\lambda \in L$ there exists $a_{\lambda} \in \mathbf{F}_{N^2}^{\times}$ such that $\frac{1-\lambda}{\lambda^2\cdot 2^4} = a_{\lambda}^3$. We then let
$$e_0^1 = \sum_{\lambda \in L} a_{\lambda} \cdot [\lambda] \in M'/3^r\cdot M' $$
(this does not depend on the choice of $a_{\lambda}$ since $r \leq t < v = v_3(N-1) = t+1$).

\begin{prop}\label{Supersingular_Eisenstein_alpha}
For all $\alpha \in \{0,1,2\}$, $e_0^{\alpha}$ is annihilated by the ideal $I_{\alpha}$.
\end{prop}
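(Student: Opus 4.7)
The plan is to verify that each of the three kinds of generators of $I_\alpha$, namely $T_\ell - \ell - 1$ for primes $\ell \nmid 2N$, $U_2 - \alpha$, and $U_N - 1$, annihilates $e_0^\alpha$ in $M'/p^r\cdot M'$. Throughout, I interpret $e_0^1$ in the $p=3$ case as $\sum_\lambda \log(a_\lambda)\cdot [\lambda]$; this is well-defined modulo $3^r$ because $\log(\zeta_3)$ has order dividing $3$ inside $\mathbf{Z}/3^{t+1}\mathbf{Z}$ and therefore is a multiple of $3^t$, which vanishes modulo $3^r$ (as $r\leq t$). Likewise $\log(-1)=0$ modulo $p^r$ since $p$ is odd.

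For $T_\ell - \ell - 1$, symmetry of the $\ell$-isogeny relation on $L$ (dual isogeny, and supersingularity is preserved by isogeny) allows one to swap the order of summation:
\[
(T_\ell - \ell - 1)\Bigl(\sum_\lambda f(\lambda)\cdot [\lambda]\Bigr) = \sum_\mu \Bigl(\sum_{\lambda \sim_\ell \mu} f(\lambda) - (\ell+1)\cdot f(\mu)\Bigr) \cdot [\mu].
\]
By Lemma \ref{Supersingular_identity_product_lambda}, $\prod_{\lambda \sim_\ell \mu}\lambda = \mu^{\ell+1}$ and $\prod_{\lambda \sim_\ell \mu}(1-\lambda) = (1-\mu)^{\ell+1}$; taking $\log$ kills the inner bracket for $f\in\{\log(\cdot),\ \log(1-\cdot)\}$, while the constant case is trivial. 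Linearity then handles $e_0^0$, $e_0^1$, and $e_0^2$ simultaneously. For $U_N - 1$: since $E_\lambda^{(N)} = E_{\lambda^N}$, one has $U_N([\lambda]) = [\lambda^N]$; reindexing via $\mu = \lambda^N$ (using $L \subset \mathbf{F}_{N^2}$) and using the Frobenius identities $\log(\mu^N) = N\cdot \log(\mu)$ and $\log(1 - \mu^N) = \log((1-\mu)^N) = N\cdot \log(1-\mu)$ yield $U_N(e_0^\alpha) = N\cdot e_0^\alpha$, which equals $e_0^\alpha$ in $M'/p^r\cdot M'$ since $p^r\mid p^t\mid N-1$.

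The main computation is $U_2 - \alpha$. By Proposition \ref{Supersingular_formule_U_2}, the coefficient of $[\mu]$ in $U_2(\sum c_\lambda\cdot [\lambda])$ equals $\sum_{\varphi_2(\lambda,\mu)=0} c_\lambda$. As a polynomial in $X$, $\varphi_2(X,\mu) = \mu^2\cdot X^2 + (-2\mu^2 + 16\mu - 16)\cdot X + \mu^2$ has degree $2$ with nonzero constant term, and its two roots $\lambda_1',\lambda_2'$ lie in $L$ since they are $2$-isogenous to the supersingular $\mu$ and cannot equal $0$ or $1$. Vieta then gives $\lambda_1'\lambda_2' = 1$ and $(1-\lambda_1')(1-\lambda_2') = 1 - (\lambda_1'+\lambda_2') + \lambda_1'\lambda_2' = 16(\mu-1)/\mu^2$. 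Reading off the three cases: for $\alpha = 2$ the coefficient count is $2$, so $U_2\, e_0^2 = 2\cdot e_0^2$; for $\alpha = 0$ the coefficient is $\log(\lambda_1'\lambda_2') = 0$, so $U_2\, e_0^0 = 0$; and for $\alpha = 1$ with $p\geq 5$ the coefficient is $\log\bigl(16(\mu-1)/\mu^2\bigr) - 8\log(2) = \log(1-\mu) - 2\log(\mu) - 4\log(2)$ (using $\log(-1)=0$), which matches the $[\mu]$-coefficient of $e_0^1$. The case $p = 3$ follows by applying the same Vieta identity to $3\log(a_\lambda) = \log(1-\lambda) - 2\log(\lambda) - 4\log(2)$ modulo $3^{t+1}$ and dividing by $3$ to recover $U_2\, e_0^1 = e_0^1$ modulo $3^r$. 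The main obstacle is precisely this $U_2$ computation; in particular, one must track $3$-adic divisibilities carefully in the $p=3$ case to justify the division by $3$.
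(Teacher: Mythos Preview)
Your proof is correct and follows the same approach as the paper's, which simply records that $T_\ell-\ell-1$ is handled by Lemma~\ref{Supersingular_identity_product_lambda}, that $U_N-1$ is ``obvious'', and that $U_2$ is ``a formal computation using Proposition~\ref{Supersingular_formule_U_2}''; you have filled in exactly those computations, including the Vieta identities $\lambda_1'\lambda_2'=1$ and $(1-\lambda_1')(1-\lambda_2')=16(\mu-1)/\mu^2$ and the careful $3$-adic bookkeeping for the $p=3$ case of $e_0^1$.

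Two minor points of phrasing: your blanket statement ``$U_N(e_0^\alpha)=N\cdot e_0^\alpha$'' is not literally correct for the constant term $-4\log(2)$ in $e_0^1$ (nor for $e_0^2$), but since $N\equiv 1\pmod{p^r}$ the conclusion $(U_N-1)e_0^\alpha=0$ is unaffected; and in the $p=3$ step for $U_2$, your lift to $\mathbf{Z}/3^{t+1}\mathbf{Z}$ works because $\Lambda(\zeta_3)$ is a multiple of $3^t$ there, so after dividing by $3$ the ambiguity in the choice of cube root $a_\lambda$ vanishes modulo $3^t\supseteq 3^r$, as you note.
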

\begin{proof}
The property for $T_{\ell}-\ell-1$ for a prime $\ell \neq 2, N$ follows from Lemma \ref{Supersingular_identity_product_lambda}. The fact that these elements are killed by $U_N-1$ is obvious.
The property for $U_2$ is a formal computation using Proposition \ref{Supersingular_formule_U_2}.
\end{proof}

\begin{conj}\label{Supersingular_non_nuls}
The elements $e_0^1$ and $e_0^0$ are non-zero modulo $p$.
\end{conj}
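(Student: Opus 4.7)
The plan is to proceed by contradiction, exploiting the symmetries of $L$ under the involutions $\lambda \mapsto 1-\lambda$ and $\lambda \mapsto 1/\lambda$ together with the $2$-isogeny correspondence encoded by $\varphi_2$. Throughout, $p$ is odd so $\log(-1)=0$.

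For $e_0^0$, I first work under the auxiliary hypothesis $\log(2)\not\equiv 0 \pmod p$. Suppose for contradiction that $\log(\lambda) \equiv 0 \pmod p$ for every $\lambda \in L$. By Proposition \ref{Supersingular_motivation_varphi_2}, for each $\lambda \in L$ the two $2$-isogeny neighbors $\lambda_1,\lambda_2$ again lie in $L$ and satisfy $\lambda_1\lambda_2 = -16\lambda/(1-\lambda)^2$. Taking $\log$ of this product yields
$$0 = \log(\lambda_1) + \log(\lambda_2) \equiv 4\log(2) + \log(\lambda) - 2\log(1-\lambda) \equiv 4\log(2) - 2\log(1-\lambda) \pmod{p},$$
so $\log(1-\lambda) \equiv 2\log(2)$ for every $\lambda \in L$. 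Since $\lambda \mapsto 1-\lambda$ permutes $L$, substituting $\lambda$ by $1-\lambda$ forces $\log(\lambda) \equiv 2\log(2) \pmod p$, contradicting the assumption combined with $\log(2)\not\equiv 0$.

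For $e_0^1$ (still assuming $p \geq 5$ and $\log(2) \not\equiv 0 \pmod p$), suppose $e_0^1 \equiv 0$. Then $\log(1-\lambda) \equiv 2\log(\lambda) + 4\log(2) \pmod p$ for every $\lambda \in L$. Applying the involution $\lambda \mapsto 1/\lambda$ of $L$ and simplifying using $\log(-1)=0$ yields the additional relation $\log(1-\lambda) \equiv -\log(\lambda) + 4\log(2) \pmod p$. Subtracting the two identities gives $3\log(\lambda) \equiv 0$; since $p \neq 3$, this forces $\log(\lambda)\equiv 0$ for every $\lambda \in L$, i.e.\ $e_0^0 = 0$, contradicting the conclusion of the first step.

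The main obstacle is the case $\log(2) \equiv 0 \pmod p$, where the elementary symmetry arguments above produce no contradiction: the identities derived become vacuous or consistent with the vanishing. Overcoming this would presumably require a genuinely Hecke-theoretic input, for example identifying $M'[I_\alpha]/p$ via a Jacquet--Langlands/Eichler-type correspondence with the $\alpha$-component of the Eisenstein part of weight $2$ modular forms for $\Gamma_0(N)\cap \Gamma(2)$ and reading non-triviality off the $q$-expansions at cusps, or invoking Ribet-style congruences at level $2N$ between cusp forms and Eisenstein series (as alluded to in Remarks \ref{Supersingular_relation_remark} (iii)-(iv)). The case $p=3$ is additionally delicate, since $e_0^1$ is defined via cube roots (Proposition \ref{Supersingular_cube}) rather than a direct logarithmic expression, and the factor $3$ dividing $\log(\lambda)$ in the $e_0^1$ step is no longer invertible, so a different combination of symmetries would be needed.
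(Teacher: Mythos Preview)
This statement is a \emph{conjecture}: the paper does not prove it in general either. What the paper does prove is exactly the partial result you establish, namely Proposition~\ref{Supersingular_preuve_non_nuls}: under the auxiliary hypothesis $\log(2)\not\equiv 0\pmod p$, both $e_0^0$ and $e_0^1$ are nonzero modulo $p$. Your identification of the case $\log(2)\equiv 0\pmod p$ as the genuine obstacle is therefore accurate; the paper leaves this case open as well.

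Your argument for $e_0^0$ is essentially identical to the paper's. For $e_0^1$ the routes differ slightly: you exploit the involution $\lambda\mapsto 1/\lambda$ on $L$ to obtain a second linear relation between $\log(1-\lambda)$ and $\log(\lambda)$, whereas the paper reuses the $U_2$-correspondence via the identity $(1-\lambda_1)(1-\lambda_2)=1$ (which follows from $\lambda_1+\lambda_2=\lambda_1\lambda_2$). Both manipulations lead to the same conclusion $3\log(\lambda)\equiv 0$, so each approach is equally elementary.

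Regarding $p=3$: the paper dispatches it in one line by ``working modulo $9$ instead of modulo $3$''. Concretely, the vanishing of $e_0^1$ means $\log\bigl((1-\lambda)/(16\lambda^2)\bigr)\equiv 0\pmod 9$ for every $\lambda\in L$, and then the same derivation (yours via $\lambda\mapsto 1/\lambda$, or the paper's via $U_2$) carried out modulo $9$ gives $3\log(\lambda)\equiv 0\pmod 9$, hence $\log(\lambda)\equiv 0\pmod 3$, contradicting the nonvanishing of $e_0^0$. So your own argument already handles $p=3$ once lifted to $\mathbf{Z}/9\mathbf{Z}$; no genuinely new ingredient is required.
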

\begin{rems}
\begin{enumerate}
\item The fact that $e_0^0 \neq 0$ is equivalent to Conjecture \ref{Supersingular_conjecture_supersingular_5} (ii) if $p\geq 5$ and to Conjecture \ref{Supersingular_conjecture_supersingular_3} (i) if $p=3$. 
\item By \cite[Theorems $1.2$ and $1.3$]{Yoo}, the $\mathbf{Z}/p^r\mathbf{Z}$-module $(M'/p^r\cdot M')[I_{\alpha}]$ is free of rank one. Thus,  if Conjecture \ref{Supersingular_non_nuls} is true, we have $(M'/p^r\cdot M')[I_{\alpha}] = \mathbf{Z}/p^r\mathbf{Z}\cdot e_0^{\alpha}$.
\end{enumerate}
\end{rems}

\begin{prop}\label{Supersingular_preuve_non_nuls}
Assume that $2$ is not a $p$th power modulo $N$, \ie $\log(2) \not\equiv 0 \text{ (modulo }p\text{)}$. Then Conjecture \ref{Supersingular_non_nuls} holds.
\end{prop}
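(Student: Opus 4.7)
The plan is to argue by contradiction using the eigenspace decomposition under $U_2$. By Proposition~\ref{Supersingular_Eisenstein_alpha}, the elements $e_0^0, e_0^1, e_0^2$ satisfy $U_2 e_0^\alpha = \alpha\cdot e_0^\alpha$, so they live in eigenspaces attached to eigenvalues $0,1,2$, which are pairwise distinct modulo $p$ since $p\geq 3$. The element $e_0^2=\sum_{\lambda\in L}[\lambda]$ is plainly nonzero in $M'/p\cdot M'$ because every coefficient in the basis $\{[\lambda]\}_{\lambda\in L}$ equals $1$. A second key input is that $L$ is stable under the anharmonic group generated by $\lambda\mapsto 1-\lambda$ and $\lambda\mapsto 1/\lambda$, since those maps only change the $\Gamma(2)$-structure on a fixed supersingular curve.

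Suppose first that $e_0^0\equiv 0\pmod{p}$. Then $\log(\lambda)\equiv 0$ for every $\lambda\in L$, and since $1-\lambda\in L$ also $\log(1-\lambda)\equiv 0$. Substituting into the defining formula for $e_0^1$ yields the identity $e_0^1\equiv -4\log(2)\cdot e_0^2$ in $M'/p\cdot M'$. Applying $U_2-1\in I_1$, which annihilates $e_0^1$, and using $U_2 e_0^2=2e_0^2$, one obtains $-4\log(2)\cdot e_0^2\equiv 0$. Since $e_0^2\neq 0$ and $-4$ is invertible mod $p$, this forces $\log(2)\equiv 0\pmod p$, contradicting the hypothesis.

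Suppose next that $e_0^1\equiv 0\pmod p$ and $p\geq 5$. For every $\lambda\in L$ one has $\log(1-\lambda)-2\log(\lambda)-4\log(2)\equiv 0$; replacing $\lambda$ by $1-\lambda$ and subtracting gives $3(\log(1-\lambda)-\log(\lambda))\equiv 0$, so $\log(1-\lambda)\equiv \log(\lambda)$ (using $p\neq 3$). Substituting back yields $\log(\lambda)\equiv -4\log(2)$ for all $\lambda\in L$, whence $e_0^0\equiv -4\log(2)\cdot e_0^2$. Applying $U_2$, which annihilates $e_0^0$, then produces $-8\log(2)\cdot e_0^2\equiv 0$ and again forces $\log(2)\equiv 0\pmod p$.

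The one step requiring care is the vanishing of $e_0^1$ for $p=3$. The cube-root in the definition of $e_0^1$ means that $e_0^1\equiv 0\pmod 3$ is equivalent to the stronger condition $c_\lambda:=\log(1-\lambda)-2\log(\lambda)-4\log(2)\equiv 0\pmod 9$ for all $\lambda\in L$, via the relation $3\log(a_\lambda)=c_\lambda$. Here the previous $\lambda\mapsto 1-\lambda$ trick only produces $3\log((1-\lambda)/\lambda)\equiv 0\pmod 9$, which is vacuous mod $3$; the crucial extra input is the involution $\lambda\mapsto 1/\lambda$. Using $\log(-1)=0$ one computes $c_{1/\lambda}=\log(1-\lambda)+\log(\lambda)-4\log(2)$, so $c_\lambda-c_{1/\lambda}=-3\log(\lambda)\equiv 0\pmod 9$, giving $\log(\lambda)\equiv 0\pmod 3$ for every $\lambda\in L$; symmetrically $\log(1-\lambda)\equiv 0\pmod 3$. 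Reducing $c_\lambda\equiv 0\pmod 9$ modulo $3$ then yields $-4\log(2)\equiv 0\pmod 3$, i.e., $\log(2)\equiv 0\pmod 3$, contradicting the hypothesis. The main (mild) obstacle is recognising that the cube-root packaging of $e_0^1$ at $p=3$ secretly encodes a mod-$9$ condition, and that one must exploit the \emph{full} anharmonic action on $L$ to extract it.
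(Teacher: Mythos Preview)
Your argument is essentially correct and takes a genuinely different route from the paper. The paper's proof uses Proposition~\ref{Supersingular_formule_U_2} directly: from the explicit identities $\lambda_1\lambda_2 = -16\lambda/(1-\lambda)^2$ and $(1-\lambda_1)(1-\lambda_2)=1$ (the coefficients of $\varphi_2(\lambda,Y)$), one computes $\log(\lambda_1\lambda_2)$ and $\log((1-\lambda_1)(1-\lambda_2))$ under the vanishing hypotheses and extracts $\log(2)\equiv 0$. You instead exploit the anharmonic group action on $L$ (the substitutions $\lambda\mapsto 1-\lambda$ and $\lambda\mapsto 1/\lambda$) to reduce to a relation of the form $e_0^\alpha \equiv c\cdot e_0^\beta$ between elements living in distinct $U_2$-eigenspaces, and then use the eigenspace separation to force $c=0$. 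This is more structural and avoids the specific formulas in $\varphi_2$; the paper's approach, on the other hand, handles $p=3$ and $p\geq 5$ uniformly in a single line. Both ultimately rest on Proposition~\ref{Supersingular_Eisenstein_alpha}.

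There is one small slip you should patch: your treatment of the case $e_0^0\equiv 0$ when $p=3$. You write that substitution into the defining formula for $e_0^1$ gives $e_0^1\equiv -4\log(2)\cdot e_0^2$, but this uses the $p\geq 5$ formula; for $p=3$ the element $e_0^1$ is defined via the cube root $a_\lambda$, and knowing $\log(\lambda),\log(1-\lambda)\equiv 0\pmod 3$ does not pin down $\log(a_\lambda)\pmod 3$. The fix is immediate and does not even require $U_2$: by Proposition~\ref{Supersingular_cube} one always has $c_\lambda = \log(1-\lambda)-2\log(\lambda)-4\log(2)\equiv 0\pmod 3$ (it is the log of a cube), while the hypothesis $\log(\lambda),\log(1-\lambda)\equiv 0\pmod 3$ gives $c_\lambda\equiv -4\log(2)\pmod 3$; hence $\log(2)\equiv 0\pmod 3$. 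Your handling of the $e_0^1$ case at $p=3$ via $\lambda\mapsto 1/\lambda$ and the mod-$9$ interpretation is correct and parallels the paper's terse ``work modulo $9$''.
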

\begin{proof}
Assume that for all $\lambda \in L$, $\log(\lambda) \equiv 0 \text{ (modulo }p\text{)}$. We use the same notation as in Proposition \ref{Supersingular_formule_U_2}. We have $$\log(\lambda_1\cdot \lambda_2) \equiv  0 \equiv \log\left(\frac{-16\cdot \lambda}{(1-\lambda)^2}\right) \equiv  \log(-16) \text{ (modulo }p\text{)} \text{ .}$$ This is a contradiction since $p \geq 3$ and $\log(2) \neq 0$. Thus, we have $e_0^0$ is non-zero modulo $p$.

Assume that for all $\lambda \in L$, $$\log(1-\lambda)-2\cdot \log(\lambda) - 4 \cdot \log(2) \equiv 0 \text{ (modulo }p \text{).}$$ We use the same notation as in Proposition \ref{Supersingular_formule_U_2}. We then have 
\begin{align*}
0 &\equiv \log\left((1-\lambda_1)(1-\lambda_2)\right) \\&\equiv 2\cdot \log(\lambda_1) + 2 \cdot \log(\lambda_2) + 8 \cdot \log(2) \\& \equiv 2\cdot \log\left(\frac{-16 \cdot \lambda}{(1-\lambda)^2}\right) + 8 \cdot \log(2) \text{ (modulo }p\text{).}
\end{align*}
Thus, we have $\log(\lambda) - 2 \cdot \log(1-\lambda) + 8 \cdot \log(2) \equiv 0 \text{ (modulo }p\text{)}$, so $-3 \cdot \log(\lambda) \equiv 0 \text{ (modulo }p\text{)}$.
If $p>3$, this is a contradiction by the previous case. If $p=3$, we just work modulo $9$ instead of working modulo $3$.
\end{proof}

We now want to determine $(M'/p^r\cdot M')[I_{\alpha}^2]$. The only result we got in this direction is the following.
\begin{thm}\label{Supersingular_U_2_log}
Let
$$e_{1}^2:=e_0^0+\frac{1}{2}\cdot e_0^1+\frac{1}{2}\cdot \sum_{\lambda \in L} \log(H'(\lambda))\cdot [\lambda] \text{ .}$$
\begin{enumerate}
\item We have, for all prime number $\ell$ not dividing $2\cdot N$:
$$(T_{\ell}-\ell-1)(e_1^2) = \frac{\ell-1}{2}\cdot \log(\ell)\cdot e_0^2 \text{ .}$$
\item We have:
$$
(U_2-2)(e_1^2) = \log(2)\cdot e_0^0
$$
\item We have
$$(U_N-1)(e_1^2)=0 \text{ .}$$
\end{enumerate}
In particular, we have $e_1^2 \in (M'/p^r\cdot M')[I_{\alpha}^2]$.
\end{thm}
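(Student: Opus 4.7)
The plan is to decompose $e_1^2=e_0^0+\tfrac{1}{2} e_0^1+\tfrac{1}{2} f$ with $f:=\sum_{\lambda\in L}\log(H'(\lambda))[\lambda]$, and to verify each of (i), (ii), (iii) by computing the relevant Hecke operator on each summand.

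For (i) and (iii), the heavy lifting is already done: $e_0^0$ and $e_0^1$ are annihilated by $T_\ell-\ell-1$ and by $U_N-1$ by Proposition~\ref{Supersingular_Eisenstein_alpha}, so only the $f$-term contributes. For $T_\ell-\ell-1$, this $f$ is (for the fixed choice of logarithm) exactly the element $e_1'$ of Theorem~\ref{Supersingular_w_1_L}, which gives $(T_\ell-\ell-1)(f)=(\ell-1)\log(\ell)e_0^2$ and hence (i) after multiplying by $\tfrac{1}{2}$. For $U_N-1$, I use that $U_N$ acts on $M'$ as the Frobenius $[\lambda]\mapsto[\lambda^N]$ and that $H\in\mathbf{F}_N[X]$ forces $H'(\lambda^N)=H'(\lambda)^N$; since $p^r\mid N-1$, $\log(H'(\lambda^N))\equiv\log(H'(\lambda))\pmod{p^r}$, so reindexing the sum by the Frobenius bijection of $L$ yields $U_N(f)=f$ and thus (iii).

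The main obstacle is (ii), which requires computing $U_2(f)$ explicitly. The starting point is Theorem~\ref{Supersingular_U_2_multiplicative}: for $\lambda\in L$, the product $\lambda^{N-1}H'(\lambda_1)H'(\lambda_2)$ (with $\lambda_1,\lambda_2$ the roots of $\varphi_2(\lambda,Y)$) equals an explicit rational function of $\lambda$ times $H'(\lambda)^2$. Taking logarithms and using $p^r\mid N-1$ expresses $\log H'(\lambda_1)+\log H'(\lambda_2)$ as a $\mathbf{Z}$-linear combination of $\log\lambda$, $\log(1-\lambda)$, $\log 2$ and $\log H'(\lambda)$. To pass from this (which directly controls the adjoint $U_2^*$ acting on $f$) to $U_2(f)$ itself, I exploit the involution $\sigma\colon\lambda\mapsto\lambda/(\lambda-1)$ of $L$: from the Vieta identities $\lambda_1+\lambda_2=\lambda_1\lambda_2=-16\lambda/(1-\lambda)^2$, the two $U_2$-images of any $\lambda\in L$ are related by $\sigma$, and dually the two $U_2$-preimages of any $\mu$ share $\{\mu,\sigma(\mu)\}$ as their common set of $U_2$-images. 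The needed transformation of $H'$ under $\sigma$ follows by differentiating the functional equation $H(\sigma(\lambda))(1-\lambda)^m=H(\lambda)$ (itself a direct consequence of $H(1-X)=(-1)^m H(X)$ and $H(1/X)=X^{-m}H(X)$), giving $H'(\sigma(\lambda))=-H'(\lambda)(1-\lambda)^{2-m}$ on $L$ and hence $\log H'(\sigma(\mu))\equiv 2\log(1-\mu)+\log H'(\mu)\pmod{p^r}$, using $m\equiv 0\pmod{p^r}$.

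Putting these ingredients together yields a closed formula for $U_2(f)$ as an explicit $\mathbf{Z}$-linear combination of $f$, $g:=e_0^0$, $h:=\sum_\lambda\log(1-\lambda)[\lambda]$, and $e_0^2$; substituting $e_0^1=h-2g-4\log(2)e_0^2$ together with the eigenvalues $U_2(e_0^0)=0$, $U_2(e_0^1)=e_0^1$, $U_2(e_0^2)=2e_0^2$ from Proposition~\ref{Supersingular_Eisenstein_alpha} into $(U_2-2)(e_1^2)$ collapses to the claimed identity. The most delicate bookkeeping is the identification of the ``other'' image and the degenerate fibers where $\lambda_1=\lambda_2$ (i.e.\ where the discriminant $64\lambda(1+\lambda)^2$ of $\varphi_2(\lambda,Y)$ in $Y$ vanishes); multiplicities must be tracked carefully there, and this is where I would expect the main technical pitfall.
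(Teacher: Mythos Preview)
Your treatment of (i) and (iii) is correct and matches the paper's one-line proofs.

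For (ii) there is a genuine gap, stemming from a misreading of Theorem~\ref{Supersingular_U_2_multiplicative}. In that theorem the $\lambda_1,\lambda_2$ are the roots of $\varphi_2(X,\lambda)$, \emph{not} of $\varphi_2(\lambda,Y)$; since $\varphi_2(X,Y)=Y^2(1-X)^2+16XY-16X$ is not symmetric in $X$ and $Y$, this distinction matters. By Proposition~\ref{Supersingular_formule_U_2}, $\mu$ is a $U_2$-image of $\lambda$ iff $\varphi_2(\lambda,\mu)=0$, i.e.\ iff $\lambda$ is a root of $\varphi_2(X,\mu)$. Hence the $\lambda_1,\lambda_2$ appearing in Theorem~\ref{Supersingular_U_2_multiplicative} (applied at $\mu$) are exactly the $U_2$-\emph{preimages} of $\mu$, so the theorem already computes the coefficient of $[\mu]$ in $U_2(f)$ directly:
\[
(U_2 f)[\mu]=\log H'(\lambda_1)+\log H'(\lambda_2)
= 2\log H'(\mu)+2\log\mu+\log(1-\mu)-2\log 2.
\]
No passage through an adjoint $U_2^*$ or through the involution $\sigma$ is needed; the paper's ``follows from Proposition~\ref{Supersingular_formule_U_2}'' (together, implicitly, with Theorem~\ref{Supersingular_U_2_multiplicative}) is precisely this direct computation.

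Your detour is not merely redundant: if one actually treats the formula of Theorem~\ref{Supersingular_U_2_multiplicative} as giving $\log H'$ summed over the $U_2$-images of $\lambda$ and then feeds this into your $\sigma$-machinery, one arrives at the incorrect value $2\log H'(\mu)+\tfrac{3}{2}\log(1-\mu)+\log\mu$ for $(U_2 f)[\mu]$. So the approach as written would not close. Once you use the correct reading, the substitution $h:=\sum_\mu\log(1-\mu)[\mu]=e_0^1+2e_0^0+4\log(2)\,e_0^2$ gives $(U_2-2)f=4e_0^0+e_0^1+2\log(2)\,e_0^2$, and combining with $(U_2-2)e_0^0=-2e_0^0$, $(U_2-2)e_0^1=-e_0^1$ yields $(U_2-2)(e_1^2)=\log(2)\cdot e_0^2$. (The printed statement has $e_0^0$ on the right-hand side; the computation and the ``in particular $e_1^2\in(M'/p^rM')[I_2^2]$'' both force $e_0^2$.)
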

\begin{proof}
Point (i) follows from Proposition \ref{Supersingular_Eisenstein_alpha} and Theorem \ref{Supersingular_main_thm}. Point (ii) follows from Proposition \ref{Supersingular_formule_U_2}. Point (iii) is obvious.
\end{proof}
\begin{rem}
Although we will not need it, results of Yoo show that $(M'/p^r\cdot M')[I_2^2] = \mathbf{Z}/p^r\mathbf{Z} \cdot e_0^2 \oplus \mathbf{Z}/p^r\mathbf{Z} \cdot e_1^2$.
\end{rem}

We have note been able to conjecture any explicit formula for the higher Eisenstein elements corresponding to $I_0$ and $I_1$. We have no conjecture for the analogous element $e_2^2$ annihilated by $I_2^3$ (when it exists, \ie when $n(r,p) \geq 2$). The degree of $e_2^2$ is $e_1 \bullet e_1$, which is $$\frac{1}{24}\cdot \left( \sum_{\lambda \in L} \log(H'(\lambda))^2 \right) - \frac{1}{18} \cdot \left( \sum_{\lambda \in L} \log(\lambda)^2 \right)$$ 
if $p\geq 5$ by Theorem \ref{Supersingular_w_1_S_5} (iii).

One idea would be to express $e_2^2$ in terms of $\log(H'(\lambda))^2$, $\log(\lambda)^2$, $\log(1-\lambda)^2$ or more general quadratic formula in the logarithms of differences of supersingular invariants. More precisely, define the following elements of $M'/p^r\cdot M'$:
\begin{enumerate}
\item $$\alpha_1 = \sum_{\lambda \in L} \left(\sum_{\lambda' \in L \atop \lambda' \neq \lambda} \log(\lambda'-\lambda)^2 \right)\cdot [\lambda]$$
\item $$\alpha_2 = \sum_{\lambda \in L} \left(\sum_{\lambda', \lambda'' \in L \atop \lambda' \neq \lambda, \lambda'\neq \lambda'', \lambda\neq \lambda''} \log(\lambda'-\lambda)\cdot \log(\lambda''-\lambda) \right)\cdot [\lambda]$$
\item $$\alpha_3 = \sum_{\lambda \in L} \left(\sum_{\lambda', \lambda'' \in L \atop \lambda' \neq \lambda, \lambda''\neq \lambda', \lambda\neq \lambda''} \log(\lambda'-\lambda)\cdot \log(\lambda''-\lambda') \right)\cdot [\lambda]$$
\item $$\alpha_4 = \sum_{\lambda \in L} \log(\lambda)^2\cdot [\lambda]$$
\item $$\alpha_5 = \sum_{\lambda \in L} \log(1-\lambda)^2\cdot [\lambda]$$
\item $$\alpha_6 = \sum_{\lambda \in L} \log(\lambda)\cdot \log(1-\lambda)\cdot [\lambda]$$
\end{enumerate}
Then we have checked numerically for $N=181$ and $p=5$ that $e_2^2$ modulo $p$ is not a linear combination of the elements $\alpha_i$.

\section{Odd modular symbols: extension of the theory of Sharifi}\label{Section_odd_modSymb}

We keep the notation of chapters \ref{Section_introduction} and \ref{Section_Formalism}. In all this chapter, unless explicitly stated, we assume $p \geq 5$. Let $r$ be an integer such that $1 \leq r \leq t$.

\subsection{Odd and even modular symbols}\label{odd_modSymb_preliminary_setting}
In this section, we do not assume $p\geq 5$. Thus, we only assume that $p \geq 2$ is a prime such that $p^r$ divides the numerator of $\frac{N-1}{12}$. 

Let $M^- = H_1(Y_0(N), \mathbf{Z}_p)^-$ (resp. $H^- = H_1(X_0(N), \mathbf{Z}_p)^-$) be the largest torsion-free quotient of $H_1(Y_0(N), \mathbf{Z}_p)$ (resp. $H_1(X_0(N), \mathbf{Z}_p)$) anti-invariant by the complex conjugation. Let $M_+=H_1(X_0(N), \cusps, \mathbf{Z}_p)_+$ (resp. $H_+ = H_1(X_0(N), \mathbf{Z}_p)_+$) be the subspace of $H_1(X_0(N), \cusps, \mathbf{Z}_p)$ (resp. $H_1(X_0(N), \mathbf{Z}_p)$) fixed by the complex conjugation. With the notation of Theorem \ref{Formalism_existence_eisenstein}, we easily see that $H_+ = (M_+)^0$.  

The intersection product induces perfect $\tilde{\mathbb{T}}$-equivariant pairings:
$$ \bullet :  M_+ \times M^- \rightarrow \mathbf{Z}_p $$
and
$$ \bullet :  H_+ \times H^- \rightarrow \mathbf{Z}_p \text{ .}$$

In order to apply the theory of higher Eisenstein elements as in chapter \ref{Section_Formalism}, we need to check the hypotheses of Theorem \ref{Formalism_existence_eisenstein} as a variant of Mazur's well-known result \cite[Proposition II.18.3]{Mazur_Eisenstein}. 
\begin{prop}\label{even_modSymb_multiplicity_one}
The $\tilde{\mathbf{T}}$-modules $M_+ \otimes_{\tilde{\mathbb{T}}} \tilde{\mathbf{T}}$ and $M^-\otimes_{\tilde{\mathbb{T}}} \tilde{\mathbf{T}}$ are free of rank $1$.
\end{prop}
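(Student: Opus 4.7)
The plan is to extend Mazur's multiplicity one theorem \cite[Proposition II.18.3]{Mazur_Eisenstein}, which handles the cuspidal modules $H_+ = H_1(X_0(N),\mathbf{Z}_p)_+$ and $H^- = H_1(X_0(N),\mathbf{Z}_p)^-$, to the enlarged modules $M_+$ and $M^-$ via Nakayama's lemma in the local ring $\tilde{\mathbf{T}}$. The starting point is the pair of $\tilde{\mathbb{T}}$-equivariant short exact sequences
\begin{equation*}
0 \to H_+ \to M_+ \xrightarrow{\partial} \mathbf{Z}_p[\cusps]^0 \to 0, \quad 0 \to \mathbf{Z}_p\cdot \tilde m_0^- \to M^- \to H^- \to 0,
\end{equation*}
coming from the boundary of $(X_0(N),\cusps)$ and the inclusion $Y_0(N) \hookrightarrow X_0(N)$ respectively, restricted to the eigenspaces of complex conjugation. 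In each case the extra one-dimensional piece is Eisenstein: $T_\ell - \ell - 1$ kills $\mathbf{Z}_p[\cusps]^0$ for $\ell\neq N$, and using $\partial \tilde m_0^+ = \frac{N-1}{12}(\{0\}-\{\infty\})$ from Theorem \ref{thm_Introduction_w_0^+} together with $U_N\tilde m_0^+ = \tilde m_0^+$ one sees that $U_N - 1$ also vanishes on $\mathbf{Z}_p[\cusps]^0$. Consequently this extra piece is isomorphic, as a $\tilde{\mathbb{T}}$-module, to $\mathbf{Z}_p$ with $\tilde{\mathbb{T}}$ acting through the Eisenstein character $\chi:\tilde{\mathbb{T}} \to \mathbf{Z}_p$; tensoring with $\tilde{\mathbf{T}}$ produces $\tilde{\mathbf{T}}/\tilde{I}\tilde{\mathbf{T}}\simeq\mathbf{Z}_p$, using that $T_0 - (N-1)/\nu \in \tilde{I}$.

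By flatness of $\tilde{\mathbf{T}}$ over $\tilde{\mathbb{T}}$ (Lemma \ref{Formalism_flatness_Hecke}(i)), both sequences remain exact after tensoring with $\tilde{\mathbf{T}}$, and a $\mathbf{Z}_p$-rank count yields $\rk_{\mathbf{Z}_p}(M_\pm\otimes\tilde{\mathbf{T}}) = g_p+1 = \rk_{\mathbf{Z}_p}(\tilde{\mathbf{T}})$. The heart of the proof is to show that $(M^-\otimes\tilde{\mathbf{T}})/\tilde{\mathfrak{P}}$ is $1$-dimensional over $\mathbf{F}_p$. I would argue that the image of $\tilde m_0^- \otimes 1$ is already absorbed into $\tilde{\mathfrak{P}}\cdot(M^-\otimes\tilde{\mathbf{T}})$: the action of $T_0 \in \tilde{\mathbf{T}}$ on the Eisenstein line is by $(N-1)/\nu$, which is divisible by $p$, so $T_0 \in \tilde{\mathfrak{P}}$; combined with the fact that $\tilde m_0^-$ lies in $T_0\cdot M^-$ (the analogue of Lemma \ref{Formalism_T_0_Eisenstein}, whose proof here only requires the freeness already known for the cuspidal quotient $H^-$), this gives $\tilde m_0^-\otimes 1 \in \tilde{\mathfrak{P}}\cdot(M^-\otimes\tilde{\mathbf{T}})$. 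The reduced sequence then collapses to an isomorphism $(M^-\otimes\tilde{\mathbf{T}})/\tilde{\mathfrak{P}} \simeq (H^-\otimes\tilde{\mathbf{T}})/\tilde{\mathfrak{P}} \simeq \mathbf{F}_p$ by Mazur, and Nakayama produces a surjection $\tilde{\mathbf{T}}\twoheadrightarrow M^-\otimes\tilde{\mathbf{T}}$ which, by the rank equality, must be an isomorphism.

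For $M_+$ I would invoke the perfect $\tilde{\mathbb{T}}$-equivariant intersection pairing $\bullet: M_+\times M^-\to\mathbf{Z}_p$, giving $M_+\simeq \Hom_{\mathbf{Z}_p}(M^-,\mathbf{Z}_p)$ as $\tilde{\mathbb{T}}$-modules. Combined with Lemma \ref{Formalism_flatness_Hecke}(iii) and the freeness of $M^-\otimes\tilde{\mathbf{T}}$ just obtained, this yields $(M_+)\otimes \tilde{\mathbf{T}} \simeq \Hom_{\mathbf{Z}_p}(\tilde{\mathbf{T}},\mathbf{Z}_p)$ as $\tilde{\mathbf{T}}$-modules, reducing the desired freeness to the Gorenstein property of the full Hecke algebra $\tilde{\mathbf{T}}$. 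The latter is standard in the prime-level Eisenstein context and can be obtained, for instance, from the perfect $a_1$-pairing $\tilde{\mathbf{T}} \times M_2(\Gamma_0(N),\mathbf{Z}_p)_{\tilde{\mathfrak{P}}} \to \mathbf{Z}_p$, $(T,f)\mapsto a_1(Tf)$, combined with Hypothesis \ref{hyp_gorenstein} for the cuspidal $\mathbf{T}$.

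The main obstacle is the verification that $\tilde m_0^- \in T_0\cdot M^-$ (equivalently $\tilde m_0^-\otimes 1 \in \tilde{\mathfrak{P}}\cdot (M^-\otimes\tilde{\mathbf{T}})$) without circular use of the multiplicity one statement being proven. This is a Lemma of Mazur--Ribet type, asserting the existence of a $T_0$-antecedent for the Eisenstein generator; in our setting it can be established directly, either from the explicit description of $\tilde m_0^-$ via the Shimura covering, or by exploiting Mazur's multiplicity one for the cuspidal $H^-$ to identify the Eisenstein line of $M^-\otimes\tilde{\mathbf{T}}$ with $\Ann_{\tilde{\mathbf{T}}}(\tilde{I})\cdot (M^-\otimes\tilde{\mathbf{T}})$ before appealing to the freeness we want to prove.
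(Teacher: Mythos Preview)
Your strategy and the paper's agree on the duality reduction (one of $M_+,M^-$ implies the other via the perfect intersection pairing, Lemma \ref{Formalism_flatness_Hecke}(iii), and the Gorenstein property of $\tilde{\mathbf{T}}$), but they diverge on which side to treat first and, crucially, on what input from Mazur is used. The paper works with $M_+$ and produces an explicit candidate generator: the map $\tilde{\mathbf{T}}\to M_+\otimes_{\tilde{\mathbb{T}}}\tilde{\mathbf{T}}$, $T\mapsto \{0,\infty\}\otimes T$, is shown to be an isomorphism by checking it on the two pieces of the sequence $0\to H_+\otimes\mathbf{T}\to M_+\otimes\tilde{\mathbf{T}}\to \tilde{\mathbf{T}}/\tilde I\tilde{\mathbf{T}}\to 0$. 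On the quotient it is the identity of $\mathbf{Z}_p$; on the kernel it is Mazur's \emph{winding isomorphism} \cite[Theorem II.18.10]{Mazur_Eisenstein}, asserting that $\eta\cdot\{0,\infty\}$ generates $H_+\otimes\mathbf{T}$ for any generator $\eta$ of $I\cdot\mathbf{T}$. This is a genuinely stronger statement than multiplicity one \cite[Proposition II.18.3]{Mazur_Eisenstein}, which only says that $H_+\otimes\mathbf{T}$ is abstractly free of rank one.

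Your Nakayama argument on the $M^-$ side requires the equivalent extra input, and your claim that ``freeness of the cuspidal quotient $H^-$'' suffices to prove $\tilde m_0^-\in T_0\cdot M^-$ is not correct. Consider the split $\tilde{\mathbf{T}}$-module $N=\mathbf{Z}_p\oplus\mathbf{T}$, where $\tilde{\mathbf{T}}$ acts on the first summand through the Eisenstein character $\tilde{\mathbf{T}}\to\tilde{\mathbf{T}}/\tilde I\tilde{\mathbf{T}}\simeq\mathbf{Z}_p$ and on the second through the quotient $\tilde{\mathbf{T}}\to\mathbf{T}$. Then $N$ sits in the same short exact sequence $0\to\mathbf{Z}_p\to N\to\mathbf{T}\to 0$ with $\mathbf{T}$ free of rank one, and $\rk_{\mathbf{Z}_p}N=\rk_{\mathbf{Z}_p}\tilde{\mathbf{T}}$; yet $T_0$ acts on $N$ by $((N-1)/\nu,\,0)$, so $T_0\cdot N=p^t\mathbf{Z}_p\subsetneq\mathbf{Z}_p$ and the Eisenstein generator is \emph{not} in $T_0\cdot N$. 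Thus freeness of the cuspidal quotient alone cannot distinguish $M^-\otimes\tilde{\mathbf{T}}$ from this split model, and neither of your proposed fixes (the Shimura-covering description of $\tilde m_0^-$, or Lemma \ref{Formalism_T_0_Eisenstein} whose proof already presupposes freeness over $\tilde{\mathbf{T}}$) supplies the missing non-splitting. What is needed is precisely the dual of the winding isomorphism; once you invoke \cite[Theorem II.18.10]{Mazur_Eisenstein} your argument can be completed, but at that point it is simpler to follow the paper and work on the $M_+$ side directly with the explicit generator $\{0,\infty\}$.
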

\begin{proof}
Since $M_+$ and $M^-$ are dual $\tilde{\mathbb{T}}$-modules, it suffices to prove that $M_+ \otimes_{\tilde{\mathbb{T}}} \tilde{\mathbf{T}}$ is free of rank one over $\tilde{\mathbf{T}}$ by Lemma \ref{Formalism_flatness_Hecke} (iii).  By \cite[Proposition II.18.3]{Mazur_Eisenstein}, $H_+ \otimes_{\mathbb{T}} \mathbf{T}$ is free of rank one over $\mathbf{T}$. We now prove that this implies that $M_+ \otimes_{\tilde{\mathbb{T}}} \tilde{\mathbf{T}}$ is free of rank one over $\tilde{\mathbf{T}}$. Consider the exact sequence of $\mathbf{Z}_p$-modules
\begin{equation}\label{odd_even_modSymb_exact_seq1}
0 \rightarrow H_+ \rightarrow M_+ \xrightarrow{\pi} \mathbf{Z}_p \rightarrow 0 \text{ ,}
\end{equation}
where the map $\pi$ sends $\{0,\infty\}$ to $1$. It is a $\tilde{\mathbb{T}}$-equivariant exact sequence if we identify $\mathbf{Z}_p$ with $\tilde{\mathbb{T}}/\tilde{I}$ with its obvious $\tilde{\mathbb{T}}$-module structure.  By Lemma \ref{Formalism_flatness_Hecke} (i), $\tilde{\mathbf{T}}$ is flat over $\tilde{\mathbb{T}}$. Thus, (\ref{odd_even_modSymb_exact_seq1}) gives an exact sequence of $\tilde{\mathbf{T}}$-modules
\begin{equation}\label{odd_even_modSymb_exact_seq2}
0 \rightarrow H_+\otimes_{\mathbb{T}} \mathbf{T} \rightarrow M_+\otimes_{\tilde{\mathbb{T}}} \tilde{\mathbf{T}} \xrightarrow{\pi'} \tilde{\mathbf{T}}/\tilde{I}\cdot \tilde{\mathbf{T}}  \rightarrow 0 \text{ .}
\end{equation}

We claim that the map $\textbf{e} :\tilde{\mathbf{T}} \rightarrow M_+ \otimes_{\tilde{\mathbb{T}}} \tilde{\mathbf{T}}$ given by $T \mapsto \{0,\infty\} \otimes T$ is an isomorphism of $\tilde{\mathbf{T}}$-modules. This follows (\ref{odd_even_modSymb_exact_seq2}), the fact that $\pi'\left(\{0, \infty\} \otimes T \right)$ is the image of $T$ in $\tilde{\mathbf{T}}/\tilde{I}\cdot \tilde{\mathbf{T}}$ and the fact that the restriction of $\textbf{e}$ to $\tilde{I} \cdot \tilde{\mathbf{T}}$ gives an isomorphism of $\mathbf{T}$-modules $\tilde{I} \cdot \tilde{\mathbf{T}} \rightarrow H_+ \otimes_{\tilde{\mathbb{T}}} \tilde{\mathbf{T}}$. The latter fact comes from \cite[Theorem II.18.10]{Mazur_Eisenstein} and the fact that the map $\tilde{I} \rightarrow I$ is an isomorphism of $\tilde{\mathbb{T}}$-modules.
\end{proof}

Thus, Theorem  \ref{Formalism_existence_eisenstein} holds for $M^-$ and $M_+$. Recall (\cf Section \ref{section_intro_odd_modSymb}) that we have denoted by $m_0^-$, $m_1^-$, ..., $m_{n(r,p)}^-$ the higher Eisenstein elements in $M^-/p^r\cdot M^-$. Recall also that we have fixed an element $\tilde{m}_0^- \in M^-[\tilde{I}]$ reducing to $m_0^-$ modulo $p^r$, such that $\{0, \infty\} \bullet \tilde{m}_0^- = -1$. The kernel of the map $M^- \rightarrow H^-$ is $\mathbf{Z}_p\cdot \tilde{m}_0^-$. If $i \in \{1, ..., n(r,p)\}$, we denote by $\overline{m}_i^-$ the image of $m_i^-$ in $H^-$. Since $m_1^-$ is uniquely defined modulo the subgroup generated by $m_0^-$, the element $\overline{m}_1^-$ is uniquely defined and is annihilated by the Eisenstein ideal $I$. If $i>1$, the element $\overline{m}_i^-$ is uniquely defined modulo the subgroup generated by $\overline{m}_1^-$, ..., $\overline{m}_{i-1}^-$.

By intersection duality, the elements $\overline{m}_i^-$ can be considered as elements of $\Hom_{\mathbf{Z}}(H_+, \mathbf{Z}/p^r\mathbf{Z})$. If $k \in \{1, ..., n(r,p)\}$ then $\overline{m}_k^-$ modulo $p^r$ can also be considered as a group homomorphism
$$I^{k-1}\cdot (H_+/p^r \cdot H_+)/I^k \cdot (H_+/p^r \cdot H_+) \rightarrow \mathbf{Z}/p^r\mathbf{Z} \text{ .}$$ 
Indeed, we have an exact sequence of abelian groups:
\begin{align*}
0 \rightarrow \Hom((H_+/p^r \cdot H_+)/I^k \cdot (H_+/p^r \cdot H_+)/ , \mathbf{Z}/p^r\mathbf{Z}) &\rightarrow  \Hom((H_+/p^r \cdot H_+)/I^{k+1} \cdot (H_+/p^r \cdot H_+)/, \mathbf{Z}/p^r\mathbf{Z})  \\&\rightarrow \Hom(I^{k}\cdot (H_+/p^r \cdot H_+)/I^{k+1}\cdot (H_+/p^r \cdot H_+), \mathbf{Z}/p^r\mathbf{Z}) \\& \rightarrow 0 \text{ .}
\end{align*}

Equivalently, for all $k \in \{0,1, ..., n(r,p)\}$ the element $m_k^-$ is uniquely determined by its pairing with $\tilde{I}^k\cdot (M_+/p^r\cdot M^+)$. The element $\overline{m}_1^-$ was essentially determined by Mazur \cite[II.18.8]{Mazur_Eisenstein} (although it was not formulated in this way). Recall the Manin surjective map $$\xi_{\Gamma_0(N)} : \mathbf{Z}[\mathbf{P}^1(\mathbf{Z}/N\mathbf{Z})] \rightarrow H_1(X_0(N), \cusps, \mathbf{Z}_p)$$ whose definition is recalled in Section \ref{section_intro_odd_modSymb}. The $\mathbf{Z}_p$-module $H_1(X_0(N), \mathbf{Z}_p)$ is generated by the element $\xi_{\Gamma_0(N)}([x:1])$ for $x \in (\mathbf{Z}/N\mathbf{Z})^{\times}$ \cite[Proposition 3]{Merel_accouplement}. The group $H_1(X_0(N), \mathbf{Z}_p)$ is acyclic for the complex conjugation \cite[Proposition 5]{Merel_accouplement}.  In particular, we have $H_+ = (1+c)\cdot H_1(X_0(N), \mathbf{Z}_p)$, where $c$ is the complex conjugation. Thus, the element $\overline{m}_1^-$ is uniquely determined by its pairing with $(1+c)\cdot \xi_{\Gamma_0(N)}([x:1])$ for $x \in (\mathbf{Z}/N\mathbf{Z})^{\times}$. This was essentially computed by Mazur  \cite[II.18.8]{Mazur_Eisenstein} (although Mazur does not take into account the complex conjugation).

\begin{thm}\label{odd_even_modSymb_determination_m_1^-}
For all $x \in (\mathbf{Z}/N\mathbf{Z})^{\times}$, we have in $\mathbf{Z}/p^r\mathbf{Z}$:
$$\left((1+c)\cdot \xi_{\Gamma_0(N)}([x:1]) \right) \bullet m_1^-= \log(x) \text{ .}$$
\end{thm}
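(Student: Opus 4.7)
Let $\phi:(\mathbf{Z}/N\mathbf{Z})^{\times}\to\mathbf{Z}/p^r\mathbf{Z}$ be defined by $\phi(x)=((1+c)\xi_{\Gamma_0(N)}([x:1]))\bullet m_1^-$; the goal is to show $\phi=\log$. First, $\phi$ is well-defined (independent of the $\mathbf{Z}\cdot m_0^-$ ambiguity in $m_1^-$): the element $(1+c)\xi_{\Gamma_0(N)}([x:1])$ lies in $H_+$, and since $\tilde{m}_0^-$ generates the kernel of the surjection $M^-\twoheadrightarrow H^-$ which is dual, for the intersection pairing, to the inclusion $H_+\hookrightarrow M_+$, the pairing kills $H_+\bullet\tilde{m}_0^-$ and hence $H_+\bullet m_0^-$ modulo $p^r$.

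The substance of the argument follows Mazur's proof of Proposition II.18.8 in \cite{Mazur_Eisenstein}, with adjustments to work modulo $p^r$ rather than modulo $p$. The surjection $\log$ factors through $(\mathbf{Z}/N\mathbf{Z})^{\times}/\{\pm 1\}$ (since $p$ is odd), which is the Galois group of the Shimura covering $X_1(N)\to X_0(N)$, so $\log$ cuts out an intermediate cyclic cover $X\to X_0(N)$ of degree $p^r$, unramified outside the cusps. Equivalently, the $p^r$-torsion of the Shimura subgroup $\Sigma\subset J_0(N)$ is canonically isomorphic to $\mu_{p^r}$, and the composition
$$\mu_{p^r}\xrightarrow{\sim}\Sigma[p^r]\hookrightarrow J_0(N)[p^r]\xrightarrow{\sim} H_1(X_0(N),\mathbf{Z}/p^r\mathbf{Z}),$$
together with the fact that $c$ acts as $-1$ on $\mu_{p^r}$, produces a canonical class $\bar{s}\in H^-/p^r H^-$. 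Since $\Sigma$ is killed by the Eisenstein ideal (Mazur, Chapter II, Section 11 of \cite{Mazur_Eisenstein}), $\bar{s}$ is annihilated by $\tilde I$; and a direct computation of the monodromy of $X\to X_0(N)$ along the geodesic representing $(1+c)\xi_{\Gamma_0(N)}([x:1])$, using the Tate parametrization of the Shimura cover at the cusp, yields $((1+c)\xi_{\Gamma_0(N)}([x:1]))\bullet\bar{s}=\log(x)$.

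It remains to identify $\bar{s}$ with the image $\overline{m}_1^-$ of $m_1^-$ in $H^-/p^r H^-$. Lifting $\bar{s}$ to an element $s\in M^-/p^r M^-$, the multiplicity-one result Proposition \ref{even_modSymb_multiplicity_one} and the construction in Theorem \ref{Formalism_existence_eisenstein} show that such a lift is characterized, modulo $\mathbf{Z}\cdot m_0^-$, by the Hecke relation $(T_\ell-\ell-1)s=\frac{\ell-1}{2}\log(\ell)\,m_0^-$ for all primes $\ell\nmid N$. Verifying this relation for the Shimura class amounts to computing the Hecke action on $\Sigma[p^r]\cong\mu_{p^r}$ and matching the coefficient against Mazur's formula $e(T_\ell-\ell-1)=\frac{\ell-1}{2}\log(\ell)$ (Proposition II.18.9 of \cite{Mazur_Eisenstein}); this is the main technical obstacle, requiring a careful tracking of the normalization $\{0,\infty\}\bullet\tilde{m}_0^-=-1$ through the Tate module identifications to confirm that no stray scalar appears. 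Once this is done, $s\equiv m_1^-\pmod{\mathbf{Z}\cdot m_0^-}$, and the pairing computation of the previous paragraph gives $\phi(x)=\log(x)$.
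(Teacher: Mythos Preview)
Your approach is correct but more roundabout than the paper's. You construct the Shimura class $\bar{s}$ geometrically from $\Sigma[p^r]$, verify its pairing via monodromy, and then face what you call the ``main technical obstacle'': identifying $\bar{s}$ with $\overline{m}_1^-$ via a Hecke normalization check that you do not actually carry out. The paper bypasses the Shimura construction entirely. It simply defines $f:H_+\to\mathbf{Z}/p^r\mathbf{Z}$ by the formula $f((1+c)\xi_{\Gamma_0(N)}([x:1]))=\log(x)$, notes (citing Mazur II.18.8) that $f$ kills $I\cdot H_+$, and then checks that $f$ and the pairing $\bullet\,m_1^-$ agree on the elements $(T_\ell-\ell-1)\{0,\infty\}$, which generate $H_+/IH_+$ by Mazur's winding isomorphism (II.18.10). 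That last check is immediate: on one side $((T_\ell-\ell-1)\{0,\infty\})\bullet m_1^-=\{0,\infty\}\bullet\frac{\ell-1}{2}\log(\ell)\,m_0^-$ by the defining relation of $m_1^-$, and on the other $f((T_\ell-\ell-1)\{0,\infty\})$ is read off from the explicit formula $(T_\ell-\ell-1)\{0,\infty\}=-(1+c)\sum_{i=1}^{(\ell-1)/2}\{0,i/\ell\}$. Your unresolved normalization step is exactly this computation in disguise: once you pair your lift $s$ with $(T_\ell-\ell-1)\{0,\infty\}$ and invoke your own monodromy formula, you recover it. So the Shimura detour, while it explains conceptually where $m_1^-$ comes from, is not needed for the proof itself.
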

\begin{proof}
We easily check that there is a unique group homomorphism $f : H_+ \rightarrow \mathbf{Z}/p^r\mathbf{Z}$ such that for all $x \in( \mathbf{Z}/N\mathbf{Z})^{\times}$, we have in $\mathbf{Z}/p^r\mathbf{Z}$:
 $$f\left((1+c)\cdot \xi_{\Gamma_0(N)}([x:1]) \right) = \log(x) \text{ .}$$
We easily see (\cf  \cite[II.18.8]{Mazur_Eisenstein}) that the map $f$ annihilates $I \cdot H_+$. 
Let $\ell$ be a prime not dividing $2\cdot N$. A simple computation shows that we have, in $H_+$:
$$(T_{\ell}-\ell-1)(\{0, \infty\}) = -(1+c)\cdot \sum_{i=1}^{\frac{\ell-1}{2}} \{0, \frac{i}{\ell}\} \text{ .}$$
Thus, we have in $\mathbf{Z}/p^r\mathbf{Z}$:
$$f\left( (T_{\ell}-\ell-1)(\{0, \infty\}) \right) = \frac{\ell-1}{2}\cdot \log(\ell) \cdot \left(\{0, \infty\} \bullet m_0^-\right) = \left((T_{\ell}-\ell-1)(\{0, \infty\})\right) \bullet m_1^-\text{ .}$$
The elements $(T_{\ell}-\ell-1)(\{0, \infty\})$ generate $H_+/I\cdot H_+$ when $\ell$ goes through the primes not dividing $2\cdot N$ \cite[II.18.10]{Mazur_Eisenstein}. Thus, for all $x \in H_+$ we have in $\mathbf{Z}/p^r\mathbf{Z}$:
$$f(x) = x \bullet m_1^- \text{ .}$$
This concludes the proof of Theorem \ref{odd_even_modSymb_determination_m_1^-}.
\end{proof}
\begin{rem}
Let $f : H_+ \rightarrow A$ where $A$ is an abelian group. By intersection duality, $f$ corresponds to an element $\hat{f} \in H_1(X_0(N), A)^-$. Merel gave a formula for $\hat{f}$  in terms of Manin symbols (\cf Lemma \ref{even_modSymb_duality_lemma_RUI_2} for the case where $3$ is invertible in $A$). This allows us to compute $\overline{m}_1^-$ in terms of Manin symbols. If $p\neq 3$ the formula is given in Lemma \ref{even_modSymb_injectivity_trace} (i).
\end{rem}
The aim of this chapter is to determine the element $m_2^-$ modulo $p^r$ when $n(r,p) \geq 2$ and $p\geq 5$. We have seen that $m_2^-$ can be considered as a group isomorphism $$I\cdot (H_+/p^r\cdot H_+) / I^2\cdot (H_+/p^r\cdot H_+) \rightarrow \mathbf{Z}/p^r\mathbf{Z} \text{ .}$$ We will construct such a map, using the modular curve $X_1(N)$ and its Eisenstein ideals.

\subsection{Refined Hida theory}\label{odd_modSymb_section_hida}
In this section, we will use the following notation.
 \begin{itemize}
  \item $\sigma = \begin{pmatrix}
0 & -1 \\
1 & 0
\end{pmatrix} $ and $\tau = \begin{pmatrix}
0 & -1 \\
1 & -1
\end{pmatrix}$ $\in \SL_2(\mathbf{Z})$.
 \item If $\Gamma$ is a subgroup of $\Gamma_0(N)$ containing $\Gamma_1(N)$, let $X_{\Gamma}$ be the compact modular curve associated to $\Gamma$. 
 \item  $C_{\Gamma}^0$ (resp. $C_{\Gamma}^{\infty}$) is the set of cusps of $X_{\Gamma}$ above the cusp $\Gamma_0(N) \cdot 0$ (resp. $\Gamma_0(N) \cdot \infty$) of $X_0(N)$. 
 \item $C_{\Gamma} = C_{\Gamma}^0 \cup C_{\Gamma}^{\infty}$
 \item  $\tilde{H}_{\Gamma}' = H_1(X_{\Gamma}, C_{\Gamma}, \mathbf{Z}_p)$, $\tilde{H}_{\Gamma} = H_1(X_{\Gamma}, C_{\Gamma}^0, \mathbf{Z}_p)$ and $H_{\Gamma} = H_1(X_{\Gamma}, \mathbf{Z}_p)$. 
 \item $\partial : \tilde{H}_{\Gamma}'  \rightarrow \mathbf{Z}_p[C_{\Gamma}]^0$ is the boundary map, sending the geodesic path $\{\alpha, \beta\}$ to $[\beta]-[\alpha]$ where $\alpha$, $\beta$ $\in \mathbf{P}^1(\mathbf{Q})$.
 \item $\left(\tilde{H}_{\Gamma}\right)_+$ is the subgroup of elements of $\tilde{H}_{\Gamma}$ fixed by the complex conjugation. A similar notation applies to $H_{\Gamma}$.
 \item $D_{\Gamma} \subset (\mathbf{Z}/N\mathbf{Z})^{\times}$ is the subgroup generated by the classes of the lower right corners of the elements of $\Gamma$ and by the class of $-1$. 
 \item $\Lambda_{\Gamma} = \mathbf{Z}_p[(\mathbf{Z}/N\mathbf{Z})^{\times}/D_{\Gamma}]$.
 \item If $\Gamma_1$ and $\Gamma_2$ are subgroups of $\SL_2(\mathbf{Z})$ such that $\Gamma_1(N) \subset \Gamma_1 \subset \Gamma_2 \subset \Gamma_0(N)$, we let $J_{1\rightarrow 2} = \Ker(\Lambda_{\Gamma_1} \rightarrow \Lambda_{\Gamma_2})$. It is a principal ideal of $\Lambda_{\Gamma_1}$, generated by $[x]-1$ where $x$ is a generator of $\Ker((\mathbf{Z}/N\mathbf{Z})^{\times}/D_{\Gamma_1} \rightarrow (\mathbf{Z}/N\mathbf{Z})^{\times}/D_{\Gamma_2})$.  
 \item $\tilde{\mathbb{T}}_{\Gamma}'$ (resp. $\tilde{\mathbb{T}}_{\Gamma}$, resp. $\mathbb{T}_{\Gamma}$) is the $\mathbf{Z}_p$-Hecke algebra acting faithfully on $\tilde{H}_{\Gamma}'$ (resp. $\tilde{H}_{\Gamma}$, resp. $H_{\Gamma}$) generated by the Hecke operators $T_n$ for $n \geq 1$ and the diamond operators. The $d$th diamond operator is denoted by $\langle d \rangle$. By convention, it corresponds on modular form to the action of a matrix whose lower right corner is congruent to $d$ modulo $N$.
\end{itemize}

We will need some ``refined Hida control'' results, describing the kernel of the various maps in homology induced by the degeneracy maps between the various modular curves. 

Manin proved \cite[Theorem 1.9]{Manin} that we have a surjection
$$\mathcal{\xi}_{\Gamma} : \mathbf{Z}_p[\Gamma \backslash \PSL_2(\mathbf{Z})] \rightarrow H_1(X_{\Gamma}, C_{\Gamma}, \mathbf{Z}_p)$$
such that $\xi_{\Gamma}(\Gamma\cdot g)$ is the class of the geodesic path $\{g(0), g(\infty)\}$, where $X_{\Gamma}$ is the compact modular curve associated to $\Gamma$. Furthermore, he proved that the kernel of $\xi_{\Gamma}$ is spanned by the sum of the (right) $\sigma$-invariants and $\tau$-invariants.

Recall that $\Gamma_1(N) \subset \Gamma \subset \Gamma_0(N)$. Consider the bijection $$\kappa : \Gamma \backslash \PSL_2(\mathbf{Z}) \xrightarrow{\sim} \left((\mathbf{Z}/N\mathbf{Z})^2\backslash \{(0,0)\} \right) / D_{\Gamma}$$ given by $\kappa(\Gamma \cdot g)= [c,d]$
where $g = \begin{pmatrix}
a & b \\
c & d
\end{pmatrix}$ and $[c,d]$ is the class of $(c,d)$ modulo $D_{\Gamma}$. By abuse of notation, we identify $\Gamma \backslash \PSL_2(\mathbf{Z})$ and $\left((\mathbf{Z}/N\mathbf{Z})^2\backslash \{(0,0)\} \right) / D_{\Gamma}$.

The map $(\mathbf{Z}/N\mathbf{Z})^{\times}/ D_{\Gamma} \rightarrow C_{\Gamma}^0$ (resp. $(\mathbf{Z}/N\mathbf{Z})^{\times}/D_{\Gamma} \rightarrow C_{\Gamma}^{\infty}$) given by $u \mapsto \langle u \rangle \cdot (\Gamma \cdot 0)$ (resp. $u \mapsto \langle u \rangle \cdot (\Gamma\cdot \infty)$) (where $\langle \cdot\rangle$ denotes the diamond operator) is a bijection. If $u \in (\mathbf{Z}/N\mathbf{Z})^{\times}/D_{\Gamma} $, we denote by $[u]_{\Gamma}^0$ (resp. $[u]_{\Gamma}^{\infty}$) the image of $u$ in $C_{\Gamma}^0$ (resp. $C_{\Gamma}^{\infty}$). In other words, we have $[u]_{\Gamma}^0 = \Gamma \cdot \frac{c}{d}$ for some coprime integers $c$ and $d$ not divisible by $N$, and such that the image of $d$ in $(\mathbf{Z}/N\mathbf{Z})^{\times}/D_{\Gamma} $ is $u$. Similarly, $[u]_{\Gamma}^{\infty} = \Gamma \cdot \frac{a}{N\cdot b}$ for some coprime integers $a$ and $b$ not divisible by $N$, and such that the image of $a$ in $(\mathbf{Z}/N\mathbf{Z})^{\times}/D_{\Gamma} $ is $u^{-1}$. 

Let $\begin{pmatrix} a&b \\ c&d \end{pmatrix} \in \SL_2(\mathbf{Z})$. We describe $\partial(\xi_{\Gamma}([c,d]))$ in the various cases that can happen.

\begin{itemize}
\item If $c \equiv 0 \text{ (modulo }N\text{)}$ then $a \equiv d^{-1} \text{ (modulo }N\text{)}$.
Thus, we have $\partial(\xi_{\Gamma}([c,d])) = [d]_{\Gamma}^{\infty} - [d]_{\Gamma}^{0}$.

\item If $d \equiv 0 \text{ (modulo }N\text{)}$ then we have $b \equiv -c^{-1} \text{ (modulo }N\text{)}$.
Thus, we have $\partial(\xi_{\Gamma}([c,d])) = [c]_{\Gamma}^{0} - [c]_{\Gamma}^{\infty}$.

\item If  $c\cdot d \not\equiv 0 \text{ (modulo }N\text{)}$ then we have $\partial(\xi_{\Gamma}([c,d])) = [c]_{\Gamma}^{0} - [d]_{\Gamma}^{0}$.
\end{itemize}

In particular, the set of $[c,d]$ such that $\partial(\xi_{\Gamma}([c,d])) \in \mathbf{Z}[C_{\Gamma}^0]$ coincides with the set of $[c,d]$ such that $c\cdot d \not\equiv 0 \text{ (modulo }N \text{)}$. Let $M_{\Gamma}^0$ be the sub-$\mathbf{Z}_p$-module of $\mathbf{Z}_p[\left((\mathbf{Z}/N\mathbf{Z})^2\backslash \{(0,0)\} \right) / D_{\Gamma}]$ generated by the symbols $[c,d]$ with $c\cdot d \not\equiv 0 \text{ (modulo }N\text{)}$. 

The following statement is well-known, but we could not find a reference.
\begin{prop}\label{generation_Manin_C_0^{(p^r)}} 
The map $\xi_{\Gamma}$ induces a surjective homomorphism
$$\xi_{\Gamma}^0 : M_{\Gamma}^0 \rightarrow \tilde{H}_{\Gamma}$$
whose kernel is $R_{\Gamma}^0 = (M_{\Gamma}^0)^{\tau} + (M_{\Gamma}^0)^{\sigma} + \sum_{d \in (\mathbf{Z}/N\mathbf{Z})^{\times}} \mathbf{Z}_p \cdot [-d,d]$ where $(M_{\Gamma}^0)^{\tau}$ (resp. $(M_{\Gamma}^0)^{\sigma}$) is the subgroup of elements of $M_{\Gamma}^0$ fixed by the right action of $\tau$ (resp. $\sigma$).
\end{prop}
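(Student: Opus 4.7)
The proof proceeds in four steps, combining Manin's presentation of $\ker \xi_\Gamma$ (as $(1+\sigma)$-relations and $(1+\tau+\tau^2)$-relations) with the boundary formulas given just before the proposition.

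\emph{Step 1 (Factorization through $\tilde H_\Gamma$ and surjectivity).} For $[c,d] \in M_\Gamma^0$, the third boundary formula yields $\partial\xi_\Gamma([c,d]) = [c]_\Gamma^0 - [d]_\Gamma^0 \in \mathbf{Z}_p[C_\Gamma^0]$, so $\xi_\Gamma|_{M_\Gamma^0}$ factors through $\tilde H_\Gamma \hookrightarrow \tilde H_\Gamma'$ (identifying $\tilde H_\Gamma$ with the kernel of the projection $\tilde H_\Gamma' \to \mathbf{Z}_p[C_\Gamma^\infty]$ induced by $\partial$). For surjectivity, lift $x \in \tilde H_\Gamma$ via Manin's theorem to some $y \in \mathbf{Z}_p[\Gamma\backslash\PSL_2(\mathbf{Z})]$ and decompose $y = y_0 + \sum_u a_u[0,u] + \sum_u b_u[u,0]$ with $y_0 \in M_\Gamma^0$. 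Comparing the $C_\Gamma^\infty$-component of $\partial x = 0$ against the first two boundary formulas forces $a_u = b_u$ for every $u$; since $[0,u](1+\sigma) = [0,u] + [u,0]$ lies in $\ker\xi_\Gamma$, we conclude $x = \xi_\Gamma^0(y_0)$.

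\emph{Step 2 ($R_\Gamma^0 \subseteq \ker\xi_\Gamma^0$).} Because $p \geq 5$, the elements $\tfrac{1+\sigma}{2} \in \mathbf{Z}_p[\sigma]$ and $\tfrac{1+\tau+\tau^2}{3} \in \mathbf{Z}_p[\tau]$ are idempotents, so the $\sigma$- and $\tau$-invariant submodules of $\mathbf{Z}_p[\Gamma\backslash\PSL_2(\mathbf{Z})]$ coincide with its $(1+\sigma)$- and $(1+\tau+\tau^2)$-images respectively; both are contained in $\ker\xi_\Gamma$ by Manin, hence $(M_\Gamma^0)^\sigma, (M_\Gamma^0)^\tau \subset \ker\xi_\Gamma$. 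For the generator $[-d,d]$, combine the identities $(1+\tau+\tau^2)[-d,d] = [-d,d] + [d,0] + [0,-d] \in \ker\xi_\Gamma$ and $(1+\sigma)[d,0] = [d,0] + [0,-d] \in \ker\xi_\Gamma$ and subtract to deduce $\xi_\Gamma([-d,d]) = 0$.

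\emph{Step 3 ($\ker\xi_\Gamma^0 \subseteq R_\Gamma^0$).} Take $y \in M_\Gamma^0 \cap \ker\xi_\Gamma$ and use Manin to express $y = \sum_i [c_i,d_i](1+\sigma) + \sum_j [c_j,d_j](1+\tau+\tau^2)$. Classify each generator by which of its Manin-symbol summands lie in $M_\Gamma^0$: the two terms of $[c,d](1+\sigma) = [c,d] + [d,-c]$ are simultaneously in or out of $M_\Gamma^0$ (the defining condition $cd \not\equiv 0$ being invariant under $(c,d)\mapsto(d,-c)$), while the three terms of $[c,d](1+\tau+\tau^2) = [c,d] + [d,-c-d] + [-c-d,c]$ are either all in $M_\Gamma^0$ (when $c, d, c+d$ are all nonzero mod $N$) or have exactly one in $M_\Gamma^0$, the ``all out'' case being impossible. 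In the mixed $\tau$-case a direct sub-case check ($c \equiv 0$, $d \equiv 0$, or $c+d \equiv 0$) reveals the surviving symbol to be $[d,-d]$, $[-c,c]$, or $[c,-c]$ respectively—each of the form $[-e,e]$. Projecting the identity $y = z_\sigma + z_\tau$ onto the direct summand $M_\Gamma^0$ leaves $y$ on the left and, on the right, a sum of fully-in $\sigma$-invariants (in $(M_\Gamma^0)^\sigma$), fully-in $\tau$-invariants (in $(M_\Gamma^0)^\tau$), and $[-e,e]$ terms—displaying $y$ as an element of $R_\Gamma^0$.

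\emph{Expected main obstacle.} The delicate point is the mixed-case analysis of Step 3: the explicit verification that the unique surviving $M_\Gamma^0$-term of a mixed $(1+\tau+\tau^2)$-relation always has the special form $[-e,e]$ is precisely what forces the extra summand $\sum_d \mathbf{Z}_p \cdot [-d,d]$ in $R_\Gamma^0$ beyond the $\sigma$- and $\tau$-invariants, and is the only place where ``fully invariant'' Manin relations fail to suffice.
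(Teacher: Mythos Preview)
Your proof is correct and follows essentially the same route as the paper's. Both arguments establish surjectivity by cancelling the $[u,0]+[0,u]$ pairs in a Manin lift, and both handle the kernel by writing an element of $\ker\xi_\Gamma \cap M_\Gamma^0$ via Manin's presentation and then isolating the boundary contribution. The paper does this by tracking coefficients $\lambda_{[c,d]}, \mu_{[c,d]}$ of a global $\tau$-invariant and $\sigma$-invariant decomposition and subtracting the orbit sums $([d,0]+[0,d]+[d,-d])-([d,0]+[0,d])$, whereas you work generator-by-generator with $(1+\sigma)$- and $(1+\tau+\tau^2)$-relations and project onto $M_\Gamma^0$; your case analysis showing that a mixed $\tau$-relation leaves exactly a symbol of shape $[-e,e]$ is precisely the content of the paper's relation $[d,-d]=([d,0]+[0,d]+[d,-d])-([d,0]+[0,d])$, read in reverse.
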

\begin{proof}
Let $\xi_{\Gamma}' = \xi_{\Gamma} \circ \kappa^{-1} :  \mathbf{Z}_p[\left((\mathbf{Z}/N\mathbf{Z})^2\backslash \{(0,0)\} \right) / D_{\Gamma}] \rightarrow \tilde{H}_{\Gamma}'$ and $\xi_{\Gamma}^0$ be the restriction of $\xi_{\Gamma}'$ to $M_{\Gamma}^0$. The computation of $\partial$ shows that $\xi_{\Gamma}^0$ takes values in $\tilde{H}_{\Gamma}$. 
Let $y \in \tilde{H}_{\Gamma}$. Since $\xi_{\Gamma}'$ is surjective, there is some element $x = \sum_{[c,d] \in  \left((\mathbf{Z}/N\mathbf{Z})^2\backslash \{(0,0)\} \right) / D_{\Gamma}} \lambda_{[c,d]} \cdot [c,d] \in \mathbf{Z}_p[\left((\mathbf{Z}/N\mathbf{Z})^2\backslash \{(0,0)\} \right) / D_{\Gamma}]$ such that $\xi_{\Gamma}'(x)=y$. Since $\partial \xi_{\Gamma}'(x) \in \mathbf{Z}_p[C_{\Gamma}^0]$, we have $\lambda_{[d,0]} = \lambda_{[0,d]}$ for all $d \in  \left((\mathbf{Z}/N\mathbf{Z})^2\backslash \{(0,0)\} \right) / D_{\Gamma}$. Since $\xi_{\Gamma}'([0,d]+[d,0]) = 0$, the element $y$ is in the image of $\xi_{\Gamma}^0$. Thus, we have proved that $\xi_{\Gamma}^0$ is surjective.

Let $x = \sum_{[c,d] \in  \left((\mathbf{Z}/N\mathbf{Z})^2\backslash \{(0,0)\} \right) / D_{\Gamma}} \lambda_{[c,d]} \cdot [c,d] - \mu_{[c,d]} \cdot [c,d] \in \Ker(\xi_{\Gamma}^0) = \Ker(\xi_{\Gamma}') \cap M_{\Gamma}^0$ with $\lambda_{[c,d]}=\lambda_{[c,d]\cdot \tau}$ and $\mu_{[c,d]}=\mu_{[c,d]\cdot \sigma}$ for all $[c,d] \in  \left((\mathbf{Z}/N\mathbf{Z})^2\backslash \{(0,0)\} \right) / D_{\Gamma}$.  We also have $\lambda_{[d,0]} = \mu_{[d,0]}$ and $\lambda_{[0,d]} = \mu_{[0,d]}$ for all $d \in (\mathbf{Z}/N\mathbf{Z})^{\times}/D_{\Gamma}$.
Note that for all $d \in (\mathbf{Z}/N\mathbf{Z})^{\times}/D_{\Gamma}$, we have:
\begin{equation}\label{[-u,u]_relation}
[d,-d]=([d,0]+[0,d]+[d,-d])-([d,0]+[0,d]) \in \Ker(\xi_{\Gamma}^0) \text{ .}
\end{equation}
Hence, $x - \sum_{d \in (\mathbf{Z}/N\mathbf{Z})^{\times}/D_{\Gamma}} \lambda_{[d,0]} \cdot [d,-d] \in (M_{\Gamma}^0)^{\sigma} + (M_{\Gamma}^0)^{\tau}$ and $x$ has the form sought after.

\end{proof}

\begin{corr}\label{surjection_homology}
The map $\tilde{H}_{\Gamma_1} \rightarrow \tilde{H}_{\Gamma_2}$ is surjective.
\end{corr}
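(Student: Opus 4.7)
The plan is to deduce this from the Manin presentations established in Proposition \ref{generation_Manin_C_0^{(p^r)}}, by comparing them through the natural degeneracy map. Since $\Gamma_1 \subset \Gamma_2$, there is a natural covering map of modular curves $\pi : X_{\Gamma_1} \to X_{\Gamma_2}$, and by definition $C_{\Gamma}^0$ consists of the cusps above $\Gamma_0(N)\cdot 0 \in X_0(N)$, so $\pi(C_{\Gamma_1}^0) \subset C_{\Gamma_2}^0$. Hence $\pi_*$ induces a well-defined homomorphism $\pi_* : \tilde{H}_{\Gamma_1} \to \tilde{H}_{\Gamma_2}$, which is precisely the map whose surjectivity we need to show.

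The key step is to set up the commutative square
\[
\begin{array}{ccc}
M_{\Gamma_1}^0 & \twoheadrightarrow & M_{\Gamma_2}^0 \\
{\scriptstyle \xi_{\Gamma_1}^0}\downarrow & & \downarrow {\scriptstyle \xi_{\Gamma_2}^0} \\
\tilde{H}_{\Gamma_1} & \xrightarrow{\pi_*} & \tilde{H}_{\Gamma_2} \,.
\end{array}
\]
The top map is the obvious surjection induced by the quotient
\[
\bigl((\mathbf{Z}/N\mathbf{Z})^2 \setminus \{(0,0)\}\bigr)/D_{\Gamma_1} \twoheadrightarrow \bigl((\mathbf{Z}/N\mathbf{Z})^2 \setminus \{(0,0)\}\bigr)/D_{\Gamma_2},
\]
which is surjective because $D_{\Gamma_1} \subset D_{\Gamma_2}$, and which restricts to a surjection $M_{\Gamma_1}^0 \twoheadrightarrow M_{\Gamma_2}^0$ because the condition $cd\not\equiv 0$ modulo $N$ is preserved. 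Commutativity is immediate: a representative $g = \begin{pmatrix} a & b \\ c & d\end{pmatrix}\in \SL_2(\mathbf{Z})$ with image $[c,d]_{\Gamma_1}$ is sent by either composition to the class of the geodesic $\{g(0),g(\infty)\}$, viewed in $X_{\Gamma_2}$.

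Now the two vertical arrows are surjective by Proposition \ref{generation_Manin_C_0^{(p^r)}}, and the top horizontal arrow is surjective by construction, so a diagram chase forces $\pi_*$ to be surjective. Concretely, given $y \in \tilde{H}_{\Gamma_2}$, lift it to $\bar{x} \in M_{\Gamma_2}^0$ via $\xi_{\Gamma_2}^0$, then lift $\bar{x}$ to $x \in M_{\Gamma_1}^0$; the element $\xi_{\Gamma_1}^0(x) \in \tilde{H}_{\Gamma_1}$ maps to $y$.

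There is no serious obstacle here; the only point requiring a moment's thought is checking that the degeneracy map respects the chosen cusp subset $C^0$ (so that the map on relative homology is defined) and that the induced map on Manin symbols is the evident quotient — both of which follow from the very definitions.
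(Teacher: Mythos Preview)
Your proof is correct and follows essentially the same approach as the paper: use the surjectivity of $M_{\Gamma_1}^0 \to M_{\Gamma_2}^0$ together with the Manin presentation of Proposition~\ref{generation_Manin_C_0^{(p^r)}}. The paper states this in two lines, while you have spelled out the commutative square and the diagram chase more explicitly.
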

\begin{proof}
The map $M_{\Gamma_1}^0 \rightarrow M_{\Gamma_2}^0$ is surjective. We conclude using Proposition \ref{generation_Manin_C_0^{(p^r)}}.
\end{proof}

The ring $\Lambda_{\Gamma_i}$ acts naturally on $R_{\Gamma_i}^0$, $M_{\Gamma_i}$ and $\tilde{H}_{\Gamma_i}$ (for $i=1,2$).

\begin{prop}\label{odd_modSymb_refined_Hida}
\begin{enumerate}
\item The kernel of the homomorphism $\tilde{H}_{\Gamma_1} \rightarrow \tilde{H}_{\Gamma_2}$ is $J_{1 \rightarrow 2}  \cdot \tilde{H}_{\Gamma_1}$. 
\item The kernel of the homomorphism $H_{\Gamma_1} \rightarrow H_{\Gamma_2}$ is $J_{1 \rightarrow 2}  \cdot H_{\Gamma_1}$.
\end{enumerate}
\end{prop}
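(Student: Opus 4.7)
For (i), the plan is to invoke the snake lemma on the diagram of Manin presentations supplied by Proposition \ref{generation_Manin_C_0^{(p^r)}}:
\begin{equation*}
\begin{array}{ccccccccc}
0 & \to & R_{\Gamma_1}^0 & \to & M_{\Gamma_1}^0 & \to & \tilde H_{\Gamma_1} & \to & 0 \\
& & \downarrow & & \downarrow & & \downarrow & & \\
0 & \to & R_{\Gamma_2}^0 & \to & M_{\Gamma_2}^0 & \to & \tilde H_{\Gamma_2} & \to & 0,
\end{array}
\end{equation*}
where the vertical arrows are the natural projections. First I would verify that the middle vertical is surjective with kernel exactly $J_{1 \to 2} M_{\Gamma_1}^0$: the subgroup $H := \Ker((\mathbf{Z}/N\mathbf{Z})^\times / D_{\Gamma_1} \to (\mathbf{Z}/N\mathbf{Z})^\times / D_{\Gamma_2})$ acts freely on the basis $\{[c,d] : cd \neq 0\}$ of $M_{\Gamma_1}^0$, since any stabilizer of such a $[c,d]$ must lie in $D_{\Gamma_1}$. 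The snake lemma then reduces part (i) to the surjectivity of $R_{\Gamma_1}^0 \to R_{\Gamma_2}^0$, since the image of $J_{1\to 2} M_{\Gamma_1}^0$ in $\tilde H_{\Gamma_1}$ is $J_{1\to 2} \tilde H_{\Gamma_1}$.

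To prove this surjectivity, I would treat each of the three families of generators of $R_{\Gamma_2}^0$ separately. The generators $[-d,d]$ lift tautologically to elements of the same form in $R_{\Gamma_1}^0$. For a $\sigma$-invariant $r \in M_{\Gamma_2}^0$, choose any lift $\tilde r \in M_{\Gamma_1}^0$ and replace it with the symmetrization $\tfrac{1}{2}(\tilde r + \tilde r \cdot \sigma)$, which lies in $(M_{\Gamma_1}^0)^\sigma$ and still projects to $r$; the factor $\tfrac{1}{2}$ is legitimate since $p \geq 5$. The $\tau$-invariants are handled identically via $\tfrac{1}{3}(\tilde r + \tilde r \cdot \tau + \tilde r \cdot \tau^2)$, using that $3 \in \mathbf{Z}_p^\times$.

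For (ii), I would argue directly. Take $y \in \Ker(H_{\Gamma_1} \to H_{\Gamma_2})$. Viewing $y$ inside $\tilde H_{\Gamma_1}$, part (i) places $y$ in $J_{1\to 2} \tilde H_{\Gamma_1} = ([g]-1)\tilde H_{\Gamma_1}$ (by principality of $J_{1\to 2}$), so $y = ([g]-1)\tilde z$ for some $\tilde z \in \tilde H_{\Gamma_1}$, where $g$ is a generator of $H$. The $\Lambda_{\Gamma_1}$-equivariance of $\partial$ together with $\partial y = 0$ forces $\partial \tilde z \in (\Lambda_{\Gamma_1}^0)^H$. The crucial observation is that $H$ also acts freely on the cusps $C_{\Gamma_1}^0$, so $(\Lambda_{\Gamma_1}^0)^H$ is generated as a $\mathbf{Z}_p$-module by norm differences $N_O - N_{O_0} = \sum_{h \in H} h\cdot([c_O'] - [c_0'])$ indexed by the $H$-orbits $O$. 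Each such norm difference is realized as $\partial\bigl(\sum_{h \in H} h \cdot w\bigr)$, where $w \in \tilde H_{\Gamma_1}$ is any preimage of $[c_O'] - [c_0']$ under the surjection $\partial \colon \tilde H_{\Gamma_1} \twoheadrightarrow \Lambda_{\Gamma_1}^0$ (which comes from the long exact sequence of the pair $(X_{\Gamma_1}, C_{\Gamma_1}^0)$), and the averaged element is automatically $H$-invariant.

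Assembling these, I would obtain an $H$-invariant $\tilde z' \in \tilde H_{\Gamma_1}$ with $\partial \tilde z' = \partial \tilde z$. Then $\tilde z - \tilde z' \in H_{\Gamma_1}$ and $y = ([g]-1)(\tilde z - \tilde z')$ lies in $J_{1\to 2} \cdot H_{\Gamma_1}$, completing (ii). The unifying obstacle in both parts is the lifting of invariants from the $\Gamma_2$-level to the $\Gamma_1$-level: in (i) it is overcome by inverting $2$ and $3$ (forcing $p\geq 5$), while in (ii) the freeness of the diamond action on cusps makes the averaging trick work integrally, without any denominator.
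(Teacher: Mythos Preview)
Your proof is correct and follows essentially the same route as the paper: for (i) you apply the snake lemma to the Manin presentations and establish surjectivity of $R_{\Gamma_1}^0 \to R_{\Gamma_2}^0$ by averaging over $\sigma$- and $\tau$-orbits (which is exactly the content of the paper's terse ``using $p>3$''), and for (ii) your direct lifting argument is the unwound form of the paper's snake-lemma reduction to the surjectivity of $\tilde H_{\Gamma_1}[J_{1\to 2}] \to \mathbf{Z}_p[C_{\Gamma_1}^0]^0[J_{1\to 2}]$, with both proofs realizing the $H$-invariant cusp differences as boundaries of norm-averaged Manin symbols.
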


\begin{proof}
We prove point (i).
Consider the following commutative diagram, where the rows are exact:
$$\xymatrix{
   0 \ar[r] &  R_{\Gamma_1}^0 \ar[r] \ar[d]  & M_{\Gamma_1}^0 \ar[r] \ar[d]  & \tilde{H}_{\Gamma_1}\ar[r] \ar[d]& 0  \\
    0 \ar[r] & R_{\Gamma_2}^0 \ar[r]  & M_{\Gamma_2}^0  \ar[r] & \tilde{H}_{\Gamma_2} \ar[r] & 0   
  } $$

It is clear that the kernel of the middle vertical arrow is $J_{1 \rightarrow 2}\cdot M_{\Gamma_1}^0$. The cokernel of the left vertical map is zero by Proposition \ref{generation_Manin_C_0^{(p^r)}} (using $p>3$). The snake lemma concludes the proof of point (i).

We now prove point (ii). Using point (i), it suffices to show that $H_{\Gamma_1} \cap \left(J_{1\rightarrow 2} \cdot \tilde{H}_{\Gamma_1}\right) = J_{1\rightarrow 2}\cdot H_{\Gamma_1}$. Consider the following commutative diagram, where the rows are exact:
$$\xymatrix{
   0 \ar[r] &   H_{\Gamma_1} \ar[r] \ar[d]  & \tilde{H}_{\Gamma_1} \ar[r] \ar[d]  &  \mathbf{Z}_p[C_{\Gamma_1}^0] ^0\ar[r] \ar[d]& 0  \\
    0 \ar[r] &  H_{\Gamma_1} \ar[r]  & \tilde{H}_{\Gamma_1}  \ar[r] &   \mathbf{Z}_p[C_{\Gamma_1}^0]^0 \ar[r] & 0   
  }$$
Here, the vertical maps are induced by the action of $[d]-1$ where $d$ is a fixed generator of $\Ker((\mathbf{Z}/N\mathbf{Z})^{\times}/D_{\Gamma_1} \rightarrow (\mathbf{Z}/N\mathbf{Z})^{\times}/D_{\Gamma_2})$, and $ \mathbf{Z}_p[C_{\Gamma_1}^0] ^0$ is the augmentation subgroup of $ \mathbf{Z}_p[C_{\Gamma_1}^0] $. Note that $J_{1 \rightarrow 2}$ is principal, generated by $[d]-1$. Thus, to prove (ii) it suffices to show (using the snake Lemma) that the map $\tilde{H}_{\Gamma_1}[J_{1 \rightarrow 2}] \rightarrow \mathbf{Z}_p[C_{\Gamma_1}^0] ^0[J_{1 \rightarrow 2}]$ is surjective. 

It suffices to show that the boundary map $M_{\Gamma_1}^0[J_{1 \rightarrow 2}] \rightarrow  \mathbf{Z}_p[C_{\Gamma_1}^0] ^0[J_{1 \rightarrow 2}]$ is surjective. Since we can identify $C_{\Gamma_1}^0$ with $(\mathbf{Z}/N\mathbf{Z})^{\times}/D_{\Gamma_1}$, the action of $(\mathbf{Z}/N\mathbf{Z})^{\times}/D_{\Gamma_1}$ on $C_{\Gamma_1}^0$ is free. Thus, any element of $\mathbf{Z}_p[C_{\Gamma_1}^0] ^0[J_{1 \rightarrow 2}]$ is of the form $\sum_{x \in C_{\Gamma_1}^0} \lambda_x \cdot (\sum_{k=0}^{m-1} [d^{k-1}])\cdot [x]$ where $m$ is the order of $d$. We have $\sum_{x \in C_{\Gamma_1}^0} \lambda_x = 0$. Thus, $\mathbf{Z}_p[C_{\Gamma_1}^0] ^0[J_{1 \rightarrow 2}]$ is spanned over $\mathbf{Z}_p$ by the elements $(\sum_{k=0}^{m-1} [d^{k-1}])\cdot ([u]-[v])$ for $u$, $v$ $\in C_{\Gamma_1}^0$. If we identify $u$ and $v$ with elements of $(\mathbf{Z}/N\mathbf{Z})^{\times}/D_{\Gamma_1}$ and lift them to elements of $(\mathbf{Z}/N\mathbf{Z})^{\times}$, $(\sum_{k=0}^{m-1} [d^{k-1}])\cdot ([u]-[v])$ is the boundary of the Manin symbol $(\sum_{k=0}^{m-1} [d^{k-1}])\cdot [u,v]$, which is annihilated by $J_{1 \rightarrow 2}$. This concludes the proof of point (ii).

\end{proof}

We have the analogous (certainly well-known) statement for the Hecke algebras.

\begin{prop}\label{odd_modSymb_refined_Hida_Hecke}
The kernel of the restriction map $\tilde{\mathbb{T}}_{\Gamma_1}\rightarrow \tilde{\mathbb{T}}_{\Gamma_2}$ (resp. $\mathbb{T}_{\Gamma_1} \rightarrow \mathbb{T}_{\Gamma_2}$) is $J_{1 \rightarrow 2} \cdot \tilde{\mathbb{T}}_{\Gamma_1}$ (resp. $J_{1 \rightarrow 2} \cdot \mathbb{T}_{\Gamma_1}$).
\end{prop}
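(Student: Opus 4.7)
My plan is to dispatch the easy inclusion by a direct check and then obtain the reverse inclusion through the duality between Hecke algebras and modular forms given by the $q$-expansion pairing. I treat $\rho:\tilde{\mathbb{T}}_{\Gamma_1}\to\tilde{\mathbb{T}}_{\Gamma_2}$ in detail; the cuspidal case is identical with $S_2$ in place of $M_2$ and $\mathbb{T}_{\Gamma_i}$ in place of $\tilde{\mathbb{T}}_{\Gamma_i}$.

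First I would verify $J_{1\to 2}\tilde{\mathbb{T}}_{\Gamma_1}\subset\ker\rho$. The ideal $J_{1\to 2}$ is principal, generated by $\Delta=[d]-1$ for any $d\in(\mathbf{Z}/N\mathbf{Z})^{\times}$ whose class generates $\Ker\bigl((\mathbf{Z}/N\mathbf{Z})^{\times}/D_{\Gamma_1}\to(\mathbf{Z}/N\mathbf{Z})^{\times}/D_{\Gamma_2}\bigr)$; in particular such a $d$ lies in $D_{\Gamma_2}$. The image of $\Delta$ in $\tilde{\mathbb{T}}_{\Gamma_1}$ is $\langle d\rangle-1$, and since $d\in D_{\Gamma_2}$ the diamond operator $\langle d\rangle$ acts trivially on $X_{\Gamma_2}$ (by the very definition of $D_{\Gamma_2}$), hence on $\tilde{H}_{\Gamma_2}$. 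Faithfulness of $\tilde{\mathbb{T}}_{\Gamma_2}$ on $\tilde{H}_{\Gamma_2}$ then forces $\rho(\langle d\rangle-1)=0$.

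For the reverse inclusion I would use the perfect $\mathbf{Z}_p$-bilinear $q$-expansion pairing
$$\tilde{\mathbb{T}}_{\Gamma_i}\times M_2(\Gamma_i,\mathbf{Z}_p)\to\mathbf{Z}_p,\qquad (T,f)\longmapsto a_1(Tf),$$
which is perfect by the $q$-expansion principle together with the comparison between the modular-forms and modular-symbols Hecke algebras. Under this pairing $\rho$ is dual to the natural inclusion $M_2(\Gamma_2,\mathbf{Z}_p)\hookrightarrow M_2(\Gamma_1,\mathbf{Z}_p)$, so $(\ker\rho)^{\perp}=M_2(\Gamma_2,\mathbf{Z}_p)$. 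On the other hand, since diamond and Hecke operators commute, for $T=\Delta S$ with $S\in\tilde{\mathbb{T}}_{\Gamma_1}$ and $f\in M_2(\Gamma_1,\mathbf{Z}_p)$ one has $a_1(Tf)=a_1\bigl(S\cdot(\langle d\rangle-1)f\bigr)$; this vanishes for every $S$ precisely when $(\langle d\rangle-1)f=0$, i.e.\ when $f\in M_2(\Gamma_2,\mathbf{Z}_p)$. Hence $(J_{1\to 2}\tilde{\mathbb{T}}_{\Gamma_1})^{\perp}=M_2(\Gamma_2,\mathbf{Z}_p)$ as well, and taking double orthogonals yields the desired equality.

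The hard part will be the integral saturation step: double orthogonal is an involution only on $\mathbf{Z}_p$-saturated submodules. The ideal $\ker\rho$ is automatically saturated since $\tilde{\mathbb{T}}_{\Gamma_2}$ is $\mathbf{Z}_p$-torsion-free, so the real content is to show that $\tilde{\mathbb{T}}_{\Gamma_1}/J_{1\to 2}\tilde{\mathbb{T}}_{\Gamma_1}$ has no $\mathbf{Z}_p$-torsion. I would deduce this from a Hida-style control statement: the freeness of $M_2(\Gamma_1,\mathbf{Z}_p)$ as a $\Lambda_{\Gamma_1}$-module under the diamond action, which by duality endows $\tilde{\mathbb{T}}_{\Gamma_1}$ with the structure of a free $\Lambda_{\Gamma_1}$-module, so that $\tilde{\mathbb{T}}_{\Gamma_1}/J_{1\to 2}\tilde{\mathbb{T}}_{\Gamma_1}\cong\tilde{\mathbb{T}}_{\Gamma_1}\otimes_{\Lambda_{\Gamma_1}}\Lambda_{\Gamma_2}$ is free over $\Lambda_{\Gamma_2}$ and \emph{a fortiori} $\mathbf{Z}_p$-torsion-free.
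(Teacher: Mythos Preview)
Your overall strategy---duality between $\tilde{\mathbb{T}}_{\Gamma_i}$ and $M_2(\Gamma_i,\mathbf{Z}_p)$ via the $q$-expansion pairing, together with the identification $M_2(\Gamma_2,\mathbf{Z}_p)=M_2(\Gamma_1,\mathbf{Z}_p)[J_{1\to 2}]$---is exactly the paper's approach. The paper simply records the $\tilde{\mathbb{T}}_{\Gamma_i}$-module isomorphism $\Hom_{\mathbf{Z}_p}(\tilde{\mathbb{T}}_{\Gamma_i},\mathbf{Z}_p)\simeq M_2(\Gamma_i,\mathbf{Z}_p)$ and the equality $M_2(\Gamma_2,\mathbf{Z}_p)=M_2(\Gamma_1,\mathbf{Z}_p)[J_{1\to 2}]$, and declares the result; it does not spell out the saturation step you highlight. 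So you have correctly isolated the only nontrivial point.

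The gap is in your resolution of that point. You claim that $M_2(\Gamma_1,\mathbf{Z}_p)$ (and by duality $\tilde{\mathbb{T}}_{\Gamma_1}$) is free as a $\Lambda_{\Gamma_1}$-module. This is false in general. After inverting $p$, the diamond action decomposes $M_2(\Gamma_1,\overline{\mathbf{Q}}_p)$ as $\bigoplus_{\chi}M_2(\Gamma_0(N),\chi)$, and these isotypic pieces do not have equal dimension: already for $N$ prime the Eisenstein subspace has dimension $1$ when $\chi$ is trivial and dimension $2$ when $\chi$ is a nontrivial even character (and the cuspidal dimensions differ as well). Hence $M_2(\Gamma_1,\overline{\mathbf{Q}}_p)$ is not free over $\Lambda_{\Gamma_1}\otimes\overline{\mathbf{Q}}_p$, so certainly $M_2(\Gamma_1,\mathbf{Z}_p)$ is not free over $\Lambda_{\Gamma_1}$; the same obstruction applies to $\tilde{\mathbb{T}}_{\Gamma_1}$ and, in the cuspidal case, to $S_2$ and $\mathbb{T}_{\Gamma_1}$. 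Your ``Hida-style control'' intuition does not transfer here: Hida's freeness results concern the ordinary part over the Iwasawa algebra of a $\mathbf{Z}_p$-tower, not the full Hecke algebra over the group ring of a finite diamond quotient. So while you have pinpointed the crux of the argument, your proposed justification of the torsion-freeness of $\tilde{\mathbb{T}}_{\Gamma_1}/J_{1\to 2}\tilde{\mathbb{T}}_{\Gamma_1}$ does not stand.
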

\begin{proof}
We prove the first assertion. There is an isomorphism of $\tilde{\mathbb{T}}_{\Gamma_i}$-modules
\begin{equation}\label{duality_T_M}
\Hom_{\mathbf{Z}_p}(\tilde{\mathbb{T}}_{\Gamma_i}, \mathbf{Z}_p) \simeq M_2(\Gamma_i, \mathbf{Z}_p) \text{ ,}
\end{equation}
where $M_2(\Gamma_i, \mathbf{Z}_p) = M_2(\Gamma_i, \mathbf{Z}) \otimes_{\mathbf{Z}} \mathbf{Z}_p$ and $M_2(\Gamma_i, \mathbf{Z})$ is the set of modular form for $\Gamma_i$ whose $q$-expansion at the cusp $\infty$ has coefficients in $\mathbf{Z}$. We have
$$M_2(\Gamma_2, \mathbf{Z}_p)= M_2(\Gamma_1, \mathbf{Z}_p)[J_{1 \rightarrow 2}]\text{, }$$
where the action of $\Lambda_{\Gamma_1}$ on $M_2(\Gamma_1, \mathbf{Z}_p)$ is via the diamond operators.  This proves the first assertion using (\ref{duality_T_M}). The proof of the second assertion is identical, replacing $ M_2(\Gamma_i, \mathbf{Z}_p)$ by the space of cusp forms  $S_2(\Gamma_i, \mathbf{Z}_p)$.
\end{proof}

\subsection{Eisenstein ideals of $X_1^{(p^r)}(N)$}\label{odd_modSymb_section_eisenstein_ideal}
We keep the notation of section \ref{odd_modSymb_section_hida} and add the following ones.
\begin{itemize}
\item $P_r=(\mathbf{Z}/N\mathbf{Z})^{\times}/\left( (\mathbf{Z}/N\mathbf{Z})^{\times} \right)^{p^r}$.
 \item $P_r' =\left( (\mathbf{Z}/N\mathbf{Z})^{\times} \right)^{p^r}$.
 \item $\Lambda_r = \mathbf{Z}_p[P_r]$.
 \item $J_r \subset \Lambda_r$ is the augmentation ideal.
 \item $J_r' = \Ker\left(\mathbf{Z}_p[(\mathbf{Z}/N\mathbf{Z})^{\times}] \rightarrow \Lambda_r \right)$.
 \item $\Gamma_1^{(p^r)}(N) \subset \Gamma_0(N)$ is the subgroup of $\Gamma_0(N)$ corresponding to the matrices whose diagonal entries are in $P_r'$ modulo $N$.
  \item If $\Gamma = \Gamma_1^{(p^r)}(N)$, we let $X_1^{(p^r)}(N)=X_{\Gamma}$, $\tilde{H}^{(p^r)} = \tilde{H}_{\Gamma}$, $\left(\tilde{H}^{(p^r)}\right)_+ = \left(\tilde{H}_{\Gamma}\right)_+$, $H^{(p^r)} = H_{\Gamma}$, $\left(H^{(p^r)} \right)_+ = \left(  H_{\Gamma} \right)_+$, $\widetilde{\mathbb{T}'}^{(p^r)} = \tilde{\mathbb{T}}_{\Gamma}'$, $\tilde{\mathbb{T}}^{(p^r)} = \tilde{\mathbb{T}}_{\Gamma}$, $\mathbb{T}^{(p^r)} = \mathbb{T}_{\Gamma}$, $C_0^{(p^r)} = C_{\Gamma}^0$ and $C_{\infty}^{(p^r)} = C_{\Gamma}^{\infty}$.
 \item If $\Gamma = \Gamma_0(N)$, we recall that $\tilde{H} = \tilde{H}_{\Gamma}$, $H = H_{\Gamma}$, $\tilde{\mathbb{T}} = \tilde{\mathbb{T}}_{\Gamma}$ and $\mathbb{T}= \mathbb{T}_{\Gamma}$.
 \item We define two Eisenstein ideals in $\widetilde{\mathbb{T}'}^{(p^r)}$, $\tilde{\mathbb{T}}^{(p^r)}$ and $\mathbb{T}^{(p^r)}$. The set $C_{\infty}^{(p^r)}$ is annihilated by the Eisenstein ideal $\tilde{I}_{\infty}'$ of $\widetilde{\mathbb{T}'}^{(p^r)}$ generated by the operators $T_{n} - \sum_{d \mid n, \gcd(d,N)=1} \langle d \rangle \cdot \frac{n}{d}$. Similarly, $C_0^{(p^r)}$ is annihilated by the Eisenstein ideal $\tilde{I}_{0}'$ of $\widetilde{\mathbb{T}'}^{(p^r)}$ generated by the operators $T_{n} - \sum_{d \mid n, \gcd (d,N)=1} \langle d \rangle \cdot d$.
 \item We denote by $\tilde{I}_{\infty}$ and $\tilde{I}_0$ (resp. $I_{\infty}$ and $I_0$) the respective images of $\tilde{I}_{\infty}'$ and $\tilde{I}_0'$ in $\tilde{\mathbb{T}}^{(p^r)}$ (resp. $\mathbb{T}^{(p^r)}$).
\end{itemize}

The main goal of this section is to give an explicit description of $\tilde{H}^{(p^r)}/\tilde{I}_{\infty} \cdot \tilde{H}^{(p^r)}$. The Hecke algebra $\widetilde{\mathbb{T}'}^{(p^r)}$ (resp. $\tilde{\mathbb{T}}^{(p^r)}$, $\mathbb{T}^{(p^r)}$) acts faithfully on the space of modular forms of weight $2$ and level $\Gamma_1^{(p^r)}(N)$ (resp. which vanish at the cusps in $C_0^{(p^r)}$, resp. which are cuspidal). 

Let
$$ \zeta^{(p^r)} = \sum_{x \in (\mathbf{Z}/N\mathbf{Z})^{\times}}  \B_2\left(\frac{x}{N}\right)\cdot [x^{-1}] \in \Lambda_r $$
and
$$\delta^{(p^r)} =  \sum_{x \in P_r}  [x] \in \Lambda_r \text{ .}$$

The following lemma will be useful in our proofs. It is an immediate consequence of Nakayama's lemma, since $\Lambda_r$ is a local ring.

\begin{lem}\label{odd_modSymb_Nakayama}
Let $f : M_1 \rightarrow M_2$ be a morphism of finitely generated $\Lambda_r$-modules. Let $\overline{f} :  M_1 \rightarrow M_2/J_r\cdot M_2$ be the map obtained from $f$. Then $f$ is surjective if and only if $\overline{f}$ is surjective.
\end{lem}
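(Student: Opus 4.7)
The plan is to derive this from the standard Nakayama lemma applied to the ring $\Lambda_r$, which is local. One direction is immediate, so the content is: $\overline{f}$ surjective implies $f$ surjective.

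First I would verify that $\Lambda_r$ is a local ring with maximal ideal $(p, J_r)$. Since $r \leq t$ and $(\mathbf{Z}/N\mathbf{Z})^{\times}$ is cyclic of order divisible by $p^t$, the group $P_r$ is cyclic of order $p^r$, so fixing a generator we obtain a presentation $\Lambda_r \simeq \mathbf{Z}_p[T]/((1+T)^{p^r}-1)$. Reducing modulo $p$ yields $\mathbf{F}_p[T]/(T^{p^r})$, which is local with maximal ideal $(T)$; lifting, the unique maximal ideal of $\Lambda_r$ is $(p, J_r)$, and in particular $J_r$ lies in the Jacobson radical of $\Lambda_r$.

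Next I set $C := M_2/f(M_1)$, a finitely generated $\Lambda_r$-module since $M_2$ is. The surjectivity of $\overline{f}$ translates exactly to $M_2 = f(M_1) + J_r \cdot M_2$, i.e.\ $C = J_r \cdot C$. By Nakayama's lemma applied to the ideal $J_r$ (which is contained in the Jacobson radical of $\Lambda_r$) and the finitely generated module $C$, we conclude $C = 0$, so $f$ is surjective.

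There is no real obstacle here; the only minor point to get right is the verification that $\Lambda_r$ is local so that Nakayama applies to $J_r$ rather than only to the full maximal ideal $(p, J_r)$. The converse direction is trivial since the quotient map $M_2 \twoheadrightarrow M_2/J_r\cdot M_2$ is surjective, so the composition $\overline{f}$ inherits surjectivity from $f$.
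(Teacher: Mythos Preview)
Your proof is correct and follows exactly the approach the paper indicates: the paper's proof consists of the single sentence that this is an immediate consequence of Nakayama's lemma since $\Lambda_r$ is a local ring, and you have simply written out the details of that sentence.
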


The following result is analogous to Mazur's computation of $\mathbb{T}/I$ \cite[Proposition II.9.7]{Mazur_Eisenstein}. In fact, our proof uses Mazur's results and techniques. 
\begin{thm}\label{odd_modSymb_Eisenstein_ideal_X1}
Assume that $p \geq 5$.
\begin{enumerate}
\item The map $\Lambda_r\rightarrow \tilde{\mathbb{T}}^{(p^r)}$ given by $[d] \mapsto \langle d \rangle$ gives an isomorphism of $\Lambda_r$-modules $$\Lambda_r/(\zeta^{(p^r)}) \xrightarrow{\sim} \tilde{\mathbb{T}}^{(p^r)}/\tilde{I}_{\infty} \text{ .}$$

\item The map $\Lambda_r\rightarrow \mathbb{T}^{(p^r)}$ given by $[d] \mapsto \langle d \rangle$ gives an isomorphism of $\Lambda_r$-modules $$\Lambda_r/\left(\zeta^{(p^r)}, p^{t-r}\cdot \delta^{(p^r)}\right) \xrightarrow{\sim} \mathbb{T}^{(p^r)}/I_{\infty} \text{ .}$$

\item The groups $\tilde{\mathbb{T}}^{(p^r)}/\tilde{I}_{\infty}$ and $\mathbb{T}^{(p^r)}/I_{\infty}$ are finite.
\end{enumerate}
\end{thm}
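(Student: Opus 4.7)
The strategy is to generalize Mazur's computation $\mathbb{T}/I \simeq \mathbf{Z}/p^t\mathbf{Z}$ (hypotheses \eqref{hyp_T/I_tilde} and \eqref{hyp_T/I}) to the refined level $\Gamma_1^{(p^r)}(N)$ by following the diamond action through the Eisenstein series attached to characters of $P_r$.

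For both parts, first I define the $\Lambda_r$-algebra map
$$\phi : \Lambda_r \longrightarrow \tilde{\mathbb{T}}^{(p^r)}/\tilde{I}_{\infty}, \qquad [d] \mapsto \langle d \rangle,$$
and similarly for the cuspidal quotient. \textbf{Surjectivity} follows by Nakayama: Proposition \ref{odd_modSymb_refined_Hida_Hecke} identifies $\tilde{\mathbb{T}}^{(p^r)}/J_r \tilde{\mathbb{T}}^{(p^r)}$ with $\tilde{\mathbb{T}}$ (and $\mathbb{T}^{(p^r)}/J_r \mathbb{T}^{(p^r)}$ with $\mathbb{T}$), hence $\overline{\phi} : \mathbf{Z}_p = \Lambda_r/J_r \to \tilde{\mathbb{T}}/\tilde{I} = \mathbf{Z}_p$ is the canonical identification (the identity), so by Lemma \ref{odd_modSymb_Nakayama}, $\phi$ itself is surjective. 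The same argument gives surjectivity in case (ii) from Mazur's $\mathbb{T}/I \simeq \mathbf{Z}/p^t\mathbf{Z}$.

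The core of the proof is computing $\ker \phi$. I would use the $q$-expansion duality (as in \eqref{duality_T_M}, applied here with vanishing conditions at $C_0^{(p^r)}$) to identify
$$\Hom_{\mathbf{Z}_p}\!\bigl(\tilde{\mathbb{T}}^{(p^r)}/\tilde{I}_{\infty},\ \mathbf{Z}_p\bigr) \simeq \{f \in M_2(\Gamma_1^{(p^r)}(N), \mathbf{Z}_p) : f \text{ vanishes on } C_0^{(p^r)},\ \tilde{I}_{\infty}\cdot f = 0\}.$$
Classically this subspace is a free $\Lambda_r$-module of rank one, generated by the Siegel-type Eisenstein series $E^{(p^r)}$ whose non-constant $q$-expansion at $\infty$ is $\sum_{n\geq 1}\bigl(\sum_{d\mid n,\,(d,N)=1}[d^{-1}]\cdot d\bigr)q^n$; its character-by-character decomposition recovers the classical $E_{2,\chi}$ for each $\chi : P_r \to \overline{\mathbf{Q}}_p^{\times}$. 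The key computation is that the constant coefficient of $E^{(p^r)}$ at the cusp $\infty$ is (a unit times) $\zeta^{(p^r)}$: for each character $\chi$ one has $L(-1,\chi) = -B_{2,\chi}/2$ and the Hurwitz-type formula $B_{2,\chi} = N \sum_{x=1}^{N-1} B_2(x/N)\,\chi(x^{-1})$, which interpolated over $\chi$ gives exactly the element $\zeta^{(p^r)} \in \Lambda_r$. Consequently the $\Lambda_r$-linear constant-term-at-$\infty$ functional $\tilde{\mathbb{T}}^{(p^r)}/\tilde{I}_{\infty} \to \Lambda_r$ composed with $\phi$ is (up to a unit) multiplication by $\zeta^{(p^r)}$, establishing $\ker \phi = (\zeta^{(p^r)})$, hence (i).

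For (ii), the kernel of $\tilde{\mathbb{T}}^{(p^r)} \twoheadrightarrow \mathbb{T}^{(p^r)}$ is generated by the distinguished element $T_0^{(p^r)} \in \Ann_{\tilde{\mathbb{T}}^{(p^r)}}(\tilde{I}_{\infty})$ projecting onto the unique $\Gamma_0(N)$-Eisenstein series (which lives in the trivial $P_r$-character piece). Its image in $\Lambda_r/(\zeta^{(p^r)})$ is, up to units, $p^{t-r}\cdot \delta^{(p^r)}$: this comes from the fact that the constant term of the standard $\Gamma_0(N)$ Eisenstein series is $\frac{N-1}{24}$ of $p$-adic valuation $t$, while we work modulo $p^r$ with $r \leq t$, and the idempotent onto the trivial character is $\tfrac{1}{p^r}\delta^{(p^r)}$. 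Finally, (iii) follows from the observation that $\zeta^{(p^r)}$ is a nonzero-divisor in $\Lambda_r$: every character of $P_r$ is even (since $-1 \in P_r'$ as $(N-1)/p^r$ is even) and $B_{2,\chi}\neq 0$ for all even $\chi$, while the augmentation of $\zeta^{(p^r)}$ is $-\frac{N-1}{6N}\neq 0$ in $\mathbf{Z}_p$, giving finiteness of the cyclic $\mathbf{Z}_p$-quotients in both (i) and (ii).

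The principal obstacle is making the Eisenstein-series computation precise: pinning down the correct normalization of $E^{(p^r)}$ so that its constant coefficient at $\infty$ equals \emph{exactly} a unit times $\zeta^{(p^r)}$ (with the right factor $1/2$ and $N$-scaling from the generalized Bernoulli formula), and verifying that $\Hom(\tilde{\mathbb{T}}^{(p^r)}/\tilde{I}_{\infty},\mathbf{Z}_p)$ is $\Lambda_r$-free of rank one, so that the inclusion $(\zeta^{(p^r)}) \subseteq \ker \phi$ coming from the existence of this functional is in fact an equality.
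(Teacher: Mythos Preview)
Your overall plan is sound and matches the paper: surjectivity via Nakayama plus Proposition \ref{odd_modSymb_refined_Hida_Hecke}, then computing $\ker\phi$ through a $\Lambda_r$-valued Eisenstein family and its constant terms. But there is a genuine mix-up of cusps and Eisenstein series that would make the argument fail as written.

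\textbf{The cusp swap.} The $\tilde{I}_{\infty}$-annihilated Eisenstein family has non-constant coefficients $a_n = \sum_{d\mid n,\,(d,N)=1}[d]\cdot\frac{n}{d}$, \emph{not} $[d^{-1}]\cdot d$; the series you wrote down is (character by character) $E_{1,\chi^{-1}}$, which has diamond character $\chi^{-1}$ and is not killed by $\tilde{I}_{\infty}$ under the convention $[d]\mapsto\langle d\rangle$. Once you take the correct family $F_{\infty}=\frac{N-1}{24p^r}\delta^{(p^r)}+\sum_{n\geq 1}\bigl(\sum_{d\mid n}[d]\cdot\frac{n}{d}\bigr)q^n$, its constant term at $\infty$ is the $\delta^{(p^r)}$-term, \emph{not} $\zeta^{(p^r)}$. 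The element $\zeta^{(p^r)}$ arises instead as (a unit times) the constant term of $F_{\infty}$ at the cusp $0$, computed via the Atkin--Lehner involution $w_N$ and Weisinger's formula for $w_N(E_{\epsilon,1})$; the Gauss-sum factor that appears is a unit of $\Lambda_r'$. So the condition ``$F_{\infty}$ vanishes on $C_0^{(p^r)}$ modulo $I$'' is exactly $\zeta^{(p^r)}\in I$, and the extra condition ``vanishes on $C_{\infty}^{(p^r)}$'' is $p^{t-r}\delta^{(p^r)}\in I$. This is the reversal of what you described.

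\textbf{The freeness obstacle.} Your acknowledged obstacle---proving that $\Hom_{\mathbf{Z}_p}(\tilde{\mathbb{T}}^{(p^r)}/\tilde{I}_{\infty},\mathbf{Z}_p)$ is $\Lambda_r$-free of rank one---is essentially the theorem itself, so one cannot assume it. The paper avoids this circularity by a different device: it proves (Lemma \ref{odd_modSymb_constant_modular_form}) that any \emph{constant} modular form $F\in\Lambda_r/I$ with $\langle d\rangle F=[d]\cdot F$ must vanish, by a $J_r$-adic induction reducing to Mazur's vanishing result at level $\Gamma_0(N)$ for $p\geq 5$. This forces any $\tilde{I}_{\infty}$-killed form $F_{\alpha}=\alpha+E_{\infty}$ over $\Lambda_r/K$ (with $K=\ker\phi$) to coincide with $\overline{F}_{\infty}$, whence the Atkin--Lehner computation gives $\zeta^{(p^r)}\in K$. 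Conversely, $\overline{F}_{\infty}$ over $\Lambda_r/(\zeta^{(p^r)})$ already vanishes on $C_0^{(p^r)}$, providing the section that shows $K\subseteq(\zeta^{(p^r)})$. Your argument for (iii) via non-vanishing of $B_{2,\chi}$ for even $\chi$ is correct and is what the paper uses.
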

\begin{proof}
The assertion (iii) follows from (i), (ii) and the well-known property of Stickelberger elements (non vanishing of $L(\chi,2)$ for any even Dirichlet character $\chi$).

We first prove the following result.
\begin{lem}\label{odd_modSymb_delta_modular_form}
Let $I$ be an ideal of $\Lambda_r$ and 
$$ E_{\infty} :=\sum_{n \geq 1} \left(\sum_{d \mid n \atop \gcd(d,N)=1}  [d] \cdot \frac{n}{d}\right)\cdot q^n \in \Lambda_r[[q]] \text{ .}$$
\begin{enumerate}
\item The series $\frac{N-1}{24\cdot p^r}\cdot \delta^{(p^r)} + E_{\infty}$ is the $q$-expansion at the cusp $\infty$ of a modular form of weight $2$ and level $\Gamma_1^{(p^r)}(N)$ over $\Lambda_r$.
\item The following assertions are equivalent.

\begin{enumerate}
\item There exists $\alpha \in \Lambda_r/I$ such that the image $F_{\alpha}$ of $\alpha + E_{\infty}$ in $(\Lambda_r/I)[[q]]$ satisfies:
\begin{itemize}
\item $F_{\alpha}$ is the $q$-expansion at the cusp $\infty$ of a modular form of weight $2$, level $\Gamma_1^{(p^r)}(N)$ over $\Lambda_r/I$ which vanishes at the cusps in $C_0^{(p^r)}$.
\item For all $d \in P$, we have $\langle d \rangle F_{\alpha} = [d] \cdot F_{\alpha}$.
\end{itemize}
\item We have $\zeta^{(p^r)} \in I$.
\end{enumerate}
\item The following assertions are equivalent.

\begin{enumerate}
\item There exists $\alpha \in \Lambda_r/I$ such that the image $F_{\alpha}$ of $\alpha + E_{\infty}$ in $(\Lambda_r/I)[[q]]$ satisfies:
\begin{itemize}
\item $F_{\alpha}$ is the $q$-expansion at the cusp $\infty$ of a cuspidal modular form of weight $2$, level $\Gamma_1^{(p^r)}(N)$ over $\Lambda_r/I$.
\item For all $d \in P$, we have $\langle d \rangle F_{\alpha} = [d] \cdot F_{\alpha}$.
\end{itemize}
\item We have $\zeta^{(p^r)} \in I$ and $p^{t-r}\cdot \delta^{(p^r)} \in I$.
\end{enumerate}
\end{enumerate}
\end{lem}
\begin{proof}

Fix an embedding of $\overline{\mathbf{Q}}_p \hookrightarrow \mathbf{C}$. For any non-trivial character $\epsilon : P_r \rightarrow \mathbf{C}^{\times}$, the $q$-expansion $$\sum_{n \geq 1} \left(\sum_{d \mid n, \atop \gcd(d,N)=1}  \epsilon(d) \cdot \frac{n}{d}\right)\cdot q^n \in \mathbf{C}[[q]]$$ is the $q$-expansion at the cusp $\infty$ of an Eisenstein series of weight $2$ and level $\Gamma_1^{(p^r)}(N)$, which we denote by $E_{\epsilon,1}$ (\cf for instance \cite[Theorem $4.6.2$]{Diamond_Shurman}). Furthermore, we have already seen that $\frac{N-1}{24} + \sum_{n \geq 1} \left(\sum_{d \mid n, \gcd(d,N)=1} \frac{n}{d}\right)\cdot q^n$ is the $q$-expansion at the cusp $\infty$ of an Eisenstein series of level $\Gamma_0(N)$, denoted by $E_2$. 

Using our fixed embedding $\mathbf{Q}_p \hookrightarrow \mathbf{C}$, we get a natural injective ring homomorphism $\iota : \Lambda_r\rightarrow \prod_{\epsilon \in \hat{P}_r} \mathbf{C}$ where $\hat{P}_r$ is the set of characters of $P_r$.

Thus, we have shown that $F_{\infty}:=\frac{N-1}{24 \cdot p^r} \cdot \delta^{(p^r)} + E_{\infty} \in \Lambda_r[[q]]$ is the $q$-expansion at the cusp $\infty$ of a modular form of weight $2$ and level $\Gamma_1^{(p^r)}(N)$ (still denoted by $F_{\infty}$) over $\prod_{\epsilon \in \hat{P}_r} \mathbf{C}$. By the $q$-expansion principle \cite[Corollary 1.6.2]{Katz_properties}, such a modular form is over $\Lambda_r$. This proves point (i) of Lemma \ref{odd_modSymb_delta_modular_form}. 

We now prove point (ii). Since for all $d \in P_r$ we have $\langle d \rangle E_{\epsilon,1} = \epsilon(d) \cdot E_{\epsilon, 1}$, the $q$-expansion principle shows that
\begin{equation}\label{odd_modSymb_action_Atkin_Lehner}
\langle d \rangle  F_{\infty} = [d]\cdot F_{\infty} \text{ .}
\end{equation}
Let $\alpha \in \Lambda_r$ such that point (a) of (ii) holds and let $\overline{F}_{\infty}$ be the image of $F_{\infty}$ modulo $I$. The element $F:= F_{\alpha} - \overline{F}_{\infty} = \alpha - \frac{N-1}{24 \cdot p^r} \cdot \delta^{(p^r)} \in \Lambda_r/I \subset (\Lambda_r/I)[[q]]$ is the $q$-expansion at the cusp $\infty$ of a modular form of weight $2$ and level $\Gamma_1^{(p^r)}(N)$ over $\Lambda_r/I$. Furthermore, for all $d\in P_r$, we have by the assumptions on $F_{\alpha}$ and (\ref{odd_modSymb_action_Atkin_Lehner}): 
$$\langle d \rangle F = \langle d \rangle F_{\alpha} - \langle d \rangle \overline{F}_{\infty} = [d]\cdot F_{\alpha} - [d] \cdot \overline{F}_{\infty} = [d]\cdot F \text{ .}$$

\begin{lem}\label{odd_modSymb_constant_modular_form}
Let $I$ be an ideal of $\Lambda_r$ and $F \in \Lambda_r/I$. Assume that $F$ is the $q$-expansion of a modular form of weight $2$ and level $\Gamma_1^{(p^r)}(N)$ over $\Lambda_r/I$ such that for all $d \in P_r$, we have $\langle d \rangle F = [d] \cdot F$. Then we have $F=0$.
\end{lem}
\begin{proof}
We prove by induction on $n\geq 0$ that $F \in J_r^n \cdot (\Lambda_r/I)$. This is true if $n=0$. Assume that this is true for some $n \geq 0$. By the $q$-expansion principle, $F$ is the $q$-expansion at the cusp $\infty$ of a modular form over the $\mathbf{Z}[\frac{1}{N}]$-module $J_r^n \cdot (\Lambda_r/I)$ (\cf \cite[Section 1.6]{Katz_properties} for the notion of a modular form over an abelian group). Let $\overline{F}$ be the image $F$ by the map $J_r^n \cdot (\Lambda_r/I) \twoheadrightarrow J_r^n \cdot (\Lambda_r/I)/J_r^{n+1} \cdot (\Lambda_r/I) $. The diamond operators act trivially on $\overline{F}$. Thus, $\overline{F}$ is the $q$-expansion at the cusp $\infty$ of a modular form of weight $2$ and level $\Gamma_0(N)$ with coefficients in the module $J_r^n \cdot (\Lambda_r/I)/J_r^{n+1} \cdot (\Lambda_r/I) $. Note that $J_r^n \cdot (\Lambda_r/I)/J_r^{n+1} \cdot (\Lambda_r/I) $ is a quotient of $J_r^n/J_r^{n+1} \simeq \mathbf{Z}/p^r\mathbf{Z}$. Since $p \geq 5$ and $\gcd(N,p)=1$, \cite[Lemma 5.9, Corollary 5.11]{Mazur_Eisenstein} shows that $\overline{F}=0$, \ie we have $F \in J_r^{n+1} \cdot (\Lambda_r/I)$. This concludes the induction step. Since $\bigcap_{n \geq 0} J_r^n \cdot (\Lambda_r/I) = 0$, we have $F=0$. This concludes the proof of Lemma \ref{odd_modSymb_constant_modular_form}.
\end{proof}

By Lemma \ref{odd_modSymb_constant_modular_form}, we have $F=0$, \ie $F_{\alpha}= \overline{F}_{\infty}$.
Let $w_N$ be the Atkin--Lehner involution. 
By \cite[Proposition 1]{Weisinger_thesis}, the $q$-expansion at the cusp $\infty$ of $w_N(E_{\epsilon, 1})$ is
$$\frac{1}{N}\cdot\left(\sum_{x \in (\mathbf{Z}/N\mathbf{Z})^{\times}} \epsilon(x) \cdot e^{\frac{2 i \pi x }{N}} \right) \cdot \left(-\frac{N}{4} \cdot \left( \sum_{x \in (\mathbf{Z}/N\mathbf{Z})^{\times}} \epsilon(x)^{-1}\cdot \B_2(\frac{x}{N}) \right)+  \sum_{n \geq 1} \sum_{d \mid n \atop \gcd(d,N)=1}  \epsilon\left(\frac{n}{d}\right)^{-1} \cdot \frac{n}{d} \cdot q^n \right) \in \Lambda_r[[q]] \text{ .}$$
Furthermore, we have $w_N(E_2) = -E_2$.

Let $\mu \in \overline{\mathbf{Z}}_p$ be the primitive $N$th root of unity corresponding to $e^{\frac{2 i \pi}{N}}$ under our fixed embedding $\overline{\mathbf{Q}}_p \hookrightarrow \mathbf{C}$. Let $\Lambda_r' = (\mathbf{Z}_p[\mu])[P_r]$ and $\mathcal{G'} = \sum_{x \in (\mathbf{Z}/N\mathbf{Z})^{\times}} \mu^x \cdot [x] \in \Lambda_r'$. The element $\mathcal{G}'$ is invertible in $\Lambda_r'$ since its degree is $-1$, which is prime to $p$, and $\Lambda_r'$ is a local ring whose maximal ideal is $J'+(\varpi)$ where $J'$ is the augmentation ideal of $\Lambda_r'$ and $\varpi$ is a uniformizer of $\mathbf{Z}_p[\mu]$.

By reformulating Weisinger's formula, the $q$-expansion principle shows that the $q$-expansion at the cusp $\infty$ of $F_0:=w_N(F_{\infty})$ is 
\begin{equation}
-\frac{1}{4}\cdot \mathcal{G}' \cdot \zeta^{(p^r)} + \mathcal{G}' \cdot \sum_{n\geq 1} \left(\sum_{d \mid n, \text{ gcd}(d,N)=1}  \left[\frac{n}{d}\right]^{-1} \cdot \frac{n}{d}\right)\cdot q^n \text{ . }
\end{equation}

Since the constant coefficients of the modular form associated to $F_{\alpha} = \overline{F}_{\infty}$ at the cusps of $C_0^{(p^r)}$ are zero, this proves that $\mathcal{G'} \cdot \zeta^{(p^r)} \in I$. Since $\mathcal{G'}$ is a unit of $\Lambda_r$, we have $\zeta^{(p^r)} \in I$.

Conversely, if $\zeta^{(p^r)} \in I$ then the image $\overline{F}_{\infty}$ of $F_{\infty}$ in $(\Lambda_r/I)[[q]]$ is the $q$-expansion at the cusp $\infty$ of a modular form of weight $2$ and level $\Gamma_1^{(p^r)}(N)$ over $\Lambda_r/I$ whose constant coefficient at the cusp $0$ is zero. Since the diamond operators act transitively on $C_0^{(p^r)}$, equation (\ref{odd_modSymb_action_Atkin_Lehner}) shows that the constant terms of the modular form associated to $\overline{F}_{\infty}$ at any cusp of $C_0^{(p^r)}$ is zero. This proves point (ii).

The proof of point (iii) is similar to the proof of point (ii). This concludes the proof of Lemma \ref{odd_modSymb_delta_modular_form}.
\end{proof}

We now prove that Theorem \ref{odd_modSymb_Eisenstein_ideal_X1} follows from Lemma \ref{odd_modSymb_delta_modular_form}. The ring homomorphism $\Lambda_r\rightarrow \tilde{\mathbb{T}}^{(p^r)}/\tilde{I}_{\infty}$ is surjective since we have $T_n - \sum_{d \mid n, \text{ gcd}(d,n\text{)}=1}\frac{n}{d}\cdot \langle d \rangle \in \tilde{I}_{\infty}$ for all $n \geq 1$. Let $K$ be its kernel. There is some $\alpha \in \Lambda_r$ such that the image of $\alpha + E_{\infty}$ in $\Lambda_r/K$ is the $q$-expansion at the cusp $\infty$ of a modular form of weight $2$ and level $\Gamma_1^{(p^r)}(N)$ over $\Lambda_r/K$ satisfying the two properties of Lemma \ref{odd_modSymb_delta_modular_form} (ii) (a), and $K$ is the smallest such ideal of $\Lambda_r$. By Lemma \ref{odd_modSymb_delta_modular_form} (i), we have $K = (\zeta^{(p^r)})$. Point (ii) of Theorem \ref{odd_modSymb_Eisenstein_ideal_X1} follows in a similar way.
\end{proof}

In the following result, we extend the winding homomorphism of Mazur \cite[p. 137]{Mazur_Eisenstein}.

\begin{thm}\label{odd_modSymb_structure_H_1}
The $\Lambda_r$-module $\left(\tilde{H}^{(p^r)}\right)_+/\tilde{I}_{\infty} \cdot  \left(\tilde{H}^{(p^r)}\right)_+$ is isomorphic to $\Lambda_r/(\zeta^{(p^r)})$. 
\end{thm}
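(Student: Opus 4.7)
My approach is to establish a multiplicity-one statement at level $\Gamma_1^{(p^r)}(N)$ analogous to Proposition \ref{even_modSymb_multiplicity_one}, namely that after completing at the Eisenstein maximal ideal $\tilde{\mathfrak{P}}_\infty := \tilde{I}_\infty + (p)$, the module $((\tilde{H}^{(p^r)})_+) \otimes_{\tilde{\mathbb{T}}^{(p^r)}} \tilde{\mathbf{T}}_\infty^{(p^r)}$ is free of rank one over $\tilde{\mathbf{T}}_\infty^{(p^r)}$. Combined with Theorem \ref{odd_modSymb_Eisenstein_ideal_X1}(i), quotienting by $\tilde{I}_\infty$ would then yield the desired isomorphism
\[
\frac{(\tilde{H}^{(p^r)})_+}{\tilde{I}_\infty (\tilde{H}^{(p^r)})_+} \;\simeq\; \frac{\tilde{\mathbb{T}}^{(p^r)}}{\tilde{I}_\infty} \;\simeq\; \Lambda_r/(\zeta^{(p^r)}).
\]

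Concretely, I would fix $x \in (\mathbf{Z}/N\mathbf{Z})^\times$ with $\log(x)$ a unit of $\mathbf{Z}/p^t\mathbf{Z}$, set $e := (1+c) \cdot \xi_{\Gamma_1^{(p^r)}(N)}([x:1])$ (which lies in $(\tilde{H}^{(p^r)})_+$ since $x \cdot 1 \not\equiv 0 \pmod{N}$ by Proposition \ref{generation_Manin_C_0^{(p^r)}}), and study the $\tilde{\mathbb{T}}^{(p^r)}$-equivariant map $\mathbf{e} : \tilde{\mathbb{T}}^{(p^r)} \to (\tilde{H}^{(p^r)})_+,\ T \mapsto Te$. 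Surjectivity of $\mathbf{e} \otimes \tilde{\mathbf{T}}_\infty^{(p^r)}$ reduces by Nakayama (using that $\tilde{\mathbf{T}}_\infty^{(p^r)}$ is local) to showing surjectivity modulo $\tilde{\mathfrak{P}}_\infty$; and the refined Hida control theorems (Propositions \ref{odd_modSymb_refined_Hida} and \ref{odd_modSymb_refined_Hida_Hecke}) further reduce to checking that the image of $e$ generates $H_+/(\tilde{I}+(p))\cdot H_+$ at level $\Gamma_0(N)$. By Mazur's multiplicity-one, this quotient is isomorphic to $\mathbf{F}_p$, and Theorem \ref{odd_even_modSymb_determination_m_1^-} identifies it with $\mathbf{F}_p$ via pairing with $m_1^-$, sending the image of $e$ to $\log(x) \not\equiv 0 \pmod{p}$, so $e$ is a generator.

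Injectivity is the more delicate part: it is equivalent to the equality $\mathrm{rk}_{\mathbf{Z}_p}\bigl(((\tilde{H}^{(p^r)})_+)_{\tilde{\mathfrak{P}}_\infty}\bigr) = \mathrm{rk}_{\mathbf{Z}_p}\bigl(\tilde{\mathbf{T}}_\infty^{(p^r)}\bigr)$, which is a Gorenstein-type multiplicity-one at level $\Gamma_1^{(p^r)}(N)$ not directly supplied by Mazur's theorem. The plan is to establish it by a descent argument: by Proposition \ref{odd_modSymb_refined_Hida}(i) applied to $(\tilde{H}^{(p^r)})_+$ and by Proposition \ref{odd_modSymb_refined_Hida_Hecke}, both sides are finitely generated over $\Lambda_r$ and their reductions modulo $J_r$ (paired with the $+$-decomposition, which is exact since $p \geq 5$) give the corresponding objects at level $\Gamma_0(N)$, where the rank equality follows from Mazur's multiplicity-one. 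The main obstacle is verifying that these control sequences remain exact after the $\tilde{\mathfrak{P}}_\infty$-completion and after taking $(+)$-parts, and that the Hecke-module structure is compatible throughout the tower; once this compatibility is in hand, the rank equality yields the isomorphism, completing the proof.
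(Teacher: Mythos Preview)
Your overall strategy---establish a multiplicity-one statement by exhibiting a cyclic generator, prove surjectivity via Nakayama by descending along the refined Hida control to level $\Gamma_0(N)$, and deduce the theorem from Theorem~\ref{odd_modSymb_Eisenstein_ideal_X1}(i)---is the same as the paper's. The differences are in the choice of map and in how injectivity is handled.

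The paper does \emph{not} map from $\tilde{\mathbb{T}}^{(p^r)}$ using a Manin symbol. Instead it extends Mazur's winding homomorphism: it defines $\tilde{e}:\tilde{I}_\infty\to(\tilde{H}^{(p^r)})_+$ by $\eta\mapsto\eta\cdot\{0,\infty\}$ (checking this is well-defined), completes, and reduces modulo $J_r$ to Mazur's winding isomorphism $\mathbf{I}\xrightarrow{\sim}\mathbf{H}_+$, whence surjectivity by Nakayama. After proving $\tilde{\mathbf{e}}$ is an isomorphism, the paper passes to quotients to get $\tilde{I}_\infty/\tilde{I}_\infty^2\simeq(\tilde{H}^{(p^r)})_+/\tilde{I}_\infty$, then argues separately that $\tilde{\mathbf{I}}_\infty$ is \emph{principal} (by Nakayama, since $\tilde{I}_\infty/\tilde{I}_\infty^2$ is $\Lambda_r$-cyclic) and a generator is a non-zero-divisor, so $\tilde{\mathbb{T}}^{(p^r)}/\tilde{I}_\infty\simeq\tilde{I}_\infty/\tilde{I}_\infty^2$. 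Your approach, mapping directly from $\tilde{\mathbb{T}}^{(p^r)}$, avoids this last principality step, which is a genuine simplification.

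However, your injectivity argument has a gap. The paper settles injectivity in one line by invoking the Eichler--Shimura isomorphism over $\mathbf{C}$: both $\tilde{\mathbf{I}}_\infty$ and $(\tilde{\mathbf{H}}^{(p^r)})_+$ have $\mathbf{Z}_p$-rank equal to that of $\tilde{\mathbf{T}}^{(p^r)}_\infty$. Your proposed descent---reduce modulo $J_r$ and use the rank equality at level $\Gamma_0(N)$---does not by itself yield rank equality upstairs: from the right-exact sequence $K/J_rK\to\tilde{\mathbf{T}}^{(p^r)}_\infty/J_r\to((\tilde{H}^{(p^r)})_+)_{\tilde{\mathfrak{P}}_\infty}/J_r\to 0$ and the fact that the last map is an isomorphism, you only get that the \emph{image} of $K/J_rK$ vanishes, not $K/J_rK$ itself, unless you also know $\mathrm{Tor}_1^{\Lambda_r}\bigl(((\tilde{H}^{(p^r)})_+)_{\tilde{\mathfrak{P}}_\infty},\mathbf{Z}_p\bigr)=0$ (equivalently, some freeness over $\Lambda_r$), which you have not established. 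The direct appeal to Eichler--Shimura, as in the paper, closes this cleanly.
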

\begin{proof} 
We first define a homomorphism of $\Lambda_r$-modules $\tilde{e}: \tilde{I}_{\infty} \rightarrow \left(\tilde{H}^{(p^r)}\right)_+$ as follows.

We have a map $\tilde{I}_{\infty}' \rightarrow \left(\tilde{H}^{(p^r)}\right)_+$
given by $\eta \mapsto \eta\cdot \{0,\infty\}$. 
This induces the desired map $\tilde{I}_{\infty} \rightarrow \left(\tilde{H}^{(p^r)}\right)_+$. 
Indeed, if $\eta \in \tilde{I}_{\infty}'$ maps to $0$ in $\tilde{I}_{\infty}$ then $\eta \in \tilde{I}_{\infty}' \cap \tilde{I}_0'$, so in particular $\eta$ annihilates all the Eisenstein series of $M_2(\Gamma_1^{(p^r)}(N), \mathbf{C})$. There is an Eisenstein series $E \in M_2(\Gamma_1^{(p^r)}(N), \mathbf{C})$ such that the divisor of the meromorphic differential form $E(z)dz$ is $(0)-(\infty)$. This Eisenstein series induces (via integration) a morphism $H_1(Y_1^{(p^r)}(N), \mathbf{Z}) \rightarrow \mathbf{C}$. By intersection duality, we get an element $\mathcal{E} \in H_1(X_1^{(p^r)}(N), C_0^{(p^r)}\cup C_{\infty}^{(p^r)}, \mathbf{C})$. Since $E$ is annihilated by $ \tilde{I}_{\infty}' \cap \tilde{I}_0'$, so is $\mathcal{E}$. Since $\mathcal{E} - \{0,\infty\} \in H_1(X_1^{(p^r)}(N), \mathbf{C})$ and $\eta$ acts trivially on $H_1(X_1^{(p^r)}(N), \mathbf{C})$, we see that $\eta \cdot \{0,\infty\}=0$. 

Let $e: I \rightarrow H_+$ be the winding homomorphism of Mazur, denoted by $e_+$ in \cite[Definition, p. 137]{Mazur_Eisenstein}. We denote by a bold letter the various $\tilde{\mathbb{T}}^{(p^r)}$ or $\mathbb{T}$-modules involved completed at $\tilde{I}_{\infty}$ or at $I$. Thus, for example, $\left(\tilde{\textbf{H}}^{(p^r)}\right)_+$ (resp. $\textbf{H}_+$) is the $\tilde{I}_{\infty}$ (resp. $I$)-adic completion of $\left(\tilde{H}^{(p^r)}\right)_+$ (resp. $H_+$). Let $\tilde{\textbf{e}}: \tilde{\textbf{I}}_{\infty} \rightarrow \left(\tilde{\textbf{H}}_1\right)_+$ (resp. $\textbf{e} : \textbf{I} \rightarrow \textbf{H}_+ $) be the map obtained after completion at the ideal $\tilde{I}_{\infty}$ (resp. $I$). 

\begin{lem}\label{odd_modSymb_completion_projection}
The map $\left(\tilde{H}^{(p^r)}\right)_+ \rightarrow H_+$ defines by passing to completion a group isomorphism $\left(\tilde{\textbf{H}}^{(p^r)}\right)_+/J_r\cdot \left(\tilde{\textbf{H}}^{(p^r)}\right)_+ \xrightarrow{\sim} \textbf{H}_+$.
\end{lem}
\begin{proof}
By point (ii) of Lemma \ref{Formalism_flatness_Hecke}, we have $\left(\tilde{\textbf{H}}^{(p^r)}\right)_+ = \left(\tilde{H}^{(p^r)}\right)_+/\left(\bigcap_{n \geq 0} \tilde{I}_{\infty}^n\right) \cdot \left(\tilde{H}^{(p^r)}\right)_+$ and $\textbf{H}_+ = H_+/\left(\bigcap_{n\geq 0} I^n\right) \cdot H_+$. By Proposition \ref{odd_modSymb_refined_Hida}, it suffices to show that the image of $\bigcap_{n\geq 0} \tilde{I}_{\infty}^n$ in $\mathbb{T}$ is $\bigcap_{n \geq 0} I^n$. Since $\Lambda_r$ is a local ring, by Theorem \ref{odd_modSymb_Eisenstein_ideal_X1} (i) there is a unique maximal ideal of $\tilde{\mathbb{T}}^{(p^r)}$ containing $\tilde{I}_{\infty}$, which we denote by $\tilde{\mathfrak{m}}$.  By Theorem \ref{odd_modSymb_Eisenstein_ideal_X1} (iii), the group $\tilde{\mathbb{T}}^{(p^r)}/\tilde{I}_{\infty}$ is finite, so there exists an integer $n_0 \geq 1$ such that $\tilde{\mathfrak{m}}^{n_0} \subset \tilde{I}_{\infty}$. Thus, we have $\bigcap_{n \geq 0} \tilde{I}_{\infty}^n = \bigcap_{n \geq 0} \tilde{\mathfrak{m}}^n$. Similarly, there is a unique maximal ideal $\mathfrak{m}$ of $\mathbb{T}$ containing $I$, and we have $\bigcap_{n \geq 0} I^n = \bigcap_{n \geq 0} \mathfrak{m}^n$. 

Any maximal ideal $\tilde{\mathfrak{m}}'$ of $\tilde{\mathbb{T}}^{(p^r)}$ contains the image of $J_r$ by the map $\Lambda_r \rightarrow \tilde{\mathbb{T}}^{(p^r)}$. Indeed, we have an injective ring homomorphism $\Lambda_r/ \mathfrak{m}' \hookrightarrow \tilde{\mathbb{T}}^{(p^r)}/\tilde{\mathfrak{m}}'$ where $\mathfrak{m}'$ is the pre image of $\tilde{\mathfrak{m}}'$ in $\Lambda_r$. Since $\tilde{\mathbb{T}}^{(p^r)}/\tilde{\mathfrak{m}}'$ is a finite field, $\Lambda_r \cap \tilde{\mathfrak{m}}'$ is a maximal ideal of $\Lambda_r$ and therefore contains $J_r$. If $\mathfrak{m}'$ is a maximal ideal of $\mathbb{T}$, there exists a unique maximal ideal $\tilde{\mathfrak{m}}'$ of $\tilde{\mathbb{T}}^{(p^r)}$ projecting to $\mathfrak{m}'$. The existence is obvious and the unicity follows from the fact that if $\tilde{\mathfrak{m}}_1$ and $\tilde{\mathfrak{m}}_2$ are two such ideals, then $\tilde{\mathfrak{m}}_1 = \tilde{\mathfrak{m}}_1 + J_r = \tilde{\mathfrak{m}}_2 + J_r = \tilde{\mathfrak{m}}_2$ by the previous remark and Proposition \ref{odd_modSymb_refined_Hida_Hecke}. The rings $\tilde{\mathbb{T}}^{(p^r)}$ and $\mathbb{T}$ are semi-local and $p$-adically complete, so we have $\tilde{\mathbb{T}}^{(p^r)} = \bigoplus_{\tilde{\mathfrak{m}}' \in \text{SpecMax}(\tilde{\mathbb{T}}^{(p^r)})} (\tilde{\mathbb{T}}^{(p^r)})_{\tilde{\mathfrak{m}}'}$ and $\mathbb{T} = \bigoplus_{\mathfrak{m}' \in \text{SpecMax}(\mathbb{T}) } \mathbb{T}_{\mathfrak{m}'}$, where the subscript means the completion. By the previous discussion, the map $(\tilde{\mathbb{T}}^{(p^r)})_{\tilde{\mathfrak{m}}'} \rightarrow \mathbb{T}_{\mathfrak{m}'}$ is surjective if $\tilde{\mathfrak{m}}'$ projects to $\mathfrak{m}'$. We have $\bigcap_{n \geq 0} \tilde{\mathfrak{m}}^n =  \bigoplus_{\tilde{\mathfrak{m}}' \in \text{SpecMax}(\tilde{\mathbb{T}}^{(p^r)}) \atop \tilde{\mathfrak{m}}' \neq \tilde{\mathfrak{m}}} (\tilde{\mathbb{T}}^{(p^r)})_{\tilde{\mathfrak{m}}'}$ and  $\bigcap_{n \geq 0} \mathfrak{m}^n =  \bigoplus_{\mathfrak{m}' \in \text{SpecMax}(\mathbb{T}) \atop \mathfrak{m}' \neq \mathfrak{m}} \mathbb{T}_{\tilde{\mathfrak{m}}'}$. Thus, the map $\bigcap_{n \geq 0} \tilde{\mathfrak{m}}^n \rightarrow \bigcap_{n \geq 0} \mathfrak{m}^n$ is surjective.
\end{proof}

Thus, we get a commutative diagram:
$$\xymatrix{
 \tilde{\textbf{I}}_{\infty} \ar[r]^{\tilde{\textbf{e}}} \ar[d]  & \left(\tilde{\textbf{H}}^{(p^r)}\right)_+  \ar[d]  \\
     \textbf{I} \ar[r]^{\textbf{e}} & \textbf{H}_+ }$$

Since $\textbf{e}$ is surjective (it is even an isomorphism by \cite[Theorem $18.10$]{Mazur_Eisenstein}), Lemmas \ref{odd_modSymb_Nakayama} and \ref{odd_modSymb_completion_projection} show that $\tilde{\textbf{e}}$ is surjective. By the Eichler--Shimura isomorphism (over $\mathbf{C}$), the $\mathbf{Z}_p$-rank of these two modules must be equal to the $\mathbf{Z}_p$-rank of $\tilde{\mathbf{T}}_1$. Thus, $\tilde{\textbf{e}}$ is an isomorphism. 

By passing to the quotient map, $\tilde{\textbf{e}}$ gives rise to an isomorphism of $\Lambda_r$-modules $\tilde{I}_{\infty} / \tilde{I}_{\infty}^2\simeq \left(\tilde{H}^{(p^r)}\right)_+/\tilde{I}_{\infty}$. The $\Lambda_r$-module $\left(\tilde{H}^{(p^r)}\right)_+/\tilde{I}_{\infty}$, and so $\tilde{I}_{\infty} / \tilde{I}_{\infty}^2$, is cyclic since it is cyclic modulo $J_r$ by Proposition \ref{odd_modSymb_refined_Hida}. By Nakayama's Lemma the $\tilde{\textbf{I}}_{\infty}$ is principal. Since a generator of $\tilde{\textbf{I}}_{\infty}$ is not a zero-divisor, we get:
$$\tilde{\mathbb{T}}^{(p^r)}/\tilde{I}_{\infty} \simeq \tilde{I}_{\infty}/\tilde{I}_{\infty}^2 \text{ .}$$
This concludes the proof of Theorem \ref{odd_modSymb_structure_H_1} by Theorem \ref{odd_modSymb_Eisenstein_ideal_X1}.
\end{proof}

The following result will be useful later.
\begin{prop}\label{odd_modSymb_image_I_0}
We have $\tilde{I}_0 \cdot \left(\tilde{H}^{(p^t)}\right)_+ = \left(H^{(p^t)}\right)_+$.
\end{prop}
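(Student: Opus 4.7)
The plan is a two-inclusion argument where the nontrivial direction reduces, via Nakayama's lemma and refined Hida theory, to the corresponding equality at level $\Gamma_0(N)$, namely $\tilde{I}\cdot \tilde{H}_+ = H_+$, which itself follows from Mazur's winding isomorphism. The inclusion $\tilde{I}_0\cdot(\tilde{H}^{(p^t)})_+ \subset (H^{(p^t)})_+$ is immediate: by construction $\tilde{I}_0$ annihilates $\mathbf{Z}_p[C_0^{(p^t)}]$, so from the boundary exact sequence
$$0 \to H^{(p^t)} \to \tilde{H}^{(p^t)} \xrightarrow{\partial} \mathbf{Z}_p[C_0^{(p^t)}]^0 \to 0$$
$\tilde{I}_0$ carries $\tilde{H}^{(p^t)}$ into $H^{(p^t)}$, and taking $+$-parts (legal since $p\geq 5$) gives the claim.

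For the reverse inclusion, I set $Q:=(H^{(p^t)})_+/\tilde{I}_0\cdot(\tilde{H}^{(p^t)})_+$ and aim to show $Q=0$. This $Q$ is a finitely generated module over the local ring $\Lambda_t$, whose maximal ideal is $(p, J_t)$ with residue field $\mathbf{F}_p$, so by Nakayama it suffices to show $Q/J_t Q = 0$ (the additional vanishing modulo $p$ is automatic because $Q/J_t Q$ is a finitely generated $\mathbf{Z}_p$-module). To compute $Q/J_t Q$, I would invoke Proposition \ref{odd_modSymb_refined_Hida}(ii), which, using that complex conjugation commutes with the diamond operators (both being algebraic), identifies $(H^{(p^t)})_+/J_t\cdot(H^{(p^t)})_+$ with $H_+$. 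The Hecke-equivariant surjection $(\tilde{H}^{(p^t)})_+\twoheadrightarrow \tilde{H}_+$ from Corollary \ref{surjection_homology}, combined with the fact that the restriction map $\tilde{\mathbb{T}}^{(p^t)}\to\tilde{\mathbb{T}}$ sends the generator $T_n-\sum_{d\mid n,\,\gcd(d,N)=1}\langle d\rangle\cdot d$ of $\tilde{I}_0$ to the generator $T_n-\sum_{d\mid n,\,\gcd(d,N)=1}d$ of $\tilde{I}$, permits lifting any element of $\tilde{I}\cdot \tilde{H}_+$ back to $\tilde{I}_0\cdot(\tilde{H}^{(p^t)})_+$, yielding an isomorphism $Q/J_t Q \simeq H_+/\tilde{I}\cdot \tilde{H}_+$.

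It thus remains to prove $\tilde{I}\cdot \tilde{H}_+ = H_+$. Using Lemma \ref{Formalism_flatness_Hecke}, I would decompose $\tilde{\mathbb{T}}$ as the direct sum of its completions at its finitely many maximal ideals. At any non-Eisenstein maximal ideal $\mathfrak{m}$, $\tilde{I}$ generates the unit ideal, and moreover $\tilde{H}_{+,\mathfrak{m}} = H_{+,\mathfrak{m}}$ since the Eisenstein quotient $\tilde{H}_+/H_+\simeq \mathbf{Z}_p$ is supported only at $\tilde{\mathfrak{P}}$. At the Eisenstein maximal ideal $\tilde{\mathfrak{P}}$, Mazur's winding homomorphism $\tilde{I}\to H_+$, $\eta\mapsto \eta\cdot\{0,\infty\}$, becomes an isomorphism $\tilde{\mathbf{I}}\xrightarrow{\sim}\mathbf{H}_+$ by \cite[Theorem II.18.10]{Mazur_Eisenstein}, giving $\tilde{\mathbf{I}}\cdot\tilde{\mathbf{H}}_+\supset \tilde{\mathbf{I}}\cdot\{0,\infty\} = \mathbf{H}_+$. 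The main obstacle of the entire proof is precisely this last step, which encodes Mazur's multiplicity-one / winding isomorphism; everything else is Nakayama bookkeeping and refined Hida theory.
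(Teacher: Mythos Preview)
Your Nakayama strategy contains a genuine error that makes the argument collapse. You invoke Proposition \ref{odd_modSymb_refined_Hida}(ii) to ``identify $(H^{(p^t)})_+/J_t\cdot(H^{(p^t)})_+$ with $H_+$'', but that proposition only computes the \emph{kernel} of the degeneracy map $H^{(p^t)}\to H$; it says nothing about surjectivity. In fact the map is \emph{not} surjective. Since $X_0(N)$ has a single cusp above $0$, one has $\tilde{H}_{\Gamma_0(N)}=H$ and $\mathbf{Z}_p[C_{\Gamma_0(N)}^0]^0=0$, so applying the snake lemma to the boundary exact sequences at the two levels shows that the cokernel of $(H^{(p^t)})_+\to H_+$ is $J_t/J_t^2\simeq\mathbf{Z}/p^t\mathbf{Z}$. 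This same observation breaks your final step: the element $\{0,\infty\}$ you use for the winding homomorphism does not lie in $\tilde{H}_{\Gamma_0(N)}$ (its boundary involves the cusp $\infty$), so your target $\tilde{H}_+$ is really $H_+$ itself, and the identity you would need is $I\cdot H_+=H_+$, which is false precisely because $H_+/I\cdot H_+\simeq\mathbf{Z}/p^t\mathbf{Z}$.

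With the correction in hand, your approach \emph{can} be rescued: one finds $Q/J_tQ\simeq \phi((H^{(p^t)})_+)/I\cdot H_+$, where $\phi((H^{(p^t)})_+)$ has index $p^t$ in $H_+$ (by the snake lemma) and contains $I\cdot H_+$ (which also has index $p^t$, by Mazur). Hence $Q/J_tQ=0$ and Nakayama concludes. But note that this route essentially reproves Mazur's result that the image of $(H^{(p^t)})_+$ in $H_+$ is $I\cdot H_+$, which the paper records as a \emph{consequence} of the proposition. The paper instead works with the larger quotient $M=(\tilde{H}^{(p^t)})_+/\tilde{I}_0\cdot(\tilde{H}^{(p^t)})_+$: it shows $M$ is a cyclic $\Lambda_t$-module, pins down its annihilator as exactly $(\delta^{(p^t)})$ (using both the surjection $M\twoheadrightarrow J_t$ to get $K\subset(\delta^{(p^t)})$ and the identification $M/J_tM\simeq H_+/IH_+\simeq\mathbf{Z}/p^t\mathbf{Z}$ to force equality), and then compares $\mathbf{Z}_p$-ranks of $M$ and the boundary quotient to conclude.
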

\begin{proof}
The inclusion $\tilde{I}_0 \cdot \left(\tilde{H}^{(p^t)}\right)_+ \subset \left(H^{(p^t)}\right)_+$ is obvious. We have $\left(\tilde{H}^{(p^t)}\right)_+/\left(H^{(p^t)}\right)_+ = \mathbf{Z}_p[C_0^{(p^t)}]^0$. The $\Lambda_t$-module $\mathbf{Z}_p[C_0^{(p^t)}]^0$ is isomorphic to $J_t$.
Let $M =  \left(\tilde{H}^{(p^t)}\right)_+/\tilde{I}_0 \cdot \left(\tilde{H}^{(p^t)}\right)_+$; this is a $\Lambda_t$-module. By Proposition \ref{odd_modSymb_refined_Hida}, we have $M/J_t\cdot M = H_+/I\cdot H_+$.
In particular, $M$ is a cyclic $\Lambda_t$-module, which we write $\Lambda_t/K$ where $K \subset \Lambda_t$ is an ideal.  

We have $K \subset \delta^{(p^t)} \cdot \Lambda_t$. Indeed, we have a surjection of $\Lambda_t$-modules $\Lambda_r/K \twoheadrightarrow J_t   \simeq \Lambda_t/\delta^{(p^t)} \cdot \Lambda_t$.
This induces a surjection
$\Lambda_t/K \twoheadrightarrow \Lambda_t/((\delta^{(p^t)})+J) \cdot \Lambda_t\simeq \mathbf{Z}/p^t\mathbf{Z}$.
Since $\Lambda_t$ is a local ring with maximal ideal $J_t + (p)$, the image of $1 \in \Lambda_t$ in $\Lambda_t/K$ is mapped to the image of a unit of $\Lambda_t$ in $\Lambda_t  /\delta^{(p^t)} \cdot \Lambda_t$. Thus we have $K \subset \delta^{(p^t)} \cdot \Lambda_t$. We now prove the reverse inclusion.

Since $M/J_t\cdot M \simeq \mathbf{Z}/p^t\mathbf{Z}$, there is an element $\beta = \delta^{(p^t)} \cdot \alpha$ in $K$ such that $\alpha  \in 1 + J_t + (p)$. In particular, $\alpha$ is a unit and $K = \delta^{(p^t)} \cdot \Lambda_r$. 
Thus, we have $M \simeq J_t $ and the surjective map $\left(\tilde{H}^{(p^t)}\right)_+/\tilde{I_0} \cdot \left(\tilde{H}^{(p^t)}\right)_+ \rightarrow \left(\tilde{H}^{(p^t)}\right)_+/\left(H^{(p^t)}\right)_+$ is an isomorphism of free $\mathbf{Z}_p$-modules.
\end{proof}

\begin{corr} \label{odd_modSymb_construction}
The natural map $\tilde{H} \rightarrow H$ gives an isomorphism
$$ \left(H^{(p^t)}\right)_+/(I_0 + J_t)\cdot \left(H^{(p^t)}\right)_+ \xrightarrow{\sim} I\cdot H_+/I^2\cdot H_+\text{ .}$$
\end{corr}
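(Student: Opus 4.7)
The plan is to show the stated isomorphism is induced by the degeneracy map $\pi\colon H^{(p^t)}\to H$. Note that for $\Gamma_0(N)$ one has $\tilde{H} = H$, since $C_{\Gamma_0(N)}^0$ consists of a single cusp, so the ``natural map $\tilde{H}\to H$'' is really this degeneracy. First I will prove that $\pi$ maps $\left(H^{(p^t)}\right)_+$ surjectively onto $I\cdot H_+$: by Proposition \ref{odd_modSymb_image_I_0}, every element of $\left(H^{(p^t)}\right)_+$ is of the form $\tilde{\eta}_0\cdot \tilde{h}_0$ with $\tilde{\eta}_0\in\tilde{I}_0$ and $\tilde{h}_0\in\left(\tilde{H}^{(p^t)}\right)_+$. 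The restriction $\tilde{\mathbb{T}}^{(p^t)}\twoheadrightarrow \tilde{\mathbb{T}}$ sends $\tilde{I}_0$ onto $\tilde{I}$; the map $\left(\tilde{H}^{(p^t)}\right)_+\twoheadrightarrow \left(\tilde{H}\right)_+ = H_+$ is surjective by Corollary \ref{surjection_homology}; and $\tilde{I}$ acts on $H_+$ through $I$, since $T_0$ vanishes on cuspidal homology. Combining these gives $\pi\bigl(\left(H^{(p^t)}\right)_+\bigr) = I\cdot H_+$.

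Second, I define $f\colon \left(H^{(p^t)}\right)_+/(I_0+J_t)\cdot \left(H^{(p^t)}\right)_+ \to I\cdot H_+/I^2\cdot H_+$ by $h\mapsto \pi(h)\bmod I^2\cdot H_+$ and check it is well-defined. By Hecke-equivariance of $\pi$, one has $\pi\bigl(I_0\cdot \left(H^{(p^t)}\right)_+\bigr) = I\cdot \pi\bigl(\left(H^{(p^t)}\right)_+\bigr) = I\cdot I\cdot H_+ = I^2\cdot H_+$; and $\pi$ kills $J_t\cdot H^{(p^t)}$ because the diamond operators act trivially on $X_0(N)$. Hence $(I_0+J_t)\cdot \left(H^{(p^t)}\right)_+$ is contained in the preimage of $I^2\cdot H_+$, so $f$ factors through the stated quotient. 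Surjectivity of $f$ follows from the surjectivity $\pi\colon \left(H^{(p^t)}\right)_+\twoheadrightarrow I\cdot H_+$ established in step one.

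For injectivity, I compute $\pi^{-1}(I^2\cdot H_+)\cap \left(H^{(p^t)}\right)_+$. Since $I^2\cdot H_+ = \pi\bigl(I_0\cdot \left(H^{(p^t)}\right)_+\bigr)$ and $\pi$ surjects onto $I\cdot H_+$, this preimage equals $I_0\cdot \left(H^{(p^t)}\right)_+ + \ker\bigl(\pi|_{\left(H^{(p^t)}\right)_+}\bigr)$. By Proposition \ref{odd_modSymb_refined_Hida}(ii), the kernel of $H^{(p^t)}\to H$ is $J_t\cdot H^{(p^t)}$; and since $p\geq 5$ is odd and the diamond operators commute with complex conjugation, the $\pm$-decomposition yields $\left(H^{(p^t)}\right)_+\cap J_t\cdot H^{(p^t)} = J_t\cdot \left(H^{(p^t)}\right)_+$. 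Hence $\pi^{-1}(I^2\cdot H_+)\cap \left(H^{(p^t)}\right)_+ = (I_0+J_t)\cdot \left(H^{(p^t)}\right)_+$, and $f$ is injective. The main obstacle is this last kernel identification on the $+$-part, but it follows cleanly from the $\pm$-splitting once one observes that the diamond operators commute with complex conjugation.
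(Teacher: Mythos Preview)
Your proof is correct and follows essentially the same route as the paper: the paper's proof simply cites Propositions \ref{odd_modSymb_refined_Hida} and \ref{odd_modSymb_image_I_0} and leaves the verification to the reader, and what you have written is precisely that verification spelled out in full. One minor imprecision: in step one you say every element of $\left(H^{(p^t)}\right)_+$ is of the form $\tilde{\eta}_0\cdot\tilde{h}_0$, but Proposition \ref{odd_modSymb_image_I_0} only gives it as a \emph{sum} of such products; this does not affect the argument since $\pi$ is additive and Hecke-equivariant.
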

\begin{proof}
This follows from Propositions \ref{odd_modSymb_refined_Hida} and \ref{odd_modSymb_image_I_0}.
\end{proof}
Note that Lemma \ref{odd_modSymb_construction} gives another proof of the fact that the image of $\left(H^{(p^t)}\right)_+$ in $H_+$ is $I\cdot H_+$, which was first proved by Mazur \cite[Section II, Lemma 18.7]{Mazur_Eisenstein}.

\subsection{Algebraic number theoretic criterion for $n(r,p) \geq 2$}\label{odd_modSymb_section_class_group}

We keep the notation of section \ref{odd_modSymb_section_eisenstein_ideal}, and add the following ones. 
\begin{itemize}
\item Fix an algebraic closure $\overline{\mathbf{Q}}$ of $\mathbf{Q}$.
\item If $n$ is a positive integer, let $\mu_n$ be the group of $n$th roots of unity in $\overline{\mathbf{Q}}$.
\item Let $\zeta_{p^r}$, $\zeta_N$ $\in \overline{\mathbf{Q}}$ be primitive $p^r$th and $N$th roots of unity respectively.
\item Let $\chi_p : \Gal(\overline{\mathbf{Q}}/\mathbf{Q}) \rightarrow \mathbf{Z}_p^{\times}$ be the $p$th cyclotomic character and $\omega_p : \Gal(\mathbf{Q}(\zeta_p)/\mathbf{Q}) \xrightarrow{\sim} (\mathbf{Z}/p\mathbf{Z})^{\times}$ be the Teichm\"uller character.
\item If $M$ is a $\mathbf{Z}_p[\Gal(\mathbf{Q}(\zeta_{p^{\infty}})/\mathbf{Q})]$-module, and $i \in \mathbf{Z}$,  we denote by $M_{(i)}$ the maximal quotient of $M$ where $\Gal(\mathbf{Q}(\zeta_{p^{\infty}})/\mathbf{Q})$ acts by $\chi_p^i$. Let $$M^{(i)} = \{m \in M, \forall g \in \Gal(\mathbf{Q}(\zeta_{p^{\infty}})/\mathbf{Q}), \text{ }g(m) = \chi_p^i(g)\cdot m\}\text{ .}$$ Note that if the action of $\Gal(\mathbf{Q}(\zeta_{p^{\infty}})/\mathbf{Q})$ on $M$ factors through $\Gal(\mathbf{Q}(\zeta_{p^r})/\mathbf{Q})$ and if $i \neq 0$, then $M^{(i)}$ is necessarily a $\mathbf{Z}/p^r\mathbf{Z}$-module.
\item We normalize class field theory by sending geometric Frobenius substitutions to uniformizers.
\item For simplicity, we denote the augmentation ideal $J_r$ of $\Lambda_r$ by $J$.
\end{itemize}

Let $K_r$ be the unique degree $p^r$-extension of $\mathbf{Q}$ contained in $\mathbf{Q}(\zeta_{N})$, and $L_r = K_r(\zeta_{p^r})$. Let $\mathcal{O}_r = \mathcal{O}_{K_r}[\frac{1}{Np}]$ where $\mathcal{O}_{K_r}$ is the ring of integers of $K_r$ and let $\mathcal{A}_r=\mathcal{O}_r[\zeta_{p^r}]$. Let $S_r$ (resp. $T_r$) be the set of infinite places of $K_r$ (resp. $L_r$) and finite places of $K_r$ (resp. $L_r$) dividing $Np$. Let $K_{S}$ be the maximal extension of $K_r$ unramified outside $S_r$. Let $\Gamma_r = \Gal(K_{S}/K_r)$ and $\Gamma_r' = \Gal(K_{S}/L_r)$. The group $\Gamma_r'$ is a normal subgroup of $\Gamma_r$, and we have canonical group isomorphisms $$\Gamma_r/\Gamma_r' \simeq \Gal(L_r/K_r) \simeq \Gal(\mathbf{Q}(\zeta_{p^r})/\mathbf{Q}) \simeq (\mathbf{Z}/p^r\mathbf{Z})^{\times}$$ (the last one coming from the $p^r$th cyclotomic character). The $N$th cyclotomic character gives a group isomorphism $\Gal(K_r/\mathbf{Q}) \simeq (\mathbf{Z}/N\mathbf{Z})^{\times}/\left((\mathbf{Z}/N\mathbf{Z})^{\times} \right)^{p^r}$. Thus, we have a canonical ring isomorphism $$\Lambda_r \xrightarrow{\sim} \mathbf{Z}_p[\Gal(K_r/\mathbf{Q})] \text{ .}$$ We let $\mathcal{K}_r = K_2(\mathcal{O}_r)/p^r \cdot K_2(\mathcal{O}_r)$; it is equipped with a structure of $\Lambda_r$-module since $\Gal(K_r/\mathbf{Q})$ acts on $K_2(\mathcal{O}_r)$. The aim of this section is to understand (partially) the filtration of $\mathcal{K}_r$ given by the powers of $J$.

Recall that $\frac{1}{p} \in \mathcal{O}_r$. The Chern character induces an isomorphism of $\Lambda_r$-modules $$K_2(\mathcal{O}_r)/p^r \cdot K_2(\mathcal{O}_r)  \simeq H^2_{\text{\'et}}(\mathcal{O}_r, \mu_{p^r}^{\otimes 2}) $$\cite{Tate_cohomology}. We have a canonical isomorphism $$H^2_{\text{\'et}}(\mathcal{O}_r, \mu_{p^r}^{\otimes 2}) \simeq H^2(\Gamma_r, \mu_{p^r}^{\otimes 2})\text{ .}$$ Since the real places and the places dividing $p$ of $K_r$ are in $S_r$, the $p$-cohomological dimension of $\Gamma_r$ is $\leq 2$ \cite[Proposition 8.3.18]{Neukirch_CNF}. Thus, by \cite[Proposition 3.3.11]{Neukirch_CNF}, the corestriction gives an isomorphism
\begin{equation}\label{odd_modSymb_cores_eq}
H^2(\Gamma_r', \mu_{p^r}^{\otimes 2})_{\Gamma_r/\Gamma_r'} \xrightarrow{\sim} H^2(\Gamma_r, \mu_{p^r}^{\otimes 2})
\end{equation}
where $H^2(\Gamma_r', \mu_{p^r}^{\otimes 2})_{\Gamma_r/\Gamma_r'}$ is the group of coinvariants of $H^2(\Gamma_r', \mu_{p^r}^{\otimes 2})$ for the action of $\Gamma_r/\Gamma_r'$.

\begin{thm}\label{comparison_K_C}
\begin{enumerate}
\item The group $\mathcal{K}_r/J\cdot \mathcal{K}_r$ is cyclic of order $p^r$.
\item The group following assertions are equivalent.
\begin{enumerate}
\item The group $J\cdot \mathcal{K}_r/J^2\cdot \mathcal{K}_r$ is cyclic of order $p^r$.
\item We have $n(r,p) \geq 2$, \ie $\sum_{k=1}^{\frac{N-1}{2}} k \cdot \log(k) \equiv 0 \text{ (modulo }p^r\text{)}$.
\end{enumerate}

\end{enumerate}
\end{thm}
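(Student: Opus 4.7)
The overall strategy is to analyze $\mathcal{K}_r = K_2(\mathcal{O}_r)/p^r$ through the Chern-class identification $\mathcal{K}_r \simeq H^2(\Gamma_r,\mu_{p^r}^{\otimes 2})$ and the corestriction isomorphism (\ref{odd_modSymb_cores_eq}), bringing the filtration by powers of $J$ into contact with the Hochschild--Serre spectral sequence for $\Gamma_r \trianglelefteq \Gal(K_S/\mathbf{Q})$, whose quotient $\Gal(K_r/\mathbf{Q})$ is identified with $P_r$ and acts via $\Lambda_r$. The arithmetic content linking (ii) to the sum $\sum k\cdot \log(k)$ modulo $p^r$ will come from the class-number formula of \cite{Lecouturier_class_group}.

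For (i), I first produce a surjection $\mathcal{K}_r/J\cdot \mathcal{K}_r \twoheadrightarrow \mathbf{Z}/p^r\mathbf{Z}$ via corestriction to $K_2(\mathbf{Z}[\tfrac{1}{Np}])/p^r$. That last group is cyclic of order exactly $p^r$: Moore's localization sequence together with $K_2(\mathbf{Z}) = \mathbf{Z}/2$ and $p \geq 5$ show it is cut out by the tame symbol at $N$, taking values in $\mathbf{F}_N^{\times} \otimes \mathbf{Z}/p^r\mathbf{Z} \simeq \mathbf{Z}/p^r\mathbf{Z}$ (using $p^r \mid p^t \mid N-1$). Surjectivity of corestriction on $H^2$ is formal once the strict cohomological $p$-dimension of $\Gamma_r$ is $\leq 2$. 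Injectivity (needed to pin down the order) comes from the five-term Hochschild--Serre sequence, exploiting the vanishing $H^0(\Gamma_r, \mu_{p^r}^{\otimes 2}) = 0$ (since $\mu_p \not\subset K_r$, as $K_r \cap \mathbf{Q}(\zeta_p) = \mathbf{Q}$) to kill the obstruction class.

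For (ii), since $J$ is principal with generator $\Delta = [g]-1$ for $g$ a generator of $P_r$, multiplication by $\Delta$ yields a canonical surjection $\mathcal{K}_r/J\cdot \mathcal{K}_r \twoheadrightarrow J\cdot \mathcal{K}_r/J^2\cdot \mathcal{K}_r$; hence the target has order dividing $p^r$, with equality iff this map is injective, equivalently iff the Tate-cohomology group $\hat H^0(\Gal(K_r/\mathbf{Q}),\mathcal{K}_r)$ vanishes, i.e.\ every $\Gal(K_r/\mathbf{Q})$-invariant class is a norm. Through the Hochschild--Serre differential and Poitou--Tate duality, this vanishing translates into a nontriviality statement for the $\omega_p^{-1}$-eigencomponent of the $p^r$-part of the class group of $L_r = K_r(\zeta_{p^r})$. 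By the class-number formula of \cite{Lecouturier_class_group}, that eigenspace attains the required size precisely when the derived Stickelberger element, which evaluates to $\sum_{k=1}^{\frac{N-1}{2}} k\cdot \log(k)$, vanishes modulo $p^r$. Combined with Proposition~\ref{Formalism_odd_modSymb_critere_higher_eisenstein} and the explicit formula $e(T_\ell-\ell-1) = \tfrac{\ell-1}{2}\cdot \log(\ell)$, this is equivalent to $n(r,p) \geq 2$.

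The main obstacle will be making the Poitou--Tate and Kummer unwinding in step (ii) explicit enough to quote the class-number computation of \cite{Lecouturier_class_group} in exactly the right eigencomponent, matching it on the nose with the Tate-cohomological vanishing $\hat H^0(\Gal(K_r/\mathbf{Q}),\mathcal{K}_r)=0$. Tracking the action of $\Gal(L_r/K_r)$ (of order prime to $p$, hence harmless for cohomology but used to isolate eigenspaces) and projecting onto the $\omega_p^{-1}$-part at each step is routine but delicate, particularly when $r<t$ so that we are working with strict truncations modulo $p^r$ rather than $p^t$.
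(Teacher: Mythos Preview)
Your approach to part (i) is essentially the paper's: both prove the corestriction $\mathcal{K}_r/J\cdot\mathcal{K}_r \to K_2(\mathbf{Z}[\tfrac{1}{Np}])/p^r$ is an isomorphism and compute the target directly (Lemma \ref{odd_modSymb_K_cyclic}).

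For part (ii), your route diverges from the paper. The paper does not pass through $\hat H^0$ or Poitou--Tate. Instead it uses the Kummer exact sequence for $\mathcal{A}_r$ to identify $J\cdot\mathcal{K}_r$ with $(\Pic(\mathcal{A}_r)\otimes\mathbf{Z}/p^r)_{(-1)}$ inside $\mathcal{K}_r$, hence $J\cdot\mathcal{K}_r/J^2\cdot\mathcal{K}_r$ with a quotient of the $(-1)$-part of the class group $\mathcal{C}_r$ of $L_r$. It then applies genus theory \`a la Gold to express this quotient via norm residue symbols at $\mathfrak{n}\mid N$, reduces the question to whether $\legendre{u_{\chi_p^{-1}},L_r/\mathbf{Q}(\zeta_{p^r})}{\mathfrak{n}}=1$ for a specific element $u_{\chi_p^{-1}}$, identifies $u_{\chi_p^{-1}}$ (up to a unit power) with a projected Gauss sum $g_{r,\chi_p^{-1}}$, and finally computes the symbol via Kummer-type congruences and the Morita $N$-adic $\Gamma$-function (Lemmas \ref{big_thm_log_unit}--\ref{proof_K_C_identity_Gamma}), arriving at $\sum_{k}k\log(k)\pmod{p^r}$.

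There is a genuine gap in your plan. You want to invoke the class-number formula of \cite{Lecouturier_class_group} to pin down the $\omega_p^{-1}$-eigenspace modulo $p^r$, but that reference works modulo $p$. The paper itself flags this: its remark (iv) after the theorem says that for $r=1$ the statement does follow from \cite[Theorem 1.7]{Lecouturier_class_group}, but for general $r\le t$ the paper must carry out the Gauss-sum and $\Gamma_N$ calculation by hand. Your proposal gives no mechanism for lifting the class-number input from $\mathbf{Z}/p$ to $\mathbf{Z}/p^r$, and this is precisely where the paper spends most of its effort.

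A secondary issue: the claimed equivalence ``$\Delta$ acts injectively on $\mathcal{K}_r/J\cdot\mathcal{K}_r$ iff $\hat H^0(\Gal(K_r/\mathbf{Q}),\mathcal{K}_r)=0$'' is not obviously two-sided. The kernel of $\Delta$ on $\mathcal{K}_r/J\cdot\mathcal{K}_r$ is $(\mathcal{K}_r^G+J\cdot\mathcal{K}_r)/J\cdot\mathcal{K}_r$, so injectivity means $\mathcal{K}_r^G\subset J\cdot\mathcal{K}_r$. Since $\mathcal{K}_r$ is $p^r$-torsion one has $N_G\mathcal{K}_r\subset J\cdot\mathcal{K}_r$, so $\hat H^0=0$ implies injectivity; but the converse direction requires an additional argument you have not supplied.
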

\begin{rems}
\begin{enumerate}
\item The group $J\cdot \mathcal{K}_r/J^2\cdot \mathcal{K}_r$ is cyclic in any case, since $\mathcal{K}_r/J\cdot \mathcal{K}_r$ is cyclic. 
\item We give another criterion equivalent to (a) and (b) in Lemma \ref{big_thm_log_unit} in terms of certain norm residue symbols.
\item We give in Proposition \ref{odd_modSymb_local_cup_product} an explicit isomorphism between $\mathcal{K}_r/J\cdot \mathcal{K}_r$ and $(\mathbf{Z}/N\mathbf{Z})^{\times}[p^r]$.
\item If $r=1$, one can show that Theorem \ref{comparison_K_C} is a consequence of \cite[Theorem 1.7]{Lecouturier_class_group}.
\end{enumerate}
\end{rems}
\begin{proof}
Kummer theory gives us an exact sequence of $\Gal(L_r/\mathbf{Q})$-modules:
\begin{equation}  
0 \rightarrow \Pic(\mathcal{A}_r) \otimes_{\mathbf{Z}} \mathbf{Z}/p^r\mathbf{Z} \rightarrow H^2(\Gamma_r', \mu_{p^r}) \rightarrow H^2(\Gamma_r', \mathcal{A}_r^{\times})[p^r]\rightarrow 0 \text{ .}
\end{equation}
By \cite[Proposition ($8.3.11$) (iii)]{Neukirch_CNF}, we have an isomorphism of $\Gal(L_r/\mathbf{Q})$-modules:
$$H^2(\Gamma_r', \mathcal{A}_r^{\times})[p^r] \simeq \Ker\left(\bigoplus_{\mathfrak{q} \in T_r}\mathbf{Z}/p^r\mathbf{Z} \rightarrow \mathbf{Z}/p^r\mathbf{Z}\right) \text{ .}$$
where $\Gal(L_r/\mathbf{Q})$ acts on the right-hand side by permuting the elements of $T_r$ via the natural left action of  $\Gal(L_r/\mathbf{Q})$ on $T_r$.
Thus, we get the following $\Gal(L_r/\mathbf{Q})$-equivariant exact sequence  (by tensoring by $\mu_{p^r}$):
\begin{equation}\label{odd_modSymb_exact_sequence_H_2_bis}
0 \rightarrow \Pic(\mathcal{A}_r) \otimes \mu_{p^r}  \rightarrow H^2(\Gamma_r', \mu_{p^r}) \otimes \mu_{p^r} \rightarrow \Ker\left(\bigoplus_{\mathfrak{q} \in T_r} \mu_{p^r} \rightarrow \mu_{p^r}\right) \rightarrow 0 \text{ .}
\end{equation}
Over $L_r$, we have $\mu_{p^r} \simeq \mathbf{Z}/p^r\mathbf{Z}$. Thus $H^2(\Gamma_r', \mu_{p^r}) \otimes \mu_{p^r}= H^2(\Gamma_r', \mu_{p^r}^{\otimes 2})$.
Taking $\Gal(L_r/K_r)$-coinvariants in (\ref{odd_modSymb_exact_sequence_H_2_bis}), we get (using (\ref{odd_modSymb_cores_eq})) an exact sequence of $\Gal(L_r/\mathbf{Q}(\zeta_{p^r}))$-modules:
\begin{equation}\label{odd_modSymb_short_exact_sqn_1}
H_1(\Gal(L_r/K_r), M) \rightarrow \left(\Pic(\mathcal{A}_r) \otimes_{\mathbf{Z}} \mathbf{Z}/p^r\mathbf{Z}\right)_{(-1)} \rightarrow  \mathcal{K}_r \rightarrow M_{(0)} \rightarrow 0
\end{equation}
where  $M:=\Ker\left(\bigoplus_{\mathfrak{q} \in T_r} \mu_{p^r} \rightarrow \mu_{p^r}\right)$.

\begin{lem}\label{odd_modSymb_coinv_computation}
We have $H_1(\Gal(L_r/K_r), M) =0$ and $ M_{(0)} \simeq \mathbf{Z}/p^r\mathbf{Z}$.
\end{lem}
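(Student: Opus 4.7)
The plan is to derive both assertions from the long exact sequence in $G$-homology, with $G := \Gal(L_r/K_r) \simeq (\mathbf{Z}/p^r\mathbf{Z})^{\times}$, attached to the defining short exact sequence
\[
0 \longrightarrow M \longrightarrow \bigoplus_{\mathfrak{q} \in T_r} \mu_{p^r} \longrightarrow \mu_{p^r} \longrightarrow 0.
\]

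The first step is to show that $H_i(G, \mu_{p^r}) = 0$ for every $i \geq 0$. Since $p \geq 5$, the group $G$ is cyclic; a generator $\sigma$ acts on $\mu_{p^r}$ as $\zeta \mapsto \zeta^a$ for some $a \in (\mathbf{Z}/p^r\mathbf{Z})^{\times}$ whose image in $(\mathbf{Z}/p\mathbf{Z})^{\times}$ is still a generator, so $a-1$ is a unit in $\mathbf{Z}/p^r\mathbf{Z}$. Combined with the identity $1 + a + \cdots + a^{|G|-1} \equiv 0 \pmod{p^r}$, this makes the Tate cohomology of the cyclic group $G$ acting on $\mu_{p^r}$ vanish. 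The long exact sequence then collapses to
\[
H_1(G, M) \cong H_1\Bigl(G, \bigoplus_{\mathfrak{q} \in T_r} \mu_{p^r}\Bigr), \qquad M_{(0)} = M_G \cong \Bigl(\bigoplus_{\mathfrak{q} \in T_r} \mu_{p^r}\Bigr)_G.
\]

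To compute the right-hand sides I will group the places $\mathfrak{q} \in T_r$ according to the place $v$ of $K_r$ below them; the summand $\bigoplus_{\mathfrak{q} \mid v} \mu_{p^r}$ is induced from the decomposition group $G_v \subset G$, so Shapiro's lemma identifies its $G$-homology with $H_i(G_v, \mu_{p^r})$. A case analysis on $v$ then completes the computation:
\begin{itemize}
\item For $v$ archimedean: since $p$ is odd and $p^r$ divides $(N-1)/2$, the element $-1$ is a $p^r$-th power in $(\mathbf{Z}/N\mathbf{Z})^{\times}$, so $K_r$ is totally real; hence $G_v$ has order $2$, generated by complex conjugation acting on $\mu_{p^r}$ by inversion, and both $H_0$ and $H_1$ vanish because $-2$ is a unit of $\mathbf{Z}/p^r\mathbf{Z}$.
\item For $v \mid p$: since $K_r/\mathbf{Q}$ is unramified at $p$ while $\mathbf{Q}(\zeta_{p^r})/\mathbf{Q}$ is totally ramified, the extension $L_r/K_r$ is totally ramified at any place above $p$, so $G_v = G$ and both $H_0$ and $H_1$ already vanish by the first step.
\item For $v \mid N$: there is a unique such $v$ because $K_r/\mathbf{Q}$ is totally ramified at $N$, and it is unramified in $L_r/K_r$. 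Because $N \equiv 1 \pmod{p^r}$, Frobenius acts trivially on $\mu_{p^r}$; the injectivity of $G \hookrightarrow \Aut(\mu_{p^r})$ then forces $\Frob_v = 1$, i.e. $v$ splits completely in $L_r/K_r$. Hence $G_v = \{1\}$, $H_1(G_v, \mu_{p^r}) = 0$, and $H_0(G_v, \mu_{p^r}) = \mu_{p^r} \simeq \mathbf{Z}/p^r\mathbf{Z}$.
\end{itemize}

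Summing all contributions yields $H_1(G, M) = 0$ and $M_{(0)} \cong \mu_{p^r} \cong \mathbf{Z}/p^r\mathbf{Z}$, the latter coming entirely from the unique place of $K_r$ above $N$. The only point requiring real care will be the $v \mid N$ case, where one must upgrade the trivial action of Frobenius on $\mu_{p^r}$ to a complete splitting statement through the faithfulness of the cyclotomic character; the remaining steps are direct bookkeeping with Shapiro's lemma and Tate cohomology of a cyclic group.
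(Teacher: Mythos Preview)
Your proof is correct. The paper takes a different, more hands-on route: rather than passing through the long exact sequence, it uses directly that for the cyclic group $G = \Gal(L_r/K_r)$ one has $H_1(G,M) \simeq M^G/\Norm(M)$, and then writes down both sides explicitly. The paper observes (implicitly handling the primes above $p$ and the archimedean places, where the stabilizer acts nontrivially on $\mu_{p^r}$ and hence kills invariants) that any $G$-invariant element of $M$ is supported on the primes above $N$, where it must have the shape $(\zeta^{\chi_p(g)})_{g\in G}$ for some $\zeta\in\mu_{p^r}$; it then checks that every such element already lies in $\Norm(M)$. The computation of $M_{(0)}$ is simply declared ``similar.''

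Your approach via the long exact sequence and Shapiro's lemma is more systematic: you isolate once and for all the vanishing of $H_i(G,\mu_{p^r})$, reducing everything to computing $H_i(G_v,\mu_{p^r})$ for each type of place $v$ of $K_r$. This makes it completely explicit why only the unique place above $N$ contributes, and why it contributes exactly one copy of $\mathbf{Z}/p^r\mathbf{Z}$. The paper's argument is shorter on the page but leaves tacit the very case analysis you carry out; your version has the advantage of handling $H_1$ and $H_0$ uniformly rather than treating $M_{(0)}$ as an afterthought.
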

\begin{proof}
Recall that $\Gal(L_r/K_r) \simeq \Gal(\mathbf{Q}(\zeta_{p^r})/\mathbf{Q}) \simeq  (\mathbf{Z}/p^r\mathbf{Z})^{\times}$ is a cyclic group. Thus we have $H_1(\Gal(L_r/K_r), M) \simeq M^{\Gal(L_r/K_r)}/\Norm(M)$ where $\Norm(M) = \{\sum_{g \in \Gal(L_r/K_r)} g \cdot m, m\in M\}$. Recall that the action of $\Gal(L_r/K_r)$ on $M$ permutes the primes in $T_r$ and acts on $\mu_{p^r}$ via the $p$th cyclotomic character. Fix a prime $\mathfrak{n} \in T_r$ dividing $N$. Any other such prime equals $g(\mathfrak{n})$ for some $g \in \Gal(L_r/K_r)$. We have
$$M^{\Gal(L_r/K_r)} =\left\{  \bigoplus_{g(\mathfrak{n}) \in T_r \atop g \in \Gal(L_r/K_r)}  \zeta^{\chi_p(g)}, \zeta \in \mu_{p^r}  \right\}  $$
where $\bigoplus_{g(\mathfrak{n}) \in T_r \atop g \in \Gal(L_r/K_r)}  \zeta^{\chi_p(g)}$ is the element of $M$ whose component at $g(\mathfrak{n})$ is $ \zeta^{\chi_p(g)}$. Furthermore, this is by definition an element of $\Norm(M)$. This shows that $M^{\Gal(L_r/K_r)}=\Norm(M)$, and therefore  $H_1(\Gal(L_r/K_r), M) =0$. The computation of  $M_{(0)} =H_0(\Gal(L_r/K_r), M)$ is similar.
\end{proof}

By Lemma \ref{odd_modSymb_coinv_computation} and  (\ref{odd_modSymb_short_exact_sqn_1}), we have a canonical short exact sequence of $\Lambda_r$-modules:
\begin{equation}\label{odd_modSymb_short_exact_sqn_2}
0 \rightarrow  \left(\Pic(\mathcal{A}_r) \otimes_{\mathbf{Z}} \mathbf{Z}/p^r\mathbf{Z}\right)_{(-1)} \rightarrow  \mathcal{K}_r \rightarrow \mathbf{Z}/p^r\mathbf{Z} \rightarrow 0 \text{ .}
\end{equation}

\begin{lem}\label{odd_modSymb_K_cyclic}
The group $\mathcal{K}_r/J\cdot \mathcal{K}_r$ is cyclic of order $p^r$. Furthermore, the image of the injective map $ \left(\Pic(\mathcal{A}_r) \otimes_{\mathbf{Z}} \mathbf{Z}/p^r\mathbf{Z}\right)_{(-1)} \rightarrow  \mathcal{K}_r $ is $J \cdot \mathcal{K}_r$.
\end{lem}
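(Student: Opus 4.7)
The proof rests on the short exact sequence (\ref{odd_modSymb_short_exact_sqn_2}),
\[
0 \to A \to \mathcal{K}_r \to \mathbf{Z}/p^r\mathbf{Z} \to 0, \qquad A := \left(\Pic(\mathcal{A}_r) \otimes_{\mathbf{Z}} \mathbf{Z}/p^r\mathbf{Z}\right)_{(-1)}.
\]
My first observation is that $\Gal(K_r/\mathbf{Q})$, and hence $\Lambda_r$ through the canonical identification $\Lambda_r \simeq \mathbf{Z}_p[\Gal(K_r/\mathbf{Q})]$, acts trivially on the quotient $\mathbf{Z}/p^r\mathbf{Z}$ (this can be read off from the proof of Lemma \ref{odd_modSymb_coinv_computation}, as $M_{(0)} = \mathbf{Z}/p^r\mathbf{Z}$ is $\Gal(L_r/\mathbf{Q})$-fixed). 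Consequently the augmentation ideal $J$ kills the quotient, yielding the inclusion $J\mathcal{K}_r \subseteq A$ and a surjection $\mathcal{K}_r/J\mathcal{K}_r \twoheadrightarrow \mathbf{Z}/p^r\mathbf{Z}$. Both assertions of the lemma then reduce to a single claim: this surjection is an isomorphism, equivalently $A \subseteq J\mathcal{K}_r$.

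To establish this, I would use the corestriction (norm) map in $K$-theory
\[
\mathcal{N}: K_2(\mathcal{O}_r)/p^r \longrightarrow K_2(\mathbf{Z}[\tfrac{1}{Np}])/p^r.
\]
Since corestriction for a Galois extension factors through $\Gal(K_r/\mathbf{Q})$-coinvariants, $\mathcal{N}$ descends to a map $\mathcal{K}_r/J\mathcal{K}_r \to K_2(\mathbf{Z}[\tfrac{1}{Np}])/p^r$. Next I would compute the target via the localization sequence
\[
K_2(\mathbf{Z}) \to K_2(\mathbf{Z}[\tfrac{1}{Np}]) \to k(N)^\times \oplus k(p)^\times \to K_1(\mathbf{Z}) \to 0.
\]
Since $p \geq 5$, the terms $K_2(\mathbf{Z}) \otimes \mathbf{Z}/p^r = 0$ and $K_1(\mathbf{Z}) \otimes \mathbf{Z}/p^r = 0$ vanish, and $k(p)^\times/p^r = 0$ because $\#\mathbf{F}_p^\times$ is coprime to $p$; only the tame symbol at $N$ survives, giving $K_2(\mathbf{Z}[\tfrac{1}{Np}])/p^r \simeq \mathbf{F}_N^\times/p^r \simeq \mathbf{Z}/p^r\mathbf{Z}$ (using $p^r \mid N-1$).

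The key verification is then twofold: (a) the restriction of $\mathcal{N}$ to $A$ is zero, because $A$ originates from $\Pic(\mathcal{A}_r)$ via Kummer theory and the norm $\Pic(\mathcal{A}_r) \to \Pic(\mathbf{Z}[\tfrac{1}{Np}])$ lands in $\Pic(\mathbf{Z}[\tfrac{1}{Np}])=0$; and (b) the map $\mathcal{K}_r/J\mathcal{K}_r \to K_2(\mathbf{Z}[\tfrac{1}{Np}])/p^r$ agrees up to canonical identifications with the surjection $\mathcal{K}_r/J\mathcal{K}_r \twoheadrightarrow \mathbf{Z}/p^r\mathbf{Z}$ from (\ref{odd_modSymb_short_exact_sqn_2}). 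Granted both, we obtain $A \subseteq \ker(\mathcal{N}|_{\mathcal{K}_r/J\mathcal{K}_r \text{ composite}}) = J\mathcal{K}_r$, which gives part (ii), and then part (i) follows as $\mathcal{K}_r/J\mathcal{K}_r = \mathcal{K}_r/A = \mathbf{Z}/p^r\mathbf{Z}$.

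The hard part will be the precise identification in (b): the map $\mathcal{K}_r \twoheadrightarrow \mathbf{Z}/p^r\mathbf{Z}$ in (\ref{odd_modSymb_short_exact_sqn_2}) was produced through a string of Kummer-theoretic maneuvers ending with the isomorphism $M_{(0)} \simeq \mathbf{Z}/p^r\mathbf{Z}$ from Lemma \ref{odd_modSymb_coinv_computation}, while the map extracted from the localization sequence arises from the tame symbol at the unique prime above $N$ in $K_r$. Compatibility amounts to chasing the Chern character / Kummer diagram, and comparing boundary maps in the localization sequences for $\mathcal{O}_r$ and $\mathbf{Z}[\tfrac{1}{Np}]$ under corestriction; this is routine but is where the real content of the lemma lies.
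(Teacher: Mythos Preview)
Your proposal identifies the right tool (the corestriction/norm map to $K_2(\mathbf{Z}[\tfrac{1}{Np}])/p^r$) and correctly computes the target as $\mathbf{Z}/p^r\mathbf{Z}$, which is exactly what the paper does. However, the logical structure of your argument has a genuine gap.

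You reduce everything to showing $A \subseteq J\mathcal{K}_r$, and propose to verify (a) $\mathcal{N}|_A = 0$ and (b) the induced map $\bar{\mathcal{N}}\colon \mathcal{K}_r/J\mathcal{K}_r \to \mathbf{Z}/p^r\mathbf{Z}$ coincides with the surjection from (\ref{odd_modSymb_short_exact_sqn_2}). But these two statements together do \emph{not} yield $A \subseteq J\mathcal{K}_r$. Indeed, if (b) holds, then $\ker(\mathcal{N}\colon \mathcal{K}_r \to \mathbf{Z}/p^r\mathbf{Z})$ is the preimage of $\ker(\bar{\mathcal{N}})$; since the surjection in (\ref{odd_modSymb_short_exact_sqn_2}) has kernel $A$ by definition, (b) gives $\ker(\mathcal{N}) = A$. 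Your statement (a) then reads $A \subseteq A$, which is vacuous, and your final line ``$\ker(\mathcal{N}) = J\mathcal{K}_r$'' presupposes that $\bar{\mathcal{N}}$ is injective --- precisely what you are trying to prove.

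What is actually needed, and what the paper invokes, is that corestriction is an \emph{isomorphism} on coinvariants in top degree: since the $p$-cohomological dimension of $\Gal(K_S/\mathbf{Q})$ is $\leq 2$, \cite[Proposition 3.3.11]{Neukirch_CNF} gives directly $\mathcal{K}_r/J\mathcal{K}_r \xrightarrow{\sim} H^2_{\text{\'et}}(\mathbf{Z}[\tfrac{1}{Np}], \mu_{p^r}^{\otimes 2}) \simeq K_2(\mathbf{Z}[\tfrac{1}{Np}])/p^r \simeq \mathbf{Z}/p^r\mathbf{Z}$. Once $|\mathcal{K}_r/J\mathcal{K}_r| = p^r$ is known, the second assertion follows by pure cardinality: you already have $J\mathcal{K}_r \subseteq A$, and $|\mathcal{K}_r/A| = p^r = |\mathcal{K}_r/J\mathcal{K}_r|$ forces $A = J\mathcal{K}_r$. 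Your compatibility check (b), which you flag as the ``hard part'', is then entirely unnecessary.
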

\begin{proof}
As before, by \cite[Proposition 3.3.11]{Neukirch_CNF}, the corestriction gives a group isomorphism
$$\mathcal{K}_r/J\cdot \mathcal{K}_r \xrightarrow{\sim} H^2_{\text{\'et}}(\mathbf{Z}[\frac{1}{Np}], \mu_{p^r}^{\otimes 2}) \text{ .}$$
The latter group is (as before by Tate) isomorphic to $K_2(\mathbf{Z}[\frac{1}{Np}])/p^r\cdot K_2(\mathbf{Z}[\frac{1}{Np}])$. Since $\mathbf{Z}[\frac{1}{Np}]$ is euclidean, by \cite[Th\'eor\`eme 1]{Kahn} and the well-known description of $K_2(\mathbf{Q})$, we get that $K_2(\mathbf{Z}[\frac{1}{Np}])/p^r\cdot K_2(\mathbf{Z}[\frac{1}{Np}])$ is cyclic of order $p^r$.

The second assertion follows from the short exact sequence (\ref{odd_modSymb_short_exact_sqn_2}) since $J$ acts trivially on the right-hand side term $\mathbf{Z}/p^r\mathbf{Z}$.
\end{proof}

Lemma \ref{odd_modSymb_K_cyclic} proves point (i) of Theorem \ref{comparison_K_C}. Fix a prime $\mathfrak{n}$ above $N$ in $\mathbf{Q}(\zeta_{p^r})$. There is a canonical $\Gal(L_r/\mathbf{Q})$-equivariant group isomorphism $\Pic(\mathcal{A}_r) \xrightarrow{\sim} \mathcal{C}_r/C$ where $\mathcal{C}_r$ is the class group of $L_r$ and $C$ is the subgroup of $\mathcal{C}_r$ generated by the classes of the primes in $T_r$ (\cf for instance  \cite[Proposition ($8.3.11$) (ii)]{Neukirch_CNF}). Thus, Lemma \ref{odd_modSymb_K_cyclic} gives a canonical group isomorphism
\begin{equation}\label{odd_modSymb_J/J^2_K_C}
J\cdot \mathcal{K}_r/J^2 \cdot \mathcal{K}_r \xrightarrow{\sim} \left( \mathcal{C}_r/(C+J\cdot \mathcal{C}_r + p^r \cdot \mathcal{C}_r)\right)_{(-1)} \text{ .}
\end{equation}
We have denoted (abusively) by $J$ the augmentation ideal of $\mathbf{Z}[\Gal(L_r/\mathbf{Q}(\zeta_{p^r}))]$, which acts on $\mathcal{C}_r$, although $J$ is properly speaking the augmentation ideal of $\Lambda_r \simeq \mathbf{Z}_p[\Gal(L_r/\mathbf{Q}(\zeta_{p^r}))]$ (the point of this abuse is that $\mathcal{C}_r$ is not a $p$-group a priori, so we have to work with $\mathbf{Z}$-coefficients).

Let ${}_{N}\mathcal{C}_r$ be the kernel of the norm from $\mathcal{C}_r$ to the class group of $\mathbf{Q}(\zeta_{p^r})$. The group ${}_{N}\mathcal{C}_r/J \cdot \mathcal{C}_r$ is well understood in this case, thanks to genus theory. We now explain the result, following \cite{Gold_genus_theory}.

Let $\Gal(L_r/\mathbf{Q}(\zeta_{p^r}))^{(\Gal(\mathbf{Q}(\zeta_{p^r})/\mathbf{Q}))}$ be the product of copies of $\Gal(L_r/\mathbf{Q}(\zeta_{p^r}))$ indexed by the elements $\Gal(\mathbf{Q}(\zeta_{p^r})/\mathbf{Q})$ (which we identify with $\Gal(L_r/K_r)$ as usual). It is equipped with an action of $\Gal(\mathbf{Q}(\zeta_{p^r})/\mathbf{Q})$ given as follows. If $\tau_0 \in \Gal(\mathbf{Q}(\zeta_{p^r})/\mathbf{Q})$ and $(g_{\tau})_{\tau \in \Gal(\mathbf{Q}(\zeta_{p^r})/\mathbf{Q})} \in \Gal(L_r/\mathbf{Q}(\zeta_{p^r}))^{(\Gal(\mathbf{Q}(\zeta_{p^r})/\mathbf{Q}))}$, then we let
$$\tau_0 \cdot (g_{\tau})_{\tau \in \Gal(\mathbf{Q}(\zeta_{p^r})/\mathbf{Q})} = (g_{\tau \cdot \tau_0^{-1}})_{\tau \in \Gal(\mathbf{Q}(\zeta_{p^r})/\mathbf{Q})} \text{ .}$$

Let $f : \mathbf{Q}(\zeta_{p^r})^{\times} \rightarrow \Gal(L_r/K)^{(\Gal(\mathbf{Q}(\zeta_{p^r})/\mathbf{Q}))}$ be given by
$$f(x) = \left( \legendre{x, L_r/\mathbf{Q}(\zeta_{p^r})}{\tau(\mathfrak{n})}  \right)_{\tau \in \Gal(\mathbf{Q}(\zeta_{p^r})/\mathbf{Q})}$$
where $\legendre{\cdot , L_r/\mathbf{Q}(\zeta_{p^r})}{\tau(\mathfrak{n})}$ is the norm residue symbol of the extension $L_r/\mathbf{Q}(\zeta_{p^r})$ at the prime $\tau(\mathfrak{n})$. The group homomorphism $f$ is $\Gal(\mathbf{Q}(\zeta_{p^r})/\mathbf{Q})$-equivariant. This follows from the equality, for all $x \in \mathbf{Q}(\zeta_{p^r})^{\times}$ and $\tau \in \Gal(\mathbf{Q}(\zeta_{p^r})/\mathbf{Q})$, in $\Gal(L_r/\mathbf{Q}(\zeta_{p^r}))$:
\begin{equation}\label{odd_modSymb_galois_action_residue_symbol}
\legendre{x, L_r/\mathbf{Q}(\zeta_{p^r})}{\tau(\mathfrak{n})} = \legendre{\tau^{-1}(x), L_r/\mathbf{Q}(\zeta_{p^r})}{\mathfrak{n}} \text{ .}
\end{equation}

 We let $U = f(\mathbf{Z}[\zeta_{p^r}]^{\times})$. By local class field theory, the kernel of the restriction of $f$ to $U$ is the set of elements of $U$ which are everywhere locally norms of element of $L_r$. Since $L_r/\mathbf{Q}(\zeta_{p^r})$ is cyclic, the Hasse norm theorem shows that this kernel is $\mathbf{Z}[\zeta_{p^r}]^{\times} \cap \Norm_{L_r/\mathbf{Q}(\zeta_{p^r})}(\mathbf{Z}[\zeta_{p^r}]^{\times}) $. Thus, we have a canonical isomorphism 
\begin{equation}
U \xrightarrow{\sim} \mathbf{Z}[\zeta_{p^r}]^{\times}/\left( \mathbf{Z}[\zeta_{p^r}]^{\times} \cap \Norm_{L_r/\mathbf{Q}(\zeta_{p^r})}(\mathbf{Z}[\zeta_{p^r}]^{\times}) \right) \text{ .}
\end{equation}

Let $\mathfrak{a}$ be an ideal class in ${}_{N}\mathcal{C}_r$ and $I$ be a fractional ideal of $L_r$ whose class is in $\mathfrak{a}$. There is some $\alpha \in \mathbf{Q}(\zeta_{p^r})^{\times}$ such that $\Norm_{L_r/\mathbf{Q}(\zeta_{p^r})}(I) = (\alpha)$.
One easily checks that the map $\mathfrak{a} \mapsto f(\alpha)$ modulo $U$ is well-defined, \ie is independent of the choice of $I$ and $\alpha$. We have thus constructed a canonical $\Gal(\mathbf{Q}(\zeta_{p^r})/\mathbf{Q})$-equivariant group homomorphism 
$$\hat{f} : {}_{N}\mathcal{C}_r \rightarrow \Gal(L_r/\mathbf{Q}(\zeta_{p^r}))^{(\Gal(\mathbf{Q}(\zeta_{p^r})/\mathbf{Q}))}/U \text{ .}$$
The following result follows from \cite[Proposition 3.1]{Lecouturier_class_group} applied with $E = \mathbf{Q}(\zeta_{p^r})$ and $F=L_r$.
\begin{lem}
\begin{enumerate}
\item The kernel of $\hat{f}$ is $J \cdot \mathcal{C}_r$.
\item The image of $\hat{f}$ is $\Gal(L_r/\mathbf{Q}(\zeta_{p^r}))^{(\Gal(\mathbf{Q}(\zeta_{p^r})/\mathbf{Q})),0}/U$, where $$\Gal(L_r/\mathbf{Q}(\zeta_{p^r}))^{(\Gal(\mathbf{Q}(\zeta_{p^r})/\mathbf{Q})),0} = \{(g_{\tau})_{\tau \in \Gal(\mathbf{Q}(\zeta_{p^r})/\mathbf{Q})} \in\Gal(L_r/\mathbf{Q}(\zeta_{p^r}))^{(\Gal(\mathbf{Q}(\zeta_{p^r})/\mathbf{Q}))}, \text{ } \prod_{\tau \in \Gal(\mathbf{Q}(\zeta_{p^r})/\mathbf{Q})} g_{\tau} = 1 \} \text{ .}$$
\end{enumerate}
\end{lem}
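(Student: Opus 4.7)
The lemma is a standard genus-theoretic computation for the cyclic $p$-power extension $L_r/\mathbf{Q}(\zeta_{p^r})$, which is ramified only at the primes above $N$. I would proceed in three steps.

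First, I would verify that $\hat{f}$ is well-defined. If $I$ is replaced by $I\cdot (\gamma)$ for some $\gamma \in L_r^\times$, then $\alpha$ is multiplied by $\Norm_{L_r/\mathbf{Q}(\zeta_{p^r})}(\gamma)\cdot \epsilon'$ for some $\epsilon' \in \mathbf{Z}[\zeta_{p^r}]^\times$; a global norm is everywhere a local norm, so $f(\Norm(\gamma))=0$, while the $\epsilon'$-contribution lies in $U$ by definition. The same computation yields $J \cdot \mathcal{C}_r \subset \ker \hat{f}$: for a class $(\sigma-1) \cdot [I_0]$, the representative $\sigma(I_0)I_0^{-1}$ has trivial ideal norm, so one may take $\alpha = 1$.

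For the reverse inclusion in (i), suppose $\hat{f}([I])=0$, so $f(\alpha)=f(\epsilon)$ for some $\epsilon \in \mathbf{Z}[\zeta_{p^r}]^\times$. The element $\alpha/\epsilon$ is a local norm at every place of $\mathbf{Q}(\zeta_{p^r})$: at the primes above $N$ by the defining property of $\epsilon$; at a finite prime $v$ not above $N$ the extension $L_r/\mathbf{Q}(\zeta_{p^r})$ is unramified, so the local norm group in $\mathbf{Q}(\zeta_{p^r})_v^\times$ is $\pi_v^{f_v\mathbf{Z}}\cdot \mathcal{O}_v^\times$, and $v(\alpha)=f_v\cdot \sum_{w \mid v} w(I)$ is automatically divisible by $f_v$; all archimedean places are complex and contribute trivially. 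The Hasse norm theorem for the cyclic extension $L_r/\mathbf{Q}(\zeta_{p^r})$ then produces $\beta \in L_r^\times$ with $\Norm(\beta) = \alpha/\epsilon$. The ideal $I\cdot (\beta)^{-1}$ has ideal norm $(\epsilon) = (1)$, so it lies in $\Ker\!\bigl(\Norm: \mathbb{I}(L_r)\to \mathbb{I}(\mathbf{Q}(\zeta_{p^r}))\bigr)$. Writing $G = \Gal(L_r/\mathbf{Q}(\zeta_{p^r}))$, the ideal group $\mathbb{I}(L_r) = \bigoplus_{\mathfrak{q}} \mathrm{Ind}_{G_\mathfrak{q}}^G \mathbf{Z}$ is a sum of induced $G$-modules, so Shapiro's lemma gives $\hat{H}^{-1}(G, \mathbb{I}(L_r))=0$. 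Thus $I\cdot(\beta)^{-1} = (\sigma-1) \cdot K$ for some fractional ideal $K$, proving $[I] \in J\cdot \mathcal{C}_r$.

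For (ii), I would first check the image lies in the norm-zero subgroup. Using (\ref{odd_modSymb_galois_action_residue_symbol}),
\[
\prod_\tau g_\tau = \legendre{\Norm_{\mathbf{Q}(\zeta_{p^r})/\mathbf{Q}}(\alpha),\, L_r/\mathbf{Q}(\zeta_{p^r})}{\mathfrak{n}},
\]
and the rational $a := \Norm_{\mathbf{Q}(\zeta_{p^r})/\mathbf{Q}}(\alpha) = \Norm_{L_r/\mathbf{Q}}(I)$ generates a principal ideal of $\mathbf{Z}$. Under the identification $\Gal(L_r/\mathbf{Q}(\zeta_{p^r}))\simeq \Gal(K_r/\mathbf{Q})$ and compatibility of the norm residue symbol with base change at $\mathfrak{n} \mid N$, this equals $\legendre{a,K_r/\mathbf{Q}}{N}$. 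The product formula for $a$ in the extension $K_r/\mathbf{Q}$, combined with triviality of the local symbols at the unramified primes $\ell \neq N$ (where the symbol is the $v_\ell(a)$-th power of Frobenius and combines over $\ell$ to the Artin symbol of the principal ideal $(a)$, hence is trivial), shows this equals $1$. For surjectivity onto the full norm-zero quotient modulo $U$, one identifies ${}_N\mathcal{C}_r/J\cdot \mathcal{C}_r \simeq \hat H^{-1}(G,\mathcal{C}_r)$, and the standard Tate cohomology sequences arising from $1 \to P(L_r) \to \mathbb{I}(L_r) \to \mathcal{C}_r \to 1$ and $1 \to \mathcal{O}_{L_r}^\times \to L_r^\times \to P(L_r) \to 1$ (via Hilbert 90) compute this group as $\Gal(L_r/\mathbf{Q}(\zeta_{p^r}))^{(\Gal(\mathbf{Q}(\zeta_{p^r})/\mathbf{Q})),0}/U$, matching the image.

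The main obstacle is the surjectivity in (ii): tracking the connecting homomorphisms in Tate cohomology carefully enough to match the image with the prescribed quotient. This is classical genus theory but the bookkeeping is delicate; it is the content of \cite[Proposition 3.1]{Lecouturier_class_group} applied to the extension $L_r/\mathbf{Q}(\zeta_{p^r})$, to which I would simply appeal.
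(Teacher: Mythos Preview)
The paper's proof is simply the citation of \cite[Proposition 3.1]{Lecouturier_class_group} applied with $E=\mathbf{Q}(\zeta_{p^r})$ and $F=L_r$, which is exactly the reference you invoke at the end; so your approach and the paper's coincide.

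Your additional sketch is largely sound, but one step in (ii) is incorrectly justified. You claim that $\prod_{\ell\neq N}\Frob_\ell^{v_\ell(a)}$ is ``the Artin symbol of the principal ideal $(a)$, hence is trivial.'' This fails on two counts: $(a)$ need not be prime to $N$, and even when it is, the Artin symbol of a principal ideal $(a)$ in $\Gal(K_r/\mathbf{Q})$ is the image of $a$ in $(\mathbf{Z}/N\mathbf{Z})^\times/((\mathbf{Z}/N\mathbf{Z})^\times)^{p^r}$, which has no reason to vanish. The conclusion $\prod_\tau g_\tau=1$ is nonetheless correct, and the right argument uses the specific shape of $a$: since $(a)=\Norm_{L_r/\mathbf{Q}}(I)$, for each prime $\ell\neq N$ one has $v_\ell(a)=\sum_{\mathfrak{P}\mid \ell} f(\mathfrak{P}/\ell)\,e_{\mathfrak{P}}(I)$, and every residue degree $f(\mathfrak{P}/\ell)$ in $L_r/\mathbf{Q}$ is divisible by $f_\ell(K_r/\mathbf{Q})$ (the order of $\Frob_\ell$ in $\Gal(L_r/\mathbf{Q})$ is a multiple of its order in the quotient $\Gal(K_r/\mathbf{Q})$). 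Hence each local symbol $\legendre{a,K_r/\mathbf{Q}}{\ell}=\Frob_\ell^{v_\ell(a)}$ is already trivial for $\ell\neq N$, and the product formula gives $\legendre{a,K_r/\mathbf{Q}}{N}=1$. With this correction your sketch goes through.
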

We get a group isomorphism 
\begin{equation}\label{odd_modSymb_hat_f_iso}
\hat{f}' :  \left( ({}_{N}\mathcal{C}_r/J\cdot \mathcal{C}_r) \otimes_{\mathbf{Z}} \mathbf{Z}_p \right)_{(-1)}  \xrightarrow{\sim} \left(\Gal(L_r/\mathbf{Q}(\zeta_{p^r}))^{(\Gal(\mathbf{Q}(\zeta_{p^r})/\mathbf{Q})),0}/U \right)_{(-1)} \text{ .}
\end{equation}
There is a group isomorphism 
$$\left( \Gal(L_r/\mathbf{Q}(\zeta_{p^r}))^{(\Gal(\mathbf{Q}(\zeta_{p^r})/\mathbf{Q})),0} \right)_{(-1)} \xrightarrow{\sim} \Gal(L_r/\mathbf{Q}(\zeta_{p^r}))$$
given by 
$$(g_{\tau})_{\tau \in \Gal(\mathbf{Q}(\zeta_{p^r})/\mathbf{Q})} \mapsto \prod_{\tau \in \Gal(\mathbf{Q}(\zeta_{p^r})/\mathbf{Q})} g_{\tau}^{\chi_p^{-1}(\tau)} \text{ .}$$
Note that $\Gal(L_r/\mathbf{Q}(\zeta_{p^r})) \simeq \mathbf{Z}/p^r\mathbf{Z}$ is a $p$-group, so $g_{\tau}^{\chi_p^{-1}(\tau)}$ makes sense. By \cite[Lemma 2.7]{Lecouturier_class_group}, we have 
\begin{equation}\label{odd_modSymb_units_killed}
\left(\mathbf{Z}[\zeta_{p^r}]^{\times} \otimes_{\mathbf{Z}} \mathbf{Z}_p\right)_{(-1)}=0
\end{equation}
 (we are using the fact that $\chi_p^{-1} \not\equiv \chi_p \text{ (modulo }p^r\text{)}$ since $p>3$). Thus, the image of $U$ in $\left( \Gal(L_r/\mathbf{Q}(\zeta_{p^r}))^{(\Gal(\mathbf{Q}(\zeta_{p^r})/\mathbf{Q})),0} \right)_{(-1)}$ is trivial. Consequently, we have
\begin{equation}\label{odd_modSymb_right_hand_side_Hilbert}
\left( \Gal(L_r/\mathbf{Q}(\zeta_{p^r}))^{(\Gal(\mathbf{Q}(\zeta_{p^r})/\mathbf{Q})),0}/U \right)_{(-1)}\simeq \mathbf{Z}/p^r\mathbf{Z} \text{ .}
\end{equation}

Since $p$ does not divide the numerator of $B_2 = \frac{1}{6}$, the Herbrand--Ribet theorem shows that $(\Pic(\mathbf{Z}[\zeta_p]) \otimes_{\mathbf{Z}} \mathbf{Z}_p)_{(-1)}$ is trivial. By Nakayama's lemma, it follows that 
\begin{equation}\label{odd_modSymb_Picard_trivial}
(\Pic(\mathbf{Z}[\zeta_{p^r}])  \otimes_{\mathbf{Z}} \mathbf{Z}_p)_{(-1)}=0
\end{equation}
Thus, we have $(\mathcal{C}_r \otimes_{\mathbf{Z}} \mathbf{Z}_p)_{(-1)} =({}_{N}\mathcal{C}_r \otimes_{\mathbf{Z}} \mathbf{Z}_p)_{(-1)}$ and 
\begin{equation}\label{odd_modSymb_iso_norm_non_norm}
\left( ({}_{N}\mathcal{C}_r/J\cdot \mathcal{C}_r) \otimes_{\mathbf{Z}} \mathbf{Z}_p \right)_{(-1)} = \left( (\mathcal{C}_r/J\cdot \mathcal{C}_r) \otimes_{\mathbf{Z}} \mathbf{Z}_p\right)_{(-1)} \text{ .}
\end{equation}
Since the primes above $p$ in $L_r$ are fixed by $\Gal(L_r/K_r)$, we have 
\begin{equation}\label{odd_modSymb_norm_class_group}
\left(\mathcal{C}_r/(C+J\cdot \mathcal{C}_r + p^r\cdot \mathcal{C}_r )\right)_{(-1)} = \left(\mathcal{C}_r/(C'+J\cdot \mathcal{C}_r + p^r\cdot \mathcal{C}_r )\right)_{(-1)}
\end{equation}
 where $C' \subset \mathcal{C}_r$ is generated by the classes of primes above $N$ in $\mathcal{C}_r$. 

The multiplication by a generator of $J$ gives a surjective group homomorphism $\mathcal{K}_r/J\cdot \mathcal{K}_r \twoheadrightarrow J\cdot \mathcal{K}_r/J^2\cdot \mathcal{K}_r$. Thus, $J\cdot \mathcal{K}_r/J^2\cdot \mathcal{K}_r$ is a cyclic group of order dividing $p^r$. By (\ref{odd_modSymb_J/J^2_K_C}), (\ref{odd_modSymb_hat_f_iso}), (\ref{odd_modSymb_right_hand_side_Hilbert}), (\ref{odd_modSymb_iso_norm_non_norm}) and (\ref{odd_modSymb_norm_class_group}), the group $J\cdot \mathcal{K}_r/J^2\cdot \mathcal{K}_r$ is cyclic of order $p^r$ if and only if $\hat{f}'((C' \otimes_{\mathbf{Z}} \mathbf{Z}_p)_{(-1)})=0$. Let $\mathcal{I}$ be the group of fractional ideals of $\mathbf{Q}(\zeta_{p^r})$. By (\ref{odd_modSymb_units_killed}) and (\ref{odd_modSymb_Picard_trivial}), we have 
\begin{equation}\label{odd_modSymb_ideal_principal}
(\mathcal{I} \otimes_{\mathbf{Z}} \mathbf{Z}_p)_{(-1)} = (\mathbf{Q}(\zeta_{p^r})^{\times} \otimes_{\mathbf{Z}} \mathbf{Z}_p)_{(-1)} \text{ .}
\end{equation}
Recall that we have fixed a prime ideal $\mathfrak{n}$ above $N$ in $\mathcal{I}$. By (\ref{odd_modSymb_ideal_principal}), the image of $\mathfrak{n}$ in $(\mathcal{I} \otimes_{\mathbf{Z}} \mathbf{Z}_p)_{(-1)}$ is some $x \in (\mathbf{Q}(\zeta_{p^r})^{\times} \otimes_{\mathbf{Z}} \mathbf{Z}_p)_{(-1)}$. We let $\tilde{u}$ be any lift of $x$ in $\mathbf{Q}(\zeta_{p^r})^{\times} \otimes_{\mathbf{Z}} \mathbf{Z}_p$ and $u$ be the image of $\tilde{u}$ in $\mathbf{Q}(\zeta_{p^r})^{\times} \otimes_{\mathbf{Z}} \mathbf{Z}/p^r\mathbf{Z}$. By the discussion above, $J\cdot \mathcal{K}_r/J^2\cdot \mathcal{K}_r$ is a cyclic group of order $p^r$ if and only if we have, in $\Gal(L_r/\mathbf{Q}(\zeta_{p^r}))$:
\begin{equation*}
\prod_{\tau \in \Gal(\mathbf{Q}(\zeta_{p^r})/\mathbf{Q})} \legendre{u, L_r/\mathbf{Q}(\zeta_{p^r})}{\tau(\mathfrak{n})}^{\chi_p^{-1}(\tau)} = 1 \text{ .}
\end{equation*}
(this does not depend on the choice of $\tilde{u}$).
By (\ref{odd_modSymb_galois_action_residue_symbol}), this is equivalent to
\begin{equation*}
\legendre{\prod_{\tau \in \Gal(\mathbf{Q}(\zeta_{p^r})/\mathbf{Q})}  \tau^{-1}(u)^{\chi_p^{-1}(\tau)}, L_r/\mathbf{Q}(\zeta_{p^r})}{\mathfrak{n}} = 1 \text{ .}
\end{equation*}
Note that we have written the $\mathbf{Z}_p$-module structure of $\mathbf{Q}(\zeta_{p^r})^{\times} \otimes_{\mathbf{Z}} \mathbf{Z}/p^r\mathbf{Z}$ multiplicatively.
Let $$u_{\chi_p^{-1}} = \prod_{\tau \in \Gal(\mathbf{Q}(\zeta_{p^r})/\mathbf{Q})}  \tau(u)^{\chi_p(\tau)} \in (\mathbf{Q}(\zeta_{p^r})^{\times} \otimes_{\mathbf{Z}} \mathbf{Z}/p^r\mathbf{Z})^{(-1)} \text{ .}$$ (again, this does not depend on the choice of $\tilde{u}$). 
To conclude the proof of Theorem \ref{comparison_K_C}, it suffices to prove the following result.

\begin{lem}\label{big_thm_log_unit}
The following assertions are equivalent.
\begin{enumerate}
\item We have $\legendre{u_{\chi_p^{-1}}, L_r/\mathbf{Q}(\zeta_{p^r})}{\mathfrak{n}} = 1$.
\item We have $\sum_{k=1}^{\frac{N-1}{2}} k \cdot \log(k) \equiv 0 \text{ (modulo }p^r\text{)}$.
\end{enumerate}
\end{lem}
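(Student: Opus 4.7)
The plan is to compute the norm residue symbol $\legendre{u_{\chi_p^{-1}},L_r/\mathbf{Q}(\zeta_{p^r})}{\mathfrak{n}}$ explicitly as a tame Hilbert symbol, and then to identify the answer, for a suitable choice of lift $\tilde{u}$, with a non-zero constant multiple of $\sum_{k=1}^{(N-1)/2}k\cdot\log(k)$ modulo $p^r$.

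Since $p^r\mid N-1$, the prime $N$ splits completely in $\mathbf{Q}(\zeta_{p^r})$ and $\mathbf{Q}(\zeta_{p^r})_{\mathfrak{n}}\simeq\mathbf{Q}_N$. The extension $L_r/\mathbf{Q}(\zeta_{p^r})$ is totally and tamely ramified at $\mathfrak{n}$ with Galois group canonically identified with $\Gal(K_r/\mathbf{Q})\simeq(\mathbf{Z}/N\mathbf{Z})^{\times}/((\mathbf{Z}/N\mathbf{Z})^{\times})^{p^r}\simeq\mathbf{Z}/p^r\mathbf{Z}$, the last isomorphism coming from $\log$. By local Kummer theory (using $\mu_{p^r}\subset\mathbf{Q}_N$), the completion of $L_r$ at $\mathfrak{n}$ is $\mathbf{Q}_N(N^{1/p^r})/\mathbf{Q}_N$, and under these identifications the tame Hilbert symbol formula gives
\[
\legendre{v,L_r/\mathbf{Q}(\zeta_{p^r})}{\mathfrak{n}} \equiv -\log\!\bigl(v\cdot N^{-v_{\mathfrak{n}}(v)}\bmod\mathfrak{n}\bigr)\pmod{p^r}
\]
for any $v\in\mathbf{Q}(\zeta_{p^r})_{\mathfrak{n}}^{\times}$. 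Expanding $u_{\chi_p^{-1}}=\prod_{\tau}\tau(\tilde{u})^{\chi_p(\tau)}$ and using that each $\tau$ induces the identity on residue fields $\mathbf{F}_N\simeq\mathcal{O}/\tau^{-1}\mathfrak{n}$, one obtains
\[
\legendre{u_{\chi_p^{-1}},L_r/\mathbf{Q}(\zeta_{p^r})}{\mathfrak{n}} \equiv -\sum_{\tau}\chi_p(\tau)\log\!\bigl(\tilde{u}\cdot N^{-v_{\tau^{-1}\mathfrak{n}}(\tilde{u})}\bmod\tau^{-1}\mathfrak{n}\bigr)\pmod{p^r}.
\]

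Next, I would choose $\tilde{u}$ explicitly via the Jacobi sum $J(\omega,\omega)\in\mathbf{Z}[\zeta_{p^r}]$, where $\omega:(\mathbf{Z}/N\mathbf{Z})^{\times}\to\mu_{p^r}(\mathbf{Z}[\zeta_{p^r}])$ is the multiplicative character corresponding to $\log$. Such an element represents the correct $(-1)$-eigenspace class of $\mathfrak{n}$ because $(\Pic(\mathbf{Z}[\zeta_{p^r}])\otimes\mathbf{Z}_p)_{(-1)}=0$ and Stickelberger's theorem factors the Jacobi-sum ideal into exactly the desired combination of primes above $N$. The residues $J(\omega,\omega)\bmod\tau^{-1}\mathfrak{n}$ can then be written down via Stickelberger congruences (or equivalently via Gross--Koblitz applied to the factorization $J(\chi,\chi)=g(\chi)^{2}/g(\chi^{2})$). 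Substituting these residues into the sum above and applying Fourier inversion on $\mathbf{F}_N^{\times}$ to collapse the average over $\tau\in\Gal(\mathbf{Q}(\zeta_{p^r})/\mathbf{Q})$ yields a linear combination of $\log(k)$ with coefficients of Bernoulli type that, after the standard simplification, is a non-zero $(\mathbf{Z}/p^r)^{\times}$-multiple of $\sum_{k=1}^{(N-1)/2}k\cdot\log(k)\bmod p^r$.

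The main obstacle will be the last step: computing the Stickelberger residues of $J(\omega,\omega)$ at the various $\tau^{-1}\mathfrak{n}$ and tracking the constants through the Fourier inversion. This computation closely parallels (and in fact can borrow directly from) the derived Stickelberger element arguments of \cite{Lecouturier_class_group}, the paper already invoked in the introduction as the algebraic input for the construction of the second higher Eisenstein element. The non-vanishing of the leading constant modulo $p$ is exactly what turns the implication into the equivalence (i)$\Leftrightarrow$(ii).
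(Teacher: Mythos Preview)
Your strategy is the paper's strategy: pass to an explicit algebraic number whose class in $(\mathbf{Q}(\zeta_{p^r})^{\times}\otimes\mathbf{Z}/p^r)^{(-1)}$ matches $u_{\chi_p^{-1}}$ up to a unit, then compute the tame symbol at $\mathfrak{n}$ via Stickelberger-type congruences. The paper uses the Gauss sum $\mathcal{G}_r=\sum_a\omega_N(a)^{-(N-1)/p^r}\zeta_N^a$ rather than a Jacobi sum, but since $J(\omega,\omega)=g(\omega)^2/g(\omega^2)$ the two choices are closely related.

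Two points where your outline is thin compared to the paper. First, the assertion that your chosen element lies in the correct $(\mathbf{Z}/p^r)^{\times}$-orbit of $u_{\chi_p^{-1}}$ is not free: the paper spends several lemmas (using the vanishing of $(\Pic\mathbf{Z}[\zeta_{p^r}]\otimes\mathbf{Z}_p)_{(-1)}$ and $(\mathbf{Z}[\zeta_{p^r}]^{\times}\otimes\mathbf{Z}_p)_{(-1)}$ to inject into fractional ideals, plus a non-$p$th-power check that reduces to the $r=1$ case) to pin this down for the Gauss sum. For the Jacobi sum you would need the analogous verification. Second, and more importantly, the ``last step'' you defer is not a Fourier inversion at all in the paper: after the Stickelberger/Gross--Koblitz congruence the symbol becomes $\sum_{a=1,\,(a,p)=1}^{p^r-1}a\cdot\log\bigl((a\tfrac{N-1}{p^r})!\bigr)$, and turning this into a unit multiple of $\sum_{k\le(N-1)/2}k\log(k)$ is done via a direct manipulation with the Morita $N$-adic $\Gamma$-function (reindexing $k=a+bp^s$, telescoping $\Gamma_N$-values) followed by an induction on $r$. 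This computation is the bulk of the proof and is not something one can simply import from \cite{Lecouturier_class_group}; the paper attributes the key rearrangement to forthcoming work of Schaefer--Stubley and then extends it from $\bmod\,p$ to $\bmod\,p^r$. So your plan is sound, but what you call ``the main obstacle'' is essentially the entire proof, and its mechanism is different from what you anticipate.
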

\begin{proof}
We first relate $u_{\chi_p^{-1}}$ to a Gauss sum. Let $\omega_N : (\mathbf{Z}/N\mathbf{Z})^{\times} \rightarrow \mathbf{Z}_N^{\times}$ be the Teichm\"uller character, characterized by $\omega_N(a) \equiv a \text{ (modulo }N\text{)}$ for all $a \in (\mathbf{Z}/N\mathbf{Z})^{\times}$. Let $$\mathcal{G}_r= \sum_{a \in (\mathbf{Z}/N\mathbf{Z})^{\times}} \omega_N(a)^{-\frac{N-1}{p^r}} \cdot \zeta_N^a \in \mathbf{Z}[\zeta_{p^r}, \zeta_N]\text{, }$$ where we view $\omega_N^{\frac{N-1}{p^r}}$ as taking values in the $p^r$th root of unity of $\mathbf{Z}[\zeta_{p^r}]$ using the choice of $\mathfrak{n}$. Galois theory shows that $\mathcal{G}_r^{p^r} \in \mathbf{Q}(\zeta_{p^r})$. Thus, we have $\mathcal{G}_r \in L_r$. Let $$\mathcal{G}_{r,\chi_p^{-1}} = \prod_{\tau \in  \Gal(\mathbf{Q}(\zeta_{p^r})/\mathbf{Q})} \tau(\mathcal{G}_r)^{\chi_p(\tau)} \in (L_r^{\times} \otimes_{\mathbf{Z}} \mathbf{Z}/p^r\mathbf{Z})^{(-1)} \text{ .}$$

\begin{lem}\label{proof_K_C_lemma_invariant_L_r}
The inclusion $\mathbf{Q}(\zeta_{p^r}) \hookrightarrow L_r$ gives a group isomorphism
$$(\mathbf{Q}(\zeta_{p^r})^{\times} \otimes_{\mathbf{Z}} \mathbf{Z}/p^r\mathbf{Z})^{(-1)} \xrightarrow{\sim} \left((L_r^{\times} \otimes_{\mathbf{Z}} \mathbf{Z}/p^r\mathbf{Z})^{(-1)}\right)^{\Gal(L_r/\mathbf{Q}(\zeta_{p^r}))}$$
where the $\Gal(L_r/\mathbf{Q}(\zeta_{p^r}))$ in the exponent means the invariants by $\Gal(L_r/\mathbf{Q}(\zeta_{p^r}))$.
\end{lem}
\begin{proof}
The long exact sequence of cohomology attached to the short exact sequence of $\Gal(L_r/\mathbf{Q}(\zeta_{p^r}))$-modules $1 \rightarrow \mu_{p^r} \rightarrow L_r^{\times} \xrightarrow{p^r} (L_r^{\times})^{p^r} \rightarrow 1$ gives us, using Hilbert 90:
\begin{equation}\label{proof_K_C_exact_sequence_galois_1}
1 \rightarrow \mu_{p^r} \rightarrow \mathbf{Q}(\zeta_{p^r})^{\times} \xrightarrow{p^r} \mathbf{Q}(\zeta_{p^r})^{\times} \cap (L_r^{\times})^{p^r} \rightarrow H^1(\Gal(L_r/\mathbf{Q}(\zeta_{p^r})), \mu_{p^r}) \rightarrow 1
\end{equation}
and
\begin{equation}\label{proof_K_C_exact_sequence_galois_2}
1 \rightarrow H^1(\Gal(L_r/\mathbf{Q}(\zeta_{p^r})), (L_r^{\times})^{p^r}) \rightarrow H^2(\Gal(L_r/\mathbf{Q}(\zeta_{p^r})), \mu_{p^r}) \text{ .}
\end{equation}

The long exact sequence of cohomology attached to the short exact sequence of $\Gal(L_r/\mathbf{Q}(\zeta_{p^r}))$-modules $1 \rightarrow(L_r^{\times})^{p^r}   \rightarrow L_r^{\times} \rightarrow L_r^{\times} \otimes_{\mathbf{Z}} \mathbf{Z}/p^r\mathbf{Z} \rightarrow 1$ gives us, using Hilbert 90:
\begin{equation}\label{proof_K_C_exact_sequence_galois_3}
1 \rightarrow \mathbf{Q}(\zeta_{p^r})^{\times} \cap (L_r^{\times})^{p^r}  \rightarrow \mathbf{Q}(\zeta_{p^r})^{\times} \rightarrow (L_r^{\times} \otimes_{\mathbf{Z}} \mathbf{Z}/p^r\mathbf{Z})^{\Gal(L_r/\mathbf{Q}(\zeta_{p^r}))} \rightarrow H^1(\Gal(L_r/\mathbf{Q}(\zeta_{p^r})), (L_r^{\times})^{p^r}) \rightarrow 1 \text{ .}
\end{equation}
Since $p>3$, we have $\chi_p^{-1} \not\equiv \chi_p \text{ (modulo }p\text{)}$, so we get:
\begin{equation}\label{proof_K_C_vanishing_cohomology}
H^1(\Gal(L_r/\mathbf{Q}(\zeta_{p^r})), \mu_{p^r})^{(-1)}=H^2(\Gal(L_r/\mathbf{Q}(\zeta_{p^r})), \mu_{p^r})^{(-1)}=1 \text{ .}
\end{equation}
Combining (\ref{proof_K_C_exact_sequence_galois_1}) and (\ref{proof_K_C_vanishing_cohomology}), we get
\begin{equation}\label{proof_K_C_vanishing_cohomology_2}
\left(\mathbf{Q}(\zeta_{p^r})^{\times} \cap (L_r^{\times})^{p^r}/(\mathbf{Q}(\zeta_{p^r})^{\times})^{p^r} \right)^{(-1)}=1 \text{ .}
\end{equation}
Combining (\ref{proof_K_C_exact_sequence_galois_2}) and (\ref{proof_K_C_vanishing_cohomology}), we get
\begin{equation}\label{proof_K_C_vanishing_cohomology_3}
H^1(\Gal(L_r/\mathbf{Q}(\zeta_{p^r})), (L_r^{\times})^{p^r})^{(-1)}=1 \text{ .}
\end{equation}
Combining (\ref{proof_K_C_exact_sequence_galois_3}) and (\ref{proof_K_C_vanishing_cohomology_3}), we get
\begin{equation}\label{proof_K_C_exact_sequence_galois_4}
\left((L_r^{\times} \otimes_{\mathbf{Z}} \mathbf{Z}/p^r\mathbf{Z})^{(-1)}\right)^{\Gal(L_r/\mathbf{Q}(\zeta_{p^r}))} =  \left(\mathbf{Q}(\zeta_{p^r})^{\times}/\mathbf{Q}(\zeta_{p^r})^{\times} \cap (L_r^{\times})^{p^r} \right)^{(-1)}\text{ .}
\end{equation}
We have an exact sequence
\begin{equation}\label{proof_K_C_exact_sequence_galois_5}
1 \rightarrow \mathbf{Q}(\zeta_{p^r})^{\times} \cap (L_r^{\times})^{p^r}/(\mathbf{Q}(\zeta_{p^r})^{\times})^{p^r} \rightarrow \mathbf{Q}(\zeta_{p^r})^{\times}/(\mathbf{Q}(\zeta_{p^r})^{\times})^{p^r} \rightarrow \mathbf{Q}(\zeta_{p^r})^{\times}/\mathbf{Q}(\zeta_{p^r})^{\times} \cap  (L_r^{\times})^{p^r} \rightarrow 1 \text{ .}
\end{equation}
\begin{lem}\label{proof_K_C_vanishing_cohomology_4}
We have $$H^1\left(\Gal(\mathbf{Q}(\zeta_{p^r})/\mathbf{Q}),  \left(\mathbf{Q}(\zeta_{p^r})^{\times} \cap (L_r^{\times})^{p^r}/(\mathbf{Q}(\zeta_{p^r})^{\times})^{p^r} \right) \otimes_{\mathbf{Z}/p^r\mathbf{Z}} \mu_{p^r} \right) = 1 \text{ .}$$
\end{lem}
\begin{proof}
Since $\Gal(\mathbf{Q}(\zeta_{p^r})/\mathbf{Q})$ is cyclic, the Herbrand quotient of $ \left(\mathbf{Q}(\zeta_{p^r})^{\times} \cap (L_r^{\times})^{p^r}/(\mathbf{Q}(\zeta_{p^r})^{\times})^{p^r} \right) \otimes_{\mathbf{Z}/p^r\mathbf{Z}} \mu_{p^r} $ is well-defined, and is trivial since this last group is finite by Kummer theory. By (\ref{proof_K_C_vanishing_cohomology_2}), we have 
$$H^0\left(\Gal(\mathbf{Q}(\zeta_{p^r})/\mathbf{Q}),  \left(\mathbf{Q}(\zeta_{p^r})^{\times} \cap (L_r^{\times})^{p^r}/(\mathbf{Q}(\zeta_{p^r})^{\times})^{p^r} \right) \otimes_{\mathbf{Z}/p^r\mathbf{Z}} \mu_{p^r} \right) = 1 \text{ .}$$ 
Using the previous remark on the Herbrand quotient, this proves that $$H^1\left(\Gal(\mathbf{Q}(\zeta_{p^r})/\mathbf{Q}),  \left(\mathbf{Q}(\zeta_{p^r})^{\times} \cap (L_r^{\times})^{p^r}/(\mathbf{Q}(\zeta_{p^r})^{\times})^{p^r} \right) \otimes_{\mathbf{Z}/p^r\mathbf{Z}} \mu_{p^r} \right) = 1 \text{ .}$$
\end{proof}

Combining Lemma \ref{proof_K_C_vanishing_cohomology_4}, (\ref{proof_K_C_vanishing_cohomology_2}) and (\ref{proof_K_C_exact_sequence_galois_5}), we get a group isomorphism:
\begin{equation}\label{proof_K_C_exact_sequence_galois_6}
\left(\mathbf{Q}(\zeta_{p^r})^{\times}/(\mathbf{Q}(\zeta_{p^r})^{\times})^{p^r} \right)^{(-1)} \xrightarrow{\sim} \left(  \mathbf{Q}(\zeta_{p^r})^{\times}/\mathbf{Q}(\zeta_{p^r})^{\times} \cap  (L_r^{\times})^{p^r} \right)^{(-1)}  \text{ .}
\end{equation}
To conclude the proof of Lemma \ref{proof_K_C_lemma_invariant_L_r}, it suffices to combine (\ref{proof_K_C_exact_sequence_galois_3}), (\ref{proof_K_C_vanishing_cohomology_3}) and (\ref{proof_K_C_exact_sequence_galois_6}).

\end{proof}

We have $\mathcal{G}_{r,\chi_p^{-1}} \in \left((L_r^{\times} \otimes_{\mathbf{Z}} \mathbf{Z}/p^r\mathbf{Z})^{(-1)}\right)^{\Gal(L_r/\mathbf{Q}(\zeta_{p^r}))}$. By Lemma \ref{proof_K_C_lemma_invariant_L_r}, there is a unique $g_{r,\chi_p^{-1}} \in (\mathbf{Q}(\zeta_{p^r})^{\times} \otimes_{\mathbf{Z}} \mathbf{Z}/p^r\mathbf{Z})^{(-1)}$ such that $\mathcal{G}_{r,\chi_p^{-1}}$ is the image of $g_{r,\chi_p^{-1}}$ in $(L_r^{\times} \otimes_{\mathbf{Z}} \mathbf{Z}/p^r\mathbf{Z})^{(-1)}$. 

\begin{lem}\label{proof_K_C_equality_gauss_unit}
There exists $\alpha \in (\mathbf{Z}/p^r\mathbf{Z})^{\times}$ such that $g_{r,\chi_p^{-1}}=u_{\chi_p^{-1}}^{\alpha}$.
\end{lem}
\begin{proof}
Let $\mathcal{I}_{\mathbf{Q}(\zeta_{p^r})}$ (resp. $\mathcal{C}_{\mathbf{Q}(\zeta_{p^r})}$) be the group of fractional ideals (resp. the class group) of $\mathbf{Q}(\zeta_{p^r})$. Similarly, let $\mathcal{I}_{L_r}$ be the group of fractional ideals of $L_r$. We have an exact sequence 
$$1 \rightarrow \mathbf{Z}[\zeta_{p^r}]^{\times} \rightarrow \mathbf{Q}(\zeta_{p^r})^{\times} \rightarrow \mathcal{I}_{\mathbf{Q}(\zeta_{p^r})} \rightarrow \mathcal{C}_{\mathbf{Q}(\zeta_{p^r})} \rightarrow 1\text{ .}$$
The snake lemma gives an exact sequence 
\begin{equation}\label{proof_K_C_exact_sequence_fractional_ideals}
1 \rightarrow  \mathcal{C}_{\mathbf{Q}(\zeta_{p^r})}[p^r] \rightarrow (\mathbf{Q}(\zeta_{p^r})^{\times}/\mathbf{Z}[\zeta_{p^r}]^{\times}) \otimes_{\mathbf{Z}} \mathbf{Z}/p^r\mathbf{Z} \rightarrow \mathcal{I}_{\mathbf{Q}(\zeta_{p^r})} \otimes_{\mathbf{Z}} \mathbf{Z}/p^r\mathbf{Z} \text{ .}
\end{equation}
\begin{lem}\label{proof_K_C_triviality_class_group}
We have $(\mathcal{C}_{\mathbf{Q}(\zeta_{p^r})}[p^r])^{(-1)}=1$.
\end{lem}
\begin{proof}
It suffices to show that $(\mathcal{C}_{\mathbf{Q}(\zeta_{p^r})}[p^r])^{(-1)}[p]=1$, \ie that $(\mathcal{C}_{\mathbf{Q}(\zeta_{p^r})}[p])^{(-1)}=1$. If $M$ is a $\Gal(\mathbf{Q}(\zeta_p)/\mathbf{Q})$-module and $i \in \mathbf{Z}$, let $M^{[i]}$ (resp. $M_{[i]}$) be the subgroup (resp. the maximal quotient) of $M$ on which $\Gal(\mathbf{Q}(\zeta_p)/\mathbf{Q})$ acts by $\omega_p^i$. It suffices to show that $(\mathcal{C}_{\mathbf{Q}(\zeta_{p^r})}[p])^{[-1]}=1$. It suffices to show that $(\mathcal{C}_{\mathbf{Q}(\zeta_{p^r})})^{[-1]}=1$, or equivalently that $(\mathcal{C}_{\mathbf{Q}(\zeta_{p^r})})_{[-1]}=1$. By Nakayama's lemma, it suffices to show that $(\mathcal{C}_{\mathbf{Q}(\zeta_{p})})_{[-1]}=1$, which follows from the Herbrand--Ribet theorem since $p$ does not divide $B_2 = \frac{1}{6}$.
\end{proof}
By (\ref{proof_K_C_exact_sequence_fractional_ideals}), we get an embedding
\begin{equation}\label{proof_K_C_embedding_fractional_ideals}
\left((\mathbf{Q}(\zeta_{p^r})^{\times}/\mathbf{Z}[\zeta_{p^r}]^{\times})\otimes_{\mathbf{Z}} \mathbf{Z}/p^r\mathbf{Z} \right)^{(-1)} \hookrightarrow \left(\mathcal{I}_{\mathbf{Q}(\zeta_{p^r})} \otimes_{\mathbf{Z}} \mathbf{Z}/p^r\mathbf{Z}\right)^{(-1)} \text{ .}
\end{equation}

\begin{lem}\label{proof_K_C_triviality_units}
The map $$\left(\mathbf{Q}(\zeta_{p^r})^{\times} \otimes_{\mathbf{Z}} \mathbf{Z}/p^r\mathbf{Z} \right)^{(-1)} \rightarrow \left((\mathbf{Q}(\zeta_{p^r})^{\times}/\mathbf{Z}[\zeta_{p^r}]^{\times})\otimes_{\mathbf{Z}} \mathbf{Z}/p^r\mathbf{Z} \right)^{(-1)}$$
is an isomorphism.
\end{lem}
\begin{proof}
We have an exact sequence
\begin{equation}
1 \rightarrow \mathbf{Z}[\zeta_{p^r}]^{\times}/\mathbf{Z}[\zeta_{p^r}]^{\times} \cap (\mathbf{Q}(\zeta_{p^r})^{\times})^{p^r} \rightarrow  \mathbf{Q}(\zeta_{p^r})^{\times} \otimes_{\mathbf{Z}} \mathbf{Z}/p^r\mathbf{Z} \rightarrow (\mathbf{Q}(\zeta_{p^r})^{\times}/\mathbf{Z}[\zeta_{p^r}]^{\times})\otimes_{\mathbf{Z}} \mathbf{Z}/p^r\mathbf{Z}  \rightarrow 1 \text{ .}
\end{equation}
To prove Lemma \ref{proof_K_C_triviality_units}, it suffices to prove that
\begin{equation}\label{proof_K_C_triviality_cohomology_units}
H^0\left(\Gal(\mathbf{Q}(\zeta_{p^r})/\mathbf{Q}), \left(\mathbf{Z}[\zeta_{p^r}]^{\times}/\mathbf{Z}[\zeta_{p^r}]^{\times} \cap (\mathbf{Q}(\zeta_{p^r})^{\times})^{p^r} \right) \otimes_{\mathbf{Z}/p^r\mathbf{Z}} \mu_{p^r}\right)=1
\end{equation}
and
$$H^1\left(\Gal(\mathbf{Q}(\zeta_{p^r})/\mathbf{Q}), \left(\mathbf{Z}[\zeta_{p^r}]^{\times}/\mathbf{Z}[\zeta_{p^r}]^{\times} \cap (\mathbf{Q}(\zeta_{p^r})^{\times})^{p^r}\right) \otimes_{\mathbf{Z}/p^r\mathbf{Z}} \mu_{p^r}\right)=1 \text{ .}$$
Since $\Gal(\mathbf{Q}(\zeta_{p^r})/\mathbf{Q})$ is cyclic and $ \left(\mathbf{Z}[\zeta_{p^r}]^{\times}/\mathbf{Z}[\zeta_{p^r}]^{\times} \cap (\mathbf{Q}(\zeta_{p^r})^{\times})^{p^r}\right) \otimes_{\mathbf{Z}/p^r\mathbf{Z}} \mu_{p^r}$ is a finite group, a Herbrand quotient argument like in Lemma \ref{proof_K_C_vanishing_cohomology_4} shows that it suffices to prove (\ref{proof_K_C_triviality_cohomology_units}). We have an exact sequence
$$1 \rightarrow \mathbf{Z}[\zeta_{p^r}]^{\times} \cap (\mathbf{Q}(\zeta_{p^r})^{\times})^{p^r}/(\mathbf{Z}[\zeta_{p^r}]^{\times})^{p^r} \rightarrow  \mathbf{Z}[\zeta_{p^r}]^{\times}/( \mathbf{Z}[\zeta_{p^r}]^{\times})^{p^r} \rightarrow  \mathbf{Z}[\zeta_{p^r}]^{\times}/ \mathbf{Z}[\zeta_{p^r}]^{\times} \cap (\mathbf{Q}(\zeta_{p^r})^{\times})^{p^r} \rightarrow 1 \text{ .}$$
To prove (\ref{proof_K_C_triviality_cohomology_units}), it thus suffices to prove that
\begin{equation}\label{usual_vanishing_units_cyclotomic}
(\mathbf{Z}[\zeta_{p^r}]^{\times} \otimes_{\mathbf{Z}} \mathbf{Z}/p^r\mathbf{Z})^{(-1)}=1
\end{equation}
and 
\begin{equation}\label{proof_K_C_triviality_cohomology_units_2}
H^1\left(\Gal(\mathbf{Q}(\zeta_{p^r})/\mathbf{Q}), \left(\mathbf{Z}[\zeta_{p^r}]^{\times} \cap (\mathbf{Q}(\zeta_{p^r})^{\times})^{p^r}/(\mathbf{Z}[\zeta_{p^r}]^{\times})^{p^r} \right) \otimes_{\mathbf{Z}/p^r\mathbf{Z}} \mu_{p^r} \right)=1 \text{ .}
\end{equation}
The equality (\ref{usual_vanishing_units_cyclotomic}) follows from \cite[Lemma 2.7]{Lecouturier_class_group}. As before, a Herbrand quotient argument shows that to prove (\ref{proof_K_C_triviality_cohomology_units_2}), it suffices to prove
$$ H^0\left(\Gal(\mathbf{Q}(\zeta_{p^r})/\mathbf{Q}), \left(\mathbf{Z}[\zeta_{p^r}]^{\times} \cap (\mathbf{Q}(\zeta_{p^r})^{\times})^{p^r}/(\mathbf{Z}[\zeta_{p^r}]^{\times})^{p^r} \right) \otimes_{\mathbf{Z}/p^r\mathbf{Z}} \mu_{p^r}\right)= 1 \text{ ,}$$
which follows from (\ref{usual_vanishing_units_cyclotomic}).
\end{proof}

By (\ref{proof_K_C_embedding_fractional_ideals}) and Lemma \ref{proof_K_C_triviality_units}, we get an embedding
\begin{equation}\label{injection_field_fractional_ideals}
\left(\mathbf{Q}(\zeta_{p^r})^{\times} \otimes_{\mathbf{Z}} \mathbf{Z}/p^r\mathbf{Z}\right)^{(-1)} \hookrightarrow \left(\mathcal{I}_{\mathbf{Q}(\zeta_{p^r})} \otimes_{\mathbf{Z}} \mathbf{Z}/p^r\mathbf{Z}\right)^{(-1)} \text{ .}
\end{equation}

By (\ref{injection_field_fractional_ideals}), in order to prove Lemma \ref{proof_K_C_equality_gauss_unit} it suffices to prove that there exists  $\alpha \in (\mathbf{Z}/p^r\mathbf{Z})^{\times}$  such that $u_{\chi_p^{-1}}$ and $g_{r,\chi_p^{-1}}^{\alpha}$ have the same image in $(\mathcal{I}_{\mathbf{Q}(\zeta_{p^r})} \otimes_{\mathbf{Z}} \mathbf{Z}/p^r\mathbf{Z})^{(-1)}$. Let $\mathcal{I}_N \subset \mathcal{I}_{\mathbf{Q}(\zeta_{p^r})}$ be the subgroup generated the ideals above $N$ in $\mathbf{Q}(\zeta_{p^r})$. This is a direct summand of the free $\mathbf{Z}$-module $\mathcal{I}_{\mathbf{Q}(\zeta_{p^r})}$. Thus, we have an embedding $(\mathcal{I}_N \otimes_{\mathbf{Z}} \mathbf{Z}/p^r\mathbf{Z})^{(-1)} \hookrightarrow  (\mathcal{I}_{\mathbf{Q}(\zeta_{p^r})} \otimes_{\mathbf{Z}} \mathbf{Z}/p^r\mathbf{Z})^{(-1)}$. Furthermore, the group $(\mathcal{I}_N \otimes_{\mathbf{Z}} \mathbf{Z}/p^r\mathbf{Z})^{(-1)}$ is cyclic of order $p^r$. 

By definition, the image of $u_{\chi_p^{-1}}$ in  $(\mathcal{I}_{\mathbf{Q}(\zeta_{p^r})} \otimes_{\mathbf{Z}} \mathbf{Z}/p^r\mathbf{Z})^{(-1)}$ is a generator of $(\mathcal{I}_N \otimes_{\mathbf{Z}} \mathbf{Z}/p^r\mathbf{Z})^{(-1)}$. To conclude the proof of Proposition \ref{proof_K_C_equality_gauss_unit}, it suffices to prove that the image of $g_{r,\chi_p^{-1}}$ in $(\mathcal{I}_{\mathbf{Q}(\zeta_{p^r})} \otimes_{\mathbf{Z}} \mathbf{Z}/p^r\mathbf{Z})^{(-1)}$ is a generator of $(\mathcal{I}_N \otimes_{\mathbf{Z}} \mathbf{Z}/p^r\mathbf{Z})^{(-1)}$. We first prove that this image lies in $(\mathcal{I}_N \otimes_{\mathbf{Z}} \mathbf{Z}/p^r\mathbf{Z})^{(-1)}$. Let $\mathfrak{N}$ be the unique prime ideal of $L_r$ above the fixed prime ideal $\mathfrak{n}$ of $\mathbf{Q}(\zeta_{p^r})$ dividing $N$. If $\tau \in \Gal(\mathbf{Q}(\zeta_{p^r})/\mathbf{Q})$, let $\Rep_r(\tau)$ be the unique integer in $\{1, ..., p^r-1\}$ such that $\tau(\zeta_{p^r}) = \zeta_{p^r}^{\Rep_r(\tau)}$. We have the following prime ideal decomposition in $L_r$ (\cf for instance \cite[Section 4]{Garett}):
\begin{equation}\label{prime_factorization_gauss_sum}
(\mathcal{G}_r) = \prod_{\tau \in \Gal(\mathbf{Q}(\zeta_{p^r})/\mathbf{Q})} \tau(\mathfrak{N})^{\Rep_r(\tau^{-1})} \text{ .}
\end{equation}
This allows us to compute the image of $\mathcal{G}_{r,\chi_p^{-1}}$ in $\mathcal{I}_{L_r}\otimes_{\mathbf{Z}} \mathbf{Z}/p^r\mathbf{Z}$, which is
\begin{equation}\label{proof_K_C_zero_image_gauss}
\left(\prod_{\tau \in  \Gal(\mathbf{Q}(\zeta_{p^r})/\mathbf{Q})} \tau(\mathfrak{N})^{\Rep_r(\tau^{-1})}\right)^{\sum_{k=1}^{p^r-1} k^2} = 1
\end{equation}
since $\sum_{k=1}^{p^r-1} k^2\equiv 0 \text{ (modulo }p^r\text{)}$. The kernel of the map $\mathcal{I}_{\mathbf{Q}(\zeta_{p^r})} \otimes_{\mathbf{Z}}\mathbf{Z}/p^r\mathbf{Z} \rightarrow \mathcal{I}_{L_r}  \otimes_{\mathbf{Z}}\mathbf{Z}/p^r\mathbf{Z}$ is $\mathcal{I}_N  \otimes_{\mathbf{Z}}\mathbf{Z}/p^r\mathbf{Z}$. By (\ref{proof_K_C_zero_image_gauss}), the image of $g_{r,\chi_p^{-1}}$ in  $(\mathcal{I}_{\mathbf{Q}(\zeta_{p^r})} \otimes_{\mathbf{Z}} \mathbf{Z}/p^r\mathbf{Z})^{(-1)}$ is in $(\mathcal{I}_N \otimes_{\mathbf{Z}} \mathbf{Z}/p^r\mathbf{Z})^{(-1)}$. Thus, there exists $\alpha \in \mathbf{Z}/p^r\mathbf{Z}$ such that $g_{r,\chi_p^{-1}}=u_{\chi_p^{-1}}^{\alpha}$. In order to prove that $\alpha \in (\mathbf{Z}/p^r\mathbf{Z})^{\times}$, it suffies to prove the following result.
\begin{lem}
The element $g_{r,\chi_p^{-1}}$ is not a $p$th power in $(\mathbf{Q}(\zeta_{p^r})^{\times}\otimes_{\mathbf{Z}} \mathbf{Z}/p^r\mathbf{Z})^{(-1)}$.
\end{lem}
\begin{proof}
For the sake of a contradiction, assume that $g_{r,\chi_p^{-1}}$ is a $p$th power in $(\mathbf{Q}(\zeta_{p^r})^{\times}\otimes_{\mathbf{Z}} \mathbf{Z}/p^r\mathbf{Z})^{(-1)}$. In particular, the image of $\mathcal{G}_{r, \chi_p^{-1}}$ in $L_r^{\times} \otimes_{\mathbf{Z}} \mathbf{Z}/p\mathbf{Z}$ is trivial. Thus, we have  
\begin{equation}\label{proof_K_C_norm_trivial_Gauss_1}
\prod_{\tau \in \Gal(\mathbf{Q}(\zeta_p)/\mathbf{Q})} \tau\left(\prod_{\tau' \in \Gal(\mathbf{Q}(\zeta_{p^r})/\mathbf{Q}(\zeta_p))} \tau'(\mathcal{G}_r)\right)^{\omega_p(\tau)} = 1 \text{ in } (L_r^{\times} \otimes_{\mathbf{Z}} \mathbf{Z}/p\mathbf{Z})^{(-1)} \text{ .}
\end{equation}
Note that $\prod_{\tau' \in \Gal(\mathbf{Q}(\zeta_{p^r})/\mathbf{Q}(\zeta_p))} \tau'(\mathcal{G}_r) = \Norm_{L_r/K_r(\zeta_p)}(\mathcal{G}_r)$.
\begin{lem}\label{proof_K_C_puissance_p_intersection}
\begin{enumerate}
\item The map $(K_r(\zeta_p)^{\times} \otimes_{\mathbf{Z}} \mathbf{Z}/p\mathbf{Z})^{(-1)} \rightarrow (L_r^{\times} \otimes_{\mathbf{Z}} \mathbf{Z}/p\mathbf{Z})^{(-1)}$ is injective.
\item The map $(L_1^{\times} \otimes_{\mathbf{Z}} \mathbf{Z}/p\mathbf{Z})^{(-1)} \rightarrow (K_r(\zeta_p)^{\times} \otimes_{\mathbf{Z}} \mathbf{Z}/p\mathbf{Z})^{(-1)}$ is injective.
\end{enumerate}
\end{lem}
\begin{proof}
This follows by considering the cohomology of the exact sequences $1 \rightarrow \mu_p \rightarrow L_r^{\times} \rightarrow  (L_r^{\times})^p \rightarrow 1$ and $1 \rightarrow \mu_p \rightarrow K_r(\zeta_p)^{\times} \rightarrow (K_r(\zeta_p)^{\times})^p \rightarrow 1$ as in the proof of Lemma \ref{proof_K_C_lemma_invariant_L_r}.
\end{proof}
By (\ref{proof_K_C_norm_trivial_Gauss_1}), we get that
\begin{equation}\label{proof_K_C_norm_trivial_Gauss_2}
\prod_{\tau \in \Gal(\mathbf{Q}(\zeta_p)/\mathbf{Q})} \tau\left( \Norm_{L_r/K_r(\zeta_p)}(\mathcal{G}_r) \right)^{\omega_p(\tau)} \text{ is a }p \text{th power in }  (K_r(\zeta_p)^{\times} \otimes_{\mathbf{Z}} \mathbf{Z}_p)^{(\omega_p^{-1})} \text{ .}
\end{equation}
 Here, if $M$ is a $\mathbf{Z}_p[\Gal(\mathbf{Q}(\zeta_p)/\mathbf{Q})]$, module, we let $M^{(\omega_p^{-1})}$ be the submodule of $M$ on which $\Gal(\mathbf{Q}(\zeta_p)/\mathbf{Q})$ acts by $\omega_p^{-1}$.
\begin{lem}\label{proof_K_C_equality_norm_gauss_sum}
We have, in $(K_r(\zeta_p)^{\times} \otimes_{\mathbf{Z}} \mathbf{Z}_p)^{(\omega_p^{-1})}$:
$$\prod_{\tau \in \Gal(\mathbf{Q}(\zeta_p)/\mathbf{Q})} \tau(\Norm_{L_r/K_r(\zeta_p)}(\mathcal{G}_r))^{\omega_p(\tau)} = \prod_{\tau \in \Gal(\mathbf{Q}(\zeta_p)/\mathbf{Q})} \tau(\mathcal{G}_1)^{\omega_p(\tau)} \text{ .}$$
\end{lem}
\begin{proof}
Using (\ref{prime_factorization_gauss_sum}), one checks that both sides have the same image in $(\mathcal{I}_{K_r(\zeta_p)} \otimes_{\mathbf{Z}} \mathbf{Z}_p)^{(\omega_p^{-1})}$. One then uses the fact that the map $(K_r(\zeta_p)^{\times} \otimes_{\mathbf{Z}} \mathbf{Z}_p)^{(\omega_p^{-1})} \rightarrow (\mathcal{I}_{K_r(\zeta_p)} \otimes_{\mathbf{Z}} \mathbf{Z}_p)^{(\omega_p^{-1})}$ is injective by \cite[Lemma 2.7]{Lecouturier_class_group} since $\omega_p^{-1} \neq \omega_p$ and $\omega_p^{-1}$ is odd.
\end{proof}
By Lemma \ref{proof_K_C_puissance_p_intersection} (ii), (\ref{proof_K_C_norm_trivial_Gauss_2}) and Lemma \ref{proof_K_C_equality_norm_gauss_sum}, we get that $\prod_{\tau \in \Gal(\mathbf{Q}(\zeta_p)/\mathbf{Q})} \tau(\mathcal{G}_1)^{\omega_p(\tau)}$ is a $p$th power in $L_1^{\times} \otimes_{\mathbf{Z}}\mathbf{Z}_p$. This contradicts \cite[Proposition 3.4]{Lecouturier_class_group}.
\end{proof}
This concludes the proof of Lemma \ref{proof_K_C_equality_gauss_unit}.
\end{proof}
By Lemma \ref{proof_K_C_equality_gauss_unit}, in order to conclude the proof of Lemma \ref{big_thm_log_unit}, it suffices to prove the following result.
\begin{lem}\label{big_thm_log_unit_2}
The following assertions are equivalent.
\begin{enumerate}
\item We have $\legendre{g_{r,\chi_p^{-1}}, L_r/\mathbf{Q}(\zeta_{p^r})}{\mathfrak{n}} = 1$.
\item We have $\sum_{k=1}^{\frac{N-1}{2}} k \cdot \log(k) \equiv 0 \text{ (modulo }p^r\text{)}$.
\end{enumerate}
\end{lem}
\begin{proof}
Let $\mathcal{L} : \mathbf{Q}_N(\zeta_N)^{\times} \otimes_{\mathbf{Z}} \mathbf{Z}/p^r\mathbf{Z} \rightarrow \mathbf{Z}/p^r\mathbf{Z}$ be the group homomorphism defined by
$$\mathcal{L}(a \otimes b) = \log\left(\overline{\frac{a}{(1-\zeta_N)^{v_N(a)}}}\right)\cdot b$$
where $a \in \mathbf{Q}_N(\zeta_N)^{\times}$, $v_N(a)$ is the $N$-adic valuation of $a$ (normalized by $v_N(1-\zeta_N)=1$), $\overline{\frac{a}{(1-\zeta_N)^{v_N(a)}}}$ is the reduction of $\frac{a}{(1-\zeta_N)^{v_N(a)}}$ modulo $(1-\zeta_N)$ and $b \in \mathbf{Z}/p^r\mathbf{Z}$.
We have $$N = \prod_{i=1}^{N-1}(\zeta_N^i-1) \text{ ,}$$ so 
\begin{equation}\label{vanishing_N_residue_symbol}
\mathcal{L}(N \otimes \overline{1}) = \mathcal{L}\left(\frac{N}{(\zeta_N-1)^{N-1}} \otimes \overline{1}\right) \equiv \log((N-1)!) \equiv 0 \text{ (modulo }p^r \text{).}
\end{equation}
Let $\mathfrak{N}'$ be the prime above $\mathfrak{n}$ in $\mathbf{Q}(\zeta_{p^r}, \zeta_N)$. We let $\mathcal{L}' :  \mathbf{Q}(\zeta_N, \zeta_{p^r})^{\times} \otimes_{\mathbf{Z}} \mathbf{Z}/p^r\mathbf{Z} \rightarrow \mathbf{Z}/p^r\mathbf{Z}$ be the composition of $\mathcal{L}$ with the group homomorphism $\mathbf{Q}(\zeta_N, \zeta_{p^r}) \otimes_{\mathbf{Z}} \mathbf{Z}/p^r\mathbf{Z} \rightarrow \mathbf{Q}_N(\zeta_N)^{\times} \otimes_{\mathbf{Z}} \mathbf{Z}/p^r\mathbf{Z}$ induced by the $\mathfrak{N}'$-adic completion. Let $\mathcal{L}'' :  \mathbf{Q}(\zeta_{p^r})^{\times} \otimes_{\mathbf{Z}} \mathbf{Z}/p^r\mathbf{Z} \rightarrow \mathbf{Z}/p^r\mathbf{Z}$ be the composition of $\mathcal{L}'$ with the group homomorphism $\mathbf{Q}(\zeta_{p^r})^{\times} \otimes_{\mathbf{Z}} \mathbf{Z}/p^r\mathbf{Z} \rightarrow  \mathbf{Q}(\zeta_N, \zeta_{p^r})^{\times} \otimes_{\mathbf{Z}} \mathbf{Z}/p^r\mathbf{Z}$.

One easily sees that the $\mathfrak{N}$-adic completion of $L_r$ is the extension of $\mathbf{Q}_N$ obtained by adjoining a $p^r$th root of $N$. The classical properties of the norm residue symbol (recalled for instance in \cite[Section 3.1]{Lecouturier_class_group}) show that $\legendre{g_{r,\chi_p^{-1}}, L_r/\mathbf{Q}(\zeta_{p^r})}{\mathfrak{n}} = 1$ if and only if $\mathcal{L}''\left(g_{r,\chi_p^{-1}} - N \otimes v \right)=0$ where $v \in \mathbf{Z}/p^r\mathbf{Z}$ is the $\mathfrak{n}$-adic valuation of $g_{r,\chi_p^{-1}}$. By (\ref{vanishing_N_residue_symbol}), this is equivalent to $\mathcal{L}''(g_{r, \chi_p^{-1}})=0$. By definition of $g_{r, \chi_p^{-1}}$, this is equivalent to $\mathcal{L}'(\mathcal{G}_{r, \chi_p^{-1}})=0$ where $\mathcal{G}_{r, \chi_p^{-1}}$ is viewed abusively as an element of $\mathbf{Q}(\zeta_N, \zeta_{p^r})^{\times} \otimes_{\mathbf{Z}} \mathbf{Z}/p^r\mathbf{Z}$.

It is well-known (\cf for instance  \cite[Section 2]{Garett}) that we have, for all $\tau \in \Gal(\mathbf{Q}(\zeta_{p^r})/\mathbf{Q})$:
\begin{equation*}
\frac{\tau(\mathcal{G}_r)}{(\zeta_N-1)^{\Rep_r(\tau)\cdot \frac{N-1}{p^r}}} \equiv -\frac{1}{(\Rep_r(\tau)\cdot \frac{N-1}{p^r})!} \text{ (modulo }\mathfrak{N}'\text{).}
\end{equation*}
where we recall that $\Rep_r(\tau)$ is the unique integer in $\{0, 1, ..., p^r-1\}$ such that $\tau(\zeta_{p^r})=\zeta_{p^r}^{\Rep_r(\tau)}$.
Thus, we have in $\mathbf{Z}/p^r\mathbf{Z}$:
\begin{equation}\label{kummer_congruence_gauss_sum}
\mathcal{L}'(\mathcal{G}_{r, \chi_p^{-1}}) = \sum_{a=1 \atop \gcd(a, p)=1}^{p^r-1} a\cdot \log\left(-\frac{1}{(a\cdot \frac{N-1}{p^r})!}\right) \text{ .}
\end{equation}
In order to conclude the proof of Lemma \ref{big_thm_log_unit_2}, it suffices to prove the following identity.

\begin{lem}\label{proof_K_C_identity_Gamma}
We have:
$$\sum_{a=1 \atop \gcd(a, p)=1}^{p^r-1} a\cdot \log((a\cdot \frac{N-1}{p^r})!)  \equiv -\frac{2\cdot (p-1)}{3}\cdot \sum_{k=1}^{\frac{N-1}{2}} k \cdot \log(k) \text{ (modulo }p^r\text{)} \text{ .}$$
\end{lem}
\begin{proof}
We prove this by induction on $r$. Let $s$ be an integer such that $1 \leq s \leq r$. We follow the computations of the forthcoming work of Karl Schaefer and Eric Stubley, but we do them modulo $p^s$ and not modulo $p$ as they do. We have, in $\mathbf{Z}/p^s\mathbf{Z}$:
\begin{align*}
\sum_{k=1}^{N-1} k^2\cdot \log(k) &= \sum_{a=1}^{p^s-1}\sum_{b=0}^{\frac{N-1}{p^s}-1} (a+b\cdot p^s)^2\cdot \log(a+b\cdot p^s) \\& = \sum_{a=1}^{p^s-1}\sum_{b=0}^{\frac{N-1}{p^s}-1} a^2\cdot \log(a+b\cdot p^s) \\&=  \sum_{a=1}^{p^s-1}a^2\cdot \sum_{b=0}^{\frac{N-1}{p^s}-1} \log(\frac{a}{p^s}+b)
\end{align*}
where in the last equality we used the fact that $\sum_{a=1}^{p^s-1} a^2 \equiv 0 \text{ (modulo }p^s\text{)}$.

Let $\Gamma_N: \mathbf{Z}_N \rightarrow \mathbf{Z}_N^{\times}$ be the Morita $N$-adic Gamma function. This is the unique continuous function $\mathbf{Z}_N \rightarrow \mathbf{Z}_N^{\times}$ satisfying $\Gamma_N(n) = (-1)^n\cdot \prod_{1 \leq i \leq n-1, \text{ pgcd(}n,N \text{)}=1} i$ if $n>1$ is an integer. 
We will use the following properties of $\Gamma_N$: 
\begin{enumerate}
\item $\Gamma_N(0)=1$.
\item If $x$, $y$ $\in \mathbf{Z}_N$ are such that $x \equiv y \text{ (modulo }N\text{)}$, then $\Gamma_N(x) \equiv \Gamma_N(y) \text{ (modulo }N\text{).}$
\item If $x \in \mathbf{Z}_N$ is such that for all integer $k$ in $\{0, ..., \frac{N-1}{p^s}-1\}$ we have $x + k \not\equiv 0 \text{ (modulo }N\text{)}$, then we have
$$\prod_{k=0}^{\frac{N-1}{p^s}-1} \Gamma_N(x+k) = (-1)^{\frac{N-1}{p^s}} \cdot \frac{\Gamma_N(x+\frac{N-1}{p^s})}{\Gamma_N(x)} \text{ .}$$
\end{enumerate}
Thus, we have:
\begin{align*}
\sum_{k=1}^{N-1} k^2\cdot \log(k) &= \sum_{a=1}^{p^s-1}a^2\cdot \log\left(\frac{\Gamma_N(\frac{a}{p^s}+\frac{N-1}{p^s})}{\Gamma_N(\frac{a}{p^s})}\right) \\& =  \sum_{a=1}^{p^s-1}a^2\cdot \log\left(\frac{\Gamma_N(\frac{a-1}{p^s})}{\Gamma_N(\frac{a}{p^s})}\right) \\&= \sum_{a=1}^{p^s-1} \left((a+1)^2-a^2\right)\cdot \log(\Gamma_N(\frac{a}{p^s})) \\&=  \sum_{a=1}^{p^s-1} \left((a+1)^2-a^2\right)\cdot \log(\Gamma_N(-\frac{N-1}{p^s}\cdot a)) \\&= \sum_{a=1}^{p^s-1} \left((p^s-a+1)^2-(p^s-a)^2\right)\cdot \log(\Gamma_N(1+a\cdot \frac{N-1}{p^s})) \\&= \sum_{a=1}^{p^s-1} \left((p^s-a+1)^2-(p^s-a)^2\right)\cdot \log((a\cdot \frac{N-1}{p^s})!)  \text{ .}
\end{align*}
Similarly, we have in $\mathbf{Z}/p^s\mathbf{Z}$:
\begin{align*}
\sum_{k=1}^{N-1}k\cdot \log(k) &=  \sum_{a=1}^{p^s-1}\sum_{b=0}^{\frac{N-1}{p^s}-1} (a+b\cdot p^s)\cdot \log(a+b\cdot p^s) \\& = \sum_{a=1}^{p^s-1} a\cdot \sum_{b=0}^{\frac{N-1}{p^s}-1} \log(\frac{a}{p^s}+b) \\& =\sum_{a=1}^{p^s-1} a\cdot \log\left(\frac{\Gamma_N(\frac{a}{p^s}+\frac{N-1}{p^s})}{\Gamma_N(\frac{a}{p^s})}\right) \\& =  \sum_{a=1}^{p^s-1} a\cdot \log\left(\frac{\Gamma_N(\frac{a-1}{p^s})}{\Gamma_N(\frac{a}{p^s})}\right) \\& = \sum_{a=1}^{p^s-1} (a+1-a)\cdot \log(\Gamma_N(\frac{a}{p^s}))\end{align*}
\end{proof}
Since $\sum_{k=1}^{N-1} k \cdot \log(k) = 0$, we get $\sum_{a=1}^{p^s-1} \log(\Gamma_N(\frac{a}{p^s})) \equiv  0 \text{ (modulo }p^s\text{)}$. Thus, we have:
$$\sum_{k=1}^{N-1} k^2  \cdot \log(k) \equiv -2 \cdot \sum_{a=1}^{p^s-1} a \cdot \log((a\cdot \frac{N-1}{p^s})!) \text{ .}$$
By Lemma \ref{even_modSymb_computation_square} (which is independent of everything else in this paper), we have:
\begin{equation}\label{proof_K_C_identiy_Gamma_lemma}
 \sum_{a=1}^{p^s-1} a \cdot \log((a\cdot \frac{N-1}{p^s})!) \equiv \frac{2}{3} \cdot \sum_{k=1}^{\frac{N-1}{2}} k \cdot \log(k)\text{ (modulo }p^s\text{).}
\end{equation}
This proves Lemma \ref{proof_K_C_identity_Gamma} for $r=1$. Assume that Lemma \ref{proof_K_C_identity_Gamma} is true for all $1 \leq s < r$. We have:
$$ \sum_{a=1}^{p^r-1} a \cdot \log((a\cdot \frac{N-1}{p^r})!) \equiv \sum_{s=0}^{r-1} \sum_{a=1 \atop \gcd(a,p)=1}^{p^{r-s}-1} p^s\cdot a \cdot \log(\frac{N-1}{p^r}\cdot p^s\cdot a)  \text{ (modulo }p^r\text{).}$$
By the induction hypothesis and (\ref{proof_K_C_identiy_Gamma_lemma}), we have:
$$\sum_{a=1 \atop \gcd(a,p)=1}^{p^r-1} a \cdot \log((a\cdot \frac{N-1}{p^r})!) \equiv \frac{2}{3}\cdot (1-\sum_{s=1}^{r-1} p^s\cdot (1-p)) $$
Since $\sum_{s=1}^{r-1} p^s \equiv -\frac{p}{p-1} \text{ (modulo }p^r\text{)}$, this concludes the induction, and thus the proof of Lemma \ref{proof_K_C_identity_Gamma}.
\end{proof}
This concludes the proof of Lemma \ref{big_thm_log_unit}.
\end{proof}
This concludes the proof of Theorem \ref{comparison_K_C}.
 \end{proof}

By point Theorem \ref{comparison_K_C} (i), we have a group isomorphism $\mathcal{K}_r/J\cdot \mathcal{K}_r \simeq \mathbf{Z}/p^r\mathbf{Z}$. Such an isomorphism follows canonically from the choice of $\log$ we have made throughout the article. If $x,y \in \mathbf{Z}[\zeta_N, \frac{1}{Np}]^{\times}$, we let $(x, y)_r$ be the image of the Steinberg symbol $\{x,y\} \in K_2(\mathbf{Z}[\zeta_N, \frac{1}{Np}])$ in $\mathcal{K}_r$ via the norm map $K_2(\mathbf{Z}[\zeta_N, \frac{1}{Np}]) \rightarrow K_2(\mathcal{O}_r)$. 

\begin{prop}\label{odd_modSymb_local_cup_product}
There is a unique group isomorphism
$$\iota_r :\mathcal{K}_r/J\cdot \mathcal{K}_r \xrightarrow{\sim} \mathbf{Z}/p^r\mathbf{Z}$$
such that for all $u$ and $v$ in $(\mathbf{Z}/N\mathbf{Z})^{\times}$, we have
$$\iota_r \left( (1-\zeta_N^u, 1-\zeta_N^v)_r\right) \equiv \log\left(\frac{u}{v}\right) \in \mathbf{Z}/p^r\mathbf{Z} \text{ .}$$
\end{prop}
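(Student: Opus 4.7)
The plan is to construct $\iota_r$ explicitly as a composition of three isomorphisms and then verify the stated formula by a direct tame symbol computation. First, the corestriction isomorphism established inside the proof of Lemma \ref{odd_modSymb_K_cyclic},
$$\mathrm{cor}:\mathcal{K}_r/J\cdot \mathcal{K}_r \xrightarrow{\sim} K_2(\mathbf{Z}[\tfrac{1}{Np}])/p^r\cdot K_2(\mathbf{Z}[\tfrac{1}{Np}]),$$
is induced (via Tate's Chern character) by the $K$-theoretic norm $K_2(\mathcal{O}_r)\to K_2(\mathbf{Z}[\tfrac{1}{Np}])$. Second, the localization sequence
$$K_2(\mathbf{Z}[\tfrac{1}{p}]) \to K_2(\mathbf{Z}[\tfrac{1}{Np}]) \xrightarrow{\tau_N} \mathbf{F}_N^{\times} \to 0$$
together with the fact that $K_2(\mathbf{Z}[\tfrac{1}{p}])/p^r = 0$ (an immediate consequence of Tate's computation of $K_2(\mathbf{Z})$, already used in Lemma \ref{odd_modSymb_K_cyclic}) yields an isomorphism $\tau_N:K_2(\mathbf{Z}[\tfrac{1}{Np}])/p^r \xrightarrow{\sim} \mathbf{F}_N^{\times}\otimes \mathbf{Z}/p^r\mathbf{Z}$. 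Third, $\log: \mathbf{F}_N^{\times}\otimes \mathbf{Z}/p^r\mathbf{Z} \xrightarrow{\sim} \mathbf{Z}/p^r\mathbf{Z}$ is by definition an isomorphism. I would \emph{define} $\iota_r := \log\circ \tau_N\circ \mathrm{cor}$.

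To verify the formula, I would evaluate $\iota_r$ on $(1-\zeta_N^u, 1-\zeta_N^v)_r$. By construction of the norm symbol, this amounts to computing the image of the Steinberg symbol $\{1-\zeta_N^u, 1-\zeta_N^v\}\in K_2(\mathbf{Z}[\zeta_N, \tfrac{1}{Np}])$ under $\tau_N$ composed with the full norm $K_2(\mathbf{Z}[\zeta_N, \tfrac{1}{Np}])\to K_2(\mathbf{Z}[\tfrac{1}{Np}])$. By functoriality of the localization sequence with respect to norms, and since $N$ is totally ramified in $\mathbf{Q}(\zeta_N)/\mathbf{Q}$ with residue extension $\mathbf{F}_N/\mathbf{F}_N$ trivial, this equals the tame symbol at the unique prime $\mathfrak{n}=(1-\zeta_N)$ above $N$, viewed in $\mathbf{F}_N^{\times}$. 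Since $v_{\mathfrak{n}}(1-\zeta_N^a)=1$ and $(1-\zeta_N^a)/(1-\zeta_N) \equiv a \pmod{\mathfrak{n}}$ for any $a\in (\mathbf{Z}/N\mathbf{Z})^{\times}$, the tame-symbol formula gives
$$\tau_{\mathfrak{n}}(\{1-\zeta_N^u, 1-\zeta_N^v\})=(-1)^{1\cdot 1}\frac{1-\zeta_N^u}{1-\zeta_N^v} \equiv -\frac{u}{v}\pmod{\mathfrak{n}}.$$
Applying $\log$ and using $\log(-1)=0$ (since $p$ is odd), I obtain $\iota_r((1-\zeta_N^u, 1-\zeta_N^v)_r)=\log(u/v)$, as required.

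For uniqueness, recall from Theorem \ref{comparison_K_C}(i) that $\mathcal{K}_r/J\cdot \mathcal{K}_r$ is cyclic of order $p^r$. Fix $u_0\in (\mathbf{Z}/N\mathbf{Z})^{\times}$ with $\log(u_0)=1$, and set $v_0=1$; then any $\iota_r$ satisfying the stated formula sends $(1-\zeta_N^{u_0}, 1-\zeta_N)_r$ to $1\in \mathbf{Z}/p^r\mathbf{Z}$, i.e.\ to a generator of the target. Because the source is cyclic of order $p^r$, this forces $(1-\zeta_N^{u_0}, 1-\zeta_N)_r$ to be a generator of $\mathcal{K}_r/J\cdot \mathcal{K}_r$ and pins down the isomorphism uniquely. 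The main technical obstacle in making this plan rigorous is the compatibility of Tate's isomorphism $K_2(\mathcal{O})/p^r \simeq H^2_{\mathrm{\acute et}}(\mathcal{O}, \mu_{p^r}^{\otimes 2})$ with the $K$-theoretic norm map on the left and the Galois-cohomological corestriction on the right, so that the first leg of my composition really is the map appearing in Theorem \ref{comparison_K_C}; this is standard but requires careful identification of trace versus norm conventions, particularly at the ramified prime above $N$.
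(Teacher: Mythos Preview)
Your proposal is correct and follows essentially the same route as the paper. The paper also builds $\iota_r$ from the tame (Hilbert) symbol at the unique prime above $N$ in $\mathbf{Q}(\zeta_N)$, verifies the formula via the same computation $(1-\zeta_N^a)/(1-\zeta_N)\equiv a\pmod{\mathfrak n}$, and descends to $\mathcal{K}_r/J\cdot\mathcal{K}_r$ by the corestriction isomorphism (\cite[Proposition 3.3.11]{Neukirch_CNF}); uniqueness is deduced from cyclicity exactly as you do. Your factoring through $K_2(\mathbf{Z}[\tfrac{1}{Np}])/p^r$ via the localization sequence is just a slightly more explicit packaging of the same argument.
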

\begin{proof}
By Matsumoto's Theorem \cite[Theorem 4.3.15]{Rosenberg}, we have a map $K_2(\mathbf{Q}(\zeta_N)) \rightarrow \mathbf{Z}/p^r\mathbf{Z}$ given by the Hilbert symbol
$$\{x,y\} \mapsto  \log\left( \overline{\frac{y^{v(x)}}{x^{v(y)}}} \right) \text{ .}$$
Here, $v$ is the $N$-adic valuation and $\overline{\frac{y^{v(x)}}{x^{v(y)}}}$ is the reduction modulo the prime above $N$ in $\mathbf{Z}[\zeta_N]$ of $\frac{y^{v(x)}}{x^{v(y)}}$. Composing with the (injective) map $K_2(\mathbf{Z}[\zeta_N, \frac{1}{Np}]) \rightarrow K_2(\mathbf{Q}(\zeta_N))$, we get a map $\varphi : K_2(\mathbf{Z}[\zeta_N, \frac{1}{Np}]) \rightarrow \mathbf{Z}/p^r\mathbf{Z}$ such that $\varphi(\{1-\zeta_N^u, 1-\zeta_N^v\}) \equiv \log(\frac{u}{v})$ modulo $p^r$. The map $\varphi$ is $\Gal(\mathbf{Q}(\zeta_N)/\mathbf{Q})$-equivariant, where the action of $\Gal(\mathbf{Q}(\zeta_N)/\mathbf{Q})$ on $\mathbf{Z}/p^r\mathbf{Z}$ is trivial. By \cite[Proposition 3.3.11]{Neukirch_CNF}, this induces the map $\iota_r$ of the statement (which is unique because the elements $\log(\frac{u}{v})$ generate $\mathbf{Z}/p^r\mathbf{Z}$).
\end{proof}

Let $\Delta_r = [x]-[1] \in \Lambda_r$ where $x \in P_r$ is such that $\log(x) \equiv 1\text{ (modulo }p^r\text{)}$. The element $\Delta_r$ is a generator of $J$. The multiplication by $\Delta_r$ gives a natural surjective homomorphism $\delta_r' : \mathcal{K}_r/J\cdot \mathcal{K}_r \rightarrow J\cdot \mathcal{K}_r/J^2\cdot \mathcal{K}_r$, which is an isomorphism if and only if $n(r,p) \geq 2$. In this case, we define $\delta_r : \mathbf{Z}/p^r\mathbf{Z} \xrightarrow{\sim} J\cdot \mathcal{K}_r/J^2\cdot \mathcal{K}_r$ by $\delta_r = \delta_r' \circ \iota_r^{-1}$.

\subsection{Refined Sharifi theory}\label{odd_modSymb_Section_Sharifi}
We follow the notation of sections \ref{odd_modSymb_section_hida}, \ref{odd_modSymb_section_eisenstein_ideal} and \ref{odd_modSymb_section_class_group}. 
In this section, we explain, inspired by Sharifi \cite{Sharifi_survey}, the link between $H_1(X_1^{(p^r)}(N), C_0^{(p^r)}, \mathbf{Z}_p)$ and the $K$-group $\mathcal{K}_r$ studied in section \ref{odd_modSymb_section_class_group}. 

As in section \ref{odd_modSymb_section_hida}, if $u, v \in (\mathbf{Z}/N\mathbf{Z})^{\times}$, we have the Manin symbol $\xi_{\Gamma_1(N)}([u,v]) \in \tilde{H}_{\Gamma_1(N)}$. Recall that $M_{\Gamma_1(N)}^0 = \mathbf{Z}_p[((\mathbf{Z}/N\mathbf{Z})^{\times})^2/\pm]$. Following Goncharov \cite{Goncharov}, consider the map
$$ \varpi' : M_{\Gamma_1(N)}^0 \rightarrow K_2\left(\mathbf{Z}[\zeta_N, \frac{1}{Np}]\right) \otimes_{\mathbf{Z}} \mathbf{Z}_p$$
given by 
$$\varpi'\left([u,v] \right) =\left\{1-\zeta_N^u, 1-\zeta_N^v\right\} \otimes 1 \text{ .}$$

\begin{lem}\label{odd_modSymb_annihilates_Manin}
The map $\varpi'$ is even with respect to the complex conjugation $c$. It factors through $\xi_{\Gamma_1(N)}$. Hence we get a map
$$\varpi : \left( \tilde{H}_{\Gamma_1(N)}\right)_+ \rightarrow K_2\left(\mathbf{Z}[\zeta_N, \frac{1}{Np}]\right) \otimes_{\mathbf{Z}} \mathbf{Z}_p\text{ }$$
such that
$$\varpi\left((1+c)\cdot \xi_{\Gamma_1(N)}([u,v])\right) = \left\{1-\zeta_N^u, 1-\zeta_N^v\right\} \text{ .} $$
\end{lem}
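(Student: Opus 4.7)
The plan is to establish three claims in sequence: (a) $\varpi'$ is well-defined on $M^0_{\Gamma_1(N)}$ (whose elements are pairs modulo $\pm 1$); (b) $\varpi'(c\cdot x) = \varpi'(x)$ for the complex-conjugation action; (c) $\varpi'$ vanishes on $\Ker(\xi^0_{\Gamma_1(N)})$. The unifying mechanism is that in $K_2(\mathbf{Z}[\zeta_N, \tfrac{1}{Np}])\otimes_{\mathbf{Z}}\mathbf{Z}_p$ with $p\geq 5$ and $p\nmid N$, every Steinberg symbol one of whose entries is a root of unity (a power of $\zeta_N$ or $\pm 1$) vanishes: it is torsion of order dividing $2N$, which is invertible in $\mathbf{Z}_p$.

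For (a), I would expand $\varpi'([-c,-d])$ using $1-\zeta_N^{-c}=-\zeta_N^{-c}(1-\zeta_N^c)$ and bilinearity of the Steinberg symbol; the extra terms each involve a root-of-unity entry and so vanish. For (b), complex conjugation acts on Manin symbols by $[u,v]\mapsto [u,-v]$ (right multiplication by $\mathrm{diag}(1,-1)$ on representing matrices), and the same torsion-killing argument gives $\varpi'([u,-v])=\varpi'([u,v])$.

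The core content is (c). By Proposition \ref{generation_Manin_C_0^{(p^r)}}, $\Ker(\xi^0_{\Gamma_1(N)}) = R^0_{\Gamma_1(N)}$ is generated by $\sigma$-invariants, $\tau$-invariants, and the symbols $[-d,d]$. The element $[-d,d]$ maps to $\{1-\zeta_N^{-d}, 1-\zeta_N^d\}$; after reducing the first entry modulo roots of unity this becomes $\{1-\zeta_N^d, 1-\zeta_N^d\}$, which vanishes in $K_2\otimes\mathbf{Z}_p$ for odd $p$ via $\{x,x\}=\{x,-1\}$. Since $N>3$, a direct check of the fixed-point equations shows that $\sigma$ and $\tau$ act freely on $((\mathbf{Z}/N\mathbf{Z})^\times)^2/\pm$, so the invariants are orbit sums. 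A $\sigma$-orbit sum $[u,v]+[v,-u]$ maps to $\{1-\zeta_N^u,1-\zeta_N^v\}+\{1-\zeta_N^v,1-\zeta_N^u\}$ modulo torsion, hence vanishes by antisymmetry. A $\tau$-orbit sum $[u,v]+[v,-u-v]+[-u-v,u]$ requires, after removing roots of unity, the Steinberg cocycle identity
\begin{equation*}
\{1-x,1-y\}+\{1-y,1-xy\}+\{1-xy,1-x\}=0
\end{equation*}
with $x=\zeta_N^u$, $y=\zeta_N^v$, modulo $2$-torsion in $K_2\otimes\mathbf{Z}_p$. This is the hard part of the argument: it should follow from the Steinberg relation $\{xy,1-xy\}=0$ (which gives $\{x,1-xy\}=-\{y,1-xy\}$) together with $\{x,1-x\}=0$, $\{y,1-y\}=0$, bilinearity, and antisymmetry, but the manipulations must carefully track auxiliary root-of-unity terms that disappear only after $\otimes\mathbf{Z}_p$.

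Once (a)--(c) are in place, $\varpi'$ descends to a $c$-invariant map $\varpi_0 : \tilde H_{\Gamma_1(N)} \to K_2(\mathbf{Z}[\zeta_N, \tfrac{1}{Np}])\otimes_{\mathbf{Z}}\mathbf{Z}_p$. Since $2$ is invertible in $\mathbf{Z}_p$, the idempotent $\tfrac{1+c}{2}$ splits $\tilde H_{\Gamma_1(N)}=(\tilde H_{\Gamma_1(N)})_+\oplus (\tilde H_{\Gamma_1(N)})_-$ with $\varpi_0$ vanishing on the anti-invariant summand. Defining $\varpi:(\tilde H_{\Gamma_1(N)})_+\to K_2\otimes_{\mathbf{Z}}\mathbf{Z}_p$ by $\varpi((1+c)y):=\varpi_0(y)$ is then well-defined and yields the stated formula $\varpi((1+c)\xi_{\Gamma_1(N)}([u,v]))=\{1-\zeta_N^u,1-\zeta_N^v\}$ by construction.
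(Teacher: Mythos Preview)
Your proposal is correct and follows essentially the same route as the paper: reduce to checking that $\varpi'$ respects the $\pm$-quotient, is $c$-invariant, and kills the $\sigma$-, $\tau$-, and $[-d,d]$-generators of $R^0_{\Gamma_1(N)}$, using bilinearity, antisymmetry, and the vanishing in $K_2\otimes\mathbf{Z}_p$ of any symbol with a root-of-unity entry. For the $\tau$-relation you flag as the hard part, the paper dispatches it cleanly via the explicit algebraic identity
\[
\frac{\zeta_N^v(1-\zeta_N^u)}{1-\zeta_N^{u+v}} + \frac{1-\zeta_N^v}{1-\zeta_N^{u+v}} = 1,
\]
so that the Steinberg relation $\{A,1-A\}=0$ applied to $A=\zeta_N^v(1-\zeta_N^u)/(1-\zeta_N^{u+v})$ and expanded bilinearly gives exactly your cocycle identity $\{1-x,1-y\}+\{1-y,1-xy\}+\{1-xy,1-x\}=0$.
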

\begin{proof}
The proof is the same as the proof of \cite[Theorem 5.1]{Busuioc} (also independently found by Sharifi). We give the proof here for the convenience of the reader. By Proposition \ref{generation_Manin_C_0^{(p^r)}}, it suffices to prove the following equalities for all $u, v \in (\mathbf{Z}/N\mathbf{Z})^{\times}$:
\begin{enumerate}
\item $\varpi'([-u,v]) = \varpi'([u,v])$.
\item $\varpi'([u,v]) + \varpi'([-v,u]) = 0$.
\item If  $u+v \neq 0$, we have $\varpi'([u,v]) + \varpi'([-(u+v),u]) + \varpi'([v,-(u+v)]) = 0$.
\item $\varpi'([u,-u])=0$. 
\end{enumerate}
These equalities follow from those general identities.

\begin{enumerate}[label=(\alph*)]
\item The Steinberg relations. If $x \in \mathbf{Z}[\zeta_N, \frac{1}{Np}]^{\times}$ and $1-x \in \mathbf{Z}[\zeta_N, \frac{1}{Np}]^{\times}$, then we have $\{x, 1-x\}=0$.
\item Antisymmetry. For all $x$, $y$ $\in \mathbf{Z}[\zeta_N, \frac{1}{Np}]^{\times}$, we have $\{x,y\}=-\{y,x\}$.

\item For all root of unity $\zeta$ of order prime to $p$ and all $x \in \mathbf{Z}[\zeta_N, \frac{1}{Np}]^{\times}$, we have $\{\zeta,x\}=0$.
\end{enumerate}

We prove (i). We have $\varpi'([-u,v]) = \{1-\zeta_N^{-u}, 1-\zeta_N^v\} = \{\zeta_N^u-1, 1-\zeta_N^v\} - \{\zeta_N^u, 1-\zeta_N ^v\} = \{\zeta_N^u-1, 1-\zeta_N^v\} = \{1-\zeta_N^u, 1-\zeta_N^v\} = \varpi'([u,v])$. 

We prove (ii). We have $\varpi([-v,u]) = \{1-\zeta_N^{-v}, 1-\zeta_N^u\} = \{1-\zeta_N^{v}, 1-\zeta_N^u\}= -\{1-\zeta_N^{u}, 1-\zeta_N^v\} = -\varpi([u,v])$. 

We prove (iii). Note that we have $$\frac{\zeta_N^v\cdot (1-\zeta_N^u)}{1-\zeta_N^{u+v}}+ \frac{1-\zeta_N^v}{1-\zeta_N^{u+v}}=1 \text{ .}$$
This shows that 
$$\left\{\frac{\zeta_N^v\cdot (1-\zeta_N^u)}{1-\zeta_N^{u+v}}, \frac{1-\zeta_N^v}{1-\zeta_N^{u+v}} \right\} = 0 \text{ .}$$ 
Using the facts above and the bilinearity of $\{\cdot, \cdot\}$, this proves (iii). We have: $\{1-\zeta_N^{-u}, 1-\zeta_N^u\} = \{1-\zeta_N^{u}, 1-\zeta_N^u\}= 0$, which proves (iv).
\end{proof}

The following conjecture is inspired by the work of Sharifi. We refer to \cite[Section 2]{Sharifi_survey} for details about Sharifi's conjectures. However, our situation is not considered by Sharifi and Fukaya--Kato, who consider modular curves of level divisible by $p$ \cite[Section 5.2]{Fukaya_Kato}.

\begin{conj}\label{odd_modSymb_Sharifi_conj}
The map $\varpi$ is annihilated by the Hecke operator $T_{n}-\sum_{d \mid n, \gcd(d,N)=1} \frac{n}{d}\cdot \langle d \rangle$ for all $n \geq 1$ such that $\gcd(n,p)=1$.
\end{conj}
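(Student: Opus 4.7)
Since the statement is an open conjecture (acknowledged by the author to be inspired by, but not covered by, Sharifi--Fukaya--Kato), any realistic proposal is necessarily speculative. Here is the strategy I would attempt.

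\medskip

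First I would reduce the problem to prime indices. Using that the Hecke algebra $\widetilde{\mathbb{T}'}^{(p^r)}$ is generated by $\{T_\ell\}_{\ell \nmid p}$ together with the diamond operators, and that $\varpi$ is plainly $\Gal(\mathbf{Q}(\zeta_N)/\mathbf{Q})$-equivariant (so it commutes with the diamond action via the identification $\langle d\rangle \leftrightarrow \sigma_d \in \Gal(\mathbf{Q}(\zeta_N)/\mathbf{Q})$), the conjecture reduces to verifying, for each prime $\ell \neq N,p$, the single identity
\[
\varpi\bigl((T_\ell - \ell\,\langle 1\rangle - \langle\ell\rangle)\,(1+c)\,\xi_{\Gamma_1(N)}([u,v])\bigr) \;=\; 0
\qquad \text{in } K_2(\mathbf{Z}[\zeta_N,\tfrac{1}{Np}])\otimes \mathbf{Z}_p
\]
for all $u,v \in (\mathbf{Z}/N\mathbf{Z})^{\times}$, since these symbols generate $(\tilde H_{\Gamma_1(N)})_+$ by Proposition \ref{generation_Manin_C_0^{(p^r)}}. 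By Lemma \ref{odd_modSymb_annihilates_Manin}, I only need to verify the corresponding identity in $K_2(\mathbf{Z}[\zeta_N,\tfrac{1}{Np}])\otimes\mathbf{Z}_p$ at the level of $\varpi'$ modulo the explicit Manin relations.

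\medskip

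Next I would do the computation explicitly. Using the standard right-coset representatives for $\Gamma_1(N)\begin{pmatrix}1&0\\0&\ell\end{pmatrix}\Gamma_1(N)$, the Hecke operator $T_\ell$ at level $\Gamma_1(N)$ admits a formula of the shape
\[
T_\ell\,[u,v] \;=\; \sum_{i=0}^{\ell-1} [u,\ell v + i u] \;+\; \langle \ell\rangle\cdot [\ell u, v].
\]
Applying $\varpi'$ and using bilinearity of Steinberg symbols, the desired identity becomes
\[
\Bigl\{1-\zeta_N^u,\ \prod_{i=0}^{\ell-1}(1-\zeta_N^{\ell v + iu})\Bigr\} + \bigl\{1-\zeta_N^{\ell u},\,1-\zeta_N^{\ell v}\bigr\}
\;\equiv\; \ell\cdot\{1-\zeta_N^u,1-\zeta_N^v\} + \{1-\zeta_N^{\ell u},1-\zeta_N^{\ell v}\}
\]
(with the last term coming from the $\Gal$-equivariance of $\varpi$ under $\langle \ell\rangle$). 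After cancellation, everything reduces to the cyclotomic identity
\[
\Bigl\{1-\zeta_N^u,\ \prod_{i=0}^{\ell-1}\tfrac{1-\zeta_N^{\ell v + iu}}{1-\zeta_N^{v}}\Bigr\} \;\equiv\; 0 \quad\text{in } K_2(\mathbf{Z}[\zeta_N,\tfrac{1}{Np}])\otimes\mathbf{Z}_p.
\]
The product inside is precisely a cyclotomic distribution quotient; one checks that it lies in the subgroup of units on which the Steinberg symbol with $1-\zeta_N^u$ is trivial (after reducing modulo $p$ and using Steinberg relations together with $\{x,-x\}=0$ and $\{x,\zeta\}=0$ for roots of unity of order prime to $p$). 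This last reduction is the analogue of the ``Beilinson--Kato distribution relation'' and is the technical heart of Fukaya--Kato's original proof.

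\medskip

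An alternative, and in my opinion more promising, route is to deduce the conjecture from the Fukaya--Kato theorem at level $Np^s$ by a trace/norm argument. The degeneracy map $X_1(Np^s)\to X_1(N)$ pushes Manin symbols to Manin symbols and is compatible with the Steinberg-symbol map from $\mathbf{Z}[\zeta_{Np^s},\tfrac{1}{Np}]$ to $\mathbf{Z}[\zeta_N,\tfrac{1}{Np}]$ via the norm on $K_2$. One could try to show that the failure of Hecke-equivariance at level $N$ lies in the kernel of a suitable norm, and invoke \cite[Theorem 5.2.3]{Fukaya_Kato} to kill it.

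\medskip

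The main obstacle is the very last step of either approach. In the direct approach, the vanishing of the distribution-quotient Steinberg symbol is not formal: it requires a careful verification that the unit $\prod_i (1-\zeta_N^{\ell v+iu})/(1-\zeta_N^v)^\ell$ is, modulo $p^r$-th powers, a product of roots of unity and Steinberg-trivial elements with respect to $1-\zeta_N^u$. In the Fukaya--Kato reduction, the difficulty is that their argument uses $p$-adic Hodge theory and the Coleman/Perrin-Riou regulator at level divisible by $p$; transporting this to level prime to $p$ is exactly the gap that makes the statement conjectural. I would expect the cleanest resolution to come from establishing a ``tame'' Sharifi map directly, building on the refined Hida-theoretic machinery of Section \ref{odd_modSymb_section_hida} and the $K_2$/class-group comparison of Theorem \ref{comparison_K_C}, rather than from a Fukaya--Kato-style descent.
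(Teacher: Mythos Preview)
The statement is labelled a \emph{conjecture} in the paper and is not proved there; the paper only establishes the cases $\ell=2$ and $\ell=3$ (the theorem immediately following the conjecture), by means of the explicit rational-function identities of McCallum--Sharifi and Sharifi. These identities are entirely ad hoc, and the author records in a remark that the same method already fails for $\ell=5$. So there is no ``paper's own proof'' to compare against, only this fragmentary evidence and a discussion of possible approaches.

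Your direct strategy has a concrete error: the formula you write for $T_\ell$ on Manin symbols is not the correct one. Manin symbols live in $\Gamma_1(N)\backslash\PSL_2(\mathbf{Z})$, and the Hecke action is given by Merel's set $\mathcal{X}_\ell$ (the formula the paper actually uses in its $\ell=2,3$ computation), not by the naive coset representatives $\begin{pmatrix}1&i\\0&\ell\end{pmatrix}$ acting on the right. For $\ell=2$ Merel's formula produces the four terms $[u,2v]+[2u,v]+[u+v,2v]+[2u,u+v]$, which does not match your $[u,2v]+[u,2v+u]+\langle 2\rangle[2u,v]$. Once you replace your formula by Merel's, the reduction you sketch no longer lands on a clean ``distribution quotient'': the Steinberg symbol you would need to kill is a genuinely complicated expression, and its vanishing for general $\ell$ is exactly the open problem. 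In short, the step you call ``the technical heart'' is not a known fact in this tame-level setting; it is the conjecture itself.

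On your alternative route: the paper also discusses descent from Fukaya--Kato, and dismisses it for the same reason you flag (their argument is $p$-adic and does not transport to level prime to $p$). The approach the paper actually singles out as most promising is different from both of yours: it is Goncharov's map $\Gamma$ built from Siegel units $g_{0,u/N}$ on $Y_1(N)$, with the observation that Hecke-equivariance of $\Gamma$ would imply the conjecture by specialisation at the cusp $\infty$. This is closer in spirit to a Beilinson--Kato argument than to either a direct Steinberg computation or a norm descent, and is the direction the author implicitly recommends.
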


We end this section by stating a conjecture about our $K$-groups generalizing Theorem \ref{comparison_K_C}. The norm map gives a group homomorphism $K_2\left(\mathbf{Z}[\zeta_N, \frac{1}{Np}]\right) \otimes_{\mathbf{Z}} \mathbf{Z}_p\rightarrow \mathcal{K}_r$ sending $\{x,y\} \otimes 1$ to $(x,y)_r$. By  Proposition \ref{odd_modSymb_refined_Hida}, the map $\varpi$ induces a $\Lambda_r$-module homomorphism $$\varpi^{(p^r)} : \left(\tilde{H}^{(p^r)}\right)_+ \rightarrow \mathcal{K}_r$$ such that $$\varpi^{(p^r)}\left((1+c)\cdot \xi_{\Gamma_1^{(p^r)}(N)}([u,v])\right) = (1-\zeta_N^u, 1-\zeta_N^v)_r$$ for all $u,v \in (\mathbf{Z}/N\mathbf{Z})^{\times}$. By Proposition \ref{odd_modSymb_local_cup_product}, the map $\varpi^{(p^r)}$ is surjective modulo $J$, so it is surjective.

\begin{conj}\label{odd_modSymb_conjecture_class_group}
The map $\varpi^{(p^r)}$ induces an isomorphism of $\Lambda_r$-modules
$$\left(\tilde{H}^{(p^r)}\right)_+/\left(\tilde{I}_{\infty}+(p^r)\right) \cdot \left(\tilde{H}^{(p^r)}\right)_+  \xrightarrow{\sim} \mathcal{K}_r \text{ .}$$
\end{conj}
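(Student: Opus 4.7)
The plan is to reduce the conjecture to Conjecture \ref{odd_modSymb_Sharifi_conj} together with a character-by-character cardinality comparison that is, in essence, the Iwasawa Main Conjecture for the $(-1)$-twist. First, assuming Conjecture \ref{odd_modSymb_Sharifi_conj}, every generator $T_n - \sum_{d \mid n, \gcd(d,N)=1} \frac{n}{d}\langle d \rangle$ of $\tilde{I}_{\infty}$ annihilates the image of $\varpi^{(p^r)}$, and $(p^r)$ trivially does, so $\varpi^{(p^r)}$ descends to a well-defined $\Lambda_r$-linear map
$$\bar{\varpi} : \left(\tilde{H}^{(p^r)}\right)_+/(\tilde{I}_{\infty} + (p^r)) \cdot \left(\tilde{H}^{(p^r)}\right)_+ \twoheadrightarrow \mathcal{K}_r,$$
which is surjective because $\varpi^{(p^r)}$ is. By Theorem \ref{odd_modSymb_structure_H_1} combined with Theorem \ref{odd_modSymb_Eisenstein_ideal_X1}(i), the source is canonically isomorphic to $\Lambda_r/(\zeta^{(p^r)}, p^r)$, so it remains only to show that $\bar\varpi$ is injective, or equivalently that both sides have the same cardinality.

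Since $\Lambda_r$ is semi-local, the argument decomposes over the $\overline{\mathbf{Q}}_p$-characters $\chi$ of $P_r$. For each $\chi$, write $\mathcal{O}_\chi = \mathbf{Z}_p[\mathrm{Im}(\chi)]$. The $\chi$-component of the source has order $|\mathcal{O}_\chi/(\zeta^{(p^r)}(\chi^{-1}), p^r)|$, and one checks that $\zeta^{(p^r)}(\chi^{-1})$ is, up to units, a generalized Bernoulli number $B_{2,\chi}$ (modulo $p^r$) via the explicit formula for $B_2(x/N)$. On the $\mathcal{K}_r$ side, the exact sequence (\ref{odd_modSymb_short_exact_sqn_2}) gives $|\mathcal{K}_r^{(\chi)}| = |(\Pic(\mathcal{A}_r)\otimes \mathbf{Z}/p^r)_{(-1)}^{(\chi)}|$ for $\chi \neq 1$, and an extra factor of $p^r$ for $\chi = 1$ (which precisely matches the ``trivial zero'' of $B_{2,\chi}$ at $\chi = 1$ produced by the cyclotomic $\mathbf{Z}/p^r\mathbf{Z}$ quotient).

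The key remaining input is that the Fitting ideal of $(\Pic(\mathcal{A}_r)\otimes \mathbf{Z}/p^r)_{(-1)}^{(\chi)}$ over $\mathcal{O}_\chi$ equals $(B_{2,\chi}, p^r)$. Morally this is the Iwasawa Main Conjecture for the $(-1)$-eigenspace over the cyclotomic tower containing $L_r/\mathbf{Q}(\zeta_{p^r})$, hence accessible from Mazur--Wiles or its equivariant refinements (Burns--Greither, Ritter--Weiss). A more hands-on route, better suited to the explicit nature of the rest of the paper, would be to construct an Euler system of Gauss sums $\mathcal{G}_r$ (as in Lemma \ref{big_thm_log_unit_2}) and apply a Kolyvagin--Rubin argument at the single prime $N$; the calculations of Section \ref{odd_modSymb_section_class_group}, in particular Lemma \ref{proof_K_C_equality_gauss_unit} and the identification of the Stickelberger unit $g_{r,\chi_p^{-1}}$, already supply all the essential ingredients in the first layer $J\cdot \mathcal{K}_r/J^2\cdot \mathcal{K}_r$.

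The principal obstacle is exactly this last step: promoting the divisibility between Stickelberger elements and the Fitting ideal of the $(-1)$-Picard group from a bound on characteristic ideals (available via Mazur--Wiles) to an \emph{equality of finite-level Fitting ideals}. This is subtle because the $(-1)$-twist does not fall directly into the classical $+$/$-$ dichotomy of cyclotomic Iwasawa theory, and because one needs control at level $N$ rather than the full $p$-adic tower. A secondary obstacle is making Conjecture \ref{odd_modSymb_Sharifi_conj} unconditional; here the natural attempt is to adapt the Fukaya--Kato proof of Sharifi's conjecture at level $Np$ to the prime-to-$p$ level $N$, which should be cleaner since the Hecke algebra $\tilde{\mathbb{T}}^{(p^r)}$ is Gorenstein at the Eisenstein maximal ideals and the Eisenstein ideal is locally principal by the results already invoked in Section \ref{odd_modSymb_section_eisenstein_ideal}.
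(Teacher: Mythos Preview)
The statement you are trying to prove is labeled a \emph{conjecture} in the paper, and the paper does not prove it. What the paper does establish is exactly the reduction you carry out in your first two paragraphs: assuming Conjecture~\ref{odd_modSymb_Sharifi_conj}, the map $\varpi^{(p^r)}$ descends to a surjection $\Lambda_r/(\zeta^{(p^r)},p^r)\twoheadrightarrow\mathcal{K}_r$ (using Theorem~\ref{odd_modSymb_structure_H_1}), and hence the conjecture is \emph{equivalent} to the statement that the annihilator of the cyclic $\Lambda_r$-module $\mathcal{K}_r$ is exactly $(\zeta^{(p^r)})+(p^r)$. This is Proposition~\ref{odd_modSymb_consequence_class_group}. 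The paper then stops: it identifies this remaining condition, for $r=1$, as a special case of \cite[Conjecture~1.10]{Lecouturier_class_group}, reports numerical verification for $p=5$ and $N\le 12791$, and leaves it open.

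Your third and fourth paragraphs attempt to go beyond what the paper proves, and the obstacle you name is real and is precisely why the statement remains conjectural. The extension $L_r/\mathbf{Q}$ is a compositum of a piece of the $N$-cyclotomic tower with a piece of the $p$-cyclotomic tower; the classical Iwasawa Main Conjecture (Mazur--Wiles) controls characteristic ideals along the $p$-cyclotomic $\mathbf{Z}_p$-extension, not Fitting ideals at a fixed finite layer in this mixed direction. Equivariant refinements do not directly give the finite-level equality of Fitting ideals you need here, and the Euler-system sketch you propose would require new input at primes above $N$ that is not supplied by the Gauss-sum computations of Section~\ref{odd_modSymb_section_class_group} (those computations only touch $J\cdot\mathcal{K}_r/J^2\cdot\mathcal{K}_r$, i.e.\ the first non-trivial graded piece). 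So your proposal correctly reproduces the paper's reduction, but the completion you sketch is not a proof; the paper regards this step as open.
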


The $\Lambda_r$-module $\mathcal{K}_r$ is cyclic generated by $(1-\zeta_N^x, 1-\zeta_N^y)_r$ for all $x$, $y$ $\in (\mathbf{Z}/N\mathbf{Z})^{\times}$ such that $xy^{-1}$ is not a $p$th power. Using Stickelberger theory, one can show that the annihilator of $\mathcal{K}_r$ in $\Lambda_r$ contains $(p^r)+(\zeta^{(p^r)})$. Theorem \ref{odd_modSymb_structure_H_1} gives us the following structure theorem for the $\Lambda_r$-module $\mathcal{K}_r$.

\begin{prop}\label{odd_modSymb_consequence_class_group}
Assume Conjecture \ref{odd_modSymb_Sharifi_conj}. Then Conjecture \ref{odd_modSymb_conjecture_class_group} is true if and only if the annihilator of $\mathcal{K}_r$ in $\Lambda_r$ is $(\zeta^{(p^r)})+(p^r)$.

If $r=1$, this is true if and only if the $\mathbf{F}_p$-rank of $\mathcal{K}_1$ is the largest integer $i \in \{1, 2, ..., p-1\}$ such that for all $1 \leq j < i$, we have
$$\sum_{k\in (\mathbf{Z}/N\mathbf{Z})^{\times}} \B_2\left(\frac{k}{N}\right) \cdot \log(k)^j \equiv 0 \text{ (modulo }p\text{)}$$
(this is a particular case for $\chi = \omega_p^{-1}$ of \cite[Conjecture 1.10]{Lecouturier_class_group}).
\end{prop}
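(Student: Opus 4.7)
The plan rests on combining Theorem \ref{odd_modSymb_structure_H_1} with an explicit computation inside the Artinian local ring $\Lambda_1/(p)$.

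For the general $r$ part, Conjecture \ref{odd_modSymb_Sharifi_conj} asserts that $\varpi$ annihilates every generator of the Eisenstein ideal $\tilde{I}_\infty'$; since $\mathcal{K}_r$ is killed by $p^r$, the surjection $\varpi^{(p^r)}:\left(\tilde{H}^{(p^r)}\right)_+\twoheadrightarrow\mathcal{K}_r$ factors through $\left(\tilde{H}^{(p^r)}\right)_+/(\tilde{I}_\infty+(p^r))\cdot \left(\tilde{H}^{(p^r)}\right)_+$. Applying the isomorphism of Theorem \ref{odd_modSymb_structure_H_1} reduced modulo $p^r$, this descends to a $\Lambda_r$-linear surjection
$$\Lambda_r/((\zeta^{(p^r)})+(p^r))\twoheadrightarrow \mathcal{K}_r.$$
Because $\mathcal{K}_r$ is cyclic over $\Lambda_r$ (as recalled after Theorem \ref{comparison_K_C}), it equals $\Lambda_r/\mathrm{Ann}_{\Lambda_r}(\mathcal{K}_r)$, and Conjecture \ref{odd_modSymb_conjecture_class_group} holds precisely when $\mathrm{Ann}_{\Lambda_r}(\mathcal{K}_r)=(\zeta^{(p^r)})+(p^r)$, giving the first equivalence.

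For $r=1$, I will compute $\zeta^{(p)}\bmod p$ explicitly in order to read off the $\mathbf{F}_p$-dimension of $\Lambda_1/((\zeta^{(p)})+(p))$. The fixed $\log$ identifies $P_1\simeq \mathbf{Z}/p\mathbf{Z}$; letting $g\in P_1$ satisfy $\log(g)=1$ and $Y=[g]-1$, we obtain $\Lambda_1/(p)\simeq \mathbf{F}_p[Y]/(Y^p)$ with maximal ideal $(Y)=J$. For $x\in(\mathbf{Z}/N\mathbf{Z})^\times$ one has $[x^{-1}]\equiv (1+Y)^{-\log(x)}=\sum_{j=0}^{p-1}\binom{-\log(x)}{j}Y^j \pmod p$. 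Substituting into $\zeta^{(p)}=\sum_x \B_2(x/N)[x^{-1}]$ yields
$$\zeta^{(p)}\equiv \sum_{j=0}^{p-1} c_j\,Y^j\pmod p,\qquad c_j=\frac{(-1)^j}{j!}\sum_{x}\B_2\!\left(\tfrac{x}{N}\right)\prod_{s=0}^{j-1}(\log(x)+s).$$
Since $\prod_{s=0}^{j-1}(a+s)$ is a monic polynomial of degree $j$ in $a$, an upper-triangular change of basis converts the vanishing conditions $c_0=\cdots=c_{i-1}\equiv 0$ into the vanishing of $L_j:=\sum_x \B_2(x/N)\log(x)^j$ for $0\leq j<i$. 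A direct computation using $B_2(t)=t^2-t+\tfrac{1}{6}$ and $N\equiv 1\pmod p$ gives $L_0\equiv 0\pmod p$ automatically, so the condition on $c_0,\ldots,c_{i-1}$ reduces to the displayed Bernoulli identities on $L_1,\ldots,L_{i-1}$.

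Finally, I would record that $\zeta^{(p)}\not\equiv 0\pmod p$: this follows from the nontriviality of the Stickelberger element (equivalently, from $\mathcal{K}_1/J\cdot \mathcal{K}_1\simeq \mathbf{Z}/p\mathbf{Z}$ of Theorem \ref{comparison_K_C}(i), which prevents the surjection $\Lambda_1/((\zeta^{(p)})+(p))\twoheadrightarrow \mathcal{K}_1$ from being trivial). This forces the $Y$-adic valuation $j_0$ of $\zeta^{(p)}\bmod p$ to lie in $\{1,\dots,p-1\}$, and $j_0$ is exactly the largest $i\in\{1,\ldots,p-1\}$ with $L_1\equiv\cdots\equiv L_{i-1}\equiv 0\pmod p$. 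Thus $\dim_{\mathbf{F}_p}\Lambda_1/((\zeta^{(p)})+(p))=j_0$, and the surjection from the first part gives $\dim_{\mathbf{F}_p}\mathcal{K}_1\leq j_0$ with equality if and only if Conjecture \ref{odd_modSymb_conjecture_class_group} holds. The main obstacle is the bookkeeping of the rising-factorial-to-monomial change of basis together with the verification that $L_0\equiv 0\pmod p$ and $\zeta^{(p)}\not\equiv 0\pmod p$; these are both essentially formal but need to be stated carefully.
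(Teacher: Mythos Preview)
The paper gives no proof for this proposition; it only records the two ingredients in the sentence preceding it (that $\mathcal{K}_r$ is $\Lambda_r$-cyclic with annihilator containing $(\zeta^{(p^r)})+(p^r)$, and that Theorem \ref{odd_modSymb_structure_H_1} is the relevant structural input). Your argument is exactly the intended fleshing-out of that sketch: factoring $\varpi^{(p^r)}$ through Theorem \ref{odd_modSymb_structure_H_1} yields the $\Lambda_r$-surjection $\Lambda_r/((\zeta^{(p^r)})+(p^r))\twoheadrightarrow\mathcal{K}_r$, and cyclicity turns Conjecture \ref{odd_modSymb_conjecture_class_group} into the annihilator statement. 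For $r=1$, your identification $\Lambda_1/(p)\simeq\mathbf{F}_p[Y]/(Y^p)$, the binomial expansion of $[x^{-1}]$, the triangular passage from the rising factorials to the monomials $\log(x)^j$, and the check $L_0\equiv (1-N)/(6N)\equiv 0\pmod p$ are all correct and give $\dim_{\mathbf{F}_p}\Lambda_1/((\zeta^{(p)})+(p))$ equal to the $Y$-adic valuation of $\zeta^{(p)}\bmod p$.

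There is one genuine soft spot. Your sentence ``$\zeta^{(p)}\not\equiv 0\pmod p$: this follows from the nontriviality of the Stickelberger element'' is circular, since that nontriviality \emph{is} the assertion. The parenthetical appeal to $\mathcal{K}_1/J\cdot\mathcal{K}_1\simeq\mathbf{Z}/p\mathbf{Z}$ only shows that the target of your surjection is nonzero; it does not prevent $\zeta^{(p)}\in p\Lambda_1$ (for instance $\Lambda_1/(p)$ itself surjects onto $\mathbf{Z}/p\mathbf{Z}$). You do need $\zeta^{(p)}\not\equiv 0\pmod p$ for the second part of the proposition to hold as stated: if $\zeta^{(p)}\equiv 0$ then $\dim_{\mathbf{F}_p}\Lambda_1/((\zeta^{(p)})+(p))=p$ while the integer described in the proposition is $p-1$, and the claimed equivalence would fail. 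So replace that sentence with an actual argument (e.g.\ an appeal to the $p$-adic non-vanishing of the relevant $L$-value that the paper already invokes in the proof of Theorem \ref{odd_modSymb_Eisenstein_ideal_X1}(iii), or to the reference \cite{Lecouturier_class_group} alluded to in the proposition). With that fixed, your write-up is complete and matches the paper's implicit argument.
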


Using the computer software PARI/GP, Nicolas Mascot checked the truth of the last condition (when $r=1$) of Proposition \ref {odd_modSymb_consequence_class_group} for $p=5$ and $N \leq 12791$, thus proving Conjecture \ref{odd_modSymb_conjecture_class_group} (assuming Conjecture \ref{odd_modSymb_Sharifi_conj}) for theses values of $N$ and $p$. He did the computation with $\mathcal{K}_1$ replaced by the $\omega_p^{-1}$-part $\mathcal{C}_{(\omega_p^{-1})}$ of the class group of $L_r$ modulo $p$, since one can show that $\mathcal{C}_{(\omega_p^{-1})}$ and $\mathcal{K}_1$ are isomorphic $\Lambda_1$-modules.

\subsection{Evidence in favor of Conjecture \ref{odd_modSymb_Sharifi_conj}}
We follow the notation of section \ref{odd_modSymb_Section_Sharifi}.  We give some evidence in favor of Conjecture \ref{odd_modSymb_Sharifi_conj}. 

The following result was proved in level $\Gamma_1(p)$ and weight $2$ by Busioc \cite[Theorem 1.1]{Busuioc} (and independently by Sharifi) and her proof can be adapted directly to our case. We reproduce it here for the convenience of the reader.
\begin{thm}
The map $\varpi$ is annihilated by the Hecke operators $T_2-2-\langle 2 \rangle$ and $T_3-3-\langle 3 \rangle$.
\end{thm}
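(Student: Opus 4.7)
The plan is to follow Busuioc's argument, essentially verbatim, adapted to our Manin symbols for $\Gamma_1(N)$ instead of $\Gamma_1(p)$. By Lemma \ref{odd_modSymb_annihilates_Manin}, it suffices to show that for every $[u,v] \in (\mathbf{Z}/N\mathbf{Z})^{\times 2}/\pm$ and for $\ell \in \{2,3\}$, the element
\[
\varpi'\bigl((T_\ell - \ell - \langle \ell \rangle)\,\xi_{\Gamma_1(N)}([u,v])\bigr)
\]
vanishes in $K_2(\mathbf{Z}[\zeta_N, \tfrac{1}{Np}]) \otimes \mathbf{Z}_p$. The first step is to write the action of $T_\ell$ on Manin symbols explicitly using the Heilbronn family: for $\ell$ prime to $N$, $T_\ell \cdot \xi_{\Gamma_1(N)}([u,v])$ is a sum over the (finitely many) Heilbronn matrices $M$ of determinant $\ell$ of the Manin symbol $\xi_{\Gamma_1(N)}([u,v]\cdot M)$, up to the usual bookkeeping that turns matrix entries into residues mod $N$. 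For $\ell = 2,3$ there are very few such matrices, so the computation is entirely explicit.

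The second step is to apply $\varpi'$ term by term. After using the diamond action to identify the contribution of $\langle \ell \rangle$ with $\{1-\zeta_N^{\ell u}, 1-\zeta_N^{\ell v}\}$, the whole expression becomes a $\mathbf{Z}$-linear combination of Steinberg symbols $\{1 - \zeta_N^a, 1 - \zeta_N^b\}$ for various $a, b$ built from $u, v$ and the Heilbronn entries. The key algebraic identity driving the argument is the factorization
\[
1 - \zeta_N^{\ell u} \;=\; \prod_{a=0}^{\ell-1}\bigl(1 - \zeta_N^{u} \cdot \eta^a\bigr), \qquad \eta = e^{2i\pi/\ell},
\]
which allows one to rewrite the $\{1-\zeta_N^{\ell u}, 1-\zeta_N^{\ell v}\}$ term coming from $\langle \ell \rangle$ as a sum of Steinberg symbols in $K_2(\mathbf{Z}[\zeta_{N\ell}, \tfrac{1}{N\ell p}]) \otimes \mathbf{Z}_p$. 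Then the Steinberg relation $\{x, 1-x\} = 0$, bilinearity, antisymmetry, and the vanishing of $\{\zeta, x\}$ for roots of unity $\zeta$ of order prime to $p$ (as used in Lemma \ref{odd_modSymb_annihilates_Manin}) collapse the combined expression to zero. Finally one pushes the identity down to $K_2(\mathbf{Z}[\zeta_N, \tfrac{1}{Np}]) \otimes \mathbf{Z}_p$ via the norm map (or checks directly that the original sum already lives there and equals zero after the intermediate computation).

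The main obstacle is purely combinatorial: for $\ell = 2$ one has two nontrivial Heilbronn decompositions and the cyclotomic identity above reduces to $1-\zeta_N^{2u} = (1-\zeta_N^u)(1+\zeta_N^u)$, which contains no auxiliary roots of unity, so the matching is clean; for $\ell = 3$ one must handle $\eta$ of order $3$, and the symbols $\{1-\zeta_N^u\eta^a, 1-\zeta_N^v\eta^b\}$ have to be rearranged (using the Steinberg relation applied to $\zeta_N^u\eta^a + \zeta_N^v\eta^b$-type combinations) before everything cancels. For $\ell \ge 5$ the number of cross terms grows fast enough that this bare-hands matching no longer closes up, which is exactly why Conjecture \ref{odd_modSymb_Sharifi_conj} remains open in general and only $T_2$ and $T_3$ admit this direct verification.
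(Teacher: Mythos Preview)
Your overall plan---write $T_\ell$ on Manin symbols via Merel's Heilbronn set $\mathcal{X}_\ell$, apply $\varpi'$ term by term, and reduce to Steinberg relations---matches the paper exactly. The gap is in what you call ``the key algebraic identity driving the argument.'' The cyclotomic factorization $1-\zeta_N^{\ell u}=\prod_{a=0}^{\ell-1}(1-\zeta_N^u\eta^a)$ is not the engine of the proof, and on its own does not make the expression collapse. After expanding via this factorization you are left with mixed terms like $\{1+\zeta_N^u,1+\zeta_N^v\}$ (for $\ell=2$) or symbols involving genuine cube roots of unity (for $\ell=3$), and there is no formal mechanism that cancels them: you still need to exhibit a specific $x$ with $x+(1-x)=1$ built from the cyclotomic units so that $\{x,1-x\}=0$ unwinds to the desired relation. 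This is exactly the nontrivial input.

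The paper (following Busuioc) supplies these identities explicitly. For $\ell=2$ it is the McCallum--Sharifi identity
\[
\frac{(1-\zeta_N^{u+v})(1-\zeta_N^{u})}{1-\zeta_N^{2u}}+\frac{\zeta_N^{u}(1-\zeta_N^{2v})(1-\zeta_N^{u})}{(1-\zeta_N^{2u})(1-\zeta_N^{v})}=1,
\]
and for $\ell=3$ it is Sharifi's identity
\[
\frac{\zeta_N^{v-u}(1-\zeta_N^{3u})(1-\zeta_N^{v})}{(1-\zeta_N^{u})(1-\zeta_N^{3v})}+\frac{(1-\zeta_N^{v-u})(1-\zeta_N^{v})(1-\zeta_N^{u+v})}{1-\zeta_N^{3v}}=1.
\]
Expanding the single Steinberg relation $\{x,1-x\}=0$ coming from each of these, together with bilinearity, antisymmetry, $\{\zeta,\cdot\}=0$ for roots of unity of order prime to $p$, and the Manin three-term relation already encoded in $\varpi'$, yields the vanishing directly over $\mathbf{Z}[\zeta_N,\tfrac{1}{Np}]$---no passage to $\mathbf{Z}[\zeta_{N\ell}]$ and no norm step is needed. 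Your sketch for $\ell=3$ (``rearrange symbols $\{1-\zeta_N^u\eta^a,1-\zeta_N^v\eta^b\}$ using Steinberg relations applied to $\zeta_N^u\eta^a+\zeta_N^v\eta^b$-type combinations'') is too vague to constitute a proof; in practice making it work would amount to rediscovering Sharifi's identity. Replace the factorization heuristic with these two explicit identities and your write-up becomes complete.
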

\begin{proof}
The proof relies on explicit formulas for Hecke operators acting on Manin symbols \cite{Merel_Universal}. For each integer $n \geq 1$ prime to $N$, let 
$$\mathcal{X}_n = \left\{\begin{pmatrix} a & b \\ c & d \end{pmatrix} \in M_2(\mathbf{Z})\text{, } a>b\geq 0 \text{, } d>c \geq 0\text{, }ad-bc=n\right\} \text{ .}$$
This is a finite set \cite[Lemma 8]{Merel_Universal}. By \cite[Theorem 2 and Proposition 20]{Merel_Universal}, for all $\Gamma_0(N)\cdot \gamma \in \Gamma_0(N) \backslash \SL_2(\mathbf{Z})$ we have in $H_1(X_1(N), \cusps, \mathbf{Z})$:
\begin{equation}\label{odd_modSymb_Merel_formula_Hecke}
T_n \left( \xi_{\Gamma_1(N)}(\Gamma_0(N)\cdot \gamma) \right) = \sum_{\alpha \in \mathcal{X}_n} \xi_{\Gamma_1(N)}\left(\Gamma_0(N)\cdot \gamma\cdot \alpha \right) \text{ .}
\end{equation}

We first prove that $\varpi$ is annihilated by the Hecke operator $T_2-2-\langle 2 \rangle$. We have
$$\mathcal{X}_2 = \left\{\begin{pmatrix} 1 & 0 \\ 0 & 2 \end{pmatrix}\text{, } \begin{pmatrix} 2 & 0 \\ 0 & 1 \end{pmatrix}\text{, } \begin{pmatrix} 1 & 0 \\ 1 & 2 \end{pmatrix}\text{, } \begin{pmatrix} 2 & 1 \\ 0 & 1 \end{pmatrix}\right\} \text{ .}$$
Let $u,v \in (\mathbf{Z}/N\mathbf{Z})^{\times}$. By (\ref{odd_modSymb_Merel_formula_Hecke}), we have:
\begin{equation}\label{odd_modSymb_Merel_formula_Hecke_2}
T_2\left(\xi_{\Gamma_1(N)}([u,v]) \right) = \xi_{\Gamma_1(N)}([u,2v])+\xi_{\Gamma_1(N)}([2u,v])+\xi_{\Gamma_1(N)}([u+v,2v])+\xi_{\Gamma_1(N)}([2u,u+v]) \text{ .}
\end{equation}
Assume first that $u + v \neq 0$. By (\ref{odd_modSymb_Merel_formula_Hecke_2}), we have in $\mathcal{K}_r$:
$$\varpi\left( (1+c)\cdot T_2\left(\xi_{\Gamma_1(N)}([u,v])\right) \right)=\{1-\zeta_N^u, 1-\zeta_N^{2v}\} + \{1-\zeta_N^{2u}, 1-\zeta_N^v\} + \{1-\zeta_N^{u+v}, 1-\zeta_N^{2v}\} + \{1-\zeta_N^{2u}, 1-\zeta_N^{u+v}\} \text{ .}$$
The following identity was discovered by McCallum and Sharifi \cite{McCallum_Sharifi}:
\begin{equation}\label{identity_p=2_steinberg}
\frac{(1-\zeta_N^{u+v})\cdot (1-\zeta_N^u)}{1-\zeta_N^{2u}}+\frac{\zeta_N^u\cdot (1-\zeta_N^{2v})\cdot(1-\zeta_N^{u})}{(1-\zeta_N^{2u})\cdot (1-\zeta_N^{v})} = 1 \text{ .}
\end{equation}
Using (\ref{identity_p=2_steinberg}) and the properties of $\{\cdot, \cdot\}$ stated in the proof of Lemma \ref{odd_modSymb_annihilates_Manin}, we  have in $K_2\left(\mathbf{Z}[\zeta_N, \frac{1}{Np}]\right)$:
\begin{align*}
0&=\left\{\frac{(1-\zeta_N^{u+v})\cdot (1-\zeta_N^u)}{1-\zeta_N^{2u}},\frac{\zeta_N^u\cdot (1-\zeta_N^{2v})\cdot (1-\zeta_N^{u})}{(1-\zeta_N^{2u})\cdot (1-\zeta_N^{v})}\right\} \\&=\{1-\zeta_N^{u+v}, 1-\zeta_N^{2v}\}+\{1-\zeta_N^{u+v}, 1-\zeta_N^u\}-(1-\zeta_N^{u+v}, 1-\zeta_N^{2u}\}-\{1-\zeta_N^{u+v}, 1-\zeta_N^{v}\}\\&+\{1-\zeta_N^{u}, 1-\zeta_N^{2v}\}-\{1-\zeta_N^{u}, 1-\zeta_N^{2u}\}-\{1-\zeta_N^{u}, 1-\zeta_N^{v}\}-\{1-\zeta_N^{2u}, 1-\zeta_N^{2v}\}\\&-\{1-\zeta_N^{2u}, 1-\zeta_N^{u}\}+\{1-\zeta_N^{2u}, 1-\zeta_N^{v}\} \\&= \{1-\zeta_N^u, 1-\zeta_N^{2v}\} + \{1-\zeta_N^{2u}, 1-\zeta_N^v\} + \{1-\zeta_N^{u+v}, 1-\zeta_N^{2v}\} + \{1-\zeta_N^{2u}, 1-\zeta_N^{u+v}\}  \\& +  \{1-\zeta_N^{u+v}, 1-\zeta_N^{u}\} - \{1-\zeta_N^{u+v}, 1-\zeta_N^{v}\} - \{1-\zeta_N^{u}, 1-\zeta_N^{v}\}-\{1-\zeta_N^{2u}, 1-\zeta_N^{2v}\} \text{ .}
\end{align*}
Furthermore, we have seen that the Manin relations hold, \ie:
\begin{align*}
 \{1-\zeta_N^{u}, 1-\zeta_N^{v}\}+ \{1-\zeta_N^{u+v}, 1-\zeta_N^{u}\} - \{1-\zeta_N^{u+v}, 1-\zeta_N^{v}\} = 0 \text{ .} 
\end{align*}
Using \ref{odd_modSymb_Merel_formula_Hecke_2}, we get:
\begin{align*}
\varpi\left( (1+c)\cdot T_2\left(\xi_{\Gamma_1(N)}([u,v])\right) \right) = 2\cdot \{1-\zeta_N^{u}, 1-\zeta_N^{v}\} + \{1-\zeta_N^{2u}, 1-\zeta_N^{2v}\} \text{ .}
\end{align*}
Since $\langle 2 \rangle \cdot \xi_{\Gamma_1(N)}([u,v]) = \xi_{\Gamma_1(N)}([2u,2v])$, we have:
$$\varpi\left( (1+c)\cdot (T_2-2-\langle 2 \rangle)\left(\xi_{\Gamma_1(N)}([u,v])\right) \right) = 0 \text{ .}$$

If $u+v=0$, then we have $\xi_{\Gamma_1(N)}([u,v]) = 0$ by the Manin relations. Thus, we also have in $\mathcal{K}_r$:
$$\varpi_r\left( (1+c)\cdot (T_2-2-\langle 2 \rangle)\left(\xi_{\Gamma_1(N)}([u,v])\right) \right) = 0 \text{ .}$$

The proof that $\varpi'$ is annihilated by the Hecke operator $T_3-3-\langle 3 \rangle$ is the same as for $T_2-2-\langle 2 \rangle$, using the identity (discovered by Sharifi in an unpublished work)
$$\frac{\zeta_N^{v-u}\cdot (1-\zeta_N^{3u})\cdot (1-\zeta_N^{v})}{(1-\zeta_N^u)\cdot (1-\zeta_N^{3v})}+\frac{(1-\zeta_N^{v-u})\cdot (1-\zeta_N^v)\cdot (1-\zeta_N^{u+v})}{1-\zeta_N^{3v}}=1 \text{ .}$$
\end{proof}
\begin{rem}
We were not able to prove that $T_5-5-\langle 5 \rangle$ annihilates $\varpi_r$. 
\end{rem}

Another evidence in favor of Conjecture \ref{odd_modSymb_Sharifi_conj} is that the analogous conjecture when $p$ divides the level has been proved by Fukaya and Kato \cite[Theorem 5.2.3]{Fukaya_Kato}. Their methods are $p$-adic, so it is not clear how to generalize them in our setting. 

We end this section by recalling briefly a construction due to Goncharov, used by Fukaya and Kato. If $u \in (\mathbf{Z}/N\mathbf{Z})^{\times}$, one can defined a Siegel unit $g_{0, \frac{u}{N}} \in \mathcal{O}(Y_1(N)_{\mathbf{Z}[\zeta_N, \frac{1}{N}]})^{\times} \otimes_{\mathbf{Z}} \mathbf{Z}[\frac{1}{N}]$, where $\mathcal{O}(Y_1(N)_{\mathbf{Z}[\zeta_N, \frac{1}{N}]})^{\times}$ is the ring of global sections of the open modular curve $Y_1(N)$ over $\mathbf{Z}[\zeta_N, \frac{1}{N}]$. We refer to \cite[Section 2.3.1]{Sharifi_survey} for its definition. The specialization of $g_{0, \frac{u}{N}}$ at the cusp $\Gamma_1(N)\cdot \infty$ is $1-\zeta_N^u$. We define a map 
$$g : \mathbf{Z}[\left((\mathbf{Z}/N\mathbf{Z})^{\times}\right)^2/\pm 1] \rightarrow K_2\left(\mathcal{O}(Y_1(N)_{\mathbf{Z}[\zeta_N, \frac{1}{N}]})^{\times}\right) \otimes_{\mathbf{Z}}  \mathbf{Z}[\frac{1}{N}] $$
by $g([u,v]) = \{g_{0, \frac{u}{N}}, g_{0, \frac{v}{N}}\}$ where $\{\cdot, \cdot\}$ is the Steinberg symbol. Goncharov proved that an analogous map at level $\Gamma(N)$ factors through the Manin relations \cite[Corollary 2.17]{Goncharov}. His proof in fact shows that $g$ factors through $\xi_{\Gamma_1(N)}$. Thus, we get a map
$$\Gamma : H_1(X_1(N), C_{\Gamma_1(N)}^0, \mathbf{Z}) \rightarrow  K_2\left(\mathcal{O}(Y_1(N)_{\mathbf{Z}[\zeta_N, \frac{1}{N}]})^{\times}\right) \otimes_{\mathbf{Z}} \mathbf{Z}[\frac{1}{N}]  \text{ .}$$
We expect that $\Gamma$ commutes with the action of the Hecke operators $T_n$ and $\langle n \rangle$ for $n$ prime to $N$. This was proved by Fukaya and Kato when $p$ divides the level \cite[Remark 3.3.16]{Fukaya_Kato}. If this is true, then Conjecture \ref{odd_modSymb_Sharifi_conj} is true by specializing at the cusp $\Gamma_1(N)\cdot \infty$. This is the idea behind the proof of \cite[Theorem 5.2.3]{Fukaya_Kato}. However, we were not able to prove that $\Gamma$ is Hecke-equivariant.

\subsection{Construction of the element $m_2^-$ under Conjecture \ref{odd_modSymb_Sharifi_conj}}\label{odd_modSymb_section_construction}

In this section, we assume Conjecture \ref{odd_modSymb_Sharifi_conj} but not yet that $n(r,p) \geq 2$. We follow the notation of sections \ref{odd_modSymb_section_hida}, \ref{odd_modSymb_section_eisenstein_ideal}, \ref{odd_modSymb_section_class_group} and \ref{odd_modSymb_Section_Sharifi}.  

We first construct an explicit group homomorphism:
$$\psi : I\cdot H_+/I^{2}\cdot H_+ \rightarrow J_t\cdot \mathcal{K}_t/J_t^2 \cdot \mathcal{K}_t \text{.}$$

Recall the Eisenstein ideals $\tilde{I}_0$ and $\tilde{I}_{\infty}$ defined in section \ref{odd_modSymb_section_eisenstein_ideal}, annihilating the cusps $\Gamma_1^{(p^t)}(N) \cdot 0$ and $\Gamma_1^{(p^t)}(N)\cdot \infty$ respectively. Since we assume that Conjecture \ref{odd_modSymb_Sharifi_conj} holds, $\varpi^{(p^t)}$ induces a surjective morphism of $\Lambda_t$-modules $\varphi: \left(\tilde{H}^{(p^t)}\right)_+ \rightarrow \mathcal{K}_t $
annihilating $\tilde{I}_{\infty} \cdot  \left(\tilde{H}^{(p^t)}\right)_+$. Let $\varphi'$ be the restriction of $\varphi$ to $\tilde{I}_0 \cdot \left(\tilde{H}^{(p^t)}\right)_+$. We claim that the image of $\varphi'$ is $J_t \cdot \mathcal{K}_t$.
Since $\varphi$ is surjective, it suffices to note that we have $T_{\ell}- \ell\cdot \langle \ell \rangle-1 \in \tilde{I}_{0}$ and $T_{\ell} - \langle \ell \rangle - \ell \in \tilde{I}_{\infty}$, so
$$(\langle \ell \rangle-1)\cdot (\ell-1) \in \tilde{I}_0 + \tilde{I}_{\infty} $$
for all prime number $\ell$ different from $N$. Similarly, we see that $\varphi'$ induces a surjective group homomorphism
$$\varphi'' : \tilde{I}_0 \cdot \left(\tilde{H}^{(p^t)}\right)_+/(\tilde{I}_0^2 +J\cdot \tilde{I}_0)\cdot \left(\tilde{H}^{(p^t)}\right)_+ \rightarrow J_t \cdot \mathcal{K}_t/J_t^2\cdot \mathcal{K}_t \text{ .}$$

By Proposition \ref{odd_modSymb_image_I_0} and Corollary \ref{odd_modSymb_construction}, we have 
$$\tilde{I}_0 \cdot \left(\tilde{H}^{(p^t)}\right)_+/(\tilde{I}_0^2 +J\cdot \tilde{I}_0)\cdot \left(\tilde{H}^{(p^t)}\right)_+ =  \left(H^{(p^t)}\right)_+/(I_0 + J)\cdot \left(H^{(p^t)}\right)_+ \simeq I\cdot H_+/I^2\cdot H_+ \text{ .}$$
Thus, we have a canonical group isomorphism
$$\psi' : I\cdot H_+/I^2\cdot H_+ \xrightarrow{\sim}\tilde{I}_0 \cdot \left(\tilde{H}^{(p^t)}\right)_+/(\tilde{I}_0^2 +J\cdot \tilde{I}_0)\cdot \left(\tilde{H}^{(p^t)}\right)_+ \text{ .}$$
We then let 
$$\psi = \varphi'' \circ \psi' : I\cdot H_+/I^{2}\cdot H_+ \rightarrow J_t\cdot \mathcal{K}_t/J_t^2 \cdot \mathcal{K}_t \text{ .}$$

As in section \ref{odd_modSymb_construction}, we abuse notation and denote $J_r$ by $J$.

\begin{prop}
If $n(r,p) \geq 2$, the map $\psi$ induces a surjective group homomorphism:
$$\overline{\psi}_r: I\cdot (H_+/p^r\cdot H_+)/I^{2}\cdot (H_+/p^r\cdot H_+) \rightarrow J\cdot \mathcal{K}_r/J^{2}\cdot \mathcal{K}_r \text{ .}$$
\end{prop}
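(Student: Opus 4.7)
The plan is to factor $\overline{\psi}_r$ through a natural norm map going from the $p^t$-level to the $p^r$-level, then exploit the fact that the target $J \cdot \mathcal{K}_r / J^2 \cdot \mathcal{K}_r$ is $p^r$-torsion.

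The first step is to construct a surjective $\Lambda_t$-equivariant homomorphism $\nu_0 : \mathcal{K}_t \twoheadrightarrow \mathcal{K}_r$. The inclusion of rings $\mathcal{O}_r \subset \mathcal{O}_t$ (coming from $K_r \subset K_t$) induces a transfer on $K$-theory $K_2(\mathcal{O}_t) \to K_2(\mathcal{O}_r)$; reducing modulo $p^r$ (which is legitimate since $p^r \mid p^t$) produces $\nu_0$. By transitivity of the norm along the tower $\mathcal{O}_r \subset \mathcal{O}_t \subset \mathbf{Z}[\zeta_N, \frac{1}{Np}]$, this map carries $(x,y)_t$ to $(x,y)_r$ for any $x,y \in \mathbf{Z}[\zeta_N, \frac{1}{Np}]^{\times}$, and in particular $(1-\zeta_N^u, 1-\zeta_N^v)_t$ to $(1-\zeta_N^u, 1-\zeta_N^v)_r$. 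By Proposition \ref{odd_modSymb_local_cup_product} the image therefore surjects onto $\mathcal{K}_r/J \cdot \mathcal{K}_r$; since $\mathcal{K}_r$ is cyclic over the local ring $\Lambda_r$ (Nakayama applied to Theorem \ref{comparison_K_C}(i)), the map $\nu_0$ is surjective. Standard compatibility of the $K$-theoretic transfer with the diamond (Galois) action makes $\nu_0$ $\Lambda_t$-equivariant, with $\Lambda_t$ acting on $\mathcal{K}_r$ through the canonical surjection $\Lambda_t \twoheadrightarrow \Lambda_r$. Consequently $\nu_0$ restricts to a surjection $J_t \cdot \mathcal{K}_t \twoheadrightarrow J \cdot \mathcal{K}_r$ and induces a surjection
$$\nu : J_t \cdot \mathcal{K}_t/J_t^2 \cdot \mathcal{K}_t \twoheadrightarrow J \cdot \mathcal{K}_r/J^2 \cdot \mathcal{K}_r \text{ .}$$

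The composition $\nu \circ \psi : I \cdot H_+/I^2 \cdot H_+ \to J \cdot \mathcal{K}_r/J^2 \cdot \mathcal{K}_r$ then has $p^r$-torsion target, so it vanishes on $p^r \cdot (I \cdot H_+/I^2 \cdot H_+)$ and thus factors through $(I \cdot H_+/I^2 \cdot H_+)/p^r$. Since $H_+/I \cdot H_+ \simeq \mathbf{Z}_p$ is torsion-free over $\mathbf{Z}_p$, one has $I \cdot H_+ \cap p^r \cdot H_+ = p^r \cdot I \cdot H_+$, and a short diagram chase then yields the canonical identification
$$(I \cdot H_+/I^2 \cdot H_+)/p^r \simeq I \cdot (H_+/p^r \cdot H_+)/I^2 \cdot (H_+/p^r \cdot H_+) \text{ .}$$
This produces the desired homomorphism $\overline{\psi}_r$. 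Its surjectivity is automatic: $\psi = \varphi'' \circ \psi'$ is surjective because $\varphi''$ is surjective by construction and $\psi'$ is an isomorphism, and $\nu$ was shown to be surjective.

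The only delicate point is the construction of $\nu_0$ and the verification of its two key properties: its effect on cyclotomic Steinberg symbols and its $\Lambda$-equivariance. Both follow from the projection formula and the transitivity of the $K_2$-transfer, combined with the fact that $K_r$ and $K_t$ are both subfields of $\mathbf{Q}(\zeta_N)$, so that the action on the symbols $(1-\zeta_N^u,1-\zeta_N^v)$ factored through $\Gal(K_t/\mathbf{Q})$ is visibly compatible with the projection $\Lambda_t \twoheadrightarrow \Lambda_r$. Once these compatibilities are in place, everything else is a formal consequence of Nakayama's lemma and the observation that the target is $p^r$-torsion.
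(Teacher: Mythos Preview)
Your argument contains a genuine gap at the step where you claim the identification
\[
(I \cdot H_+/I^2 \cdot H_+)/p^r \;\simeq\; I \cdot (H_+/p^r \cdot H_+)/I^2 \cdot (H_+/p^r \cdot H_+).
\]
You justify this by asserting that $H_+/I\cdot H_+ \simeq \mathbf{Z}_p$ is torsion-free. This is false: the winding isomorphism identifies the completion of $H_+$ with the completion of $I$ as $\mathbf{T}$-modules, so $H_+/I\cdot H_+ \simeq I/I^2 \simeq \mathbf{Z}/p^t\mathbf{Z}$. Consequently $I\cdot H_+ \cap p^r\cdot H_+$ strictly contains $p^r\cdot I\cdot H_+$; the quotient is cyclic of order $p^r$, generated by the class of $p^t\cdot h$ for $h$ a local generator of $H_+$. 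Your ``short diagram chase'' therefore does not go through, and the two groups displayed above are not isomorphic in general. A telling symptom is that your argument never uses the hypothesis $n(r,p)\geq 2$.

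The paper's proof shares your first step (the norm map from level $p^t$ to level $p^r$, giving $\psi_r$), but then confronts exactly the obstruction above: it shows that the image of $I\cdot H_+ \cap p^r\cdot H_+$ in $I\cdot H_+/I^2\cdot H_+$ already lies in the image of $p^r\cdot I\cdot H_+$. After transporting via the winding isomorphism, this reduces to showing $(p^r\cdot I)\cap I^2 \subset I^3 + p^r\cdot I^2$ in $\mathbf{T}$. A short computation with a generator $\eta$ of $I\cdot\mathbf{T}$ gives $(p^r\cdot I)\cap I^2 = p^t\cdot I + p^r\cdot I^2$, and then the hypothesis $n(r,p)\geq 2$ enters precisely via Proposition~\ref{Formalism_odd_modSymb_critere_higher_eisenstein}, which says $p^t \in I^2 + p^r\cdot I$. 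This is the missing ingredient in your argument.
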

\begin{proof}
The norm map yields a canonical surjective group homomorphism $J_t \cdot \mathcal{K}_t/J_t^2 \cdot \mathcal{K}_t \rightarrow J \cdot \mathcal{K}_r/J^2 \cdot \mathcal{K}_r$. Thus, $\psi$ gives a surjective group homomorphism
$$\psi_r : I\cdot H_+ / I^2\cdot H_+ \rightarrow J \cdot \mathcal{K}_r/J^2 \cdot \mathcal{K}_r \text{ .}$$
To construct $\overline{\psi}_r$, it suffices to show that $\psi_r$ vanishes on the image of $(p^r\cdot H_+) \cap (I\cdot H_+)$ in $I\cdot H_+/I^2\cdot H_+$.
It suffices to prove that
\begin{equation}\label{odd_modSymb_localization_I}
\left(I^2\cdot H_+ + (p^r\cdot H_+) \cap (I\cdot H_+) \right) /\left( I^2\cdot H_+ + p^r\cdot I\cdot  H_+  \right) = 0 
\end{equation}

Since $I \cdot \left(I^2\cdot H_+ + (p^r\cdot H_+) \cap (I\cdot H_+) \right)\subset \left( I^2\cdot H_+ + p^r\cdot I\cdot  H_+  \right)$, it suffices to show (\ref{odd_modSymb_localization_I}) after completion at $I$. Using the winding isomorphism \cite[p. 137]{Mazur_Eisenstein}, it suffices to show that
\begin{equation}\label{odd_modSymb_localization_I_2}
\left((p^r\cdot I)\cap I^2 + I^3 \right)/\left( p^r\cdot I^2 + I^3 \right) \otimes_{\mathbb{T}} \mathbf{T}= 0 \text{ .}
\end{equation}

\begin{lem}
We have $\left( (p^r\cdot I) \cap I^2 \right) \otimes_{\mathbb{T}} \mathbf{T} = \left( p^t\cdot I + p^r\cdot I^2 \right)  \otimes_{\mathbb{T}} \mathbf{T}$.
\end{lem}
\begin{proof}
We have $p^t\cdot I + p^r\cdot I^2 \subset  (p^r\cdot I) \cap I^2$ since $p^t \in I$ and $r \leq t$. Let $\eta$ be a generator of $I\cdot \mathbf{T}$, and $x \in (p^r\cdot I)\cap I^2$. We can write $x = p^r \cdot \eta \cdot u = \eta^2\cdot v$ for some $u,v \in \mathbf{T}$.
Since $\eta$ is not a zero divisor in $\mathbf{T}$, we have $p^r\cdot u = \eta \cdot v \in I$. Let $m\in  \mathbf{Z}$ such that $u-m \in I$. We have $p^r\cdot m \in I \cap \mathbf{Z}_p = p^t\cdot \mathbf{Z}_p$. Thus, we have $x \in p^t\cdot I + p^r\cdot I^2$.
\end{proof}
 
Since $n(r,p) \geq 2$, we have by Proposition \ref{Formalism_odd_modSymb_critere_higher_eisenstein}, $p^t\cdot I \subset I^3+p^r\cdot I^2$. This ends the proof of the proposition.
\end{proof}

If $n(r,p) \geq 2$, the group $J\cdot \mathcal{K}_r/J^{2}\cdot \mathcal{K}_r$ is cyclic of order $p^r$ by Theorem \ref{comparison_K_C}. As explained in Section \ref{odd_modSymb_preliminary_setting}, the map $\psi_r$ gives the construction of the higher Eisenstein element $m_2^-$.

\subsection{Explicit computation of $m_2^-$}\label{odd_modSymb_explicit_computation_section}

In this section, we assume that Conjecture \ref{odd_modSymb_Sharifi_conj} holds and that $n(r,p) \geq 2$. We keep the notation the previous sections of Chapter \ref{Section_odd_modSymb}.

Recall that by intersection duality, we can consider $m_2^-$ as a group homomorphism
$$I \cdot (H_+/p^r\cdot H_+) / I^2\cdot (H_+/p^r\cdot H_+) \rightarrow  J\cdot \mathcal{K}_r/J^{2}\cdot \mathcal{K}_r \text{ .}$$
By Proposition \ref{odd_modSymb_image_I_0}, an element of $I \cdot H_+$ is the image of an element of $\left(H^{(p^r)}\right)_+$, which can be written as
$$\sum_{[u,v] \in \left( (\mathbf{Z}/N\mathbf{Z})^{\times} \right)^2/P_r'} \lambda_{[u,v]} \cdot (1+c)\cdot \xi_{\Gamma_1^{(p^r)}(N)}([u,v])$$
for some $\lambda_{[u,v]} \in \mathbf{Z}_p$, with the boundary condition $\sum_{[u,v]} \lambda_{[u,v]}\cdot([v]-[u])=0$ in $\Lambda_r$.

Equivalently, we have
$$I\cdot H_+ = \left\{\sum_{x \in (\mathbf{Z}/N\mathbf{Z})^{\times}} \lambda_x\cdot (1+c)\cdot \xi_{\Gamma_0(N)}(x), \sum_{x} \lambda_x \cdot \log(x) \equiv 0 \text{ (modulo }p^t\text{)} \right\} \text{ .}$$

An element $\sum_{x \in  (\mathbf{Z}/N\mathbf{Z})^{\times}} \lambda_x \cdot [x] \in \mathbf{Z}_p[(\mathbf{Z}/N\mathbf{Z})^{\times}]$ satisfies  $\sum_{x \in  (\mathbf{Z}/N\mathbf{Z})^{\times}} \lambda_x \cdot \log(x) \equiv 0 \text{ (modulo }p^t\text{)}$ if and only if it is in the subgroup generated by the square of the augmentation ideal of $\mathbf{Z}_p[(\mathbf{Z}/N\mathbf{Z})^{\times}]$ and by $[\overline{1}]$. Thus, any such element is a linear combination of elements of the form $[x\cdot y]-[x]-[y]$. Therefore, $m_2^-$ is determined by the values $$\left((1+c)\cdot \xi_{\Gamma_0(N)}([x\cdot y:1]-[x:1]-[y:1])\right)\bullet m_2^-$$ for  all $x,y \in (\mathbf{Z}/N\mathbf{Z})^{\times}$. 

\begin{thm}\label{odd_modSymb_main_theorem_>5}
Assume that Conjecture \ref{odd_modSymb_Sharifi_conj} holds. Assume that $n(r,p) \geq 2$, \ie that
$$\sum_{k=1}^{\frac{N-1}{2}} k \cdot \log(k) \equiv 0 \text{ (modulo } p^r\text{).}$$

Let $x$, $y$ $\in (\mathbf{Z}/N\mathbf{Z})^{\times}$. Then we have the following equality in $J\cdot \mathcal{K}_r/J^2\cdot \mathcal{K}_r$:
\begin{align*}
\delta_r\left(\left( 2\cdot (1+c)\cdot \xi_{\Gamma_0(N)}([x\cdot y:1]-[x:1] - [y:1]) \right) \bullet m_2^-\right)&= (1-\zeta_N^x, 1-\zeta_N)_r \\&-(1-\zeta_N^x, 1-\zeta_N^{y^{-1}})_r -(1-\zeta_N^{y^{-1}}, 1-\zeta_N)_r
\end{align*}
where $(\cdot, \cdot)_r : \mathbf{Z}[\zeta_N, \frac{1}{Np}]^{\times} \times  \mathbf{Z}[\zeta_N, \frac{1}{Np}]^{\times} \rightarrow \mathcal{K}_r$ and $\delta_r : \mathbf{Z}/p^r\mathbf{Z}  \xrightarrow{\sim}J\cdot \mathcal{K}_r/J^2\cdot \mathcal{K}_r$ were defined in Section \ref{odd_modSymb_section_class_group}.
\end{thm}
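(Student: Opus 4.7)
The strategy is to unpack the explicit construction of $m_2^-$ from Section \ref{odd_modSymb_section_construction}. There, the higher Eisenstein element $m_2^-$ is realized, via intersection duality, as (a rescaling of) the composition $\overline{\psi}_r = \varphi'' \circ \psi'$, which sends $I\cdot (H_+/p^r\cdot H_+)/I^2\cdot (H_+/p^r\cdot H_+)$ to $J\cdot \mathcal{K}_r/J^2\cdot \mathcal{K}_r$ by lifting to $\tilde{I}_0\cdot (\tilde{H}^{(p^t)})_+$, applying $\varpi^{(p^t)}$, and descending through the norm map $\mathcal{K}_t \rightarrow \mathcal{K}_r$. The element $(1+c)\cdot \xi_{\Gamma_0(N)}([xy:1]-[x:1]-[y:1])$ lies in $I\cdot H_+$ by Theorem \ref{odd_even_modSymb_determination_m_1^-} together with $\log(xy)=\log(x)+\log(y)$, so this construction applies and the plan is to evaluate $\overline{\psi}_r$ directly on this class.

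The core step is to exhibit an explicit Manin-symbol lift $\tilde{z} \in \tilde{I}_0\cdot (\tilde{H}^{(p^t)})_+$. A natural candidate, suggested by Goncharov--Sharifi theory, is
$$\tilde{z} = (1+c)\cdot \Bigl(\xi_{\Gamma_1^{(p^t)}(N)}([x, y^{-1}]) - \xi_{\Gamma_1^{(p^t)}(N)}([x, 1]) - \xi_{\Gamma_1^{(p^t)}(N)}([y^{-1}, 1])\Bigr),$$
whose image under the degeneracy map $\tilde{H}^{(p^t)} \rightarrow H$ of Proposition \ref{odd_modSymb_refined_Hida} is exactly our class in $H_+$, using the identification $[x:y^{-1}]=[xy:1]$ in $\mathbf{P}^1(\mathbf{Z}/N\mathbf{Z})$. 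One then checks, using the explicit description of $\partial$ on Manin symbols recalled before Proposition \ref{generation_Manin_C_0^{(p^r)}}, that the boundary contributions of $\tilde{z}$ at cusps in $C_\infty^{(p^t)}$ cancel pairwise, so $\tilde{z} \in (H^{(p^t)})_+$. Combined with Proposition \ref{odd_modSymb_image_I_0}, which identifies $(H^{(p^t)})_+$ with $\tilde{I}_0\cdot (\tilde{H}^{(p^t)})_+$, this exhibits $\tilde{z}$ as a legitimate preimage under $\psi'$.

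Once the lift is fixed, Lemma \ref{odd_modSymb_annihilates_Manin} gives
$$\varpi^{(p^t)}(\tilde{z}) = (1-\zeta_N^x, 1-\zeta_N^{y^{-1}})_t - (1-\zeta_N^x, 1-\zeta_N)_t - (1-\zeta_N^{y^{-1}}, 1-\zeta_N)_t$$
in $\mathcal{K}_t$, and the norm map sends this to (the negative of) the right-hand side of the theorem in $\mathcal{K}_r$. The overall sign and the coefficient $2$ on the left-hand side arise from the identification between $\overline{\psi}_r$ and the pairing against $m_2^-$: this identification is governed by the normalization $(T_\ell-\ell-1) m_i^- = \tfrac{\ell-1}{2}\log(\ell) \cdot m_{i-1}^-$, whose factor $\tfrac{1}{2}$ is absorbed when translating between the action of $\tilde{I}_0$ on $\tilde{H}^{(p^t)}$ and the isomorphism $\iota_r$ from $\mathcal{K}_r/J\cdot \mathcal{K}_r$ to $\mathbf{Z}/p^r\mathbf{Z}$ of Proposition \ref{odd_modSymb_local_cup_product}.

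The main obstacle is the compatibility of $\tilde{z}$ with the chain of isomorphisms
$$I\cdot H_+/I^2\cdot H_+ \;\simeq\; (H^{(p^t)})_+/(I_0+J)\cdot (H^{(p^t)})_+ \;\simeq\; \tilde{I}_0\cdot (\tilde{H}^{(p^t)})_+/(\tilde{I}_0^2+J\cdot \tilde{I}_0)\cdot (\tilde{H}^{(p^t)})_+$$
coming from Corollary \ref{odd_modSymb_construction} and Proposition \ref{odd_modSymb_image_I_0}: one must unwind these at the level of explicit symbols and verify that $\tilde{z}$ lands in the correct coset, as even a small miscount in boundary terms or diamond operators would shift the answer by an element of $J\cdot \mathcal{K}_r$ or $J^2\cdot \mathcal{K}_r$. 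The hypothesis $n(r,p)\geq 2$ enters precisely here through Theorem \ref{comparison_K_C}: it guarantees that $J\cdot \mathcal{K}_r/J^2\cdot \mathcal{K}_r$ is cyclic of order $p^r$ and that $\delta_r$ is a well-defined isomorphism, without which the equality could not be meaningfully stated modulo $p^r$.
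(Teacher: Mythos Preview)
Your approach is essentially the paper's: exhibit an explicit Manin-symbol lift to $(H^{(p^t)})_+$, apply $\varpi^{(p^t)}$, and identify the result with the pairing against $m_2^-$ via Section~\ref{odd_modSymb_section_construction}. However, there is a concrete sign error in your lift. The correct element is $\xi_{\Gamma_1^{(p^t)}(N)}\bigl([x,y^{-1}]-[x,1]+[y^{-1},1]\bigr)$, with a \emph{plus} on the last term. All three symbols have both endpoints in $C_0^{(p^t)}$ (since $x,y^{-1},1$ are units), so there are no $C_\infty$ contributions to cancel; what one must check is that the boundary in $\mathbf{Z}_p[C_0^{(p^t)}]$ vanishes. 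With your minus sign it equals $-2[y^{-1}]_\Gamma^0+2[1]_\Gamma^0\neq 0$, so $\tilde z\notin(H^{(p^t)})_+$ and Proposition~\ref{odd_modSymb_image_I_0} does not apply. With the corrected sign the boundary is zero, and your computation of $\varpi^{(p^t)}(\tilde z)$ acquires an extra sign on the third term, after which its negative matches the theorem's right-hand side exactly.

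The more substantive gap is the normalization you acknowledge but do not carry out. The paper does not merely cite Section~\ref{odd_modSymb_section_construction}; the precise constant in the identity $\delta_r\bigl(2\cdot\pi(h)\bullet m_2^-\bigr)=-\varphi_r'(h)$ is established \emph{within} this proof by a Hecke argument. One writes $T_\ell-\ell\langle\ell\rangle-1=(T_\ell-\ell-\langle\ell\rangle)-(\ell-1)(\langle\ell\rangle-1)\in\tilde I_\infty-(\ell-1)(\langle\ell\rangle-1)$; Conjecture~\ref{odd_modSymb_Sharifi_conj} kills the $\tilde I_\infty$ part, giving $\varphi_r'\bigl((T_\ell-\ell\langle\ell\rangle-1)(u)\bigr)=-(\ell-1)([\ell]-1)\cdot\varphi_r(u)$. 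On the other side, Proposition~\ref{odd_modSymb_local_cup_product} and Theorem~\ref{odd_even_modSymb_determination_m_1^-} identify $\iota_r\circ\varphi_r$ on a Manin symbol with pairing against $m_1^-$, and the defining relation $(T_\ell-\ell-1)m_2^-\equiv\tfrac{\ell-1}{2}\log(\ell)\,m_1^-$ then forces the factor~$2$ and the sign. Since the elements $(T_\ell-\ell\langle\ell\rangle-1)(u)$ span $(H^{(p^r)})_+$ by Proposition~\ref{odd_modSymb_image_I_0}, this pins down the identity everywhere. Your sketch gestures at this mechanism but does not execute it; without this step the constants on the left-hand side remain unjustified.
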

\begin{proof}
Let $\pi : \left(\tilde{H}^{(p^r)}\right)_+ \rightarrow H_+$ be the canonical map. The group homomorphism $$\varphi :   \left(\tilde{H}^{(p^r)}\right)_+ \rightarrow \mathcal{K}_t$$ gives a surjective group homomorphism $$\varphi_r : \left(\tilde{H}^{(p^r)}\right)_+ \rightarrow  \mathcal{K}_r/J\cdot \mathcal{K}_r \text{ .}$$
Similarly,  the group homomorphism $$\varphi' :   \tilde{I}_0 \cdot \left(\tilde{H}^{(p^r)}\right)_+ \rightarrow \mathcal{K}_t$$ gives a surjective group homomorphism $$\varphi_r' : \tilde{I}_0 \cdot \left(\tilde{H}^{(p^r)}\right)_+ \rightarrow  J\cdot \mathcal{K}_r/J^2\cdot \mathcal{K}_r \text{ .}$$

For all $x, y \in (\mathbf{Z}/N\mathbf{Z})^{\times}$, we have in  $H_+$:
\begin{equation}\label{odd_modSymb_ultimate_proof_eq0}
\xi_{\Gamma_0(N)}([x\cdot y:1]-[x:1]-[y:1]) = \pi(\xi_{\Gamma_1^{(p^r)}(N)}([x,y^{-1}]-[x,1]+[y^{-1},1])) \text{ .}
\end{equation}
Furthermore $\xi_{\Gamma_1^{(p^r)}(N)}([x,y^{-1}]-[x,1]+[y^{-1},1]) \in \left(H^{(p^r)}\right)_+$ since
\begin{align*}
\partial\left( \xi_{\Gamma_1^{(p^r)}(N)}([x,y^{-1}]-[x,1]+[y^{-1},1])\right) &= [x]_{\Gamma_1^{(p^r)}(N)}^0-[y^{-1}]_{\Gamma_1^{(p^r)}(N)}^0  - \left([x]_{\Gamma_1^{(p^r)}(N)}^0-[1]_{\Gamma_1^{(p^r)}(N)}^0 \right) \\& + [y^{-1}]_{\Gamma_1^{(p^r)}(N)}^0-[1]_{\Gamma_1^{(p^r)}(N)}^0\\&
= 0 
\end{align*}
(\cf the computation of $\partial$ in Section \ref{odd_modSymb_section_hida}). Thus, to prove Theorem \ref{odd_modSymb_main_theorem_>5} it suffices to prove that for all $h \in \tilde{I}_0 \cdot  \left(\tilde{H}^{(p^r)}\right)_+$, we have in $J\cdot \mathcal{K}_r/J^2\cdot \mathcal{K}_r$:
\begin{equation}\label{odd_modSymb_delta_r_m_2}
\delta_r  \left( 2 \cdot \pi(h) \bullet m_2^-\right) = -\varphi_r'(h) \text{ .}
\end{equation}

Let $\ell$ be a prime not dividing $N$. Note that
$$T_{\ell} - \ell \cdot \langle \ell \rangle -1 = T_{\ell}-\ell-\langle \ell \rangle - (\ell-1)\cdot (\langle \ell \rangle-1) \in I_{\infty} -  (\ell-1)\cdot (\langle \ell \rangle-1)\text{ .}$$
Since Conjecture \ref{odd_modSymb_Sharifi_conj} is assumed to be true, for all $u \in \left(\tilde{H}^{(p^r)}\right)_+$ we have in $J\cdot \mathcal{K}_r/J^2\cdot \mathcal{K}_r$:
\begin{equation}\label{odd_modSymb_ultimate_proof_eq1}
\varphi_r'((T_{\ell}-\ell\langle \ell \rangle-1)(u)) = -(\ell-1)\cdot ([\ell]-1) \cdot \varphi_r(u) \text{ .}
\end{equation}
By construction, for all $[x,y] \in M_{\Gamma_1^{(p^r)}(N)}^0$, we have in $\mathbf{Z}/p^r\mathbf{Z}$: 
\begin{equation}\label{odd_modSymb_ultimate_proof_eq2}
\iota_r\left( \varphi_r((1+c)\cdot \xi_{\Gamma_1^{(p^r)}(N)}([x,y])) \right)= \log\left(\frac{x}{y}\right) = \left( (1+c)\cdot \xi_{\Gamma_0(N)}\left([x:y]\right)\right)\bullet m_1^-  \text{ .}
\end{equation}
Combining (\ref{odd_modSymb_ultimate_proof_eq1}) and (\ref{odd_modSymb_ultimate_proof_eq2}), we have in $J\cdot \mathcal{K}_r/J^2\cdot \mathcal{K}_r$:
\begin{align*}
\varphi_r'((T_{\ell}-\ell\langle \ell \rangle-1)(u)) &= -\delta_r\left(2\cdot \frac{\ell-1}{2}\cdot \log(\ell)\cdot (\pi(u) \bullet m_1^-)\right) \\&=  -\delta_r\left(2\cdot (T_{\ell}-\ell-1)(\pi(u)) \bullet m_2^- \right) \\&= -\delta_r\left(2\cdot \pi((T_{\ell}-\ell\langle \ell \rangle-1)(u)) \bullet m_2^- \right) \text{ .}
\end{align*}

By Proposition \ref{odd_modSymb_image_I_0}, the elements $(T_{\ell}-\ell\langle \ell \rangle-1)(u)$ span $\left(H^{(p^r)}\right)_+$ when $\ell$ and $u$ varies. This concludes the proof of (\ref{odd_modSymb_delta_r_m_2}).
\end{proof}

\section{Even modular symbols}\label{Section_even_modSymb}
In this chapter, unless explicitly stated, we allow $p=2$ and $p=3$. We keep the notation of chapters \ref{Section_introduction}, \ref{Section_Formalism} and  \ref{Section_odd_modSymb}. We assume as usual that $p$ divides the numerator of $\frac{N-1}{12}$. We let $v$ be the $p$-adic valuation of $N-1$. We extend $\log$ to a group homomorphism $(\mathbf{Z}/N\mathbf{Z})^{\times} \rightarrow \mathbf{Z}/p^{v}\mathbf{Z}$ (still abusively denoted by $\log$).

Recall that in Theorem \ref{thm_Introduction_w_0^+}, we defined an element $\tilde{m}_0^+ \in H_1(X_0(N), \cusps, \mathbf{Q})_+$ (independent of the choice of $p$). We have, in fact, $\tilde{m}_0^+ \in H_1(X_0(N), \cusps, \mathbf{Z}_p)_+$ even if when $p \in \{2,3\}$ the formula defining $F_{0,p}$ is not $p$-integral. We fix this choice of $\tilde{m}_0^+$ for the rest of the paper. 

Let $r$ be an integer such that $1 \leq r \leq t$. We denote by $m_0^+$, ..., $m_{n(r,p)}^+$ the higher Eisenstein elements in $M_+/p^r\cdot M_+$, where $m_0^+$ is the image of $\tilde{m}_0^+$ in $M_+/p^r\cdot M_+$.

In this chapter, we give an explicit formula for $m_1^+$ modulo $p^t$ if $p \geq 3$, and a formula for $m_1^+$ modulo $p^{t-1}$ if $t\geq 2$ and $p=2$. In particular, we prove Theorem \ref{thm_Introduction_w_1^+>3}. The formula when $p=3$ (resp. $p=2$) is given in Theorem \ref{thm_Introduction_w_1^+_3} (resp. Theorem \ref{thm_Introduction_w_1^+_2}).

\subsection{Some results about the homology of $X_0(N)$ and $X_1(N)$}

In this section, we gather some useful results about the homology of $X_0(N)$ and $X_1(N)$.

\begin{prop}\label{even_modSymb_reduction_mod_2_complex_conjug}
Let $n \geq 1$ be an integer.
\begin{enumerate}
\item  The inclusion $M_+ \hookrightarrow H_1(X_0(N), \cusps, \mathbf{Z}_p)$ gives a $\tilde{\mathbb{T}}$-equivariant group isomorphism
$$M_+/p^n\cdot M_+ \xrightarrow{\sim} H_1(X_0(N), \cusps, \mathbf{Z}/p^n\mathbf{Z})_+ \text{ .}$$
\item The surjection $H_1(Y_0(N), \mathbf{Z}_p) \twoheadrightarrow M^-$ gives a $\tilde{\mathbb{T}}$-equivariant group isomorphism
$$H_1(Y_0(N), \mathbf{Z}/p^n\mathbf{Z})^- \xrightarrow{\sim} M^-/p^n\cdot M^- \text{ .}$$
\end{enumerate}
\end{prop}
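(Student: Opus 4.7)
The strategy is to derive both isomorphisms from Merel's acyclicity of $H_1(X_0(N), \mathbf{Z}_p)$ under complex conjugation (\cite[Proposition 5]{Merel_accouplement}), combined with the short exact sequences of $\tilde{\mathbb{T}}[\langle c\rangle]$-modules relating the homology groups of $Y_0(N)$, $X_0(N)$, and the pair $(X_0(N), \cusps)$. The $\tilde{\mathbb{T}}$-equivariance will be automatic from the constructions, since complex conjugation commutes with the Hecke action.

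For (i), let $H = H_1(X_0(N), \cusps, \mathbf{Z}_p)$. I would use the short exact sequence
\begin{equation*}
0 \to H_1(X_0(N), \mathbf{Z}_p) \to H \to \mathbf{Z}_p[\cusps]^0 \to 0,
\end{equation*}
in which $c$ acts trivially on $\mathbf{Z}_p[\cusps]^0 \simeq \mathbf{Z}_p$ because the two cusps $0$ and $\infty$ of $X_0(N)$ are $c$-fixed. Merel's result gives $\hat H^i(\langle c\rangle, H_1(X_0(N), \mathbf{Z}_p)) = 0$ for all $i$; combined with the trivial vanishing $\hat H^1(\langle c\rangle, \mathbf{Z}_p) = 0$ for the trivial action on $\mathbf{Z}_p$, the long exact Tate cohomology sequence produces $H^1(\langle c\rangle, H) = \hat H^1(\langle c\rangle, H) = 0$. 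Applying the $\langle c\rangle$-invariants functor to $0 \to H \xrightarrow{p^n} H \to H/p^n H \to 0$ then yields the exact sequence $0 \to M_+ \xrightarrow{p^n} M_+ \to (H/p^n H)_+ \to 0$, which is the desired isomorphism once $H/p^n H$ is identified with $H_1(X_0(N), \cusps, \mathbf{Z}/p^n \mathbf{Z})$ via universal coefficients ($H$ is $\mathbf{Z}_p$-free).

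For (ii), I would proceed by duality from (i). The perfect $\tilde{\mathbb{T}}$-equivariant intersection pairing $M_+ \times M^- \to \mathbf{Z}_p$ recalled in the introduction gives $M^- \simeq \Hom_{\mathbf{Z}_p}(M_+, \mathbf{Z}_p)$ as $\tilde{\mathbb{T}}$-modules, so
\begin{equation*}
M^-/p^n M^- \simeq \Hom(M_+/p^n M_+, \mathbf{Z}/p^n\mathbf{Z}) \simeq \Hom(H_1(X_0(N), \cusps, \mathbf{Z}/p^n\mathbf{Z})_+, \mathbf{Z}/p^n\mathbf{Z})
\end{equation*}
by part (i). On the other side, Poincaré--Lefschetz duality provides a perfect mod-$p^n$ intersection pairing between $H_1(X_0(N), \cusps, \mathbf{Z}/p^n\mathbf{Z})$ and $H_1(Y_0(N), \mathbf{Z}/p^n\mathbf{Z})$, which is $c$-compatible with the sign rule $\langle cx, cy\rangle = -\langle x, y\rangle$ because $c$ reverses the orientation of the Riemann surface $X_0(N)$. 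This sign rule forces the pairing to restrict to a nondegenerate pairing between $H_1(X_0(N), \cusps, \mathbf{Z}/p^n\mathbf{Z})_+$ and $H_1(Y_0(N), \mathbf{Z}/p^n\mathbf{Z})^-$; chasing the identifications, the resulting isomorphism is precisely the one induced by the surjection $H_1(Y_0(N), \mathbf{Z}_p) \twoheadrightarrow M^-$.

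The main anticipated obstacle is the nondegeneracy of the restricted mod-$p^n$ intersection pairing at $p=2$: for $p$ odd the sign rule forces the $(+,+)$- and $(-,-)$-blocks of the pairing to vanish, so the $(+,-)$-block is automatically perfect, but at $p=2$ a more delicate argument invoking Merel's acyclicity (which for $p=2$ means $H_1(X_0(N), \mathbf{Z}_2)$ is free as a $\mathbf{Z}_2[\langle c\rangle]$-module) is required to secure the nondegeneracy of the $(+,-)$-block mod $2^n$. Once this structural input is in place, the identifications chain together to give the claimed $\tilde{\mathbb{T}}$-equivariant isomorphism in both parts.
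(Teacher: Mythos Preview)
Your proposal is correct and follows essentially the same route as the paper. Part (i) is identical: both use the boundary sequence $0\to H_1(X_0(N),\mathbf{Z}_p)\to H_1(X_0(N),\cusps,\mathbf{Z}_p)\to\mathbf{Z}_p\to 0$ together with Merel's acyclicity to kill $H^1(\langle c\rangle,H_1(X_0(N),\cusps,\mathbf{Z}_p))$, then apply $(\cdot)^{\langle c\rangle}$ to the multiplication-by-$p^n$ sequence. For part (ii) the paper simply writes ``follows from point (i) by intersection duality''; your version spells out the same reduction and correctly flags that at $p=2$ the nondegeneracy of the $(+,-)$-block modulo $2^n$ is not automatic from the sign rule alone but is secured by the $\langle c\rangle$-cohomological vanishing already established in (i).
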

\begin{proof}
Point (ii) follows from point (i) by intersection duality. Thus, we only need to prove (i).
The multiplication by $p^n$ gives an exact sequence of $\tilde{\mathbb{T}}$-modules
\begin{equation}\label{even_modSymb_exact_sequence_cplx_conjug}
0 \rightarrow H_1(X_0(N), \cusps, \mathbf{Z}_p) \xrightarrow{p^n} H_1(X_0(N), \cusps, \mathbf{Z}_p) \rightarrow H_1(X_0(N), \cusps, \mathbf{Z}/p^n\mathbf{Z}) \rightarrow 0\text{ .}
\end{equation}
There is an action of $\mathbf{Z}/2\mathbf{Z}$ on each of the groups involved in (\ref{even_modSymb_exact_sequence_cplx_conjug}), given by the complex conjugation. 
Furthermore, (\ref{even_modSymb_exact_sequence_cplx_conjug}) is $\mathbf{Z}/2\mathbf{Z}$-equivariant. The long exact sequence of cohomology associated to $\mathbf{Z}/2\mathbf{Z}$ yields an exact sequence:
\begin{equation}\label{even_modSymb_exact_sequence_cplx_conjug_2}
0 \rightarrow M_+ \xrightarrow{p^n} M_+ \rightarrow H_1(X_0(N), \cusps, \mathbf{Z}/p^r\mathbf{Z})_+ \rightarrow H^1\left(\mathbf{Z}/2\mathbf{Z},   H_1(X_0(N), \cusps, \mathbf{Z}_p)\right)[p^n] \text{ .}
\end{equation}
We have a $\mathbf{Z}/2\mathbf{Z}$-equivariant exact sequence
$$0 \rightarrow H_1(X_0(N), \mathbf{Z}_p) \rightarrow H_1(X_0(N), \cusps, \mathbf{Z}_p) \rightarrow \mathbf{Z}_p \rightarrow 0 \text{ ,}$$
where the action of $\mathbf{Z}/2\mathbf{Z}$ on $\mathbf{Z}_p$ is trivial. The long exact sequence in cohomology yields an exact sequence
$$ H^1\left(\mathbf{Z}/2\mathbf{Z}, H_1(X_0(N), \mathbf{Z}_p)\right) \rightarrow H^1\left(\mathbf{Z}/2\mathbf{Z}, H_1(X_0(N), \cusps, \mathbf{Z}_p)\right) \rightarrow H^1\left(\mathbf{Z}/2\mathbf{Z},  \mathbf{Z}_p\right) \text{ .}$$
We have $H^1(\mathbf{Z}/2\mathbf{Z}, \mathbf{Z}_p)=0$ and $H^1\left(\mathbf{Z}/2\mathbf{Z}, H_1(X_0(N), \mathbf{Z}_p)\right)=0$ \cite[Proposition $5$]{Merel_accouplement}. Thus, we have $H^1\left(\mathbf{Z}/2\mathbf{Z}, H_1(X_0(N), \cusps, \mathbf{Z}_p)\right) = 0$, which concludes the proof of Proposition \ref{even_modSymb_reduction_mod_2_complex_conjug} by (\ref{even_modSymb_exact_sequence_cplx_conjug_2}).
\end{proof}
Thus, we will abuse notation and write $H_1(X_0(N), \cusps, \mathbf{Z}/p^n\mathbf{Z})_+$ (resp. $H_1(Y_0(N),\mathbf{Z}/p^n\mathbf{Z})^-$) for $M_+/p^n\cdot M_+$ (resp. $M^-/p^n\cdot M^-$). 

Let $\pi : X_1(N) \rightarrow X_0(N)$ be the standard degeneracy map. This produces by pull-back and push-forward two maps 
$$\pi^* : H_1(X_0(N), \cusps, \mathbf{Z}) \rightarrow H_1(X_1(N), \cusps, \mathbf{Z})$$
and 
$$\pi_* : H_1(X_1(N), \cusps, \mathbf{Z}) \rightarrow H_1(X_0(N), \cusps, \mathbf{Z}) \text{ .}$$
We will freely abuse notation and still denote the pull-back maps for different coefficient rings than $\mathbf{Z}$ by $\pi^*$ and $\pi_*$.

\begin{prop}\label{even_modSymb_injectivity_trace}
\begin{enumerate}
\item For any integer $n \geq 1$, the kernel of $\pi^* : H_1(X_0(N), \cusps, \mathbf{Z}/p^n\mathbf{Z}) \rightarrow H_1(X_1(N), \cusps, \mathbf{Z}/p^n\mathbf{Z})$ is cyclic of order $p^{\min(n,t)}$, annihilated by the Eisenstein ideal $\tilde{I}$ and by $1+c$ where $c$ is the complex conjugation. If $p\neq 3$, a generator of this kernel is $p^{\max(n-t,0)} \cdot \mathcal{E}_{p}^-$,
where $$\mathcal{E}_{p}^- := \frac{1}{3}\cdot \sum_{x \in \mathcal{R} \atop x \not\sim [1:1]} \log\left(\frac{x-1}{x+1}\right) \cdot \xi_{\Gamma_0(N)}(x) \in H_1(X_0(N), \mathbf{Z}/p^t\mathbf{Z}) \text{ .}$$
Here, $\mathcal{R}$ is the set of equivalences classes in $\mathbf{P}^1(\mathbf{Z}/N\mathbf{Z})$ for the equivalence relation $[c:d] \sim [-d:c]$.
\item If $p \geq 3$, then for any integer $n \geq 1$, the pull-back map $$M_+/p^n\cdot M_+ \rightarrow H_1(X_1(N), \cusps, \mathbf{Z}/p^n\mathbf{Z})$$ is injective.
\item For any integer $n \geq 1$, the kernel of the pull-back map $$M_+/2^n\cdot M_+ \rightarrow H_1(X_1(N), \cusps, \mathbf{Z}/2^n\mathbf{Z})$$ is spanned by the reduction of $2^{n-1}\cdot \tilde{m}_0^+$ modulo $2^r$.
\end{enumerate}
\end{prop}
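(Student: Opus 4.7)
My plan is to prove part (i) first, in three steps, and deduce parts (ii) and (iii) by restricting to the $+$-eigenspace of complex conjugation. For (i), the first step is a snake-lemma reduction from relative to absolute homology, applied to the diagram of boundary exact sequences $0\to H_1(X_i(N),\mathbf{Z}/p^n\mathbf{Z})\to H_1(X_i(N),\cusps,\mathbf{Z}/p^n\mathbf{Z})\to \widetilde{\mathbf{Z}/p^n\mathbf{Z}}[\cusps_{X_i(N)}]\to 0$ for $i=0,1$, connected by $\pi^*$. The rightmost column is injective because $[\infty]-[0]$ pulls back to (sum of cusps above $\infty$) $-$ (sum of cusps above $0$), two disjoint non-empty supports (the cover being unramified at cusps), so $\ker(\pi^*)$ on relative homology equals $\ker(\pi^*)$ on absolute homology. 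Second, since $H_1(X_i(N),\mathbf{Z}_p)$ is $\mathbf{Z}_p$-free and $\pi^*\otimes\mathbf{Q}_p$ is injective (because $\pi_*\pi^*=\deg\pi$), tensoring $0\to H_1(X_0(N),\mathbf{Z}_p)\xrightarrow{\pi^*} H_1(X_1(N),\mathbf{Z}_p)\to C\to 0$ with $\mathbf{Z}/p^n\mathbf{Z}$ yields $\ker(\pi^*\otimes\mathbf{Z}/p^n\mathbf{Z})\simeq C[p^n]$. Third, via the Abel--Jacobi sequence $0\to H_1(X,\mathbf{Z})\to H^0(X,\Omega^1)^*\to\mathrm{Jac}(X)\to 0$, the torsion of $C$ is canonically identified with $\ker(\pi^*: J_0(N)\to J_1(N))$, whose $p$-part by Mazur's Shimura-covering analysis \cite[Chap.~II, \S11]{Mazur_Eisenstein} is cyclic of order $p^t$, generated by the class of $(0)-(\infty)$, annihilated by $\tilde I$, and acted on by $c$ via $-1$.

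\textbf{Explicit formula and parts (ii), (iii).} When $p\neq 3$, identifying $\mathcal{E}_p^-$ as a generator uses the Manin-symbol dictionary (Lemma~\ref{even_modSymb_duality_lemma_RUI_2} combined with Theorem~\ref{odd_even_modSymb_determination_m_1^-}) between linear forms on $M_+$ and elements of $M^-/p^t M^-$: the unique (up to $(\mathbf{Z}/p^t\mathbf{Z})^\times$) element of $(M^-/p^t M^-)[\tilde I]$ is the reduction of Mazur's $m_1^-$, whose Manin-symbol expression, after $\sigma$-symmetrization, gives exactly the claimed formula (the factor $1/3$ and the exclusion of $[1{:}1]$, where $\log\tfrac{x-1}{x+1}$ is ill-defined, being forced by the $\sigma$-relations combined with the relation $\xi_{\Gamma_0(N)}(\sigma\cdot x)=-\xi_{\Gamma_0(N)}(x)$). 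Part (ii) is then immediate: the kernel is annihilated by $1+c$ by (i), and on $M_+/p^n M_+$ this operator is multiplication by $2$, a unit for odd $p$. For part (iii) with $p=2$, the same annihilation forces the kernel inside $M_+/2^n M_+$ into the $2$-torsion of a cyclic group, hence into a group of order at most $2$; the candidate generator $2^{n-1}\tilde m_0^+$ (I suspect the statement's ``modulo $2^r$'' is a typo for ``modulo $2^n$'') is nonzero in $M_+/2^n M_+$ because $\tilde m_0^+$ is indivisible by $2$ in $M_+$: its boundary $\frac{N-1}{12}((0)-(\infty))$ has $2$-adic valuation exactly $t$, matching the fact that $\mathrm{image}(\partial: M_+\to\mathbf{Z}_2)=p^t\,\mathbf{Z}_2$.

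\textbf{Main obstacle.} The delicate remaining point for (iii) is to show that $\pi^*(\tilde m_0^+)$ is divisible by $2$ in $H_1(X_1(N),\cusps,\mathbf{Z}_2)$, so that $\pi^*(2^{n-1}\tilde m_0^+)=0$ in $\mathbf{Z}/2^n\mathbf{Z}$-coefficients. I would extract this divisibility by comparing $\pi^*(\tilde m_0^+)$ with the natural Eisenstein cycle at the cusp $\infty$ of $X_1(N)$ (governed by the Eisenstein ideal $\tilde I_\infty$ of Section~\ref{odd_modSymb_section_eisenstein_ideal}) and tracking the $2$-adic factor $\nu/2=2$ that arises from the degree of the ramified cover $X_1(N)\to X_s$ at the prime $2$. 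The main difficulty is precisely this last identification: one must disentangle the $2$-adic contribution of the Shimura subgroup from the extra $\{\pm 1\}$-layer between $X_s$ and $X_1(N)$, and pin down the explicit Manin-symbol representative that realizes the correct $2$-adic divisibility after pullback.
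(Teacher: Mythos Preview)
Your approach to (i) via the Abel--Jacobi sequence and the Shimura subgroup $\Sigma=\ker(\pi^*\colon J_0(N)\to J_1(N))$ is a valid alternative to the paper's route, which instead dualizes the sequence $H_1(Y_1(N),\mathbf{Z})\to H_1(Y_0(N),\mathbf{Z})\xrightarrow{\varphi} U\to 0$ (with $U=(\mathbf{Z}/N\mathbf{Z})^\times/\mu_{12}$) via the intersection pairing to get $\ker\pi^*\simeq\Hom(U,\mathbf{Z}/p^n\mathbf{Z})$ directly; the paper's version makes the Eisenstein and $c$-actions visible on $\varphi$ without passing through Jacobians. One slip: $\Sigma$ is \emph{not} generated by the class of $(0)-(\infty)$ --- that is the cuspidal subgroup, which only meets $\Sigma$ in a point of order dividing~$2$ (cf.\ Remark after the proposition). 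All you need from Mazur is that the $p$-part of $\Sigma$ is cyclic of order $p^t$, Eisenstein, and odd under $c$, which is in \cite[II.11]{Mazur_Eisenstein}.

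Your genuine difficulty is in (iii), and it is self-inflicted. First, the indivisibility argument is wrong: $\partial\colon M_+\to\mathbf{Z}_2$ is surjective (already $\{0,\infty\}\in M_+$ has boundary a generator), so the $2$-adic valuation of $\partial\tilde m_0^+$ says nothing. More importantly, your ``main obstacle'' is unnecessary. From (i) the full kernel $G$ is cyclic with $c$ acting by $-1$; for $x\in G[2]$ one has $cx=-x=x$, so $G[2]\subset M_+/2^nM_+$ automatically, and hence $K=G\cap(M_+/2^nM_+)=G[2]$ has order \emph{exactly} $2$, not merely at most $2$. Since $K\subset(M_+/2^nM_+)[\tilde I]$ and the latter is cyclic of order $2^n$ generated by $m_0^+$, the unique subgroup of order $2$ is $\langle 2^{n-1}m_0^+\rangle$ --- no need to verify $\pi^*(\tilde m_0^+)\in 2\cdot H_1(X_1(N),\cusps,\mathbf{Z}_2)$ by hand. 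The paper reaches the same endpoint by a parallel duality on the minus side: noting $\varphi\bigl((1+c)H_1(Y_0(N),\mathbf{Z})\bigr)=U^2$ and dualizing $H_1(Y_1(N),\mathbf{Z})^-\to H_1(Y_0(N),\mathbf{Z})^-\to U/U^2\to 0$.
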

\begin{proof}
We prove (i). Let $U = (\mathbf{Z}/N\mathbf{Z})^{\times}/\mu_{12}$, where $\mu_{12}$ is the $12$-torsion subgroup of $(\mathbf{Z}/N\mathbf{Z})^{\times}$. We have a commutative diagram whose rows are exact and whose vertical maps are surjective:
\begin{equation}\label{even_modSymb_diagram_shimura_complex_conjugation}
\xymatrix{
\Gamma_1(N) \ar[r]\ar[d]^{\gamma \mapsto \{z_1, \gamma(z_1)\}}  & \Gamma_0(N) \ar[r]^{\begin{pmatrix} a & b \\ c & d \end{pmatrix}  \mapsto d}\ar[d]^{\gamma \mapsto \{z_0, \gamma(z_0)\}}  & (\mathbf{Z}/N\mathbf{Z})^{\times} \ar[d] \ar[r] & 0 \\
H_1(Y_1(N), \mathbf{Z}) \ar[r] & H_1(Y_0(N), \mathbf{Z}) \ar[r]^{\varphi} & U  \ar[r] & 0 }
\end{equation}
where $z_1 \in Y_1(N)$ (resp. $z_0 \in Y_0(N)$) is any fixed point. The complex conjugation acts on $\Gamma_0(N)$ via $\begin{pmatrix} a & b \\ c & d \end{pmatrix} \mapsto \begin{pmatrix} -1 & 0 \\ 0 & 1 \end{pmatrix}^{-1}  \begin{pmatrix} a & b \\ c & d \end{pmatrix}  \begin{pmatrix} -1 & 0 \\ 0 & 1 \end{pmatrix} = \begin{pmatrix} a & -b \\ -c & d \end{pmatrix}$. Thus, the map $\varphi$ is annihilated by $1+c$. The map $\varphi$ is also annihilated by $\tilde{I}$ \cite[II.18]{Mazur_Eisenstein}. By intersection duality, we get an exact sequence
$$0 \rightarrow \Hom\left(U, \mathbf{Z}/p^n\mathbf{Z}\right) \rightarrow H_1(X_0(N), \cusps, \mathbf{Z}/p^n\mathbf{Z}) \xrightarrow{\pi^*} H_1(X_1(N), \cusps, \mathbf{Z}/p^n\mathbf{Z}) \text{ .} $$
The intersection duality is Hecke equivariant and changes the sign for the complex conjugation. This proves the first assertion of (i). The map $\varphi : H_1(Y_0(N), \mathbf{Z}) \rightarrow U$ corresponds by intersection duality to an element $\mathcal{E}^-$ of $H_1(X_0(N), \cusps, U)$, which is a generator of the kernel of $\pi^* : H_1(X_0(N), \cusps, U) \rightarrow H_1(X_1(N), \cusps, U)$. The following general result, essentially due to Merel \cite{Merel_FLT} and Rebolledo \cite{Rebolledo}, allows us to compute $\mathcal{E}^-$ in terms of Manin symbols.
\begin{lem}\label{even_modSymb_duality_lemma_RUI}
Keep the notation of Section \ref{odd_modSymb_section_hida}. Let $A$ be an abelian group in which $3$ is invertible.  Let $f : H_1(X_{\Gamma}, C_{\Gamma}, \mathbf{Z}) \rightarrow A$ be a group homomorphism and $\hat{f} \in H_1(Y_{\Gamma}, A)$ be the element corresponding to $f$ by intersection duality. Let $\mathcal{R}_{\Gamma}$ be the set of equivalence classes in $\Gamma \backslash \PSL_2(\mathbf{Z})$ for the equivalence relation $\Gamma \cdot g \sim \Gamma \cdot g \cdot \sigma$. The image of $\hat{f}$ in $H_1(X_{\Gamma}, A)$ is
$$\sum_{\Gamma \cdot g \in \mathcal{R}_{\Gamma}} f\left(\xi_{\Gamma}(\Gamma\cdot g)\right)\cdot  \xi_{\Gamma}(\Gamma\cdot g) +  \frac{1}{3}\cdot \sum_{g\cdot\Gamma \in \Gamma \backslash \PSL_2(\mathbf{Z})} \left(2\cdot f\left(\xi_{\Gamma}(\Gamma\cdot g\cdot \tau)\right)+  f\left(\xi_{\Gamma}(\Gamma\cdot g\cdot \tau^2)\right)\right) \cdot \xi_{\Gamma}(\Gamma\cdot g)\text{ .}$$
\end{lem}
\begin{proof}
We follow the notation of \cite[Section 1]{Merel_FLT}. Note that Merel assumes that $\begin{pmatrix} -1 & 0 \\ 0 & -1 \end{pmatrix} \in \Gamma$, but he uses the coset $\Gamma \backslash \SL_2(\mathbf{Z})$. Since we have $\Gamma \backslash \SL_2(\mathbf{Z}) = \Gamma \backslash \PSL_2(\mathbf{Z})$, this assumption of $\Gamma$ is not important. Let $\mathfrak{H}$ be the upper-half plane and $\pi : \mathfrak{H} \cup \mathbf{P}^1(\mathbf{Q}) \rightarrow X_{\Gamma}$ be the canonical surjection. Let $\rho = e^{\frac{2\pi i}{3}}$ and $\delta$ be the geodesic path between $i$ and $\rho$. Let $R = \pi\left(\SL_2(\mathbf{Z})\cdot \rho\right)$ and $I = \pi\left(\SL_2(\mathbf{Z})\cdot i\right)$. These sets are disjoints. If $\Gamma \cdot g \in \Gamma \backslash \PSL_2(\mathbf{Z})$, let $\xi_{\Gamma}'(\Gamma\cdot g)$ be the class of $\pi(g\cdot \delta)$ in $H_1(Y_{\Gamma}, R\cup I, \mathbf{Z})$. Let $f' : H_1(X_{\Gamma} - (R \cup I), \cusps, \mathbf{Z}) \rightarrow A$ be the composition $f$ with the canonical map $H_1(X_{\Gamma} - (R \cup I), \cusps, \mathbf{Z}) \rightarrow H_1(X_{\Gamma}, \cusps, \mathbf{Z})$. By intersection duality, $f'$ corresponds to an element $\hat{f}' \in H_1(Y_{\Gamma}, R\cup I, A)$. By \cite[Proposition 1]{Merel_FLT}, we have
\begin{equation}\label{even_modSymb_formula_duality_RUI}
\hat{f}' = \sum_{g\cdot\Gamma \in \Gamma \backslash \PSL_2(\mathbf{Z})} f\left(\xi_{\Gamma}(\Gamma\cdot g)\right) \cdot \xi_{\Gamma}'(\Gamma\cdot g) \text{ .}
\end{equation}
We have $\hat{f}' \in H_1(Y_{\Gamma}, A)$ since $f'$ factors through $H_1(X_{\Gamma}, \cusps, \mathbf{Z})$. Consider the image $\hat{f}''$ of $\hat{f}'$ in $H_1(X_{\Gamma}, R\cup I, A)$. We have $\hat{f}'' \in H_1(X_{\Gamma}, A)$. Lemma \ref{even_modSymb_duality_lemma_RUI} follows from the following result. This is a slight generalization of \cite[Th\'eor\`eme 2.2]{Rebolledo} (Rebolledo's result assumes that $6$ is invertible in $A$).
\begin{lem}\label{even_modSymb_duality_lemma_RUI_2}
Let $$x =\sum_{\Gamma\cdot g \in \Gamma\backslash \PSL_2(\mathbf{Z})} \lambda_{\Gamma\cdot g} \cdot \xi_{\Gamma}'(\Gamma\cdot g) \in H_1(X_{\Gamma}, R\cup I, A) \text{ .}$$
Assume that $x \in H_1(X_{\Gamma}, A)$. We have, in $H_1(X_{\Gamma}, \cusps, A)$:
$$x = \sum_{\Gamma\cdot g \in \mathcal{R}_{\Gamma}} \lambda_{\Gamma\cdot g} \cdot  \xi_{\Gamma}(\Gamma\cdot g)  + \frac{1}{3}\cdot \sum_{\Gamma\cdot g \in \Gamma \backslash \PSL_2(\mathbf{Z})} (2\cdot \lambda_{\Gamma\cdot g\cdot \tau} + \lambda_{\Gamma\cdot g\cdot \tau^2})\cdot \xi_{\Gamma}(\Gamma\cdot g) \text{ .}$$
\end{lem}
\begin{proof} For simplicity, we write $\lambda_g$ for $\lambda_{\Gamma\cdot g}$.
We have in $H_1(X_{\Gamma}, \cusps \cup R \cup I, A)$:
\begin{align*}
x&=\sum_{\Gamma\cdot g \in \Gamma\backslash \PSL_2(\mathbf{Z})} \lambda_{g} \cdot \{g(i), g(\rho)\} \\&
=\sum_{\Gamma\cdot g \in \Gamma\backslash \PSL_2(\mathbf{Z})} \lambda_{g} \cdot \{g(i), g(\infty)\}  - \sum_{\Gamma\cdot g \in \Gamma\backslash \PSL_2(\mathbf{Z})} \lambda_{g} \cdot \{g(\rho), g(\infty)\} \\&
= \sum_{\Gamma\cdot g \in \mathcal{R}_{\Gamma}} \left( \lambda_{g} \cdot \{g(i), g(\infty)\} + \lambda_{g\cdot \sigma} \cdot \{g\cdot \sigma(i), g\cdot\sigma(\infty)\} \right) \\&-\frac{1}{3}\cdot \sum_{\Gamma\cdot g \in \Gamma\backslash \PSL_2(\mathbf{Z})} \left( \lambda_g \cdot \{g(\rho), g(\infty)\} +\lambda_{g\cdot \tau} \cdot \{g\cdot \tau(\rho), g\cdot \tau(\infty) \}+\lambda_{g\cdot \tau^2} \cdot \{g\cdot \tau^2(\rho), g\cdot \tau^2(\infty) \} \right)\\&
= \sum_{\Gamma\cdot g \in \mathcal{R}_{\Gamma}} \left( \lambda_{g} \cdot \{g(i), g(\infty)\} + \lambda_{g\cdot \sigma} \cdot \{g\cdot \sigma(i), g\cdot\sigma(\infty)\} \right) \\&-\frac{1}{3}\cdot \sum_{\Gamma\cdot g \in \Gamma\backslash \PSL_2(\mathbf{Z})} \left( \lambda_g \cdot \{g(\rho), g(\infty)\} +\lambda_{g\cdot \tau} \cdot \{g\cdot \tau(\rho), g\cdot \tau(\infty) \}+\lambda_{g\cdot \tau^2} \cdot \{g\cdot \tau^2(\rho), g\cdot \tau^2(\infty) \} \right) \text{ .}
\end{align*}
Note that $\sigma(i) = i$, $\sigma(\infty) = 0$, $\tau(\rho)=\rho$ and $\tau(\infty)=0$. Since the boundary of $x$ is zero, we have $\lambda_{g \cdot \sigma} = -\lambda_{g}$ and $\lambda_{g} = -\lambda_{g}=-\lambda_{g\cdot \tau} -  \lambda_{g\cdot \tau^2}$ \cite[Th\'eor\`eme 3]{Merel_FLT}.
Thus, we have in $H_1(X_{\Gamma}, \cusps \cup R \cup I, A)$:
\begin{align*}
x&=\sum_{\Gamma\cdot g \in \mathcal{R}_{\Gamma}} \lambda_{g} \cdot\left( \{g(i), g(\infty)\} -  \{g(i), g(0)\} \right) \\&-\frac{1}{3}\cdot \sum_{\Gamma\cdot g \in \Gamma\backslash \PSL_2(\mathbf{Z})} \left( -(\lambda_{g\cdot \tau} +\lambda_{g\cdot \tau^2}) \cdot \{g(\rho), g(\infty)\} +\lambda_{g\cdot \tau} \cdot \{g(\rho), g(0) \}+\lambda_{g\cdot \tau^2} \cdot \{g(\rho), g\cdot \tau(0) \} \right) \\&
=\sum_{\Gamma\cdot g \in \mathcal{R}_{\Gamma}} \lambda_{g} \cdot \{g(0), g(\infty)\} \\&-\frac{1}{3}\cdot \sum_{\Gamma\cdot g \in \Gamma\backslash \PSL_2(\mathbf{Z})} \lambda_{g\cdot \tau} \cdot \{g(\infty), g(0) \}+\lambda_{g\cdot \tau^2} \cdot \{g(\infty), g\cdot \tau(0) \}  \text{ .}
\end{align*}
We have: $$\{g(\infty), g\cdot \tau(0)\} = \{g(\infty), g\cdot \tau(\infty)\}+ \{g\cdot \tau(\infty), g\cdot \tau(0)\} = -\{g(0), g(\infty)\}- \{g\cdot \tau(0),g\cdot \tau(\infty) \} \text{ .}$$ Thus, we have:
\begin{align*}
x&=\sum_{\Gamma\cdot g \in \mathcal{R}_{\Gamma}} \lambda_{g} \cdot \{g(0), g(\infty)\} \\&+\frac{1}{3}\cdot \sum_{\Gamma\cdot g \in \Gamma\backslash \PSL_2(\mathbf{Z})} \lambda_{g\cdot \tau} \cdot \{g(0), g(\infty)\} \\& +\frac{1}{3}\cdot \sum_{\Gamma\cdot g \in \Gamma\backslash \PSL_2(\mathbf{Z})}   \lambda_{g\cdot \tau^2} \cdot \{g(0), g(\infty)\}  \\& +\frac{1}{3}\cdot \sum_{\Gamma\cdot g \in \Gamma\backslash \PSL_2(\mathbf{Z})}   \lambda_{g\cdot \tau} \cdot \{g(0), g(\infty)\} \text{ .}
\end{align*}
This concludes the proof of Lemma \ref{even_modSymb_duality_lemma_RUI_2}.
\end{proof}
This concludes the proof of Lemma \ref{even_modSymb_duality_lemma_RUI}.
\end{proof}
We identify $H_1(X_0(N), \cusps, U)$ with $H_1(X_0(N), \cusps, \mathbf{Z}) \otimes_{\mathbf{Z}} U$. The homomorphism $\varphi : H_1(Y_0(N), \mathbf{Z}) \rightarrow U$ induces a map $\varphi' : H_1(X_0(N), \mathbf{Z}) \rightarrow U$. It is given by $$\varphi'\left(\xi_{\Gamma_0(N)}\left(\Gamma_0(N) \cdot \begin{pmatrix} a & b \\ c & d \end{pmatrix} \right)\right) = \overline{c}\cdot \overline{d}^{-1} \text{ ,}$$
where $\gcd(c,N)=\gcd(d,N)=1$ and if $x \in \mathbf{Z}$ is prime to $N$, then $\overline{x}$ is the image of $x$ in $U$. By Lemma \ref{even_modSymb_duality_lemma_RUI}, we have in $H_1(X_0(N), \cusps, \mathbf{Z}) \otimes_{\mathbf{Z}} U$:
\begin{align*}
\mathcal{E}_- &= \sum_{x \in \mathcal{R} \atop x \not\sim \infty} \xi_{\Gamma_0(N)}(x) \otimes  \overline{x}+ \frac{1}{3}\cdot \sum_{x \in \mathbf{P}^1(\mathbf{Z}/N\mathbf{Z}) \atop x \neq 0, \infty} \xi_{\Gamma_0(N)}(x) \otimes \overline{\frac{-1}{x\cdot (x+1)}} \\&
= \frac{1}{3}\cdot \sum_{x \in \mathcal{R} \atop x \not\sim \infty} \xi_{\Gamma_0(N)}(x) \otimes  \overline{\frac{x-1}{x+1}}\text{ .}
\end{align*}
This concludes the proof of point (i).

Point (ii) is an immediate consequence of point (i). We now prove point (iii). We have $\varphi\left((1+c)\cdot H_1(Y_0(N), \mathbf{Z})\right) = U^2$, where $U^2$ is the subgroup of squares in $U$. We have an exact sequence:
$$H_1(Y_1(N), \mathbf{Z}) \rightarrow H_1(Y_0(N), \mathbf{Z})/(1+c)\cdot H_1(Y_0(N), \mathbf{Z}) \rightarrow U/U^2 \rightarrow 0 \text{ .}$$
Let $H_1(Y_0(N), \mathbf{Z})^-$ (resp. $H_1(Y_1(N), \mathbf{Z})^-$) be the largest torsion-free quotient of $H_1(Y_0(N), \mathbf{Z})$ (resp. $H_1(Y_1(N), \mathbf{Z})$) annihilated by $1+c$.
\begin{lem}\label{even_modSymb_acyclicity_Y0}
The kernel of the map $H_1(Y_0(N), \mathbf{Z}) \rightarrow H_1(Y_0(N), \mathbf{Z})^-$ is $(1+c)\cdot H_1(Y_0(N), \mathbf{Z})$ where $c$ is the complex conjugation.
\end{lem}
\begin{proof}
The kernel of the map $H_1(X_0(N), \mathbf{Z}) \rightarrow H_1(X_0(N), \mathbf{Z})^-$ is $(1+c)\cdot H_1(Y_0(N), \mathbf{Z})$ by \cite[Proposition 5]{Merel_accouplement}. We have an exact sequence 
$$0 \rightarrow H_1(X_0(N), \mathbf{Z}) \rightarrow H_1(X_0(N), \cusps, \mathbf{Z}) \rightarrow \mathbf{Z} \rightarrow 0 \text{ .}$$ 
Since the cusps are fixed  by the complex conjugation, this gives on the plus subspaces an exact sequence:
$$0 \rightarrow H_1(X_0(N), \mathbf{Z})_+ \rightarrow H_1(X_0(N), \cusps, \mathbf{Z})_+\rightarrow \mathbf{Z} \rightarrow 0 \text{ .}$$ 
By intersection duality, we get a commutative diagram whose rows are exact:
$$\xymatrix{
0 \ar[r] & \mathbf{Z}\ar[r]\ar[d] &H^1(Y_0(N), \mathbf{Z})\ar[r]\ar[d] &H^1(X_0(N),  \mathbf{Z}) \ar[r]\ar[d]&0 \\
0 \ar[r] &\mathbf{Z} \ar[r] & H^1(Y_0(N), \mathbf{Z})^- \ar[r] & H^1(X_0(N),  \mathbf{Z})^- \ar[r]&  0 
} \text{ .}$$
By the snake lemma, the kernel of the map $H_1(Y_0(N), \mathbf{Z}) \rightarrow H_1(Y_0(N), \mathbf{Z})^-$ is $(1+c) \cdot H_1(Y_0(N), \mathbf{Z})$.
\end{proof}

By Lemma \ref{even_modSymb_acyclicity_Y0}, we have an exact sequence:
$$H_1(Y_1(N), \mathbf{Z})^- \rightarrow H_1(Y_0(N), \mathbf{Z})^- \rightarrow U/U^2 \rightarrow 0 \text{ .}$$
By intersection duality, we have an exact sequence
$$0 \rightarrow \Hom(U/U^2, \mathbf{Z}/2^n\mathbf{Z}) \rightarrow M_+/2^n\cdot M_+ \rightarrow H_1(X_1(N), \cusps, \mathbf{Z}_2)_+  \otimes_{\mathbf{Z}_2}\mathbf{Z}/2^n\mathbf{Z}   \text{ .}$$
Note that $H_1(X_1(N), \cusps, \mathbf{Z}_2)_+  \otimes_{\mathbf{Z}_2}\mathbf{Z}/2^n\mathbf{Z}$ is a subgroup of $H_1(X_1(N), \cusps, \mathbf{Z}_2)  \otimes_{\mathbf{Z}_2}\mathbf{Z}/2^n\mathbf{Z}$, which we identify with $H_1(X_1(N), \cusps, \mathbf{Z}/2^n\mathbf{Z})$. The map $\varphi : H_1(Y_0(N), \mathbf{Z}) \rightarrow U$ is annihilated by the Eisenstein ideal. The map $H_1(Y_0(N), \mathbf{Z})^- \rightarrow U/U^2$ is also annihilated by the Eisenstein ideal. Thus, the image of $\Hom(U/U^2, \mathbf{Z}/2^n\mathbf{Z})$ in $M_+/2^n\cdot M_+$ has order $2$ and is Eisenstein, so is generated by $2^{n-1}\cdot \tilde{m}_0^+$ modulo $2^n$. 
\end{proof}

\begin{rem}\label{even_modSymb_intersection_shimura_cuspidal_rk}
Proposition \ref{even_modSymb_injectivity_trace} (iii) is another way to express that the Shimura subgroup and the cuspidal subgroup of $J_0(N)$ intersect at a point of order $2$ \cite[Proposition II.11.11]{Mazur_Eisenstein}.
\end{rem}

\subsection{The method used to compute $m_1^+$ }\label{even_modSymbidea}
In this section, we give the idea that lead us to the proof of Theorems \ref{thm_Introduction_w_1^+>3} and \ref{thm_Introduction_w_1^+_3}. This section is not necessary for the proof, and is mainly for the convenience of the reader. 

We begin by describing the first higher Eisenstein element in the space of modular forms. Following \cite{Emerton_supersingular}, let $\mathcal{N}$ be the $\mathbf{Z}$-module of weight $2$ modular forms $f$ of level $\Gamma_0(N)$ and weight $2$ for which $a_n(f)\in \mathbf{Z}$ if $n\geq 1$ and $a_0(f) \in \mathbf{Q}$, where $\sum_{n \geq 0} a_n(f) \cdot q^n$ is the $q$-expansion of $f$ at the cusp $\infty$. We let $\mathcal{M} = \mathcal{N} \otimes_{\mathbf{Z}} \mathbf{Z}_p$. By \cite[Corollary II.$16.3$]{Mazur_Eisenstein} and \cite[Theorem $0.5$ and Proposition $1.9$]{Emerton_supersingular}, the $\tilde{\mathbf{T}}$-module $\mathcal{M} \otimes_{\tilde{\mathbb{T}}} \tilde{\mathbf{T}}$ is free of rank one. By \cite[Proposition $1.3$ (i)]{Emerton_supersingular}, the pairing $\mathcal{M} \times \tilde{\mathbb{T}} \rightarrow \mathbf{Q}_p$
given by $(f,T) \mapsto a_1(T(f))$
takes values in $\mathbf{Z}_p$ and induces a canonical $\tilde{\mathbb{T}}$-equivariant isomorphism $\mathcal{M} \xrightarrow{\sim} \Hom_{\mathbf{Z}_p}(\tilde{\mathbb{T}}, \mathbf{Z}_p)$. 
Let $T_0 \in \tilde{\mathbb{T}}$ be such that 
\begin{equation}\label{comparison_relation_a_1_a_0_T_0}
a_1(T_0(f)) = \frac{24}{\nu}\cdot a_0(f)
\end{equation}
for all $f \in \mathcal{N}$. We have $\Ker(\tilde{\mathbb{T}} \rightarrow \mathbb{T}) = \mathbf{Z}_p \cdot T_0$. By \cite[Proposition $1.8$ (ii)]{Emerton_supersingular}, we have 
\begin{equation}\label{comparison_T_0-n in I}
T_0 - \frac{N-1}{\nu}\in \tilde{I} \text{ .}
\end{equation}
This is coherent with the choice of section \ref{Formalism_special_case_modular_forms}.

Let $\tilde{f}_0 \in \mathcal{M}$ be the modular form whose $q$-expansion at the cusp $\infty$ is
$$\frac{N-1}{24}+\sum_{n\geq 1} \left(\sum_{d\mid n \atop \gcd(d,N)=1} d\right)\cdot q^n \text{ .}$$ Let $f_0$, ..., $f_{n(t,p)}$ be the higher Eisenstein elements of $\mathcal{M}/p^t\cdot \mathcal{M}$. 
We normalize $f_0$ to be the image of $\tilde{f}_0$ in $\mathcal{M}/p^t\cdot \mathcal{M}$. The image of $f_1$ in $(\mathcal{M}/p^t\cdot \mathcal{M})/\mathbf{Z}\cdot f_0$ is uniquely determined.

\begin{prop}\label{even_modSymbhigher_modular_form}
There is an element $f_1'$ in $\mathcal{M}/p^t\cdot \mathcal{M}$ such that for all prime number $\ell$ not dividing $N$ we have $$a_{\ell}(f_1) = \frac{\ell-1}{2}\cdot \log(\ell)$$ (with the usual interpretation if $\ell=p=2$, \cf chapter \ref{Section_introduction}). The images of $f_1'$ and $f_1$ in $\left( \mathcal{M}/p^t\cdot \mathcal{M}\right)/\mathbf{Z}\cdot f_0$ coincide.
\end{prop}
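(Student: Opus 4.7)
The plan is to exploit the duality between $\mathcal{M}$ and $\tilde{\mathbb{T}}$ to read off Fourier coefficients in terms of the action of the Hecke algebra, and then to renormalize $f_1$ so as to kill its $q$-expansion coefficient at $q^1$. The isomorphism $\mathcal{M}\simeq\Hom_{\mathbf{Z}_p}(\tilde{\mathbb{T}},\mathbf{Z}_p)$ given by $f\mapsto(T\mapsto a_1(Tf))$ implies $a_n(f)=a_1(T_n f)$ for every $f\in\mathcal{M}$ and every integer $n\ge 1$. Applied to $f_0$ and to any prime $\ell$ not dividing $N$, the explicit Eisenstein $q$-expansion yields $a_1(f_0)=1$ and $a_\ell(f_0)=\ell+1$.

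Next, I would rewrite the defining relation of $f_1$ from Theorem~\ref{Formalism_existence_eisenstein}(ii) as the genuine equality
$$(T_\ell-\ell-1)(f_1)=\tfrac{\ell-1}{2}\log(\ell)\cdot f_0\quad\text{in }\mathcal{M}/p^t\cdot\mathcal{M},$$
rather than merely a congruence modulo $\mathbf{Z}\cdot f_0$. Indeed, the ambiguity in the choice of $f_1$ is precisely addition of a multiple of $f_0$, and since $f_0$ is annihilated by $T_\ell-\ell-1$, both sides of the displayed equation are unchanged under such a modification. Applying the functional $a_1$ and using the preceding paragraph gives
$$a_\ell(f_1)=(\ell+1)\cdot a_1(f_1)+\tfrac{\ell-1}{2}\log(\ell).$$

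I then set $f_1':=f_1-a_1(f_1)\cdot f_0\in\mathcal{M}/p^t\cdot\mathcal{M}$, which by construction is congruent to $f_1$ modulo $\mathbf{Z}\cdot f_0$ and satisfies $a_1(f_1')=0$. Subtracting $a_1(f_1)$ times the identity $a_\ell(f_0)=\ell+1$ from the previous display yields
$$a_\ell(f_1')=a_\ell(f_1)-a_1(f_1)\cdot a_\ell(f_0)=\tfrac{\ell-1}{2}\log(\ell)$$
for every prime $\ell\nmid N$, which is the required formula. The case $p=\ell=2$ is handled in the same way, replacing $\tfrac{\ell-1}{2}\log(\ell)$ by $\log(x)$ with $x^2\equiv 2\pmod{N}$, as prescribed by the definition of $e$ in section~\ref{Formalism_special_case_modular_forms}.

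There is no serious obstacle here: the argument is a one-line normalization that works once one observes that the Eisenstein ideal annihilates $f_0$, so that the higher Eisenstein relation for $f_1$ is an actual equality in $\mathcal{M}/p^t\cdot\mathcal{M}$ and not just a congruence modulo $\mathbf{Z}\cdot f_0$.
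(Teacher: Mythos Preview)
Your proof is correct. Both you and the paper use the duality $\mathcal{M}\simeq\Hom_{\mathbf{Z}_p}(\tilde{\mathbb{T}},\mathbf{Z}_p)$, but from opposite ends. The paper constructs $f_1'$ directly as a functional on $\tilde{\mathbb{T}}$: it sets $\psi(T)=\alpha(T-\phi(T))$, where $\phi(T_n)=\sum_{d\mid n,\ \gcd(d,N)=1}d$ is the functional corresponding to $\tilde f_0$ and $\alpha:\tilde I/\tilde I^2\to\mathbf{Z}/p^t\mathbf{Z}$ is Mazur's winding map, and then checks that the associated form $f_1'$ satisfies the higher Eisenstein relation. You instead start from the already-existing $f_1$ furnished by Theorem~\ref{Formalism_existence_eisenstein}, read off $a_\ell(f_1)$ from the relation $(T_\ell-\ell-1)f_1=\tfrac{\ell-1}{2}\log(\ell)\cdot f_0$ via $a_n=a_1\circ T_n$, and subtract the appropriate multiple of $f_0$. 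Your route is a bit more economical given that the general formalism is in place; the paper's route has the advantage of giving $a_n(f_1')=\alpha(T_n-\phi(T_n))$ for \emph{all} $n\ge 1$ at once, not just primes. One minor remark: your justification for why the Eisenstein relation for $f_1$ is a genuine equality is slightly roundabout; it is simpler to observe that for $i=1$ the ``modulo $\mathbf{Z}\cdot e_0+\cdots+\mathbf{Z}\cdot e_{i-2}$'' clause in Theorem~\ref{Formalism_existence_eisenstein}(ii) is vacuous.
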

\begin{proof}
Let $\phi : \tilde{\mathbb{T}} \rightarrow \mathbf{Z}_p$ be the $\mathbf{Z}_p$-linear homomorphism corresponding to $\tilde{f}_0$, given by
$$\phi(T_n) = \sum_{d \mid n \atop \gcd(d,N)=1}^n d$$
and 
$$\phi(T_0)=\frac{N-1}{\nu} \text{.}$$

If $T \in \tilde{\mathbb{T}}$, we have by definition $T-\phi(T) \in \tilde{I}$. The winding homomorphism of Mazur yields a map $\alpha : \tilde{I}/\tilde{I}^2 \rightarrow \mathbf{Z}/p^t\mathbf{Z}$
sending $T_{\ell}-\phi(T_{\ell})$ to $\frac{\ell-1}{2}\cdot \log(\ell)$ for all prime $\ell$ not dividing $N$. 
The map $\psi : \tilde{\mathbb{T}} \rightarrow \mathbf{Z}/p^t\mathbf{Z}$
given by $\psi(T) = \alpha(T-\phi(T))$ is a group morphism, so defines a modular form $f_1'$ with $a_n(f_1') = \psi(T_n)$ for all integers $n \geq 0$. In particular, we have $a_{\ell}(f_1') = \frac{\ell-1}{2} \cdot \log(\ell)$. It is straightforward to check that $(T_{\ell}-\ell-1)(f_1') = \frac{\ell-1}{2} \cdot \log(\ell) \cdot f_0$ for all prime $\ell$ not dividing $N$. This concludes the proof of Proposition \ref{even_modSymbhigher_modular_form}.
\end{proof}
\begin{rem}
Theorem \ref{thm_Introduction_constant_coeff} (which will be proved in chapter \ref{Section_comparison}) shows that we have
$$a_0(f_1') = a_0(f_1) =  \frac{1}{6}\cdot \sum_{k=1}^{\frac{N-1}{2}} k \cdot \log(k) \text{.}$$
\end{rem}

The key fact is that $f_1'$ is related to an Eisenstein series of level $\Gamma_1(N)$ and weight $2$. If $\epsilon : (\mathbf{Z}/N\mathbf{Z})^{\times} \rightarrow \mathbf{C}^{\times}$ is an even character, there is an Eisenstein series $E_{\epsilon, 1} \in M_2(\Gamma_1(N), \epsilon)$ whose $q$-expansion at the cusp $\infty$ is:
$$2\cdot \sum_{n\geq 1 } \left( \sum_{d\mid n} \epsilon(d)\cdot \frac{n}{d} \right)q^n $$
(\cf for instance \cite[Theorem $4.6.2$]{Diamond_Shurman}).
Similarly, there is an Eisenstein series $E_{1, \epsilon} \in M_2(\Gamma_1(N), \epsilon)$ whose $q$-expansion at the cusp $\infty$ is:
$$L(-1, \epsilon) + 2\cdot \sum_{n\geq 1 } \left( \sum_{d\mid n} \epsilon(d)\cdot d \right)q^n \text{.}$$
Let $\epsilon : (\mathbf{Z}/N\mathbf{Z})^{\times} \rightarrow \mathbf{C}^{\times}$
be the even character defined by $$\epsilon(x) = e^{\frac{2i\pi \cdot \log(x)}{p^{t}}} \text{.}$$ We now define
\begin{equation}\label{even_modSymb_Eisenstein_series_moyenne}
E = \frac{E_{1, \epsilon}-\tau(\epsilon)\cdot E_{\epsilon^{-1},1}}{4} 
\end{equation}
where $\tau(\epsilon)  = \sum_{a=1}^{N-1} \epsilon(a) \cdot e^{\frac{2\cdot i \cdot \pi \cdot a}{N}}$.
The key observation is that the $q$-expansion of $E$ is $\tilde{f}_0 + \pi \cdot f_1' + O(\pi^2)$
where $\pi = 1-\zeta_{p^t}$ is a uniformizer of $\mathbf{Z}_p^{\text{unr}}[\zeta_{p^t}]$. Thus, there should be a way to compute $m_1^+$ using \textit{Eisenstein elements} in $H_1(X_1(N), \cusps, \mathbf{C})_+$. Formulas for all the Eisenstein elements of level $\Gamma(N)$ have recently been given in \cite{Debargha_Merel}, where $\Gamma(N) \subset \SL_2(\mathbf{Z})$ is the subgroup of matrices congruent to the identity modulo $N$.

\subsection{Eisenstein elements of level in $H_1(X_1(N), \cusps, \mathbf{C})$}
Recall that we have defined $\B_1: \mathbf{R} \rightarrow \mathbf{R}$ by 
$$\B_1(x) = x-\lfloor x \rfloor-\frac{1}{2}$$
if $x \not\in\mathbf{Z}$ and $\B_1(x)=0$ else, where $\lfloor x \rfloor$ is the integer part of $x$. Following \cite[Section 2.2]{Merel_accouplement}, let $D_2: \mathbf{R} \rightarrow \mathbf{R}$ be given by $D_2(x) = 2\cdot( \B_1(x) - \B_1(x + \frac{1}{2}))$. It is a periodic function with period $1$ such that $D_2(0) = D_2\left(\frac{1}{2}\right)=0$, $D_2(x) = -1$ if $x \in ]0, \frac{1}{2}[$ and $D_2(x) = 1$ if $x \in ]\frac{1}{2}, 1[$. 

For the convenience of the reader, we state the main result of \cite{Debargha_Merel}. As in section \ref{odd_modSymb_section_hida}, let $$\xi_{\Gamma(N)} : \mathbf{Z}[\Gamma(N) \backslash \PSL_2(\mathbf{Z})] \rightarrow H_1(X(N), \cusps, \mathbf{Z})$$ be the Manin surjective map. Since $N$ is odd, we have canonical identifications $$\SL_2(\mathbf{Z}/N\mathbf{Z})/\pm1 \simeq \Gamma(N) \backslash \PSL_2(\mathbf{Z}) \simeq \pm \Gamma(2N) \backslash \Gamma(2) \text{ .}$$

Let $F : (\mathbf{Z}/N\mathbf{Z})^2 \rightarrow \mathbf{C}$ be defined by
$$F(x,y) = \sum_{(s_1,s_2) \in (\mathbf{Z}/2N\mathbf{Z})^2} e^{\frac{2i \pi \cdot ( s_1(x'+y') +s_2(x'-y'))}{2N}} \cdot \B_1\left(\frac{s_1}{2N}\right) \B_1\left(\frac{s_2}{2N}\right) $$
where $(x', y') \in (\mathbf{Z}/2N\mathbf{Z})^{2}$ is a lift of $(x,y)$ such that $x'+y' \equiv 1 \text{ (modulo }2\text{)}$.
We have $F(x,y)=F(-x,-y)$.

If $P \in (\mathbf{Z}/N\mathbf{Z})^2$, let 
$$\tilde{\mathcal{E}}_P = \sum_{\gamma \in \SL_2(\mathbf{Z}/N\mathbf{Z})/\pm1} F(\gamma^{-1}P) \cdot \xi_{\Gamma(N)}(\gamma) \in H_1(X(N), \cusps, \mathbf{C}) \text{ .}$$
We have $\tilde{\mathcal{E}}_P = \tilde{\mathcal{E}}_{-P}$. Let $\mathcal{E}_P$ be the image of $\tilde{\mathcal{E}}_P$ to $H_1(X_1(N), \cusps, \mathbf{C})$.The following result is an easy consequence of the work of Debargha and Merel (\cite{Debargha_Merel}).

\begin{thm}\label{even_modSymb_DM_thm}
Let $\ell$ be a prime not dividing $N$. For all $x \in (\mathbf{Z}/N\mathbf{Z})^{\times}$, we have:
$$T_{\ell}(\mathcal{E}_{(x,0)}) = \ell \cdot \mathcal{E}_{(x,0)} + \mathcal{E}_{(\ell^{-1}x, 0)} $$
and
$$T_{\ell}(\mathcal{E}_{(0,x)}) = \ell\cdot \mathcal{E}_{(0,\ell x)} + \mathcal{E}_{(0,x)} $$
where $T_{\ell}$ is the $\ell$th Hecke operator.
\end{thm}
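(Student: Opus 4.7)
The theorem should follow as a direct specialization and push-forward of the main Hecke action formula of Debargha--Merel \cite{Debargha_Merel} at level $\Gamma(N)$. My plan is therefore to quote (or re-derive) the level-$\Gamma(N)$ formula, specialise $(a,b) = (x,0)$ and $(a,b) = (0,x)$, and then push forward to $H_1(X_1(N), \cusps, \mathbf{C})$. Concretely, I expect the Debargha--Merel identity for $\ell$ prime to $N$ to read
$$T_\ell\, \tilde{\mathcal{E}}_{(a,b)} = \ell\cdot \tilde{\mathcal{E}}_{(a,\, \ell b)} + \tilde{\mathcal{E}}_{(\ell^{-1} a,\, b)} \in H_1(X(N), \cusps, \mathbf{C}) \text{ ,}$$
where $\ell$ and $\ell^{-1}$ act on a single coordinate modulo $N$. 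Specialising $(a,b) = (x,0)$ gives $T_\ell\, \tilde{\mathcal{E}}_{(x,0)} = \ell\, \tilde{\mathcal{E}}_{(x,0)} + \tilde{\mathcal{E}}_{(\ell^{-1} x, 0)}$ (using $\ell \cdot 0 = 0$), while $(a,b) = (0,x)$ gives $T_\ell\, \tilde{\mathcal{E}}_{(0,x)} = \ell\, \tilde{\mathcal{E}}_{(0, \ell x)} + \tilde{\mathcal{E}}_{(0,x)}$ (using $\ell^{-1} \cdot 0 = 0$). The push-forward $\pi_* : H_1(X(N), \cusps, \mathbf{C}) \to H_1(X_1(N), \cusps, \mathbf{C})$ is Hecke-equivariant for $T_\ell$ with $\ell \nmid N$ and satisfies $\pi_* \tilde{\mathcal{E}}_Q = \mathcal{E}_Q$ by definition, so the two claimed formulas follow.

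To derive the level-$\Gamma(N)$ identity I would combine Merel's Hecke action on Manin symbols (equation \eqref{odd_modSymb_Merel_formula_Hecke}) with the explicit definition of $\tilde{\mathcal{E}}_P$. Applying \eqref{odd_modSymb_Merel_formula_Hecke} term by term to $\tilde{\mathcal{E}}_P = \sum_\gamma F(\gamma^{-1} P)\,\xi_{\Gamma(N)}(\gamma)$ yields a double sum over $\gamma \in \SL_2(\mathbf{Z}/N\mathbf{Z})/\pm 1$ and $\alpha \in \mathcal{X}_\ell$. Since $\gcd(\ell, N) = 1$ the reduction $\bar\alpha$ of $\alpha$ modulo $N$ is invertible in $\GL_2(\mathbf{Z}/N\mathbf{Z})$, and the substitution $\gamma' = \gamma\bar\alpha$ rewrites the double sum as $\sum_{\gamma'} \bigl(\sum_{\alpha \in \mathcal{X}_\ell} F(\bar\alpha \cdot (\gamma')^{-1} P)\bigr)\,\xi_{\Gamma(N)}(\gamma')$. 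The crux is then the Fourier-analytic identity
$$\sum_{\alpha \in \mathcal{X}_\ell} F(\bar\alpha \cdot Q) \;=\; \ell\, F\bigl((1, \ell) \cdot Q\bigr) + F\bigl((\ell^{-1}, 1) \cdot Q\bigr)$$
for $Q \in (\mathbf{Z}/N\mathbf{Z})^2$ and $\ell \nmid N$, which collapses the Hecke sum to only two terms and, after restoring the sum over $\gamma'$, is precisely the $\Gamma(N)$-level formula announced above.

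The main obstacle is exactly this Fourier identity, which is the technical heart of \cite{Debargha_Merel}; it is obtained by expanding $F$ into its double Fourier series in $(s_1, s_2)$, evaluating the additive characters against the $\ell+1$ matrices of $\mathcal{X}_\ell$, and observing that all but two of the resulting geometric sums cancel (the surviving ones correspond to the trivial action and to the diagonal $\mathrm{diag}(1, \ell)$, up to the transpose involution exchanging the two coordinates). Once this identity is granted, the change of variables, specialisation to $P = (x,0)$ or $P = (0,x)$, and push-forward via $\pi_*$ are formal; the Hecke-equivariance of $\pi_*$ away from $N$ is standard and follows from the compatibility of the degeneracy map $X(N) \to X_1(N)$ with the $T_\ell$-correspondence.
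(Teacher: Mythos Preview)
Your approach is genuinely different from the paper's, and as written contains a gap. The paper does not attempt to establish the Hecke identity by a direct Manin-symbol computation at level $\Gamma(N)$. Instead it quotes two results from \cite{Debargha_Merel}: that the boundary map $\partial_N$ restricts to a Hecke-equivariant isomorphism from the span $E_N$ of the $\mathcal{E}_{(x,0)}$ and $\mathcal{E}_{(0,x)}$ onto $\mathbf{C}[C_{\Gamma_1(N)}]^0$ (their Theorem~1), and an explicit formula for $\partial_N(\mathcal{E}_{(x,0)})$ and $\partial_N(\mathcal{E}_{(0,x)})$ as combinations of cusps (their Theorem~13). Since $\partial_N|_{E_N}$ is injective and Hecke-equivariant, the two desired relations reduce to equalities in $\mathbf{C}[C_{\Gamma_1(N)}]^0$, which follow at once from the standard formulas $(T_\ell - \ell - \langle\ell\rangle)\cdot[u]^\infty = 0$, $(T_\ell - \ell\langle\ell\rangle - 1)\cdot[u]^0 = 0$, and $\langle\ell\rangle\cdot[u]^\bullet = [\ell u]^\bullet$. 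No Fourier identity for $F$ is needed.

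Your route, by contrast, rests on a ``Fourier-analytic identity'' $\sum_{\alpha\in\mathcal{X}_\ell} F(\bar\alpha\cdot Q) = \ell\,F((1,\ell)\cdot Q) + F((\ell^{-1},1)\cdot Q)$ that you neither prove nor locate precisely in \cite{Debargha_Merel}; the paper's account of what that reference actually supplies is the boundary description above, not a level-$\Gamma(N)$ Hecke formula of the shape you posit. There is also a technical problem with the substitution $\gamma' = \gamma\bar\alpha$: since $\det\bar\alpha = \ell$ in $(\mathbf{Z}/N\mathbf{Z})^\times$, the product $\gamma\bar\alpha$ lies in $\GL_2(\mathbf{Z}/N\mathbf{Z})$ but not in $\SL_2(\mathbf{Z}/N\mathbf{Z})$, so the sum over $\SL_2(\mathbf{Z}/N\mathbf{Z})/\pm1$ cannot be reparametrised as you describe; and the class $\xi_{\Gamma(N)}(\gamma\alpha)$ for $\det(\gamma\alpha)=\ell$ is $\{\gamma\alpha(0),\gamma\alpha(\infty)\}$, which is generally a \emph{sum} of Manin symbols rather than a single one. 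These issues may be repairable with more bookkeeping, but the boundary-map argument bypasses them entirely and is what the paper uses.
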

\begin{proof}
Recall the notation of section \ref{odd_modSymb_section_hida} for the cusps of $X_1(N)$. For all prime $\ell$ not dividing $N$, the Hecke operator $T_{\ell} - \ell-\langle \ell \rangle$ (resp. $T_{\ell} - \ell\langle \ell \rangle - 1$) annihilates $C_{\Gamma_1(N)}^{\infty}$ (resp. $C_{\Gamma_1(N)}^{0}$), where $\langle \ell \rangle$ is the $\ell$th diamond operator. By definition we have $\langle \ell \rangle \cdot [x]_{\Gamma_1(N)}^{\infty} = [\ell\cdot x]_{\Gamma_1(N)}^{\infty}$ and $\langle \ell \rangle \cdot [x]_{\Gamma_1(N)}^{0} = [\ell \cdot x]_{\Gamma_1(N)}^{0}$.

Let $E_N$ be the $\mathbf{C}$-vector space generated by the elements $\mathcal{E}_{(x,0)}$ and $\mathcal{E}_{(0,x)}$ for all $x \in (\mathbf{Z}/N\mathbf{Z})^{\times}/\pm 1$. Let $\partial_N : H_1(X_1(N), \cusps, \mathbf{C}) \rightarrow \mathbf{C}[C_{\Gamma_1(N)}]^0$ be the boundary map. By \cite[Theorem 1]{Debargha_Merel}, the restriction of $\partial_N$ to $E_N$ gives a Hecke-equivariant isomorphism $E_N \xrightarrow{\sim}  \mathbf{C}[C_{\Gamma_1(N)}]^0$. By \cite[Theorem 13]{Debargha_Merel}, for all $x \in (\mathbf{Z}/N\mathbf{Z})^{\times}/\pm 1$ we have
$$ \partial_N(\mathcal{E}_{(x,0)}) = 2N\cdot \left(\sum_{\mu \in (\mathbf{Z}/N\mathbf{Z})^{\times}} (F(\mu \cdot (1,0))+\frac{1}{4})\cdot [(\mu\cdot x)^{-1}]_{\Gamma_1(N)}^{\infty} \right) - \frac{N}{2} \cdot \left(\sum_{c \in C_{\Gamma_1(N)}} [c] \right) $$
and
$$ \partial_N(\mathcal{E}_{(0,x)}) = 2N\cdot \left(\sum_{\mu \in (\mathbf{Z}/N\mathbf{Z})^{\times}} (F(\mu \cdot (1,0))+\frac{1}{4})\cdot [\mu \cdot x]_{\Gamma_1(N)}^{0} \right) - \frac{N}{2} \cdot \left(\sum_{c \in C_{\Gamma_1(N)}} [c] \right) \text{ .}$$
Thus, for all prime $\ell$ not dividing $N$ we have
$$\partial_N( \langle \ell \rangle \mathcal{E}_{(x,0)}) =  \langle \ell \rangle \partial_N(\mathcal{E}_{(x,0)}) = \partial_N(\mathcal{E}_{(\ell^{-1}x,0)}) \text{ .}$$
By injectivity of $\partial_N$, we have $\langle \ell \rangle \mathcal{E}_{(x,0)} = \mathcal{E}_{(\ell^{-1}x,0)}$. Similarly, we have $(T_{\ell}- \ell - \langle \ell \rangle)(\mathcal{E}_{(x,0)}) = 0$ since it is true after applying $\partial_N$. This proves the first relation of Theorem \ref{even_modSymb_DM_thm}. The second relation is proved in a similar way.
\end{proof}

The counterparts of the modular forms $E_{\epsilon,1}$ and $E_{1, \epsilon}$ of section \ref{even_modSymbidea} are the following modular symbols
$$\mathcal{E}_{\infty} = \sum_{x \in (\mathbf{Z}/N\mathbf{Z})^{\times}/\pm 1 } [x] \cdot \mathcal{E}_{(x,0)} \in H_1(X_1(N), \cusps, \mathbf{C}[(\mathbf{Z}/N\mathbf{Z})^{\times}/\pm 1])$$
and
$$\mathcal{E}_0 = \sum_{x \in (\mathbf{Z}/N\mathbf{Z})^{\times}/\pm 1 } [x]^{-1} \cdot \mathcal{E}_{(0,x)} \in H_1(X_1(N), \cusps, \mathbf{C}[(\mathbf{Z}/N\mathbf{Z})^{\times}/\pm 1]) \text{ .}$$

As an immediate application of Theorem \ref{even_modSymb_DM_thm}, for all prime $\ell$ not dividing $N$ we have
\begin{equation}\label{even_modSymb_Eisenstein_diamond_1}
(T_{\ell}-\ell-[\ell])(\mathcal{E}_{\infty})=0
\end{equation}
and
\begin{equation}\label{even_modSymb_Eisenstein_diamond_2}
(T_{\ell}-\ell\cdot [\ell]-1)(\mathcal{E}_0)=0 \text{ .}
\end{equation}

Let $$\mathcal{E}' =  \pi^*(\tilde{m}_0^+) \in H_1(X_1(N), \cusps, \mathbf{Z}_p) \text{ .}$$ Recall that there is a canonical bijection $\Gamma_1(N) \backslash \PSL_2(\mathbf{Z}) \xrightarrow{\sim} (\mathbf{Z}/N\mathbf{Z})^{2} \backslash \{(0,0)\}/\pm 1$ (\cf Section \ref{odd_modSymb_section_hida}), and that we denote by $[c,d]$ the class of $(c,d)$ in $(\mathbf{Z}/N\mathbf{Z})^{2} \backslash \{(0,0)\}/\pm 1$ for any $(c,d) \in (\mathbf{Z}/N\mathbf{Z})^{2}  \backslash \{(0,0)\}$. We sometimes abusively view $[c,d]$ as an element of $\Gamma_1(N) \backslash \PSL_2(\mathbf{Z})$.

\begin{lem}\label{even_modSymb_reecriture_m_0^+}
We have 
\begin{align*} 6\cdot \mathcal{E}' &= \sum_{[c,d] \in (\mathbf{Z}/N\mathbf{Z})^{2} \backslash \{(0,0)\}/\pm 1} \left( \sum_{(s_1, s_2) \in (\mathbf{Z}/2N\mathbf{Z})^2 \atop s_1(d-c)+s_2(d+c) \equiv 0 \text{ (modulo }N\text{)}} (-1)^{s_1+s_2} \B_1\left(\frac{s_1}{2N}\right)\B_1\left(\frac{s_2}{2N}\right) \right) \cdot \xi_{\Gamma_1(N)}([c,d]) \\&= - \sum_{[c,d] \in (\mathbf{Z}/N\mathbf{Z})^{2} \backslash \{(0,0)\}/\pm 1} \left( \sum_{(s_1, s_2) \in (\mathbf{Z}/2N\mathbf{Z})^2 \atop s_1(d-c)+s_2(d+c) \not\equiv 0 \text{ (modulo }N\text{)}} (-1)^{s_1+s_2} \B_1\left(\frac{s_1}{2N}\right)\B_1\left(\frac{s_2}{2N}\right)\right)\cdot \xi_{\Gamma_1(N)}([c,d]) \text{ .}
\end{align*}
\end{lem}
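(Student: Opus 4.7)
The plan is to compute $\mathcal{E}' = \pi^*(\tilde{m}_0^+)$ directly from the formula for $\tilde{m}_0^+$ in Theorem \ref{thm_Introduction_w_0^+}, together with the explicit description of the transfer map on Manin symbols. The key preliminary observation is that the expression
$$c(c,d) := \sum_{(s_1,s_2)\in(\mathbf{Z}/2N\mathbf{Z})^2 \atop (d-c)s_1+(d+c)s_2\equiv 0 \,(\mathrm{mod}\,N)}(-1)^{s_1+s_2}\B_1\!\left(\tfrac{s_1}{2N}\right)\B_1\!\left(\tfrac{s_2}{2N}\right)$$
is invariant under $(c,d)\mapsto(uc,ud)$ for $u\in(\mathbf{Z}/N\mathbf{Z})^\times$ and under $(c,d)\mapsto(-c,-d)$; indeed, since $u$ is a unit mod $N$, the congruence $u[(d-c)s_1+(d+c)s_2]\equiv 0\pmod N$ is equivalent to the original one, and the sign change leaves both sides unchanged. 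Thus $c(c,d)$ descends to a well-defined function on $(\mathbf{Z}/N\mathbf{Z})^2\setminus\{(0,0)\}/\pm 1$, which specializes to $6F_{0,p}([c:d])$ whenever $[c:d]\in\mathbf{P}^1(\mathbf{Z}/N\mathbf{Z})$ (in particular, the formula makes sense, and takes the correct value, even at $[0:d]$ and $[c:0]$).

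Next I would use the standard description of the transfer $\pi^*$ on Manin symbols. Under the identification $\Gamma_1(N)\backslash\PSL_2(\mathbf{Z})\simeq(\mathbf{Z}/N\mathbf{Z})^2\setminus\{0\}/\pm 1$, the cosets above $\Gamma_0(N)\cdot g$ are exactly the $\Gamma_0(N)/\pm\Gamma_1(N)\simeq(\mathbf{Z}/N\mathbf{Z})^\times/\pm 1$-orbit of $\Gamma_1(N)\cdot g$. Concretely, if $g$ has bottom row $(c,d)$, then left multiplication by $\mathrm{diag}(u^{-1},u)\in\Gamma_0(N)$ changes the bottom row to $(uc,ud)$, so
$$\pi^*(\xi_{\Gamma_0(N)}([c:d])) = \sum_{u\in(\mathbf{Z}/N\mathbf{Z})^\times/\pm 1}\xi_{\Gamma_1(N)}([uc,ud]).$$
Applying this termwise to $6\tilde{m}_0^+=\sum_{[c:d]\in\mathbf{P}^1(\mathbf{Z}/N\mathbf{Z})} 6F_{0,p}([c:d])\,\xi_{\Gamma_0(N)}([c:d])$ and then reindexing the double sum by $[c',d']\in(\mathbf{Z}/N\mathbf{Z})^2\setminus\{0\}/\pm 1$ (each such $[c',d']$ appearing exactly once, as $(u,[c:d])$), the scaling invariance of the coefficient gives $6\mathcal{E}'=\sum_{[c,d]}c(c,d)\,\xi_{\Gamma_1(N)}([c,d])$, which is the first claimed equality.

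For the second equality, it suffices to show that for every $[c,d]$,
$$\sum_{(s_1,s_2)\in(\mathbf{Z}/2N\mathbf{Z})^2}(-1)^{s_1+s_2}\B_1\!\left(\tfrac{s_1}{2N}\right)\B_1\!\left(\tfrac{s_2}{2N}\right)=0,$$
since this sum is the difference of the coefficient in the first equality and the negative of the coefficient in the second. The expression factors as $\bigl(\sum_{s\in\mathbf{Z}/2N\mathbf{Z}}(-1)^s\B_1(s/(2N))\bigr)^2$, and one computes, using $\B_1(0)=0$ and $\B_1(s/(2N))=s/(2N)-1/2$ for $s\in\{1,\dots,2N-1\}$, that
$$\sum_{s=0}^{2N-1}(-1)^s\B_1\!\left(\tfrac{s}{2N}\right)=\frac{1}{2N}\sum_{s=1}^{2N-1}(-1)^s s-\tfrac{1}{2}\sum_{s=1}^{2N-1}(-1)^s=-\tfrac{1}{2}+\tfrac{1}{2}=0.$$

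The argument is essentially bookkeeping; the only substantive inputs are the transfer formula on Manin symbols (which is exactly the statement that $\pi^*$ lifts a Manin symbol to the sum over the $(\mathbf{Z}/N\mathbf{Z})^\times/\pm 1$-orbit) and the invariance of the Bernoulli-sum coefficient under scaling. The mildly delicate point is checking that the formula for the coefficient in the claim makes sense and agrees with $6F_{0,p}$ on every lift of every $[c:d]\in\mathbf{P}^1(\mathbf{Z}/N\mathbf{Z})$—including the degenerate cases $c=0$ or $d=0$—which is precisely the invariance step above.
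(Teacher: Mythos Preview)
Your proof is correct and follows essentially the same approach as the paper: both use the transfer formula $\pi^*(\xi_{\Gamma_0(N)}([c:d]))=\sum_{[c',d']:[c':d']=[c:d]}\xi_{\Gamma_1(N)}([c',d'])$ together with the fact that the Bernoulli-sum coefficient depends only on $[c:d]$, and both derive the second equality from the factorization $\bigl(\sum_s(-1)^s\B_1(s/2N)\bigr)^2=0$. The paper routes the invariance observation through the identity $\sum_{(s_1,s_2)\equiv(t_1,t_2)\,(N)}(-1)^{s_1+s_2}\B_1(s_1/2N)\B_1(s_2/2N)=\tfrac14 D_2(t_1/N)D_2(t_2/N)$, whereas you see it directly from the congruence condition; this is a minor packaging difference, not a different idea.
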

\begin{proof}
The second equality follows from the first equality and from the identity
$$\sum_{(s_1, s_2) \in (\mathbf{Z}/2N\mathbf{Z})^2 } (-1)^{s_1+s_2} \B_1\left(\frac{s_1}{2N}\right)\B_1\left(\frac{s_2}{2N}\right) = \left(\sum_{s \in (\mathbf{Z}/2N\mathbf{Z})^2} (-1)^s \cdot  \B_1\left(\frac{s}{2N}\right)\right)^2 = 0 \text{ .}$$
Let $(t_1,t_2) \in (\mathbf{Z}/N\mathbf{Z})^2$. Then 
$$\sum_{(s_1,s_2) \in(\mathbf{Z}/N\mathbf{Z})^2 \atop (s_1,s_2) \equiv (t_1, t_2) \text{ (modulo }N\text{)} } (-1)^{s_1+s_2} \B_1\left(\frac{s_1}{2N}\right) \B_1\left(\frac{s_2}{2N}\right) = \frac{1}{4} \cdot D_2\left(\frac{t_1}{N}\right) D_2\left(\frac{t_2}{N}\right) \text{ .} $$
Thus, we have
\begin{align*}
 &\sum_{[c,d] \in (\mathbf{Z}/N\mathbf{Z})^{2} \backslash \{(0,0)\}/\pm 1} \left( \sum_{(s_1, s_2) \in (\mathbf{Z}/2N\mathbf{Z})^2 \atop s_1(d-c)+s_2(d+c) \equiv 0 \text{ (modulo }N\text{)}} (-1)^{s_1+s_2} \B_1\left(\frac{s_1}{2N}\right)\B_1\left(\frac{s_2}{2N}\right) \right) \cdot \xi_{\Gamma_1(N)}([c,d]) \\&= \sum_{[c,d] \in (\mathbf{Z}/N\mathbf{Z})^{2} \backslash \{(0,0)\}/\pm 1} \left(  \sum_{(t_1,t_2)\in (\mathbf{Z}/N\mathbf{Z})^2 \atop (t_1:t_2)=(c+d:c-d) \in \mathbf{P}^1(\mathbf{Z}/N\mathbf{Z})} \frac{1}{4}\cdot D_2\left(\frac{t_1}{N}\right)D_2\left(\frac{t_2}{N}\right) \right) \cdot \xi_{\Gamma_1(N)}([c,d]) \text{ .}
\end{align*}
Note that for all $[u:v] \in \mathbf{P}^1(\mathbf{Z}/N\mathbf{Z}) = \Gamma_0(N) \backslash \SL_2(\mathbf{Z})$, we have
$$\pi^*(\xi_{\Gamma_0(N)}([u:v])) = \sum_{[c,d] \in (\mathbf{Z}/N\mathbf{Z})^{2} \backslash \{(0,0)\}/\pm 1 \atop [c:d] = [u:v]} \xi_{\Gamma_1(N)}([c,d]) \text{ .}$$
The first equality of the lemma then follows from the above computation Theorem \ref{thm_Introduction_w_0^+}.
\end{proof}

Define two maps $G_{\infty}$, $G_0$ : $\Gamma_1(N) \backslash \SL_2(\mathbf{Z}) \rightarrow \mathbf{Z}[\frac{1}{2N}][(\mathbf{Z}/N\mathbf{Z})^{\times}/\pm1]$ as follows. \\ Let $\gamma=\begin{pmatrix}
 a & b \\
 c & d
  \end{pmatrix} \in \PSL_2(\mathbf{Z})$. We define
  $$G_{\infty}(\Gamma_1(N) \cdot \gamma) = \sum_{(s_1, s_2) \in (\mathbf{Z}/2N\mathbf{Z})^2 \atop (d-c)s_1+(d+c)s_2 \not\equiv 0 \text{ (modulo }N\text{)}} (-1)^{s_1+s_2} \B_1\left(\frac{s_1}{2N}\right)\B_1\left(\frac{s_2}{2N}\right) \cdot [(d-c)s_1+(d+c)s_2]^{-1}$$
and
$$G_0(\Gamma_1(N) \cdot \gamma) = \sum_{(s_1, s_2) \in (\mathbf{Z}/2N\mathbf{Z})^2 \atop (d-c)s_1+(d+c)s_2 \equiv 0 \text{ (modulo }N\text{)}} (-1)^{s_1+s_2} \B_1\left(\frac{s_1}{2N}\right)\B_1\left(\frac{s_2}{2N}\right) \cdot [(b-a)s_1+(b+a)s_2] \text{ .}$$
Let
$$\mathcal{E}_{\infty}' = \sum_{\Gamma_1(N) \cdot \gamma \in \Gamma_1(N) \backslash \PSL_2(\mathbf{Z})} G_{\infty}(\Gamma_1(N) \cdot \gamma) \cdot \xi_{\Gamma_1(N)}(\Gamma_1(N) \cdot \gamma) \in H_1(X_1(N), \cusps, \mathbf{Z}[\frac{1}{2N}][(\mathbf{Z}/N\mathbf{Z})^{\times}/\pm 1]) $$
and
$$\mathcal{E}_{0}' = \sum_{\Gamma_1(N) \cdot \gamma \in \Gamma_1(N) \backslash \PSL_2(\mathbf{Z})} G_0(\Gamma_1(N) \cdot \gamma) \cdot \xi_{\Gamma_1(N)}(\Gamma_1(N) \cdot \gamma)  \in H_1(X_1(N), \cusps, \mathbf{Z}[\frac{1}{2N}][(\mathbf{Z}/N\mathbf{Z})^{\times}/\pm 1]) \text{ .}$$

\begin{lem}\label{even_modSymb_Hecke_relation_bis}
For all prime $\ell$ not dividing $N$, we have
$$
(T_{\ell}-\ell-[\ell])(\mathcal{E}_{\infty}')=0
$$
and
$$
(T_{\ell}-\ell\cdot [\ell]-1)(\mathcal{E}_0')=0 \text{ .}
$$
\end{lem}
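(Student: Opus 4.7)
The plan is to reduce Lemma \ref{even_modSymb_Hecke_relation_bis}, character by character, to the eigenvalue identities (\ref{even_modSymb_Eisenstein_diamond_1}) and (\ref{even_modSymb_Eisenstein_diamond_2}) for the Debargha--Merel Eisenstein elements. After tensoring with $\mathbf{C}$, I will decompose $\mathbf{C}[(\mathbf{Z}/N\mathbf{Z})^\times/\pm 1]$ into its $\chi$-isotypic components, where $\chi$ ranges over the even characters of $(\mathbf{Z}/N\mathbf{Z})^\times$. On the $\chi$-component, $[\ell]$ acts as the scalar $\chi(\ell)$, so the two assertions reduce to $(T_\ell-\ell-\chi(\ell))(\mathcal{E}_\infty')^\chi = 0$ and $(T_\ell - \ell\chi(\ell) - 1)(\mathcal{E}_0')^\chi = 0$ for each such $\chi$.

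For a nontrivial $\chi$, I will compare $(\mathcal{E}_\infty')^\chi$ to the element $\mathcal{E}_\infty^\chi := \sum_{x}\chi(x)\mathcal{E}_{(x,0)} \in H_1(X_1(N),\text{cusps},\mathbf{C})$, which is annihilated by $T_\ell - \ell - \chi(\ell)$ by Theorem \ref{even_modSymb_DM_thm}. Since $F(\gamma^{-1}(x,0)) = F((dx,-cx))$ depends only on the bottom row of $\gamma$, the pushforward from $X(N)$ to $X_1(N)$ collapses $N$ identical contributions per $\Gamma_1(N)$-coset, yielding coefficient $N\sum_x \chi(x) F((dx,-cx))$ of $\xi_{\Gamma_1(N)}([c,d])$ in $\mathcal{E}_\infty^\chi$. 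Using the factorization $F(x,y)=G(x+y)G(x-y)$ with $G(k):=\sum_s e^{2\pi isk/(2N)}\B_1(s/2N)$ and the Gauss-sum identity $\sum_{x\in(\mathbf{Z}/N\mathbf{Z})^\times}\chi(x)e^{2\pi iAx/N} = \tau(\chi)\chi^{-1}(A)$ for $A\not\equiv 0\pmod N$ (zero otherwise), I expect to obtain
$$\sum_x \chi(x)F((dx,-cx)) = \chi(2)\,\tau(\chi)\cdot\chi\bigl(G_\infty([c,d])\bigr),$$
so that $\mathcal{E}_\infty^\chi = N\chi(2)\tau(\chi)\cdot(\mathcal{E}_\infty')^\chi$ with nonzero scalar. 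The desired eigenvalue identity for $(\mathcal{E}_\infty')^\chi$ follows immediately.

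For the trivial character, the augmentation of $G_\infty([c,d])$ is exactly the sum over $(s_1,s_2)$ with $(d-c)s_1+(d+c)s_2 \not\equiv 0\pmod N$ of $(-1)^{s_1+s_2}\B_1(s_1/2N)\B_1(s_2/2N)$, which by Lemma \ref{even_modSymb_reecriture_m_0^+} equals $-6F_{0,p}([c:d])$. Hence $(\mathcal{E}_\infty')^{\text{triv}} = -6\pi^*(\tilde{m}_0^+)$, which is annihilated by $T_\ell-\ell-1$ since $\tilde{m}_0^+$ is. Assembling the $\chi$-contributions yields the first identity. The argument for $\mathcal{E}_0'$ is entirely parallel: the top row of $\gamma$ replaces the bottom row, so $\mathcal{E}_0^\chi := \sum_x \chi^{-1}(x)\mathcal{E}_{(0,x)}$ intervenes and (\ref{even_modSymb_Eisenstein_diamond_2}) provides the eigenvalue $\ell\chi(\ell)+1$, which accounts for $G_0$ carrying $[(b-a)s_1+(b+a)s_2]$ rather than its inverse.

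The main obstacle will be the parity bookkeeping in the Gauss-sum step: since the definition of $F$ requires the lift $(u,v)$ of $(dx,-cx)$ to satisfy $u+v\equiv 1\pmod 2$, the Fourier exponents acquire a correction $(-1)^{(s_1+s_2)\delta(x)}$, where $\delta(x)\in\{0,1\}$ records whether $(d\pm c)x$ is even or odd (both parities coincide since their sum is $2dx$). The crux is to verify that, after summing against $\chi(x)$, these parity corrections reassemble precisely into the $(-1)^{s_1+s_2}$ factor present in $G_\infty$ and $G_0$, while the support condition $A \not\equiv 0\pmod N$ emerges as the locus where the twisted Gauss sum does not vanish. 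Once this bookkeeping is completed, nonvanishing of the proportionality constant $N\chi(2)\tau(\chi)$ reduces to the classical fact that $\tau(\chi) \neq 0$ for $\chi$ primitive mod the prime $N$.
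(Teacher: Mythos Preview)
Your approach is correct and is essentially the same as the paper's: both establish that $\mathcal{E}_\infty'$ is related to $\mathcal{E}_\infty$ (and $\mathcal{E}'$) via a Gauss-sum computation, then invoke (\ref{even_modSymb_Eisenstein_diamond_1}) and (\ref{even_modSymb_Eisenstein_diamond_2}). The paper packages your character-by-character argument into a single group-algebra identity
\[
\tfrac{2}{N}\,\mathcal{E}_\infty \;=\; 6\,\delta\cdot\mathcal{E}' \;+\; [2]\cdot\mathcal{G}\cdot\mathcal{E}_\infty'
\]
with $\mathcal{G}=\sum_{x}e^{2\pi i x/N}[x]$ and $\delta=\sum_x[x]$ in $\mathbf{C}[(\mathbf{Z}/N\mathbf{Z})^\times/\pm1]$; your nontrivial-$\chi$ proportionality and trivial-$\chi$ identification with $-6\mathcal{E}'$ are exactly the $\chi$-projections of this identity, and your nonvanishing of $\tau(\chi)$ is the statement that $\mathcal{G}$ is not a zero divisor (the paper's Lemma~\ref{odd_modSymb_Gauss_sum_non_zero}). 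The parity bookkeeping you flag is absorbed in the paper's ``easy computation'' leading to the displayed identity (the lift condition $u'+v'\equiv1\pmod 2$ is what produces the $(-1)^{s_1+s_2}$ and the $[2x]$), so there is no additional content to supply.
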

\begin{proof}
Consider the following two elements of $\mathbf{C}[(\mathbf{Z}/N\mathbf{Z})^{\times}/\pm 1]$: $$\mathcal{G} = \sum_{x \in (\mathbf{Z}/N\mathbf{Z})^{\times}}e^{\frac{2i\pi x}{N}} \cdot [x]$$ and $$\delta = \sum_{x \in (\mathbf{Z}/N\mathbf{Z})^{\times}} [x] \text{ .}$$ 
An easy computation shows that we have, in $H_1(X_1(N), \cusps, \mathbf{C}[(\mathbf{Z}/N\mathbf{Z})^{\times}/\pm 1])$:
\begin{align*}
\frac{2}{N} \cdot \mathcal{E}_{\infty} &=  \sum_{[c,d] \in (\mathbf{Z}/N\mathbf{Z})^2 \backslash \{(0,0)\} / \pm1} \xi_{\Gamma_1(N)}([c,d]) \cdot \sum_{x \in (\mathbf{Z}/N\mathbf{Z})^{\times}\atop (s_1,s_2) \in (\mathbf{Z}/2N\mathbf{Z})^2} (-1)^{s_1+s_2}\cdot \B_1\left(\frac{s_1}{2N}\right)\B_1\left(\frac{s_2}{2N}\right)[2x]\cdot e^{\frac{2i\pi x((d-c)s_1+(d+c)s_2)}{N}}
\\&=
6 \cdot \delta \cdot \mathcal{E}'  +  [2]\cdot \mathcal{G} \cdot \mathcal{E}_{\infty}' \text{ .}
  \end{align*}
  
The last equality follows from Lemma \ref{even_modSymb_reecriture_m_0^+}. By (\ref{even_modSymb_Eisenstein_diamond_1}), the operator $T_{\ell}-\ell-[\ell]$ annihilates $\mathcal{E}_{\infty}$. Furthermore, $T_{\ell}-\ell-1$ annihilates $\mathcal{E}'$ and $[\ell]-1$ annihilates $\delta$, so $T_{\ell}-\ell-[\ell]$ annihilates $\delta \cdot \mathcal{E}'$. Thus, $T_{\ell}-\ell - [\ell]$ annihilates $\mathcal{G} \cdot \mathcal{E}_{\infty}'$. 

\begin{lem}\label{odd_modSymb_Gauss_sum_non_zero}
The element $\mathcal{G}$ is not a zero divisor of $\mathbf{C}[(\mathbf{Z}/N\mathbf{Z})^{\times}/\pm 1]$.
\end{lem}
\begin{proof}
It suffices to prove that if $\alpha : (\mathbf{Z}/N\mathbf{Z})^{\times} \rightarrow \mathbf{C}^{\times}$ is any character such that $\alpha(-1)=1$, then we have $\sum_{x \in (\mathbf{Z}/N\mathbf{Z})^{\times}} e^{\frac{2i \pi x}{N}} \cdot \alpha(x) \neq 0$. This is a well-known fact on Gauss sums.
\end{proof}

By Lemma \ref{odd_modSymb_Gauss_sum_non_zero}, the operator $T_{\ell}-\ell - [\ell]$ annihilates $\mathcal{E}_{\infty}'$, as wanted. The proof that $T_{\ell}-\ell\cdot [\ell]-1$ annihilates $\mathcal{E}_0'$ is similar.
\end{proof}

Let $\mathcal{U} = \frac{\mathcal{E}_0'+\mathcal{E}_{\infty}'}{2} \in H_1(X_1(N), \cusps, \mathbf{Z}[\frac{1}{2N}][(\mathbf{Z}/N\mathbf{Z})^{\times}/\pm 1])$. This is the modular symbol counterpart of (\ref{even_modSymb_Eisenstein_series_moyenne}). Let $J \subset \mathbf{Z}[(\mathbf{Z}/N\mathbf{Z})^{\times}/\pm 1]$ be the augmentation ideal. By Lemma \ref{even_modSymb_reecriture_m_0^+}, we have $\mathcal{U} \in J\cdot H_1(X_1(N), \cusps, \mathbf{Z}[\frac{1}{2N}][(\mathbf{Z}/N\mathbf{Z})^{\times}/\pm 1])$. Lemma \ref{even_modSymb_Hecke_relation_bis} shows that we have
\begin{equation}\label{even_modSymb_crucial_Hecke}
(T_{\ell}-\ell-1)(\mathcal{U})=([\ell]-1)\cdot \frac{\mathcal{E}_{\infty}' + \ell \cdot \mathcal{E}_0'}{2}
\end{equation}
This is the fundamental equality which allow us to compute $m_1^+$. We will study the cases $p\geq 5$, $p=3$ and $p=2$ separately.

\subsection{The case $p \geq 5$}\label{even_modSymb_the_case_p>3}
The following theorem is a generalization of Theorem \ref{thm_Introduction_w_1^+>3} modulo $p^t$.

\begin{thm}\label{thm_Introduction_w_1^+>3_p^t}
Assume that $p\geq 5$.
Let $F_{1,p} : \mathbf{P}^1(\mathbf{Z}/N\mathbf{Z}) \rightarrow \mathbf{Z}/p^t\mathbf{Z}$ be defined as follows. Let $[c:d] \in \mathbf{P}^1(\mathbf{Z}/N\mathbf{Z})$. If $[c:d] \neq [1:1]$, let
\begin{align*}
12 \cdot F_{1,p}([c:d]) &=  \sum_{(s_1,s_2) \in (\mathbf{Z}/2N\mathbf{Z})^2 \atop (d-c)s_1+(d+c)s_2 \equiv 0 \text{ (modulo } N\text{)}} (-1)^{s_1+s_2} \B_1\left(\frac{s_1}{2N}\right)\B_1\left(\frac{s_2}{2N}\right) \cdot \log\left(\frac{s_2}{d-c}\right) \\&
-\sum_{(s_1,s_2) \in (\mathbf{Z}/2N\mathbf{Z})^2 \atop (d-c)s_1+(d+c)s_2 \not\equiv 0 \text{ (modulo } N\text{)}} (-1)^{s_1+s_2} \B_1\left(\frac{s_1}{2N}\right)\B_1\left(\frac{s_2}{2N}\right) \cdot \log((d-c)s_1+(d+c)s_2)) \text{ .}
\end{align*}
This is independent of the choice of $c$ and $d$. Let $F_{1,p}([1:1]) = 0$. We have: 
$$m_1^+ \equiv \sum_{x \in \mathbf{P}^1(\mathbf{Z}/N\mathbf{Z})} F_{1,p}(x) \cdot \xi_{\Gamma_0(N)}(x) \text{ in } \left(M_+/p^t\cdot M_+\right)/\mathbf{Z}\cdot m_0^+$$
\end{thm}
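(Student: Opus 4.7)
The plan is to pull the proposed expression back to $X_1(N)$ via the injection $\pi^*\colon M_+/p^tM_+\hookrightarrow H_1(X_1(N),\cusps,\mathbf{Z}/p^t\mathbf{Z})$ of Proposition \ref{even_modSymb_injectivity_trace}(ii), and to identify the pull-back with the element obtained by reducing the group-ring coefficients of $\mathcal{U}=\tfrac12(\mathcal{E}_0'+\mathcal{E}_\infty')$ via a ``logarithm'' map. Because $p\geq 5$ is odd, $\log(-1)=0$, so one has a $\mathbf{Z}$-linear map
\[
\ell:\mathbf{Z}[\tfrac{1}{2N}][(\mathbf{Z}/N\mathbf{Z})^{\times}/\pm 1]\longrightarrow \mathbf{Z}/p^t\mathbf{Z},\qquad [x]\longmapsto\log(x),
\]
which kills $J^2$ since $([x]-1)([y]-1)\mapsto \log(xy)-\log(x)-\log(y)=0$. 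Set $\mathcal{V}:=\ell_*(\mathcal{U})\in H_1(X_1(N),\cusps,\mathbf{Z}/p^t\mathbf{Z})$.

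The first task is to compute the coefficient of $\xi_{\Gamma_1(N)}([c,d])$ in $\mathcal{V}$ explicitly. Applying $\ell$ to $G_\infty([c,d])$ yields exactly the ``$\not\equiv 0$'' summand of $12F_{1,p}([c:d])$. For $[c:d]\neq[1:1]$ one uses $ad-bc=1$ together with the constraint $(d-c)s_1+(d+c)s_2\equiv 0\pmod N$ to rewrite $(b-a)s_1+(b+a)s_2\equiv 2s_2/(d-c)\pmod N$; thus $\ell(G_0([c,d]))$ equals the ``$\equiv 0$'' summand of $12F_{1,p}([c:d])$ plus a $\log(2)$-multiple of $G_0|_J([c,d])=6F_{0,p}([c:d])$, the latter identification coming from Lemma \ref{even_modSymb_reecriture_m_0^+}. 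For the degenerate case $[c:d]=[1:1]$ the congruence forces $s_2\in\{0,N\}$, where $\B_1(s_2/2N)=0$, while the residual ``$\not\equiv 0$'' contribution vanishes by the identity $\sum_{s\in\mathbf{Z}/2N\mathbf{Z}}(-1)^s\B_1(s/2N)=0$; the same observation shows $F_{0,p}([1:1])=0$. Consequently the coefficient depends only on $[c:d]\in\mathbf{P}^1(\mathbf{Z}/N\mathbf{Z})$, and
\[
\mathcal{V}=\pi^*\!\Bigl(6\sum_{x\in\mathbf{P}^1(\mathbf{Z}/N\mathbf{Z})}F_{1,p}(x)\,\xi_{\Gamma_0(N)}(x)+3\log(2)\,\tilde{m}_0^+\Bigr)=:\pi^*(h).
\]

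To conclude, apply $\ell_*$ to the Hecke identity \eqref{even_modSymb_crucial_Hecke}. Because $\ell$ kills $J^2$, only the $J$-reduction of $\tfrac12(\mathcal{E}_\infty'+\ell\mathcal{E}_0')$ survives, and by Lemma \ref{even_modSymb_reecriture_m_0^+} this equals $3(\ell-1)\mathcal{E}'=3(\ell-1)\pi^*(\tilde{m}_0^+)$. Hence
\[
(T_\ell-\ell-1)(\mathcal{V})=3(\ell-1)\log(\ell)\cdot\pi^*(\tilde{m}_0^+)
\]
for every prime $\ell\nmid N$. By injectivity of $\pi^*$ and invertibility of $6$ in $\mathbf{Z}/p^t\mathbf{Z}$, this gives $(T_\ell-\ell-1)(h/6)=\tfrac{\ell-1}{2}\log(\ell)\,m_0^+$, so Proposition \ref{Formalism_property_Hecke_outside} yields $h/6\equiv m_1^+\pmod{\mathbf{Z}\cdot m_0^+}$. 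Since $h/6=\sum_xF_{1,p}(x)\xi_{\Gamma_0(N)}(x)+\tfrac{\log(2)}{2}\tilde{m}_0^+$, the $\tilde{m}_0^+$-summand is absorbed into $\mathbf{Z}\cdot m_0^+$, giving the stated formula. The main obstacle is the coefficient bookkeeping in the middle paragraph: verifying the $\Gamma_0(N)$-invariance of the coefficient function and the precise cancellations at the cusp class $[1:1]$, which in the end amount to the vanishing $F_{0,p}([1:1])=0$.
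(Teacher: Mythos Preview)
Your proof is correct and follows essentially the same strategy as the paper: reduce the group-ring coefficients of $\mathcal{U}$ by a logarithm map, recognize the result as a pull-back from level $\Gamma_0(N)$, apply the Hecke identity \eqref{even_modSymb_crucial_Hecke} together with Lemma~\ref{even_modSymb_reecriture_m_0^+}, and then use the injectivity of $\pi^*$ on $M_+$ (Proposition~\ref{even_modSymb_injectivity_trace}(ii)) and Proposition~\ref{Formalism_property_Hecke_outside}; the only cosmetic difference is that the paper first introduces an auxiliary function $F_{1,p}'$ involving $(b-a)s_1+(b+a)s_2$ and converts to $F_{1,p}$ at the end, whereas you do the substitution $(b-a)s_1+(b+a)s_2\equiv 2s_2/(d-c)$ at the coefficient level from the start. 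The one point you leave implicit is the check $F_{1,p}([-c:d])=F_{1,p}([c:d])$ ensuring that your element $h$ actually lies in $M_+/p^tM_+$ before invoking the injectivity of $\pi^*$ there; the paper verifies this explicitly (for $F_{1,p}'$), and the computation is routine via the substitution $(s_1,s_2)\mapsto(s_2,s_1)$.
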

\begin{proof}
Let $\beta : J/J^2 \rightarrow \mathbf{Z}/p^{t}\mathbf{Z}$ be given by $[x]-1 \mapsto \log(x)$ for $x \in (\mathbf{Z}/N\mathbf{Z})^{\times}$. This induces a map $\beta_* : J\cdot H_1(X_1(N), \cusps, \mathbf{Z}[\frac{1}{2N}][(\mathbf{Z}/N\mathbf{Z})^{\times}/\pm 1]) \rightarrow H_1(X_1(N), \cusps, \mathbf{Z}/p^{t}\mathbf{Z})$. 

By (\ref{even_modSymb_crucial_Hecke}) and Lemma \ref{even_modSymb_reecriture_m_0^+}, we have:
\begin{equation}\label{even_modSymb_crucial_Hecke_2}
(T_{\ell}-\ell-1)(\beta_*(\mathcal{U})) \equiv  6 \cdot \frac{\ell-1}{2}\cdot \log(\ell) \cdot \mathcal{E}' \text{ (modulo }p^v\text{)} \text{ .}
\end{equation}
 
Let $F_{1,p}' : \mathbf{P}^1(\mathbf{Z}/N\mathbf{Z}) \rightarrow \mathbf{Z}/p^t\mathbf{Z}$ be defined by
\begin{align*}
12 \cdot F_{1,p}'(x) &= \sum_{(s_1,s_2) \in (\mathbf{Z}/2N\mathbf{Z})^2 \atop (d-c)s_1+(d+c)s_2 \equiv 0 \text{ (modulo } N\text{)}} (-1)^{s_1+s_2} \B_1\left(\frac{s_1}{2N}\right)\B_1\left(\frac{s_2}{2N}\right) \cdot \log((b-a)s_1+(b+a)s_2) \\&
- \sum_{(s_1,s_2) \in (\mathbf{Z}/2N\mathbf{Z})^2 \atop (d-c)s_1+(d+c)s_2 \not\equiv 0 \text{ (modulo } N\text{)}} (-1)^{s_1+s_2} \B_1\left(\frac{s_1}{2N}\right)\B_1\left(\frac{s_2}{2N}\right) \cdot \log((d-c)s_1+(d+c)s_2))
\end{align*}
where as above $a$ and $b$ are such that $\begin{pmatrix} a & b \\ c & d \end{pmatrix} \in \SL_2(\mathbf{Z}/N\mathbf{Z})$. This expression does not depend on the choice of $c$ and $d$ by Lemma \ref{even_modSymb_reecriture_m_0^+}. This also does not depend on the choice of $a$ and $b$. 

For all $[c:d] \in \mathbf{P}^1(\mathbf{Z}/N\mathbf{Z})$, we have $F_{1,p}'([-c:d]) = F_{1,p}'([c:d])$. Thus, we have $\sum_{x \in \mathbf{P}^1(\mathbf{Z}/N\mathbf{Z})} F_{1,p}'(x)\cdot \xi_{\Gamma_0(N)}(x) \in H_1(X_0(N), \cusps, \mathbf{Z}/p^t\mathbf{Z})_+$. The element $\frac{1}{6}\cdot \beta_*(\mathcal{U})$ is the image of $\sum_{x \in \mathbf{P}^1(\mathbf{Z}/N\mathbf{Z})} F_{1,p}'(x)\cdot \xi_{\Gamma_0(N)}(x)$ via the pull-back map $H_1(X_0(N), \cusps, \mathbf{Z}/p^t\mathbf{Z})_+ \rightarrow H_1(X_1(N), \cusps, \mathbf{Z}/p^t\mathbf{Z})_+$. By (\ref{even_modSymb_crucial_Hecke_2}) and Proposition \ref{even_modSymb_injectivity_trace} (ii), for any prime $\ell$ not dividing $N$ we have in $H_1(X_0(N), \cusps, \mathbf{Z}/p^t\mathbf{Z})_+$:
\begin{equation}\label{even_modSymb_identity_p>3_hecke_superieur}
(T_{\ell}-\ell-1)\left(\sum_{x \in \mathbf{P}^1(\mathbf{Z}/N\mathbf{Z})} F_{1,p}'(x)\cdot \xi_{\Gamma_0(N)}(x) \right) = \frac{\ell-1}{2}\cdot \log(\ell) \cdot m_0^+ \text{ .}
\end{equation}

If $\begin{pmatrix}
 a & b \\
 c & d
  \end{pmatrix}  \in \SL_2(\mathbf{Z})$ and $(s_1, s_2) \in (\mathbf{Z}/2N\mathbf{Z})^2$ are such that $(d-c)s_1 + (d+c)s_2 \equiv 0 \text{ (modulo }N\text{)}$ and $d \not\equiv c \text{ (modulo }N\text{)}$, then we have $(b-a)s_1+(b+a)s_2 \equiv \frac{2}{d-c}\cdot s_2  \text{ (modulo }N\text{)}$. By Lemma \ref{even_modSymb_reecriture_m_0^+}, we have:
$$12\cdot \sum_{x \in \mathbf{P}^1(\mathbf{Z}/N\mathbf{Z})} F_{1,p}(x)\cdot \xi_{\Gamma_0(N)}(x) = 12\cdot \sum_{x \in \mathbf{P}^1(\mathbf{Z}/N\mathbf{Z})} F_{1,p}'(x)\cdot \xi_{\Gamma_0(N)}(x) - 6\cdot \log(2)\cdot m_0^+ \text{ .}$$
By (\ref{even_modSymb_identity_p>3_hecke_superieur}), we have:
$$
(T_{\ell}-\ell-1)\left(\sum_{x \in \mathbf{P}^1(\mathbf{Z}/N\mathbf{Z})} F_{1,p}(x)\cdot \xi_{\Gamma_0(N)}(x) \right) = \frac{\ell-1}{2}\cdot \log(\ell) \cdot m_0^+ \text{ .}
$$
This concludes the proof of Theorem \ref{thm_Introduction_w_1^+>3_p^t}. 
 \end{proof}

\subsection{A few identities in $(\mathbf{Z}/N\mathbf{Z})^{\times}$ and in  $\left((\mathbf{Z}/N\mathbf{Z})^{\times}\right)^{\otimes 2}$}\label{even_modSymb_few_identities}
In this section, we establish a few identities which will be useful to determine $m_1^+$ when $p \in \{2,3\}$ and also in chapter \ref{Section_comparison}. We do not impose any restriction on $p$. If $i$ is a non-negative integer, we let $$\mathcal{F}_i = \sum_{k=1}^{\frac{N-1}{2}} \log(k)^i \in \mathbf{Z}/p^v\mathbf{Z} \text{ .}$$
We have $\mathcal{F}_0 = \mathcal{F}_1=0$ if $p>2$ and $2\cdot \mathcal{F}_0 = 4\cdot \mathcal{F}_1=0$ if $p=2$. We have $\mathcal{F}_2 = 0$ if $p>3$, $3\cdot \mathcal{F}_2 = 0$ if $p=3$ and $4\cdot \mathcal{F}_2 = 0$ if $p=2$. If $0 \leq i \leq 2$, we easily check that $\mathcal{F}_i = \sum_{k=\frac{N+1}{2}}^{N-1} \log(k)^i$ (we use the fact that $\frac{N-1}{2}$ is even if $p=2$).

\begin{lem}\label{even_modSymb_computation_square}
We have, in $\mathbf{Z}/p^{v}\mathbf{Z}$:
$$4\cdot \sum_{k=1}^{\frac{N-1}{2}} k \cdot \log(k) = -3\cdot \sum_{k=1}^{N-1} k^2 \cdot \log(k) - \log(2)\cdot \frac{N-1}{6} - \mathcal{F}_1 \text{ .}$$
\end{lem}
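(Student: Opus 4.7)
Write $m = (N-1)/2$ and set $\sigma_1' = \sum_{k=1}^m k\log(k)$, $\sigma_2' = \sum_{k=1}^m k^2\log(k)$, and $\sigma_2 = \sum_{k=1}^{N-1} k^2\log(k)$. The plan is to produce two independent expressions for $\sigma_2$ modulo $p^v$, one from the involution $k \mapsto N-k$ and one from the multiplication $k \mapsto 2k \bmod N$, and to combine them with a Wilson's theorem computation of $\mathcal{F}_1$. Throughout I use that $N \equiv 1 \pmod{p^v}$ by definition of $v$.

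First I would split $\sigma_2$ at $k = m$ and substitute $k = N - k'$ in the second piece, using $\log(N - k') = \log(-1) + \log(k')$ together with $(N-k')^2 \equiv 1 - 2k' + k'^2 \pmod{p^v}$. After checking that the residual term $\log(-1) \cdot \sum_{k'=1}^m (N-k')^2$ vanishes modulo $p^v$, this yields
\begin{equation*}
\sigma_2 \equiv 2\sigma_2' - 2\sigma_1' + \mathcal{F}_1 \pmod{p^v}.
\end{equation*}
The vanishing is trivial for $p$ odd (where $\log(-1) \equiv 0$), while for $p = 2$ it follows from $\log(-1) \equiv 2^{v-1}$ together with the sum having $v_2$-valuation $v - 2$, so that the product has valuation $2v - 3 \geq v$ (using $v \geq 3$, forced by $t \geq 1$ and $v_2(N-1) = t+2$).

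Next I would apply the bijection $k \mapsto 2k \bmod N$ on $\{1,\ldots,N-1\}$ to rewrite $\sigma_2$ as $\log(2)\sum_k (2k \bmod N)^2 + \sum_k (2k \bmod N)^2\log(k)$. The first sum equals $(N-1)N(2N-1)/6 \equiv (N-1)/6 \pmod{p^v}$; in the second, the contribution of $k \leq m$ is $4\sigma_2'$, while for $k > m$ the substitution $k = N - k'$ and expansion $(N - 2k')^2 \equiv (1 - 2k')^2 \pmod{p^v}$ contribute $\mathcal{F}_1 - 4\sigma_1' + 4\sigma_2'$ (the corresponding $\log(-1)$ term again vanishing by the same valuation bookkeeping). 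Collecting,
\begin{equation*}
\sigma_2 \equiv 8\sigma_2' - 4\sigma_1' + \mathcal{F}_1 + \log(2)\cdot\tfrac{N-1}{6} \pmod{p^v}.
\end{equation*}

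Subtracting these two expressions gives $2\sigma_1' \equiv 6\sigma_2' + \log(2)(N-1)/6 \pmod{p^v}$. To finish, I would invoke Wilson's theorem $(m!)^2 \equiv (-1)^{m+1} \pmod N$, which yields $2\mathcal{F}_1 = (m+1)\log(-1)$, whence $4\mathcal{F}_1 \equiv 0 \pmod{p^v}$ in all cases ($\mathcal{F}_1 = 0$ for $p$ odd since $2$ is invertible, while $4\mathcal{F}_1 \equiv 2^v \equiv 0$ for $p = 2$). Substituting the first expression for $\sigma_2$ into the target identity $4\sigma_1' \equiv -3\sigma_2 - \log(2)(N-1)/6 - \mathcal{F}_1$ rewrites it as $2\sigma_1' \equiv 6\sigma_2' + 4\mathcal{F}_1 + \log(2)(N-1)/6$, which then follows from the two displayed identities. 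The main obstacle is the $2$-adic bookkeeping at $p = 2$: since $\log(-1) \not\equiv 0 \pmod{2^v}$ there, one must carefully track valuations to verify the various $\log(-1)$-terms vanish after multiplication by the binomial sums, and the hypothesis $t \geq 1$ (equivalently $v \geq 3$) is exactly what makes this work; for $p \geq 3$ one has $\log(-1) \equiv 0 \pmod{p^v}$ automatically, so the analogous manipulations are routine.
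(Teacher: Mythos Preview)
Your proof is correct. Both arguments rest on the same two transformations of the index set $\{1,\ldots,N-1\}$: the involution $k\mapsto N-k$ and the bijection $k\mapsto 2k\bmod N$. The paper packages these together by introducing the function $g(x)=f(2x)-4f(x)$ with $f(x)=(x-\lfloor x\rfloor)^2$, and computes $\sum_k g(k/N)\log(k)$ once via the explicit piecewise description of $g$ (which is where the half-range sum $4\sum_{k\le m}k\log(k)$ emerges) and once via the decomposition $g=f(2\cdot)-4f$ (which produces $-3\sum k^2\log(k)-\log(2)(N-1)/6$); equating the two gives the identity in one stroke, with no auxiliary half-range quantity $\sigma_2'$ needed.

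Your route instead computes $\sigma_2$ itself in two ways, each producing a linear relation among $\sigma_1',\sigma_2',\mathcal{F}_1$, and then eliminates $\sigma_2'$; the final step invoking $4\mathcal{F}_1\equiv 0$ (via Wilson) closes the gap. This is a perfectly valid reorganisation of the same ideas. The paper's formulation via $g$ is slightly more economical in that it never names $\sigma_2'$ and reaches the target identity directly rather than after a substitution, but your explicit $2$-adic bookkeeping for the $\log(-1)$ terms makes the $p=2$ case more transparent than in the paper, where the vanishing of those terms is left somewhat implicit. One small remark: for the second $\log(-1)$ term (the sum $\sum_{k'}(N-2k')^2 = N(N-1)(N-2)/6$), the valuation is actually $v-1$ rather than $v-2$, so only $v\ge 2$ is needed there; this does not affect your argument since you already have $v\ge 3$.
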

\begin{proof}
Let $f : \mathbf{R} \rightarrow \mathbf{R}$ be defined by $f(x) = (x-\lfloor x \rfloor)^2$ and let $g : \mathbf{R} \rightarrow \mathbf{R}$ given by $g(x) = f(2x) - 4f(x)$. If $x \in [0, \frac{1}{2}[$, we have $g(x) = 0$ and if $x \in [\frac{1}{2}, 1[$, we have $g(x) = -4x+1$. We thus have, in $\mathbf{Z}/p^v\mathbf{Z}$:
\begin{align*}
\sum_{k \in (\mathbf{Z}/N\mathbf{Z})^{\times}} g(\frac{k}{N}) \cdot \log(k) &= -4 \cdot \sum_{k=\frac{N+1}{2}}^{N-1} k \cdot \log(k) + \log\left((-1)^{\frac{N-1}{2}} \cdot \left(\frac{N-1}{2}\right)! \right) \\& 
= -4 \cdot \sum_{k=1}^{\frac{N-1}{2}}(N-k) \cdot \log(-k) + \log\left((-1)^{\frac{N-1}{2}} \cdot \left(\frac{N-1}{2}\right)! \right) \\&
=4 \cdot \sum_{k=1}^{\frac{N-1}{2}} k \cdot \log(k) + \mathcal{F}_1 \text{ .}
\end{align*}
On the other hand, we have in $\mathbf{Z}/p^v\mathbf{Z}$:
\begin{align*}
\sum_{k \in (\mathbf{Z}/N\mathbf{Z})^{\times}} g(\frac{k}{N}) \cdot \log(k)  &= \sum_{k \in (\mathbf{Z}/N\mathbf{Z})^{\times}}( f(\frac{2k}{N})- 4\cdot f(\frac{k}{N})) \cdot \log(k) \\&= -3\cdot  \sum_{k \in (\mathbf{Z}/N\mathbf{Z})^{\times}} f(\frac{k}{N}) \cdot \log(k) - \log(2)\cdot \sum_{k \in (\mathbf{Z}/N\mathbf{Z})^{\times}} f(\frac{k}{N})  \\& = -3 \cdot \sum_{k=1}^{N-1}k^2\cdot \log(k) - \log(2) \cdot \frac{N-1}{6} \text{ .}
\end{align*}
\end{proof}

We have the following variant of lemma \ref{even_modSymb_computation_square}.
\begin{lem}\label{even_modSymb_computation_square_2}
We have, in $\mathbf{Z}/p^{v}\mathbf{Z}$:
$$4\cdot \sum_{k=1}^{\frac{N-1}{2}} k \cdot \log(k)^2 =  -3 \cdot \sum_{k=1}^{N-1} k^2 \cdot \log(k)^2 + \log(2)^2 \cdot \frac{N-1}{6} - 2 \cdot \log(2) \cdot \sum_{k=1}^{N-1} k^2\cdot \log(k) + 3\cdot \mathcal{F}_2\text{ .}$$
\end{lem}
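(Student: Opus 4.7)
The proof imitates the argument of Lemma \ref{even_modSymb_computation_square} with $\log(k)$ replaced throughout by $\log(k)^2$. I will reuse the piecewise quadratic function $f(x) = (x - \lfloor x \rfloor)^2$ and the combination $g(x) = f(2x) - 4f(x)$, which vanishes on $[0, \tfrac12[$ and equals $-4x + 1$ on $[\tfrac12, 1[$, and compute the sum
\begin{equation*}
S := \sum_{k \in (\mathbf{Z}/N\mathbf{Z})^{\times}} g\!\left(\tfrac{k}{N}\right) \cdot \log(k)^{2}
\end{equation*}
in two different ways, then equate the results.

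For the first evaluation, the piecewise description of $g$ together with the congruence $N \equiv 1 \pmod{p^{v}}$ (so that $k/N \equiv k$) reduces $S$ to $\sum_{k = (N+1)/2}^{N-1} (-4k + 1) \log(k)^{2}$. I then substitute $k = N - j$ and use $\log(N - j)^{2} \equiv \log(j)^{2} \pmod{p^{v}}$. This last congruence is automatic for $p$ odd since $\log(-1) = 0$, and for $p = 2$ it follows from $2 \log(-1) = 0$ combined with $v \geq 2$, which force both $\log(-1)^{2} \equiv 0$ and $2 \log(-1) \log(j) \equiv 0$ modulo $p^{v}$. A short rearrangement yields
\begin{equation*}
S \equiv 4 \sum_{j=1}^{(N-1)/2} j \cdot \log(j)^{2} - 3 \mathcal{F}_{2} \pmod{p^{v}}.
\end{equation*}

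For the second evaluation, expand $g = f(2 \cdot) - 4 f(\cdot)$. The term $\sum_{k} f(k/N) \log(k)^{2}$ is congruent to $\sum_{k} k^{2} \log(k)^{2}$ modulo $p^{v}$. For the other piece, make the bijective change of variable $j \equiv 2k \pmod{N}$ on $(\mathbf{Z}/N\mathbf{Z})^{\times}$: one finds $f(2k/N) = (j/N)^{2} \equiv j^{2}$ modulo $p^{v}$, while $\log(k) = \log(j) - \log(2)$. Expanding $(\log(j) - \log(2))^{2}$ produces three sums; for the last one, $\sum_{j=1}^{N-1} j^{2} = \frac{(N-1) N (2N-1)}{6} \equiv \frac{N-1}{6} \pmod{p^{v}}$. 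Collecting yields
\begin{equation*}
S \equiv -3 \sum_{k=1}^{N-1} k^{2} \log(k)^{2} - 2 \log(2) \sum_{k=1}^{N-1} k^{2} \log(k) + \log(2)^{2} \cdot \frac{N-1}{6} \pmod{p^{v}}.
\end{equation*}

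Equating the two evaluations and solving for $4 \sum_{k=1}^{(N-1)/2} k \log(k)^{2}$ produces the stated identity. No step is conceptually difficult; the only delicate point is verifying the vanishing of the $\log(-1)$-contributions in the first evaluation when $p = 2$, which is exactly why one needs $v \geq 3$ in that case (a condition automatically satisfied when $p = 2$ divides the numerator of $(N-1)/12$).
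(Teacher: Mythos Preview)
Your proof is correct and follows essentially the same approach as the paper: both compute $S=\sum_{k}g(k/N)\log(k)^2$ in two ways using the same auxiliary functions $f$ and $g$, the substitution $k\mapsto N-k$ for the first evaluation, and the change of variable $j=2k$ for the second. You are in fact slightly more careful than the paper in justifying $\log(-j)^2\equiv\log(j)^2\pmod{p^v}$ when $p=2$ (the paper passes over this silently); note only the harmless inconsistency that you invoke $v\geq 2$ in the body but $v\geq 3$ in the closing remark---the former already suffices, and both hold automatically here.
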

\begin{proof}
Let $f,g : \mathbf{R} \rightarrow \mathbf{R}$ be as in the proof of Lemma \ref{even_modSymb_computation_square_2}. We have:
\begin{align*}
\sum_{k \in (\mathbf{Z}/N\mathbf{Z})^{\times}} g(\frac{k}{N}) \cdot \log(k)^2 &= -4\cdot  \sum_{k=\frac{N+1}{2}}^{N-1} k \cdot \log(k)^2 + \mathcal{F}_2  \\&
= -4 \cdot \sum_{k=1}^{\frac{N-1}{2}} (N-k) \cdot \log(-k)^2 + \mathcal{F}_2  \\&
= 4\cdot  \sum_{k=1}^{\frac{N-1}{2}} k \cdot \log(k)^2- 3\cdot \mathcal{F}_2\text{ .}
\end{align*}
On the other hand, we have in $\mathbf{Z}/p^{v}\mathbf{Z}$:

\begin{align*}
\sum_{k \in (\mathbf{Z}/N\mathbf{Z})^{\times}} g(\frac{k}{N}) \cdot \log(k)^2  &= \sum_{k \in (\mathbf{Z}/N\mathbf{Z})^{\times}}\left( f(\frac{2k}{N})- 4\cdot f(\frac{k}{N})\right) \cdot \log(k)^2 \\&
=\sum_{k \in (\mathbf{Z}/N\mathbf{Z})^{\times}} (-4)\cdot f(\frac{k}{N}) \cdot \log(k)^2 + f(\frac{2k}{N})\cdot \left(\log(2k)-\log(2)\right)^2 \\&
= -3 \sum_{k \in (\mathbf{Z}/N\mathbf{Z})^{\times}} f(\frac{k}{N}) \cdot \log(k)^2 +  \log(2)^2\cdot \sum_{k \in (\mathbf{Z}/N\mathbf{Z})^{\times}} f(\frac{2k}{N}) \\& - 2 \cdot \log(2) \cdot \sum_{k \in (\mathbf{Z}/N\mathbf{Z})^{\times}} f(\frac{2k}{N}) \cdot \log(2k)  \\&= -3 \cdot \sum_{k=1}^{N-1} k^2 \cdot \log(k)^2 + \log(2)^2 \cdot \frac{N-1}{6} - 2 \cdot \log(2) \cdot \sum_{k=1}^{N-1} k^2\cdot \log(k) \text{ .}
\end{align*} 
\end{proof}

\begin{lem}\label{even_modSymb_somme_bizarre}
We have, in $\mathbf{Z}/p^{v}\mathbf{Z}$:
$$\sum_{t_1,\text{ }t_2=1\atop t_1\neq t_2}^{\frac{N-1}{2}} \log(t_1-t_2) = -2 \cdot \sum_{k=1}^{\frac{N-1}{2}}k \cdot \log(k) $$
and
$$\sum_{t_1, t_2=1}^{\frac{N-1}{2}} \log(t_1+t_2) = 2\cdot \sum_{k=1}^{\frac{N-1}{2}}k \cdot \log(k) -\mathcal{F}_1 \text{ .}$$
\end{lem}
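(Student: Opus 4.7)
The plan is to prove both identities by direct multiplicative computation, exploiting the fact that $\log\colon(\mathbf{Z}/N\mathbf{Z})^{\times}\to\mathbf{Z}/p^v\mathbf{Z}$ is a group homomorphism. Throughout, set $m=(N-1)/2$. Two basic reductions will be used repeatedly: (a) $2m=N-1\equiv0\pmod{p^v}$, so any term of the form $(N-1)\mathcal{F}_1$ or $2m\cdot\mathcal{F}_1$ vanishes modulo $p^v$; (b) $\log(-1)$ is $2$-torsion in $\mathbf{Z}/p^v\mathbf{Z}$, so $\log(-1)=0$ for $p$ odd, and for $p=2$ we have $v\geq v_2(12)+t\geq3$, hence $m=(N-1)/2$ is divisible by $4$.

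For the first identity, split the sum according to the sign of $t_1-t_2$. Pairing $(t_1,t_2)$ with $(t_2,t_1)$ and using $\log(t_1-t_2)=\log(-1)+\log(t_2-t_1)$ when $t_1<t_2$, I would write
\[
S_1:=\sum_{\substack{t_1,t_2=1\\ t_1\neq t_2}}^{m}\log(t_1-t_2)=2\sum_{1\leq t_2<t_1\leq m}\log(t_1-t_2)+\binom{m}{2}\log(-1).
\]
Parametrizing the inner sum by the difference $d=t_1-t_2\in\{1,\ldots,m-1\}$, which occurs with multiplicity $m-d$, and using $\sum_{d=1}^{m-1}\log(d)=\mathcal{F}_1-\log(m)$, one obtains $\sum_{d=1}^{m-1}(m-d)\log(d)=m\mathcal{F}_1-\sum_{k=1}^{m}k\log(k)$. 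The coefficient $2m\mathcal{F}_1$ then disappears modulo $p^v$ by reduction (a). It remains to show $\binom{m}{2}\log(-1)\equiv 0$: trivial for $p$ odd; for $p=2$, since $4\mid m$, the integer $m(m-1)/2$ is even and $\log(-1)$ is $2$-torsion, so the product vanishes modulo $2^v$.

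For the second identity, I would first group the inner sum by fixing $t_1$, using $\prod_{t_2=1}^{m}(t_1+t_2)=(t_1+m)!/t_1!$ in $(\mathbf{Z}/N\mathbf{Z})^{\times}$, to get $S_2=\sum_{t_1=1}^{m}\bigl[\log((t_1+m)!)-\log(t_1!)\bigr]$. For $m+1\le j\le N-1$, replacing each factor $k\in\{m+1,\ldots,j\}$ by $-(N-k)$ modulo $N$ yields the key formula
\[
\log(j!)=2\mathcal{F}_1+(j-m)\log(-1)-\log((N-j-1)!).
\]
Applying this with $j=t_1+m$ (so $N-j-1=m-t_1$) and summing, I reduce to computing $\sum_{t_1=1}^{m}\log(t_1!)$ and $\sum_{t_1=0}^{m-1}\log(t_1!)$, each of which is evaluated by an Abel-type rearrangement $\sum_{j=1}^{m}\log(j!)=\sum_{k=1}^{m}(m-k+1)\log(k)=(m+1)\mathcal{F}_1-\sum_{k=1}^{m}k\log(k)$. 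Putting everything together gives
\[
S_2=-\mathcal{F}_1+\frac{m(m+1)}{2}\log(-1)+2\sum_{k=1}^{m}k\log(k)\pmod{p^v},
\]
and the error term $\tfrac{m(m+1)}{2}\log(-1)$ vanishes by the same case analysis as before: trivially for $p$ odd, and for $p=2$ because $4\mid m$ makes $m(m+1)/2$ even.

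The computation is entirely elementary; the only delicate point is the $p=2$ bookkeeping, where $\log(-1)$ may be nonzero and one must verify that each $\log(-1)$-contribution has an even integer coefficient so that it is killed in $\mathbf{Z}/2^v\mathbf{Z}$. This is the one step that will require care, but it follows in both identities from $4\mid m$, which is forced by the hypothesis $p^t\mid(N-1)/12$ when $p=2$.
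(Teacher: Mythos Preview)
Your proof is correct and, for the first identity, follows exactly the paper's approach of parametrizing by the difference $d=t_1-t_2$ and counting multiplicities; you are simply more explicit than the paper about the $\binom{m}{2}\log(-1)$ term at $p=2$, which the paper absorbs silently in the line $\sum_{k<0}(k+m)\log(k)+\sum_{k>0}(m-k)\log(k)=2\sum_{k=1}^{m}(m-k)\log(k)$. For the second identity the paper only says ``similar and left to the reader'' (meaning: parametrize by the sum $s=t_1+t_2$ and count), whereas you organize via factorials and the reflection $\log(j!)=2\mathcal{F}_1+(j-m)\log(-1)-\log((N-1-j)!)$; this is a mild reorganization of the same counting, and both routes arrive at $S_2=2\sum k\log(k)-\mathcal{F}_1+\tfrac{m(m+1)}{2}\log(-1)$ before killing the $\log(-1)$ term. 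One cosmetic point: your inequality $v\geq v_2(12)+t$ is in fact an equality $v=t+2$ when $p=2$, but your conclusion $4\mid m$ is unaffected.
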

\begin{proof}
We first compute $S_1 := \sum_{t_1,\text{ }t_2=1\atop t_1\neq t_2}^{\frac{N-1}{2}} \log(t_1-t_2) \in \mathbf{Z}/p^{v}\mathbf{Z}$.
When $t_1$ and $t_2$ vary in $\{1, ..., \frac{N-1}{2}\}$, the quantity $t_1-t_2$ varies in $X:=\{-\frac{N-1}{2}+1, ..., \frac{N-1}{2}-1\}$. If $k\neq 0 \in X$, then the number of such $t_1$ and $t_2$ such that $k = t_1-t_2$ is $\text{min}(\frac{N-1}{2}-k, \frac{N-1}{2})-\text{max}(1-k,1)+1$. If $1 \leq k \leq \frac{N-1}{2}-1$, this number is $\frac{N-1}{2}-k$. If $-\frac{N-1}{2}+1 \leq k \leq -1$, this number is $\frac{N-1}{2}+k$. Thus, we have
\begin{align*}
S_1 &=\sum_{k=-\frac{N-1}{2}}^{-1} \left(k+\frac{N-1}{2}\right)\cdot \log(k) + \sum_{k=1}^{\frac{N-1}{2}} \left(-k+\frac{N-1}{2}\right)\cdot \log(k)  \\&
=2\cdot\sum_{k=1}^{\frac{N-1}{2}} \left(-k+\frac{N-1}{2}\right)\cdot \log(k)
\\& = -2 \cdot \sum_{k=1}^{\frac{N-1}{2}}k \cdot \log(k) \text{ .}
\end{align*}
The proof of the second equality is similar and is left to the reader.
\end{proof}
In a similar way, we have the following result (whose proof is left to the reader).
\begin{lem}\label{even_modSymb_somme_bizarre_2}
We have, in $\mathbf{Z}/p^{v}\mathbf{Z}$:
$$\sum_{t_1,\text{ }t_2=1\atop t_1\neq t_2}^{\frac{N-1}{2}} \log(t_1-t_2)^2 = -2 \cdot \sum_{k=1}^{\frac{N-1}{2}}k \cdot \log(k)^2 $$
and
$$\sum_{t_1, t_2=1}^{\frac{N-1}{2}} \log(t_1+t_2)^2 = 2\cdot \sum_{k=1}^{\frac{N-1}{2}}k \cdot \log(k)^2 -\mathcal{F}_2\text{ .}$$
\end{lem}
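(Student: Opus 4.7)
The plan is to mimic the proof of Lemma \ref{even_modSymb_somme_bizarre} essentially verbatim, the only new input being that $\log(-k)^2 \equiv \log(k)^2 \pmod{p^v}$. This congruence holds because $2\log(-1)=0$ in $\mathbf{Z}/p^v\mathbf{Z}$; when $p\geq 3$ this forces $\log(-1)=0$, and when $p=2$ one has $\log(-1)\in\{0,2^{v-1}\}$, so that $\log(-k)^2-\log(k)^2 = \log(-1)^2+2\log(-1)\log(k)$ vanishes modulo $p^v=2^{t+3}$ since $2v-2\geq v$.

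For the first identity, I would count the number of ordered pairs $(t_1,t_2)\in\{1,\dots,\frac{N-1}{2}\}^2$ with $t_1\neq t_2$ producing a given difference $k=t_1-t_2$. Exactly as in the proof of Lemma \ref{even_modSymb_somme_bizarre}, the multiplicity is $\frac{N-1}{2}-k$ for $1\leq k\leq \frac{N-1}{2}-1$ and $\frac{N-1}{2}+k$ for $-\frac{N-1}{2}+1\leq k\leq -1$. Splitting into positive and negative parts and using $\log(-k)^2=\log(k)^2$, I obtain
\[
\sum_{t_1\neq t_2}\log(t_1-t_2)^2 = 2\sum_{k=1}^{\frac{N-1}{2}-1}\Bigl(\tfrac{N-1}{2}-k\Bigr)\log(k)^2 = (N-1)\,\mathcal{F}_2 - 2\sum_{k=1}^{\frac{N-1}{2}}k\log(k)^2.
\]
The conclusion follows once I verify $(N-1)\mathcal{F}_2\equiv 0\pmod{p^v}$: this is trivial for $p\geq 5$ (where $\mathcal{F}_2=0$), while for $p=3$ one uses $v_3(N-1)=t+1=v$ together with $3\mathcal{F}_2=0$, and for $p=2$ one uses $v_2(N-1)=t+2$ and $4\mathcal{F}_2=0$, giving $v_2((N-1)\mathcal{F}_2)\geq 2t+3\geq v$.

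For the second identity, I would count pairs $(t_1,t_2)$ with $t_1+t_2=k$, obtaining multiplicity $k-1$ for $2\leq k\leq \frac{N-1}{2}$ and $N-k$ for $\frac{N+1}{2}\leq k\leq N-1$. Then
\[
\sum_{t_1,t_2=1}^{\frac{N-1}{2}}\log(t_1+t_2)^2 = \sum_{k=2}^{\frac{N-1}{2}}(k-1)\log(k)^2 + \sum_{k=\frac{N+1}{2}}^{N-1}(N-k)\log(k)^2.
\]
In the second sum I substitute $k\mapsto N-k$; using $\log(N-k)^2=\log(-k)^2=\log(k)^2$ it becomes $\sum_{k=1}^{(N-1)/2}k\log(k)^2$. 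The first sum rewrites as $\sum_{k=1}^{(N-1)/2}k\log(k)^2 - \mathcal{F}_2$, and adding gives the claim.

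The computations are entirely elementary; the only genuine point of care is the mod-$p^v$ statement $\log(-k)^2=\log(k)^2$ together with the vanishing $(N-1)\mathcal{F}_2\equiv 0\pmod{p^v}$ for $p\in\{2,3\}$, which is what keeps the sharper modulus $p^v$ (rather than just $p^t$) of Section \ref{even_modSymb_few_identities} intact.
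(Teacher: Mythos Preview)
Your proof is correct and follows exactly the approach the paper intends: the paper simply states that the result is proved ``in a similar way'' to Lemma~\ref{even_modSymb_somme_bizarre} and leaves it to the reader, and your argument is precisely that similar computation. One minor simplification: since by definition $v=v_p(N-1)$, the integer $N-1$ is already $0$ in $\mathbf{Z}/p^v\mathbf{Z}$, so $(N-1)\mathcal{F}_2=0$ holds trivially for all $p$ without any appeal to the bounds on $\mathcal{F}_2$.
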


\begin{lem}\label{even_modSymb_computation_D2}
We have, in $\mathbf{Z}/p^{v}\mathbf{Z}$:
\begin{align*}
\sum_{(t_1, t_2) \in (\mathbf{Z}/N\mathbf{Z})^{2} \atop t_1 \neq t_2} D_2\left(\frac{t_1}{N}\right)\cdot D_2\left(\frac{t_2}{N}\right)\cdot \log(t_1-t_2) &= -\sum_{(t_1, t_2) \in (\mathbf{Z}/N\mathbf{Z})^{2} \atop t_1 \neq -t_2} D_2\left(\frac{t_1}{N}\right)\cdot D_2\left(\frac{t_2}{N}\right)\cdot \log(t_1+t_2) \\&= -8 \cdot \sum_{k=1}^{\frac{N-1}{2}} k\cdot \log(k) + 2\cdot \mathcal{F}_1\\&= 6\cdot  \sum_{k=1}^{N-1} k^2\cdot \log(k) + \log(2) \cdot \frac{N-1}{3}\text{ .}
\end{align*}
\end{lem}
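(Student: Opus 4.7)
The plan is to prove the three displayed equalities in turn. For the first equality, I would make the change of variables $t_2 \mapsto -t_2$ in the sum. This sends $\log(t_1-t_2)$ to $\log(t_1+t_2)$ and changes the constraint $t_1\neq t_2$ into $t_1\neq -t_2$. Since $D_2$ is periodic with period $1$ and satisfies $D_2(1-x)=-D_2(x)$, the factor $D_2(t_2/N)$ picks up a minus sign, explaining the overall negative sign.

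For the second equality, I would explicitly use that $D_2(t/N)=-1$ when $t\in S^-:=\{1,\ldots,(N-1)/2\}$ and $D_2(t/N)=+1$ when $t\in S^+:=\{(N+1)/2,\ldots,N-1\}$. Splitting the sum into the four blocks determined by which of $S^\pm$ contains $t_1$ and $t_2$, and applying the substitution $t\mapsto N-t$ (which swaps $S^-$ and $S^+$, leaves $\log(t_1-t_2)$ unchanged since $\log(-1)$ is $2$-torsion and $N\equiv 1\bmod p^v$, and inverts the sign of $D_2$), the $(S^+,S^+)$-block equals the $(S^-,S^-)$-block and the two mixed blocks are equal to each other. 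Writing $t_2=N-s_2$ with $s_2\in S^-$ in the mixed block turns $\log(t_1-t_2)$ into $\log(t_1+s_2)$. One thus reduces the entire sum to
\[
2\sum_{t_1\neq t_2\in S^-}\log(t_1-t_2)\;-\;2\sum_{t_1,s_2\in S^-}\log(t_1+s_2),
\]
and then both pieces are evaluated using Lemma~\ref{even_modSymb_somme_bizarre}: the first equals $-4\sum_{k=1}^{(N-1)/2}k\log(k)$, and the second equals $4\sum_{k=1}^{(N-1)/2}k\log(k)-2\mathcal{F}_1$, which combine to give $-8\sum_{k=1}^{(N-1)/2}k\log(k)+2\mathcal{F}_1$.

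For the third equality, I would invoke Lemma~\ref{even_modSymb_computation_square}: multiplying the identity there by $-2$ gives
\[
-8\sum_{k=1}^{(N-1)/2}k\log(k)-2\mathcal{F}_1\;=\;6\sum_{k=1}^{N-1}k^2\log(k)+\log(2)\cdot\frac{N-1}{3}.
\]
Comparing with what we want, the remaining obstacle is to show $4\mathcal{F}_1=0$ in $\mathbf{Z}/p^v\mathbf{Z}$. This follows from Wilson's theorem: factoring $(N-1)!\equiv -1$ into the product over $S^-$ and over $S^+$ and taking $\log$ yields $2\mathcal{F}_1+\frac{N-1}{2}\log(-1)=\log(-1)$, so $2\mathcal{F}_1=\log(-1)\cdot\frac{3-N}{2}$ (the division by $2$ is legitimate either because $p$ is odd, or, when $p=2$, because $(3-N)/2$ is a well-defined integer); since $2\log(-1)=0$ and $N\equiv 1\bmod p^v$, an elementary verification in the two cases $p$ odd and $p=2$ shows $4\mathcal{F}_1=0$. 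The main obstacle is really the second equality: getting the casework on the four blocks of $(S^\pm)^2$ right and correctly bookkeeping the substitution $t\mapsto N-t$ — everything else is a direct application of the lemmas already proved.
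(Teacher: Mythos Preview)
Your proposal is correct and follows essentially the same route as the paper: the change of variable $t_2\mapsto -t_2$ for the first equality, the four-block decomposition reducing to $2\sum_{t_1\neq t_2\in S^-}\log(t_1-t_2)-2\sum_{t_1,t_2\in S^-}\log(t_1+t_2)$ followed by Lemma~\ref{even_modSymb_somme_bizarre} for the second, and Lemma~\ref{even_modSymb_computation_square} together with $4\mathcal{F}_1=0$ for the third. The only cosmetic difference is that the paper simply cites the fact $4\mathcal{F}_1=0$ (stated at the start of \S\ref{even_modSymb_few_identities}), whereas you re-derive it from Wilson's theorem; note that your derivation can be shortened, since once you have $2\mathcal{F}_1=\log(-1)\cdot\frac{3-N}{2}$, multiplying by $2$ and using $2\log(-1)=0$ gives $4\mathcal{F}_1=0$ immediately with no case split.
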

\begin{proof}
The first equality of Lemma \ref{even_modSymb_computation_D2} follows from the change of variable $t_2 \mapsto -t_2$, using $D_2(-x) = -D_2(x)$ for all $x \in \mathbf{R}$.
We have, in $\mathbf{Z}/p^{v}\mathbf{Z}$:
\begin{align*}
\sum_{(t_1, t_2) \in (\mathbf{Z}/N\mathbf{Z})^{2} \atop t_1 \neq t_2} D_2\left(\frac{t_1}{N}\right)D_2\left(\frac{t_2}{N}\right)\log(t_1-t_2) &= 2 \sum_{t_1,\text{ }t_2=1\atop t_1\neq t_2}^{\frac{N-1}{2}}  \log(t_1-t_2)  - 2\sum_{t_1,t_2=1}^{\frac{N-1}{2}}  \log(t_1+t_2) 
\\& = -8  \sum_{k=1}^{\frac{N-1}{2}}k \cdot \log(k) +2\cdot \mathcal{F}_1\text{ .}
\end{align*}
where in the last equality, we have used Lemma \ref{even_modSymb_somme_bizarre}. This shows the second equality of Lemma \ref{even_modSymb_computation_D2}. The third equality follows from Lemma \ref{even_modSymb_computation_square} and $4\cdot \mathcal{F}_1=0$.
\end{proof}

Similarly, using Lemmas \ref{even_modSymb_computation_square_2} and \ref{even_modSymb_somme_bizarre_2} we get the following result (which will be used in chapter 6).
\begin{lem}\label{even_modSymb_computation_D2_square}
We have, in $\mathbf{Z}/p^{v}\mathbf{Z}$:
\begin{align*}
\sum_{(t_1, t_2) \in (\mathbf{Z}/N\mathbf{Z})^{2} \atop t_1 \neq t_2} D_2\left(\frac{t_1}{N}\right)\cdot D_2\left(\frac{t_2}{N}\right)\cdot \log(t_1-t_2)^2 &= -\sum_{(t_1, t_2) \in (\mathbf{Z}/N\mathbf{Z})^{2} \atop t_1 \neq -t_2} D_2\left(\frac{t_1}{N}\right)\cdot D_2\left(\frac{t_2}{N}\right)\cdot \log(t_1+t_2)^2 \\&= -8 \cdot \sum_{k=1}^{\frac{N-1}{2}} k\cdot \log(k)^2 + 2\cdot \mathcal{F}_2 \\& = 6 \cdot \sum_{k=1}^{N-1}k^2 \cdot \log(k)^2 - \log(2)^2\cdot \frac{N-1}{3} \\&+ 4 \cdot \log(2) \cdot \sum_{k=1}^{N-1} k^2\cdot \log(k) -4\cdot \mathcal{F}_2 \text{ .}
\end{align*}
\end{lem}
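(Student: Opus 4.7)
The plan is to mimic line by line the proof of Lemma \ref{even_modSymb_computation_D2}, but with $\log$ replaced by $\log^2$ throughout, invoking the squared analogues of the underlying identities where needed.

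First, I would establish the equality between the two $D_2$-sums by the substitution $t_2 \mapsto -t_2$ in the left-hand side. Since $D_2(-x)=-D_2(x)$ while $\log(-1)$ satisfies $2\log(-1)=0$ in $\mathbf{Z}/p^{v}\mathbf{Z}$ (so that $\log(-(t_1+t_2))^{2}=\log(t_1+t_2)^{2}+2\log(-1)\log(t_1+t_2)+\log(-1)^{2}$, and the last two terms vanish after summation against the odd function $D_2(t_1/N)D_2(t_2/N)$, once one isolates any $p=2$ complication using $4\mathcal{F}_{2}=0$), the sum transforms as claimed and the constraint $t_1\neq t_2$ becomes $t_1\neq -t_2$, producing the overall sign change.

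Next, I would use the explicit values $D_2(k/N)=-1$ for $1\le k\le(N-1)/2$ and $D_2(k/N)=+1$ for $(N+1)/2\le k\le N-1$ to split the $D_2$-sum into four quadrants indexed by the halves of $(\mathbf{Z}/N\mathbf{Z})^{\times}$. Performing $t\mapsto N-t$ in the quadrants containing upper-range variables folds the four pieces pairwise, yielding
\begin{equation*}
\sum_{\substack{t_1,t_2=1\\t_1\neq t_2}}^{\frac{N-1}{2}}D_2\!\left(\tfrac{t_1}{N}\right)D_2\!\left(\tfrac{t_2}{N}\right)\log(t_1-t_2)^{2}
=2\!\!\sum_{\substack{t_1,t_2=1\\t_1\neq t_2}}^{\frac{N-1}{2}}\!\!\log(t_1-t_2)^{2}-2\!\!\sum_{t_1,t_2=1}^{\frac{N-1}{2}}\!\!\log(t_1+t_2)^{2},
\end{equation*}
up to correction terms involving $\log(-1)^{2}$ which vanish mod $p^{v}$ by the same reasoning as above. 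Applying Lemma \ref{even_modSymb_somme_bizarre_2} to each of the two resulting sums gives exactly $-8\sum_{k=1}^{(N-1)/2}k\log(k)^{2}+2\mathcal{F}_{2}$, proving the second equality.

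For the third equality, I would invoke Lemma \ref{even_modSymb_computation_square_2}, which expresses $4\sum_{k=1}^{(N-1)/2}k\log(k)^{2}$ as $-3\sum_{k=1}^{N-1}k^{2}\log(k)^{2}+\log(2)^{2}\frac{N-1}{6}-2\log(2)\sum_{k=1}^{N-1}k^{2}\log(k)+3\mathcal{F}_{2}$. Multiplying by $-2$ and combining with the $+2\mathcal{F}_{2}$ correction yields $6\sum_{k=1}^{N-1}k^{2}\log(k)^{2}-\log(2)^{2}\frac{N-1}{3}+4\log(2)\sum_{k=1}^{N-1}k^{2}\log(k)-6\mathcal{F}_{2}+2\mathcal{F}_{2}=6\sum k^{2}\log(k)^{2}-\log(2)^{2}\frac{N-1}{3}+4\log(2)\sum k^{2}\log(k)-4\mathcal{F}_{2}$, which is the desired third expression.

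The main obstacle will be the bookkeeping at $p=2$: the quadrant-folding argument requires $\log(-1)^{2}\equiv 0\pmod{p^{v}}$ and the disappearance of cross terms $2\log(-1)\log(t_1\pm t_2)$ when summed against $D_2\cdot D_2$, which for odd $p$ is automatic from $\log(-1)=0$ but for $p=2$ demands verifying that each such correction is annihilated by the relations $2\mathcal{F}_{0}=4\mathcal{F}_{1}=4\mathcal{F}_{2}=0$ recalled at the start of Section \ref{even_modSymb_few_identities}. All other manipulations are routine re-runs of the arithmetic used for Lemma \ref{even_modSymb_computation_D2}.
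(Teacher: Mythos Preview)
Your proposal is correct and follows exactly the approach the paper indicates (the paper simply writes ``Similarly, using Lemmas \ref{even_modSymb_computation_square_2} and \ref{even_modSymb_somme_bizarre_2}''). One small remark: your worries about $\log(-1)$ corrections are unnecessary, since the substitution $t_2\mapsto -t_2$ sends $t_1-t_2$ directly to $t_1+t_2$ (no sign), and the quadrant-folding via $t_i\mapsto N-t_i$ followed by relabeling introduces no $\log(-1)$ either; the argument is therefore cleaner than you suggest, but your conclusions are right.
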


The following identity will be useful in chapter \ref{Section_comparison}.
\begin{lem}\label{Bernardi_lemma>3}
For any $a \in (\mathbf{Z}/N\mathbf{Z})^{\times}$, we have in $\mathbf{Z}/p^{v}\mathbf{Z}$:
$$\sum_{k \in (\mathbf{Z}/N\mathbf{Z})^{\times} \atop k \neq a} \log(k-a)\cdot \log(k)=- \log(a)^2 + \log(-1)\cdot \log(a)+ \mathcal{F}_2 \text{ .}$$
\end{lem}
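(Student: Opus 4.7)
First I would reduce to an $a$-independent calculation via the substitution $k=aj$ with $j\in(\mathbf{Z}/N\mathbf{Z})^{\times}\setminus\{1\}$. Expanding $\log(k-a)\log(k)=(\log(a)+\log(j-1))(\log(a)+\log(j))$ and summing, I collect four contributions. The constant-in-$j$ term gives $(N-2)\log(a)^2\equiv -\log(a)^2\pmod{p^v}$ since $N\equiv 1\pmod{p^v}$. The two cross terms involve $\sum_{j\neq 1}\log(j)$ and $\sum_{j\neq 1}\log(j-1)$: Wilson's theorem identifies $\sum_{j\in(\mathbf{Z}/N\mathbf{Z})^{\times}}\log(j)=\log((N-1)!)=\log(-1)$, so the first equals $\log(-1)$, while the shift $m=j-1$ (a bijection $(\mathbf{Z}/N\mathbf{Z})^{\times}\setminus\{1\}\xrightarrow{\sim}(\mathbf{Z}/N\mathbf{Z})^{\times}\setminus\{-1\}$) forces the second to equal $\log(-1)-\log(-1)=0$. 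Thus the left-hand side becomes $-\log(a)^2+\log(-1)\log(a)+S$, where
$$S \;:=\; \sum_{j\in(\mathbf{Z}/N\mathbf{Z})^{\times},\;j\neq 1}\log(j-1)\log(j),$$
and the lemma reduces to proving $S=\mathcal{F}_2$.

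For $S$, I would exploit the involution $\iota\colon j\mapsto 1/j$ on $(\mathbf{Z}/N\mathbf{Z})^{\times}\setminus\{1\}$. From $1/j-1=(1-j)/j$ one has $\log(1/j-1)=\log(-1)+\log(j-1)-\log(j)$ and $\log(1/j)=-\log(j)$. Substituting and re-summing over the same index set yields
$$S \;=\; \sum_{j\neq 1}\bigl[\log(-1)+\log(j-1)-\log(j)\bigr]\bigl[-\log(j)\bigr] \;=\; -\log(-1)^2 \;-\; S \;+\; \sum_{j\in(\mathbf{Z}/N\mathbf{Z})^{\times}}\log(j)^2,$$
so $2S=\sum_{j\in(\mathbf{Z}/N\mathbf{Z})^{\times}}\log(j)^2-\log(-1)^2$. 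Pairing $k$ with $-k$ in the last sum and using $\log(-k)=\log(-1)+\log(k)$ expands this as $2\mathcal{F}_2+2\log(-1)\mathcal{F}_1+\tfrac{N-1}{2}\log(-1)^2$. Under the blanket hypothesis $p\geq 5$ (implicit in the ``$>3$'' label), the identity $2\log(-1)=\log(1)=0$ in the torsion-free-at-$2$ group $\mathbf{Z}/p^v\mathbf{Z}$ forces $\log(-1)=0$, and moreover $\tfrac{N-1}{2}\equiv 0\pmod{p^v}$. Every $\log(-1)$-term therefore drops and one is left with $2S=2\mathcal{F}_2$. Dividing by $2$, which is a unit in $\mathbf{Z}/p^v\mathbf{Z}$ for $p\geq 5$, yields $S=\mathcal{F}_2$ and completes the proof.

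The main obstacle is the evaluation of $S$: unlike the linear sums handled by Wilson and a shift, the bilinear sum $\sum\log(j-1)\log(j)$ does not collapse by any purely formal re-indexing. The trick is to observe that $j\mapsto 1/j$ is the unique nontrivial order-two symmetry of $(\mathbf{Z}/N\mathbf{Z})^{\times}\setminus\{1\}$ that essentially swaps $\log(j)$ with $-\log(j)$ while mapping $\log(j-1)$ to $\log(j-1)-\log(j)$ (modulo a $\log(-1)$-correction). This exchange makes the ``off-diagonal'' bilinear sum $S$ appear with the opposite sign on both sides of the resulting identity, so that $2S$ is forced equal to the diagonal sum $\sum\log(j)^2$, which is manifestly tractable.
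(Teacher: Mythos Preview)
Your reduction to $S=\sum_{j\neq 1}\log(j-1)\log(j)$ and your use of the involution $j\mapsto 1/j$ to obtain $2S=\sum_{j}\log(j)^2-\log(-1)^2$ are both correct, and this is exactly how the paper handles the odd-$p$ case (the paper phrases it as ``$S=0$ and $\mathcal F_2=0$'', but your more careful bookkeeping, yielding $S=\mathcal F_2$ directly, is equivalent and in fact works without separate verification that $\mathcal F_2$ vanishes). Note also that your argument goes through for $p=3$ as well: all you need is $\log(-1)=0$ and that $2$ is a unit in $\mathbf Z/p^v\mathbf Z$, both of which hold for any odd $p$.

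The gap is that the lemma is \emph{not} restricted to $p\geq 5$. It is stated in a section that explicitly says ``we do not impose any restriction on $p$'', and it is later invoked in the paper's treatment of the $p=2$ case (e.g.\ in the proof of Lemma~\ref{even_modSymb_magical_identity_Gauss} and of Theorem~\ref{comparison_pairing_m_1^+_m_1^-_2}). The label ``$>3$'' is just a tag, not a hypothesis. At $p=2$ your argument breaks down at two points: $\log(-1)$ is generally nonzero (it is $2$-torsion, so equals $2^{v-1}$ or $0$), and you cannot divide the relation $2S=\cdots$ by $2$ in $\mathbf Z/2^v\mathbf Z$. The paper handles $p=2$ by a separate combinatorial argument: it introduces three equivalence relations on $(\mathbf Z/N\mathbf Z)^\times\setminus\{\pm1\}$ (generated by $x\sim -x$, $x\sim 1/x$, and both together), partitions a set of representatives compatibly, and tracks the contribution of $\log(-1)$ and of the fixed point $\zeta_4$ to show $\sum_{s\in R_2}\log(s)^2=\mathcal F_2$. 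So your proof is essentially the paper's for odd $p$, but it is incomplete as it stands: you still owe the $p=2$ case.
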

\begin{proof}
We make the change of variable $k = a \cdot s$. We get, in $\mathbf{Z}/p^v\mathbf{Z}$:
\begin{align*}
\sum_{k \in (\mathbf{Z}/N\mathbf{Z})^{\times} \atop k \neq a} \log(k-a)\cdot \log(k)&=\sum_{s \in (\mathbf{Z}/N\mathbf{Z})^{\times} \atop s\neq 1 } \log(s-1)\cdot \log(s) -\log(a)^2+\log(a) \cdot \sum_{s \in (\mathbf{Z}/N\mathbf{Z})^{\times} \atop s\neq 1} \log(s) \\& + \log(a) \cdot \sum_{s \in (\mathbf{Z}/N\mathbf{Z})^{\times} \atop s \neq 1} \log(s-1)  \\& =\sum_{s \in (\mathbf{Z}/N\mathbf{Z})^{\times} \atop s\neq 1 } \log(s-1)\cdot \log(s) -\log(a)^2+2 \cdot \log(a) \cdot \sum_{s \in (\mathbf{Z}/N\mathbf{Z})^{\times} } \log(s) \\& - \log(-1)\cdot \log(a) \\& =
-\log(a)^2 + \log(-1)\cdot \log(a) + \sum_{s \in (\mathbf{Z}/N\mathbf{Z})^{\times} \atop s\neq 1 } \log(s-1)\cdot \log(s)\text{ .}
\end{align*}
Since $2\cdot  \sum_{s \in (\mathbf{Z}/N\mathbf{Z})^{\times}} \log(s) =  \log((N-1)!^2) = 0$, we have in $\mathbf{Z}/p^v\mathbf{Z}$:
\begin{equation}\label{even_modSymb_sum_k-a_k_k-1_k_eq}
\sum_{k \in (\mathbf{Z}/N\mathbf{Z})^{\times} \atop k \neq a} \log(k-a)\cdot \log(k) = -\log(a)^2 + \log(-1)\cdot \log(a) + \sum_{s \in (\mathbf{Z}/N\mathbf{Z})^{\times} \atop s\neq 1 } \log(s-1)\cdot \log(s)\text{ .}
\end{equation}

\begin{lem}\label{Bernardi_Merel_Lemma}
We have, in $\mathbf{Z}/p^v\mathbf{Z}$:
$$\sum_{s \in (\mathbf{Z}/N\mathbf{Z})^{\times} \atop s\neq 1 } \log(s-1)\cdot \log(s) = \mathcal{F}_2 \text{ .}$$
\end{lem}
\begin{proof}
We treat the cases $p=2$ and $p>2$ separately. Assume first that $p>2$. In this case, we have $\mathcal{F}_2=0$. On the other hand, the left-hand side is easily seen to be zero, using the change of variable $s \mapsto \frac{1}{s}$. During the rest of the proof, we assume that $p=2$ (so $N \equiv 1 \text{ (modulo }8\text{)}$).
We define three equivalence relations $\sim_1$, $\sim_2$ and $\sim_3$ in $(\mathbf{Z}/N\mathbf{Z})^{\times} \backslash \{\overline{1}, \overline{-1}\}$, characterized by $x \sim_1 -x$, $x\sim_2 \frac{1}{x}$, $x\sim_3 \frac{1}{x}$ and $x \sim_3 -x$ for all $x \in (\mathbf{Z}/N\mathbf{Z})^{\times}$. For $i \in \{1, 2, 3\}$, let $R_i \subset (\mathbf{Z}/N\mathbf{Z})^{\times}$ be a set of representative for $\sim_i$. We can and do choose $R_1$, $R_2$ and $R_3$ so that $R_3 \subset R_1 \cap R_2$. We denote by $\overline{R}_i$ the complement of $R_i$ in $(\mathbf{Z}/N\mathbf{Z})^{\times} \backslash \{\overline{1}, \overline{-1}\}$. Let $\zeta_4 \in R_2$ be the unique element of order $4$. If $x \in R_3$ and $x \neq \zeta_4$, there is a unique element $[x]$ of $R_2$ such that $[x] \neq x$ and $[x] \sim_3 x$. We have $[x] = -x$ or $[x] = -\frac{1}{x}$. We get a partition $$R_2 = \{\zeta_4\}\bigsqcup_{x \in R_3\atop x \neq \zeta_4} \{x, [x]\} \text{ .}$$

We have, in $\mathbf{Z}/p^v\mathbf{Z}$:
\begin{align*}
\sum_{s \in (\mathbf{Z}/N\mathbf{Z})^{\times} \atop s\neq 1 } \log(s-1)\cdot \log(s) &=  \sum_{s \in (\mathbf{Z}/N\mathbf{Z})^{\times} \atop s\neq 1, -1 } \log(s-1)\cdot \log(s) \\& =  \sum_{s \in R_2} \log(s-1)\cdot \log(s)  +  \sum_{s \in \overline{R}_2} \log(s-1)\cdot \log(s) \\&= \sum_{s \in R_2} \log(s-1)\cdot \log(s) - \log\left(\frac{1}{s}-1\right)\cdot \log(s) \text{ .}
\end{align*}
In the first equality, we have used the fact that $\log(-1)\cdot\log(-2)=0$ since $\log(2) \equiv 0 \text{ (modulo }2\text{)}$ by the quadratic reciprocity law (recall that $N \equiv 1 \text{ (modulo }8\text{)}$).
Thus, we have:
\begin{equation}\label{Bernardi_Merel_Lemma_eq1}
\sum_{s \in (\mathbf{Z}/N\mathbf{Z})^{\times} \atop s\neq 1 } \log(s-1)\cdot \log(s) = \sum_{s \in R_2} \log(s)^2 - \log(-1)\cdot \log(s) \text{ .}
\end{equation}
We have:
$$\sum_{s \in R_2} \log(s) \equiv \log(\zeta_4) + \sum_{s \in R_3 \atop s \neq \zeta_4} \log(s)+\log([s]) \text{ (modulo }2\text{).}$$
We have $\log(s) + \log([s]) \equiv 0 \text{ (modulo }2\text{)}$, and $\log(\zeta_4) \equiv 0 \text{ (modulo }2\text{)}$.
Thus, we have:
\begin{equation}\label{Bernardi_Merel_Lemma_eq2}
\sum_{s \in R_2}\log(-1)\cdot \log(s) = 0 \text{ .}
\end{equation}
By (\ref{Bernardi_Merel_Lemma_eq1}) and (\ref{Bernardi_Merel_Lemma_eq2}), have:
\begin{equation}\label{Bernardi_Merel_Lemma_eq3}
\sum_{s \in (\mathbf{Z}/N\mathbf{Z})^{\times} \atop s\neq 1 } \log(s-1)\cdot \log(s) = \sum_{s \in R_2} \log(s)^2 \text{ .}
\end{equation}
We have:
\begin{align*}
\mathcal{F}_2 &= \sum_{s \in R_1} \log(s)^2  \\& = \log(\zeta_4)^2 + 2\cdot \sum_{s \in R_3 \atop s\neq \zeta_4} \log(s)^2 \\&   =\log(\zeta_4)^2 +  \sum_{s \in R_3 \atop s\neq \zeta_4} \log(s)^2 + \log([s])^2= \sum_{s \in R_2} \log(s)^2 \text{ .}
\end{align*}
Combining the latter equality with (\ref{Bernardi_Merel_Lemma_eq3}), this concludes the proof of Lemma \ref{Bernardi_Merel_Lemma}.
\end{proof}
Lemma \ref{Bernardi_lemma>3} follows from (\ref{even_modSymb_sum_k-a_k_k-1_k_eq}) and Lemma \ref{Bernardi_Merel_Lemma}.
\end{proof}

The following identity will be useful to compute $m_1^+$ when $p=2$ (Theorem \ref{thm_Introduction_w_1^+_2}).

\begin{lem}\label{even_modSymb_magical_identity_Gauss}
Assume that $p=2$, so that $N \equiv 1 \text{ (modulo }8\text{)}$. 
For all $x \in (\mathbf{Z}/N\mathbf{Z})^{\times} \backslash \{\overline{1}, \overline{-1}\}$, we have in $\mathbf{Z}/2\mathbf{Z}$:
\begin{align*}
\sum_{s_1,s_2=1 \atop (1-x)s_1+(1+x)s_2\equiv 0 \text{ (modulo }N\text{)}}^{\frac{N-1}{2}} 1 &=  \log\left(\frac{x+1}{x-1}\right)\\&= \sum_{s_1,s_2=1 \atop (1-x)s_1+(1+x)s_2\equiv 0 \text{ (modulo }N\text{)}}^{\frac{N-1}{2}} \log\left(\frac{2}{1-x}\cdot s_2\right) \\& + \sum_{s_1,s_2=1 \atop (1-x)s_1+(1+x)s_2\not\equiv 0 \text{ (modulo }N\text{)}}^{\frac{N-1}{2}} \log\left( (1-x)s_1+(1+x)s_2\right)\text{ .} 
\end{align*}
\end{lem}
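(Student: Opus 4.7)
Throughout I set $M=(N-1)/2$, $L=\{1,\dots,M\}\subset(\mathbf{Z}/N\mathbf{Z})^{\times}$ and $y=\frac{x+1}{x-1}\in(\mathbf{Z}/N\mathbf{Z})^{\times}$ (well defined since $x\neq\pm 1$). The key algebraic identity is the factorization
\[
(1-x)s_1+(1+x)s_2=(1-x)(s_1-y\,s_2),
\]
so the congruence condition $(1-x)s_1+(1+x)s_2\equiv 0\pmod N$ is equivalent to $s_1\equiv y s_2\pmod N$. Since $N\equiv 1\pmod 8$, both $-1$ and $2$ are squares in $\mathbf{F}_N^{\times}$, so $\log(-1)\equiv\log(2)\equiv 0\pmod 2$; this fact will be used repeatedly and makes $M=(N-1)/2$ even.

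For the first equality, for each $s_2\in L$ the integer $s_1$ solving $s_1\equiv y s_2\pmod N$ in $L$ exists uniquely iff $\tau(s_2):=(y s_2\bmod N)$ lies in $L$. Writing $\mu=\#\{s_2\in L:\tau(s_2)>M\}$, the cardinality on the left equals $M-\mu\equiv\mu\pmod 2$ (using $M$ even). Gauss's lemma gives $(-1)^{\mu}=\bigl(\tfrac{y}{N}\bigr)$, hence $\mu\equiv\log(y)\pmod 2$, which equals $\log\!\bigl(\tfrac{x+1}{x-1}\bigr)\pmod 2$.

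For the second equality, I factor each logarithm using $(1-x)s_1+(1+x)s_2=(1-x)(s_1-y s_2)$. Taking the expression modulo $2$ and exploiting $\log(2)\equiv 0$, the total contribution of $\pm\log(1-x)$ is $(\lvert T_0\rvert-\lvert T_1\rvert)\log(1-x)$, which vanishes mod $2$ because $\lvert T\rvert=M^2$ is even. Hence it suffices to show
\[
U:=\sum_{(s_1,s_2)\in T_0}\log\!\bigl((s_1-y s_2)\bmod N\bigr)+\sum_{(s_1,s_2)\in T_1}\log(s_2)\equiv\log(y)\pmod 2.
\]
For fixed $s_2$ with $\tau(s_2)=t$, I compute the inner sum over $s_1\in L$ case by case: if $t\in L$ (case 1), the values $s_1-t\bmod N$ range through $\{-(t-1),\dots,-1,0,1,\dots,M-t\}$, and after using $\log(-1)\equiv 0$ and $\log(s_2)=\log(t)-\log(y)$, the contribution is $\sigma(1,t)+\sigma(1,M-t)-\log(y)$ where $\sigma(a,b)=\sum_{k=a}^{b}\log(k)$; if $t=N-t'\in -L$ (case 2), the values $s_1-t\bmod N=s_1+t'$ lie in $\{t'+1,\dots,M+t'\}$ (no wrap-around), and after converting the $-L$ part via $\log(-1)\equiv 0$, the contribution is $\sigma(t'+1,M)+\sigma(M-t'+1,M)=2\sigma_0-\sigma(1,t')-\sigma(1,M-t')$, with $\sigma_0=\log(M!)$.

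The map $\tau^*:s_2\mapsto\min(\tau(s_2),N-\tau(s_2))$ is a bijection $L\to L$, so summing over $s_2$ and using $M-\mu\equiv\mu\pmod 2$ plus $2\mu\sigma_0\equiv 0$, I obtain
\[
U\equiv (A_1+A_2)+\mu\log(y)\pmod 2,\qquad A_1+A_2=\sum_{t\in L}\bigl[\sigma(1,t)+\sigma(1,M-t)\bigr].
\]
Pairing $t$ with $M-t$ yields $A_1+A_2=\sigma_0+2\sum_{t=1}^{M-1}\sigma(1,t)\equiv\sigma_0\pmod 2$, and $\mu\log(y)\equiv\log(y)^2\equiv\log(y)\pmod 2$. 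Thus $U\equiv\sigma_0+\log(y)\pmod 2$, and the whole question reduces to the single arithmetic fact that \emph{$\sigma_0=\log(M!)\equiv 0\pmod 2$}, i.e.\ $M!$ is a square modulo $N$. This is the main obstacle and where the hypothesis $N\equiv 1\pmod 8$ (not merely $\pmod 4$) is crucial: Wilson's theorem gives $(M!)^2\equiv(-1)^{M+1}\equiv -1\pmod N$, so $M!$ has order $4$ in the cyclic group $(\mathbf{Z}/N\mathbf{Z})^{\times}$; an element of order $4$ is a square precisely when $8\mid N-1$ (it is then the square of an element of order $8$). This finishes the proof.
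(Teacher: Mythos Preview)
Your proof is correct. The first equality (via Gauss's lemma) is handled exactly as in the paper. For the second equality, the paper takes a different route: after similar initial cancellations it rewrites the sum as $\sum_{w\neq y} f(w)\log(w-y)$, where $f(w)$ counts pairs $(s_1,s_2)\in L^2$ with $s_1/s_2\equiv w$; a second application of Gauss's lemma gives $f(w)\equiv\log(w)\pmod 2$, and the paper then invokes its separately proved bilinear identity (Lemma~\ref{Bernardi_lemma>3}), $\sum_{k\ne a}\log(k-a)\log(k)=-\log(a)^2+\log(-1)\log(a)+\mathcal{F}_2$, together with the proof of Lemma~\ref{Bernardi_Merel_Lemma} for $p=2$. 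Your direct case analysis on whether $\tau(s_2)\in L$ or $\tau(s_2)\in N-L$ bypasses these auxiliary lemmas entirely and reduces everything to the single arithmetic fact $\log(M!)\equiv 0\pmod 2$, which you settle cleanly via Wilson and the order-$4$ argument. The paper's path has the advantage of isolating a reusable identity that it needs elsewhere; your path is more self-contained for this lemma and makes the role of $N\equiv 1\pmod 8$ (as opposed to merely $\pmod 4$) completely transparent at the final step. Note that the paper's endgame implicitly uses the same fact, since $\mathcal{F}_2\equiv\mathcal{F}_1=\log(M!)\pmod 2$.
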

\begin{proof}
The integer
$$\sum_{s_1,s_2=1 \atop (1-x)s_1+(1+x)s_2\equiv 0 \text{ (modulo }N\text{)}}^{\frac{N-1}{2}} 1$$
is the number of $s_2 \in \{1, 2, ..., \frac{N-1}{2}\}$ such that the representative of $\frac{x+1}{x-1} \cdot s_2 \in \mathbf{Z}/N\mathbf{Z}$ in $\{1, 2, ...., N-1\}$ is in $\{1, 2, ..., \frac{N-1}{2}\}$. By Gauss's Lemma \cite[p. 52]{Ireland_Rosen}, this number is congruent to $\frac{N-1}{2}-\log\left(\frac{x+1}{x-1}\right)$ modulo $2$. Since $\frac{N-1}{2}$ is even, we get in $\mathbf{Z}/2\mathbf{Z}$:
 $$\sum_{s_1,s_2=1 \atop (1-x)s_1+(1+x)s_2\equiv 0 \text{ (modulo }N\text{)}}^{\frac{N-1}{2}} 1 =  \log\left(\frac{x+1}{x-1}\right) \text{ .}$$
 To conclude the proof of Lemma \ref{even_modSymb_magical_identity_Gauss}, it suffices to prove the following equality in $\mathbf{Z}/2\mathbf{Z}$:
\begin{equation}\label{even_modSymb_identity_log_log_Gauss}
\sum_{s_1,s_2=1 \atop (1-x)s_1+(1+x)s_2\equiv 0 \text{ (modulo }N\text{)}}^{\frac{N-1}{2}} \log\left(\frac{2}{1-x}\cdot s_2\right) + \sum_{s_1,s_2=1 \atop (1-x)s_1+(1+x)s_2\not\equiv 0 \text{ (modulo }N\text{)}}^{\frac{N-1}{2}} \log\left( (1-x)s_1+(1+x)s_2\right)= \log\left(\frac{x+1}{x-1}\right)\text{ .} 
\end{equation}
We denote by $S$ the left hand side of (\ref{even_modSymb_identity_log_log_Gauss}). Since $N \equiv 1 \text{ (modulo }8\text{)}$, the class of $2$ in $(\mathbf{Z}/N\mathbf{Z})^{\times}$ is a square \ie $\log(2) \equiv 0 \text{ (modulo }2\text{)}$, and we have $\log(-1)\equiv 0 \text{ (modulo }4\text{)}$. 

We have, in $\mathbf{Z}/2\mathbf{Z}$ (using the first equality):
\begin{align*}
S &= \log\left(\frac{x+1}{x-1}\right) \cdot \log(x-1) + \sum_{s_1,s_2=1 \atop (1-x)s_1+(1+x)s_2\equiv 0 \text{ (modulo }N\text{)}}^{\frac{N-1}{2}} \log(s_2) \\& + \sum_{s_1,s_2=1 \atop (1-x)s_1+(1+x)s_2\not\equiv 0 \text{ (modulo }N\text{)}}^{\frac{N-1}{2}} \log\left( (1-x)s_1+(1+x)s_2\right) \\& =  \log\left(\frac{x+1}{x-1}\right) \cdot \log(x-1) + \sum_{s_1,s_2=1}^{\frac{N-1}{2}} \log(s_2) + \sum_{s_1,s_2=1 \atop (1-x)s_1+(1+x)s_2\not\equiv 0 \text{ (modulo }N\text{)}}^{\frac{N-1}{2}} \log\left( (1-x)\cdot \frac{s_1}{s_2}+(1+x)\right) \\&=
\log\left(\frac{x+1}{x-1}\right) \cdot \log(x-1) + \sum_{s_1,s_2=1 \atop (1-x)s_1+(1+x)s_2\not\equiv 0 \text{ (modulo }N\text{)}}^{\frac{N-1}{2}} \log\left( (1-x)\cdot \frac{s_1}{s_2}+(1+x)\right) 
\\&=\sum_{s_1,s_2=1 \atop (1-x)s_1+(1+x)s_2\not\equiv 0 \text{ (modulo }N\text{)}}^{\frac{N-1}{2}} \log\left( \frac{s_1}{s_2}-\frac{x+1}{x-1}\right) 
\text{ .} 
\end{align*}
In the last equality, we have used:
$$\sum_{s_1,s_2=1 \atop (1-x)s_1+(1+x)s_2\not\equiv 0 \text{ (modulo }N\text{)}}^{\frac{N-1}{2}}  1 \equiv \sum_{s_1,s_2=1 \atop (1-x)s_1+(1+x)s_2\equiv 0 \text{ (modulo }N\text{)}}^{\frac{N-1}{2}}  1 \equiv \log\left(\frac{x+1}{x-1} \right)\text{ (modulo }2\text{),}$$
which follows from:
$$\sum_{s_1,s_2=1}^{\frac{N-1}{2}} 1 = \left(\frac{N-1}{2}\right)^2 \equiv 0 \text{ (modulo }2\text{).}$$
Let $f : (\mathbf{Z}/N\mathbf{Z})^{\times} \rightarrow \mathbf{Z}$ be such that if $y \in (\mathbf{Z}/N\mathbf{Z})^{\times}$, $f(y)$ is the number of elements $(s_1,s_2) \in \{1, 2, ..., \frac{N-1}{2}\}^2$ such that $\frac{s_1}{s_2} \equiv y \text{ (modulo }N\text{)}$. 
We have shown that we have, in $\mathbf{Z}/2\mathbf{Z}$:
\begin{equation}\label{even_modSymb_identity_log(y-x-1)}
S = \sum_{y \in (\mathbf{Z}/N\mathbf{Z})^{\times}\atop y \neq \frac{x+1}{x-1}} \log\left(y-\frac{x+1}{x-1}\right)\cdot f(y) \text{ .}
\end{equation}
As above, by Gauss's lemma and the fact that $N \equiv 1 \text{ (modulo }4\text{)}$, we have $f(y) \equiv \log(y) \text{ (modulo }2\text{)}$. By (\ref{even_modSymb_identity_log(y-x-1)}), we have in $\mathbf{Z}/2\mathbf{Z}$:
$$
S = \sum_{y \in (\mathbf{Z}/N\mathbf{Z})^{\times}\atop y \neq \frac{x+1}{x-1}} \log\left(y-\frac{x+1}{x-1}\right)\cdot \log(y) \text{ .}
$$

By Lemma \ref{Bernardi_lemma>3} and the fact that $N \equiv 1 \text{ (modulo }8\text{)}$, we have in $\mathbf{Z}/2\mathbf{Z}$:
$$S = -\log\left(\frac{x+1}{x-1}\right)^2+\frac{N-1}{12} = \log\left(\frac{x+1}{x-1}\right) \text{ .}$$
This concludes the proof of Lemma \ref{even_modSymb_magical_identity_Gauss}.
\end{proof}

\subsection{The case $p=3$}\label{even_modSymb_the_case_p=3}

In this section, we focus on the case $p=3$. We determine the image of $m_1^+$ in $\left(M_+/3^t\cdot M_+\right)/\mathbf{Z}\cdot m_0^+$. It suffices to determine the image of $6 \cdot m_1^+$ in $\left( M_+/3^{t+1}\cdot M_+\right)/\mathbf{Z}\cdot 3\cdot m_0^+$. The formula given is a minor variation of Theorem \ref{thm_Introduction_w_1^+>3_p^t}. Recall that we lift $\log$ to a group homomorphism $(\mathbf{Z}/N\mathbf{Z})^{\times} \rightarrow \mathbf{Z}/3^{t+1}\mathbf{Z}$ (still denoted by $\log$).

\begin{thm}\label{thm_Introduction_w_1^+_3}
Assume that $p=3$.
We have, in $\left( M_+/3^{t+1}\cdot M_+\right)/\mathbf{Z}\cdot 3\cdot m_0^+$:
$$6 \cdot m_1^+  \equiv \log(2) \cdot \tilde{m}_0^+ + \sum_{x \in \mathbf{P}^1(\mathbf{Z}/N\mathbf{Z})} F_{1,3}(x) \cdot \xi_{\Gamma_0(N)}(x)  \text{ ,}$$
where $F_{1,3} : \mathbf{P}^1(\mathbf{Z}/N\mathbf{Z}) \rightarrow \mathbf{Z}/3^{t+1}\mathbf{Z}$ is defined by
\begin{align*}
F_{1,3}([c:d]) &= \frac{1}{2}\sum_{(s_1,s_2) \in (\mathbf{Z}/2N\mathbf{Z})^2 \atop (d-c)s_1+(d+c)s_2 \equiv 0 \text{ (modulo } N\text{)}} (-1)^{s_1+s_2} \B_1\left(\frac{s_1}{2N}\right)\B_1\left(\frac{s_2}{2N}\right) \cdot \log\left(\frac{s_2}{d-c}\right) 
\\&
- \frac{1}{2}\sum_{(s_1,s_2) \in (\mathbf{Z}/2N\mathbf{Z})^2 \atop (d-c)s_1+(d+c)s_2 \not\equiv 0 \text{ (modulo } N\text{)}} (-1)^{s_1+s_2} \B_1\left(\frac{s_1}{2N}\right)\B_1\left(\frac{s_2}{2N}\right) \cdot \log((d-c)s_1+(d+c)s_2))  
\end{align*}
if $[c:d] \neq [1:1]$ and $F_{1,3}([1:1])=0$.
\end{thm}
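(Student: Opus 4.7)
The plan is to adapt the proof of Theorem \ref{thm_Introduction_w_1^+>3_p^t} to the case $p=3$, working modulo $3^{t+1}=3^v$ (the largest precision to which $\log$ is defined) rather than modulo $3^t$, and accounting for the fact that $6$ is no longer a unit in the coefficient ring. Extend $\log$ to a surjection $(\mathbf{Z}/N\mathbf{Z})^\times \to \mathbf{Z}/3^{t+1}\mathbf{Z}$ and let $\beta : J/J^2 \to \mathbf{Z}/3^{t+1}\mathbf{Z}$ be defined by $[x]-1 \mapsto \log(x)$. Applying the induced map $\beta_*$ to the element $\mathcal{U}=(\mathcal{E}_0' + \mathcal{E}_\infty')/2$, the fundamental Hecke relation \eqref{even_modSymb_crucial_Hecke} combined with Lemma \ref{even_modSymb_reecriture_m_0^+} gives, for every prime $\ell \nmid N$,
$$
(T_\ell - \ell - 1)\bigl(\beta_*(\mathcal{U})\bigr) \equiv 6 \cdot \tfrac{\ell - 1}{2}\,\log(\ell) \cdot \pi^*(\tilde m_0^+) \pmod{3^{t+1}}.
$$

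Expanding $\mathcal{U}$ in Manin symbols and computing $\beta \circ G_0$ and $\beta \circ G_\infty$ exactly as in section \ref{even_modSymb_the_case_p>3} yields $\beta_*(\mathcal{U}) = 6 \pi^*\bigl(\sum_x F_{1,p}'(x)\, \xi_{\Gamma_0(N)}(x)\bigr)$, where $F_{1,p}'$ is the symmetric companion of $F_{1,p}$ (using $\log((b-a)s_1+(b+a)s_2)$ in the vanishing case). By Proposition \ref{even_modSymb_injectivity_trace}(ii), which applies for $p=3$, the pull-back $\pi^*$ is injective modulo $3^{t+1}$, so the Hecke identity descends to level $\Gamma_0(N)$. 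Lemma \ref{Formalism_property_Hecke_outside_lem} (whose proof works at any precision, including $r=t+1$) then identifies $6 \sum_x F_{1,p}'(x)\xi_{\Gamma_0(N)}(x)$ with $6 m_1^+$ modulo $\mathbf{Z} \cdot \tilde m_0^+$ in $M_+/3^{t+1}M_+$; after quotienting by $3 \cdot m_0^+$, the remaining ambiguity is a scalar $c \in \mathbf{Z}/3\mathbf{Z}$ times $\tilde m_0^+$. Next, using the congruence $(b-a)s_1+(b+a)s_2 \equiv 2s_2/(d-c) \pmod N$ in the vanishing case, one finds $F_{1,p}' = F_{1,p} + \tfrac12 \log(2) F_{0,p}$, so that $6 \sum_x F_{1,p}'(x)\xi_{\Gamma_0(N)}(x) = \sum_x F_{1,3}(x)\xi_{\Gamma_0(N)}(x) + 3\log(2)\,\tilde m_0^+$ since $F_{1,3} = 6 F_{1,p}$. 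This yields an identity of the form $6 m_1^+ \equiv \sum_x F_{1,3}(x)\xi_{\Gamma_0(N)}(x) + c'\,\tilde m_0^+$ in $(M_+/3^{t+1}M_+)/\mathbf{Z}\cdot 3m_0^+$ for a yet undetermined $c' \in \mathbf{Z}/3\mathbf{Z}$.

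The main obstacle is Step $3$, the determination of $c' \pmod 3$, for which I propose two routes. The first is to apply the boundary map $\partial$: we have $\partial(6 m_1^+) = 2 \sum_{k=1}^{(N-1)/2} k \log(k) \cdot ((\Gamma_0(N)\cdot 0) - (\Gamma_0(N)\cdot \infty))$ (lifted from Theorem \ref{thm_Introduction_w_1^+>3}, which is proved independently via the comparison results of Chapter \ref{Section_comparison}), and $\partial(\tilde m_0^+) = \tfrac{N-1}{12}((\Gamma_0(N)\cdot 0)-(\Gamma_0(N)\cdot \infty))$, while $\partial(\sum F_{1,3}\xi)$ is a double $\B_1\B_1\log$-sum whose value modulo $3$ can be processed via Lemmas \ref{even_modSymb_computation_D2} and \ref{even_modSymb_magical_identity_Gauss}; comparing the three gives $c' = \log(2)$ modulo $3$. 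The second, likely cleaner, route is to pair both sides with $m_1^-$ rather than $m_0^-$, since Theorem \ref{odd_even_modSymb_determination_m_1^-} pins down $m_1^-$ explicitly on Manin symbols $(1+c)\cdot \xi_{\Gamma_0(N)}([x:1])$ and the resulting quadratic $\log$-sum is amenable to the identities of section \ref{even_modSymb_few_identities}. Either way, the delicate point is a careful accounting of $2$-adic and $3$-adic denominators coming from the $\B_1(s_1/2N)\B_1(s_2/2N)$ kernel, which is precisely the source of the residual $\log(2)\,\tilde m_0^+$ correction that distinguishes the $p=3$ case from the $p\ge 5$ case.
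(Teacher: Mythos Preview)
Your overall strategy is correct and matches the paper's proof almost exactly: apply $\beta_*$ to $\mathcal{U}$ modulo $3^{t+1}$, descend to level $\Gamma_0(N)$ via Proposition~\ref{even_modSymb_injectivity_trace}(ii), obtain the Hecke relation for $\sum_x F_{1,3}'(x)\,\xi_{\Gamma_0(N)}(x)$ (with $F_{1,3}' = 6F_{1,p}'$), deduce that $6m_1^+$ differs from this element by $K_3\cdot \tilde m_0^+$ for some $K_3$ determined only modulo~$3$, and finally pass from $F_{1,3}'$ to $F_{1,3}$ using $(b-a)s_1+(b+a)s_2 \equiv 2s_2/(d-c)$. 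All of this is right.

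The only genuine issue is in your determination of the constant. Your Route~1 is exactly what the paper does --- pairing with $m_0^-$, which is the same as reading off the boundary --- but you cite the wrong inputs. Theorem~\ref{thm_Introduction_w_1^+>3} is stated only for $p\ge 5$, so you cannot invoke its boundary formula here; and Lemma~\ref{even_modSymb_magical_identity_Gauss} is a $p=2$ statement with no role in the $p=3$ computation. The paper instead uses Corollary~\ref{Comparison_corr_comparison} to write $m_1^+\bullet m_0^- = m_0^+\bullet m_1^-$, then Theorem~\ref{Comparison_Merel_sqrt_u} (Merel's formula, valid for $p=3$) to evaluate the right-hand side as $-\tfrac14\bigl(\tfrac{N-1}{6}\log(2)+\sum_k k^2\log(k)\bigr)$, and computes $F_{1,3}'([0:1]) = -F_{1,3}'([1:0])$ directly via Lemma~\ref{even_modSymb_computation_D2}. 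Equating the two sides in $\mathbf{Z}/3^{t+1}\mathbf{Z}$ forces $K_3\equiv \log(2)\pmod 3$. Your Route~2 (pairing with $m_1^-$) would work in principle but amounts to computing $m_1^+\bullet m_1^-$, which is strictly harder and is in fact carried out \emph{after} this theorem in Section~\ref{Comparison_case_p=3}; it is not the cleaner route.
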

\begin{proof}
Let $\beta : J/J^2 \rightarrow \mathbf{Z}/3^{t+1}\mathbf{Z}$ be given by $[x]-1 \mapsto \log(x)$ for $x \in (\mathbf{Z}/N\mathbf{Z})^{\times}$. This induces a map $\beta_* : J\cdot H_1(X_1(N), \cusps, \mathbf{Z}[\frac{1}{2N}][(\mathbf{Z}/N\mathbf{Z})^{\times}/\pm 1]) \rightarrow H_1(X_1(N), \cusps, \mathbf{Z}/3^{t+1}\mathbf{Z})$. 

Let $F_{1,3}': \mathbf{P}^1(\mathbf{Z}/N\mathbf{Z}) \rightarrow \mathbf{Z}/3^{t+1}\mathbf{Z}$ be defined by:
\begin{align*}
F_{1,3}'([c:d]) &= \frac{1}{2}\sum_{(s_1,s_2) \in (\mathbf{Z}/2N\mathbf{Z})^2 \atop (d-c)s_1+(d+c)s_2 \equiv 0 \text{ (modulo } N\text{)}} (-1)^{s_1+s_2} \B_1\left(\frac{s_1}{2N}\right)\B_1\left(\frac{s_2}{2N}\right) \cdot \log((b-a)s_1+(b+a)s_2)  \\&
- \frac{1}{2}\sum_{(s_1,s_2) \in (\mathbf{Z}/2N\mathbf{Z})^2 \atop (d-c)s_1+(d+c)s_2 \not\equiv 0 \text{ (modulo } N\text{)}} (-1)^{s_1+s_2} \B_1\left(\frac{s_1}{2N}\right)\B_1\left(\frac{s_2}{2N}\right) \cdot \log((d-c)s_1+(d+c)s_2)) 
\end{align*}
where $\begin{pmatrix} a&b\\ c&d\end{pmatrix} \in \SL_2(\mathbf{Z})$. By Lemma \ref{even_modSymb_reecriture_m_0^+}, this does not depend on the choice of  $\begin{pmatrix} a&b\\ c&d\end{pmatrix}$. We also have, for all $[c:d] \in \mathbf{P}^1(\mathbf{Z}/N\mathbf{Z})$:
\begin{align*}
F_{1,3}'([c:d]) &= \frac{1}{8}\sum_{(s_1,s_2) \in (\mathbf{Z}/N\mathbf{Z})^2 \atop (d-c)s_1+(d+c)s_2 \equiv 0 \text{ (modulo } N\text{)}} D_2\left(\frac{s_1}{N}\right)D_2\left(\frac{s_2}{N}\right)\cdot \log((b-a)s_1+(b+a)s_2)  \\&
- \frac{1}{8}\sum_{(s_1,s_2) \in (\mathbf{Z}/N\mathbf{Z})^2 \atop (d-c)s_1+(d+c)s_2 \not\equiv 0 \text{ (modulo } N\text{)}} D_2\left(\frac{s_1}{N}\right)D_2\left(\frac{s_2}{N}\right) \cdot \log((d-c)s_1+(d+c)s_2)) \text{ .}
\end{align*}

For all $[c:d] \in \mathbf{P}^1(\mathbf{Z}/N\mathbf{Z})$, we have $F_{1,3}'([-c:d]) = F_{1,3}'([c:d])$. Thus, we have $$\sum_{x \in \mathbf{P}^1(\mathbf{Z}/N\mathbf{Z}) } F_{1,3}'(x)\cdot \xi_{\Gamma_0(N)}(x) \in H_1(X_0(N), \cusps, \mathbf{Z}/3^{t+1}\mathbf{Z})_+ \text{ .}$$ By construction, the pull-back of  $\sum_{x\in \mathbf{P}^1(\mathbf{Z}/N\mathbf{Z}) } F_{1,3}'(x)\cdot \xi_{\Gamma_0(N)}(x)$ in $H_1(X_1(N), \cusps, \mathbf{Z}/3^{t+1}\mathbf{Z})_+$ is $\beta_*(\mathcal{U})$. 

By (\ref{even_modSymb_crucial_Hecke_2}), Lemma \ref{even_modSymb_reecriture_m_0^+} and Proposition \ref{even_modSymb_injectivity_trace} (ii), for all prime $\ell$ not dividing $N$ we have in $M_+/3^{t+1}\cdot M_+$:
$$(T_{\ell}-\ell-1)\left(\sum_{x \in \mathbf{P}^1(\mathbf{Z}/N\mathbf{Z})} F_{1,3}'(x) \cdot \xi_{\Gamma_0(N)}(x)\right) = 6\cdot \frac{\ell-1}{2}\cdot \log(\ell)\cdot m_0^+ \text{ .}$$ 

Thus, there exists $K_3 \in \mathbf{Z}/3^{t+1}\mathbf{Z}$ such that we have:
$$6 \cdot m_1^+  \equiv K_3 \cdot m_0^+ + \sum_{x \in \mathbf{P}^1(\mathbf{Z}/N\mathbf{Z})} F_{1,3}'(x) \cdot \xi_{\Gamma_0(N)}(x) \text{ in } \left(M_+/3^{t+1}\cdot M_+\right)/\mathbf{Z}\cdot 3\cdot m_0^+ \text{ .}$$
Note that $K_3$ is only uniquely defined modulo $3$. We have, in $\mathbf{Z}/3^{t+1}\mathbf{Z}$:
$$6 \cdot m_1^+ \bullet m_0^-  = K_3 \cdot (\tilde{m}_0^+ \bullet \tilde{m}_0^-)-F_{1,3}'([0:1])+F_{1,3}'([1:0]) \text{ .}$$
Recall that $\tilde{m}_0^+ \bullet \tilde{m}_0^- = \frac{N-1}{12}$.
We also have in $\mathbf{Z}/3^t\mathbf{Z}$:
$$m_1^+ \bullet m_0^-  = m_0^+ \bullet m_1^- =  -\frac{1}{4} \cdot \left(\frac{N-1}{6}\cdot \log(2) + \sum_{k=1}^{N-1}k^2\cdot \log(k) \right) \text{ .}$$
The first equality follows from Corollary \ref{Comparison_corr_comparison} and the second equality follows from Theorem \ref{Comparison_Merel_sqrt_u} (these results essentially come from \cite{Merel_accouplement} and are thus independents of the results of this chapter).

We have, using Lemma \ref{even_modSymb_computation_D2}: 
\begin{align*}
F_{1,3}'([0:1]) &= -\frac{1}{8}\sum_{(s_1,s_2) \in (\mathbf{Z}/N\mathbf{Z})^2 \atop s_1 + s_2 \not\equiv 0 \text{ (modulo }N\text{)}} D_2\left(\frac{s_1}{N}\right)D_2\left(\frac{s_2}{N}\right)\cdot \log(s_2+s_1)  \\&=\frac{3}{4}\sum_{k=1}^{N-1} k^2\cdot \log(k) + \frac{\log(2)}{2}\cdot \frac{N-1}{12}
\end{align*}
Note also that we have $F_{1,3}'([1:0]) = -F_{1,3}'([0:1])$.
We thus get, in $\mathbf{Z}/3^{t+1}\mathbf{Z}$:
$$-\frac{1}{2}\cdot \left( \frac{N-1}{2}\cdot \log(2) + 3  \sum_{k=1}^{N-1} k^2\cdot \log(k)\right) =  (K_3-\log(2))\cdot \frac{N-1}{12}-\frac{3}{2}\sum_{k=1}^{N-1}k^2\cdot \log(k) \text{ .}$$
We thus have $K_3 \equiv \log(2) \text{ (modulo }3\text{)}$. 

If $\begin{pmatrix}
 a & b \\
 c & d
  \end{pmatrix}  \in \Gamma(2)$ and $(s_1, s_2) \in (\mathbf{Z}/2N\mathbf{Z})^2$ are such that $(d-c)s_1 + (d+c)s_2 \equiv 0 \text{ (modulo }N\text{)}$ and $d \not\equiv c \text{ (modulo }N\text{)}$, then we have $(b-a)s_1+(b+a)s_2 \equiv \frac{2}{d-c}\cdot s_2  \text{ (modulo }N\text{)}$. By Lemma \ref{even_modSymb_reecriture_m_0^+}, we have in $M_+/3^{t+1}\cdot M_+$:
 $$\sum_{x \in \mathbf{P}^1(\mathbf{Z}/N\mathbf{Z})} F_{1,3}(x)\cdot \xi_{\Gamma_0(N)}(x) = \sum_{x \in \mathbf{P}^1(\mathbf{Z}/N\mathbf{Z})} F_{1,3}'(x)\cdot \xi_{\Gamma_0(N)}(x)  - 3\cdot \log(2) \cdot m_0^+ \text{ .}$$
This concludes the proof of Theorem \ref{thm_Introduction_w_1^+_3}.
\end{proof}

\subsection{The case $p=2$}\label{even_modSymb_the_case_p=2}
We now study the case $p=2$ using a similar method, although our result is only partial. Recall that $\mathcal{R}$ is the set of equivalence classes in $\mathbf{P}^1(\mathbf{Z}/N\mathbf{Z})$ for the equivalence relation $[c:d] \sim [-d:c]$.

We first give a formula for $\tilde{m}_0^+$ in terms of Manin symbols.

\begin{thm}\label{thm_Introduction_w_0^+_2}
Assume that $p=2$. Let $F_{0,2} : \mathbf{P}^1(\mathbf{Z}/N\mathbf{Z}) \rightarrow \mathbf{Z}_p$ be given by
$$F_{0,2}([c:d]) = -\frac{N-1}{12}+\frac{1}{3}\cdot \sum_{s_1,s_2=1 \atop (d-c)s_1+(d+c)s_2 \equiv 0 \text{ (modulo }N\text{)}}^{\frac{N-1}{2}} 1 \text{ .}$$ 
If $[c:d] \in  \mathbf{P}^1(\mathbf{Z}/N\mathbf{Z}) \backslash \{[1:1], [-1:1]\}$, we have $F_{0,2}([-d:c]) = -F_{0,2}([c:d])$. Thus, for all $x \in  \mathbf{P}^1(\mathbf{Z}/N\mathbf{Z})$ the element $F_{0,2}(x) \cdot \xi_{\Gamma_0(N)}(x)$ of $M_+$ only depends on the class of $x$ in $R$. We have in $M_+$:
$$\tilde{m}_0^+ = \sum_{x \in \mathcal{R}} F_{0,2}(x) \cdot \xi_{\Gamma_0(N)}(x) \text{ .}$$
\end{thm}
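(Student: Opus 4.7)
The strategy is to reduce Theorem~\ref{thm_Introduction_w_0^+_2} to Merel's formula in Theorem~\ref{thm_Introduction_w_0^+}, which is valid for any prime $p$. I would first simplify $F_{0,p}([c:d])$ into a cleaner combinatorial shape, then show that $F_{0,2} = 2\,F_{0,p}$ as functions on (the non-exceptional locus of) $\mathbf{P}^1(\mathbf{Z}/N\mathbf{Z})$, and finally use the $\sigma$-antisymmetry together with the Manin relation $\xi_{\Gamma_0(N)}(\sigma x) = -\xi_{\Gamma_0(N)}(x)$ to convert Merel's sum over $\mathbf{P}^1$ into a sum over $\mathcal R$ with $2$-integral coefficients.

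The first step is to rewrite $6\,F_{0,p}([c:d])$ by opening $\B_1$. Substituting $s_i = u_i + N v_i$ with $u_i\in\{0,\ldots,N-1\}$ and $v_i\in\{0,1\}$, and performing the inner sum over $(v_1,v_2)$ using $\B_1(s/(2N)) = (s-N)/(2N)$ on $\{1,\ldots,2N-1\}$, I expect a clean $2\times 2$ telescoping: contributions with $u_1=0$ or $u_2=0$ vanish, while the remaining $4$-term sum evaluates to $N^2/4$. This yields
$$24\,F_{0,p}([c:d]) \;=\; S(a,B)\;:=\;\sum_{(u_1,u_2)\in\{1,\ldots,N-1\}^2,\ au_1+Bu_2\equiv 0\,(N)}(-1)^{u_1+u_2},$$
where $a=d-c$ and $B=d+c$. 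On the other hand, clearing denominators gives $24\,F_{0,2}([c:d]) = 8\,T(a,B)-2(N-1)$, with $T(a,B)$ defined as in the theorem. The required identity $F_{0,2}=2\,F_{0,p}$ thus reduces to the combinatorial identity $S(a,B) = 4\,T(a,B)-(N-1)$, valid whenever $a,B$ are both nonzero modulo $N$.

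The heart of the proof is this combinatorial identity. I would first show $4\,T(a,B)-(N-1) = \sum_X D_2(u_1/N)D_2(u_2/N)$ where $X$ is the line $au_1+Bu_2\equiv 0$ in $\{1,\ldots,N-1\}^2$: this follows by writing $\mathbf{1}_{u\in A} = (1-D_2(u/N))/2$, expanding, and using $\sum_X D_2(u_i/N)=0$ (the marginal $u\mapsto N-u$ antisymmetry of $D_2$). It then remains to prove $S(a,B) = \sum_X D_2(u_1/N)D_2(u_2/N)$. My plan is to use the pointwise identity $(-1)^u = -D_2(u/N)\cdot(-1)^{|\tilde u|}$, where $\tilde u$ is the balanced representative of $u$ in $\{-(N-1)/2,\ldots,(N-1)/2\}$, which gives $(-1)^{u_1+u_2} = D_2(u_1/N)D_2(u_2/N)\cdot(-1)^{|\tilde u_1|+|\tilde u_2|}$. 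The claim then reduces to the vanishing $\sum_X D_2(u_1/N)D_2(u_2/N)\cdot\mathbf{1}_{|\tilde u_1|+|\tilde u_2|\text{ odd}} = 0$, which I expect to prove by exhibiting a sign-reversing bijection: after reducing to $\tilde u_1>0$ via the $(\tilde u_1,\tilde u_2)\mapsto(-\tilde u_1,-\tilde u_2)$ symmetry of $X$, one matches the Case~1 ($\tilde u_2>0$, i.e.\ $u_2\in A$) odd-parity solutions with the Case~2 ($\tilde u_2<0$) odd-parity solutions via a unit shift $\tilde u_1\mapsto\tilde u_1+1$ that tracks the wrap-around of $\tilde u_2$ past $\pm(N-1)/2$. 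This is the main obstacle.

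Once $F_{0,2}=2F_{0,p}$ is established on the non-exceptional locus, the $\sigma$-antisymmetry $F_{0,p}(\sigma x)=-F_{0,p}(x)$, which follows directly from $S(B,-a)=-S(a,B)$ (proved by the bijection $u_2\mapsto N-u_2$), transfers to $F_{0,2}$; the displayed formula $T(a,B)+T(B,-a)=(N-1)/2$ is exactly the content of this antisymmetry in the $T$-language, and is itself easy since for fixed $u_1\in A$ the unique representative of $-au_1/B\bmod N$ lies in $A$ or in $N-A$. The antisymmetry together with $\xi_{\Gamma_0(N)}(\sigma x)=-\xi_{\Gamma_0(N)}(x)$ converts $\tilde m_0^+=\sum_{\mathbf{P}^1}F_{0,p}(x)\xi(x)$ into $\tilde m_0^+ = 2\sum_{\mathcal R\setminus\{\text{exc}\}}F_{0,p}(x)\xi(x) = \sum_{\mathcal R\setminus\{\text{exc}\}}F_{0,2}(x)\xi(x)$. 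The exceptional orbit $\{[1{:}1],[-1{:}1]\}$ must be analyzed directly: both representatives have $F_{0,2}=-(N-1)/12$ while $F_{0,p}=0$, and the contribution of the exceptional class to $\sum_{\mathcal R}F_{0,2}(x)\xi(x)$ is a multiple of $(N-1)/12$, which must be shown to vanish in $M_+$ using the divisibility $2^t\mid(N-1)/12$ and the structure of the cuspidal class $\xi([1{:}1])=\{0,1\}$ in $H_1(X_0(N),\mathbf{Z}_2)_+$; this completes the identification $\sum_{\mathcal R}F_{0,2}(x)\xi(x)=\tilde m_0^+$.
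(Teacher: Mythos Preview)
Your overall strategy matches the paper's: start from Merel's formula (Theorem~\ref{thm_Introduction_w_0^+}), pass to the $D_2$-form, extract the counting formula $F_{0,2}$, and fold over $\sigma$-orbits. But two points need correction.

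First, your ``main obstacle'' is illusory. You compute $24\,F_{0,p} = S(a,B)$ by the $2\times 2$ telescoping, and separately you note $4T(a,B)-(N-1)=\sum_X D_2(u_1/N)D_2(u_2/N)$. What you are missing is that the identity
\[
\sum_{s\in\mathbf{Z}/2N\mathbf{Z},\ s\equiv t\,(N)} (-1)^s\,\B_1\Bigl(\frac{s}{2N}\Bigr) \;=\; \tfrac12\,D_2\Bigl(\frac{t}{N}\Bigr)
\]
(which is just the definition $D_2(x)=2(\B_1(x)-\B_1(x+\tfrac12))$) gives \emph{directly} $24\,F_{0,p}=\sum_X D_2(u_1/N)D_2(u_2/N)$. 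So $S(a,B)=\sum_X D_2\,D_2$ follows by comparing two independent evaluations of $24\,F_{0,p}$; no sign-reversing bijection is needed. The paper never introduces $S(a,B)$ at all: it simply computes $\sum_X D_2\,D_2$ as $2\bigl(\sum_{s_1,s_2\in A}1 - \sum_{s_1\in A,\,s_2\notin A}1\bigr) = 2\cdot 2T(a,B)-(N-1)$ by splitting $\{1,\dots,N-1\}$ into $A=\{1,\dots,\tfrac{N-1}{2}\}$ and its complement.

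Second, your treatment of the exceptional orbit is wrong. You claim one must show $-\tfrac{N-1}{12}\cdot\xi_{\Gamma_0(N)}([1{:}1])=0$ via the divisibility $2^t\mid\tfrac{N-1}{12}$; but $M_+$ is a free $\mathbf{Z}_2$-module, so divisibility of the scalar proves nothing unless the element itself is torsion. In fact $\xi_{\Gamma_0(N)}([1{:}1])=0$ outright: apply the Manin $\tau$-relation to $x=[1{:}-1]$. The $\tau$-orbit is $\{[1{:}-1],[-1{:}0],[0{:}1]\}$, giving $\{1,0\}+\{\infty,0\}+\{0,\infty\}=0$, hence $\xi_{\Gamma_0(N)}([1{:}-1])=\{1,0\}=0$, and then $\xi_{\Gamma_0(N)}([1{:}1])=-\xi_{\Gamma_0(N)}([1{:}-1])=0$ by the $\sigma$-relation. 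This is exactly what the paper uses to add back the exceptional class at no cost.
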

\begin{proof}
Recall \cite[Lemme 3, Corollaire 4]{Merel_accouplement} that we have in $H_1(X_0(N), \cusps, \mathbf{Q}_p)_+$:
\begin{equation}\label{even_modSymb_p=2_m_0^+_E}
12 \cdot \tilde{m}_0^+ = \mathcal{E}
\end{equation}
where $$\mathcal{E} = \sum_{x \in \mathbf{P}^1(\mathbf{Z}/N\mathbf{Z}) } F_{0,2}'(x) \cdot \xi_{\Gamma_0(N)}(x) $$
and
$$F_{0,2}'([c:d]) =\frac{1}{2}\cdot  \sum_{(s_1,s_2) \in (\mathbf{Z}/N\mathbf{Z})^2 \atop (d-c)s_1+(d+c)s_2 \equiv 0 \text{ (modulo }N\text{)} } D_2\left(\frac{s_1}{N}\right)\cdot D_2\left(\frac{s_2}{N}\right) \text{ .}$$
Assume that $c\neq \pm d$. We have, in $\mathbf{Z}$:
\begin{align*}
F_{0,2}'([c:d]) &= \sum_{s_1,s_2=1  \atop (d-c)s_1+(d+c)s_2 \equiv 0 \text{ (modulo }N\text{)} }^{\frac{N-1}{2}} 1 -\sum_{s_1=1}^{\frac{N-1}{2}} \sum_{s_2=\frac{N+1}{2}  \atop (d-c)s_1+(d+c)s_2 \equiv 0 \text{ (modulo }N\text{)} }^{N-1} 1 \\& = 2 \cdot  \sum_{s_1,s_2=1  \atop (d-c)s_1+(d+c)s_2 \equiv 0 \text{ (modulo }N\text{)} }^{\frac{N-1}{2}} 1 - \sum_{s_1=1}^{\frac{N-1}{2}} \sum_{s_2=1 \atop (d-c)s_1+(d+c)s_2 \equiv 0 \text{ (modulo }N\text{)} }^{N-1} 1 \\&= -\frac{N-1}{2} + 2 \cdot  \sum_{s_1,s_2=1  \atop (d-c)s_1+(d+c)s_2 \equiv 0 \text{ (modulo }N\text{)} }^{\frac{N-1}{2}} 1 \text{ .}
\end{align*}
Thus, for all $[c:d] \neq [1:\pm 1] \in \mathbf{P}^1(\mathbf{Z}/N\mathbf{Z})$ we have
\begin{equation}\label{even_modSymb_F_0_2_F_0,2'}
F_{0,2}'([c:d]) = 6\cdot F_{0,2}([c:d]) \text{ .}
\end{equation}
Thus, we have $$\mathcal{E} = 6\cdot\sum_{x \in \mathbf{P}^1(\mathbf{Z}/N\mathbf{Z}) } F_{0,2}(x) \cdot \xi_{\Gamma_0(N)}(x) \text{ .}$$
By (\ref{even_modSymb_p=2_m_0^+_E}), we get:
\begin{equation}\label{even_modSymb_p=2_m_0^+_E_2}
\tilde{m}_0^+ = \frac{1}{2}\cdot \sum_{x \in \mathbf{P}^1(\mathbf{Z}/N\mathbf{Z}) } F_{0,2}(x) \cdot \xi_{\Gamma_0(N)}(x) \text{ .}
\end{equation}
For all $[c:d] \in \mathbf{P}^1(\mathbf{Z}/N\mathbf{Z})$, we have 
\begin{align*}
F_{0,2}'([-d:c]) &= \frac{1}{2}\cdot  \sum_{(s_1,s_2) \in (\mathbf{Z}/N\mathbf{Z})^2 \atop (c+d)s_1+(c-d)s_2 \equiv 0 \text{ (modulo }N\text{)} } D_2\left(\frac{s_1}{N}\right)\cdot D_2\left(\frac{s_2}{N}\right) 
\\& =  \frac{1}{2}\cdot  \sum_{(s_1,s_2) \in (\mathbf{Z}/N\mathbf{Z})^2 \atop (c+d)s_1+(d-c)s_2 \equiv 0 \text{ (modulo }N\text{)} } D_2\left(\frac{s_1}{N}\right)\cdot D_2\left(-\frac{s_2}{N}\right)  \\&=- \frac{1}{2}\cdot  \sum_{(s_1,s_2) \in (\mathbf{Z}/N\mathbf{Z})^2 \atop (c+d)s_1+(d-c)s_2 \equiv 0 \text{ (modulo }N\text{)} } D_2\left(\frac{s_1}{N}\right)\cdot D_2\left(\frac{s_2}{N}\right) \\&=-F_{0,2}'([c,d]) \text{ .}
\end{align*}
By (\ref{even_modSymb_p=2_m_0^+_E_2}) and (\ref{even_modSymb_F_0_2_F_0,2'}), for all $[c:d] \neq [1:\pm1] \in \mathbf{P}^1(\mathbf{Z}/N\mathbf{Z})$, we have $F_{0,2}([-d:c]) = -F_{0,2}([c:d])$. Since $\xi_{\Gamma_0(N)}([-1:1]) = \xi_{\Gamma_0(N)}([1:1])=0$, we have in $M_+$:
$$\tilde{m}_0^+ = \sum_{x \in \mathcal{R}} F_{0,2}(x) \cdot \xi_{\Gamma_0(N)}(x) \text{ .}$$
\end{proof}
\begin{rem}
By combining Theorem \ref{thm_Introduction_w_0^+_2}, Lemma \ref{even_modSymb_magical_identity_Gauss} and Proposition \ref{even_modSymb_injectivity_trace} (iii), we get a new proof of Proposition \ref{even_modSymb_injectivity_trace} (i) when $p=2$ and $r=1$.
\end{rem}

\begin{thm}\label{thm_Introduction_w_1^+_2}
Assume that $p=2$ and that $t \geq 2$, \ie $N \equiv 1 \text{ (modulo }16\text{)}$.
We have:
$$6 \cdot m_1^+ \equiv m_0^+ + \sum_{x \in \mathcal{R}} F_{1,2}(x) \cdot \xi_{\Gamma_0(N)}(x) \text{ in } \left(M_+/2^t\cdot M_+\right)/\mathbf{Z}\cdot 2\cdot m_0^+$$
where $F_{1,2} : \mathbf{P}^1(\mathbf{Z}/N\mathbf{Z}) \rightarrow \mathbf{Z}/2^{t+1}\mathbf{Z}$ is defined by
\begin{align*}
F_{1,2}([c:d]) &= \sum_{s_1, s_2=1 \atop (d-c)s_1+(d+c)s_2 \equiv 0 \text{ (modulo }N\text{)}}^{\frac{N-1}{2}}   \log\left(\frac{2}{d-c}\cdot s_2\right)  \\& - \sum_{s_1, s_2=1 \atop (d-c)s_1+(d+c)s_2 \neq 0 \text{ (modulo }N\text{)}}^{\frac{N-1}{2}}  \log((d-c)s_1+(d+c)s_2) 
\end{align*}
if $[c:d]\neq [1:1]$ and $F_{1,2}([1:1])=0$ (this does not depend on the choice of $c$ and $d$). 
\end{thm}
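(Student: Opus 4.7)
The plan is to follow the same strategy as in the proofs of Theorems \ref{thm_Introduction_w_1^+>3_p^t} and \ref{thm_Introduction_w_1^+_3}, namely to pass the ``universal'' Eisenstein element $\mathcal{U} = \frac{\mathcal{E}_0' + \mathcal{E}_\infty'}{2}$ through a suitable augmentation map and then descend from $X_1(N)$ to $X_0(N)$. Concretely, let $\beta : J/J^2 \to \mathbf{Z}/2^{t+1}\mathbf{Z}$ be given by $[x]-1 \mapsto \log(x)$, and let $\beta_* : J \cdot H_1(X_1(N),\cusps,\mathbf{Z}[\tfrac{1}{2N}][(\mathbf{Z}/N\mathbf{Z})^\times/\pm 1]) \to H_1(X_1(N),\cusps,\mathbf{Z}/2^{t+1}\mathbf{Z})$ be the induced map. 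By (\ref{even_modSymb_crucial_Hecke}) and Lemma \ref{even_modSymb_reecriture_m_0^+}, $\beta_*(\mathcal{U})$ satisfies $(T_\ell-\ell-1)(\beta_*(\mathcal{U})) \equiv 3(\ell-1)\log(\ell)\,\mathcal{E}' \pmod{2^{t+1}}$ for every odd prime $\ell\neq N$, where $\mathcal{E}' = \pi^*(\tilde{m}_0^+)$.

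First I would define the auxiliary function $F_{1,2}' : \mathbf{P}^1(\mathbf{Z}/N\mathbf{Z}) \to \mathbf{Z}/2^{t+1}\mathbf{Z}$ by the exact formula used in the proof of Theorem \ref{thm_Introduction_w_1^+>3_p^t} (with a lift $\begin{pmatrix}a&b\\c&d\end{pmatrix}\in\SL_2(\mathbf{Z})$ of $[c:d]$), check its well-definedness via Lemma \ref{even_modSymb_reecriture_m_0^+}, and verify the symmetry $F_{1,2}'([-c:d]) = F_{1,2}'([c:d])$ so that $\Theta := \sum_{x\in\mathbf{P}^1(\mathbf{Z}/N\mathbf{Z})} F_{1,2}'(x)\xi_{\Gamma_0(N)}(x)$ lies in $M_+/2^{t+1}M_+$. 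By construction, its pull-back to $H_1(X_1(N),\cusps,\mathbf{Z}/2^{t+1}\mathbf{Z})$ is proportional to $\beta_*(\mathcal{U})$, so
$$(T_\ell-\ell-1)(\pi^*\Theta) \equiv 6\cdot\tfrac{\ell-1}{2}\log(\ell)\,\pi^*(m_0^+) \pmod{2^{t+1}}.$$

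Next, I would apply Proposition \ref{even_modSymb_injectivity_trace} (iii): the kernel of $M_+/2^t M_+ \to H_1(X_1(N),\cusps,\mathbf{Z}/2^t\mathbf{Z})$ is generated by $2^{t-1}\tilde m_0^+$, which is a multiple of $m_0^+$. Hence the identity above already implies $(T_\ell-\ell-1)(\Theta) \equiv 6\cdot\tfrac{\ell-1}{2}\log(\ell)\, m_0^+$ modulo the subgroup $\mathbf{Z}\cdot 2 m_0^+$, and Proposition \ref{Formalism_property_Hecke_outside} (applied with $S=\{2,N\}$, together with the computation of $T_2-3$ on the Eisenstein quotient) yields a scalar $K_2 \in \mathbf{Z}/2^{t+1}\mathbf{Z}$ (defined modulo $2$) such that
$$6\,m_1^+ \equiv K_2\cdot m_0^+ + \Theta \pmod{\mathbf{Z}\cdot 2 m_0^+}.$$
The constant $K_2$ is then pinned down modulo $2$ by pairing with $m_0^-$: using Corollary \ref{Comparison_corr_comparison}, Theorem \ref{Comparison_Merel_sqrt_u} (both independent of this chapter) and Lemmas \ref{even_modSymb_computation_square}, \ref{even_modSymb_computation_D2} to evaluate $\partial\Theta$ and $\tilde m_0^+\bullet \tilde m_0^- = (N-1)/12$, one should find $K_2 \equiv 1 \pmod 2$.

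Finally, I need to turn the sum $\Theta$ over $\mathbf{P}^1(\mathbf{Z}/N\mathbf{Z})$ into the sum over $\mathcal{R}$ that appears in the statement, and to replace $F_{1,2}'$ (with lifts into $(\mathbf{Z}/2N\mathbf{Z})^2$ and $(-1)^{s_1+s_2}\B_1$ weights) by the cleaner $2$-adic expression $F_{1,2}$ (with $s_1,s_2\in\{1,\ldots,\tfrac{N-1}{2}\}$). The crucial tool here is Lemma \ref{even_modSymb_magical_identity_Gauss}, which relates the ``boundary'' sum $\sum \log(s_2/(d-c))$ and the ``interior'' sum $\sum \log((d-c)s_1+(d+c)s_2)$ to each other modulo $2$, and converts the weighted $\B_1$-sums into unweighted sums over $\{1,\ldots,\tfrac{N-1}{2}\}^2$. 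Combined with the symmetry $\xi_{\Gamma_0(N)}([-d:c]) = -\xi_{\Gamma_0(N)}([c:d])$, which forces a halving onto equivalence classes in $\mathcal{R}$, together with the formula for $\tilde m_0^+$ from Theorem \ref{thm_Introduction_w_0^+_2}, one collapses the sum to the desired expression, absorbing an extra $\log(2)$-contribution (trivial since $N\equiv 1\pmod{16}$ gives $\log(2)\equiv 0\pmod 4$) and combining the constant $K_2\equiv 1\pmod 2$ with it. The main obstacle will be precisely this last reduction: propagating the $2$-adic combinatorics of Lemma \ref{even_modSymb_magical_identity_Gauss} through the $\B_1$-weighted formula while correctly tracking the ``extra'' $m_0^+$ term that distinguishes the $p=2$ statement from its $p\geq 5$ and $p=3$ analogues.
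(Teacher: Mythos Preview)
Your outline follows the right template, but there are two genuine gaps that are specific to $p=2$ and that the paper's argument is built around.

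\textbf{Integrality of the auxiliary function.} Defining $F_{1,2}'$ by ``the exact formula used in the proof of Theorem \ref{thm_Introduction_w_1^+>3_p^t}'' is not $2$-adically meaningful: that formula reads $12\cdot F_{1,p}'=(\text{raw }\B_1\text{-sum})$, so recovering $F_{1,2}'$ from the raw sum requires dividing by $4$ and drops you to $\mathbf{Z}/2^{t-1}\mathbf{Z}$. The paper avoids this entirely by \emph{first} rewriting $G_\infty$ and $G_0$ as unweighted sums over $s_1,s_2\in\{1,\dots,\tfrac{N-1}{2}\}$; with that conversion the coefficients of $\mathcal{U}=\sum_{R_1}(G_0+G_\infty)\,\xi_{\Gamma_1(N)}$ are honestly integral, and $\beta_*(\mathcal{U})$ already has the half-range shape of the statement. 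This is also why the paper never passes back through the $\B_1$-formula at the end.

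\textbf{The $\zeta_4$ defect and the factor of $2$.} You sum $\Theta$ over all of $\mathbf{P}^1(\mathbf{Z}/N\mathbf{Z})$ and assert that $\pi^*(\Theta)$ is ``proportional to'' $\beta_*(\mathcal{U})$. In fact the proportionality constant is $2$ (each $R_1$-class has two elements), so you would only relate $\Theta$ to $12\,m_1^+$, not $6\,m_1^+$, losing exactly the bit you need. The paper instead sets $\mathcal{V}=\sum_{x\in\mathcal{R}}F_{1,2}(x)\,\xi_{\Gamma_0(N)}(x)$ (sum over $\sigma$-representatives); then $\pi^*_{t+1}(\mathcal{V})$ and $\beta_*(\mathcal{U})$ differ precisely by the contribution of the fibers over $[\pm\zeta_4:1]$, where $\xi_{\Gamma_0(N)}$ vanishes but the $\Gamma_1(N)$-symbols do not. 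This discrepancy is the term $K_2\cdot 2^t\cdot\alpha$, and Lemmas \ref{even_modSymb_alpha_Trace}--\ref{even_modSymb_alpha_Trace_vanishing} identify $2^t\alpha$ as the pull-back of an explicit $\Gamma_0(N)$-element. The paper then uses Proposition \ref{even_modSymb_injectivity_trace}~(i) (not (iii)): since the kernel of $\pi^*$ is annihilated by $1+c$, and the $\alpha$-correction also disappears after $(1+c)$, one gets the Hecke relation for $\mathcal{V}-6\,m_1^+$ directly in $M_+/2^tM_+$.

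Finally, the constant is not found by pairing with $m_0^-$; the paper shows $\mathcal{V}\equiv m_0^+\pmod 2$ directly, by comparing $F_{1,2}$ with the formula for $\tilde m_0^+$ in Theorem \ref{thm_Introduction_w_0^+_2} via Lemma \ref{even_modSymb_magical_identity_Gauss} (Gauss' lemma). Your proposed use of that lemma at the end, to ``convert $\B_1$-sums to half-range sums,'' mislocates its role.
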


\begin{proof}
Let $\gamma=\begin{pmatrix}
 a & b \\
 c & d
  \end{pmatrix} \in \PSL_2(\mathbf{Z})$ with $c \neq \pm d$. We have
\begin{align*}
G_{\infty}(\Gamma_1(N) \cdot \gamma) &= \frac{1}{4}\sum_{(s_1, s_2) \in (\mathbf{Z}/N\mathbf{Z})^2 \atop (d-c)s_1+(d+c)s_2 \not\equiv 0 \text{ (modulo }N\text{)}} D_2\left(\frac{s_1}{N}\right)D_2\left(\frac{s_2}{N}\right) \cdot [(d-c)s_1+(d+c)s_2]^{-1} \\&
=\frac{1}{2}\sum_{s_1,s_2=1 \atop (d-c)s_1+(d+c)s_2 \not\equiv 0 \text{ (modulo }N\text{)}}^{\frac{N-1}{2}}  [(d-c)s_1+(d+c)s_2]^{-1} \\& - \frac{1}{2}\sum_{s_1=\frac{N+1}{2}}^{N-1}\sum_{s_2=1 \atop (d-c)s_1+(d+c)s_2 \not\equiv 0 \text{ (modulo }N\text{)}}^{\frac{N-1}{2}}  [(d-c)s_1+(d+c)s_2]^{-1} \\&= 
\sum_{s_1,s_2=1 \atop (d-c)s_1+(d+c)s_2 \not\equiv 0 \text{ (modulo }N\text{)}}^{\frac{N-1}{2}}  [(d-c)s_1+(d+c)s_2]^{-1} \\& - \frac{1}{2}\sum_{s_1=1}^{N-1}\sum_{s_2=1 \atop (d-c)s_1+(d+c)s_2 \not\equiv 0 \text{ (modulo }N\text{)}}^{\frac{N-1}{2}}  [(d-c)s_1+(d+c)s_2]^{-1} 
\\& =-\frac{N-1}{2}\cdot \delta + \frac{1}{2}\cdot \sum_{s_2=1}^{\frac{N-1}{2}} [(d+c)s_2]^{-1}  + \sum_{s_1,s_2=1 \atop (d-c)s_1+(d+c)s_2 \not\equiv 0 \text{ (modulo }N\text{)}}^{\frac{N-1}{2}}  [(d-c)s_1+(d+c)s_2]^{-1} 
  \end{align*}
 where $\delta = \sum_{x \in (\mathbf{Z}/N\mathbf{Z})^{\times}/\pm1}[x]$. We thus get
\begin{align*}
G_{\infty}(\Gamma_1(N) \cdot \gamma) &=\left(\frac{1}{2}-\frac{N-1}{2}\right)\cdot \delta   + \sum_{s_1,s_2=1 \atop (d-c)s_1+(d+c)s_2 \not\equiv 0 \text{ (modulo }N\text{)}}^{\frac{N-1}{2}}  [(d-c)s_1+(d+c)s_2]^{-1} \text{ .}
\end{align*}

Similarly, noting $(d-c)s_1+(d+c)s_2 \equiv 0 \text{ (modulo }N\text{)}$ implies $(b-a)s_1+(b+a)s_2 \equiv \frac{2\cdot s_2}{d-c} \text{ (modulo }N\text{)}$, we get:
\begin{align*}
G_0(\Gamma_1(N) \cdot \gamma) = -\frac{1}{2} \cdot \delta +  \sum_{s_1, s_2=1 \atop (d-c)s_1+(d+c)s_2 \equiv 0 \text{ (modulo }N\text{)}}^{\frac{N-1}{2}}  \left[\frac{2}{d-c}\cdot s_2\right] \text{ .}
 \end{align*}

Note that for all $[c,d] \in (\mathbf{Z}/N\mathbf{Z})^2 \backslash \{(0,0)\} / \pm1$, we have $G_{\infty}([-d,c]) = -G_{\infty}([c,d])$ and $G_0([-d,c]) = -G_0([c,d])$. Recall also that $\xi_{\Gamma_1(N)}([-d,c]) = - \xi_{\Gamma_1(N)}([c,d])$ and that $\xi_{\Gamma_1(N)}([1,1])=0$. Let $R_1$ be the set of equivalence classes in $(\mathbf{Z}/N\mathbf{Z})^2 \backslash \{(0,0)\} / \pm1$ under the equivalence relation $[c,d] \sim [-d,c]$. We thus have:
\begin{align*}
\mathcal{U} &= \sum_{[c,d] \in R_1 \atop [c,d]\not\sim [1,1]} \xi_{\Gamma_1(N)}([c,d]) \cdot \\& ( -\frac{N-1}{2}\cdot \delta + \sum_{s_1, s_2=1 \atop (d-c)s_1+(d+c)s_2 \neq 0 \text{ (modulo }N\text{)}}^{\frac{N-1}{2}}  [(d-c)s_1+(d+c)s_2]^{-1} \\& + \sum_{s_1, s_2=1 \atop (d-c)s_1+(d+c)s_2 \equiv 0 \text{ (modulo }N\text{)}}^{\frac{N-1}{2}}  \left[\frac{2}{d-c}\cdot s_2\right] )  \text{ .}
\end{align*}

By Lemma \ref{even_modSymb_reecriture_m_0^+} and the definition of $\mathcal{U}$, we have $\mathcal{U} \in J\cdot H_1(X_1(N), \cusps, \mathbf{Z}[(\mathbf{Z}/N\mathbf{Z})^{\times}/\pm 1])$, where as above $J$ is the augmentation ideal of $\mathbf{Z}[(\mathbf{Z}/N\mathbf{Z})^{\times}/\pm 1]$. Let $\beta : J/J^2 \rightarrow \mathbf{Z}/2^{t+1}\mathbf{Z}$ be the group homomorphism given by $[x]-[1]\mapsto \log(x)$. Let $$\beta_* : J\cdot H_1(X_1(N), \cusps, \mathbf{Z}[(\mathbf{Z}/N\mathbf{Z})^{\times}/\pm 1]) \rightarrow H_1(X_1(N), \cusps, \mathbf{Z}/2^{t+1}\mathbf{Z})$$ be the map induced by $\beta$. We have:

\begin{equation}\label{beta_U_2}
\beta_*(\mathcal{U})= \sum_{[c,d] \in R_1 \atop [c,d]\not\sim [1,1]} F_{1,2}'([c,d]) \cdot \xi_{\Gamma_1(N)}([c,d]) \text{ ,}
\end{equation}
where 
$F_{1,2}': (\mathbf{Z}/N\mathbf{Z})^2 \backslash \{(0,0)\}/\pm 1 \rightarrow \mathbf{Z}/2^{t+1}\mathbf{Z}$ is defined by

\begin{align*}
F_{1,2}'([c,d]) &= \sum_{s_1, s_2=1 \atop (d-c)s_1+(d+c)s_2 \equiv 0 \text{ (modulo }N\text{)}}^{\frac{N-1}{2}}   \log\left(\frac{2}{d-c}\cdot s_2\right)  \\& - \sum_{s_1, s_2=1 \atop (d-c)s_1+(d+c)s_2 \not\equiv 0 \text{ (modulo }N\text{)}}^{\frac{N-1}{2}}  \log((d-c)s_1+(d+c)s_2) \text{ .}
\end{align*}

Formula (\ref{beta_U_2}) makes sense because 
\begin{equation}\label{even_modSymb_inversion_F_1,2'}
F_{1,2}'([-d,c])=-F_{1,2}'([c,d]) \text{, } 
\end{equation}
so $F_{1,2}'([c,d]) \cdot \xi_{\Gamma_1(N)}([c,d])$ does not depend on the choice of $[c,d]$ in its equivalence class in $R_1$. By (\ref{even_modSymb_crucial_Hecke}), for all prime $\ell$ not dividing $2\cdot N$ we have in $H_1(X_1(N), \cusps, \mathbf{Z}/2^{t+1}\mathbf{Z})$:
\begin{equation}\label{even_modSymb_crucial_Hecke_p=2_U}
(T_{\ell}-\ell-1)(\beta_*(\mathcal{U})) = 6 \cdot \frac{\ell-1}{2} \cdot \log(\ell) \cdot \mathcal{E}' \text{ .}
\end{equation}
Note that for all $\lambda \in (\mathbf{Z}/N\mathbf{Z})^{\times}$ and $[c,d] \in (\mathbf{Z}/N\mathbf{Z})^2 \backslash \{(0,0)\}/\pm 1$, we have 
\begin{equation}\label{even_modSymb_scaling_F_1,2'}
F_{1,2}'([\lambda \cdot c, \lambda \cdot d]) = F_{1,2}'([c,d]) \text{ .}
\end{equation}
Thus $F_{1,2}'$ induces the map $F_{1,2} : \mathbf{P}^1(\mathbf{Z}/N\mathbf{Z}) \rightarrow \mathbf{Z}/2^{t+1}\mathbf{Z}$ defined in the statement of Theorem \ref{thm_Introduction_w_1^+_2}. Let $$\mathcal{V} =  \sum_{x \in \mathcal{R}} F_{1,2}(x) \cdot \xi_{\Gamma_0(N)}(x) \in H_1(X_0(N), \cusps, \mathbf{Z}/2^{t+1}\mathbf{Z}) \text{ .}$$
One easily sees that for all  $[c,d] \in (\mathbf{Z}/N\mathbf{Z})^2 \backslash \{(0,0)\}/\pm 1$, we have 
\begin{equation}\label{even_modSymb_parity_F_1,2'}
F_{1,2}'([-c,d]) = F_{1,2}'([c,d]) \text{ .}
\end{equation}
Thus, we have $\mathcal{V} \in  H_1(X_0(N), \cusps, \mathbf{Z}/2^{t+1}\mathbf{Z})_+$.

Let $\zeta_4$ be a primitive fourth root in $(\mathbf{Z}/N\mathbf{Z})^{\times}$. For all $\lambda,x \in (\mathbf{Z}/N\mathbf{Z})^{\times}$, we have (using the Manin relations) in $H_1(X_1(N), \cusps, \mathbf{Z})$:
\begin{equation}\label{even_modSymb_Manin_zeta_4_Gamma_1_1}
\xi_{\Gamma_1(N)}([x, x\cdot \zeta_4]) = -\xi_{\Gamma_1(N)}([x\cdot \zeta_4, -x]) 
\end{equation}
and
\begin{equation}\label{even_modSymb_Manin_zeta_4_Gamma_1_2}
\xi_{\Gamma_1(N)}([x, -x\cdot \zeta_4]) = -\xi_{\Gamma_1(N)}([x\cdot \zeta_4, x]) \text{ .}
\end{equation}
We also have, in $H_1(X_0(N), \cusps, \mathbf{Z})$:
\begin{equation}\label{even_modSymb_Manin_zeta_4_Gamma_0}
\xi_{\Gamma_0(N)}([\zeta_4]) = \xi_{\Gamma_0(N)}([-\zeta_4]) = 0 \text{ .}
\end{equation}
By (\ref{even_modSymb_Manin_zeta_4_Gamma_1_1}) and (\ref{even_modSymb_Manin_zeta_4_Gamma_1_2}), the element 
$$\alpha := \sum_{x \in (\mathbf{Z}/N\mathbf{Z})^{\times}/\langle \zeta_4 \rangle} \xi_{\Gamma_1(N)}([x, x\cdot \zeta_4])+ \xi_{\Gamma_1(N)}([x, -x \cdot \zeta_4])$$
is well-defined in $H_1(X_1(N), \cusps, \mathbf{Z}/2\mathbf{Z})$, where $\langle \zeta_4 \rangle$ is the subgroup generated by $\zeta_4$. Thus, the element $2^t\cdot \alpha$ is well-defined in $H_1(X_1(N), \cusps, \mathbf{Z}/2^{t+1}\mathbf{Z})$. We have $$2^t\cdot \alpha \in (1+c)\cdot H_1(X_1(N), \cusps, \mathbf{Z}/2^{t+1}\mathbf{Z}) \text{ , }$$ where $c$ is the complex conjugation. 

By (\ref{even_modSymb_inversion_F_1,2'}), we have $F_{1,2}'([1, \zeta_4])=-F_{1,2}'([\zeta_4,-1])$. Since $[\zeta_4,-1] = [\zeta_4\cdot 1, \zeta_4 \cdot \zeta_4]$, by (\ref{even_modSymb_scaling_F_1,2'}) we also have $F_{1,2}'([\zeta_4,-1]) = F_{1,2}'([1, \zeta_4])$. Thus, we have $2 \cdot F_{1,2}'([1, \zeta_4]) = 0$, \ie $F_{1,2}'([1, \zeta_4]) \equiv 0 \text{ (modulo }2^t \text{)}$. Thus, there exists $K_2 \in \mathbf{Z}/2\mathbf{Z}$ such that for all $x \in (\mathbf{Z}/N\mathbf{Z})^{\times}$, we have 
\begin{equation}\label{even_modSymb_F_12_zeta_4_1}
F_{1,2}'([x, x\cdot \zeta_4]) = K_2 \cdot 2^t \text{ .}
\end{equation}
By (\ref{even_modSymb_parity_F_1,2'}), for all $x \in (\mathbf{Z}/N\mathbf{Z})^{\times}$, we have 
\begin{equation}\label{even_modSymb_F_12_zeta_4_2}
F_{1,2}'([x, -x\cdot \zeta_4]) = K_2 \cdot 2^t \text{ .}
\end{equation}

For any integer $r \geq 1$, let $\pi_r^* : H_1(X_0(N), \cusps, \mathbf{Z}/2^{r}\mathbf{Z}) \rightarrow H_1(X_1(N), \cusps, \mathbf{Z}/2^{r}\mathbf{Z})$ be the pull-back map. By construction and by (\ref{even_modSymb_Manin_zeta_4_Gamma_0}), we have
\begin{equation}\label{even_modSymb_pull_back_U_computation}
\pi_{t+1}^*(\mathcal{V}) = \beta_*(\mathcal{U})+K_2 \cdot 2^t\cdot \alpha \text{ .}
\end{equation}

\begin{lem}\label{even_modSymb_alpha_Trace}
We have, in $H_1(X_1(N), \mathbf{Z}/2^{t+1}\mathbf{Z})$:
$$2^t\cdot \alpha = \pi_{t+1}^*\left(\sum_{x \in \mathcal{R} \atop x \not\sim 1} \log\left(\frac{x+1}{x-1}\right) \cdot \xi_{\Gamma_0(N)}(x)\right) \text{ .} $$
\end{lem}
\begin{proof}
Recall the notation of Lemma \ref{even_modSymb_duality_lemma_RUI}, applied to $\Gamma = \Gamma_1(N)$. We identify as before $\Gamma_1(N) \backslash \PSL_2(\mathbf{Z})$ with $(\mathbf{Z}/N\mathbf{Z})^2\backslash \{(0,0)\}/\pm1$. We let $\mathcal{R}_1 \subset \mathcal{R}_{\Gamma_1(N)}$ be the subset of equivalence classes of elements $[c,d]$ with $c \cdot d \not\equiv 0 \text{ (modulo }N\text{)}$. We have, in $H_1(X_1(N), \cusps, \mathbf{Z}/2^{t+1}\mathbf{Z})$:
\begin{equation}\label{even_modSymb_alpha_Trace_eq1}
\pi_{t+1}^*\left(\sum_{x \in \mathcal{R} \atop x \not\sim 1} \log\left(\frac{x+1}{x-1}\right) \cdot \xi_{\Gamma_0(N)}(x)\right) = 2^t\cdot \alpha +  \sum_{[c,d] \in \mathcal{R}_1} \log\left(\frac{d-c}{d+c}\right) \cdot \xi_{\Gamma_1(N)}([c,d]) \text{ .} 
\end{equation}
To conclude the proof of Lemma \ref{even_modSymb_alpha_Trace}, it suffices to prove the following result.
\begin{lem}\label{even_modSymb_alpha_Trace_vanishing}
We have, in $H_1(X_1(N), \mathbf{Z}/2^{t+1}\mathbf{Z})$:
$$\sum_{[c,d] \in \mathcal{R}_1} \log\left(\frac{d-c}{d+c}\right) \cdot \xi_{\Gamma_1(N)}([c,d]) = 0 \text{ .}$$
\end{lem}
\begin{proof}
Consider the morphism $h : \mathbf{Z}[\Gamma_1(N) \backslash \PSL_2(\mathbf{Z})]\rightarrow \mathbf{Z}/2^{t+1}\mathbf{Z}$ given by $[c,d] \mapsto \log\left(\frac{c}{d}\right)$ if $c \cdot d \not\equiv 0 \text{ (modulo }N\text{)}$ and $[c,d]\mapsto 0$ else. This factors through $\xi_{\Gamma_1(N)}$, and we let $g : H_1(X_1(N),\cusps, \mathbf{Z}) \rightarrow \mathbf{Z}/2^{t+1}\mathbf{Z}$ be the induced map. Let $f : H_1(X_1(N),\mathbf{Z}) \rightarrow \mathbf{Z}/2^{t+1}\mathbf{Z}$ be the restriction of $g$ to $H_1(X_1(N),\mathbf{Z})$, and $\hat{f} \in H_1(X_1(N), \mathbf{Z}/2^{t+1}\mathbf{Z})$ be the element corresponding to $f$ by intersection duality. By Lemma \ref{even_modSymb_duality_lemma_RUI}, we have:
\begin{align*}
\hat{f} &= \sum_{[c,d] \in \mathcal{R}_1} \left( \log\left(\frac{c}{d} \right)+ \frac{2}{3} \cdot \log\left( \frac{d}{-(c+d)}\right) + \frac{1}{3} \cdot \log\left( \frac{-(c+d)}{c}\right) - \frac{2}{3} \cdot \log\left( \frac{c}{d-c}\right)-\frac{1}{3} \cdot \log\left( \frac{c-d}{d}\right) \right) \\& \cdot \xi_{\Gamma_1(N)}([c,d]) \\& 
=\frac{1}{3}\cdot  \sum_{[c,d] \in \mathcal{R}_1}  \log\left(\frac{d-c}{d+c}\right) \cdot \xi_{\Gamma_1(N)}([c,d]) \text{ .}
\end{align*}

To prove that $\hat{f}=0$, it suffices to prove that $f = 0$. Let $$\sum_{[c,d] \in (\mathbf{Z}/N\mathbf{Z})^2 \backslash \{(0,0)\}/\pm 1 \atop c\cdot d \not\equiv 0 \text{ (modulo }N\text{)}} \lambda_{[c,d]} \cdot \xi_{\Gamma_1(N)}([c,d]) \in H_1(X_1(N), \mathbf{Z}) \text{ .}$$ We have (\cf Section \ref{odd_modSymb_section_hida}), in $\mathbf{Z}[(\mathbf{Z}/N\mathbf{Z})^{\times}/\pm 1]$:
$$\sum_{[c,d] \in (\mathbf{Z}/N\mathbf{Z})^2 \backslash \{(0,0)\}/\pm 1 \atop c\cdot d \not\equiv 0 \text{ (modulo }N\text{)}} \lambda_{[c,d]}\cdot ([c]-[d]) = 0 \text{ .}$$
Thus, we have
$$f\left( \sum_{[c,d] \in (\mathbf{Z}/N\mathbf{Z})^2 \backslash \{(0,0)\}/\pm 1 \atop c\cdot d \not\equiv 0 \text{ (modulo }N\text{)}} \lambda_{[c,d]} \cdot \xi_{\Gamma_1(N)}([c,d])\right) =  \sum_{[c,d] \in (\mathbf{Z}/N\mathbf{Z})^2 \backslash \{(0,0)\}/\pm 1 \atop c\cdot d \not\equiv 0 \text{ (modulo }N\text{)}} \lambda_{[c,d]}\cdot \left( \log(c)-\log(d) \right)= 0 \text{ .}$$
This concludes the proof of Lemma \ref{even_modSymb_alpha_Trace_vanishing}.
\end{proof}
This concludes the proof of Lemma \ref{even_modSymb_alpha_Trace}.
\end{proof}
By (\ref{even_modSymb_pull_back_U_computation}), Lemma \ref{even_modSymb_alpha_Trace} and (\ref{even_modSymb_crucial_Hecke_p=2_U}), for all prime $\ell$ not dividing $2\cdot N$, we have in $H_1(X_1(N), \cusps, \mathbf{Z}/2^{t+1}\mathbf{Z})$:
$$
\pi_{t+1}^* \left( (T_{\ell}-\ell-1)\left(\mathcal{V} - K_2\cdot \sum_{x \in \mathcal{R} \atop x \not\sim 1} \log\left(\frac{x+1}{x-1}\right) \cdot \xi_{\Gamma_0(N)}(x) \right)- 6\cdot \frac{\ell-1}{2}\cdot \log(\ell)\cdot m_0^+  \right) = 0 \text{ .}
$$ 
Thus, we have in $H_1(X_1(N), \cusps, \mathbf{Z}/2^{t+1}\mathbf{Z})$:
\begin{equation}\label{even_modSymb_crucial_Hecke_trace_gamma_V_U}
\pi_{t+1}^*\left( (T_{\ell}-\ell-1)\left(\mathcal{V} - 6\cdot m_1^+ - K_2\cdot \sum_{x \in \mathcal{R} \atop x \not\sim 1, 0} \log\left(\frac{x+1}{x-1}\right) \cdot \xi_{\Gamma_0(N)}(x)  \right) \right) =0 \text{ .}
\end{equation}
By (\ref{even_modSymb_crucial_Hecke_trace_gamma_V_U}) and Proposition \ref{even_modSymb_injectivity_trace} (i), for all prime $\ell$ not dividing $2\cdot N$, we have in $H_1(X_0(N), \cusps, \mathbf{Z}/2^{t+1}\mathbf{Z})$:
$$
(1+c)\cdot (T_{\ell}-\ell-1)\left(\mathcal{V} - 6\cdot m_1^+\right) = 0 \text{ ,}
$$
where $c$ is the complex conjugation. By (\ref{even_modSymb_parity_F_1,2'}), we have $(1-c) \cdot (T_{\ell}-\ell-1)\left(\mathcal{V} - 6\cdot m_1^+\right) = 0$. Thus, for all prime $\ell$ not dividing $2\cdot N$, we have in $H_1(X_0(N), \cusps, \mathbf{Z}/2^t\mathbf{Z})$:
$$(T_{\ell}-\ell-1)\left(\mathcal{V} - 6\cdot m_1^+\right) = 0 \text{ .}$$
Thus, there exists a constant $C_2$, uniquely defined modulo $2$, such that we have:
$$
6 \cdot m_1^+ \equiv C_2\cdot m_0^+ + \mathcal{V} \text{ in } \left(M_+/2^{t}\cdot M_+\right)/\mathbf{Z}\cdot 2\cdot m_0^+ \text{ .}
$$
To conclude the proof of Theorem \ref{thm_Introduction_w_1^+_2}, it suffices to prove that $m_0^+ \equiv \mathcal{V}$ modulo $2$. This follows from Theorem \ref{thm_Introduction_w_1^+_2} and Lemma \ref{even_modSymb_magical_identity_Gauss}.
\end{proof}

\section{Interplay between higher Eisenstein elements and connection with Galois deformations}\label{Section_comparison}
In this chapter, we relate the various Hecke modules (and their higher Eisenstein elements) that were considered in chapters \ref{Section_Supersingular}, \ref{Section_odd_modSymb} and \ref{Section_even_modSymb}. We assume as usual that $p$ divides the numerator of $\frac{N-1}{12}$ (we allow $p=2$ and $p=3$). We keep the notation of chapters \ref{Section_introduction}, \ref{Section_Formalism}, \ref{Section_odd_modSymb} and \ref{Section_even_modSymb}. In particular $M=\mathbf{Z}_p[S]$ is the supersingular module, $M_+ = H_1(X_0(N), \cusps, \mathbf{Z}_p)_+$ and $M^- = H_1(Y_0(N), \mathbf{Z}_p)^-$. Recall also that we let $\nu = \gcd(N-1,12)$. We fix an integer $r$ such that $1 \leq r \leq t$. We denote by $f_0$, $f_1$, ..., $f_{n(r,p)}$ the higher Eisenstein elements if $\mathcal{M}/p^r\cdot \mathcal{M}$ (where $\mathcal{M}$ was defined in section \ref{even_modSymbidea}). The $q$-expansion of an element $f \in \mathcal{M}$ at the cusp $\infty$ is denoted as usual by $\sum_{n \geq 0} a_n(f) \cdot q^n$.

\subsection{Higher Eisenstein elements in the module $\mathcal{M}$ of modular forms}

\begin{prop}\label{Comparison_T_0}
\begin{enumerate}
\item We have, for all $m\in M$:
$$T_0(m)= \frac{12}{\nu} \cdot (m\bullet \tilde{e}_0)\cdot \tilde{e}_0 \text{ .}$$
\item We have, for all $m_+\in M_+$:
$$T_0(m_+)=\frac{12}{\nu}\cdot (m_+\bullet \tilde{m}_0^-)\cdot \tilde{m}_0^+ \text{ .}$$
\item We have, for all $m^-\in M^-$:
$$T_0(m^-)=\frac{12}{\nu}\cdot (\tilde{m}_0^+\bullet m^-)\cdot \tilde{m}_0^- \text{ .}$$
\end{enumerate}
\end{prop}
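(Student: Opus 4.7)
The plan is to handle all three parts by the same argument, since in each case we have a perfect $\tilde{\mathbb{T}}$-equivariant pairing between two modules satisfying the hypotheses of Theorem \ref{Formalism_existence_eisenstein}, and the two chosen Eisenstein elements pair to $\frac{N-1}{12}$ (Eichler's mass formula for (i), and Theorem \ref{thm_Introduction_w_0^+} for (ii) and (iii)). I will describe the argument for (i); (ii) and (iii) follow identically after substituting $\tilde{m}_0^+$ (resp.\ $\tilde{m}_0^-$) for $\tilde{e}_0$ and pairing against the dual generator.

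The first step is to show that $T_0$ sends $M$ into the $\mathbf{Z}_p$-line spanned by $\tilde{e}_0$. Since $\tilde{\mathbb{T}}$ is $p$-adically complete and semi-local, it splits as a direct sum of its completions at maximal ideals, and so does $M$. Hypothesis (\ref{hyp_T/I_tilde}) says that $\tilde{\mathfrak{P}}$ is the only maximal ideal containing $\tilde{I}$, so at any other maximal ideal $\mathfrak{Q}$ the image of $\tilde{I}$ in $\tilde{\mathbb{T}}_{\mathfrak{Q}}$ contains a unit. Combined with $T_0 \cdot \tilde{I} = 0$, this forces $T_0$ to vanish on the $\mathfrak{Q}$-component of $M$, so $T_0(M) \subseteq M_{\tilde{\mathfrak{P}}}[\tilde{I}]$. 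By Lemma \ref{Formalism_e_0_completion}, which applies thanks to Theorem \ref{Supersingular_Emerton_supersingular} (and to Proposition \ref{even_modSymb_multiplicity_one} in the modular symbol settings), this last module equals $\mathbf{Z}_p \cdot \tilde{e}_0$.

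Having written $T_0(m) = c(m) \cdot \tilde{e}_0$ with a unique $c(m) \in \mathbf{Z}_p$, I would determine $c(m)$ by pairing with $\tilde{e}_0$. The normalization $T_0 - \frac{N-1}{\nu} \in \tilde{I}$ together with $\tilde{e}_0 \in M[\tilde{I}]$ gives $T_0(\tilde{e}_0) = \frac{N-1}{\nu} \tilde{e}_0$, so the $\tilde{\mathbb{T}}$-equivariance of the pairing and Eichler's mass formula yield
\[
c(m) \cdot \tfrac{N-1}{12} \;=\; T_0(m) \bullet \tilde{e}_0 \;=\; m \bullet T_0(\tilde{e}_0) \;=\; \tfrac{N-1}{\nu} (m \bullet \tilde{e}_0).
\]
Dividing by $\frac{N-1}{12}$, which is a nonzero element of $\mathbf{Q}_p$ since $N > 1$, gives $c(m) = \frac{12}{\nu} (m \bullet \tilde{e}_0)$, as required.

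There is no real obstacle: the only point worth being careful about is the first step, where one must verify that $T_0(m)$ really lies in a single $\mathbf{Z}_p$-line rather than in some larger submodule. Once the semi-local decomposition and Lemma \ref{Formalism_e_0_completion} dispose of this, the rest is a formal manipulation of the pairing, and the translation between the three cases is purely cosmetic.
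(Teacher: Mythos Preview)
Your proof is correct and follows essentially the same approach as the paper. The only cosmetic difference is in the first step: the paper observes directly that $T_0(m)\in M[\tilde{I}]=\mathbf{Z}_p\cdot\tilde{e}_0$ (using Theorem~\ref{Formalism_existence_eisenstein}(a)), whereas you pass through the semi-local decomposition and Lemma~\ref{Formalism_e_0_completion}; the pairing computation that follows is identical.
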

\begin{proof}
We have
\begin{equation}\label{comparison_T_0_I=0}
\tilde{I} \cdot T_0=0
\end{equation}
since the Hecke operators of $\tilde{I} \cdot T_0$ annihilate all the modular forms of weight $2$ and level $\Gamma_0(N)$.

We first prove (i). By (\ref{comparison_T_0_I=0}), for all $m \in M$ the element $T_0(m)$ is annihilated by $\tilde{I}$, so is proportional to $\tilde{e}_0$. Thus, for all $m \in M$ there exists $C_m \in \mathbf{Z}_p$ such that
$$T_0(m) = C_m \cdot \tilde{e}_0 \text{ .}$$
We thus have, in $\mathbf{Z}_p$:
\begin{equation}\label{comparison_equation_C_m}
C_m \cdot (\tilde{e}_0 \bullet \tilde{e}_0) = T_0(m) \bullet \tilde{e}_0 = m \bullet T_0(\tilde{e}_0) \text{ .}
\end{equation}

By (\ref{comparison_T_0-n in I}), we have $T_0(\tilde{e}_0) = \frac{N-1}{\nu} \cdot \tilde{e}_0$. Recall that by Eichler mass formula, we have $\tilde{e}_0 \bullet \tilde{e}_0 = \frac{N-1}{12}$. 
By (\ref{comparison_equation_C_m}), we get:
$$C_m = \frac{12}{\nu} \cdot (m \bullet \tilde{e}_0) \text{ .}$$

We now prove (ii). As above, for all $m_+ \in M_+$ there exists $C_{m_+} \in \mathbf{Z}_p$ such that 
$$T_0(m_+) = C_{m_+} \cdot \tilde{m}_0^+ \text{ .}$$
We thus have, in $\mathbf{Z}_p$:
\begin{equation}\label{comparison_equation_C_m^+}
C_{m_+} \cdot (\tilde{m}_0^+ \bullet \tilde{m}_0^-) = T_0(m_+) \bullet \tilde{m}_0^- = m_+ \bullet T_0(\tilde{m}_0^-) \text{ .}
\end{equation}
By (\ref{comparison_T_0-n in I}), we have $T_0(\tilde{m}_0^-) = \frac{N-1}{\nu} \cdot \tilde{m}_0^-$. Recall that we have $\tilde{m}_0^+ \bullet \tilde{m}_0^- = \frac{N-1}{12}$. 
By (\ref{comparison_equation_C_m^+}), we get:
$$C_{m_+} = \frac{12}{\nu} \cdot (m_+\bullet \tilde{m}_0^-) \text{ .}$$
The proof of (iii) is similar.
\end{proof}

Let $M_1$ and $M_2$ be $\tilde{\mathbb{T}}$-modules equipped with a $\tilde{\mathbb{T}}$-equivariant bilinear pairing $\bullet: M_1 \times M_2 \rightarrow \mathbf{Z}_p$. By (\ref{comparison_relation_a_1_a_0_T_0}), for any $m_1 \in M_1$ and $m_2 \in M_2$ we have an element of $\mathcal{M}$ whose $q$-expansion at the cusp $\infty$ is
\begin{equation}\label{comparison_pairing_modular_form}
\frac{\nu}{24}\cdot \left(m_1\bullet T_0(m_2)\right) + \sum_{n \geq 1} \left(m_1\bullet T_n(m_2)\right) \cdot q^n \text{ .}
\end{equation}
We apply this remark to the cases $(M_1,M_2)=(M,M)$ and $(M_1,M_2)=(M_+,M^-)$. We choose the pairing $\bullet$ already described in the previous chapters for these couples.

\begin{prop}\label{Comparison_comparison_tensor}
Let $x \in M$ (resp. $x_+ \in M_+$, resp. $x^- \in M^-$) be such that $\tilde{e}_0 \bullet x = 1$ (resp. $x_+ \bullet \tilde{m}_0^-=1$, resp. $\tilde{m}_0^+ \bullet x^- = 1$). For all integers $i$ such that $0 \leq i \leq n(r,p)$, let $$E_i = \frac{e_i\bullet e_0}{2} +\sum_{n\geq 1} (e_i \bullet T_n(x)) \cdot q^n \text{, }$$ $$E_i^+ = \frac{m_i^+ \bullet m_0^-}{2} + \sum_{n\geq 1} (m_i^+\bullet T_n(x^-) )\cdot q^n $$ and $$E_i^- =\frac{m_0^+ \bullet m_i^-}{2}+ \sum_{n\geq 1} (T_n(x_+) \bullet m_i^-)\cdot q^n$$
in $\left(\frac{1}{2}\mathbf{Z}\oplus q\cdot \mathbf{Z}[[q]]\right) \otimes_{\mathbf{Z}} \mathbf{Z}/p^r\mathbf{Z}$. Note that the image of $E_i$ in $$\left(\left(\frac{1}{2}\mathbf{Z}\oplus q\cdot \mathbf{Z}[[q]]\right) \otimes_{\mathbf{Z}} \mathbf{Z}/p^r\mathbf{Z}\right)/\left(\mathbf{Z}\cdot E_1+ ... + \mathbf{Z}\cdot E_{i-1}\right)$$ is uniquely determined, and similarly for $E_i^+$ and $E_i^-$.

For all integers $i$ such that $0 \leq i \leq n(r,p)$, we have:
$$\sum_{n \geq 0} a_n(f_i) \cdot q^n \equiv E_i \equiv E_i^+ \equiv E_i^- \text{ in } \left(\left(\frac{1}{2}\mathbf{Z}\oplus q\cdot \mathbf{Z}[[q]]\right) \otimes_{\mathbf{Z}} \mathbf{Z}/p^r\mathbf{Z}\right)/\left(\mathbf{Z}\cdot E_1+ ... + \mathbf{Z}\cdot E_{i-1}\right) \text{ .}$$
\end{prop}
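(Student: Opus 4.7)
The strategy is to realize each $E_i$, $E_i^+$, $E_i^-$ as the $q$-expansion of a modular form constructed directly from the relevant higher Eisenstein element via the pairing, and then match this form with $f_i$ using the axiomatic characterization of Theorem \ref{Formalism_existence_eisenstein} and the rigidity result Proposition \ref{Formalism_property_Hecke_outside}.

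First, for each $\tilde{\mathbb{T}}$-equivariant perfect pairing $\bullet : M_1 \times M_2 \to \mathbf{Z}_p$ appearing here (the supersingular pairing and the intersection pairing), formula (\ref{comparison_pairing_modular_form}) combined with the Emerton identification $\mathcal{M} \simeq \Hom_{\mathbf{Z}_p}(\tilde{\mathbb{T}}, \mathbf{Z}_p)$, $f \mapsto (T \mapsto a_1(Tf))$, produces a $\mathbf{Z}_p$-bilinear map
$$F : M_1 \times M_2 \longrightarrow \mathcal{M}$$
characterized by $a_n(F(m_1, m_2)) = m_1 \bullet T_n(m_2)$ for $n \geq 1$ and $a_0(F(m_1, m_2)) = \tfrac{\nu}{24}(m_1 \bullet T_0(m_2))$. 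Because $\bullet$ is $\tilde{\mathbb{T}}$-equivariant and $\tilde{\mathbb{T}}$ is commutative, $F$ is $\tilde{\mathbb{T}}$-equivariant in each argument: $T \cdot F(m_1, m_2) = F(Tm_1, m_2) = F(m_1, Tm_2)$.

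Choose lifts $\tilde{e}_i \in M$, $\tilde{m}_i^+ \in M_+$, $\tilde{m}_i^- \in M^-$ of $e_i, m_i^+, m_i^-$ and define
$$F_i := F(\tilde{e}_i, x), \qquad F_i^+ := F(\tilde{m}_i^+, x^-), \qquad F_i^- := F(x_+, \tilde{m}_i^-).$$
The three parts of Proposition \ref{Comparison_T_0}, combined with the normalizations $\tilde{e}_0 \bullet x = x_+ \bullet \tilde{m}_0^- = \tilde{m}_0^+ \bullet x^- = 1$, pin down the constant coefficients: for instance,
$$a_0(F_i) = \tfrac{\nu}{24}(\tilde{e}_i \bullet T_0(x)) = \tfrac{\nu}{24} \cdot \tfrac{12}{\nu}(x \bullet \tilde{e}_0)(\tilde{e}_i \bullet \tilde{e}_0) = \tfrac{1}{2}(e_i \bullet e_0),$$
and similarly $a_0(F_i^+) = \tfrac{1}{2}(m_i^+ \bullet m_0^-)$ and $a_0(F_i^-) = \tfrac{1}{2}(m_0^+ \bullet m_i^-)$. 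Hence the $q$-expansions of $F_i$, $F_i^+$, $F_i^-$ reduce modulo $p^r$ to $E_i$, $E_i^+$, $E_i^-$ respectively, so it suffices to show $F_i \equiv f_i$ in $\mathcal{M}/p^r \mathcal{M}$ modulo $\mathbf{Z}\cdot f_0 + \cdots + \mathbf{Z}\cdot f_{i-1}$ (the other two cases are identical).

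I will proceed by induction on $i$. For $i = 0$, $\tilde{e}_0$ is annihilated by $\tilde{I}$, so $T_n \tilde{e}_0 = \bigl(\sum_{d \mid n,\, \gcd(d,N) = 1} d \bigr) \tilde{e}_0$, and thus $F_0 = \tilde{f}_0$ on the nose, reducing to $f_0$ modulo $p^r$. For the inductive step, using $\tilde{\mathbb{T}}$-equivariance of $F$ and the higher Eisenstein relation for $e_i$ lifted to $M$ (with an error in $p^r M + \mathbf{Z}\tilde{e}_0 + \cdots + \mathbf{Z}\tilde{e}_{i-2}$), I compute
$$(T_\ell - \ell - 1)(F_i) = F\bigl((T_\ell - \ell - 1)\tilde{e}_i,\, x\bigr) \equiv \tfrac{\ell-1}{2}\log(\ell) \cdot F_{i-1} \pmod{p^r \mathcal{M} + \mathbf{Z} F_0 + \cdots + \mathbf{Z} F_{i-2}}$$
for every prime $\ell$ not dividing $N$ (and $\ell \neq 2$ when $p = 2$). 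The inductive hypothesis identifies the right-hand side with $\tfrac{\ell-1}{2}\log(\ell) f_{i-1}$ modulo $f_0, \ldots, f_{i-2}$, so Proposition \ref{Formalism_property_Hecke_outside}, applied with $S = \{N\}$ (resp.\ $S = \{2, N\}$), forces $F_i \equiv f_i$ modulo $\mathbf{Z} f_0 + \cdots + \mathbf{Z} f_{i-1}$ in $\mathcal{M}/p^r \mathcal{M}$. No step presents a serious obstacle; the only content that requires care is the bookkeeping of the constant coefficient, which is why Proposition \ref{Comparison_T_0} is set up as it is.
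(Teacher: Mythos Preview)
Your proof is correct and follows essentially the same approach as the paper: realize each $E_i$ (resp.\ $E_i^\pm$) as the reduction modulo $p^r$ of a genuine modular form built from the pairing via (\ref{comparison_pairing_modular_form}) and Proposition~\ref{Comparison_T_0}, verify the base case $E_0 = f_0$ directly, and then use Hecke equivariance of the pairing together with the higher Eisenstein recurrence to propagate the identification by induction. Your explicit invocation of Proposition~\ref{Formalism_property_Hecke_outside} to handle the excluded primes is in fact slightly more careful than the paper's terse ``follows immediately by induction on $i$''.
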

\begin{proof}
By (\ref{comparison_relation_a_1_a_0_T_0}), the image of the group homomorphism $\mathcal{M} \rightarrow \mathbf{Q}_p$ given by $f \mapsto a_0(f)$ is contained in $\frac{\nu}{24}\cdot \mathbf{Z}$. Thus, the $q$-expansion at the cusp $\infty$ gives an injective group homomorphism $\iota : \mathcal{M}/p^r\cdot \mathcal{M} \hookrightarrow \left(\frac{1}{2}\mathbf{Z}\oplus q\cdot \mathbf{Z}[[q]]\right) \otimes_{\mathbf{Z}} \mathbf{Z}/p^r\mathbf{Z}$.  The fact that $E_i$, $E_i^+$ and $E_i^-$ lie in $\iota\left( \mathcal{M}/p^r\cdot \mathcal{M}\right)$ follows from Proposition \ref{Comparison_T_0} and (\ref{comparison_pairing_modular_form}). We abuse notation and consider $E_i$, $E_i^+$ and $E_i^-$ as elements of $\mathcal{M}/p^r\cdot \mathcal{M}$.

One easily sees that we have $E_0 = E_0^+ = E_0^- = f_0$. Furthermore, for all prime $\ell$ not dividing $2\cdot N$ and all $1 \leq i \leq n(r,p)$, we have $$(T_{\ell}-\ell-1)(E_i) \equiv \frac{\ell-1}{2}\cdot \log(\ell) \cdot E_{i-1} \text{ in  } \left( \mathcal{M}/p^r\cdot \mathcal{M} \right)/\left( \mathbf{Z}\cdot E_0 + ... + \mathbf{Z}\cdot E_{i-1} \right) \text{ .}$$
Proposition \ref{Comparison_comparison_tensor} follows immediately by induction on $i$.
\end{proof}

By comparing the constant coefficients of the various modular forms of Proposition \ref{Comparison_comparison_tensor}, we get the following comparison result, which is Theorem \ref{thm_Introduction_comparison_pairings} if $r=1$.

\begin{corr}\label{Comparison_corr_comparison}
For all $0 \leq i,j\leq n(r,p)$ such that $i+j\leq n(r,p)$, we have in $\mathbf{Z}/p^r\mathbf{Z}$:
$$e_i \bullet e_j = m_i^+ \bullet m_j^{-} \text{.}$$
\end{corr}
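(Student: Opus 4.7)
The plan is to deduce the corollary from Proposition \ref{Comparison_comparison_tensor} by comparing constant Fourier coefficients of modular forms, after first reducing the general comparison of $e_i \bullet e_j$ with $m_i^+ \bullet m_j^-$ to the case $j = 0$. First I would apply Corollary \ref{Formalism_criterion_pairing} to both of our pairings: the supersingular self-pairing $M \times M \to \mathbf{Z}_p$ (taking $M = M'$) and the intersection pairing $M_+ \times M^- \to \mathbf{Z}_p$. In each case the corollary asserts that for $i+j \leq n(r,p)$ the products $e_i \bullet e_j$ and $m_i^+ \bullet m_j^-$ depend only on $i+j$. Writing $k = i+j$, it is therefore enough to prove the single identity $e_k \bullet e_0 = m_k^+ \bullet m_0^-$ in $\mathbf{Z}/p^r\mathbf{Z}$ for every $k \in \{0,1,\dots,n(r,p)\}$.

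For $k < n(r,p)$, Corollary \ref{Formalism_criterion_pairing} moreover tells us $e_k \bullet e_0 = 0 = m_k^+ \bullet m_0^-$, so the identity is automatic. The only non-trivial case is $k = n(r,p)$. There I would invoke Proposition \ref{Comparison_comparison_tensor}: the two modular forms $E_k$ and $E_k^+$ in $\mathcal{M}/p^r \cdot \mathcal{M}$ coincide modulo the $\mathbf{Z}$-span of $E_0,\dots,E_{k-1}$. Reading off the constant Fourier coefficients in $\tfrac{1}{2}\mathbf{Z} \otimes_{\mathbf{Z}} \mathbf{Z}/p^r\mathbf{Z}$ yields
\[
\frac{e_k \bullet e_0}{2} \;\equiv\; \frac{m_k^+ \bullet m_0^-}{2} \pmod{\text{the constant coefficients of } E_0,\dots,E_{k-1}}.
\]
Each such constant coefficient is $\tfrac{e_j \bullet e_0}{2}$ with $0 \leq j < n(r,p)$, which vanishes by the preceding case. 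Hence the congruence is an equality in $\tfrac{1}{2}\mathbf{Z} \otimes_{\mathbf{Z}} \mathbf{Z}/p^r\mathbf{Z}$; via the canonical isomorphism $\tfrac{1}{2}\mathbf{Z} \otimes_{\mathbf{Z}} \mathbf{Z}/p^r\mathbf{Z} \cong \mathbf{Z}/p^r\mathbf{Z}$ (multiplication by $2$), this gives $e_k \bullet e_0 = m_k^+ \bullet m_0^-$ in $\mathbf{Z}/p^r\mathbf{Z}$, as desired. (Running the same argument against $E_k^-$ rather than $E_k^+$ analogously produces $e_k \bullet e_0 = m_0^+ \bullet m_k^-$, which is consistent thanks to the $(i+j)$-dependence.)

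There is no substantial obstacle: once Proposition \ref{Comparison_comparison_tensor} is in hand, the argument is purely formal, and the combinatorial vanishing furnished by Corollary \ref{Formalism_criterion_pairing} kills every ambiguity coming from the filtration by powers of $\tilde I$. The only small point worth checking is the meaning of the factor $\tfrac{1}{2}$ when $p = 2$; but since $\tfrac{1}{2}\mathbf{Z}$ is a free abelian group of rank one, tensoring with $\mathbf{Z}/p^r\mathbf{Z}$ makes the division by $2$ innocuous and the conclusion valid uniformly in $p$.
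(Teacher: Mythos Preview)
Your proof is correct and follows essentially the same approach as the paper's. The paper's own proof is the single sentence ``By comparing the constant coefficients of the various modular forms of Proposition~\ref{Comparison_comparison_tensor}''; you have simply made explicit the two ingredients this presupposes --- the reduction to $j=0$ via the $(i+j)$-dependence from Corollary~\ref{Formalism_criterion_pairing}, and the vanishing of the lower constant coefficients so that the congruence modulo $E_0,\dots,E_{k-1}$ becomes an equality.
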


By Corollary \ref{Formalism_criterion_pairing}, the common quantity of Corollary \ref{Comparison_corr_comparison} depends only on $i+j$, is $0$ if $i+j<n(r,p)$ and is non zero if $i+j=n(r,p)$.

\subsection{Computation of $m_i^+ \bullet m_j^-$ when $i+j=1$}\label{comparison_section_computation_i+j=1}

By Corollary \ref{Comparison_corr_comparison}, we have in $\mathbf{Z}/p^r\mathbf{Z}$:
\begin{equation}\label{comparison_eq_e_0_e_1=m_0_m_1}
e_0 \bullet e_1 = e_1 \bullet e_0 = m_1^+ \bullet m_0^- = m_0^+ \bullet m_1^- \text{ .}
\end{equation}

Theorem \ref{Supersingular_w_1_S_5} shows that if $p\geq 5$ then $e_1\bullet e_0 = \frac{1}{3} \cdot \sum_{k=1}^{\frac{N-1}{2}} k \cdot \log(k)$. In fact, one can compute $m_0^+ \bullet m_1^-$ directly even for $p \leq 3$, using Merel's work.

\begin{thm}\label{Comparison_Merel_sqrt_u}
We have 
$$m_0^+ \bullet m_1^- = -\frac{1}{12}\cdot \log\left(\epsilon\cdot \zeta \cdot \prod_{k=1}^{\frac{N-1}{2}} k^{-4\cdot k} \right) \in \mathbf{Z}/p^r\mathbf{Z}$$
where $\epsilon=1$ if $N \not\in 1+8\mathbf{Z}$, $\epsilon=-1$ if $N \in 1+8\mathbf{Z}$, $\zeta=1$ if $N \not\in 1+3\mathbf{Z}$ and $\zeta = 2^{\frac{N-1}{3}}$ if $N \in 1+3\mathbf{Z}$.

If $p=2$, $\epsilon\cdot \zeta \cdot \prod_{k=1}^{\frac{N-1}{2}} k^{-4\cdot k} \equiv x^4 \text{ (modulo }N\text{)}$ is a $4$th power of $(\mathbf{Z}/N\mathbf{Z})^{\times}$, and the meaning of the right-hand side is $-\frac{1}{3}\cdot \log(x)$, which is
$$-\frac{1}{3}\cdot (2^{t-1}-\sum_{k=1}^{\frac{N-1}{2}} k\cdot \log(k)) \text{ .}$$
If $p=3$, $\epsilon\cdot \zeta \cdot \prod_{k=1}^{\frac{N-1}{2}} k^{-4\cdot k} \equiv x^3 \text{ (modulo }N\text{)}$ is a $3$rd power of $(\mathbf{Z}/N\mathbf{Z})^{\times}$, and the meaning of the right-hand side is $-\frac{1}{4}\cdot \log(x)$, which is
$$-\frac{1}{4}\sum_{k=1}^{N-1} k^2\cdot \log(k) \text{ .}$$
\end{thm}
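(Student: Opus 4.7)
The plan is a direct Manin-symbol computation combining the explicit formula for $\tilde m_0^+$ from Theorem \ref{thm_Introduction_w_0^+} with Merel's pairing rule $(1+c)\xi_{\Gamma_0(N)}([x:1])\bullet m_1^- = \log(x)$ from Theorem \ref{odd_even_modSymb_determination_m_1^-}. Since $\tilde m_0^+$ is $c$-invariant, I would write $2\tilde m_0^+ = (1+c)\tilde m_0^+ = \sum_x F_{0,p}(x)(1+c)\xi_{\Gamma_0(N)}(x)$: for $x\in(\mathbf{Z}/N\mathbf{Z})^\times$ the pairing with $m_1^-$ yields $F_{0,p}([x:1])\log(x)$, while the two cusp symbols $\xi([0:1])$ and $\xi([1:0])=-\xi([0:1])$ contribute a multiple of $\{0,\infty\}\bullet m_1^-$. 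This latter pairing is not canonical, depending on the representative of $m_1^-$ modulo $m_0^-$; but the ambiguity equals $\lambda\cdot(\tilde m_0^+\bullet \tilde m_0^-) = \lambda\cdot\tfrac{N-1}{12}$, which vanishes modulo $p^t$, so I may normalise $m_1^-$ to kill the cusp contribution without affecting the answer modulo $p^r$.

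Substituting the Bernoulli-sum expression for $F_{0,p}$ and using the identity $\tfrac{1}{4}D_2(t_1/N)D_2(t_2/N) = \sum_{(s_1,s_2)\equiv (t_1,t_2)\pmod N}(-1)^{s_1+s_2}\B_1(s_1/2N)\B_1(s_2/2N)$ from the proof of Lemma \ref{even_modSymb_reecriture_m_0^+}, I would swap the order of summation. The constraint $(1-x)t_1+(1+x)t_2\equiv 0\pmod N$ solves to $x = (t_1+t_2)/(t_1-t_2)$, producing
\[
24\cdot \tilde m_0^+\bullet m_1^- \equiv \sum_{(t_1,t_2)\in(\mathbf{Z}/N\mathbf{Z})^2,\ t_1\neq \pm t_2} D_2\!\left(\tfrac{t_1}{N}\right) D_2\!\left(\tfrac{t_2}{N}\right)\log\!\left(\tfrac{t_1+t_2}{t_1-t_2}\right)\pmod{p^r}.
\]
Splitting $\log(a/b) = \log(a)-\log(b)$ and applying Lemma \ref{even_modSymb_computation_D2}, which evaluates both $\sum D_2 D_2\log(t_1\pm t_2)$ in closed form, collapses this to an explicit combination of $\sum_{k=1}^{N-1}k^2\log k$ and $\log 2$. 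Lemma \ref{even_modSymb_computation_square} then rewrites the result in terms of $\sum_{k=1}^{(N-1)/2}k\log k$ up to controlled corrections in $\log 2$ and $\mathcal F_1$. Finally, Theorem \ref{Supersingular_discriminant_thm} identifies $4\sum_{k=1}^{(N-1)/2}k\log k$ with $\log\prod k^{4k}$ modulo terms in $\log(m!)$ that Wilson's congruence $m!^2\equiv(-1)^{(N+1)/2}\pmod N$ controls, thereby repackaging the answer in the form $-\tfrac{1}{12}\log\!\big(\epsilon\zeta\prod k^{-4k}\big)$.

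The main obstacle is the careful bookkeeping in the cases $p\in\{2,3\}$. For $p=3$ (so $N\equiv 1\pmod 3$), the factor $\zeta = 2^{(N-1)/3}$ must absorb a residual $\log 2$ contribution arising because Lemma \ref{even_modSymb_computation_square} is only sharp modulo $p^{r+1}$, and one must verify both that $\zeta\cdot\prod k^{-4k}$ is a cube in $(\mathbf{Z}/N\mathbf{Z})^\times$ and that the stated interpretation $-\tfrac{1}{4}\sum k^2\log k$ of the logarithm recovers the formula. For $p=2$, dividing by $2$ at the outset costs a digit of $2$-adic precision, and the sign $\epsilon = -1$ when $N\equiv 1\pmod 8$ emerges from Gauss's lemma applied (in the spirit of Lemma \ref{even_modSymb_magical_identity_Gauss}) to check that $\epsilon\cdot\prod k^{-4k}$ is a $4$th power modulo $N$; the matching with $-\tfrac{1}{3}\log x$ involves the explicit $2^{t-1}$ correction term appearing in the statement. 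Independence of the answer on the representative of $m_1^-$ is ensured throughout by the intersection-duality fact that $\tilde m_0^-$ lies in the kernel of $M^-\to H^-$ and therefore annihilates $H_+\subset M_+$.
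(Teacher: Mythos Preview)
Your approach is sound in spirit but quite different from the paper's: the paper simply cites Merel's original result (\cite[Th\'eor\`eme 1]{Merel_accouplement}), which already gives the formula in the form $-\tfrac{1}{12}\log(\epsilon\zeta\prod k^{-4k})$, and then invokes Lemma~\ref{even_modSymb_computation_square} for the $p=3$ simplification. What you propose is essentially to re-derive Merel's theorem from its constituent parts --- the explicit Manin-symbol formula for $\tilde m_0^+$ (itself Merel's, Theorem~\ref{thm_Introduction_w_0^+}) together with the Mazur pairing rule for $m_1^-$ (Theorem~\ref{odd_even_modSymb_determination_m_1^-}) and the $D_2$-identities of Section~\ref{even_modSymb_few_identities}. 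This direct recomputation is valid and is close to what Merel himself does; the paper simply chooses to cite rather than reproduce it.

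A couple of small corrections. First, your displayed formula has a factor of $2$ slip: after converting the $\B_1$-sum for $6F_{0,p}$ to $\tfrac{1}{4}$ times a $D_2$-sum, one obtains
\[
48\,\tilde m_0^+\bullet m_1^- \;\equiv\; \sum_{t_1\ne\pm t_2} D_2\!\left(\tfrac{t_1}{N}\right)D_2\!\left(\tfrac{t_2}{N}\right)\log\!\left(\tfrac{t_1+t_2}{t_1-t_2}\right),
\]
not $24$; applying Lemma~\ref{even_modSymb_computation_D2} then gives $48\,m_0^+\bullet m_1^- = 16\sum k\log k - 4\mathcal F_1$, i.e.\ $m_0^+\bullet m_1^- = \tfrac13\sum k\log k$ for $p\ge 3$, as desired. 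Second, the appeal to Theorem~\ref{Supersingular_discriminant_thm} is unnecessary: the passage from $\tfrac13\sum k\log k$ to $-\tfrac{1}{12}\log\prod k^{-4k}$ is tautological (log of a product), and the discriminant of the Hasse polynomial plays no role here. The handling of the cusp terms is fine --- the coefficient $F_{0,p}([0:1])-F_{0,p}([1:0])$ equals $-\tfrac{N-1}{12}\equiv 0\pmod{p^r}$ by the boundary computation in Theorem~\ref{thm_Introduction_w_0^+}, so these terms genuinely vanish rather than merely being absorbed into the ambiguity of $m_1^-$.
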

\begin{proof}
This follows from \cite[Th\'eor\`eme 1]{Merel_accouplement}. The last assertion in the case $p=3$ follows from Lemma \ref{even_modSymb_computation_square}.
\end{proof}

\begin{corr}\label{comparison_corr_g_p_2}
Assume $p=2$. We have $n(r,p) \geq 2$ if and only if $\sum_{k=1}^{\frac{N-1}{2}} k\cdot \log(k) \equiv 2^{t-1} \text{ (modulo }2^r\text{)}$.
\end{corr}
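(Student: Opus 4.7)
The plan is to assemble three previously established ingredients. First, by Corollary \ref{Formalism_criterion_pairing} applied to the supersingular module (or equivalently to modular symbols), the condition $n(r,p) \geq 2$ is equivalent to $e_1 \bullet e_0 \equiv 0 \pmod{2^r}$. Second, by Corollary \ref{Comparison_corr_comparison}, this pairing agrees with $m_0^+ \bullet m_1^-$ in $\mathbf{Z}/2^r\mathbf{Z}$. Third, I will invoke Theorem \ref{Comparison_Merel_sqrt_u} in the case $p=2$, which gives the explicit formula
\[
m_0^+ \bullet m_1^- \;=\; -\frac{1}{3}\cdot\left(2^{t-1} - \sum_{k=1}^{\frac{N-1}{2}} k\cdot \log(k)\right) \quad \text{in } \mathbf{Z}/2^r\mathbf{Z}.
\]

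Combining these, $n(r,p)\geq 2$ is equivalent to the vanishing of the right-hand side modulo $2^r$. Since $3$ is a unit in $\mathbf{Z}/2^r\mathbf{Z}$, multiplication by $-\tfrac{1}{3}$ is an isomorphism, so the condition reduces to
\[
\sum_{k=1}^{\frac{N-1}{2}} k\cdot \log(k) \;\equiv\; 2^{t-1} \pmod{2^r},
\]
which is precisely the statement of the corollary.

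There is essentially no obstacle here: the corollary is a direct specialization of already-proven results to the case $p=2$, and the only subtlety is the appearance of the $2^{t-1}$ term, which comes from the interpretation in Theorem \ref{Comparison_Merel_sqrt_u} of $\epsilon = -1$ (since $p=2$ forces $N\equiv 1 \pmod 8$) as contributing $-\log(x)/3$ where $x^4 \equiv -\prod k^{-4k} \pmod N$. One should verify that the factor $\zeta$ from Theorem \ref{Comparison_Merel_sqrt_u} does not contribute: here $\zeta=1$ if $N\not\equiv 1\pmod 3$, while if $N\equiv 1 \pmod 3$ the factor $2^{(N-1)/3}$ is already a $4$th power in $(\mathbf{Z}/N\mathbf{Z})^\times$ (since $N\equiv 1\pmod 8$ implies $\log(2)\equiv 0 \pmod 2$), so its contribution to $\log(x)$ vanishes modulo $2^r$. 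With this bookkeeping in place, the equivalence follows at once.
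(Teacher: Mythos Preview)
Your proof is correct and follows exactly the paper's (implicit) approach: the corollary is an immediate consequence of Theorem \ref{Comparison_Merel_sqrt_u} in the case $p=2$, combined with the criterion $n(r,p)\geq 2 \iff m_0^+\bullet m_1^- \equiv 0 \pmod{p^r}$ from Corollary \ref{Formalism_criterion_pairing}. Your final paragraph checking that $\zeta$ does not contribute is unnecessary, since Theorem \ref{Comparison_Merel_sqrt_u} already states the simplified formula $m_0^+\bullet m_1^- = -\tfrac{1}{3}\bigl(2^{t-1}-\sum_{k=1}^{(N-1)/2} k\log(k)\bigr)$ for $p=2$; you may simply quote it.
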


\begin{rem}\label{comparison_CE_p=2_g_2}
In the case $r=1$, $g_2=n(1,p)$ was determined completely in terms of the class group of $\mathbf{Q}(\sqrt{-N})$ in \cite[Theorem 1.1 and p.133]{Calegari_Emerton}: we have $g_2=2^{m-1}-1$ where $m$ is the $2$-adic valuation of the order of this  class group.
\end{rem}

\begin{corr}\label{comparison_corr_g_p_3}
Assume $p=3$. We have $n(r,p) \geq 2$ if and only if $\sum_{k=1}^{N-1} k^2\cdot \log(k) \equiv 0 \text{ (modulo }3^r\text{)}$.
\end{corr}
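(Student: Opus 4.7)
The plan is to deduce this corollary as an immediate consequence of Theorem \ref{Comparison_Merel_sqrt_u} combined with the pairing criterion of Corollary \ref{Formalism_criterion_pairing}. The verification amounts to translating the cyclicity condition defining $n(r,p) \geq 2$ into the vanishing of an explicit pairing, and then invoking Merel's formula.

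First, by Corollary \ref{Formalism_criterion_pairing} applied to the perfect $\tilde{\mathbb{T}}$-equivariant pairing $\bullet : M_+ \times M^- \rightarrow \mathbf{Z}_p$, we have $n(r,p) \geq 2$ if and only if $m_1^+ \bullet m_0^- \equiv 0 \pmod{p^r}$. By Corollary \ref{Comparison_corr_comparison} (or equivalently by (\ref{comparison_eq_e_0_e_1=m_0_m_1})), this quantity coincides with $m_0^+ \bullet m_1^-$ in $\mathbf{Z}/p^r\mathbf{Z}$. Hence $n(r,p) \geq 2$ is equivalent to $m_0^+ \bullet m_1^- \equiv 0 \pmod{3^r}$.

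Next, I would apply Theorem \ref{Comparison_Merel_sqrt_u} specialized to $p=3$, which gives
\[
m_0^+ \bullet m_1^- = -\frac{1}{4}\cdot \sum_{k=1}^{N-1} k^2 \cdot \log(k) \in \mathbf{Z}/3^r\mathbf{Z}.
\]
Since $4$ is a unit in $\mathbf{Z}/3^r\mathbf{Z}$, multiplication by $-\tfrac{1}{4}$ is an isomorphism, and the vanishing of the left-hand side modulo $3^r$ is equivalent to $\sum_{k=1}^{N-1} k^2 \cdot \log(k) \equiv 0 \pmod{3^r}$. Combining with the first step concludes the proof.

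There is essentially no obstacle: all the substantive work has been carried out in Theorem \ref{Comparison_Merel_sqrt_u} (which repackages Merel's computation of the intersection pairing in the $p=3$ case via Lemma \ref{even_modSymb_computation_square}) and in the general pairing formalism of Corollary \ref{Formalism_criterion_pairing}. The only minor point to check is that $4 \in (\mathbf{Z}/3^r\mathbf{Z})^\times$, which is immediate.
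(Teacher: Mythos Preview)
Your proof is correct and is precisely the argument the paper intends: the corollary is stated immediately after Theorem \ref{Comparison_Merel_sqrt_u} with no separate proof, because it follows exactly as you describe from the pairing criterion of Corollary \ref{Formalism_criterion_pairing} together with the explicit value $m_0^+ \bullet m_1^- = -\tfrac{1}{4}\sum_{k=1}^{N-1} k^2\log(k)$ for $p=3$.
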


\subsection{Computation of $m_i^+ \bullet m_j^-$ when $i+j \leq 3$ and $p \geq 5$}\label{comparison_section_various_pairings_>5}
In all this section, we assume $p\geq 5$. Theorem \ref{thm_Introduction_g_p>2} is a consequence of the following result.

\begin{thm}\label{even_modSymb_computation_pairing_>3}
Assume that we have $1 \leq r \leq t$ and $n(r,p) \geq 2$ (\ie $\sum_{k=1}^{\frac{N-1}{2}} k \cdot \log(k) \equiv 0 \text{ (modulo }p^r\text{)}$). We have:
$$m_1^+ \bullet m_1^- \equiv \frac{1}{6}\sum_{k=1}^{\frac{N-1}{2}} k \cdot \log(k)^2 \text{ (modulo }p^r\text{).}$$
\end{thm}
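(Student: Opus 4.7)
The plan is to compute $m_1^+ \bullet m_1^-$ directly, by substituting the explicit Manin-symbol expansion of $m_1^+$ from Theorem \ref{thm_Introduction_w_1^+>3_p^t} and pairing term-by-term against $m_1^-$ via the characterization in Theorem \ref{odd_even_modSymb_determination_m_1^-}. Since $F_{1,p}([c:d])$ is invariant under $[c:d] \mapsto [-c:d]$ (reflecting $m_1^+ \in M_+$), one may regroup
\[
m_1^+ \equiv \tfrac{1}{2}\sum_{u \in (\mathbf{Z}/N\mathbf{Z})^\times} F_{1,p}([u:1])\cdot (1+c)\xi_{\Gamma_0(N)}([u:1]) + (\text{cusp terms}) \pmod{p^r}.
\]
The element $m_1^-$ is defined only modulo $m_0^-$; fix this ambiguity by imposing $\{0,\infty\}\bullet m_1^- = 0$, which is possible because $\{0,\infty\}\bullet m_0^- = -1$ is a unit in $\mathbf{Z}/p^r\mathbf{Z}$ and this normalization annihilates the cusp contributions. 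Theorem \ref{odd_even_modSymb_determination_m_1^-} then yields
\[
m_1^+ \bullet m_1^- \equiv \tfrac{1}{2}\sum_{u \in (\mathbf{Z}/N\mathbf{Z})^\times} F_{1,p}([u:1])\cdot \log(u) \pmod{p^r}.
\]

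Next I would substitute the double-sum formula for $12F_{1,p}([u:1])$ and interchange summations, reducing the problem to inner sums over $u \in (\mathbf{Z}/N\mathbf{Z})^\times$. The ``diagonal'' piece, where $(1-u)s_1+(1+u)s_2 \equiv 0 \pmod N$, pins $u$ to $(s_1+s_2)/(s_1-s_2)$ off the degenerate locus $s_1 \equiv \pm s_2$; the ``off-diagonal'' piece contains the log-log sum $\sum_u \log(u)\log((1-u)s_1+(1+u)s_2)$, which I would evaluate via the change of variable $y = (1-u)s_1+(1+u)s_2$ followed by Lemma \ref{Bernardi_lemma>3}, producing a closed form in $\log(s_1\pm s_2)$ up to $\mathcal{F}_2$-terms that vanish for $p \geq 5$.

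Reassembling everything and converting to $D_2$-weighted sums on $(\mathbf{Z}/N\mathbf{Z})^2$ as in Lemma \ref{even_modSymb_reecriture_m_0^+}, the result becomes a linear combination of the sums evaluated in Lemmas \ref{even_modSymb_somme_bizarre_2}, \ref{even_modSymb_computation_D2}, and \ref{even_modSymb_computation_D2_square}. The standing hypothesis $\sum_{k=1}^{(N-1)/2} k\log(k) \equiv 0 \pmod{p^r}$ kills the first-order contributions and the vanishing of $\mathcal{F}_1,\mathcal{F}_2$ for $p \geq 5$ removes the parasitic constants, leaving exactly $\tfrac{1}{6}\sum_{k=1}^{(N-1)/2} k\log(k)^2$.

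The main obstacle is the off-diagonal log-log inner sum: the change of variable $y = (1-u)s_1+(1+u)s_2$ requires a careful treatment of the degenerate loci $s_1 \equiv \pm s_2$ and of the excluded points $u \equiv \pm 1$, and Lemma \ref{Bernardi_lemma>3} must be applied with exactly the right normalizations on the remaining locus. The subsequent reassembly of the $D_2$-weighted sums is computationally heavy but mechanical; the delicate matching of the coefficient $\tfrac{1}{6}$ can be sanity-checked against Merel's formula for $m_1^+ \bullet m_0^-$ in Theorem \ref{Comparison_Merel_sqrt_u} (which corresponds to taking the linear, rather than quadratic, piece of the same calculation).
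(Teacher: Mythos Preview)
Your proposal is correct and follows essentially the same route as the paper's own proof: expand $m_1^+$ via Theorem \ref{thm_Introduction_w_1^+>3_p^t}, pair termwise using Theorem \ref{odd_even_modSymb_determination_m_1^-}, interchange the $(s_1,s_2)$ and $u$ summations, evaluate the inner log--log sum by Lemma \ref{Bernardi_lemma>3}, and finish with the $D_2$-identities of Lemmas \ref{even_modSymb_computation_D2} and \ref{even_modSymb_computation_D2_square}, using the hypothesis $\sum k\log(k)\equiv 0$ to kill the linear pieces. Your explicit normalization $\{0,\infty\}\bullet m_1^-=0$ to dispose of the cuspidal contributions is a point the paper handles tacitly by writing the sum directly over $(\mathbf{Z}/N\mathbf{Z})^\times\setminus\{1\}$, but the substance is identical.
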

\begin{proof}
By Theorems \ref{thm_Introduction_w_1^+>3_p^t} and \ref{odd_even_modSymb_determination_m_1^-}, we have in $\mathbf{Z}/p^r\mathbf{Z}$:
\begin{align*}
24\cdot  m_1^+ \bullet m_1^- &= \sum_{x \in (\mathbf{Z}/N\mathbf{Z})^{\times} \atop x\neq 1 }\log(x) \cdot \left(\sum_{(s_1,s_2)\in (\mathbf{Z}/2N\mathbf{Z})^2 \atop  (1-x)s_1 + (1+x)s_2 \equiv 0 \text{ (modulo }N\text{)}} (-1)^{s_1+s_2} \B_1\left(\frac{s_1}{2N}\right)\B_1\left(\frac{s_2}{2N}\right) \cdot \log\left(\frac{s_2}{1-x}\right) \right)  
\\& - \log(x) \cdot \left( \sum_{(s_1,s_2)\in (\mathbf{Z}/2N\mathbf{Z})^2 \atop  (1-x)s_1 + (1+x)s_2 \not\equiv 0 \text{ (modulo }N\text{)}} (-1)^{s_1+s_2} \B_1\left(\frac{s_1}{2N}\right)\B_1\left(\frac{s_2}{2N}\right) \cdot \log((1-x)s_1 + (1+x)s_2) \right)
\\& = \frac{1}{4}\sum_{x \in (\mathbf{Z}/N\mathbf{Z})^{\times} \atop x\neq 1 } \log(x) \cdot \left(\sum_{(s_1,s_2)\in (\mathbf{Z}/N\mathbf{Z})^2 \atop  (1-x)s_1 + (1+x)s_2 = 0} D_2\left(\frac{s_1}{N}\right)D_2\left(\frac{s_2}{N}\right)\cdot \log\left(\frac{s_2}{1-x}\right) \right)   
\\& -\log(x) \cdot \left(\sum_{(s_1,s_2)\in (\mathbf{Z}/N\mathbf{Z})^2 \atop  (1-x)s_1 + (1+x)s_2 \neq 0} D_2\left(\frac{s_1}{N}\right)D_2\left(\frac{s_2}{N}\right) \cdot \log((1-x)s_1 + (1+x)s_2) \right)  
\\& =
\frac{1}{4}\sum_{(s_1,s_2) \in (\mathbf{Z}/N\mathbf{Z})^2 \atop s_1 \neq \pm s_2} D_2\left(\frac{s_1}{N}\right)D_2\left(\frac{s_2}{N}\right)\cdot (\log\left(\frac{s_1+s_2}{s_1-s_2}\right) \cdot \log\left(\frac{s_2-s_1}{2}\right) \\&- \sum_{x \neq 0, \frac{s_1+s_2}{s_1-s_2}} \log(x)\cdot \log((x-1)s_1+(x+1)s_2) ) \text{.}
\end{align*}
Since $n(r,p) \geq 2$, we have $\sum_{k=1}^{\frac{N-1}{2}} k \cdot \log(k) \equiv 0 \text{ (modulo }p^r\text{)}$. By Lemma \ref{even_modSymb_computation_D2}, we have:
$$\sum_{(s_1,s_2) \in (\mathbf{Z}/N\mathbf{Z})^2 \atop s_1 \neq \pm s_2} D_2\left(\frac{s_1}{N}\right)D_2\left(\frac{s_2}{N}\right)\cdot \log\left(\frac{s_1+s_2}{s_1-s_2}\right)  \equiv 0 \text{ (modulo }p^r\text{).}$$ 
Thus, we have:
\begin{align*}
96\cdot  m_1^+ \bullet m_1^- &= \sum_{(s_1,s_2) \in (\mathbf{Z}/N\mathbf{Z})^2 \atop s_1 \neq \pm s_2} D_2\left(\frac{s_1}{N}\right)D_2\left(\frac{s_2}{N}\right)\cdot (\log\left(\frac{s_1+s_2}{s_1-s_2}\right) \cdot \log(s_2-s_1) \\&- \sum_{x \neq 0, \frac{s_1+s_2}{s_1-s_2}} \log(x)\cdot \log((1-x)s_1+(1+x)s_2) ) \text{.}
\end{align*}

Lemma \ref{Bernardi_lemma>3} shows that 
\begin{align*}
 \sum_{x \neq 0, \frac{s_1+s_2}{s_1-s_2}} \log(x)\cdot \log((1-x)s_1+(1+x)s_2) = -\log(s_2-s_1)\cdot \log\left(\frac{s_1+s_2}{s_1-s_2}\right)  - \log\left(\frac{s_1+s_2}{s_1-s_2}\right) ^2 \text{ .}
\end{align*}
We conclude that
\begin{align*}
96 \cdot m_1^+\bullet m_1^- &= \sum_{(s_1,s_2) \in (\mathbf{Z}/N\mathbf{Z})^2 \atop s_1 \neq \pm s_2} D_2\left(\frac{s_1}{N}\right)D_2\left(\frac{s_2}{N}\right) \cdot \\& \left( \log\left(\frac{s_1+s_2}{s_1-s_2}\right) \log(s_2-s_1) + \log(s_2-s_1)\log\left(\frac{s_1+s_2}{s_1-s_2}\right)  + \log\left(\frac{s_1+s_2}{s_1-s_2}\right) ^2\right)
\\& =  \sum_{(s_1,s_2) \in (\mathbf{Z}/N\mathbf{Z})^2 \atop s_1 \neq \pm s_2} D_2\left(\frac{s_1}{N}\right)D_2\left(\frac{s_2}{N}\right) \cdot   \left( \log(s_1+s_2)^2 - \log(s_1-s_2)^2 \right) \\& =16 \cdot \sum_{k=1}^{\frac{N-1}{2}} k\cdot \log(k)^2
\end{align*}
where the last equality follows from Lemma \ref{even_modSymb_computation_D2_square}.
\end{proof}

Combining Corollary \ref{Comparison_corr_comparison}, Theorem \ref{Supersingular_w_1_S_5} (iii) and Theorem \ref{even_modSymb_computation_pairing_>3}, we get the following identity.
\begin{corr}\label{comparison_higher_Eichler_quadratic_formula}
Assume $n(r,p) \geq 2$, \ie $\sum_{k=1}^{\frac{N-1}{2}} k \cdot \log(k) \equiv 0 \text{ (modulo }p^r\text{)}$. We have, in $\mathbf{Z}/p^r\mathbf{Z}$:
$$\sum_{\lambda \in L} 3\cdot \log(H'(\lambda))^2 - 4\cdot \log(\lambda)^2 = 12\cdot \sum_{k=1}^{\frac{N-1}{2}} k \cdot \log(k)^2\text{ .}$$
\end{corr}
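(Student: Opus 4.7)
The plan is to chain together the three ingredients indicated immediately above the statement, which together provide two independent expressions for $e_1 \bullet e_1$ modulo $p^r$; comparing them yields the identity.

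First I would note that the hypothesis $\sum_{k=1}^{(N-1)/2} k\cdot\log(k) \equiv 0 \pmod{p^r}$ is, by Theorem \ref{thm_Introduction_g_p>1} (or equivalently by Theorem \ref{Supersingular_w_1_S_5} (ii) together with Corollary \ref{Formalism_criterion_pairing}), the same as saying $n(r,p) \geq 2$. In particular $e_1 \bullet e_0 \equiv 0 \pmod{p^r}$, so the hypothesis of Theorem \ref{Supersingular_w_1_S_5} (iii) is satisfied, yielding the ``supersingular side'' identity
\begin{equation*}
72 \cdot (e_1 \bullet e_1) \equiv 3\cdot \sum_{\lambda \in L} \log(H'(\lambda))^2 - 4\cdot \sum_{\lambda \in L} \log(\lambda)^2 \pmod{p^r}.
\end{equation*}

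Second I would invoke Corollary \ref{Comparison_corr_comparison} with $i = j = 1$ (which requires $i+j = 2 \leq n(r,p)$, precisely our hypothesis) to conclude $e_1 \bullet e_1 = m_1^+ \bullet m_1^-$ in $\mathbf{Z}/p^r\mathbf{Z}$. Then Theorem \ref{even_modSymb_computation_pairing_>3} evaluates the modular-symbol side:
\begin{equation*}
m_1^+ \bullet m_1^- \equiv \frac{1}{6}\sum_{k=1}^{(N-1)/2} k\cdot \log(k)^2 \pmod{p^r}.
\end{equation*}

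Substituting this into the previous display and multiplying through gives the claimed equality, since $72 \cdot \frac{1}{6} = 12$. There is no real obstacle here: the entire content of the corollary is packaged in the three earlier results. The only mild point to justify carefully is the equivalence between the hypothesis as stated (vanishing of the logarithmic sum) and the condition $n(r,p) \geq 2$ needed to apply both Corollary \ref{Comparison_corr_comparison} at $i=j=1$ and the vanishing of $e_1 \bullet e_0$ required by Theorem \ref{Supersingular_w_1_S_5} (iii); this equivalence is furnished by Theorem \ref{Supersingular_w_1_S_5} (ii) combined with Corollary \ref{Formalism_criterion_pairing}, so the deduction is complete.
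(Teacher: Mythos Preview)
Your proposal is correct and follows exactly the paper's approach: the corollary is stated immediately after the sentence ``Combining Corollary \ref{Comparison_corr_comparison}, Theorem \ref{Supersingular_w_1_S_5} (iii) and Theorem \ref{even_modSymb_computation_pairing_>3}, we get the following identity,'' and your argument is precisely that combination.
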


We now compute the pairings $m_0^+ \bullet m_2^-$ and $m_1^+ \bullet m_2^-$ using the formula for $m_2^-$ given in Theorem \ref{odd_modSymb_main_theorem_>5}. Recall that we defined function $F_{0,p}, F_{1,p} : \mathbf{P}^1(\mathbf{Z}/N\mathbf{Z}) \rightarrow \mathbf{Z}/p^r\mathbf{Z}$ in Theorems \ref{thm_Introduction_w_0^+} and \ref{thm_Introduction_w_1^+>3_p^t} ($F_{0,p}$ really takes values in $\mathbf{Z}_p$, but we abuse notation and consider it modulo $p^r$). Recall the group isomorphism $\delta_r : \mathbf{Z}/p^r\mathbf{Z} \rightarrow J\cdot \mathcal{K}_r/J^2\cdot \mathcal{K}_r$ defined in Section \ref{odd_modSymb_section_class_group}.

\begin{thm}\label{even_modSymb_computation_m_0_m_2}
Assume that Conjecture \ref{odd_modSymb_Sharifi_conj} holds. 
Assume that $n(r,p) \geq 2$, \ie that $\sum_{k=1}^{\frac{N-1}{2}} k \cdot \log(k) \equiv 0 \text{ (modulo }p^r\text{)}$. Let $g \in (\mathbf{Z}/N\mathbf{Z})^{\times}$ be a generator such that $\log(g) \equiv 1 \text{ (modulo }p^r\text{)}$.
We have:
\begin{align*}
\delta_r(m_0^+ \bullet m_2^-) &= \frac{1}{4}\sum_{i=1}^{N-1}  \left(\sum_{j=i}^{N-1} F_{0,p}(g^j) \right) \cdot \left(1-\zeta_N^{g^{i-1}}, \frac{1-\zeta_N}{1-\zeta_N^{g^{-1}}}\right)_r \text{ .}
\end{align*}
\end{thm}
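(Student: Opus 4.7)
The plan is to combine Merel's formula for $\tilde m_0^+$ (Theorem \ref{thm_Introduction_w_0^+}) with the $K$-theoretic formula for $m_2^-$ from Theorem \ref{odd_modSymb_main_theorem_>5}, reading off the answer by telescoping a cocycle along the cyclic group generated by $g$. Since $p \geq 5$ divides $\tfrac{N-1}{12}$, the only cuspidal Manin symbols appearing in Merel's expression, namely $\xi_{\Gamma_0(N)}([0:1]) = \{0,\infty\}$ and $\xi_{\Gamma_0(N)}([1:0]) = -\{0,\infty\}$, combine with weights whose difference is $-\tfrac{N-1}{12}$, so they contribute $0$ modulo $p^r M_+$. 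Thus
\[ 2(m_0^+ \bullet m_2^-) \equiv \sum_{x \in (\mathbf{Z}/N\mathbf{Z})^{\times}} F_{0,p}([x:1])\,h(x) \pmod{p^r}, \qquad h(x) := \bigl((1+c)\xi_{\Gamma_0(N)}([x:1])\bigr) \bullet m_2^-. \]
The function $h$ is well-defined up to $h \mapsto h + b\log$ (from the indeterminacy of $m_2^-$ modulo $\mathbf{Z}\cdot m_0^- + \mathbf{Z}\cdot m_1^-$, using that $(1+c)\xi_{\Gamma_0(N)}([x:1]) \in H_+$ pairs trivially with $\tilde m_0^-$ and using Theorem \ref{odd_even_modSymb_determination_m_1^-}); but $\sum_x F_{0,p}([x:1])\log(x) \equiv 2(m_0^+ \bullet m_1^-) \equiv 0 \pmod{p^r}$ by the hypothesis $n(r,p)\geq 2$, so this indeterminacy drops out of the weighted sum.

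Next, Theorem \ref{odd_modSymb_main_theorem_>5} provides the cocycle identity
\[ 2\delta_r\bigl(h(xy) - h(x) - h(y)\bigr) = (1-\zeta_N^x,1-\zeta_N)_r - (1-\zeta_N^x,1-\zeta_N^{y^{-1}})_r - (1-\zeta_N^{y^{-1}},1-\zeta_N)_r. \]
Setting $H := \delta_r \circ h$, using that $(1+c)\xi_{\Gamma_0(N)}([1:1]) = 0$ (hence $H(1)=0$) and telescoping along $g^k = g^{k-1}\cdot g$ while merging the first two Steinberg symbols via bilinearity in the second slot, I obtain
\[ H(g^k) = k\,H(g) + \tfrac{1}{2}\sum_{i=0}^{k-1}\Bigl[\bigl(1-\zeta_N^{g^i},\tfrac{1-\zeta_N}{1-\zeta_N^{g^{-1}}}\bigr)_r - \bigl(1-\zeta_N^{g^{-1}},1-\zeta_N\bigr)_r\Bigr]. \]
Substituting into $\delta_r(m_0^+ \bullet m_2^-) = \tfrac{1}{2}\sum_{k=0}^{N-2} F_{0,p}([g^k:1])\,H(g^k)$ and swapping sums, both constant terms (the one carrying $H(g)$ and the one carrying $(1-\zeta_N^{g^{-1}},1-\zeta_N)_r$) pick up the factor $\sum_{k} k\,F_{0,p}([g^k:1]) = \sum_x \log(x) F_{0,p}([x:1]) \equiv 0 \pmod{p^r}$ and vanish. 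A reindexing $i' = i+1$ together with the vanishing $F_{0,p}([1:1]) = 0$ (at $(c,d)=(1,1)$ the condition $(d-c)s_1 + (d+c)s_2 \equiv 0 \pmod N$ forces $s_2 \equiv 0 \pmod N$, so $\B_1(s_2/2N) = 0$) allow extending the outer range to $i' = 1,\dots,N-1$ and the inner range to $k = i',\dots,N-1$, yielding the stated formula.

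The main obstacle is bookkeeping the factors of $2$ together with the indeterminacies of $m_2^-$ and of $h$, and verifying that the right-hand side of the formula indeed belongs to $J\mathcal{K}_r/J^2\mathcal{K}_r$ and not merely to $\mathcal{K}_r/J^2\mathcal{K}_r$. By Proposition \ref{odd_modSymb_local_cup_product}, each Steinberg symbol $\bigl(1-\zeta_N^{g^{i-1}},\tfrac{1-\zeta_N}{1-\zeta_N^{g^{-1}}}\bigr)_r$ projects to the same class $-1 \in \mathcal{K}_r/J\mathcal{K}_r \simeq \mathbf{Z}/p^r\mathbf{Z}$, so the total projects to $-\tfrac{1}{4}\sum_j j\,F_{0,p}(g^j) = -\tfrac{1}{4}\sum_x \log(x) F_{0,p}([x:1]) \equiv 0 \pmod{p^r}$ by the same identity that kills the indeterminacy of $h$.
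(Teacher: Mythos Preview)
Your proof is correct and follows essentially the same approach as the paper. Both arguments telescope along the cyclic group generated by $g$, applying Theorem~\ref{odd_modSymb_main_theorem_>5} with $(x,y)=(g^{i-1},g)$ and using that $\sum_k k\,F_{0,p}(g^k)\equiv 0\pmod{p^r}$ to kill the constant terms; the paper performs the telescoping at the level of modular symbols (rewriting $m_0^+$ as a sum of terms $\xi_{\Gamma_0(N)}([g^i:1])-\xi_{\Gamma_0(N)}([g^{i-1}:1])-\xi_{\Gamma_0(N)}([g:1])$ before pairing), whereas you telescope the pairing values $H(g^k)$ directly, but the content is the same. Your added check that the right-hand side lies in $J\mathcal{K}_r/J^2\mathcal{K}_r$ (via Proposition~\ref{odd_modSymb_local_cup_product}) is a nice sanity verification that the paper does not make explicit.
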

\begin{proof}
We have, in $H_1(X_0(N), \cusps, \mathbf{Z}/p^r\mathbf{Z})$:
$$m_0^+ = \sum_{i=1}^{N-1} F_{0,p}(g^i)\cdot \xi_{\Gamma_0(N)}([g^i:1]) \text{ .}$$
The hypothesis $n(r,p) \geq 2$ can be rewritten as  
\begin{equation}\label{Comparison_F_0_p_sum_n(r,p)}
\sum_{i=1}^{N-1} i\cdot F_{0,p}(g^i) \equiv 0 \text{ (modulo }p^r\text{).}
\end{equation}
Thus, we have in $H_1(X_0(N), \cusps, \mathbf{Z}/p^r\mathbf{Z})$:
$$m_0^+ = \sum_{i=1}^{N-1} F_{0,p}(g^i) \cdot \left(  \xi_{\Gamma_0(N)}([g^i:1]) - i\cdot   \xi_{\Gamma_0(N)}([g:1]) \right) \text{ .}$$
We have 
\begin{align*}
\xi_{\Gamma_0(N)}([g^i:1]) - i\cdot   \xi_{\Gamma_0(N)}([g:1]) &=  \left( \xi_{\Gamma_0(N)}([g^i:1])  -  \xi_{\Gamma_0(N)}([g^{i-1}:1])- \xi_{\Gamma_0(N)}([g:1])\right) \\& +  \left( \xi_{\Gamma_0(N)}([g^{i-1}:1]) -  (i-1)\cdot  \xi_{\Gamma_0(N)}([g:1]) \right) \text{ .}
\end{align*}
By induction on $i$, we get (using $\xi_{\Gamma_0(N)}(\overline{1})=0$):
$$ \xi_{\Gamma_0(N)}([g^i:1]) - i\cdot   \xi_{\Gamma_0(N)}([g:1])  = \sum_{j=1}^i  \left( \xi_{\Gamma_0(N)}(g^j)  -  \xi_{\Gamma_0(N)}(g^{j-1})- \xi_{\Gamma_0(N)}([g:1])\right) \text{ .}$$

Thus, we have in $H_1(X_0(N), \cusps, \mathbf{Z}/p^r\mathbf{Z})_+$:
$$2\cdot m_0^+ = \sum_{i=1}^{N-1} \left(\sum_{j=i}^{N-1} F_{0,p}(g^j) \right) \cdot (1+c)\cdot \left(  \xi_{\Gamma_0(N)}([g^i:1]) -   \xi_{\Gamma_0(N)}([g^{i-1}:1]) - \xi_{\Gamma_0(N)}([g:1]) \right) \text{ ,}$$
where $c$ is the complex conjugation.
By Theorem \ref{odd_modSymb_main_theorem_>5}, we have:
\begin{align*}
(1+c)\cdot &\left(  \xi_{\Gamma_0(N)}([g^i:1]) -   \xi_{\Gamma_0(N)}([g^{i-1}:1]) - \xi_{\Gamma_0(N)}([g:1]) \right) \bullet m_2^- = \\& \frac{1}{2}\cdot \left((1-\zeta_N^{g^{i-1}}, 1-\zeta_N)_r-(1-\zeta_N^{g^{i-1}}, 1-\zeta_N^{g^{-1}})_r - (1-\zeta_N^{g^{-1}}, 1-\zeta_N)_r \right)
\end{align*}
By the bilinearity of $(\cdot, \cdot)_r$, we have: $$(1-\zeta_N^{g^{i-1}}, 1-\zeta_N)_r-(1-\zeta_N^{g^{i-1}}, 1-\zeta_N^{g^{-1}})_r = \left(1-\zeta_N^{g^{i-1}}, \frac{1-\zeta_N}{1-\zeta_N^{g^{-1}}}\right)_r \text{ .}$$
Using (\ref{Comparison_F_0_p_sum_n(r,p)}), this concludes the proof of Theorem \ref{even_modSymb_computation_m_0_m_2}.
\end{proof}

Since $m_1^+\bullet m_1^- = m_0^+ \bullet m_2^-$, Theorems \ref{even_modSymb_computation_pairing_>3} and \ref{even_modSymb_computation_m_0_m_2} give us the following identity.

\begin{corr}
Assume that Conjecture \ref{odd_modSymb_Sharifi_conj} holds. 
Assume that $n(r,p) \geq 2$, \ie that $\sum_{k=1}^{\frac{N-1}{2}} k \cdot \log(k) \equiv 0 \text{ (modulo }p^r\text{)}$. Let $g \in (\mathbf{Z}/N\mathbf{Z})^{\times}$ be a generator such that $\log(g) \equiv 1 \text{ (modulo }p^r\text{)}$. We have the following equality in $J\cdot \mathcal{K}_r/J^2\cdot  \mathcal{K}_r$:
$$
\delta_r\left( \sum_{k=1}^{\frac{N-1}{2}} k \cdot \log(k)^2\right) = \frac{3}{2} \cdot \sum_{i=1}^{N-1}  \left(\sum_{j=i}^{N-1} F_{0,p}(g^j) \right)\cdot \left(1-\zeta_N^{g^{i-1}}, \frac{1-\zeta_N}{1-\zeta_N^{g^{-1}}}\right)_r  \text{ .}
$$
\end{corr}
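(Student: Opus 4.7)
The plan is to derive this identity by combining the two immediately preceding results in the chapter, exploiting the formal fact established in Corollary~\ref{Formalism_criterion_pairing} that the intersection pairing $m_i^+ \bullet m_j^-$ depends only on $i+j$ (when $i+j \leq n(r,p)$). Indeed, applying that corollary in the case $i+j=2$ with $n(r,p)\geq 2$ gives the key equality
\[
m_1^+ \bullet m_1^- \;=\; m_0^+ \bullet m_2^- \qquad \text{in } \mathbf{Z}/p^r\mathbf{Z}.
\]
This is purely formal and uses only that both $M_+$ and $M^-$ satisfy the hypotheses of Theorem~\ref{Formalism_existence_eisenstein} together with the perfect Hecke-equivariant intersection pairing $\bullet: M_+ \times M^- \to \mathbf{Z}_p$ (this was verified at the start of Section~\ref{odd_modSymb_preliminary_setting}).

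Next, I would apply the group isomorphism $\delta_r : \mathbf{Z}/p^r\mathbf{Z} \xrightarrow{\sim} J\cdot \mathcal{K}_r / J^2\cdot \mathcal{K}_r$ (which exists because we assume $n(r,p)\geq 2$, by Theorem~\ref{comparison_K_C}) to both sides of the above equality. For the left-hand side, Theorem~\ref{even_modSymb_computation_pairing_>3} gives
\[
\delta_r(m_1^+ \bullet m_1^-) \;=\; \tfrac{1}{6}\,\delta_r\!\left(\sum_{k=1}^{\frac{N-1}{2}} k\cdot \log(k)^2\right).
\]
For the right-hand side, Theorem~\ref{even_modSymb_computation_m_0_m_2} (which requires Conjecture~\ref{odd_modSymb_Sharifi_conj}, as assumed) yields
\[
\delta_r(m_0^+ \bullet m_2^-) \;=\; \tfrac{1}{4}\sum_{i=1}^{N-1}\Bigl(\sum_{j=i}^{N-1} F_{0,p}(g^j)\Bigr)\cdot \Bigl(1-\zeta_N^{g^{i-1}},\,\tfrac{1-\zeta_N}{1-\zeta_N^{g^{-1}}}\Bigr)_r.
\]

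Equating these two expressions and multiplying through by $6$ produces the stated formula, with the factor $\tfrac{3}{2} = 6 \cdot \tfrac{1}{4}$ appearing naturally. There is essentially no obstacle: all nontrivial content is already packaged in Theorems~\ref{even_modSymb_computation_pairing_>3} and~\ref{even_modSymb_computation_m_0_m_2}, and the only thing to verify is that their hypotheses (namely $p\geq 5$, $n(r,p)\geq 2$, and Conjecture~\ref{odd_modSymb_Sharifi_conj}) are all implied by the hypotheses of the corollary, which is clear from the statement. The single point worth mentioning in the write-up is the appeal to Corollary~\ref{Formalism_criterion_pairing}, since this is what makes the comparison of the ``diagonal'' pairing $m_1^+\bullet m_1^-$ with the ``extreme'' pairing $m_0^+\bullet m_2^-$ legitimate in the first place.
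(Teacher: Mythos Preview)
Your proposal is correct and follows essentially the same approach as the paper: the paper's proof simply says that since $m_1^+\bullet m_1^- = m_0^+ \bullet m_2^-$, the identity follows from Theorems~\ref{even_modSymb_computation_pairing_>3} and~\ref{even_modSymb_computation_m_0_m_2}. Your only addition is to make explicit the appeal to Corollary~\ref{Formalism_criterion_pairing} for the equality $m_1^+\bullet m_1^- = m_0^+ \bullet m_2^-$, which is a helpful clarification.
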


\begin{rem}
We have not been able to prove directly this identity, without using the theory of higher Eisenstein elements.
\end{rem}

Similar computations give us the following result.
\begin{thm}\label{even_modSymb_computation_m_1_m_2}
Assume that Conjecture \ref{odd_modSymb_Sharifi_conj} holds. 
Assume that $n(r,p) \geq 3$, \ie that $\sum_{k=1}^{\frac{N-1}{2}} k \cdot \log(k) \equiv \sum_{k=1}^{\frac{N-1}{2}} k \cdot \log(k)^2 \equiv 0 \text{ (modulo }p^r\text{)}$. Let $g \in (\mathbf{Z}/N\mathbf{Z})^{\times}$ be a generator such that $\log(g) \equiv 1 \text{ (modulo }p^r\text{)}$.
We have:
\begin{align*}
\delta_r(m_1^+ \bullet m_2^-) &= \frac{1}{4}\sum_{i=1}^{N-1}  \left(\sum_{j=i}^{N-1} F_{1,p}(g^j) \right)\cdot \left( 1-\zeta_N^{g^{i-1}}, \frac{1-\zeta_N}{1-\zeta_N^{g^{-1}}}\right)_r \text{ .}
\end{align*}
\end{thm}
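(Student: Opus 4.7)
The plan is to imitate the proof of Theorem \ref{even_modSymb_computation_m_0_m_2} with $m_0^+$ replaced by $m_1^+$, using the explicit Manin symbol formula for $m_1^+$ from Theorem \ref{thm_Introduction_w_1^+>3_p^t} in place of Theorem \ref{thm_Introduction_w_0^+}. First, I would choose a concrete representative: by Theorem \ref{thm_Introduction_w_1^+>3_p^t}, modulo $\mathbf{Z}\cdot m_0^+$ we may take $m_1^+ = \sum_{x\in\mathbf{P}^1(\mathbf{Z}/N\mathbf{Z})} F_{1,p}(x)\,\xi_{\Gamma_0(N)}(x)$ in $M_+/p^r\cdot M_+$. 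Since the hypothesis $n(r,p)\geq 3$ forces $\sum_{k=1}^{(N-1)/2}k\log(k)\equiv 0\pmod{p^r}$, the boundary formula in Theorem \ref{thm_Introduction_w_1^+>3} shows $F_{1,p}([1:0])\equiv 0\pmod{p^r}$, so after absorbing this term into $m_0^+$ we may assume $m_1^+=\sum_{i=1}^{N-1}F_{1,p}(g^i)\,\xi_{\Gamma_0(N)}([g^i:1])$.

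The essential new input compared to the proof of Theorem \ref{even_modSymb_computation_m_0_m_2} is the first-moment vanishing $\sum_{i=1}^{N-1}iF_{1,p}(g^i)\equiv 0\pmod{p^r}$. To establish it, I would use Theorem \ref{odd_even_modSymb_determination_m_1^-}: the identity $(1+c)\xi_{\Gamma_0(N)}([x:1])\bullet m_1^-=\log(x)$, together with the $c$-antiequivariance of the intersection pairing and $c\cdot\xi_{\Gamma_0(N)}([x:1])=\xi_{\Gamma_0(N)}([-x:1])$, yields $\xi_{\Gamma_0(N)}([x:1])\bullet m_1^-=\tfrac{1}{2}\log(x)$. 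Combined with $F_{1,p}(-x)=F_{1,p}(x)$ this gives $m_1^+\bullet m_1^-=\tfrac{1}{2}\sum_{i=1}^{N-1}iF_{1,p}(g^i)$. By Corollary \ref{Formalism_criterion_pairing}, since $n(r,p)\geq 3>1+1$, the pairing $m_1^+\bullet m_1^-$ vanishes modulo $p^r$, giving the moment identity (as $2$ is invertible mod $p^r$).

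With the moment identity in hand, I would follow the manipulations of the proof of Theorem \ref{even_modSymb_computation_m_0_m_2} almost verbatim: replace each $\xi_{\Gamma_0(N)}([g^i:1])$ by $\xi_{\Gamma_0(N)}([g^i:1])-i\,\xi_{\Gamma_0(N)}([g:1])$ (valid by the moment identity), telescope via
\[
\xi_{\Gamma_0(N)}([g^i{:}1])-i\,\xi_{\Gamma_0(N)}([g{:}1])=\sum_{j=1}^{i}\bigl(\xi_{\Gamma_0(N)}([g^j{:}1])-\xi_{\Gamma_0(N)}([g^{j-1}{:}1])-\xi_{\Gamma_0(N)}([g{:}1])\bigr),
\]
where the base term $\xi_{\Gamma_0(N)}([1:1])$ is annihilated by $1+c$ by the Manin relations and thus contributes nothing after the next step. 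Swapping the order of summation and applying $1+c$ (using $m_1^+\in M_+$, so that $2m_1^+=(1+c)m_1^+$) produces
\[
2m_1^+=\sum_{i=1}^{N-1}\Bigl(\sum_{j=i}^{N-1}F_{1,p}(g^j)\Bigr)(1+c)\bigl(\xi_{\Gamma_0(N)}([g^i{:}1])-\xi_{\Gamma_0(N)}([g^{i-1}{:}1])-\xi_{\Gamma_0(N)}([g{:}1])\bigr).
\]
Pairing with $m_2^-$, applying $\delta_r$, and invoking Theorem \ref{odd_modSymb_main_theorem_>5} with $(x,y)=(g^{i-1},g)$ gives
\[
4\,\delta_r(m_1^+\bullet m_2^-)=\sum_{i=1}^{N-1}\Bigl(\sum_{j=i}^{N-1}F_{1,p}(g^j)\Bigr)\Bigl[\Bigl(1-\zeta_N^{g^{i-1}},\tfrac{1-\zeta_N}{1-\zeta_N^{g^{-1}}}\Bigr)_r-\bigl(1-\zeta_N^{g^{-1}},1-\zeta_N\bigr)_r\Bigr].
\]
The second bracket contribution reassembles via Abel summation as $\bigl(\sum_{j=1}^{N-1}jF_{1,p}(g^j)\bigr)\bigl(1-\zeta_N^{g^{-1}},1-\zeta_N\bigr)_r$, which vanishes by the moment identity, leaving exactly the formula claimed.

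The only substantive obstacle is the moment identity of the second paragraph, which is where the strengthened hypothesis $n(r,p)\geq 3$ (versus $n(r,p)\geq 2$ in Theorem \ref{even_modSymb_computation_m_0_m_2}) enters; everything else is bookkeeping parallel to the earlier result, provided one verifies carefully that $\xi_{\Gamma_0(N)}([1:1])$ disappears under the $(1+c)$-projection used in the telescoping step.
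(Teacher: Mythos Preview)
Your proposal is correct and follows exactly the approach the paper intends: the paper's own ``proof'' is just the sentence ``Similar computations give us the following result,'' and you have spelled out precisely those computations, replacing $F_{0,p}$ by $F_{1,p}$ throughout the argument for Theorem~\ref{even_modSymb_computation_m_0_m_2}. Your identification of the first-moment vanishing $\sum_{i} i\,F_{1,p}(g^i)\equiv 0\pmod{p^r}$ as the place where the stronger hypothesis $n(r,p)\geq 3$ enters (via $m_1^+\bullet m_1^-=0$) is exactly the point.

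Two small wording issues that do not affect correctness: first, the contribution $F_{1,p}([0{:}1])\xi_{\Gamma_0(N)}([0{:}1])+F_{1,p}([1{:}0])\xi_{\Gamma_0(N)}([1{:}0])=2F_{1,p}([0{:}1])\{0,\infty\}$ is not a multiple of $m_0^+$, so ``absorbing into $m_0^+$'' is not quite the mechanism; rather, the boundary identity forces $F_{1,p}([0{:}1])=-\tfrac{1}{6}\sum_k k\log(k)\equiv 0\pmod{p^r}$ under $n(r,p)\geq 2$, so the term simply vanishes. Second, the intermediate equality $\xi_{\Gamma_0(N)}([x{:}1])\bullet m_1^-=\tfrac12\log(x)$ is delicate because $\xi_{\Gamma_0(N)}([x{:}1])$ is not in $M_+$; the clean way is to note that the evenness $F_{1,p}(-x)=F_{1,p}(x)$ already shows the representative equals $\sum_{i=1}^{(N-1)/2}F_{1,p}(g^i)\,(1+c)\xi_{\Gamma_0(N)}([g^i{:}1])$, and then pair directly with $m_1^-$ using Theorem~\ref{odd_even_modSymb_determination_m_1^-}. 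Your conclusion and the final formula are unaffected.
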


As a corollary, we get our most advanced numerical criterion for bounding $n(r,p)$ from below.

\begin{corr}
Assume that Conjecture \ref{odd_modSymb_Sharifi_conj} holds. 
Assume that $n(r,p) \geq 3$, \ie that $\sum_{k=1}^{\frac{N-1}{2}} k \cdot \log(k) \equiv \sum_{k=1}^{\frac{N-1}{2}} k \cdot \log(k)^2 \equiv 0 \text{ (modulo }p^r\text{)}$. Let $g \in (\mathbf{Z}/N\mathbf{Z})^{\times}$ be a generator such that $\log(g) \equiv 1 \text{ (modulo }p^r\text{)}$. We have $n(r,p) \geq 4$ if and only if we have, in $J\cdot \mathcal{K}_r/J^2\cdot \mathcal{K}_r$:
$$\sum_{i=1}^{N-1} \left(\sum_{j=i}^{N-1} F_{1,p}(g^j) \right)\cdot  \left( 1-\zeta_N^{g^{i-1}}, \frac{1-\zeta_N}{1-\zeta_N^{g^{-1}}}\right)_r  = 0 \text{ .}$$
\end{corr}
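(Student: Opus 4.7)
The plan is essentially to recognize this statement as the direct combination of three ingredients already established in the excerpt: the pairing-vanishing criterion for $n(r,p)$, the comparison identity between the supersingular and modular-symbol pairings, and the explicit formula for $m_1^+ \bullet m_2^-$ in terms of $K$-theoretic symbols.

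First, I would invoke Corollary \ref{Formalism_criterion_pairing} applied to the pairing $\bullet : M_+ \times M^- \to \mathbf{Z}/p^r\mathbf{Z}$: the product $m_i^+ \bullet m_j^-$ depends only on $i+j$, is zero modulo $p^r$ when $i+j < n(r,p)$, and is nonzero modulo $p^r$ when $i+j = n(r,p)$. In particular, under the running hypothesis $n(r,p) \geq 3$, we have $n(r,p) \geq 4$ if and only if $m_i^+ \bullet m_j^- \equiv 0 \pmod{p^r}$ for some (equivalently, every) pair $(i,j)$ with $i+j = 3$, and a convenient choice is $(i,j) = (1,2)$. (Note that $m_1^+$ and $m_2^-$ exist and have their pairing well-defined modulo $p^r$ because $n(r,p) \geq 3$, so the indices $1,2$ lie in the admissible range.)

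Next, I would apply the isomorphism $\delta_r : \mathbf{Z}/p^r\mathbf{Z} \xrightarrow{\sim} J\cdot \mathcal{K}_r/J^2\cdot \mathcal{K}_r$ from Section \ref{odd_modSymb_section_class_group}, which is available because $n(r,p) \geq 2$ (cf.\ Theorem \ref{comparison_K_C}): vanishing of $m_1^+ \bullet m_2^-$ modulo $p^r$ is equivalent to vanishing of $\delta_r(m_1^+ \bullet m_2^-)$ in $J\cdot \mathcal{K}_r/J^2\cdot \mathcal{K}_r$. Then Theorem \ref{even_modSymb_computation_m_1_m_2} (whose hypotheses are exactly those of the corollary, namely Conjecture \ref{odd_modSymb_Sharifi_conj} and $n(r,p) \geq 3$) provides the explicit identity
\[
\delta_r(m_1^+ \bullet m_2^-) = \frac{1}{4}\sum_{i=1}^{N-1}\left(\sum_{j=i}^{N-1} F_{1,p}(g^j)\right)\cdot \left(1-\zeta_N^{g^{i-1}}, \frac{1-\zeta_N}{1-\zeta_N^{g^{-1}}}\right)_r.
\]
Since $p \geq 5$, the scalar $\tfrac{1}{4}$ is a unit in $\mathbf{Z}/p^r\mathbf{Z}$, so the vanishing of the right-hand side in $J\cdot \mathcal{K}_r/J^2 \cdot \mathcal{K}_r$ is equivalent to the vanishing of the displayed sum in the statement of the corollary.

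Assembling these observations yields the equivalence: $n(r,p) \geq 4$ $\iff$ $m_1^+\bullet m_2^- \equiv 0 \pmod{p^r}$ $\iff$ $\delta_r(m_1^+\bullet m_2^-) = 0$ $\iff$ the explicit $K$-theoretic sum vanishes in $J\cdot \mathcal{K}_r/J^2\cdot \mathcal{K}_r$. There is no genuine obstacle here, since all the hard work (the construction of $m_2^-$ via Sharifi-type $K$-theory, the refined Hida control yielding $\psi$, and the formula for $m_1^+$ in Manin symbols) has already been packaged into Theorem \ref{even_modSymb_computation_m_1_m_2}; the only point requiring care is verifying that the use of Corollary \ref{Formalism_criterion_pairing} is legitimate in this range of indices and that $\delta_r$ being an isomorphism lets us transfer the vanishing test from $\mathbf{Z}/p^r\mathbf{Z}$ to $J\cdot \mathcal{K}_r/J^2\cdot \mathcal{K}_r$.
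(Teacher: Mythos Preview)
Your proposal is correct and is precisely the argument the paper intends: the corollary is stated without proof immediately after Theorem \ref{even_modSymb_computation_m_1_m_2}, as an immediate consequence of that theorem together with the pairing criterion of Corollary \ref{Formalism_criterion_pairing} and the fact that $\delta_r$ is an isomorphism. Your care in checking that the index $(i,j)=(1,2)$ is admissible and that $\tfrac14$ is a unit for $p\geq 5$ is exactly what is needed.
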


\subsection{Computation of $m_1^+ \bullet m_1^-$ when $p=3$}\label{Comparison_case_p=3}
In this section, we assume $p=3$. 

We give a formula for $m_1^+ \bullet m_1^-$ modulo $3^t$. We are only able to simplify the formula for this intersection product modulo $3^{t-1}$. Note that we do not have an explicit formula for $m_2^-$, but nevertheless it is possible to compute $m_0^+ \bullet m_2^-=m_1^+ \bullet m_1^-$.

For $\overline{a}$ and $\overline{b}$ in $(\mathbf{Z}/N\mathbf{Z})^{\times}$, we let $[\overline{a},\overline{b}]$ be the reduction modulo $N$ of the interval $[a,b]$, where $a$ and $b$ are representatives of $a$ and $b$ in $\{-N,..., -1\}$ and $\{1,2,...,N\}$ respectively.

Let $\mu_3'$ the set of cubic primitive roots of unity in $(\mathbf{Z}/N\mathbf{Z})^{\times}$. Recall that $\sigma = \begin{pmatrix}
0 & -1 \\
1 & 0
\end{pmatrix} $ and $\tau = \begin{pmatrix}
0 & -1 \\
1 & -1
\end{pmatrix}$ $\in \SL_2(\mathbf{Z})$. There is a right action of $\SL_2(\mathbf{Z})$ on $\mathbf{P}^1(\mathbf{Z}/N\mathbf{Z})$ given by $x \cdot \begin{pmatrix}
a&b \\
c &d
\end{pmatrix} = \frac{ax+c}{bx+d}$ if $x \in \mathbf{P}^1(\mathbf{Z}/N\mathbf{Z})$. The set  $(\mathbf{Z}/N\mathbf{Z})^{\times} \backslash \{\overline{1}, \overline{-1}\} \subset \mathbf{P}^1(\mathbf{Z}/N\mathbf{Z})$ is stable under $\sigma$.

Let $\log_3 : \mathbf{Z}[ (\mathbf{Z}/N\mathbf{Z})^{\times} \backslash \{\overline{1}, \overline{-1}\}]\rightarrow \mathbf{Z}/3^{t+1}\mathbf{Z}$ be given by
$$\log_3\left(\sum_{x \in  (\mathbf{Z}/N\mathbf{Z})^{\times} \backslash \{\overline{1}, \overline{-1}\} } \lambda_x \cdot [x]\right) = \sum_{x_3 \in \mu_3'} \sum_{y \in [-x_3,x_3]} (\lambda_{y\sigma} - \lambda_y)\cdot \log(x_3) \text{ .}$$

The following lemma summarizes the properties $\log_3$ we will need.
\begin{lem}[Merel]\label{comparison_properties_log_3}
\begin{enumerate}
\item If $a \in (\mathbf{Z}/N\mathbf{Z})^{\times} \backslash \{\overline{1}, \overline{-1}\}$ is fixed by $\tau$, then $\log_3([a]) \equiv \log(a) \text{ (modulo }3^{t+1}\text{)}$.
\item If $a \in  (\mathbf{Z}/N\mathbf{Z})^{\times} \backslash \{\overline{1}, \overline{-1}\}$ is fixed by $\sigma$, then $\log_3(a) \equiv 0 \text{ (modulo }3^{t+1}\text{)}$.
\end{enumerate}
\end{lem}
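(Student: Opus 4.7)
The plan is to prove both parts by directly unfolding the definition of $\log_3$. Part (ii) is essentially trivial, while part (i) reduces to a careful count of integer lattice points in the intervals $[-x_3, x_3]$, the interesting feature being a wrap-around phenomenon when $2x_3 + 1 > N$.

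For part (ii), since $\sigma$ has order two in $\PSL_2$ acting on $\mathbf{P}^1$, the hypothesis $a\sigma = a$ implies that $\bar y\sigma = \bar a$ is equivalent to $\bar y = \overline{a\sigma^{-1}} = \bar a$. Hence for every integer $y$ appearing in the sum we have $\lambda_{y\sigma} = \lambda_y$, and every summand of $\log_3([a])$ vanishes identically, so $\log_3([a]) = 0$.

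For part (i), the hypothesis forces $a$ to be a primitive cube root of unity, so $\mu_3' = \{a, b\}$ with $b := a^{-1} = -1 - a$. Lifting $a$ and $b$ to $\{1, \ldots, N-1\}$, the relation $a+b \equiv -1 \pmod N$ combined with $1 \leq a,b \leq N-1$ forces $a + b = N - 1$; I would take without loss of generality $a < b$ (so $a \leq (N-3)/2$ and $b \geq (N+1)/2$) and use $-a^{-1} \equiv a+1 \pmod N$. I then count, for each $x_3 \in \{a, b\}$, the integer values $y \in \{-x_3, \ldots, x_3\}$ with $\bar y \equiv a$ versus with $\bar y \equiv a+1 \pmod N$. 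When $x_3 = a$ the length $2a+1 < N$ precludes wrap-around, giving exactly one $y = a$ hitting $\bar a$ and zero hitting $\overline{a+1}$, for a contribution $-\log(a)$. When $x_3 = b$ the interval length $2b+1 > N$ produces a wrap: there is still just $y = a$ hitting $\bar a$, but now both $y = a+1$ and $y = a+1-N = -b$ hit $\overline{a+1}$, contributing $\log(b)$.

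Summing gives $\log_3([a]) = -\log(a) + \log(b) = -2\log(a) \pmod{3^{t+1}}$ (using $\log(b) = -\log(a)$ since $ab = 1$). To finish I would invoke $a^3 = 1$, which forces $3\log(a) \equiv 0 \pmod{3^{t+1}}$, so $-2\log(a) \equiv \log(a) \pmod{3^{t+1}}$. The main obstacle is exactly this wrap-around bookkeeping in the second case: the naive count produces $-2\log(a)$, and only after using the order-three congruence $3\log(a) \equiv 0$ in $\mathbf{Z}/3^{t+1}\mathbf{Z}$ does one recover the stated $\log(a)$; the case $a > b$ follows by swapping roles and using $\log(b) = -\log(a)$.
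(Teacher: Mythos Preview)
Your proof is correct. The paper itself does not give an argument here; it simply cites \cite[Lemme~6]{Merel_accouplement} for (i) and \cite[Lemme~7]{Merel_accouplement} for (ii). What you have written is a self-contained verification of those two lemmas, and your computation matches the expected mechanism: part (ii) is immediate from $\sigma^2=1$, and part (i) is a lattice-point count in the two intervals $[-a,a]$ and $[-b,b]$ where $b=N-1-a$, with the wrap-around in the longer interval producing the extra hit at $\overline{a+1}$.

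Two minor remarks. First, your phrase ``WLOG $a<b$'' is harmless but slightly misleading, since $a$ is the fixed element you are evaluating $\log_3$ at, not a free choice; the case $a>b$ is genuinely a separate (symmetric) computation, which you correctly note at the end. Second, the passage from $-2\log(a)$ to $\log(a)$ uses that $\log$ here is the extended map $(\mathbf{Z}/N\mathbf{Z})^\times\to\mathbf{Z}/3^{t+1}\mathbf{Z}$ (recall $v=v_3(N-1)=t+1$ when $p=3$), so $a^3=1$ indeed gives $3\log(a)=0$ in $\mathbf{Z}/3^{t+1}\mathbf{Z}$; you invoke this correctly.
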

\begin{proof}
Point (i) (resp. point (ii)) follows from \cite[Lemme 6]{Merel_accouplement} (resp. \cite[Lemme 7]{Merel_accouplement}).
\end{proof}

By Lemma \ref{comparison_properties_log_3}, the group homomorphism $\mathbf{Z}[(\mathbf{Z}/N\mathbf{Z})^{\times} \backslash \{\overline{1}, \overline{-1}\}] \rightarrow \mathbf{Z}/3^{t+1}\mathbf{Z}$ given by $[x] \mapsto \log(x) - \log_3(x)$ annihilates the Manin relations, and thus induces a group homomorphism $$\varphi_3 : H_1(X_0(N), \mathbf{Z}/3^{t+1}\mathbf{Z}) \rightarrow \mathbf{Z}/3^{t+1}\mathbf{Z} \text{ .}$$

\begin{thm}\label{comparison_pairing_m_1^+_m_1^-_3}
Assume $p=3$. Assume $n(r,p) \geq 2$, \ie $\sum_{k=1}^{N-1} k^2\cdot \log(k) \equiv 0 \text{ (modulo }3^r\text{)}$. Then we have in $\mathbf{Z}/3^{r+1}\mathbf{Z}$:
$$12 \cdot m_1^+ \bullet m_1^- = \log(2) \cdot \varphi_3(m_0^+) + \varphi_3\left(  \sum_{x \in (\mathbf{Z}/N\mathbf{Z})^{\times} \backslash \{\overline{1}, \overline{-1}\} } F_{1,3}(x) \cdot \xi_{\Gamma_0(N)}([x:1])\right) $$
where $F_{1,3} : \mathbf{P}^1(\mathbf{Z}/N\mathbf{Z}) \rightarrow \mathbf{Z}/3^{t+1}\mathbf{Z}$ is defined in Theorem \ref{thm_Introduction_w_1^+_3}.
\end{thm}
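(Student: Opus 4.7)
The plan is to pair the explicit formula for $6 m_1^+$ from Theorem \ref{thm_Introduction_w_1^+_3} against $m_1^-$, and to identify the resulting pairing with the homomorphism $\varphi_3$ via Merel's refined $p=3$ version of Mazur's intersection formula. By Theorem \ref{thm_Introduction_w_1^+_3}, in $M_+/3^{t+1}M_+$ modulo $\mathbf{Z}\cdot 3 m_0^+$, we have
$$6 m_1^+ \equiv \log(2) \cdot \tilde m_0^+ + \sum_{x \in \mathbf{P}^1(\mathbf{Z}/N\mathbf{Z})} F_{1,3}(x) \cdot \xi_{\Gamma_0(N)}(x).$$
Multiplying by $2$ and taking intersection with $m_1^-$ gives a formula for $12\,m_1^+\bullet m_1^-$ modulo the ambiguity $6\cdot(m_0^+\bullet m_1^-)$. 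Before going further, I would dispose of this ambiguity: by Theorem \ref{Comparison_Merel_sqrt_u} we have $m_0^+\bullet m_1^- = -\tfrac{1}{4}\sum_{k=1}^{N-1} k^2 \log(k)$ modulo $3^r$, and Corollary \ref{comparison_corr_g_p_3} identifies the hypothesis $n(r,p)\geq 2$ with the vanishing of this sum modulo $3^r$; hence $6(m_0^+\bullet m_1^-)\equiv 0\pmod{3^{r+1}}$.

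The main step is to show that, modulo $3^{r+1}$, the functional $\xi\mapsto 2(\xi\bullet m_1^-)$ on $M_+$ coincides with a natural extension of $\varphi_3$. The point is that Merel's argument for \cite[Th\'eor\`eme~2]{Merel_accouplement} in the case $p=3$ is precisely what motivates the definition of $\log_3$ and hence $\varphi_3$: for $y\in(\mathbf{Z}/N\mathbf{Z})^\times\setminus\{\pm 1\}$ the class $\xi_{\Gamma_0(N)}([y:1])$ lies in $H_1(X_0(N),\mathbf{Z})$, and the refined pairing formula reads
$$2\,\xi_{\Gamma_0(N)}([y:1])\bullet m_1^- \equiv \log(y)-\log_3(y) = \varphi_3\bigl(\xi_{\Gamma_0(N)}([y:1])\bigr)\pmod{3^{r+1}},$$
using the identity $(1+c)\xi\bullet m_1^-=2\,\xi\bullet m_1^-$ coming from $c\cdot m_1^-=-m_1^-$ and the orientation-reversal of complex conjugation. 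I would verify this identity following Merel's argument: the map $[y]\mapsto 2\,\xi_{\Gamma_0(N)}([y:1])\bullet m_1^-$ is a group homomorphism on $\mathbf{Z}[(\mathbf{Z}/N\mathbf{Z})^\times\setminus\{\pm1\}]$ that vanishes on Manin relations (so factors through $H_1(X_0(N),\mathbf{Z}/3^{r+1}\mathbf{Z})$), is annihilated by the Eisenstein ideal on the image, has appropriate values on $\sigma$-fixed and $\tau$-fixed elements forced by Lemma \ref{comparison_properties_log_3}, and so must coincide with $\varphi_3$ after pinning down the constant by evaluation on a single element (e.g.\ the image of $\{0,\infty\}$ via Theorem \ref{Comparison_Merel_sqrt_u}).

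Once this pairing identification is available, substituting into $12\,m_1^+\bullet m_1^-$ and using linearity of $\varphi_3$ gives
$$12\,m_1^+\bullet m_1^- \equiv \log(2)\,\varphi_3(m_0^+) + \varphi_3\!\left(\sum_{x\in\mathbf{P}^1(\mathbf{Z}/N\mathbf{Z})} F_{1,3}(x)\,\xi_{\Gamma_0(N)}(x)\right)\pmod{3^{r+1}}.$$
To finish, I must match the summation range with that of the theorem statement: the symbols $\xi_{\Gamma_0(N)}([1:1])$ and $\xi_{\Gamma_0(N)}([-1:1])$ vanish by Manin relations (fixed by $\tau$ and $\sigma$ respectively), and the remaining boundary-type contributions from $[0:1]$ and $[1:0]$ are absorbed by rewriting them as a multiple of $\{0,\infty\}$, whose pairing with $m_1^-$ combines cleanly with the $\log(2)\,\varphi_3(m_0^+)$ term.

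The hardest step will be the pairing-identification in the middle paragraph: establishing that the Mazur-type homomorphism $\xi\mapsto 2(\xi\bullet m_1^-)$ is exactly $\varphi_3$ (not merely a scalar multiple of it) requires carefully tracking constants in the $p=3$ refinement of Merel's computation \cite[Th\'eor\`eme~2]{Merel_accouplement}, handling both the fixed points of $\sigma,\tau$ (where the naive $\log$ must be replaced by $\log-\log_3$) and the interplay between absolute and relative homology in the definition of $\varphi_3$ on symbols such as $\{0,\infty\}$.
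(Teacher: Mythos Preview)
Your overall strategy (start from Theorem~\ref{thm_Introduction_w_1^+_3}, pair with $m_1^-$, identify with $\varphi_3$) matches the paper's, but your central step~4 does not go through as written. The pointwise claim
\[
2\,\xi_{\Gamma_0(N)}([y:1])\bullet m_1^- \equiv \log(y)-\log_3(y)=\varphi_3(\xi_{\Gamma_0(N)}([y:1]))\pmod{3^{r+1}}
\]
is not well-posed: $m_1^-$ lives in $M^-/3^rM^-$, so the left side is only defined modulo $3^r$, and modulo $3^r$ Theorem~\ref{odd_even_modSymb_determination_m_1^-} already gives $\log(y)$, not $\log(y)-\log_3(y)$. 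The correction $\log_3$ lies in $3^t\mathbf{Z}/3^{t+1}\mathbf{Z}$ (each $\log(x_3)$ with $x_3\in\mu_3'$ is $3$-torsion), so modulo $3^r$ it vanishes and there is nothing to ``refine''; modulo $3^{r+1}$ with $r=t$ it is genuinely nonzero and cannot be produced by any pairing with $m_1^-$. Your proposed verification via Merel's argument cannot succeed because it would have to pin down a functional valued in $\mathbf{Z}/3^{r+1}\mathbf{Z}$ from data only defined in $\mathbf{Z}/3^r\mathbf{Z}$.

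The paper bypasses this by never making a pointwise pairing identification. It writes $m_0^+=\sum G_0(x)\xi_{\Gamma_0(N)}([x:1])$ and $m_1^+=\sum G_1(x)\xi_{\Gamma_0(N)}([x:1])$, so that Theorem~\ref{thm_Introduction_w_1^+_3} together with \cite[Proposition~3]{Merel_accouplement} gives an equality
\[
6\sum G_1(x)[x]=\log(2)\sum G_0(x)[x]+\sum F_{1,3}(x)[x]+a_\sigma+a_\tau
\]
in $(\mathbf{Z}/3^{t+1}\mathbf{Z})[(\mathbf{Z}/N\mathbf{Z})^\times\setminus\{\pm1\}]$, with $a_\sigma,a_\tau$ respectively $\sigma$- and $\tau$-invariant. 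Now apply the map $[x]\mapsto\log(x)-\log_3(x)$ to both sides: by Lemma~\ref{comparison_properties_log_3} this kills $a_\tau$ (part~(i)) and $a_\sigma$ (part~(ii) together with $\log(-1)=0$ for $p=3$), so is precisely $\varphi_3\circ\xi_{\Gamma_0(N)}$. On the right one obtains $\log(2)\varphi_3(m_0^+)+\varphi_3(\sum F_{1,3}(x)\xi_{\Gamma_0(N)}(x))$. On the left one obtains $6\sum G_1(x)\log(x)-6\sum G_1(x)\log_3(x)$; but $6\log_3=0$ in $\mathbf{Z}/3^{t+1}\mathbf{Z}$ since $\log_3$ takes values in $3^t\mathbf{Z}/3^{t+1}\mathbf{Z}$, so the left side is $6\sum G_1(x)\log(x)=12(m_1^+\bullet m_1^-)$. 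The factor of $6$ is what makes the passage from $\mathbf{Z}/3^r\mathbf{Z}$ to $\mathbf{Z}/3^{r+1}\mathbf{Z}$ possible, not a refined pairing formula.
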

\begin{proof}
By Theorem \ref{thm_Introduction_w_1^+_3}, we have $H_1(X_0(N), \cusps, \mathbf{Z}/3^{t+1}\mathbf{Z})_+$:
$$6\cdot m_1^+ = \log(2) \cdot m_0^+ + \sum_{x \in (\mathbf{Z}/N\mathbf{Z})^{\times} \backslash \{\overline{1}, \overline{-1}\}} F_{1,3}(x) \cdot \xi_{\Gamma_0(N)}([x:1]) \text{ .}$$
Let $G_0 : (\mathbf{Z}/N\mathbf{Z})^{\times} \backslash \{\overline{1}, \overline{-1}\} \rightarrow \mathbf{Z}/3^t\mathbf{Z}$ be such that 
$$m_0^+ = \sum_{x \in (\mathbf{Z}/N\mathbf{Z})^{\times} \backslash \{\overline{1}, \overline{-1}\}} G_0(x) \cdot \xi_{\Gamma_0(N)}([x:1]) \text{ .}$$
Let $G_1 : (\mathbf{Z}/N\mathbf{Z})^{\times} \backslash \{\overline{1}, \overline{-1}\} \rightarrow \mathbf{Z}/3^t\mathbf{Z}$ be such that 
$$m_1^+ = \sum_{x \in (\mathbf{Z}/N\mathbf{Z})^{\times} \backslash \{\overline{1}, \overline{-1}\} } G_1(x) \cdot \xi_{\Gamma_0(N)}([x:1]) \text{ .}$$

By Theorem \ref{thm_Introduction_w_1^+_3} and \cite[Proposition 3]{Merel_accouplement}, we have the following equality in $(\mathbf{Z}/3^{t+1}\mathbf{Z})[(\mathbf{Z}/N\mathbf{Z})^{\times} \backslash \{\overline{1}, \overline{-1}\} ]$:
\begin{align*}
6\cdot \sum_{x \in (\mathbf{Z}/N\mathbf{Z})^{\times} \backslash \{\overline{1}, \overline{-1}\} } G_1(x) \cdot [x]  &= \log(2) \cdot \sum_{x \in (\mathbf{Z}/N\mathbf{Z})^{\times} \backslash \{\overline{1}, \overline{-1}\} } G_0(x) \cdot [x]   \\ +  \sum_{x \in (\mathbf{Z}/N\mathbf{Z})^{\times} \backslash \{\overline{1}, \overline{-1}\} } F_{1,3}(x) \cdot [x] + a_{\sigma}+a_{\tau}
\end{align*}
where $a_{\sigma}$ (resp. $a_{\tau}$) is an element of $(\mathbf{Z}/3^{t+1}\mathbf{Z})[(\mathbf{Z}/N\mathbf{Z})^{\times} \backslash \{\overline{1}, \overline{-1}\} ]$ fixed by $\sigma$ (resp. by $\tau$). By definition, we have in $\mathbf{Z}/3^r\mathbf{Z}$:
$$m_1^+ \bullet m_1^- = \frac{1}{2}\cdot  \sum_{x \in (\mathbf{Z}/N\mathbf{Z})^{\times} \backslash \{\overline{1}, \overline{-1}\} } G_1(x) \cdot \log(x) \text{ .}$$
By Lemma \ref{comparison_properties_log_3} (i), we get in $\mathbf{Z}/3^{r+1}\mathbf{Z}$:
$$12 \cdot m_1^+ \bullet m_1^- = \log(2) \cdot \varphi_3(m_0^+) + \varphi_3\left(  \sum_{x \in (\mathbf{Z}/N\mathbf{Z})^{\times} \backslash \{\overline{1}, \overline{-1}\} } F_{1,3}(x) \cdot \xi_{\Gamma_0(N)}([x:1])\right) $$ 
which concludes the proof of Theorem \ref{comparison_pairing_m_1^+_m_1^-_3}.
\end{proof}

\begin{corr}\label{comparison_pairing_m_1^+_m_1^-_3}
Assume $t \geq 2$, \ie $N \equiv 1 \text{ (modulo }27\text{)}$. Assume $1 \leq r \leq t-1$. If $n(r,p) \geq 2$, \ie if $\sum_{k=1}^{N-1} k^2 \cdot \log(k) \equiv 0 \text{ (modulo } 3^r\text{)}$, then we have in $\mathbf{Z}/3^r\mathbf{Z}$:
$$m_1^+ \bullet m_1^- = -\frac{1}{4}\cdot \sum_{k=1}^{N-1}k^2\cdot \log(k)^2 \text{ .}$$
\end{corr}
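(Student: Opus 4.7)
The plan is to deduce the Corollary from Theorem \ref{comparison_pairing_m_1^+_m_1^-_3} in two stages: first I collapse the correction functional $\varphi_3$ to the naive logarithm modulo $3^{r+1}$, and then I evaluate the two resulting sums by the same combinatorial recipe as in the proof of Theorem \ref{even_modSymb_computation_pairing_>3}. The collapse relies on the following observation: if $\zeta\in(\mathbf{Z}/N\mathbf{Z})^{\times}$ is a primitive cube root of unity, then $3\log(\zeta)=\log(\zeta^3)=0$ in $\mathbf{Z}/3^{t+1}\mathbf{Z}$, whence $\log(\zeta)\in 3^t\mathbf{Z}/3^{t+1}\mathbf{Z}$. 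Since $\log_3$ is a $\mathbf{Z}$-linear combination of such values, it takes values in $3^t\mathbf{Z}/3^{t+1}\mathbf{Z}$, which is zero modulo $3^{r+1}$ as soon as $r+1\leq t$. Hence $\varphi_3\equiv \log_{\mathrm{ex}}\pmod{3^{r+1}}$, where $\log_{\mathrm{ex}}$ is the functional induced on Manin symbols by $[x]\mapsto \log(x)$. Theorem \ref{comparison_pairing_m_1^+_m_1^-_3} then reads
\[
12\,m_1^+\bullet m_1^-\;\equiv\;\log(2)\,L_0+L_1\pmod{3^{r+1}},
\]
with $L_0=\sum_{x\neq\pm 1}F_{0,p}([x:1])\log(x)$ and $L_1=\sum_{x\neq\pm 1}F_{1,3}([x:1])\log(x)$.

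Next I would compute $L_0$ and $L_1$ explicitly. Converting the $\B_1$-sums defining $F_{0,p}$ and $F_{1,3}$ into $D_2$-sums (as in the proof of Lemma \ref{even_modSymb_reecriture_m_0^+}), swapping the order of summation, and parametrising solutions of $(1-x)t_1+(1+x)t_2\equiv 0$ by $x=(t_1+t_2)/(t_1-t_2)$ with $t_1\neq\pm t_2$, the analysis reduces to the sums already handled in Section \ref{even_modSymb_few_identities}. For $L_0$, Lemma \ref{even_modSymb_computation_D2} (with the anti-diagonal $t_1=-t_2$ contributing only $(N-1)\log(2)$-multiples, which have $3$-adic valuation $\geq t$) followed by Lemma \ref{even_modSymb_computation_square} yields $L_0\equiv -\tfrac12\sum k^2\log(k)\pmod{3^t}$ up to a term in $3^{t-1}\mathbf{Z}/3^t\mathbf{Z}$. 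For $L_1$ I would follow the template of the proof of Theorem \ref{even_modSymb_computation_pairing_>3} using Lemmas \ref{Bernardi_lemma>3}, \ref{even_modSymb_computation_D2_square} and \ref{even_modSymb_computation_square_2}, exploiting $\log(-1)=\mathcal{F}_1=0$ for $p$ odd and $3\mathcal{F}_2\equiv 0\pmod{3^{t+1}}$ (so $\mathcal{F}_2$ vanishes modulo $3^{r+1}$). The calculation produces a main term $\sum k\log(k)^2$ together with residual contributions from $\log(2)\sum k^2\log(k)$ and $(N-1)\log(2)^2$.

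Finally I would combine the two evaluations: applying Lemma \ref{even_modSymb_computation_square_2} to trade $\sum k\log(k)^2$ for $\sum k^2\log(k)^2$ introduces a cross term $\log(2)\sum k^2\log(k)$ that cancels — together with the cross term coming from $\log(2)\,L_0$ — thanks to the hypothesis $\sum k^2\log(k)\equiv 0\pmod{3^r}$ (equivalent to $n(r,p)\geq 2$ by Corollary \ref{comparison_corr_g_p_3}); the residual $(N-1)\log(2)^2$ contributions vanish modulo $3^{r+1}$ since $v_3(N-1)=t+1\geq r+2$. Dividing through by $12=4\cdot 3$ (legitimate when passing from a congruence modulo $3^{r+1}$ to one modulo $3^r$, since $4$ is a unit) delivers the asserted identity. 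The main obstacle will be the arithmetic bookkeeping in this last step: at one point the proof of Theorem \ref{even_modSymb_computation_pairing_>3} for $p\geq 5$ invoked $\sum k\log(k)\equiv 0$ to kill an intermediate sum, an identity which does not hold here, so one must carry the $\sum k\log(k)$-terms through to the end and verify that they pair correctly with the terms produced by Lemma \ref{even_modSymb_computation_square} in the expansion of $\log(2)\,L_0$; the tightest case is $r=t-1$, where the fractional denominators $\tfrac1{9},\tfrac1{12},\tfrac1{36}$ appearing in the expansion must each be matched against their companion factor coming from $N-1$ or the hypothesis in order to restore $3$-integrality and produce the final cancellation at precision $3^t$.
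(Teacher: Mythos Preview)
Your strategy is correct and is essentially the paper's own argument: collapse $\varphi_3$ to $\log$ modulo $3^{r+1}$ (using $r+1\le t$), then evaluate the two resulting sums via the $D_2$-identities of Section~\ref{even_modSymb_few_identities} and combine. Two simplifications from the paper are worth noting. First, rather than computing $L_0$ directly, the paper observes that once $\varphi_3\equiv\log$ one has $\varphi_3(m_0^+)=2\,m_0^+\bullet m_1^-$ and invokes Theorem~\ref{Comparison_Merel_sqrt_u} to get $L_0\equiv -\tfrac12\sum_k k^2\log(k)$ cleanly in $\mathbf{Z}/3^{r+1}\mathbf{Z}$; this avoids the ``up to a term in $3^{t-1}\mathbf{Z}/3^t\mathbf{Z}$'' imprecision you flag (which would in fact bite at $r=t-1$). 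Second, for $L_1$ the paper uses the \emph{second} equality of Lemma~\ref{even_modSymb_computation_D2_square} (and of Lemma~\ref{even_modSymb_computation_D2}), which already expresses the $D_2$-sums directly in terms of $\sum_k k^2\log(k)^2$ and $\sum_k k^2\log(k)$; the $(N-1)\log(2)^2$ and $\mathcal{F}_2$ residues then vanish modulo $3^{r+1}$ since $v_3(N-1)=t+1\ge r+2$ and $3\mathcal{F}_2=0$ in $\mathbf{Z}/3^{t+1}\mathbf{Z}$, and the two $\log(2)\sum_k k^2\log(k)$ terms cancel on the nose without any appeal to the hypothesis $n(r,p)\ge2$. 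With these shortcuts the bookkeeping you worry about disappears: one lands on $6\,m_1^+\bullet m_1^-=-\tfrac32\sum_k k^2\log(k)^2$ in $\mathbf{Z}/3^{r+1}\mathbf{Z}$ and divides by $6$.
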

\begin{proof}
Since $r \leq t-1$, the map $\log_3$ is zero modulo $3^{r+1}$. Thus, for all $x \in (\mathbf{Z}/N\mathbf{Z})^{\times} \backslash \{\overline{1}, \overline{-1}\}$ we have $\varphi_3(\xi_{\Gamma_0(N)}(x)) \equiv \log(x) \text{ (modulo }3^{r+1}\text{)}$. In particular, we have 
$$\varphi_3(m_0^+) \equiv 2 \cdot m_0^+ \bullet m_1^- \text{ (modulo }3^{r+1}\text{).} $$
We get, by Theorem \ref{Comparison_Merel_sqrt_u}:
$$\varphi_3(m_0^+) \equiv - \frac{1}{2} \cdot \sum_{k=1}^{N-1} k^2\cdot \log(k) \text{ (modulo }3^{r+1}\text{).} $$

Thus, we have in $\mathbf{Z}/3^{r+1}\mathbf{Z}$:
\begin{equation}\label{even_modSymb_calcul_p_3_r<t}
6 \cdot m_1^+ \bullet m_1^- = -\frac{1}{2} \cdot \log(2) \cdot  \sum_{k=1}^{N-1} k^2\cdot \log(k) +  \sum_{x \in (\mathbf{Z}/N\mathbf{Z})^{\times} \backslash \{\overline{1}, \overline{-1}\} } F_{1,3}(x) \cdot \log(x) \text{ .}
\end{equation}

Recall that $F_{1,3} : \mathbf{P}^1(\mathbf{Z}/N\mathbf{Z}) \rightarrow \mathbf{Z}/3^{t+1}\mathbf{Z}$ is defined by
\begin{align*}
F_{1,3}([c:d]) &= \frac{1}{2}\sum_{(s_1,s_2) \in (\mathbf{Z}/2N\mathbf{Z})^2 \atop (d-c)s_1+(d+c)s_2 \equiv 0 \text{ (modulo } N\text{)}} (-1)^{s_1+s_2} \B_1\left(\frac{s_1}{2N}\right)\B_1\left(\frac{s_2}{2N}\right) \cdot \log\left(\frac{s_2}{d-c}\right)  \\&
-  \frac{1}{2}\sum_{(s_1,s_2) \in (\mathbf{Z}/2N\mathbf{Z})^2 \atop (d-c)s_1+(d+c)s_2 \not\equiv 0 \text{ (modulo } N\text{)}} (-1)^{s_1+s_2} \B_1\left(\frac{s_1}{2N}\right)\B_1\left(\frac{s_2}{2N}\right) \cdot \log((d-c)s_1+(d+c)s_2)) 
\end{align*}
if $[c:d] \neq [1:1]$ and $F_{1,3}([1:1])=0$. We have, in $\mathbf{Z}/3^{t+1}\mathbf{Z}$:

\begin{align*}
2\cdot & \sum_{x \in (\mathbf{Z}/N\mathbf{Z})^{\times} \backslash \{\overline{1}, \overline{-1}\} } F_{1,3}(x) \cdot \log(x) \\& = \sum_{x \in (\mathbf{Z}/N\mathbf{Z})^{\times} \atop x\neq 1 }\log(x) \cdot \left(\sum_{(s_1,s_2)\in (\mathbf{Z}/2N\mathbf{Z})^2 \atop  (1-x)s_1 + (1+x)s_2 \equiv 0 \text{ (modulo }N\text{)}} (-1)^{s_1+s_2} \B_1\left(\frac{s_1}{2N}\right)\B_1\left(\frac{s_2}{2N}\right) \cdot \log\left(\frac{s_2}{1-x}\right) \right)  \\& - \log(x) \cdot \left( \sum_{(s_1,s_2)\in (\mathbf{Z}/2N\mathbf{Z})^2 \atop  (1-x)s_1 + (1+x)s_2 \not\equiv 0 \text{ (modulo }N\text{)}} (-1)^{s_1+s_2} \B_1\left(\frac{s_1}{2N}\right)\B_1\left(\frac{s_2}{2N}\right) \cdot \log((1-x)s_1 + (1+x)s_2) \right)
\\& = \frac{1}{4}\sum_{x \in (\mathbf{Z}/N\mathbf{Z})^{\times} \atop x\neq 1 } \log(x) \cdot \left(\sum_{(s_1,s_2)\in (\mathbf{Z}/N\mathbf{Z})^2 \atop  (1-x)s_1 + (1+x)s_2 \equiv 0 \text{ (modulo }N\text{)}} D_2\left(\frac{s_1}{N}\right)D_2\left(\frac{s_2}{N}\right)\cdot \log\left(\frac{s_2}{1-x}\right) \right)   \\& -\log(x) \cdot \left(\sum_{(s_1,s_2)\in (\mathbf{Z}/N\mathbf{Z})^2 \atop  (1-x)s_1 + (1+x)s_2 \not\equiv 0 \text{ (modulo }N\text{)}} D_2\left(\frac{s_1}{N}\right)D_2\left(\frac{s_2}{N}\right) \cdot \log((1-x)s_1 + (1+x)s_2) \right)  \\& =
\frac{1}{4}\sum_{(s_1,s_2) \in (\mathbf{Z}/N\mathbf{Z})^2 \atop s_1 \neq \pm s_2} D_2\left(\frac{s_1}{N}\right)D_2\left(\frac{s_2}{N}\right)\cdot (\log\left(\frac{s_1+s_2}{s_1-s_2}\right) \cdot \log\left(\frac{s_2-s_1}{2}\right) \\&- \sum_{x \neq \frac{s_1+s_2}{s_1-s_2}} \log(x)\log((1-x)s_1+(1+x)s_2) ) \text{.}
\end{align*}

Since $r+1 \leq t$, Lemma \ref{Bernardi_lemma>3} shows that we have, in $\mathbf{Z}/3^{r+1}\mathbf{Z}$:
\begin{align*}
 \sum_{x \neq \frac{s_1+s_2}{s_1-s_2}} \log(x)\log((1-x)s_1+(1+x)s_2) = - \log(s_2-s_1)\log\left(\frac{s_1+s_2}{s_1-s_2}\right)  - \log\left(\frac{s_1+s_2}{s_1-s_2}\right) ^2 \text{ .}
\end{align*}
Thus we have, in $\mathbf{Z}/3^{r+1}\mathbf{Z}$:

\begin{align*}
2\sum_{x \in (\mathbf{Z}/N\mathbf{Z})^{\times} \backslash \{\overline{1}, \overline{-1}\} } F_{1,3}(x) \cdot \log(x) & =  \frac{1}{4}\sum_{(s_1,s_2) \in (\mathbf{Z}/N\mathbf{Z})^2 \atop s_1 \neq \pm s_2} D_2\left(\frac{s_1}{N}\right)D_2\left(\frac{s_2}{N}\right)\cdot  \\& \left( \log(s_1+s_2)^2-\log(s_1-s_2)^2 - \log(2) \cdot \log\left(\frac{s_1+s_2}{s_1-s_2}\right) \right) 
\end{align*}
Using Lemmas \ref{even_modSymb_computation_D2} and \ref{even_modSymb_computation_D2_square} and the assumption $r+1 \leq t$, we get in $\mathbf{Z}/3^{r+1}\mathbf{Z}$:
$$\sum_{x \in (\mathbf{Z}/N\mathbf{Z})^{\times} \backslash \{\overline{1}, \overline{-1}\} } F_{1,3}(x) \cdot \log(x) = -\frac{3}{2}\cdot \sum_{k=1}^{N-1}k^2\cdot \log(k)^2 +\frac{1}{2}\cdot \log(2) \cdot \sum_{k=1}^{N-1} k^2\cdot \log(k) \text{ .}$$
By (\ref{even_modSymb_calcul_p_3_r<t}), we have in $\mathbf{Z}/3^{r+1}\mathbf{Z}$:
$$6 \cdot m_1^+ \bullet m_1^- = -\frac{3}{2} \cdot \sum_{k=1}^{N-1}k^2 \cdot \log(k)^2 \text{ .}$$
This concludes the proof of Theorem \ref{comparison_pairing_m_1^+_m_1^-_3}.
\end{proof}

\begin{corr}\label{even_modSymb_g_3_corr}
Assume that $t \geq 2$, \ie that $N \equiv 1 \text{ (modulo }27\text{)}$. Assume that $1 \leq r \leq t-1$. The following assertions are equivalent:
\begin{enumerate}
\item $n(r,p) \geq 3$
\item $\sum_{k=1}^{N-1} k^2 \cdot \log(k) \equiv \sum_{k=1}^{N-1} k^2 \cdot \log(k)^2 \equiv 0 \text{ (modulo }3^r \text{)}$.
\end{enumerate}
\end{corr}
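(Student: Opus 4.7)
The plan is to derive the corollary by combining three results already established in the paper: the characterization of $n(r,p) \geq 2$ in terms of Merel's formula, the pairing criterion from the formalism of higher Eisenstein elements, and the explicit computation of $m_1^+ \bullet m_1^-$ modulo $3^{r+1}$ carried out just above.

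First I would observe that by Corollary \ref{comparison_corr_g_p_3}, the condition $n(r,p) \geq 2$ is equivalent to $\sum_{k=1}^{N-1} k^2 \cdot \log(k) \equiv 0 \pmod{3^r}$. Thus condition (ii) implies in particular $n(r,p) \geq 2$, which will allow us to apply all the machinery that assumes $n(r,p) \geq 2$. Conversely, if $n(r,p) \geq 3$ then certainly $n(r,p) \geq 2$, so the vanishing of $\sum_{k=1}^{N-1} k^2 \log(k)$ modulo $3^r$ is part of (i) as well.

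Next, under the assumption $n(r,p) \geq 2$, I would apply the pairing criterion of Corollary \ref{Formalism_criterion_pairing}: the quantity $e_1 \bullet e_1$ depends only on $i+j=2$, vanishes modulo $3^r$ if and only if $2 < n(r,p)$, i.e.\ $n(r,p) \geq 3$. By Corollary \ref{Comparison_corr_comparison}, $e_1 \bullet e_1 = m_1^+ \bullet m_1^-$ (both are well-defined since $1+1 \leq n(r,p)$). So the statement $n(r,p) \geq 3$ is equivalent to $m_1^+ \bullet m_1^- \equiv 0 \pmod{3^r}$.

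Finally, the hypothesis $t \geq 2$ and $1 \leq r \leq t-1$ is precisely the setting of Corollary \ref{comparison_pairing_m_1^+_m_1^-_3}, which gives the clean identity
$$m_1^+ \bullet m_1^- \equiv -\tfrac{1}{4} \cdot \sum_{k=1}^{N-1} k^2 \log(k)^2 \pmod{3^r}.$$
Since $-\tfrac{1}{4}$ is a unit in $\mathbf{Z}/3^r\mathbf{Z}$ (as $p=3$), this vanishes modulo $3^r$ if and only if $\sum_{k=1}^{N-1} k^2 \log(k)^2 \equiv 0 \pmod{3^r}$. Combining with the equivalence from the previous paragraph and the already-established characterization of $n(r,p) \geq 2$, we obtain that (i) is equivalent to (ii). Essentially no calculation remains: everything has been reduced to the prior results, and the only subtlety is simply verifying that all hypotheses needed for those results (namely $n(r,p) \geq 2$, so that the pairing $m_1^+ \bullet m_1^-$ is defined in the required range, and the range condition $r \leq t-1$) are satisfied, which is immediate from the assumptions.
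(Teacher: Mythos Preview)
Your proof is correct and matches the paper's intended argument. The paper states this corollary without proof, as it follows immediately from the preceding Corollary~\ref{comparison_pairing_m_1^+_m_1^-_3} combined with Corollary~\ref{comparison_corr_g_p_3} and the pairing criterion of Corollary~\ref{Formalism_criterion_pairing}; you have spelled out exactly this deduction.
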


\begin{rem}
Corollary \ref{even_modSymb_g_3_corr} does not hold in general if $N \not\equiv 1 \text{ (modulo }27 \text{)}$. For instance, if $N = 1279$ and $r=1$ then $g_3=2$ and $\sum_{k=1}^{N-1} k^2 \cdot \log(k) \equiv \sum_{k=1}^{N-1} k^2 \cdot \log(k)^2 \equiv 0 \text{ (modulo }3 \text{)}$. If 
$N=1747$ and $r=1$ then $g_3=3$ and $\sum_{k=1}^{N-1} k^2 \cdot \log(k)^2 \not\equiv 0 \text{ (modulo }3 \text{)}$.
\end{rem}

\subsection{Computation of $m_1^+ \bullet m_1^-$ when $p =2$}\label{Comparison_case_p=2}
In this section, we assume that $p=2$. We give a formula for $m_1^+ \bullet m_1^-$ modulo $2^{t-1}$. Note that we do not have an explicit formula for $m_2^-$ modulo $2^{t-1}$, but nevertheless it is possible to compute $m_0^+ \bullet m_2^- = m_1^+ \bullet m_1^-$ modulo $2^{t-1}$.

\begin{thm}\label{comparison_pairing_m_1^+_m_1^-_2}
Assume that $t\geq 2$, \ie that $N \equiv 1 \text{ (modulo }16 \text{)}$. Let $r$ be an integer such that $1 \leq r \leq t-1$ and $n(r,p) \geq 2$ (\ie $\sum_{k=1}^{\frac{N-1}{2}} k \cdot \log(k) \equiv 0 \text{ (modulo }2^r\text{)}$ by Corollary \ref{comparison_corr_g_p_2}). We have, in $\mathbf{Z}/2^{r+1}\mathbf{Z}$:
$$6\cdot m_1^+ \bullet m_1^- =\sum_{k=1}^{\frac{N-1}{2}} k \cdot \log(k) +  \sum_{k=1}^{\frac{N-1}{2}} k \cdot \log(k)^2 \text{ .}$$
\end{thm}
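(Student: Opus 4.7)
The plan is to pair the explicit formula for $6m_1^+$ given by Theorem \ref{thm_Introduction_w_1^+_2} with $m_1^-$, and then unfold the right-hand side using Theorem \ref{odd_even_modSymb_determination_m_1^-} together with the algebraic identities collected in Section \ref{even_modSymb_few_identities}. Since $r+1\le t$, Theorem \ref{thm_Introduction_w_1^+_2} gives
$$6\,m_1^+ \equiv m_0^+ + \mathcal V \pmod{2\mathbf{Z}\cdot m_0^+,\ 2^{r+1}M_+}, \qquad \mathcal V := \sum_{x\in\mathcal R} F_{1,2}(x)\,\xi_{\Gamma_0(N)}(x).$$
Under the assumption $n(r,p)\ge 2$, Corollary \ref{comparison_corr_g_p_2} gives $\sum_{k=1}^{(N-1)/2} k\log(k)\equiv 2^{t-1}\pmod{2^r}$, and hence by Theorem \ref{Comparison_Merel_sqrt_u} we have $m_0^+\bullet m_1^-\equiv 0\pmod{2^r}$. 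Pairing the displayed congruence with $m_1^-$ therefore kills the ambiguity $2\mathbf{Z}\cdot m_0^+$ modulo $2^{r+1}$, yielding
$$6\,m_1^+\bullet m_1^- \equiv m_0^+\bullet m_1^- + \mathcal V\bullet m_1^- \pmod{2^{r+1}},$$
and Theorem \ref{Comparison_Merel_sqrt_u} identifies the first term on the right as $\tfrac13\bigl(\sum_{k=1}^{(N-1)/2} k\log(k)-2^{t-1}\bigr)$.

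For the second term, I would exploit the parity relations $F_{1,2}([-c:d])=F_{1,2}([c:d])$ and $F_{1,2}([-d:c])=-F_{1,2}([c:d])$ (established as (\ref{even_modSymb_parity_F_1,2'}) and (\ref{even_modSymb_inversion_F_1,2'}) in the proof of Theorem \ref{thm_Introduction_w_1^+_2}) together with the Manin relation $\xi_{\Gamma_0(N)}(x)+\xi_{\Gamma_0(N)}(\sigma x)=0$ to rewrite $2\mathcal V$ as a sum over $\mathbf P^1(\mathbf Z/N\mathbf Z)\setminus\{[\pm 1:1]\}$. The contribution of the two $\sigma$-fixed points $[1:\pm i]$ is $2^t$-torsion (since $2F_{1,2}$ vanishes there), hence negligible modulo $2^{r+1}$. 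Applying Theorem \ref{odd_even_modSymb_determination_m_1^-} in the form $\bigl(\xi_{\Gamma_0(N)}([y:1])+\xi_{\Gamma_0(N)}([-y:1])\bigr)\bullet m_1^-=\log(y)$ and using the parity of $F_{1,2}$ in the first coordinate, one reduces $\mathcal V\bullet m_1^-$ to $\tfrac14\sum_{y\neq 1} F_{1,2}([y:1])\log(y)$ modulo $2^{r+1}$.

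It then remains to substitute the explicit definition of $F_{1,2}([y:1])$ and simplify a double sum indexed by $(s_1,s_2)$. After converting the $\B_1$-products into $D_2$-products (as in the proofs of Theorems \ref{even_modSymb_computation_pairing_>3} and \ref{thm_Introduction_w_1^+_3}), the inner sum $\sum_{x\neq 0,\,\frac{s_1+s_2}{s_1-s_2}}\log(x)\log((1-x)s_1+(1+x)s_2)$ is evaluated via the Bernardi--Merel identity (Lemma \ref{Bernardi_lemma>3}), and the assumption $n(r,p)\ge 2$ kills the term $\log((s_1+s_2)/(s_1-s_2))$ appearing after reduction by Lemma \ref{even_modSymb_computation_D2}. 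What survives is a quadratic expression in logs to which Lemma \ref{even_modSymb_computation_D2_square} applies, producing a multiple of $\sum_{k=1}^{(N-1)/2} k\log(k)^2$ plus a $\log(2)$-correction. Assembling this with $m_0^+\bullet m_1^-$ and invoking Lemma \ref{even_modSymb_computation_square} to convert $\sum_{k=1}^{N-1}k^2\log(k)$ back into $\sum_{k=1}^{(N-1)/2}k\log(k)$ should yield the claimed identity $6m_1^+\bullet m_1^- \equiv \sum k\log(k)+\sum k\log(k)^2 \pmod{2^{r+1}}$.

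The main obstacle is the $p=2$-specific bookkeeping, which has no counterpart in the odd case. Crucially, the ``extra'' summand $\sum_{k=1}^{(N-1)/2}k\log(k)$ on the right-hand side is not of quadratic-log type and must come from two distinct sources: the $-2^{t-1}/3$ constant in $m_0^+\bullet m_1^-$, and the $\log(2)\cdot\tfrac{N-1}{6}$-terms generated by Lemmas \ref{even_modSymb_computation_D2} and \ref{even_modSymb_computation_D2_square}, combined via the congruence $\sum k\log(k)\equiv 2^{t-1}\pmod{2^r}$ forced by $n(r,p)\ge 2$. The hypotheses $t\ge 2$ and $r\le t-1$ are precisely what is needed so that the Gauss-lemma step underlying the formula for $F_{1,2}$ (Lemma \ref{even_modSymb_magical_identity_Gauss}) and the Bernardi--Merel identity, both of which lose a factor of $2$ compared to the odd case, remain valid to the required precision modulo $2^{r+1}$.
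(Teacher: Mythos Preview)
Your overall strategy is correct and matches the paper's: pair the formula of Theorem~\ref{thm_Introduction_w_1^+_2} against $m_1^-$, separate off the $m_0^+$ contribution via Theorem~\ref{Comparison_Merel_sqrt_u}, and reduce $\mathcal V\bullet m_1^-$ to a sum $\sum_y F_{1,2}(y)\log(y)$ which is then evaluated with Lemma~\ref{Bernardi_lemma>3}. However, there is a genuine precision gap in your unfolding step, and the later computational route also differs from the paper's.

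The gap is in passing from the sum over $\mathcal R$ (classes under $x\sim -1/x$) to the full sum over $(\mathbf Z/N\mathbf Z)^\times$. Your proposed $\tfrac14\sum_{y}F_{1,2}(y)\log(y)$ modulo $2^{r+1}$ requires knowing $\sum_y F_{1,2}(y)\log(y)$ modulo $2^{r+3}$; when $r=t-1$ this is modulo $2^{t+2}$, but $F_{1,2}$ is only defined modulo $2^{t+1}$, and the antisymmetry (\ref{even_modSymb_inversion_F_1,2'}) you invoke is likewise only a $\mathbf Z/2^{t+1}\mathbf Z$ identity. The paper fixes this by introducing a lift $\tilde F_{1,2}$ with values in $\mathbf Z/2^{t+2}\mathbf Z$ (using that $\log$ here is valued in $\mathbf Z/2^{t+3}\mathbf Z$) and proving the refined relation
\[
\tilde F_{1,2}(-1/x)=-\tilde F_{1,2}(x)+\tfrac{N-1}{2}+\tfrac{N-1}{2}\log\!\Bigl(\tfrac{x-1}{x+1}\Bigr)\quad\text{in }\mathbf Z/2^{t+2}\mathbf Z
\]
(Lemma~\ref{comparison_horror_lemma}). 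This is the technical heart of the $p=2$ case and has no analogue in your outline; the correction terms $(N-1)/2$ are precisely what feed into the linear contribution $\sum k\log(k)$ in the final formula. Your attribution of the precision loss to Lemma~\ref{even_modSymb_magical_identity_Gauss} and Lemma~\ref{Bernardi_lemma>3} misidentifies the bottleneck.

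On the computational side, the paper does \emph{not} convert to $D_2$-products: $F_{1,2}$ is already given as a sum over $s_1,s_2\in\{1,\dots,(N-1)/2\}$, and the paper evaluates $\sum_x\tilde F_{1,2}(x)\log(x)$ directly in this form, reducing via Lemma~\ref{Bernardi_lemma>3} and then Lemma~\ref{even_modSymb_somme_bizarre_2} (not Lemmas~\ref{even_modSymb_computation_D2}--\ref{even_modSymb_computation_D2_square}). Your $D_2$ route may be workable, but it is a genuinely different computation and would still require the lift $\tilde F_{1,2}$ to reach the needed precision.
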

\begin{proof}
Recall that we have normalized $\tilde{m}_0^-$, and thus $m_1^-$, so that for all $x \in \mathbf{P}^1(\mathbf{Z}/N\mathbf{Z}) \backslash \{0, \infty\}$ we have, in $\mathbf{Z}/2^{t}\mathbf{Z}$:
$$(1+c) \cdot \xi_{\Gamma_0(N)}(x) \bullet m_1^- = \log(x) \text{ .}$$
For all $x \in (\mathbf{Z}/N\mathbf{Z})^{\times}$, we have in $\mathbf{Z}/2^{t+1}\mathbf{Z}$:
\begin{equation}\label{comparison_parity_F_1_2}
F_{1,2}(x) = F_{1,2}(-x) \text{ .}
\end{equation}
By Theorem \ref{thm_Introduction_w_1^+_2}, Theorem \ref{Comparison_Merel_sqrt_u} and (\ref{comparison_parity_F_1_2}), we have in $\mathbf{Z}/2^{r+2}\mathbf{Z}$:
\begin{equation}\label{comparison_pairing_m_1^+_m_1^-_2_eq1}
12\cdot m_1^+ \bullet m_1^- = -\frac{2}{3}\cdot \left(2^{t-1} - \sum_{k=1}^{\frac{N-1}{2}} k \cdot \log(k) \right) + \sum_{x \in R'} F_{1,2}(x) \cdot \log(x) \text{ ,}
\end{equation}
where $R'$ is any set of representative in $(\mathbf{Z}/N\mathbf{Z})^{\times} \backslash \{\overline{1}, \overline{-1}\}$ for the equivalence relation $x \sim -\frac{1}{x}$.

Let $\tilde{F}_{1,2} : (\mathbf{Z}/N\mathbf{Z})^{\times} \rightarrow \mathbf{Z}/2^{t+2}\mathbf{Z}$ be defined by
\begin{align*}
\tilde{F}_{1,2}(x) &= \sum_{s_1, s_2=1 \atop (1-x)s_1+(1+x)s_2 \equiv 0 \text{ (modulo }N\text{)}}^{\frac{N-1}{2}}  \log\left(\frac{2}{1-x}\cdot s_2\right)  \\& - \sum_{s_1, s_2=1 \atop (1-x)s_1+(1+x)s_2 \neq 0 \text{ (modulo }N\text{)}}^{\frac{N-1}{2}}  \log((1-x)s_1+(1+x)s_2) \text{ .}
\end{align*}
By definition, for all $x\in (\mathbf{Z}/N\mathbf{Z})^{\times}$ we have 
\begin{equation}\label{comparison_tilde_1,2_eq}
F_{1,2}(x) \equiv \tilde{F}_{1,2}(x) \text{ (modulo }2^{t+1}\text{).}
\end{equation}

\begin{lem}\label{comparison_horror_lemma}
For all $x\in (\mathbf{Z}/N\mathbf{Z})^{\times}$, we have in $\mathbf{Z}/2^{t+2}\mathbf{Z}$:
$$\tilde{F}_{1,2}\left(-\frac{1}{x}\right) = -\tilde{F}_{1,2}(x) + \frac{N-1}{2}+\frac{N-1}{2}\cdot \log\left(\frac{x-1}{x+1}\right) \text{ .}$$
\end{lem}
\begin{proof}
Let $x\in (\mathbf{Z}/N\mathbf{Z})^{\times}$. We have, in $\mathbf{Z}/2^{t+1}\mathbf{Z}$:
\begin{align*}
\tilde{F}_{1,2}(x)+\tilde{F}_{1,2}\left(-\frac{1}{x}\right) &= \sum_{s_2=1}^{\frac{N-1}{2}}\sum_{s_1=1 \atop (1-x)s_1+(1+x)s_2 \equiv 0 \text{ (modulo }N\text{)}}^{\frac{N-1}{2}}\log\left(\frac{2}{1-x}\cdot s_2 \right) \\& +  \sum_{s_2=1}^{\frac{N-1}{2}}\sum_{s_1=\frac{N+1}{2}  \atop (1-x)s_1+(1+x)s_2 \equiv 0 \text{ (modulo }N\text{)} }^{N-1} \log\left(\frac{2\cdot x}{1-x}\cdot s_2 \right) \\& - \sum_{s_2=1}^{\frac{N-1}{2}} \sum_{s_1=1  \atop (1-x)s_1+(1+x)s_2 \not\equiv 0 \text{ (modulo }N\text{)}}^{\frac{N-1}{2}} \log\left((1-x)s_1+(1+x)s_2\right) \\&- \sum_{s_2=1}^{\frac{N-1}{2}} \sum_{s_1=\frac{N+1}{2}  \atop (1-x)s_1+(1+x)s_2 \not\equiv 0 \text{ (modulo }N\text{)}}^{N-1} \log\left(\frac{1}{x}\cdot\left((1-x)s_1+(1+x)s_2 \right)\right)  \\&= \sum_{s_2=1}^{\frac{N-1}{2}}\sum_{s_1=1 \atop (1-x)s_1+(1+x)s_2 \equiv 0 \text{ (modulo }N\text{)}}^{N-1}\log\left(\frac{2}{1-x}\cdot s_2 \right) + \log(x) \cdot \mathcal{N}(x) \\&  - \sum_{s_2=1}^{\frac{N-1}{2}} \sum_{s_1=1  \atop (1-x)s_1+(1+x)s_2 \not\equiv 0 \text{ (modulo }N\text{)}}^{N-1} \log\left((1-x)s_1+(1+x)s_2\right) \\&+ \log(x) \cdot \left( \left(\frac{N-1}{2}\right)^2-\mathcal{N}(x)\right)\text{ ,}
\end{align*}
where $\mathcal{N}(x)$ is the number of $s \in \{1, 2, ..., \frac{N-1}{2}\}$ such that the representative of $s \cdot \frac{x-1}{x+1}$ in $\{1, ..., N-1\}$ is in $\{\frac{N+1}{2}, ..., N-1\}$. Thus, we have in $\mathbf{Z}/2^{t+2}\mathbf{Z}$:
\begin{align*}
\tilde{F}_{1,2}(x)+\tilde{F}_{1,2}\left(-\frac{1}{x}\right) &= \sum_{s_2=1}^{\frac{N-1}{2}}\sum_{s_1=1 \atop (1-x)s_1+(1+x)s_2 \equiv 0 \text{ (modulo }N\text{)}}^{N-1}\log\left(\frac{2}{1-x}\cdot s_2 \right)  \\&  - \sum_{s_2=1}^{\frac{N-1}{2}} \sum_{s_1=1  \atop (1-x)s_1+(1+x)s_2 \not\equiv 0 \text{ (modulo }N\text{)}}^{N-1} \log\left((1-x)s_1+(1+x)s_2\right) \\& 
= \frac{N-1}{2}\cdot \log\left(\frac{2}{1-x}\right) + \log\left(\left(\frac{N-1}{2}\right)! \right) \\& - \frac{N-1}{2} \cdot (N-2)\cdot\log(1+x) -\sum_{s_2=1}^{\frac{N-1}{2}}\sum_{s_1=1  \atop (1-x)s_1+(1+x)s_2 \not\equiv 0 \text{ (modulo }N\text{)}}^{N-1} \log\left(s_2 - \frac{x-1}{x+1}\cdot s_1\right) \text{ .}
\end{align*}
Since $N \equiv 1 \text{ (modulo }8\text{)}$, we have $\log(2) \equiv 0 \text{ (modulo }2\text{)}$ by the quadratic reciprocity law. Thus we have in $\mathbf{Z}/2^{t+2}\mathbf{Z}$:
\begin{align*}
\tilde{F}_{1,2}(x)+\tilde{F}_{1,2}\left(-\frac{1}{x}\right) &= \frac{N-1}{2}\cdot \log\left(\frac{x+1}{x-1}\right) + \log\left(\left(\frac{N-1}{2}\right)! \right) - \sum_{s_2=1}^{\frac{N-1}{2}} \left(\log\left((N-1)!\right) - \log(s_2) \right) \\&=  \frac{N-1}{2}\cdot \log\left(\frac{x+1}{x-1}\right) + 2\cdot \log\left(\left(\frac{N-1}{2}\right)! \right) \\& = \frac{N-1}{2}+ \frac{N-1}{2}\cdot \log\left(\frac{x+1}{x-1}\right)\text{ .}
\end{align*}
This concludes the proof of Lemma \ref{comparison_horror_lemma}.
\end{proof}

Let $\zeta_4$ be an element of order $4$ in $(\mathbf{Z}/N\mathbf{Z})^{\times}$. Since $t\geq 2$, we have $2^t \cdot\log( \zeta_4 )= 0$ in $\mathbf{Z}/2^{t+2}\mathbf{Z}$. Since $F_{1,2}(\zeta_4)\equiv 0 \text{ (modulo }2^t\text{)}$, we have in $\mathbf{Z}/2^{t+2}\mathbf{Z}$:
\begin{equation}\label{comparison_tilde_1_2_zeta_4_eq}
\tilde{F}_{1,2}(\zeta_4)\cdot \log(\zeta_4) = 0
\end{equation}
Similarly, we have in $\mathbf{Z}/2^{t+2}\mathbf{Z}$:
\begin{equation}\label{comparison_tilde_1_2_zeta_4_eq2}
\tilde{F}_{1,2}(-1)\cdot \log(-1) = 0
\end{equation}
By Lemma \ref{comparison_horror_lemma}, (\ref{comparison_tilde_1_2_zeta_4_eq}) and (\ref{comparison_tilde_1_2_zeta_4_eq2}), we have in $\mathbf{Z}/2^{t+2}\mathbf{Z}$:
\begin{align*}
\sum_{x \in (\mathbf{Z}/N\mathbf{Z})^{\times}} \tilde{F}_{1,2}(x) \cdot \log(x) &= \sum_{x \in R'} \tilde{F}_{1,2}(x) \cdot \log(x) + \tilde{F}_{1,2}\left(-\frac{1}{x}\right) \cdot \log\left(-\frac{1}{x}\right) \\& 
= 2\cdot \sum_{x \in R'} \tilde{F}_{1,2}(x) \cdot \log(x) +  \sum_{x \in R'} \frac{N-1}{2}\cdot \log(x) + \frac{N-1}{2}\cdot \log(x) \cdot \log\left(\frac{x-1}{x+1}\right) \\& + \log(-1)\cdot \sum_{x \in R'} \tilde{F}_{1,2}(x) \text{ .}
\end{align*}
By (\ref{comparison_parity_F_1_2}), we have $\sum_{x \in R'} F_{1,2}(x) \equiv 0 \text{ (modulo }2\text{)}$. By Lemma \ref{Bernardi_lemma>3} and the fact that $\log(-1)\cdot \log(-2) \equiv 0 \text{ (modulo }4\text{)}$, we have 
$$2\cdot \sum_{x \in R'} \log(x) \cdot \log\left(\frac{x-1}{x+1}\right) \equiv \sum_{x \in (\mathbf{Z}/N\mathbf{Z})^{\times} \backslash \{\overline{1}, \overline{-1}\}}  \log(x) \cdot \log\left(\frac{x-1}{x+1}\right) \equiv 0 \text{ (modulo }4\text{).}$$
Thus, we have in $\mathbf{Z}/2^{t+2}\mathbf{Z}$:
\begin{equation}\label{comparison_double_tilde_1,2,R_eq}
\sum_{x \in (\mathbf{Z}/N\mathbf{Z})^{\times}} \tilde{F}_{1,2}(x) \cdot \log(x) = 2\cdot  \sum_{x \in R'} \tilde{F}_{1,2}(x) \cdot \log(x)  \text{ .}
\end{equation}

By (\ref{comparison_pairing_m_1^+_m_1^-_2_eq1}) and (\ref{comparison_double_tilde_1,2,R_eq}), we have in $\mathbf{Z}/2^{t+2}\mathbf{Z}$:
\begin{equation}\label{comparison_pairing_m_1^+_m_1^-_2_eq2}
24\cdot m_1^+ \bullet m_1^- = -\frac{4}{3}\cdot \left(2^{t-1} - \sum_{k=1}^{\frac{N-1}{2}} k \cdot \log(k) \right) + \sum_{x \in (\mathbf{Z}/N\mathbf{Z})^{\times}} \tilde{F}_{1,2}(x) \cdot \log(x) \text{ .}
\end{equation}
We have, in $\mathbf{Z}/2^{t+2}\mathbf{Z}$:
\begin{align*}
\sum_{x \in (\mathbf{Z}/N\mathbf{Z})^{\times}} \tilde{F}_{1,2}(x) \cdot \log(x) &=\sum_{x \in (\mathbf{Z}/N\mathbf{Z})^{\times}} \sum_{s_1,s_2=1\atop (1-x)s_1+(1+x)s_2 \equiv 0 \text{ (modulo }N\text{)}}^{\frac{N-1}{2}} \log\left( \frac{2}{1-x} \cdot s_2\right)\cdot \log(x) \\&-\sum_{x \in (\mathbf{Z}/N\mathbf{Z})^{\times}} \sum_{s_1,s_2=1 \atop (1-x)s_1+(1+x)s_2 \not\equiv 0 \text{ (modulo }N\text{)}}^{\frac{N-1}{2}} \log\left((1-x)s_1+(1+x)s_2\right)\cdot \log(x) 
\\& = \sum_{s_1,s_2=1 \atop s_1 \neq s_2}^{\frac{N-1}{2}} \log\left(s_2-s_1\right)\cdot \log\left(\frac{s_1+s_2}{s_1-s_2} \right) \\&-\sum_{s_1,s_2=1}^{\frac{N-1}{2}} \sum_{x \in (\mathbf{Z}/N\mathbf{Z})^{\times}\atop x\not\equiv \frac{s_1+s_2}{s_1-s_2}\text{ (modulo }N\text{)}}  \log\left((1-x)s_1+(1+x)s_2\right)\cdot \log(x) 
\\& =\sum_{s_1,s_2=1 \atop s_1 \neq s_2}^{\frac{N-1}{2}} \log\left(s_2-s_1\right)\cdot \log\left(\frac{s_1+s_2}{s_1-s_2} \right)  \\& - \sum_{s_1,s_2=1 \atop s_1 \neq s_2}^{\frac{N-1}{2}} \log(s_2-s_1)\cdot \left(\log((N-1)!)-\log\left(\frac{s_1+s_2}{s_1-s_2}\right)\right) \\& +  \sum_{s_1,s_2=1 \atop s_1 \neq s_2}^{\frac{N-1}{2}}  \sum_{x \in (\mathbf{Z}/N\mathbf{Z})^{\times}\atop x\not\equiv \frac{s_1+s_2}{s_1-s_2}\text{ (modulo }N\text{)}} \log\left(x - \frac{s_1+s_2}{s_1-s_2}\right)\cdot\log(x) 
\\& =2\cdot \sum_{s_1,s_2=1 \atop s_1 \neq s_2}^{\frac{N-1}{2}} \log\left(s_2-s_1\right)\cdot \log\left(\frac{s_1+s_2}{s_1-s_2} \right)  + \log(-1)\cdot \sum_{s_1,s_2=1 \atop s_1 \neq s_2}^{\frac{N-1}{2}} \log(s_2-s_1) \\&+   \sum_{s_1,s_2=1 \atop s_1 \neq s_2}^{\frac{N-1}{2}}\sum_{x \in (\mathbf{Z}/N\mathbf{Z})^{\times}\atop x\not\equiv \frac{s_1+s_2}{s_1-s_2}\text{ (modulo }N\text{)}} \log\left(x - \frac{s_1+s_2}{s_1-s_2}\right)\cdot\log(x) \text{ .}
\end{align*}

By Lemmas \ref{even_modSymb_somme_bizarre} and  \ref{Bernardi_lemma>3}, we have in $\mathbf{Z}/2^{t+2}\mathbf{Z}$:
\begin{align*}
\sum_{x \in (\mathbf{Z}/N\mathbf{Z})^{\times}} \tilde{F}_{1,2}(x) \cdot \log(x) &=2\cdot \sum_{s_1,s_2=1 \atop s_1 \neq s_2}^{\frac{N-1}{2}} \log\left(s_1-s_2\right)\cdot \log\left(\frac{s_1+s_2}{s_1-s_2} \right)  \\& + \sum_{s_1,s_2=1 \atop s_1 \neq s_1}^{\frac{N-1}{2}} \left( \log(-1)\cdot \log\left(\frac{s_1+s_2}{s_1-s_2}\right) - \log\left(\frac{s_1+s_2}{s_1-s_2}\right)^2+  \mathcal{F}_2 \right) \text{ .}
\end{align*}
Since $N \equiv 1 \text{ (modulo }8\text{)}$, $4\cdot \mathcal{F}_2 = 4 \cdot \mathcal{F}_1 = 0$ and $\log(-1) \equiv 0 \text{ (modulo }4\text{)}$, we have in $\mathbf{Z}/2^{t+2}\mathbf{Z}$:
\begin{equation}\label{comparison_F_tilde_log_equa}
\sum_{x \in (\mathbf{Z}/N\mathbf{Z})^{\times}} \tilde{F}_{1,2}(x) \cdot \log(x) =2\cdot \sum_{s_1,s_2=1 \atop s_1 \neq s_2}^{\frac{N-1}{2}} \log\left(s_1-s_2\right)\cdot \log\left(\frac{s_1+s_2}{s_1-s_2} \right) - \sum_{s_1,s_2=1 \atop s_1 \neq s_1}^{\frac{N-1}{2}} \log\left(\frac{s_1+s_2}{s_1-s_2}\right)^2  \text{ .}
\end{equation}
We have in $\mathbf{Z}/2^{t+2}\mathbf{Z}$:
\begin{align*}
\sum_{s_1,s_2=1 \atop s_1 \neq s_2}^{\frac{N-1}{2}} \log(s_1+s_2)^2 &=\sum_{s_1,s_2=1}^{\frac{N-1}{2}} \log(s_1+s_2)^2 - \sum_{s=1}^{\frac{N-1}{2}} \log(2\cdot s)^2 \\& = \sum_{s_1,s_2=1}^{\frac{N-1}{2}} \log(s_1+s_2)^2 - \mathcal{F}_2\text{ .}
\end{align*}
By Lemma \ref{even_modSymb_somme_bizarre_2} and the fact that $2\cdot \mathcal{F}_2 = \frac{N-1}{2}$, we have in $\mathbf{Z}/2^{t+2}\mathbf{Z}$:
\begin{equation}\label{comparison_variante_Bernardi_eq}
\sum_{s_1,s_2=1 \atop s_1 \neq s_2}^{\frac{N-1}{2}} \log(s_1+s_2)^2 =\frac{N-1}{2} +  2\cdot \sum_{k=1}^{\frac{N-1}{2}} k\cdot \log(k)^2  
\end{equation}
and
\begin{equation}\label{comparison_variante_Bernardi_eq2}
\sum_{s_1,s_2=1 \atop s_1 \neq s_2}^{\frac{N-1}{2}} \log(s_1-s_2)^2 =-2\cdot \sum_{k=1}^{\frac{N-1}{2}} k\cdot \log(k)^2  \text{ .}
\end{equation}
By (\ref{comparison_F_tilde_log_equa}), (\ref{comparison_variante_Bernardi_eq}) and (\ref{comparison_variante_Bernardi_eq2}), we have in $\mathbf{Z}/2^{t+2}\mathbf{Z}$:
\begin{equation}\label{even_modSymb_p=2_pairing_computation_m-1^+m-1^-_eq}
\sum_{x \in (\mathbf{Z}/N\mathbf{Z})^{\times}} \tilde{F}_{1,2}(x) \cdot \log(x) = \frac{N-1}{2}+4\cdot\sum_{k=1}^{\frac{N-1}{2}} k\cdot \log(k)^2   + 4\cdot \sum_{s_1,s_2=1 \atop s_1 \neq s_1}^{\frac{N-1}{2}} \log(s_1+s_2)\cdot \log(s_1-s_2)  \text{ .}
\end{equation}
We have, in $\mathbf{Z}/2^{t+2}\mathbf{Z}$:
\begin{align*}
2\cdot \sum_{s_1,s_2=1 \atop s_1 \neq s_1}^{\frac{N-1}{2}} \log(s_1+s_2)\cdot \log(s_1-s_2)  &= \sum_{s_1=1}^{\frac{N-1}{2}}\sum_{s_2=1 \atop s_2 \neq s_1, N-s_1}^{N-1}\log(s_1+s_2)\cdot \log(s_1-s_2) \\& = \sum_{s_1=1}^{\frac{N-1}{2}} \sum_{s_2 \in (\mathbf{Z}/N\mathbf{Z})^{\times}\atop s_2 \neq \pm s_1} \log(s_1+s_2)\cdot \log(s_1-s_2) \\& = \sum_{s_1=1}^{\frac{N-1}{2}} \sum_{s_2 \in (\mathbf{Z}/N\mathbf{Z})^{\times}\atop s_2 \neq 2\cdot s_1, s_1} \log(s_2)\cdot \log(2\cdot s_1-s_2) \\& =  \sum_{s_1=1}^{\frac{N-1}{2}} \sum_{s_2 \in (\mathbf{Z}/N\mathbf{Z})^{\times}\atop s_2 \neq 2\cdot s_1} \log(s_2)\cdot \log(s_2-2\cdot s_1)  \\& +\log(-1)\cdot  \sum_{s_1=1}^{\frac{N-1}{2}} \log(2\cdot s_1) - \sum_{s_1=1}^{\frac{N-1}{2}} \log(s_1)^2\\& = -\mathcal{F}_2+ \sum_{s_1=1}^{\frac{N-1}{2}} \sum_{s_2 \in (\mathbf{Z}/N\mathbf{Z})^{\times}\atop s_2 \neq 2\cdot s_1} \log(s_2)\cdot \log(s_2-2\cdot s_1)  \text{ .}
\end{align*}
In the last equality, we have used the fact that $\log(2) \equiv 0 \text{ (modulo }2\text{)}$ by the quadratic reciprocity law, since $N \equiv 1 \text{ (modulo }8\text{)}$. 
\begin{align*}
2\cdot \sum_{s_1,s_2=1 \atop s_1 \neq s_1}^{\frac{N-1}{2}} \log(s_1+s_2)\cdot \log(s_1-s_2)  &= -\mathcal{F}_2+ \sum_{s_1=1}^{\frac{N-1}{2}} \left( \log(-1)\cdot \log(2\cdot s_1) -\log(2\cdot s_1)^2 + \mathcal{F}_2\right) \\&=-\mathcal{F}_2 -\sum_{s_1=1}^{\frac{N-1}{2}} \log(2\cdot s_1)^2 \\& = -2\cdot \mathcal{F}_2\text{ .}
\end{align*}
By (\ref{even_modSymb_p=2_pairing_computation_m-1^+m-1^-_eq}), we have in $\mathbf{Z}/2^{t+2}\mathbf{Z}$:
$$
\sum_{x \in (\mathbf{Z}/N\mathbf{Z})^{\times}} F_{1,2}(x) \cdot \log(x) = \frac{N-1}{2}+4\cdot \sum_{k=1}^{\frac{N-1}{2}} k \cdot \log(k)^2 \text{ .}
$$
By (\ref{comparison_pairing_m_1^+_m_1^-_2_eq2}), we have in $\mathbf{Z}/2^{r+3}\mathbf{Z}$:
\begin{align*}
24\cdot m_1^+\bullet m_1^- &= \frac{4}{3}\cdot \sum_{k=1}^{\frac{N-1}{2}}k \cdot \log(k) + 4\cdot \sum_{k=1}^{\frac{N-1}{2}} k \cdot \log(k)^2 \\& = 4\cdot  \sum_{k=1}^{\frac{N-1}{2}}k \cdot \log(k) + 4\cdot \sum_{k=1}^{\frac{N-1}{2}} k \cdot \log(k)^2 \text{ .}
\end{align*}
In the last equality, we have used the fact that $\sum_{k=1}^{\frac{N-1}{2}}k \cdot \log(k) \equiv 0 \text{ (modulo }2^{r}\text{)}$.
This concludes the proof of Theorem \ref{comparison_pairing_m_1^+_m_1^-_2}.
\end{proof}

\begin{corr}\label{even_modSymb_g_2_corr}
Assume that $t \geq 2$, \ie that $N \equiv 1 \text{ (modulo }16\text{)}$. Let $r$ be an integer such that $1 \leq r \leq t-1$. Assume that $n(r,p) \geq 2$, (\ie $\sum_{k=1}^{\frac{N-1}{2}} k \cdot \log(k) \equiv 0 \text{ (modulo }2^r\text{)}$ by Corollary \ref{comparison_corr_g_p_2}). The following assertions are equivalent:
\begin{enumerate}
\item We have $n(r,p) \geq 3$.
\item We have $$\sum_{k=1}^{\frac{N-1}{2}} k \cdot \log(k) +  \sum_{k=1}^{\frac{N-1}{2}} k \cdot \log(k)^2 \equiv 0 \text{ (modulo }2^{r+1} \text{)}$$ 
\end{enumerate}
\end{corr}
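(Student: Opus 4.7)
The plan is to derive this corollary directly from Theorem \ref{comparison_pairing_m_1^+_m_1^-_2} combined with the general pairing criterion of Corollary \ref{Formalism_criterion_pairing}. First I would invoke Corollary \ref{Formalism_criterion_pairing}: since we already assume $n(r,p) \geq 2$, the condition $n(r,p) \geq 3$ is equivalent to $m_1^+ \bullet m_1^- \equiv 0 \pmod{2^r}$ (applied with $i = j = 1$, so $i + j = 2$). This reduces everything to a numerical criterion on a single intersection number.

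Next I would plug in Theorem \ref{comparison_pairing_m_1^+_m_1^-_2}, which under our hypotheses ($t \geq 2$, $1 \leq r \leq t-1$, $n(r,p) \geq 2$) gives the identity
\[
6 \cdot m_1^+ \bullet m_1^- \equiv \sum_{k=1}^{\frac{N-1}{2}} k \cdot \log(k) + \sum_{k=1}^{\frac{N-1}{2}} k \cdot \log(k)^2 \pmod{2^{r+1}}.
\]
Since $3$ is a unit in $\mathbf{Z}/2^{r+1}\mathbf{Z}$, the vanishing of the right-hand side modulo $2^{r+1}$ is equivalent to $2 \cdot (m_1^+ \bullet m_1^-) \equiv 0 \pmod{2^{r+1}}$, which is precisely $m_1^+ \bullet m_1^- \equiv 0 \pmod{2^r}$. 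Combining with the previous step yields the desired equivalence.

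There is essentially no obstacle here: the entire content of the corollary is packaged in Theorem \ref{comparison_pairing_m_1^+_m_1^-_2}, and the only subtlety worth flagging is the bookkeeping of $2$-adic valuations, namely that reading the identity modulo $2^{r+1}$ (rather than $2^r$) is exactly what is needed to detect vanishing of $m_1^+ \bullet m_1^-$ modulo $2^r$ after division by the factor $6$. The hypothesis $r \leq t-1$ is used precisely to ensure we are in the range where Theorem \ref{comparison_pairing_m_1^+_m_1^-_2} applies.
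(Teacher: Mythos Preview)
Your proof is correct and matches the paper's approach exactly: the corollary is stated without proof in the paper because it is an immediate consequence of Theorem~\ref{comparison_pairing_m_1^+_m_1^-_2} combined with the pairing criterion of Corollary~\ref{Formalism_criterion_pairing}, which is precisely the argument you give. Your handling of the $2$-adic bookkeeping (dividing the factor $6$ into the unit $3$ and the factor $2$, so that vanishing of the right-hand side modulo $2^{r+1}$ is equivalent to vanishing of $m_1^+ \bullet m_1^-$ modulo $2^r$) is exactly the point, and there is nothing to add.
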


\begin{rem}
By Remark (\ref{comparison_CE_p=2_g_2}), the integer $n(1,p)$ is odd when $p=2$. In particular, if $n(1,p) \geq 2$ then $n(1,p) \geq 3$. If $N \equiv 1 \text{ (modulo }16\text{)}$, this means by Corollary \ref{even_modSymb_g_2_corr} that $\sum_{k=1}^{\frac{N-1}{2}} k \cdot \log(k) \equiv 0 \text{ (modulo }2\text{)}$ implies $\sum_{k=1}^{\frac{N-1}{2}} k \cdot \log(k) +  \sum_{k=1}^{\frac{N-1}{2}} k \cdot \log(k)^2 \equiv 0 \text{ (modulo }4\text{)}$. We have not found an elementary proof of this fact.
\end{rem}

\subsection{The $q$-expansion of the second higher Eisenstein element $f_2$ via Galois deformations}\label{Section_q_expansion_higher_Eis_series}
Recall that we keep the notation of chapter \ref{Section_odd_modSymb}, and in particular those of section \ref{odd_modSymb_section_class_group}. In this section, we assume $p \geq 5$, $r=1$ and $n(r,p)  \geq 2$. Theorem \ref{main_thm_deformations_f_2} below aims at giving a rather explicit description of the higher Eisenstein element $f_2$.

Our proof relies on the results of \cite{Calegari_Emerton} and \cite{Lecouturier_class_group}, some of which we now recall. If $i \in \mathbf{Z}$, let $K_{(i)}$ be the number field defined in \cite[Proposition 5.4]{Calegari_Emerton}. In particular, $K_{(i)}$ is an abelian Galois extension of $\mathbf{Q}(\zeta_p)$ of exponent $p$, unramified outside $N$ and $p$ over $\mathbf{Q}$ and such that $\Gal(\mathbf{Q}(\zeta_p)/\mathbf{Q})$ acts by multiplication by $\omega_p^{i}$ on $\Gal(K_{(i)}/\mathbf{Q}(\zeta_p))$. We have $K_{(1)} = \mathbf{Q}(N^{\frac{1}{p}}, \zeta_p)$ and $K_{(0)} = L_1$. If $F$ is a number field, we let $\mathcal{C}_{F} = \Pic(\mathcal{O}_F) \otimes_{\mathbf{Z}} \mathbf{Z}/p\mathbf{Z}$, where $\Pic(\mathcal{O}_F)$ is the class group of $F$.
The following result follows easily from the proof of \cite[Proposition 5.4]{Calegari_Emerton}.
\begin{prop}\label{comparison_canonical_CFT}
Let $i \in \{-1,0,1\}$, so that in particular we have $\left(\mathcal{C}_{\mathbf{Q}(\zeta_p)}\right)_{(i)} = 0$. Then $\Gal(K_{(i)}/\mathbf{Q}(\zeta_p))$ is cyclic of order $p$, and global class field theory gives a canonical $\Gal(\mathbf{Q}(\zeta_p)/\mathbf{Q})$-equivariant group isomorphism
$$ \beta_{(i)} : \Gal(K_{(i)}/\mathbf{Q}(\zeta_p)) \xrightarrow{\sim} U^{\otimes 1-i} \otimes \mu_{p}^{\otimes i} \text{ ,}$$
where $U := (\mathbf{Z}/N\mathbf{Z})^{\times}/\left((\mathbf{Z}/N\mathbf{Z})^{\times}\right)^p$ is equipped with the trivial action of $\Gal(\mathbf{Q}(\zeta_p)/\mathbf{Q})$ (the group law of $U$ is denoted multiplicatively).
\end{prop}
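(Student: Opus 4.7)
The plan is to bootstrap this from the construction of the fields $K_{(i)}$ already carried out in the proof of \cite[Proposition 5.4]{Calegari_Emerton}, feeding in the Herbrand--Ribet vanishing $\left(\mathcal{C}_{\mathbf{Q}(\zeta_p)}\right)_{(i)}=0$ for $i\in\{-1,0,1\}$ (which holds since $p\geq 5$ does not divide the numerator of $B_2=1/6$ nor $B_1$'s relevant analogues). First I would recall that $K_{(i)}$ is cut out as the $\omega_p^i$-isotypic part of a maximal elementary abelian $p$-extension of $\mathbf{Q}(\zeta_p)$ unramified outside $\{v\mid Np\}$ with prescribed ramification conditions. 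Global class field theory then expresses $\Gal(K_{(i)}/\mathbf{Q}(\zeta_p))$ via a four-term Selmer-type exact sequence whose cokernel is a quotient of $\bigl(\mathcal{C}_{\mathbf{Q}(\zeta_p)}\otimes\mathbf{Z}/p\mathbf{Z}\bigr)_{(i)}$, which vanishes by hypothesis.

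Having killed the global class group contribution, the Galois group reduces to the cokernel of the diagonal map from the $\omega_p^i$-part of the $S$-units (for $S=\{v\mid Np\}$) into the $\omega_p^i$-part of the direct sum of local unit groups modulo $p$th powers. Next I would compute those local factors explicitly. At places above $p$, the relevant contribution vanishes in all three weights $i\in\{-1,0,1\}$ after accounting for the cyclotomic unit $1-\zeta_p$ (which kills the only surviving piece), via \cite[Lemma 2.7]{Lecouturier_class_group}. At the unique prime of $\mathbf{Q}(\zeta_p)$ above $N$, the local unit group modulo $p$th powers is canonically $\mathbf{F}_N^{\times}/p\cong U$ with trivial $\Gal(\mathbf{Q}(\zeta_p)/\mathbf{Q})$-action; passing to all primes above $N$ one gets $U\otimes_{\mathbf{F}_p}\mathbf{F}_p[\Gal(\mathbf{Q}(\zeta_p)/\mathbf{Q})]$, whose $\omega_p^i$-isotypic component is canonically $U\otimes\mu_p^{\otimes i}$. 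The $S$-unit contribution in weight $i$ is annihilated (again by \cite[Lemma 2.7]{Lecouturier_class_group}) for $i\in\{-1,1\}$, and the $i=0$ contribution is exactly absorbed by the fact that $N$ becomes a $p$th power up to units in the completion. This yields the identification $\Gal(K_{(i)}/\mathbf{Q}(\zeta_p))\cong U\otimes\mu_p^{\otimes i}$, which is cyclic of order $p$.

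Finally I would promote this to the stated isomorphism with $U^{\otimes 1-i}\otimes\mu_p^{\otimes i}$. The two targets differ by a factor $U^{\otimes -i}$ which is one-dimensional over $\mathbf{F}_p$ with trivial Galois action; the canonical such identification comes from the fact that, under our choice of $\log$, the $N$th cyclotomic character gives a \textit{canonical} isomorphism $\Gal(\mathbf{Q}(\zeta_N)/\mathbf{Q})\otimes\mathbf{Z}/p\mathbf{Z}\cong U$, which provides the natural $-i$th tensor power twist compatibly with the action of $\Gal(\mathbf{Q}(\zeta_p)/\mathbf{Q})$ on the Kummer pairing $\Gal\times\text{Kummer classes}\to\mu_p$. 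The main obstacle, I expect, is not the existence of the isomorphism (which is essentially forced by dimension count plus class field theory) but the bookkeeping needed to make $\beta_{(i)}$ \emph{canonical} and compatible with the natural weight conventions: one must check that the Kummer $\mu_p$ and the $\mu_p$ appearing via the tame local symbol at $N$ match, and that the $U$-factors from the residue field at $N$ and from the Galois-equivariant twisting line up without a sign or normalization ambiguity. This is precisely the kind of check already implicit in \cite[Proposition 5.4]{Calegari_Emerton}, which is why the proposition can be said to \emph{follow easily} from that earlier work.
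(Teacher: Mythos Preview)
Your overall strategy---reducing via class field theory to the local contribution at primes above $N$, with the class group and $p$-adic contributions vanishing---is close in spirit to the paper's, but the execution diverges at the crucial point and leaves a genuine gap.

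The paper's argument is considerably more direct. Rather than run a Selmer sequence, it quotes \cite[Proposition 5.4]{Calegari_Emerton} to identify $\Gal(K_{(i)}/\mathbf{Q}(\zeta_p))$ canonically with $\bigl((\mathbf{Z}[\zeta_p]/N)^{\times}\otimes\mathbf{Z}/p\mathbf{Z}\bigr)_{(i)}$, applies the Chinese remainder theorem to split this over the primes $\mathfrak{n}\mid N$, and then writes down the isomorphism explicitly. The key device is this: for each $\mathfrak{n}$, let $\zeta_{\mathfrak{n}}\in V:=(\mathbf{Z}/N\mathbf{Z})^{\times}[p]$ be the reduction of $\zeta_p$ modulo $\mathfrak{n}$ and $\hat{\zeta}_{\mathfrak{n}}\in\hat{V}$ its dual; then $\zeta_p\otimes\hat{\zeta}_{\mathfrak{n}}\in\mu_p\otimes\hat{V}$ is \emph{independent of the choice of $\zeta_p$}. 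One then sets $\gamma_{(i)}'\bigl((x_{\mathfrak{n}})_{\mathfrak{n}}\bigr)=\sum_{\mathfrak{n}}x_{\mathfrak{n}}\otimes(\zeta_p\otimes\hat{\zeta}_{\mathfrak{n}})^{\otimes i}$, landing directly in $V\otimes(\mu_p\otimes\hat{V})^{\otimes i}\cong U^{\otimes 1-i}\otimes\mu_p^{\otimes i}$.

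Your proof, by contrast, first asserts that the $\omega_p^i$-component of $U\otimes\mathbf{F}_p[\Gal(\mathbf{Q}(\zeta_p)/\mathbf{Q})]$ is canonically $U\otimes\mu_p^{\otimes i}$ (this already needs justification: the $\omega_p^i$-line in the regular representation is not canonically $\mu_p^{\otimes i}$ without extra data), and then tries to ``promote'' $U\otimes\mu_p^{\otimes i}$ to $U^{\otimes 1-i}\otimes\mu_p^{\otimes i}$ via the choice of $\log$. But invoking $\log$ is exactly what destroys canonicity: for $i=\pm 1$ you are identifying $U$ with $\mathbf{F}_p$, which requires a generator. The $N$th cyclotomic character you mention identifies $\Gal(\mathbf{Q}(\zeta_N)/\mathbf{Q})\otimes\mathbf{Z}/p\mathbf{Z}$ with $U$, not $U$ with $\mathbf{F}_p$, so it does not supply the missing twist. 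The missing idea is precisely the paper's trick: the reduction map $\mu_p\to(\mathbf{Z}[\zeta_p]/\mathfrak{n})^{\times}[p]$ packages the extra factor of $\hat{V}\cong U^{\otimes -1}$ into the construction for free, and is what makes $\beta_{(i)}$ canonical without ever choosing $\log$ or a primitive root.
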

\begin{proof}
As in the proof of \cite[Proposition 5.4]{Calegari_Emerton}, global class field theory gives a canonical $\Gal(\mathbf{Q}(\zeta_p)/\mathbf{Q})$-equivariant group isomorphism
$$\alpha_{(i)} : \left( \left( \mathbf{Z}[\zeta_p]/(N) \right)^{\times} \otimes_{\mathbf{Z}} \mathbf{Z}/p\mathbf{Z} \right)_{(i)} \xrightarrow{\sim}  \Gal(K_{(i)}/\mathbf{Q}(\zeta_p)) \text{ .}$$
Let $S$ be the set of prime ideals above $N$ in $\mathbf{Z}[\zeta_p]$. The Chinese remainder theorem shows that there is a canonical group isomorphism
$$f : \left( \mathbf{Z}[\zeta_p]/(N) \right)^{\times} \xrightarrow{\sim} \prod_{\mathfrak{n} \in S} \left( \mathbf{Z}[\zeta_p]/\mathfrak{n} \right)^{\times} \text{ .}$$
We let $\Gal(\mathbf{Q}(\zeta_p)/\mathbf{Q})$ acts on $\prod_{\mathfrak{n} \in S} \left( \mathbf{Z}[\zeta_p]/\mathfrak{n} \right)^{\times}$, via the formula
$$g \cdot (x_{\mathfrak{n}})_{\mathfrak{n} \in S} =  \left(g(x_{g^{-1}(\mathfrak{n})})\right)_{\mathfrak{n} \in S} \text{ ,}$$
where $g \in \Gal(\mathbf{Q}(\zeta_p)/\mathbf{Q})$. The isomorphism $f$ is then $\Gal(\mathbf{Q}(\zeta_p)/\mathbf{Q})$-equivariant.

To conclude the proof of Proposition \ref{comparison_canonical_CFT}, it suffices to construct a canonical $\Gal(\mathbf{Q}(\zeta_p)/\mathbf{Q})$-equivariant group isomorphism 
$$\gamma_{(i)} :\left( \prod_{\mathfrak{n} \in S} \left( \mathbf{Z}[\zeta_p]/\mathfrak{n} \right)^{\times} \otimes_{\mathbf{Z}} \mathbf{Z}/p\mathbf{Z} \right)_{(i)} \xrightarrow{\sim} U^{\otimes 1-i} \otimes \mu_p^{\otimes i} \text{ .}$$ 
We then let $\beta_{(i)} := \gamma_{(i)} \circ \alpha_{(i)}^{-1}$.
There is a canonical group isomorphism $$\left( \mathbf{Z}[\zeta_p]/\mathfrak{n} \right)^{\times}  \otimes_{\mathbf{Z}} \mathbf{Z}/p\mathbf{Z} \xrightarrow{\sim} \left( \mathbf{Z}[\zeta_p]/\mathfrak{n} \right)^{\times}[p]$$ given by 
$$x \otimes 1 \mapsto x^{\frac{N-1}{p}} \text{ .}$$
We thus get a canonical group isomorphism
$$\left( \prod_{\mathfrak{n} \in S} \left( \mathbf{Z}[\zeta_p]/\mathfrak{n} \right)^{\times} \otimes_{\mathbf{Z}} \mathbf{Z}/p\mathbf{Z} \right)_{(i)}  \xrightarrow{\sim} \left( \prod_{\mathfrak{n} \in S} \left( \mathbf{Z}[\zeta_p]/\mathfrak{n} \right)^{\times} [p]\right)_{(i)} \text{ .}$$
Note that there is a canonical group isomorphism $U^{\otimes 1-i} \otimes \mu_p^{\otimes i} \xrightarrow{\sim} V \otimes \left(\mu_p \otimes \hat{V}\right)^{\otimes i}$ where $V = (\mathbf{Z}/N\mathbf{Z})^{\times}[p]$ and $\hat{V} = \Hom(V, \mathbf{Z}/p\mathbf{Z})$. 
Thus, it suffices to construct a canonical group isomorphism
$$ \gamma_{(i)}' : \left( \prod_{\mathfrak{n} \in S} \left( \mathbf{Z}[\zeta_p]/\mathfrak{n} \right)^{\times} [p]\right)_{(i)}  \xrightarrow{\sim} V \otimes (\mu_p \otimes \hat{V})^{\otimes i} \text{ .}$$
Let $\zeta_p \in \mu_p$. If $\mathfrak{n}\in S$, let $\zeta_{\mathfrak{n}} \in V$ be the reduction of $\zeta_p$ modulo $\mathfrak{n}$ and $\hat{\zeta}_{\mathfrak{n}} \in \hat{V}$ be given by $\hat{\zeta}_{\mathfrak{n}}(\zeta_{\mathfrak{n}})=1$. The element $\zeta_p \otimes \hat{\zeta}_{\mathfrak{n}} \in \mu_p \otimes \hat{V}$ only depends on $\mathfrak{n}$, and not on the choice of $\zeta_p$. We let
$$ \gamma_{(i)}'\left((x_{\mathfrak{n}})_{\mathfrak{n} \in S} \right) = \sum_{\mathfrak{n} \in S} x_{\mathfrak{n}} \otimes (\zeta_p \otimes \hat{\zeta}_{\mathfrak{n}})^{\otimes i} \text{ .}$$
This is independent of the choice of $\zeta_p$, so this is canonical. This concludes the proof of Proposition \ref{comparison_canonical_CFT}.
\end{proof}

Let $\ell$ be a rational prime not dividing $N$ such that $\ell \equiv 1 \text{ (modulo }p\text{)}$. Let $\lambda$ be any prime above $(\ell)$ in $\mathbf{Z}[\zeta_p]$. We define 
$$\beta_{\ell} = \beta_{(1)}(\Frob_{\lambda}) \cdot \beta_{(-1)}(\Frob_{\lambda}) \in \mu_p \otimes \left( U^{\otimes 2} \otimes \mu_p^{\otimes -1} \right)= U^{\otimes 2} \text{ .}$$

This does not depend on the choice of $\lambda$ dividing $(\ell)$.

Let $K = \mathbf{Q}(N^{\frac{1}{p}})$. Genus theory shows that $\mathcal{C}_K$ is non-zero \cite[Section 5]{Lecouturier_class_group}. By \cite[Theorem $1.3$ (ii)]{Calegari_Emerton}, if $g_p \geq 2$ then the rank of the $\mathbf{F}_p$-vector space $\mathcal{C}_K$ is $\geq 2$. The group $\mathcal{C}_{K_{(1)}}$ has an action of $\Gal(K_{(1)}/\mathbf{Q}(\zeta_p))$ (a cyclic group of order $p$) and of $\Gal(K_{(1)}/K) = \Gal(\mathbf{Q}(\zeta_p)/\mathbf{Q}) =  (\mathbf{Z}/p\mathbf{Z})^{\times}$ (via the Teichm\"uller character).  Let $J$ be the augmentation ideal of $(\mathbf{Z}/p\mathbf{Z})[\Gal(K_{(1)}/\mathbf{Q}(\zeta_p))]$. If $\chi : \Gal(K_{(1)}/K) \rightarrow (\mathbf{Z}/p\mathbf{Z})^{\times}$ is a character, let $$e_{\chi} = \frac{1}{p-1}\cdot\sum_{g \in \Gal(K_{(1)}/K)} \chi^{-1}(g)\cdot [g] \in (\mathbf{Z}/p\mathbf{Z})[\Gal(K_{(1)}/K)]$$ be the idempotent associated to $\chi$. Let $\chi_0$ be the trivial character and $\omega_p$ be the Teichm\"uller character (considered as a character $\Gal(K_{(1)}/K)$ via the canonical restriction isomorphism $\Gal(K_{(1)}/K) \xrightarrow{\sim} \Gal(\mathbf{Q}(\zeta_p)/\mathbf{Q})$). If $V$ is a $(\mathbf{Z}/p\mathbf{Z})[\Gal(K_{(1)}/K)]$-module, we let $V^{(\chi)} = e_{\chi}\cdot V \subset V$ and $V_{(\chi)} = V/\left(\bigoplus_{\chi' \neq \chi} e_{\chi'}\cdot V \right)$. The map $V^{(\chi)} \rightarrow V_{(\chi)}$ is a group isomorphism.

Since $[K_{(1)}:K] = p-1$ is prime to $p$, the natural map $\mathcal{C}_K  \rightarrow \mathcal{C}_{K_{(1)}}$ is injective, and its image is $(\mathcal{C}_{K_{(1)}})^{(\chi_0)}$.  There exists a generator $\Delta$ of $J$ such that for all character $\chi$ as above, we have \cite[Lemma 2.1]{Lecouturier_class_group}:
 $$\Delta \cdot e_{\chi} = e_{\chi\cdot \omega_p} \cdot \Delta \text{ .}$$
Thus, the multiplication by $\Delta$ induces a natural surjective group homomorphism
$$(\mathcal{C}_{K_{(1)}}/\Delta \cdot \mathcal{C}_{K_{(1)}})^{(\omega_p^{-1})} \rightarrow (\Delta \cdot \mathcal{C}_{K_{(1)}}/\Delta^2 \cdot \mathcal{C}_{K_{(1)}})^{(\chi_0)}$$ 
which gives a surjective group homomorphism:
\begin{equation}\label{delta_map_deformation}
\delta : (\mathcal{C}_{K_{(1)}}/\Delta \cdot \mathcal{C}_{K_{(1)}})_{(\omega_p^{-1})} \rightarrow (\Delta \cdot \mathcal{C}_{K_{(1)}}/\Delta^2 \cdot \mathcal{C}_{K_{(1)}})_{(\chi_0)} \text{ .}
\end{equation}
The group $(\mathcal{C}_{K_{(1)}}/\Delta \cdot \mathcal{C}_{K_{(1)}})_{(\omega_p^{-1})}$ is cyclic of order $p$.

It was proven in \cite[Section 5]{Lecouturier_class_group} that the above map is an isomorphism if and only if $g_p \geq 2$ (which is assumed in this section). Thus, we have a map
$$\mathcal{C}_K = (\mathcal{C}_{K_{(1)}})_{(\chi_0)} \rightarrow (\mathcal{C}_{K_{(1)}}/\Delta^2 \cdot \mathcal{C}_{K_{(1)}})_{(\chi_0)} $$
and an exact sequence
$$0 \rightarrow (\Delta \cdot \mathcal{C}_{K_{(1)}}/\Delta^2 \cdot \mathcal{C}_{K_{(1)}})_{(\chi_0)}\rightarrow (\mathcal{C}_{K_{(1)}}/\Delta^2 \cdot \mathcal{C}_{K_{(1)}})_{(\chi_0)} \rightarrow (\mathcal{C}_{K_{(1)}}/\Delta \cdot \mathcal{C}_{K_{(1)}})_{(\chi_0)} \rightarrow 0 $$
with $$(\Delta \cdot \mathcal{C}_{K_{(1)}}/\Delta^2 \cdot \mathcal{C}_{K_{(1)}})_{(\chi_0)} \simeq (\mathcal{C}_{K_{(1)}}/\Delta \cdot \mathcal{C}_{K_{(1)}})_{(\chi_0)} \simeq \mathbf{Z}/p\mathbf{Z} \text{ .}$$

If $\ell$ is a prime such that $\ell \not\equiv 1 \text{ (modulo }p\text{)}$, let $\mathfrak{p}_{\ell}$ be the unique prime of $\mathcal{O}_K$ above $\ell$ with residual degree $1$ and $\overline{\mathfrak{p}}_{\ell}$ be the image of the class of $\mathfrak{p}_{\ell}$ in $(\mathcal{C}_{K_{(1)}}/\Delta^2 \cdot \mathcal{C}_{K_{(1)}})_{(\chi_0)}$. There is a unique group isomorphism $\alpha : (\mathcal{C}_{K_{(1)}}/\Delta \cdot \mathcal{C}_{K_{(1)}})_{(\chi_0)} \xrightarrow{\sim} U$ such that for any prime $\ell \not\equiv 1 \text{ (modulo }p\text{)}$, we have 
\begin{equation}\label{definition_alpha_deformations}
\alpha(\overline{\mathfrak{p}}_{\ell}) = \overline{\ell}^{\frac{1}{2}} \text{ ,}
\end{equation}
where $\overline{\ell}$ is the image of $\ell$ in $U$.
We let $$\pi :  (\mathcal{C}_{K_{(1)}}/\Delta^2 \cdot \mathcal{C}_{K_{(1)}})_{(\chi_0)} \rightarrow U$$ be the morphism induced by $\alpha$.

\begin{thm}\label{main_thm_deformations_f_2}
Assume that $g_p \geq 2$, so that $f_2 \in \mathcal{M}/p\cdot \mathcal{M}$ exists (and is uniquely determined modulo the subgroup generated by $f_0$ and $f_1$).

Let $\mathcal{P}$ be the set of prime numbers different from $N$. Let $A_0 = (\ell+1)_{\ell \in \mathcal{P}}$ and $A_1=\left(\frac{\ell-1}{2}\cdot \log(\ell)\right)_{\ell \in \mathcal{P}}$ in $(\mathbf{Z}/p\mathbf{Z})^{(\mathcal{P})}$.

There exists $C \in (\mathbf{Z}/p\mathbf{Z})^{\times}$ and a group homomorphism $\pi' :  (\mathcal{C}_{K_{(1)}}/\Delta^2 \cdot \mathcal{C}_{K_{(1)}})_{(\chi_0)} \rightarrow U^{\otimes 2}$ whose image in $\Hom( (\mathcal{C}_{K_{(1)}}/\Delta^2 \cdot \mathcal{C}_{K_{(1)}})_{(\chi_0)}, U^{\otimes 2})/ \pi \otimes U$ is non-zero and uniquely determined, such that we have, in $(U^{\otimes 2})^{(\mathcal{P})}$ modulo the subgroup generated by $A_0 \otimes U^{\otimes 2}$ and $A_1 \otimes U^{\otimes 2}$: 
\begin{equation}\label{deformations_description_f_2_thm}
(a_{\ell}(f_2) \otimes \gamma^{\otimes 2})_{\ell \in \mathcal{P}} = \left((\overline{\ell}^{\otimes 2})^{\frac{1}{4}}\cdot \epsilon_{\ell}\right)_{\ell \in \mathcal{P}}
\end{equation}
where $\gamma \in U$ is such that $\log(\gamma) \equiv 1 \text{ (modulo }p\text{)}$ and $\epsilon_{\ell} \in U^{\otimes 2}$ is defined as follows. 
\begin{itemize}
\item If $\ell \equiv 1 \text{ (modulo }p\text{)}$, then we let $\epsilon_{\ell} = \beta_{\ell}^{C}$.
\item If $\ell \not\equiv 1 \text{ (modulo }p \text{)}$, then we let 
$$\epsilon_{\ell} = \left( \pi'(\overline{\mathfrak{p}}_{\ell})\cdot (\overline{\ell}^{\otimes 2})^{\frac{1}{8}}\right)^{\ell-1} \text{ .}$$
\end{itemize}
\end{thm}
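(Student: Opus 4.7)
The plan is to invoke the Calegari--Emerton identification of $\tilde{\mathbf{T}}$ as a universal deformation ring for $\overline{\rho} = \begin{pmatrix}\overline{\chi}_p & 0 \\ 0 & 1 \end{pmatrix}$, reduce the universal deformation to the Artinian quotient supporting $f_2$, and read off $a_\ell(f_2)$ as a coefficient of $\operatorname{Tr} \rho_{\text{univ}}(\Frob_\ell)$. Via Proposition~\ref{comparison_canonical_CFT}, the diagonal characters of the deformation factor through $K_{(1)}$ and $K_{(-1)}$, and the trace computations give the $\beta_\ell$ term for $\ell\equiv 1\pmod p$; for other $\ell$ the trace is controlled by $\rho_{\text{univ}}|_{G_K}$, which factors through the abelian pro-$p$ quotient of $G_K$, hence through $\mathcal{C}_K \hookrightarrow \mathcal{C}_{K_{(1)}}$, producing the map $\pi'$.

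\textbf{Setup.} Since $g_p\geq 2$, Theorem~\ref{Formalism_existence_eisenstein}(d) gives $A := \tilde{\mathbf{T}}/(p,\tilde{I}^3) \simeq \mathbf{F}_p[x]/(x^3)$, with $x$ the image of a generator $\eta$ of $\tilde{I}\cdot\tilde{\mathbf{T}}$ normalized so that $e(\eta) = 1$. Using the Hecke-equivariant isomorphism $\mathcal{M} \simeq \Hom_{\mathbf{Z}_p}(\tilde{\mathbb{T}},\mathbf{Z}_p)$ recalled in section~\ref{even_modSymbidea}, and the fact that $f_2$ is characterized by $(T_\ell - \ell - 1)(f_2) = \frac{\ell-1}{2}\log(\ell)\cdot f_1$ modulo $\mathbf{Z}\cdot f_0$, the coefficient $a_\ell(f_2)$ corresponds up to a universal scalar $C\in(\mathbf{Z}/p\mathbf{Z})^\times$ to the coefficient of $x^2$ in the image of $T_\ell$ in $A$, modulo the contributions $a_\ell(f_0)(\ell+1)$ and $a_\ell(f_1) \sim \frac{\ell-1}{2}\log(\ell)$ which account for the ambiguity by $A_0$ and $A_1$.

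\textbf{Deformation-theoretic trace computation.} Apply Calegari--Emerton to get the universal $\rho_{\text{univ}} : G_{\mathbf{Q},Np} \to \GL_2(\tilde{\mathbf{T}})$, and reduce to $\rho_2 : G_{\mathbf{Q},Np} \to \GL_2(A)$. After choosing a basis adapted to the residual diagonalization, write $\rho_2 = \begin{pmatrix} a & b \\ c & d \end{pmatrix}$ with $a = \overline{\chi}_p(1 + u_1 x + u_2 x^2)$, $d = 1 + v_1 x + v_2 x^2$, and $b,c$ vanishing modulo $x$. The determinant relation $\det \rho_2 = \chi_p$ in $A$ forces $v_1 = -u_1$ and relates $u_2+v_2$ to $b_1 c_1$ (coefficients of $x$ in $b,c$). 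Matching $\operatorname{Tr}\rho_2(\Frob_\ell) = (\ell+1) + u_1(\Frob_\ell)(\ell-1)x + (\ell u_2 + v_2)(\Frob_\ell) x^2$ against the expansion of $T_\ell$, one identifies $u_1$ with $\tfrac12\log$ on $G_{\mathbf{Q}}^{\mathrm{ab}}$, so that the $x^2$-coefficient yields $C\cdot a_\ell(f_2) = \ell u_2(\Frob_\ell) + v_2(\Frob_\ell)$.

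\textbf{Class field theory identification.} Restrict $u_1+xu_2$ and $-u_1+xv_2$ to $G_{\mathbf{Q}(\zeta_p)}$: the residual characters $a/\overline{\chi}_p|_{G_{\mathbf{Q}(\zeta_p)}}$ and $d|_{G_{\mathbf{Q}(\zeta_p)}}$ factor through the Galois groups of $K_{(-1)}$ and $K_{(1)}$ respectively (by the local conditions imposed by Calegari--Emerton, which force ramification only at $N$ once $\overline{\rho}$ is trivialized on $G_{\mathbf{Q}(\zeta_p)}$). Proposition~\ref{comparison_canonical_CFT} then gives canonical identifications of the values of $u_2$ and $v_2$ with $\beta_{(-1)}(\Frob_\lambda)$ and $\beta_{(1)}(\Frob_\lambda)$ for any $\lambda \mid \ell$ in $\mathbf{Z}[\zeta_p]$, and tensoring produces $\beta_\ell \in U^{\otimes 2}$. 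This handles the case $\ell \equiv 1 \pmod p$, where $\Frob_\ell \in G_{\mathbf{Q}(\zeta_p)}$. For $\ell\not\equiv 1\pmod p$, use instead $\rho_2|_{G_K}$: since $\overline{\chi}_p$ is trivial on $G_K$ (where $K=\mathbf{Q}(N^{1/p})$), $\rho_2|_{G_K}$ factors through the maximal abelian pro-$p$ unramified-outside-$Np$ quotient, which by class field theory and the Hasse principle is controlled by $\mathcal{C}_K$. The map $\pi'$ is defined as the homomorphism $(\mathcal{C}_{K_{(1)}}/\Delta^2\mathcal{C}_{K_{(1)}})_{(\chi_0)} \to U^{\otimes 2}$ encoding the restriction of the $x^2$-cocycle of $\rho_2$ to $G_K$; its well-definedness and nonvanishing modulo $\pi\otimes U$ follow from the fact that $\delta$ in~(\ref{delta_map_deformation}) is an isomorphism (which is equivalent to $g_p\geq 2$, per~\cite{Lecouturier_class_group}). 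The formula $\epsilon_\ell = (\pi'(\overline{\mathfrak{p}}_\ell)\cdot (\overline{\ell}^{\otimes 2})^{1/8})^{\ell-1}$ then drops out of the trace computation $\ell u_2(\Frob_\ell) + v_2(\Frob_\ell)$ combined with $\pi(\overline{\mathfrak{p}}_\ell) = \overline{\ell}^{1/2}$ from~(\ref{definition_alpha_deformations}).

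\textbf{Main obstacle.} The hardest step is the case $\ell\not\equiv 1\pmod p$: one must carefully track how the $\eta^2$-piece of $\rho_2|_{G_K}$ descends to a well-defined $\mathbf{F}_p$-linear map on the $\chi_0$-isotypic piece of $\mathcal{C}_{K_{(1)}}/\Delta^2$, and verify that the resulting map $\pi'$, which a priori depends on the choice of $f_2$ modulo $f_0, f_1$, is uniquely determined modulo the subgroup $\pi\otimes U$ (this ambiguity corresponds exactly to the $\mathbf{Z}\cdot f_1$-ambiguity in $f_2$). Normalizing the exponents $\tfrac14$ and $\tfrac18$ and the constant $C$ requires explicit bookkeeping of the identification $\alpha$ of~(\ref{definition_alpha_deformations}) against the choice of $\eta$ and $f_2$.
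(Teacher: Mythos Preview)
Your overall strategy is exactly the paper's: push the Calegari--Emerton universal deformation down to $\GL_2(\mathbf{F}_p[x]/(x^3))$, read $a_\ell(f_2)$ off the $x^2$-coefficient of $\Tr\rho(\Frob_\ell)$, and identify the resulting cocycles via class field theory through Proposition~\ref{comparison_canonical_CFT}. However, the details of your identifications are mislabelled in a way that breaks the argument.

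The key confusion is between diagonal and off-diagonal entries. You write $a=\overline{\chi}_p(1+u_1x+u_2x^2)$, $d=1+v_1x+v_2x^2$ and then claim that $a/\overline{\chi}_p$ and $d$ restricted to $G_{\mathbf{Q}(\zeta_p)}$ factor through $\Gal(K_{(-1)}/\mathbf{Q}(\zeta_p))$ and $\Gal(K_{(1)}/\mathbf{Q}(\zeta_p))$. They do not: the first-order diagonal cocycle $u_1$ is a genuine homomorphism (trivial Galois-twist), hence factors through $K_{(0)}$; indeed you have already identified it with $\tfrac12\log$. It is the \emph{off-diagonal} entries $b\equiv\beta_1 x$ and $c\equiv\beta_{-1}x$ that are cocycles for $\omega_p$ and $\omega_p^{-1}$ and whose restrictions to $G_{\mathbf{Q}(\zeta_p)}$ cut out $K_{(1)}$ and $K_{(-1)}$. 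The $\beta_\ell$-term for $\ell\equiv1\pmod p$ enters via the determinant relation $u_2+v_2=\omega_p^{-1}\beta_1\beta_{-1}+u_1^2$, so that $\ell u_2+v_2=(\ell-1)u_2+\ell^{-1}\beta_1\beta_{-1}+u_1^2$; it is the product $\beta_1(\Frob_\ell)\beta_{-1}(\Frob_\ell)$, not $u_2$ or $v_2$ individually, that produces $\beta_\ell\in U^{\otimes 2}$.

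A second error: you assert ``$\overline{\chi}_p$ is trivial on $G_K$'' to handle $\ell\not\equiv1\pmod p$. Since $K=\mathbf{Q}(N^{1/p})$ does not contain $\zeta_p$ for $p\geq5$, this is false. What is actually used in the paper is that $\beta_1|_{G_K}=0$ (literally, with the Calegari--Emerton normalisation of the basis, because $g(N^{1/p})=N^{1/p}$ for $g\in G_K$). This makes $a'':=u_2-\tfrac12 u_1^2$ a \emph{homomorphism} on $G_K$; its kernel, together with that of $u_1$, cuts out an everywhere-unramified abelian extension $H/K$ with $\Gal(H/K)\simeq(\mathbf{Z}/p\mathbf{Z})^2$. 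The paper then proves a nontrivial lemma (matrix commutator computation plus a filtration argument) identifying $\Gal(H/K)$ canonically with $(\mathcal{C}_{K_{(1)}}/\Delta^2\mathcal{C}_{K_{(1)}})_{(\chi_0)}$, and $\pi'$ is defined as $a''$ transported through this isomorphism. Your sketch alludes to this step but with the wrong cocycle feeding into it.
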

\begin{proof}
By \cite[Corollary $1.6$]{Calegari_Emerton}, if $g_p \geq 2$ then there exists a Galois representation:
$$\rho : \Gal(\overline{\mathbf{Q}}/\mathbf{Q}) \rightarrow \text{GL}_2\left((\mathbf{Z}/p\mathbf{Z})[x]/x^3\right)$$
which satisfies certain deformation conditions defined in \cite[p. 100]{Calegari_Emerton}. We denote by $F$ the number field cut out by the kernel of $\rho$. By \cite[Proposition $5.5$]{Calegari_Emerton}, one can assume that $\rho$ has the following form:
$$
\rho =  \begin{pmatrix}
\omega_p\cdot (1 + a \cdot x + a'\cdot x^2) & x \cdot \beta_1 + x^2\cdot b\\
x\cdot \beta_{-1} + x^2\cdot c & 1 +d\cdot x + d'\cdot x^2
\end{pmatrix} \text{ .}
$$
where $a$, $a'$, $b$, $c$, $d$, $d'$ and $\beta_{-1}$ are maps $\Gal(\overline{\mathbf{Q}}/\mathbf{Q}) \rightarrow \mathbf{Z}/p\mathbf{Z}$.

We can choose $\rho$ such that for all $\ell \neq N$, if $\Frob_{\ell}$ is any Frobenius element at $\ell$, we have:
\begin{equation}\label{deformations_equation_trace_x^2_3}
a(\Frob_{\ell}) = \frac{\log(\ell)}{2} \text{ .}
\end{equation}
The cocycle $\beta_{-1} : \Gal(\overline{\mathbf{Q}}/\mathbf{Q}) \rightarrow \mathbf{Z}/p\mathbf{Z}$ satisfies, for all $g$, $g'$ $\in \Gal(\overline{\mathbf{Q}}/\mathbf{Q})$: $$\beta_{-1}(g\cdot g') = \beta_{-1}(g)+ \omega_p(g)^{-1}\cdot \beta_{-1}(g') \text{ .}$$
Furthermore, the restriction of $\beta_{-1}$ to $\Gal(\overline{\mathbf{Q}}/\mathbf{Q}(\zeta_p))$ is uniquely determined since we have fixed the choices of $\beta_{1}$, $a$ and the conjugacy class of $\rho$.

Since $\text{det}(\rho) = \omega_p$, we have:
\begin{equation}\label{deformations_equation_trace_x^2_1}
a+d=0
\end{equation}
and
\begin{equation}\label{deformations_equation_trace_x^2_2}
a'+d' =\omega_p^{-1}\cdot \beta_1\cdot \beta_{-1} - a\cdot d \text{ .}
\end{equation}

Furthermore, as in \cite[Section 5.2]{Calegari_Emerton} the subgroup $\rho(\Gal(F/\mathbf{Q}))$ of $\text{GL}_2((\mathbf{Z}/p\mathbf{Z})[x]/x^3)$ is generated by $$\left\{\begin{pmatrix} \alpha & 0 \\ 0 & 1 \end{pmatrix}, \alpha \in (\mathbf{Z}/p\mathbf{Z})^{\times} \right\} \cup \Ker\left(\SL_2((\mathbf{Z}/p\mathbf{Z})[x]/x^3) \rightarrow \SL_2(\mathbf{Z}/p\mathbf{Z})\right) \text{ .}$$ 

Since $\rho$ is a group homomorphism, we have for all $g$, $g'$ $\in \text{Gal}(\overline{\mathbf{Q}}/\mathbf{Q})$:
$$a'(gg') = a'(g)+a'(g')+a(g)\cdot a(g')+\omega_p(gg')^{-1}\cdot \beta_1(g)\cdot \beta_1(g') \text{ .}$$
Since the restriction of $\beta_1$ to $\Gal(\overline{\mathbf{Q}}/K)$ is trivial, the restriction of $a'':=a'-\frac{a^2}{2}$ to $\Gal(\overline{\mathbf{Q}}/K)$ is a group homomorphism.

\begin{lem}\label{deformations_admitted_lemma_commutator}
The commutator subgroup of $\rho(\Gal(F/K))$ is generated by the matrices $\begin{pmatrix} 1 & x^2 \\ 0 & 1\end{pmatrix}$, $\begin{pmatrix} 1 & 0 \\ x & 1\end{pmatrix}$ and $\begin{pmatrix} 1 & 0 \\ x^2 & 1\end{pmatrix}$, and the abelianization of $\rho(\Gal(F/K))$ is isomorphic to $(\mathbf{Z}/p\mathbf{Z})^2 \times (\mathbf{Z}/p\mathbf{Z})^{\times}$.
\end{lem}
\begin{proof}
The subgroup $\rho(\Gal(F/K))$ of $\rho(\Gal(F/\mathbf{Q}))$ consists of those matrices in $\rho(\Gal(F/\mathbf{Q}))$ whose upper-right coefficient is $0$ modulo $x^2$. Let $A = \begin{pmatrix} a_1 & 0 \\ a_3 & a_4 \end{pmatrix}$, $B=\begin{pmatrix} b_1 & b_2 \\ b_3 & b_4 \end{pmatrix}$, $C=\begin{pmatrix} c_1 & 0 \\ c_3 & c_4 \end{pmatrix}$ and $D = \begin{pmatrix} d_1 & d_2 \\ d_3 & d_4 \end{pmatrix}$  be in $M_2(\mathbf{Z}/p\mathbf{Z})$ and let $\alpha$, $\beta$ in $\mathbf{Z}/p\mathbf{Z}^{\times}$. Let $\Delta_{\alpha} = \begin{pmatrix} \alpha & 0 \\ 0 & 1 \end{pmatrix}$ and $\Delta_{\beta} =   \begin{pmatrix} \beta & 0 \\ 0 & 1 \end{pmatrix}$. A formal computation shows that we have, in $\text{GL}_2((\mathbf{Z}/p\mathbf{Z})[x]/x^3)$:
\begin{align*}
&\left(\Delta_{\alpha} + x\cdot A + x^2\cdot B\right)\cdot \left(\Delta_{\beta} + x\cdot C + x^2\cdot D\right) \cdot \left(\Delta_{\alpha} + x\cdot A + x^2\cdot B\right)^{-1}\cdot \left(\Delta_{\beta} + x\cdot C + x^2\cdot D\right)^{-1}  \\&= \begin{pmatrix} 1 & 0 \\ 0 & 1 \end{pmatrix} + x\cdot \begin{pmatrix} 0 & 0 \\ \frac{\beta-1}{\alpha\beta}\cdot a_3+ \frac{1-\alpha}{\alpha\beta}\cdot c_3 & 0 \end{pmatrix}+x^2\cdot  \begin{pmatrix} 0 & (1-\beta)\cdot b_2 + (\alpha-1)\cdot d_3 \\ X & 0 \end{pmatrix}
\end{align*}
where 
$$X = \frac{1}{\alpha\beta}\cdot \left(\frac{1-\beta}{\alpha}\cdot a_1a_3 + \frac{1}{\beta}\cdot a_3c_1 - \frac{1}{\alpha}\cdot a_1c_3 + a_4c_3+\frac{\alpha-1}{\beta}\cdot c_1c_3-a_3c_4+ (\beta-1)\cdot b_3 + (1-\alpha)\cdot d_3 \right)  \text{ .}$$
Thus, the commutator of $\rho(\Gal(F/K))$ is 
$$\left\{\begin{pmatrix}1 & u \cdot x^2 \\ v\cdot x + w\cdot x^2 & 1 \end{pmatrix}\text{, }u,v,w \in \mathbf{Z}/p\mathbf{Z}\right\} \text{ .}$$
Thus,  the commutator of $\rho(\Gal(F/K))$ has order $p^3$ and is generated by $\begin{pmatrix} 1 & x^2 \\ 0 & 1\end{pmatrix}$, $\begin{pmatrix} 1 & 0 \\ x & 1\end{pmatrix}$ and $\begin{pmatrix} 1 & 0 \\ x^2 & 1\end{pmatrix}$. The second claim follows immediately. 
\end{proof}

By Lemma \ref{deformations_admitted_lemma_commutator}, the kernel of the couple $(a,a'')$ cuts out an abelian extension $H$ of $K$ such that $\Gal(H/K) \simeq (\mathbf{Z}/p\mathbf{Z})^2$. By \cite[proof of Lemma $5.13$]{Calegari_Emerton}, $H$ is unramified everywhere over $K$. Furthermore, as a subquotient of the image of $\rho$, $\Gal(H/K)$ is generated by the image of the matrices $\begin{pmatrix}
 1+x^2 & 0 \\
  0 & 1-x^2
  \end{pmatrix}$ and $\begin{pmatrix}
 1+x & 0 \\
  0 & (1+x)^{-1}
  \end{pmatrix}$. 

Let $\mathcal{H}$ be the maximal abelian extension of $K_{(1)}$ unramified everywhere such that $\Gal(\mathcal{H}/K_1)$ is a group of exponent $p$, and let $\mathcal{G} = \Gal(\mathcal{H}/K_{(1)})$. Global class field theory gives us a canonical $\Gal(K_1/\mathbf{Q})$-equivariant group isomorphism $\mathcal{C}_{K_{(1)}} \simeq \mathcal{G}$. We claim that the compositum $K_{(0)}K_{(1)}$ is unramified everywhere over $K_{(1)}$. One only needs to check that $K_{(0)}K_{(1)}$ is unramified at the primes above $N$ in $K_{(1)}$. This follows from the fact that $\rho(I_N)$ is cyclic of order $p$, where $I_N \subset \Gal(\overline{\mathbf{Q}}/\mathbf{Q})$ is the inertia at $N$, and that $K_{(1)}$ totally ramified of ramification index $p$ at the primes above $N$ in $\mathbf{Q}(\zeta_p)$. Thus, we have $K_{(0)}K_{(1)} \subset \mathcal{H}$.

\begin{lem}\label{Section_deformations_comparison_CE_class_group}
We have the following inclusions of subgroups of $\mathcal{G}$:
$$\Gal(\mathcal{H}/K_{(0)}K_{(1)}) = e_{\chi_0}(\Delta \cdot \mathcal{G}) \cdot \prod_{\chi \neq \chi_0} e_{\chi}(\mathcal{G})$$
and
$$\Gal(\mathcal{H}/H(\zeta_p)) = e_{\chi_0}(\Delta^2 \cdot \mathcal{G}) \cdot \prod_{\chi \neq \chi_0} e_{\chi}(\mathcal{G}) \text{ .}$$
Here, $\chi$ runs trough the set of characters $\Gal(K_{(1)}/K) \rightarrow (\mathbf{Z}/p\mathbf{Z})^{\times}$.
\end{lem}
\begin{proof}
The first equality comes from genus theory: $K_{(0)}K_{(1)}$ is the largest extension of $K_{(1)}$ which is unramified everywhere,  abelian over $\mathbf{Q}(\zeta_p)$ and such that $\Gal(K_{(1)}/K) \simeq \Gal(\mathbf{Q}(\zeta_p)/\mathbf{Q})$ acts by $\chi_0$. 

Since $\Gal(K_{(1)}/K)$ acts trivially on $\Gal(H(\zeta_p)/K_{(1)})$, we have
$$\prod_{\chi \neq \chi_0} e_{\chi}(\mathcal{G}) \subset \Gal(\mathcal{H}/H(\zeta_p)) \text{ .}$$
Since $\Gal(\mathcal{H}/H(\zeta_p))$ is a subgroup of index $p$ of $\Gal(\mathcal{H}/K_{(0)}K_{(1)})$, it suffices to prove the following inclusion:
$$
e_{\chi_0}(\Delta^2 \cdot \mathcal{G}) \cdot \prod_{\chi \neq \chi_0} e_{\chi}(\mathcal{G})  \subset \Gal(\mathcal{H}/H(\zeta_p)) \text{ .}
$$
Since $\Delta \cdot e_{\omega_p^{-1}} = e_{\chi_0} \cdot \Delta$, it suffices to show the following lemma. 
\begin{lem}
The compositum $K_{(1)}K_{(-1)}$ contained in $\mathcal{H}$.
We have $$\Gal(\mathcal{H}/K_{(1)}K_{(-1)}) = e_{\omega_p^{-1}}(\Delta \cdot \mathcal{G}) \cdot \prod_{\chi \neq \omega_p^{-1}} e_{\chi}(\mathcal{G})$$
and
$$\Delta \cdot \Gal(\mathcal{H}/K_{(1)}K_{(-1)}) \subset \Gal(\mathcal{H}/H(\zeta_p)) \text{ .}$$
\end{lem}
\begin{proof}
The compositum $K_{(1)}K_{(-1)}$ is unramified everywhere over $K_{(1)}$ \cite[Section 5.2]{Calegari_Emerton}, so is contained in $\mathcal{H}$
The first equality comes from genus theory: $K_{(-1)}$ is the largest extension of $K_{(1)}$ which is unramified everywhere,  abelian over $\mathbf{Q}(\zeta_p)$ and such that $\Gal(\mathbf{Q}(\zeta_p)/\mathbf{Q})$ acts by $\omega_p^{-1}$. 

Let $M = K_{(1)} \cdot K_{(-1)} \cdot H$; this is an abelian extension of $K_{(1)}$ contained in $F$. The group $\rho(\Gal(F/M))$ is the set of matrices of the form $\text{Id} + x^2\cdot  \begin{pmatrix}
 0& a \\
  b &0
  \end{pmatrix}$
  where $a$ and $b$ are in $\mathbf{Z}/p\mathbf{Z}$. Thus, $\Gal(F/M)$ is a normal subgroup of $\Gal(F/\mathbf{Q}(\zeta_p))$, which means that $M$ is Galois over $\mathbf{Q}(\zeta_p)$. There is an action of $\Gal(K_{(1)}/\mathbf{Q})$ on $\Gal(M/\mathbf{Q}(\zeta_p))$. It suffices to show that we have
  $$\Delta \cdot \Gal(M/K_{(1)}K_{(-1)}) \subset \Gal(M/H(\zeta_p))\text{ .}$$
In fact, we shall prove that we have $\Delta \cdot \Gal(M/K_{(1)}K_{(-1)})  = 0$. The group $\rho\left( \Gal(M/K_{(1)}K_{(-1)})\right)$ is generated by the images of $\begin{pmatrix}
 1+x^2 & 0 \\
  0 & 1-x^2
  \end{pmatrix}$ and $\begin{pmatrix}
 1+x & 0 \\
  0 & (1+x)^{-1}
  \end{pmatrix}$. This follows indeed from the description of $\rho\left(\Gal(H/K)\right)$ given above.  
 Let $s$ be the image of $\begin{pmatrix}
 1& x \\
  0 & 1
  \end{pmatrix}$ in $\rho\left(\Gal(M/\mathbf{Q}(\zeta_p))\right)$. The image of $s$ in $\rho\left(\Gal(K_{(1)}/\mathbf{Q}(\zeta_p)) \right)$ generator of $\rho\left(\Gal(K_{(1)}/\mathbf{Q}(\zeta_p)) \right)$. We have to show that the commutators $$\begin{pmatrix}
 1& x \\
  0 & 1
  \end{pmatrix} \cdot \begin{pmatrix}
 1+x^2 & 0 \\
  0 & 1-x^2
  \end{pmatrix} \cdot \begin{pmatrix}
 1& x \\
  0 & 1
  \end{pmatrix}^{-1}\cdot \begin{pmatrix}
 1+x^2 & 0 \\
  0 & 1-x^2
  \end{pmatrix}^{-1}$$
  and
  $$\begin{pmatrix}
 1& x \\
  0 & 1
  \end{pmatrix} \cdot \begin{pmatrix}
 1+x & 0 \\
  0 & (1+x)^{-1}
  \end{pmatrix} \cdot \begin{pmatrix}
 1& x \\
  0 & 1
  \end{pmatrix}^{-1}\cdot \begin{pmatrix}
 1+x & 0 \\
  0 & (1+x)^{-1}
  \end{pmatrix}^{-1}$$
  are in $\rho\left(\Gal(F/M)\right)$.
 
If $A$, $B$, $C$ and $D$ are in $\text{M}_2(\mathbf{Z}/p\mathbf{Z})$, the commutator of $1+x\cdot A + x^2\cdot B$ and $1+x\cdot C+x^2\cdot D$ is $1+x^2\cdot (AC-CA)$ (in $\text{GL}_2((\mathbf{Z}/p\mathbf{Z})[x]/x^3)$).
Thus, these two commutators are respectively $0$ and $\begin{pmatrix}
 1 & -2x^2 \\
  0 & 1
  \end{pmatrix}$. These matrices are in $\rho(\Gal(F/M))$, which concludes the proof of Lemma \ref{Section_deformations_comparison_CE_class_group}.
  \end{proof}
\end{proof}

By Lemma \ref{Section_deformations_comparison_CE_class_group}, Global class field theory gives a canonical group isomorphism 
$$
\varpi : (\mathcal{C}_{K_{(1)}}/\Delta^2 \cdot \mathcal{C}_{K_{(1)}})_{(\chi_0)} \xrightarrow{\sim} \Gal(H/K) \text{ .}
$$
The group homomorphism $\log\circ \pi : (\mathcal{C}_{K_{(1)}}/\Delta^2 \cdot \mathcal{C}_{K_{(1)}})_{(\chi_0)} \rightarrow \mathbf{Z}/p\mathbf{Z}$ is the composition of the restriction of $a$ to $\Gal(H/K)$ with $\varpi$.
We let $$\pi' : (\mathcal{C}_{K_{(1)}}/\Delta^2 \cdot \mathcal{C}_{K_{(1)}})_{(\chi_0)} \rightarrow U^{\otimes 2}$$
be the group homomorphism such that $\log^{\otimes 2}\circ \pi'$ is the composition of the restriction of $a''$ to $\Gal(H/K)$ with $\varpi$, where $\log^{\otimes 2} : U^{\otimes 2} \xrightarrow{\sim} \mathbf{Z}/p\mathbf{Z}$ is defined by $\log^{\otimes 2}(a\otimes b) = \log(a)\cdot \log(b)$. The group homomorphisms $\pi'$ and $\pi \otimes \gamma$ are not proportionals (recall that $\gamma$ is a generator of $U$ such that $\log(\gamma)=1$).

\begin{lem}\label{deformations_coeff_f_2}
For each prime $\ell \neq N$, let $a_{\ell} \in \mathbf{Z}/p\mathbf{Z}$ be defined by the following equality in $(\mathbf{Z}/p\mathbf{Z})[x]/x^3$:
$$\Tr\left(\rho\left(\Frob_{\ell}\right)\right) = \ell+1 + x\cdot \frac{\ell-1}{2}\cdot \log(\ell) + x^2\cdot a_{\ell} \text{ ,}$$
where $\Frob_{\ell}$ is any Frobenius substitution at $\ell$ in $\Gal(F/\mathbf{Q})$.

We have, in $(\mathbf{Z}/p\mathbf{Z})^{(\mathcal{P})}$ modulo the subgroup generated by $A_0:=(a_{\ell}(f_0))_{\ell \in  \mathcal{P }}$ and $A_1:=(a_{\ell}(f_1))_{\ell \in \mathcal{P}}$:
$$(a_{\ell}(f_2))_{\ell \in \mathcal{P}} = (a_{\ell})_{\ell \in \mathcal{P}} \text{ .}$$
\end{lem}
\begin{proof}
By \cite[Theorem 1.5]{Calegari_Emerton}, the Galois representation $\rho$ corresponds to a ring homomorphism $\varphi : \tilde{\mathbf{T}} \rightarrow (\mathbf{Z}/p\mathbf{Z})[x]/x^3$ where we recall that $\tilde{\mathbf{T}}$ is the completion of the full Hecke algebra of weight $2$ and level $\Gamma_0(N)$ at the $p$-maximal Eisenstein ideal. Furthermore, we have by construction $\varphi(T_{\ell}) = \Tr(\rho(\Frob_{\ell}))$. We have normalized $\rho$ such that $\Tr(\rho(\Frob_{\ell})) \equiv \ell+1 + x\cdot \frac{\ell-1}{2} \cdot \log(\ell) \text{ (modulo }x^2\text{)}$. Thus, we have 
\begin{equation}\label{deformations_coeff_f_2_normalization}
\varphi(T_{\ell}-\ell-1) \equiv x \cdot \frac{\ell-1}{2}\cdot \log(\ell) \text{ (modulo }x^2 \text{)} \text{ .}
\end{equation}
The morphism $\varphi$ corresponds a to a modular form $F$ in $\text{M}_2(\Gamma_0(N), (\mathbf{Z}/p\mathbf{Z})[x]/x^3)$. For each $n\geq 1$, let $a_n(F)$ be the $n$th Fourier coefficient of $F$ at the cusp $\infty$. We write $$a_n(F) = a_n^0(F) + x\cdot a_n^1(F)+x^2\cdot a_n^2(F)$$ in $(\mathbf{Z}/p\mathbf{Z})[x]/x^3$ where $a_n^i(F) \in \mathbf{Z}/p\mathbf{Z}$ for $i \in \{0,1,2\}$. The $q$-expansion $\sum_{n \geq 0} a_n^0(F) \cdot q^n$ in $\mathbf{Z}/p\mathbf{Z}[[q]]$ is the $q$-expansion of the (normalized) Eisenstein series $f_0$ of weight $2$ and level $\Gamma_0(N)$. Thus, $x \cdot \sum_{n \geq 0} (a_n^1(F)+x\cdot a_n^2(F))\cdot q^n $ is the $q$-expansion of a modular form (namely $F-f_0$). By the $q$-expansion principle \cite[Corollary 1.6.2]{Katz_properties}, $\sum_{n \geq 0} (a_n^1(F)+x\cdot a_n^2(F))\cdot q^n$ is the $q$-expansion at $\infty$ of a modular form in $M_2\left(\Gamma_0(N), (\mathbf{Z}/p\mathbf{Z})[x]/x^2\right)$. Thus, by specializing at $x=0$, we get that $\sum_{n \geq 0} a_n^1(F) \cdot q^n$ is the $q$-expansion at $\infty$ of a modular form in $M_2(\Gamma_0(N), \mathbf{Z}/p\mathbf{Z})$, which we call $F_1$. Again, by the $q$-expansion principle, $\sum_{n \geq 2 }a_n^2(F)\cdot  q^n$ is the $q$-expansion at $\infty$ of a modular form in $M_2(\Gamma_0(N), \mathbf{Z}/p\mathbf{Z})$, which we call $F_2$.

We have, for every prime $\ell \neq N$:
$$(T_{\ell}-\ell-1)(F) = (T_{\ell}-\ell-1)(f_0)+x\cdot (T_{\ell}-\ell-1)(F_1)+x^2\cdot (T_{\ell}-\ell-1)(F_2) = x\cdot (T_{\ell}-\ell-1)(F_1)+x^2\cdot (T_{\ell}-\ell-1)(F_2) 
\text{ .}$$   
Thus, we have:
$$(T_{\ell}-\ell-1)(F) \equiv x \cdot (T_{\ell}-\ell-1)(F_1) \text{ (modulo }x^2\text{)} \text{ .}$$
On the other hand, we have $(T_{\ell}-\ell-1)(F) = \varphi(T_{\ell}-\ell-1)\cdot F$. By (\ref{deformations_coeff_f_2_normalization}), we get:
$$(T_{\ell}-\ell-1)(F_1) = \frac{\ell-1}{2} \cdot \log(\ell) \cdot f_0\text{ .}$$ 
Thus, $F_1$ is the first higher Eisenstein element $f_1$ (modulo the subgroup generated by $f_0$). Similarly, $F_2$ is the second higher Eisenstein element $f_2$ (modulo the subgroup generated by $f_0$ and $f_1$), which concludes the proof of Lemma \ref{deformations_coeff_f_2}.
\end{proof}

By Lemma \ref{deformations_coeff_f_2} and (\ref{deformations_equation_trace_x^2_2}), we have in $(\mathbf{Z}/p\mathbf{Z})^{(\mathcal{P})}$ modulo the subgroup generated by $A_0$ and $A_1$:
\begin{align*}
\left(a_{\ell}(f_2)\right)_{\ell \in \mathcal{P}} &= \left( \ell \cdot a'(\Frob_{\ell})+d'(\Frob_{\ell}) \right)_{\ell \in \mathcal{P}} \\&
= \left((\ell-1)\cdot a'(\Frob_{\ell}) + \frac{\beta_1(\Frob_{\ell}) \cdot \beta_{-1}(\Frob_{\ell})}{\ell} + a(\Frob_{\ell})^2\right)_{\ell \in \mathcal{P}} \\&
= \left((\ell-1)\cdot (a''(\Frob_{\ell}) + \frac{\log(\ell)^2}{8}) + \frac{\beta_1(\Frob_{\ell}) \cdot \beta_{-1}(\Frob_{\ell})}{\ell} + \frac{\log(\ell)^2}{4}\right)_{\ell \in \mathcal{P}}
\end{align*}
where in the second equality we used (\ref{deformations_equation_trace_x^2_1}) and (\ref{deformations_equation_trace_x^2_2}) and in the last equality we used (\ref{deformations_equation_trace_x^2_3}) and the definition of $a''$. 

If $\ell \not\equiv 1 \text{ (modulo }p\text{)}$, then we may choose $\Frob_{\ell}$ in $\Gal(F/K)$. With this choice, we have $\beta_1(\Frob_{\ell}) = 0$ and $a''(\Frob_{\ell}) = \log^{\otimes 2}\left(\pi'(\overline{\mathfrak{p}}_{\ell})\right)$. If we change $\pi'$ by a multiple of $\pi$, we only change the right-hand side of (\ref{deformations_description_f_2_thm}) by a multiple of $A_1$. Thus, the choice of $\pi'$ is only well-defined up to a multiple of $\pi$. 

If $\ell \equiv 1 \text{ (modulo }p\text{)}$, we have $\Frob_{\ell} \in \Gal(F/\mathbf{Q}(\zeta_p))$. The restriction of $\beta_1$ to $\Gal(F/\mathbf{Q}(\zeta_p))$ factors through $\Gal(K_{(1)}/\mathbf{Q}(\zeta_p))$, thus giving a group isomorphism $\beta_{1}' : \Gal(K_{(1)}/\mathbf{Q}(\zeta_p)) \xrightarrow{\sim} \mathbf{Z}/p\mathbf{Z}$. Fix a group isomorphism $\log_{\zeta} : \mu_p \xrightarrow{\sim} \mathbf{Z}/p\mathbf{Z}$. Then $\beta_1'$ and $\log_{\zeta} \circ \beta_{(1)}$ are proportionals. Similarly, $\beta_{-1}$ gives a group isomorphism $\beta_{-1}' : \Gal(K_{(-1)}/\mathbf{Q}(\zeta_p)) \xrightarrow{\sim} \mathbf{Z}/p\mathbf{Z}$ which is proportional to $\left( \log^{\otimes 2} \otimes \log_{\zeta}^{\otimes -1}\right) \circ \beta_{(-1)}$, where $\log_{\zeta}^{\otimes -1} : \mu_p^{\otimes -1} \xrightarrow{\sim} \mathbf{Z}/p\mathbf{Z}$ is induced by $\log_{\zeta}$. Thus, there exists $C \in (\mathbf{Z}/p\mathbf{Z})^{\times}$ (independant of $\ell$) such that $\beta_{1}(\Frob_{\ell}) \cdot \beta_{-1}(\Frob_{\ell}) =  C \cdot \log^{\otimes 2}(\beta_{\ell})$.

This concludes the proof of Theorem \ref{main_thm_deformations_f_2}.
\end{proof}

\begin{rem}
The constant $C$ is an invariant of the conjugacy class of the modular Galois representation $\rho : \Gal(\overline{\mathbf{Q}}/\mathbf{Q}) \rightarrow \GL_2\left(\mathbf{T}/\left(p\cdot\mathbf{T}+I^2\right)\right)$ coming from $J_0(N)[I^2+(p)](\overline{\mathbf{Q}})$. The forthcoming thesis of Jun Wang (a Phd student of Sharifi) seems to imply that, at least conditionnally on a conjecture of Sharifi (which seems related to our Conjecture \ref{odd_modSymb_Sharifi_conj}), we have $C=1$.
\end{rem}

\begin{corr}
Let $\mathcal{P}'$ be the set of prime numbers $\ell$ satisfying the following conditions.
\begin{itemize}
\item We have $\ell \neq N,p$
\item We have $\ell \equiv 1 \text{ (modulo }p\text{).}$
\item The prime $N$ is a $p$th power modulo $\ell$.
\end{itemize}
The element $(a_{\ell}(f_2))_{\ell \in \mathcal{P}'}$ of $(\mathbf{Z}/p\mathbf{Z})^{(\mathcal{P}')}$ is well-defined modulo the element $X=(\ell+1)_{\ell \in \mathcal{P}'}$. We have the following equality modulo $(\mathbf{Z}/p\mathbf{Z}) \cdot X$ in $(\mathbf{Z}/p\mathbf{Z})^{(\mathcal{P}')}$:
$$(a_{\ell}(f_2))_{\ell \in \mathcal{P}'} = \left(\frac{\log(\ell)^2}{4}\right)_{\ell \in \mathcal{P}'} \text{ .}$$
\end{corr}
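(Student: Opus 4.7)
The corollary is a direct specialization of Theorem \ref{main_thm_deformations_f_2}. The plan is to restrict the equality (\ref{deformations_description_f_2_thm}) to the subset $\mathcal{P}' \subset \mathcal{P}$, show that both the ``class group'' correction $\epsilon_\ell$ vanishes on $\mathcal{P}'$ and the vector $A_1$ becomes zero on $\mathcal{P}'$, and then apply the logarithm $\log^{\otimes 2} : U^{\otimes 2} \xrightarrow{\sim} \mathbf{Z}/p\mathbf{Z}$ to translate the tensor identity into the claimed scalar identity modulo $(\mathbf{Z}/p\mathbf{Z})\cdot X$.

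First I will observe that for $\ell \in \mathcal{P}'$, the congruence $\ell \equiv 1 \pmod p$ forces $\frac{\ell-1}{2}\log(\ell) \equiv 0 \pmod p$, so the restriction $A_1|_{\mathcal{P}'}$ vanishes. Hence the subgroup of $(U^{\otimes 2})^{(\mathcal{P}')}$ generated by the images of $A_0 \otimes U^{\otimes 2}$ and $A_1 \otimes U^{\otimes 2}$ reduces to $X \otimes U^{\otimes 2}$, where $X = (\ell+1)_{\ell \in \mathcal{P}'}$. The same reasoning on the scalar side shows that $(a_\ell(f_2))_{\ell \in \mathcal{P}'}$ is well-defined modulo $(\mathbf{Z}/p\mathbf{Z}) \cdot X$, since $f_2$ is determined modulo the subgroup generated by $f_0$ and $f_1$ and the $\ell$-th Fourier coefficients of those two forms restrict on $\mathcal{P}'$ to $X$ and $0$ respectively.

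Second, I will show that $\epsilon_\ell = 1$ in $U^{\otimes 2}$ for every $\ell \in \mathcal{P}'$. Since $\ell \equiv 1 \pmod p$, Theorem \ref{main_thm_deformations_f_2} gives $\epsilon_\ell = \beta_\ell^C$ with
\[
\beta_\ell = \beta_{(1)}(\Frob_\lambda)\cdot \beta_{(-1)}(\Frob_\lambda) \in \mu_p \otimes (U^{\otimes 2}\otimes \mu_p^{\otimes -1}) = U^{\otimes 2}.
\]
The hypothesis that $N$ is a $p$th power modulo $\ell$, combined with $\ell \equiv 1 \pmod p$ (so that the residue field at $\lambda$ already contains the $p$th roots of unity and Hensel's lemma applies), guarantees that $K_{(1)} = \mathbf{Q}(\zeta_p, N^{1/p})$ is completely split at $\lambda$ over $\mathbf{Q}(\zeta_p)$; equivalently, $\Frob_\lambda$ is trivial in $\Gal(K_{(1)}/\mathbf{Q}(\zeta_p))$. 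By Proposition \ref{comparison_canonical_CFT}, $\beta_{(1)}(\Frob_\lambda)$ is the identity of $\mu_p$, which under the tensor product forces $\beta_\ell = 1$ and hence $\epsilon_\ell = 1$.

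Finally, I apply $\log^{\otimes 2}$ componentwise to (\ref{deformations_description_f_2_thm}) restricted to $\mathcal{P}'$. Since $\log(\gamma) \equiv 1 \pmod p$, the left hand side becomes $(a_\ell(f_2))_{\ell \in \mathcal{P}'}$, while the right hand side becomes $(\log(\ell)^2/4)_{\ell \in \mathcal{P}'}$ because $\log^{\otimes 2}(\epsilon_\ell) = 0$. The equality then holds in $(\mathbf{Z}/p\mathbf{Z})^{(\mathcal{P}')}$ modulo $(\mathbf{Z}/p\mathbf{Z})\cdot X$ by the first paragraph, which is exactly the corollary. There is no substantial obstacle in this argument; the only mildly delicate point is the Hensel step identifying the splitting of $\lambda$ in $K_{(1)}/\mathbf{Q}(\zeta_p)$ with the condition that $N$ is a $p$th power modulo $\ell$, and this is immediate once one notes that $\ell \equiv 1 \pmod p$ makes the residue field at $\lambda$ contain $\mu_p$.
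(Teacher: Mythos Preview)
Your proof is correct and follows essentially the same route as the paper: both derive the corollary from Theorem \ref{main_thm_deformations_f_2} by noting that the conditions defining $\mathcal{P}'$ are exactly those making $\ell$ split completely in $K_{(1)}=\mathbf{Q}(\zeta_p,N^{1/p})$, so that $\beta_{(1)}(\Frob_\lambda)=1$ and hence $\epsilon_\ell=\beta_\ell^{C}=1$. You have simply made explicit the vanishing of $A_1$ on $\mathcal{P}'$ and the passage through $\log^{\otimes 2}$, which the paper leaves implicit.
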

\begin{proof}
This follows from Theorem \ref{main_thm_deformations_f_2} and the fact that if $\ell\equiv 1 \text{ (modulo }p\text{)}$, the following assertions are equivalent:
\begin{itemize}
\item We have $\beta_{(1)}(\Frob_{\ell}) = 1$. 
\item The prime $\ell$ splits completely in $\mathbf{Q}(\zeta_p, N^{\frac{1}{p}})$ 
\item We have $\ell \equiv 1 \text{ (modulo }p\text{)}$ and $N$ is a $p$th power modulo $\ell$.
\end{itemize}
\end{proof}
\newpage

\section{Tables and summary of our results}

The following table for $g_p$ was extracted from the data of \cite{Bartosz} (Naskrecki extended his computations to $N <13000$, and kindly sent the result to us). We give the $5$-uples $(N, p, t, g_p, m)$, where $N$, $p$, $t$ and $g_p$ were defined in the article, and $m$ is the number of conjugacy class of newforms which are congruent to the Eisenstein series modulo $p$. The range is $N < 13000$, and we only display the data where $p\geq 5$ and $g_p \geq 3$.

\begin{table}[htb]
\centering
\label{table_g_p}
\caption {Numerical data for $g_p$}
\begin{tabular}{|l|l|l|l|l|}
\hline
$N$  & $p$  & $t$ & $g_p$ & $m$ \\ \hline
181   & 5  & 1 & 3 & 1 \\ \hline
1571  & 5  & 1 & 3 & 1 \\ \hline
2621  & 5  & 1 & 3 & 1 \\ \hline
3001  & 5  & 3 & 6 & 3 \\ \hline
3671  & 5  & 1 & 5 & 1 \\ \hline
4931  & 5  & 1 & 3 & 1 \\ \hline
5381  & 5  & 1 & 3 & 1 \\ \hline
5651  & 5  & 2 & 4 & 2 \\ \hline
5861  & 5  & 1 & 4 & 1 \\ \hline
6451  & 5  & 2 & 3 & 2 \\ \hline
9001  & 5  & 3 & 4 & 2 \\ \hline
9521  & 5  & 1 & 3 & 1 \\ \hline
10061 & 5  & 1 & 3 & 1 \\ \hline
11321 & 5  & 1 & 3 & 1 \\ \hline
\end{tabular}
\quad
\begin{tabular}{|l|l|l|l|l|}
\hline
$N$  & $p$  & $t$ & $g_p$ & $m$ \\ \hline
12101 & 5  & 2 & 4 & 2 \\ \hline
12301 & 5  & 2 & 3 & 2 \\ \hline
12541 & 5  & 1 & 3 & 1 \\ \hline
12641 & 5  & 1 & 4 & 1 \\ \hline
12791 & 5  & 1 & 3 & 1 \\ \hline
4159  & 7  & 1 & 4 & 1 \\ \hline
4229  & 7  & 1 & 3 & 1 \\ \hline
4957  & 7  & 1 & 3 & 1 \\ \hline
7673  & 7  & 1 & 3 & 1 \\ \hline
10627 & 7  & 1 & 3 & 1 \\ \hline
11159 & 7  & 1 & 3 & 1 \\ \hline
1321  & 11 & 1 & 3 & 1 \\ \hline
6761  & 13 & 2 & 3 & 2 \\ \hline
1381  & 23 & 1 & 3 & 1 \\ \hline
\end{tabular}
\end{table}

\begin{landscape}
Let $r$ be an integer such that $1 \leq r \leq t = v_p\left(\frac{N-1}{12}\right)$. The following tables summarize our results about the integer $n(r,p)$ using the three $\tilde{\mathbb{T}}$-modules $M$, $M^-$ and $M_+$ studied in this paper (the equalities below take place in $\mathbf{Z}/p^r\mathbf{Z}$ unless explicitly stated otherwise). 
\begin{table}[htb]
 \centering
    \caption{The case $p \geq 5$}
 \begin{tabular}{c|c|c|c|l}
\cline{2-4}
\multicolumn{1}{l|}{}       & $n(r,p) \geq 2$                                  & $n(r,p) \geq 3$                                                                                                                                                                                                                               & $n(r,p) \geq 4$                                                                                                                                                                                                                                &  \\ \cline{1-4}
\multicolumn{1}{|c|}{$M$}   & $\sum_{\lambda \in L} \log(H'(\lambda)) = 0$     & $\sum_{\lambda \in L} 3\cdot \log(H'(\lambda))^2 - 4\cdot \log(\lambda)^2 = 0$                                                                                                                                                                & ?                                                                                                                                                                                                                                              &  \\ \cline{1-4}
\multicolumn{1}{|c|}{$M_+$} & $\sum_{k=1}^{\frac{N-1}{2}} k \cdot \log(k) = 0$ & $\sum_{k=1}^{\frac{N-1}{2}} k \cdot \log(k)^2 = 0$                                                                                                                                                                                            & ?                                                                                                                                                                                                                                              &  \\ \cline{1-4}
\multicolumn{1}{|c|}{$M^-$} & $\sum_{k=1}^{\frac{N-1}{2}} k \cdot \log(k) = 0$ & \begin{tabular}[c]{@{}c@{}}$\sum_{i=1}^{N-1},\left(\sum_{j=i}^{N-1} F_{0,p}(g^j) \right)\cdot \left(1-\zeta_N^{g^{i-1}}, \frac{1-\zeta_N}{1-\zeta_N^{g^{-1}}}\right)_r= 0$ \\ in $J\cdot \mathcal{K}_r/J^2 \cdot \mathcal{K}_r$\end{tabular} & \begin{tabular}[c]{@{}c@{}}$\sum_{i=1}^{N-1} \left(\sum_{j=i}^{N-1} F_{1,p}(g^j) \right)\cdot\left( 1-\zeta_N^{g^{i-1}}, \frac{1-\zeta_N}{1-\zeta_N^{g^{-1}}}\right)_r = 0$ \\ in $J\cdot \mathcal{K}_r/J^2 \cdot \mathcal{K}_r$\end{tabular} &  \\ \cline{1-4}
\end{tabular}
\end{table}

\begin{table}[htb]
      \centering
        \caption{The case $p = 3$}
               \begin{tabular}{l|c|c|}
\cline{2-3}
                           & $n(r,p) \geq 2$                          & $n(r,p) \geq 3$, $t\geq 2$ and $r \leq t-1$ \\ \hline
\multicolumn{1}{|c|}{$M_+$} & $\sum_{k=1}^{N-1} k^2 \cdot \log(k) = 0$ & $\sum_{k=1}^{N-1} k^2 \cdot \log(k)^2 = 0$  \\ \hline
\end{tabular}    
\end{table}

\begin{table}[htb]
      \centering
        \caption{The even modular symbols $M_+$, $p = 2$}
              \begin{tabular}{l|c|c|}
\cline{2-3}
                           & $n(r,p) \geq 2$                               & $n(r,p) \geq 3$, $t \geq 2$ and $r \leq t-1$                                                                    \\ \hline
\multicolumn{1}{|c|}{$M_+$} & $2^{t-1}-\sum_{k=1}^{N-1} k\cdot \log(k) = 0$ & $\sum_{k=1}^{\frac{N-1}{2}} k \cdot \left( \log(k) + \log(k)^2\right) \equiv 0 \text{ (modulo }2^{r+1}\text{)}$ \\ \hline
\end{tabular}   
\end{table}
\end{landscape}
\newpage

\bibliography{biblio}

\def\cprime{$'$}
\begin{thebibliography}{10}

\bibitem{Atiyah}
Michael~F. Atiyah and Ian~G. Macdonald.
\newblock {\em Introduction to commutative algebra}.
\newblock Addison-Wesley Series in Mathematics. Westview Press, Boulder, CO,
  economy edition, 2016.
\newblock For the 1969 original see [ MR0242802].

\bibitem{legendre_fourth}
Roland Auer and Jaap Top.
\newblock Legendre elliptic curves over finite fields.
\newblock {\em J. Number Theory}, 95(2):303--312, 2002.

\bibitem{Debargha_Merel}
Debargha Banerjee and Lo{\"{\i}}c Merel.
\newblock The {E}isenstein cycles as modular symols.
\newblock \url{https://arxiv.org/abs/1609.04140}.

\bibitem{Lecouturier_Betina}
Adel Betina and Emmanuel Lecouturier.
\newblock On the {$p$}-adic periods of the modular curve {$X(\Gamma_0(p) \cap
  \Gamma(2))$}.
\newblock \textit{To appear in Transactions of the American Mathematical
  Society} \url{https://doi.org/10.1090/tran/7236}.

\bibitem{Lecouturier_Betina_2}
Adel Betina and Emmanuel Lecouturier.
\newblock Congruence formulas for legendre modular polynomials.
\newblock {\em Journal of Number Theory}, 188(1):71--87, 2018.

\bibitem{Pi_AGM}
Jonathan~M. Borwein and Peter~P. Borwein.
\newblock {\em Pi and the {AGM}}, volume~4 of {\em Canadian Mathematical
  Society Series of Monographs and Advanced Texts}.
\newblock 1998.

\bibitem{Bourbaki_Comm_Alg}
Nicolas Bourbaki.
\newblock {\em \'El\'ements de math\'ematique. {F}ascicule {XXVIII}.
  {A}lg\`ebre commutative. {C}hapitre 3: {G}raduations, filtra- tions et
  topologies. {C}hapitre 4: {I}d\'eaux premiers associ\'es et d\'ecomposition
  primaire}.
\newblock Actualit\'es Scientifiques et Industrielles, No. 1293. Hermann,
  Paris, 1961.

\bibitem{Busuioc}
Cecilia Busuioc.
\newblock The {S}teinberg symbol and special values of {$L$}-functions.
\newblock {\em Trans. Amer. Math. Soc.}, 360(11):5999--6015, 2008.

\bibitem{Calegari_Emerton}
Frank Calegari and Matthew Emerton.
\newblock On the ramification of {H}ecke algebras at {E}isenstein primes.
\newblock {\em Invent. Math.}, 160(1):97--144, 2005.

\bibitem{de_shalit_L}
Ehud de~Shalit.
\newblock {$p$}-adic periods and modular symbols of elliptic curves of prime
  conductor.
\newblock {\em Invent. Math.}, 121(2):225--255, 1995.

\bibitem{Deligne_Serre}
Pierre Deligne and Jean-Pierre Serre.
\newblock Formes modulaires de poids {$1$}.
\newblock {\em Ann. Sci. \'Ecole Norm. Sup. (4)}, 7:507--530 (1975), 1974.

\bibitem{Diamond_Shurman}
Fred Diamond and Jerry Shurman.
\newblock {\em A first course in modular forms}, volume 228 of {\em Graduate
  Texts in Mathematics}.
\newblock Springer-Verlag, New York, 2005.

\bibitem{Emerton_supersingular}
Matthew Emerton.
\newblock Supersingular elliptic curves, theta series and weight two modular
  forms.
\newblock {\em J. Amer. Math. Soc.}, 15(3):671--714 (electronic), 2002.

\bibitem{Sharifi_survey}
Takako Fukaya, Kazuya Kato, and Romyar Sharifi.
\newblock Modular symbols in {I}wasawa theory.
\newblock In {\em Iwasawa theory 2012}, volume~7 of {\em Contrib. Math. Comput.
  Sci.}, pages 177--219. Springer, Heidelberg, 2014.

\bibitem{Garett}
Paul Garett.
\newblock Kummer, {E}isenstein, computing {G}auss sums as {L}agrange
  resolvents.
\newblock \url{http://www.math.umn.edu/~garrett/m/v/kummer_eis.pdf}.

\bibitem{Gold_genus_theory}
Robert Gold.
\newblock Genera in abelian extensions.
\newblock {\em Proc. Amer. Math. Soc.}, 47:25--28, 1975.

\bibitem{Goncharov}
Alexander~B. Goncharov.
\newblock Euler complexes and geometry of modular varieties.
\newblock {\em Geom. Funct. Anal.}, 17(6):1872--1914, 2008.

\bibitem{Ireland_Rosen}
Kenneth Ireland and Michael Rosen.
\newblock {\em A classical introduction to modern number theory}, volume~84 of
  {\em Graduate Texts in Mathematics}.
\newblock Springer-Verlag, New York, second edition, 1990.

\bibitem{Kahn}
Bruno Kahn.
\newblock {$K_2$} d'un anneau euclidien.
\newblock In {\em Proceedings of the {L}uminy conference on algebraic
  {$K$}-theory ({L}uminy, 1983)}, volume~34, pages 255--257, 1984.

\bibitem{Katz}
Nicholas Katz.
\newblock On a question of {Z}annier.
\newblock \textit{Preprint},
  \url{http://www.math.princeton.edu/~nmk/zannier22.pdf}.

\bibitem{Katz_properties}
Nicholas~M. Katz.
\newblock {$p$}-adic properties of modular schemes and modular forms.
\newblock pages 69--190. Lecture Notes in Mathematics, Vol. 350, 1973.

\bibitem{Lecouturier_class_group}
Emmanuel Lecouturier.
\newblock On the galois structure of the class group of certain kummer
  extensions (=th\`ese chapitre 1).
\newblock \textit{To appear in Journal of the London Mathematical Society}
  \url{https://doi.org/10.1112/jlms.12123}.

\bibitem{Madsen}
Marc~Skov Madsen.
\newblock A {$p$}-adic point counting algorithm for elliptic curves on
  {L}egendre form.
\newblock {\em Finite Fields Appl.}, 11(1):71--88, 2005.

\bibitem{Manin}
Yuri Manin.
\newblock Parabolic points and zeta functions of modular curves.
\newblock {\em Izv. Akad. Nauk SSSR Ser. Mat.}, 36:19--66, 1972.

\bibitem{Mazur_Eisenstein}
Barry Mazur.
\newblock Modular curves and the {E}isenstein ideal.
\newblock {\em Inst. Hautes \'Etudes Sci. Publ. Math.}, (47):33--186 (1978),
  1977.

\bibitem{MT}
Barry Mazur and John Tate.
\newblock Refined conjectures of the ``{B}irch and {S}winnerton-{D}yer type''.
\newblock {\em Duke Math. J.}, 54(2):711--750, 1987.

\bibitem{MTT}
Barry Mazur, John Tate, and Jeremy Teitelbaum.
\newblock On {$p$}-adic analogues of the conjectures of {B}irch and
  {S}winnerton-{D}yer.
\newblock {\em Invent. Math.}, 84(1):1--48, 1986.

\bibitem{McCallum_Sharifi}
William~G. McCallum and Romyar~T. Sharifi.
\newblock A cup product in the {G}alois cohomology of number fields.
\newblock {\em Duke Math. J.}, 120(2):269--310, 2003.

\bibitem{Merel_Universal}
Lo\"ic Merel.
\newblock Universal {F}ourier expansions of modular forms.
\newblock In {\em On {A}rtin's conjecture for odd {$2$}-dimensional
  representations}, volume 1585 of {\em Lecture Notes in Math.}, pages 59--94.
  Springer, Berlin, 1994.

\bibitem{Merel_FLT}
Lo\"ic Merel.
\newblock Homologie des courbes modulaires affines et param\'etrisations
  modulaires.
\newblock In {\em Elliptic curves, modular forms, \& {F}ermat's last theorem
  ({H}ong {K}ong, 1993)}, Ser. Number Theory, I, pages 110--130. Int. Press,
  Cambridge, MA, 1995.

\bibitem{Merel_accouplement}
Lo{\"{\i}}c Merel.
\newblock L'accouplement de {W}eil entre le sous-groupe de {S}himura et le
  sous-groupe cuspidal de {$J_0(p)$}.
\newblock {\em J. Reine Angew. Math.}, 477:71--115, 1996.

\bibitem{Morton}
Patrick Morton.
\newblock The cubic {F}ermat equation and complex multiplication on the
  {D}euring normal form.
\newblock {\em Ramanujan J.}, 25(2):247--275, 2011.

\bibitem{Bartosz}
Bartosz Naskrecki.
\newblock On higher congruences between cusp forms and {E}isenstein series.
\newblock In {\em Computations with modular forms}, volume~6 of {\em Contrib.
  Math. Comput. Sci.}, pages 257--277. Springer, Cham, 2014.

\bibitem{Neukirch_CNF}
J\"urgen Neukirch, Alexander Schmidt, and Kay Wingberg.
\newblock {\em Cohomology of number fields}, volume 323 of {\em Grundlehren der
  Mathematischen Wissenschaften [Fundamental Principles of Mathematical
  Sciences]}.
\newblock Springer-Verlag, Berlin, second edition, 2008.

\bibitem{Rebolledo}
Marusia Rebolledo.
\newblock Module supersingulier et homologie des courbes modulaires.
\newblock {\em J. Number Theory}, 121(2):234--264, 2006.

\bibitem{Yoo}
Kenneth Ribet and Hwajong Yoo.
\newblock On the kernel of an {E}isenstein prime at square-free level.
\newblock \textit{Preprint}.

\bibitem{Robert}
Gilles Robert.
\newblock Congruences entre s\'eries d'{E}isenstein, dans le cas
  supersingulier.
\newblock {\em Invent. Math.}, 61(2):103--158, 1980.

\bibitem{Rosenberg}
Jonathan Rosenberg.
\newblock {\em Algebraic {$K$}-theory and its applications}, volume 147 of {\em
  Graduate Texts in Mathematics}.
\newblock Springer-Verlag, New York, 1994.

\bibitem{Silverman}
Joseph~H. Silverman.
\newblock {\em The arithmetic of elliptic curves}, volume 106 of {\em Graduate
  Texts in Mathematics}.
\newblock Springer, Dordrecht, second edition, 2009.

\bibitem{Fukaya_Kato}
Fukaya Takako and Kazuya Kato.
\newblock On conjectures of {Sharifi}.
\newblock {\em Preprint 2012}.

\bibitem{Tate_cohomology}
John Tate.
\newblock Relations between {$K_{2}$} and {G}alois cohomology.
\newblock {\em Invent. Math.}, 36:257--274, 1976.

\bibitem{Wake}
Preston Wake and Carl Wang-Erickson.
\newblock The rank of {M}azur's {E}isenstein ideal.
\newblock \url{https://arxiv.org/abs/1707.01894}.

\bibitem{Weisinger_thesis}
James~Roger Weisinger.
\newblock {\em Some results on classical Eisenstein series and modular forms
  over function fields.}
\newblock PhD thesis, Harvard University, 1977.

\bibitem{Yoo_optimal}
Hwajong Yoo.
\newblock {N}on-optimal levels of a reducible mod $\ell$ modular
  representation.
\newblock \textit{Preprint}.

\end{thebibliography}
\bibliographystyle{plain}
\newpage

\end{document}